\newcommand\ztt{\mbox{\usefont{T2A}{\rmdefault}{m}{n}\cyrc}}
\newcommand\sh{\mbox{\usefont{T2A}{\rmdefault}{m}{n}\cyrsh}}
\newcommand\Sh{\mbox{\usefont{T2A}{\rmdefault}{m}{n}\CYRSH}}
\newcommand\ch{\mbox{\usefont{T2A}{\rmdefault}{m}{n}\cyrch}}
\newcommand\ya{\mbox{\usefont{T2A}{\rmdefault}{m}{n}\cyrya}}
\newcommand\ur{\mbox{\usefont{T2A}{\rmdefault}{m}{n}\cyryu}}
\newcommand\sch{\mbox{\usefont{T2A}{\rmdefault}{m}{n}\cyrshch}}
\newcommand\Sch{\mbox{\usefont{T2A}{\rmdefault}{m}{n}\CYRSHCH}}
\newcommand\zh{\mbox{\usefont{T2A}{\rmdefault}{m}{n}\cyrzh}}
\newcommand\Zh{\mbox{\usefont{T2A}{\rmdefault}{m}{n}\CYRZH}}
\newcommand\brus{\mbox{\usefont{T2A}{\rmdefault}{m}{n}\cyrb}}
\newcommand\drus{\mbox{\usefont{T2A}{\rmdefault}{m}{n}\cyrd}}
\renewcommand{\_}[1]{_{\left( #1 \right)}}
\newcommand{\supera}[2]{{\mathbf A}(#1|#2)}
\newcommand{\superb}[2]{{\mathbf B}(#1|#2)}
\newcommand{\superd}[2]{{\mathbf D}(#1|#2)}
\newcommand{\superda}[1]{{\mathbf D}(2,1;#1)}
\newcommand{\superf}{{\mathbf F}(4)}
\newcommand{\superg}{{\mathbf G}(3)}
\newcommand{\bm}{\mathbf{m} }
\newcommand{\bM}{\mathbf{M} }
\newcommand{\toba}{{\mathcal B}}
\def\wtoba{\widetilde{\toba}}
\newcommand{\lu}{\mathcal{L}}
\newcommand{\luq}{\lu_{\bq}}
\newcommand{\fd}{finite-dimensional \ }
\newcommand{\cR}{\mathcal R}
\newcommand{\Oc}{{\mathcal O}}
\newcommand{\cI}{{\mathcal I}}
\def\Uc{\mathcal{U}}
\def\cW{\mathcal{W}}
\def\cV{\mathcal{V}}
\def\cX{\mathcal{X}}
\def\wpartial{\widetilde{\partial}}
\newcommand{\VGamma}{\widehat{\Gamma}}
\newcommand{\vsp}{\vspace*{-0.7cm}}
\newcommand{\ztu}{\overline{\zeta}}
\newcommand{\Dchaintwo}[3]{\xymatrix@C-4pt{\overset{#1}{\underset{\ }{\circ}}\ar
@{-}[r]^{#2}
& \overset{#3}{\underset{\ }{\circ}}}}
\newcommand{\Dchainthree}[5]{\xymatrix@C-6pt{
\overset{#1}{\underset{\ }{\circ}}\ar  @{-}[r]^{#2}  & \overset{#3}{\underset{\
}{\circ}}\ar  @{-}[r]^{#4}
& \overset{#5}{\underset{\ }{\circ}} }}
\newcommand{\Dchainfour}[7]{\xymatrix@C-6pt{\overset{#1}{\underset{\ }{\circ}}\ar
@{-}[r]^{#2}
& \overset{#3}{\underset{\ }{\circ}}\ar  @{-}[r]^{#4}  & \overset{#5}{\underset{\
}{\circ}} \ar  @{-}[r]^{#6}
& \overset{#7}{\underset{\ }{\circ}}}}
\newcommand{\Dchainfive}[9]{\xymatrix@C-6pt{\overset{#1}{\underset{\ }{\circ}}\ar
@{-}[r]^{#2}  & \overset{#3}{\underset{\ }{\circ}}\ar  @{-}[r]^{#4}  &
\overset{#5}{\underset{\ }{\circ}}
\ar  @{-}[r]^{#6}  & \overset{#7}{\underset{\ }{\circ}}\ar  @{-}[r]^{#8}  &
\overset{#9}{\underset{\ }{\circ}}}}
\newcommand{\Dtriangle}[6]{
\xymatrix@R-12pt{  &    \overset{#2}{\circ} \ar  @{-}[dl]_{#4}\ar  @{-}[dr]^{#5} & \\
\overset{#1}{\circ} \ar  @{-}[rr]^{#6}  &  &\overset{#3}{\circ} }}
\newcommand{\Dthreefork}[8]{
\rule[-9\unitlength]{0pt}{12\unitlength}
\begin{picture}(28,12)(0,9)
\put(2,10){\ifthenelse{\equal{#1}{l}}{\circle*{2}}{\circle{2}}}
\put(3,10){\line(1,0){10}}
\put(14,10){\ifthenelse{\equal{#1}{m}}{\circle*{2}}{\circle{2}}}
\put(15,10){\line(1,1){7}}
\put(15,10){\line(1,-1){7}}
\put(22,18){\ifthenelse{\equal{#1}{t}}{\circle*{2}}{\circle{2}}}
\put(22,2){\ifthenelse{\equal{#1}{b}}{\circle*{2}}{\circle{2}}}
\put(2,12){\makebox[0pt]{\scriptsize #2}}
\put(8,11){\makebox[0pt]{\scriptsize #3}}
\put(14,12){\makebox[0pt]{\scriptsize #4}}
\put(19,16){\makebox[0pt][r]{\scriptsize #5}}
\put(19,4){\makebox[0pt][r]{\scriptsize #6}}
\put(24,17){\makebox[0pt][l]{\scriptsize #7}}
\put(24,2){\makebox[0pt][l]{\scriptsize #8}}
\end{picture}}
\newcommand{\Drightofway}[8]{\xymatrix@R-6pt{  &    \overset{#6}{\circ} \ar
@{-}[d]_{#4}\ar  @{-}[dr]^{#7} & \\
\overset{#1}{\circ} \ar  @{-}[r]^{#2}  &\overset{#3}{\circ} \ar  @{-}[r]^{#5}
&\overset{#8}{\circ} }}
\numberwithin{equation}{section}\theoremstyle{plain}
\newtheorem{theorem}{Theorem}[section]
\newtheorem{cor}[theorem]{Corollary}
\newtheorem{conjecture}[theorem]{Conjecture}
\newtheorem{pro}[theorem]{Proposition}
\newtheorem{prop}{Proposition}[subsection]
\theoremstyle{definition}
\newtheorem{definition}[theorem]{Definition}
\newtheorem{exa}[theorem]{Example}
\newtheorem{example}[theorem]{Example}
\newtheorem{question}[equation]{Question}
\theoremstyle{remark}
\newtheorem{rem}[theorem]{Remark}
\newtheorem{remark}[theorem]{Remark}
\newcommand{\ydh}{{}^{H}_{H}\mathcal{YD}}
\newcommand{\ydg}{{}^{\ku\Gamma}_{\ku\Gamma}\mathcal{YD}}
\newcommand{\ydl}{{}_{L}^{L}{\mathcal{YD}}}
\def\ydgr{{}^{\ku G}_{\ku G}\mathcal{YD}}
\newcommand{\uno}{{\bf 1}}
\def\Ss{\mathcal{S}}
\newcommand{\SJ}{\Zh}
\def\zt{\Z^{\theta}}
\def\G{\mathbb{G}}
\def\Jb{{\tiny \mathbb J}}
\def\I{\mathbb{I}}
\newcommand\id{\operatorname{id}}
\newcommand\tp{\operatorname{top}}
\newcommand\ord{\operatorname{ord}}
\newcommand\Alg{\operatorname{Alg}}
\newcommand\img{\operatorname{Im}}
\newcommand\GK{\operatorname{GK-dim}}
\newcommand\sdim{\operatorname{sdim}}
\newcommand\End{\operatorname{End}}
\newcommand\rk{\operatorname{rank}}
\newcommand\car{\operatorname{char}}
\newcommand\mult{\operatorname{mult}}
\newcommand\gr{\operatorname{gr}}
\newcommand\ad{\operatorname{ad}}
\newcommand\Rep{\operatorname{Rep}}
\newcommand\re{\operatorname{re}}
\newcommand\im{\operatorname{im}}
\def\k{\Bbbk}
\def\ku{\Bbbk}
\def\kk{\mathbb F}
\def\qt{\mathtt{q}}
\def\bq{\mathfrak{q}}
\def\bp{\mathfrak{p}}
\newcommand{\pa}{\mathbf p}
\newcommand\nct{\mathbb{N}_0^\theta}
\newcommand\n{\mathfrak{n}}
\newcommand\ntp{\widetilde{\mathfrak{n}}_+}
\newcommand\ntm{\widetilde{\mathfrak{n}}_-}
\newcommand\np{\mathfrak{n}_+}
\newcommand\nm{\mathfrak{n}_-}
\newcommand\rg{\mathfrak{r}}
\newcommand\nt{\widetilde{\mathfrak{n}}}
\def\ot{\otimes}
\def\s{\mathbb{S}}
\def\Z{\mathbb{Z}}
\def\N{\mathbb{N}}
\def\B{\mathfrak{B}}
\def\eps{\varepsilon}
\def\cC{\mathcal{C}}
\def\D{\mathcal{D}}
\def\mP{\mathcal{P}}
\def\cQ{\mathcal{Q}}
\def\xx{\mathbb{X}}
\def\br{\mathfrak{br}}
\def\brj{\mathfrak{brj}}
\def\bgl{\mathfrak{wk}}
\def\el{\mathfrak{el}}
\def\g{\mathfrak{g}}
\def\h{\mathfrak{h}}
\def\gt{\widetilde{\mathfrak{g}}}
\def\sli{\mathfrak{sl}}
\def\ufo{\mathfrak{ufo}}
\newcommand{\Brown}{\mathtt{br}}
\newcommand{\Sbrown}{\texttt{brj}}
\newcommand{\El}{\mathtt{el}}
\newcommand{\Bgl}{\mathtt{wk}}
\newcommand{\Br}{\mathbf{Br}}
\newcommand{\SBrown}{\mathbf{Brj}}
\def\Ufo{\mathtt{ufo}}
\def\gtt{\mathtt{g}}
\def\Gtt{\mathtt{G}}
\def\Ftt{\mathtt{F}}
\def\Att{\mathtt{A}}
\def\att{\mathtt{a}}
\def\oatt{\overline{\mathtt{a}}}
\def\Btt{\mathtt{B}}
\def\Dtt{\mathtt{D}}
\newcommand{\dn}[1]{{\mathbf D}\left(#1\right)}
\newcommand{\dnp}[1]{{\mathbf D'}\left(#1\right)}
\newcommand{\vtxgpd}{\textcolor{blue}{\bullet}}
\newcommand{\J}{{\mathcal J}}
\newcommand{\Gc}{{\mathcal G}}
\newcommand{\Ec}{{\mathcal E}}
\def\pf{\begin{proof}}
\def\epf{\end{proof}}
\begin{document}


\title[Finite dimensional Nichols algebras of diagonal type]{On Finite dimensional
Nichols algebras of diagonal type}
\author[Andruskiewitsch; Angiono]
{Nicol\'as Andruskiewitsch, Iv\'an Angiono}

\address{FaMAF-CIEM (CONICET), Universidad Nacional de C\'ordoba,
Medina A\-llen\-de s/n, Ciudad Universitaria, 5000 C\' ordoba, Rep\'
ublica Argentina.} \email{(andrus|angiono)@famaf.unc.edu.ar}

\thanks{\noindent 2010 \emph{Mathematics Subject Classification.}
16T05, 16T20, 17B22, 17B37, 17B50. \newline The work of N. A. and I. A. was partially supported by CONICET, Secyt (UNC), the
MathAmSud project GR2HOPF. The work of I. A. was partially supported by ANPCyT
(Foncyt). The work of N. A., respectively I. A., was partially done during a visit to the
University of Hamburg, respectively the MPI (Bonn), supported by the Alexander von
Humboldt Foundation}

\begin{abstract}
This is a survey on Nichols algebras of diagonal type with finite dimension, or more generally with arithmetic root system.
The knowledge of these algebras is the cornerstone of the classification
program of pointed Hopf algebras with finite dimension, or finite Gelfand-Kirillov dimension;
and their structure should be indispensable for the understanding of the representation theory, the computation of the various cohomologies, and many other aspects of finite dimensional pointed Hopf algebras.
These Nichols algebras were classified in \cite{H-classif RS} as a notable application of the notions of Weyl groupoid and generalized root system \cite{H-Weyl gpd,HY}.
In the first part of this monograph, we give an overview of the theory of 
Nichols algebras of diagonal type. This includes  a discussion of 
the notion of generalized root system and its  appearance  in the contexts of Nichols algebras of diagonal type and  (modular) Lie superalgebras.
In the second and third part, we describe for each Nichols algebra in the list of \cite{H-classif RS}
the following basic information:
the  generalized root system; its label in terms of Lie theory; 
the defining relations found in \cite{A-jems,A-presentation};
the PBW-basis;  the dimension or the Gelfand-Kirillov dimension;
the associated Lie algebra as in \cite{AAR2}.
Indeed the second part deals with Nichols algebras related to Lie algebras and superalgebras in arbitrary characteristic,
while the third contains the information on Nichols algebras related to Lie algebras and superalgebras only in small characteristic, and the few examples
yet unidentified in terms of Lie theory.
\end{abstract}

\maketitle
\begin{center}{\textit{Alles Gescheidte ist schon gedacht worden, man mu\ss\ nur versuchen \newline es noch einmal zu denken.}}\end{center}

 \begin{flushright}
{\sc   Goethe}
 \end{flushright}

\setcounter{tocdepth}{2}
\tableofcontents

\newpage
\section*{Introduction}

\subsection*{What is a Nichols algebra?}

\renewcommand\thesubsubsection{\textbf{\arabic{subsubsection}}}

\subsubsection{}
Let $\ku$ be a field, $V$  a vector space and $c  \in GL(V \ot V)$. The  braid equation on $c$  is
\begin{align}
\label{eq:braid}(c\ot \id)(\id\ot c)(c\ot \id) = (\id\ot c)(c\ot \id)(\id\ot c).
\end{align}
If $c$ satisfies  \eqref{eq:braid}, then the pair $(V, c)$ is a \emph{braided vector space}.
The braid equation, or the closely related quantum Yang-Baxter equation, is the key to many
developments in the last 50 years in several areas in mathematics and theoretical physics. 
Ultimately these applications come from the representations $\varrho_n$ of the braid groups 
$\mathbb B_n$ on $T^n(V)$ induced by \eqref{eq:braid}, for $n \ge 2$.
Indeed, let $\I_{n} := \{1, 2, \dots, n\}$, where $n$ is a natural number. 
Recall that $\mathbb B_n$ is presented by generators $(\sigma_j)_{j \in \I_{n-1}}$
with relations
\begin{align}
\label{eq:braid-rel}\sigma_j  \sigma_k &= \sigma_k\sigma_j, & \vert j-k \vert &\geq 2, &
\sigma_j  \sigma_k\sigma_j &= \sigma_k\sigma_j\sigma_k, & \vert j-k \vert &= 1.
\end{align}
Thus $\varrho_n$ applies $\sigma_j \mapsto \id_{V^{\ot (j-1)}} \ot c \ot \id_{V^{\ot (n - j-1)}}$.

\subsubsection{} Assume that $\car \ku \neq 2$.
Let $c$ be a \emph{symmetry}, i.e. a solution  of \eqref{eq:braid} such that $c^2 = \id$. 
Then $\varrho_n$ factorizes through the representation $\widetilde{\varrho}_n$ of the symmetric group $\s_n$ given by
$s_j := (j \, j+1) \mapsto \id_{V^{\ot (j-1)}} \ot c \ot \id_{V^{\ot (n - j-1)}}$.
The symmetric algebra of $(V,c)$ is the quadratic algebra 
\begin{align*}
S_c(V) = T(V)/ \langle \ker (c + \id) \rangle = \oplus_{n\in \N_0} S^n_c(V).
\end{align*}
For instance, if $c = \tau$ is the usual transposition, then $S_c(V) = S(V)$, the classical symmetric algebra;
while if $V = V_0 \oplus V_1$ is a super vector space and $c$ is the super transposition, 
then $S_c(V) \simeq S(V_0) \underline{\ot} \Lambda (V_1)$, 
the super symmetric algebra.

\smallbreak
The adequate setting for such symmetries is that of symmetric tensor categories,
advocated by Mac Lane in 1963. In this context, the symmetric algebra satisfies the  same universal property
as in the classical definition. 
In particular, symmetric algebras are Hopf algebras in symmetric tensor categories.
Assume that $\car \ku = 0$. Then, as vector spaces,
\begin{align}\label{eq:ScV}
S^n_c(V) \simeq T^n(V)^{\s_n} = \img \textstyle \int_n \simeq T^n(V) / \ker \int_n,
\end{align}
where $\int_n = \sum_{s \in \s_n}  \widetilde{\varrho}_n(s):T^n(V) \to T^n(V)$.

\subsubsection{}\label{subsec:intro-yd} The adequate setting for braided vector spaces is that of braided tensor categories \cite{JS}; 
there is a natural notion of Hopf algebra in such categories. 

Let $H$ be a Hopf algebra (with bijective antipode). Then $H$ gives rise to a braided tensor category $\ydh$ \cite{Dr},
and consequently is a source of examples of braided vector spaces. Namely, an object $M \in \ydh$, called a Yetter-Drinfeld
module over $H$, is simultaneously a left $H$-module and a left $H$-comodule satisfying the compatibility condition
\begin{align}\label{eq:yd-compatibility}
\delta(h \cdot v) &= h\_1v\_{-1} \Ss(h\_3) \ot h\_2 \cdot v\_0,& h\in H, \, v&\in V.
\end{align}
This is a braided tensor category with the usual tensor product of modules and comodules, 
and braiding
\begin{align}\label{eq:braid-yd-h} 
c_{M, N}(x\ot y) &= x\_{-1} \cdot y \ot x\_{0}, &M, N \in \ydh,\, x&\in M,\,  y\in N.
\end{align}
For $M \in \ydh$,  $c = c_{M, M} \in  GL(M \ot M)$ satisfies the braid equation \eqref{eq:braid}.

\smallbreak
If $M \in \ydh$,  then the tensor algebra $T(M)$ is a Hopf algebra in $\ydh$, whose coproduct is determined by 
$\Delta(x) = x \ot 1 + 1 \ot x$ for $x \in M$.
Also the tensor coalgebra  $T^c(M)$ is a Hopf algebra in $\ydh$, with braided shuffle product.
See \cite[Proposition 9]{Ro2}.

\subsubsection{}\label{subsec:intro-nichols-def}
Let $(V, c)$ be a braided vector space but $c$ not necessarily a symmetry. The Nichols algebra 
$\toba(V) = \oplus_{n\in \N_0} \toba^n (V)$ of $(V,c)$ is a graded connected algebra with a number of 
remarkable properties that has at least superficially  a resemblance with
a symmetric algebra\footnote{It is customary to omit $c$ in the notation of the Nichols algebra.}. 
For, let  $M_n: \s_n \to \mathbb B_n$ be the (set-theoretical)
Matsumoto section, that preserves the length and satisfies $M_n(s_j) = \sigma_j$. 
Let $\Omega_n = \sum_{\sigma \in \s_n} \varrho_n (M_n(\sigma))$ and $\J^n(V) = \ker \Omega_n$. Define
\begin{align}\label{eq:def-nichols-intro}
\J(V) &= \oplus_{n\ge 2}\J^n(V), &\toba(V) &=  T(V) / \J(V).
\end{align}

Despite the similarity of \eqref{eq:ScV} and \eqref{eq:def-nichols-intro}, Nichols algebras have profound divergences with symmetric algebras--and various analogies.
\begin{enumerate}[leftmargin=*, label=\rm{(\alph*)}]
\item  The subspace $\J(V)$ is actually a two-sided ideal of $T(V)$, so that $\toba(V)$ is a connected graded algebra generated in degree 1.
However $\J(V)$ is seldom quadratic, and  it might well be not finitely generated. 
The determination of $\J(V)$ is one of the central problems of the subject.

\item Although \eqref{eq:def-nichols-intro} is a compact definition, it hides the rich structure of Nichols algebras.
Indeed,  $\toba(V)$ is a Hopf algebra in $\ydh$ for suitable $H$.
Even more, it is a coradically graded coalgebra, a notion dual to generation in degree one. 

\item If $V$ is a \fd vector space, then  $S(V^*)$ is identified with the algebra of differential operators on $S(V)$ (with constant coefficients).
An analogous description is available for Nichols algebras, being useful to find relations of $\toba(V)$.

\item Nichols algebras appeared in various fronts. In \cite{Ni}, they were defined for the first time as a tool to construct new examples of Hopf algebras.
They are instrumental for the attempt in \cite{W} to define a non-commutative differential calculus on Hopf algebras. 
Also, the positive part $U^+_q(\g)$ of the quantized
enveloping algebra of a Kac-Moody algebra $\g$ at a generic parameter $q$ turns out to be a Nichols algebra \cite{Lu,Ro1,Sch}.
\end{enumerate}

By various reasons, we are also led to consider:

\begin{itemize}
\item \emph{Pre-Nichols algebras} of the braided vector space $(V,c)$ \cite{Masuoka,A-pre-Nichols}; these are graded connected Hopf algebras in $\ydh$,
say $\toba = \oplus_{n\in \N_0} \toba^n$, with $\toba^1 \simeq V$, that are generated in degree 1 (but not necessarily coradically graded). Thus we have epimorphisms of Hopf algebras in $\ydh$
\begin{align*}
\xymatrix{T(V) \ar@{->>}[rr]  && \toba \ar@{->>}[rr]  && \toba(V).}
\end{align*}

\item \emph{Post-Nichols algebras} of the braided vector space $(V,c)$ \cite{AAR}; these are graded connected Hopf algebras in $\ydh$,
say $\Ec = \oplus_{n\in \N_0} \Ec^n$, with $\Ec^1 \simeq V$, that are coradically graded (but not not necessarily generated in degree 1). Thus we have monomorphisms of Hopf algebras in $\ydh$
\begin{align*}
\xymatrix{\toba(V) \ar@{^{(}->}[rr]  && \Ec \ar@{^{(}->}[rr]  && T^c(V).}
\end{align*}
\end{itemize}
Thus, the only pre-Nichols algebra that is also post-Nichols is $\toba(V)$ itself.

\subsection*{Classes of Nichols algebras}

\subsubsection{} Nichols algebras are basic invariants of Hopf algebras that are
not generated by its coradical \cite{AS1,AC}; see the discussion in \S \ref{subsec:nichols-classification-Hopf}.
One is naturally led to the following questions:

\subsubsection*{Classify all $V \in \ydh$ such that the (Gelfand-Kirillov) dimension of $\toba(V)$ is finite.
For such $V$, determine the generators of the ideal $\J(V)$ and all post-Nichols algebras $\toba(V) \hookrightarrow \Ec$
with finite (Gelfand-Kirillov) dimension.}

\

\smallbreak
Now $\toba(V)$ is a Hopf algebra in $\ydh$ but the underlying algebra depends only on the braiding $c$, 
and reciprocally the same braided vector space can be realized in $\ydh$ in many ways and for many $H$'s. 
That is, we may deal with the above problems for
suitable classes of braided vector spaces. 

\smallbreak
Also, assume that $(V,c)$ satisfies, for some $\theta \in \N_{>1}$, 
\begin{align}\label{eq:braiding-generalform}
V &=  V_{1} \oplus \dots   \oplus V_\theta,& 
c(V_i \otimes V_j) &=  V_j \otimes V_i,& i,j&\in \I_{\theta}.
\end{align}
So, we may suppose that the $\toba(V_i)$'s are known 
and try to infer the shape of $\toba(V)$ from them and the cross-braidings $c_{\vert V_i \otimes V_j}$; 
this viewpoint leads to a rich combinatorial analysis \cite{H-Weyl gpd,HY,AHS,HS-london,HV,AAH}.

\subsubsection{} The simplest yet most fundamental examples are those $(V,c)$ satisfying 
\eqref{eq:braiding-generalform} with $\dim V_i = 1$, $i\in \I = \I_{\theta}$.
Pick $x_i \in V_i - 0$; then $(x_i)_{i\in \I}$ is a a basis of $V$, and 
$c(x_i\ot x_j)=q_{ij}\, x_j\ot x_i$,  $i,j\in \I$, where $q_{ij}  \in \ku^{\times}$.
We say that $(V, c)$ is a \emph{braided vector space of diagonal type} if
\begin{align*}
q_{ii} &\neq 1, & \text{for all } i &\in \I.
\end{align*}
Notice that this condition, assumed by technical reasons, is not always required in the literature.
See \cite[Lemma 2.8]{AAH}.

This class appears naturally when $H = \ku \Gamma$, where $\Gamma$ is an abelian group, but also 
lays behind any attempt to argue inductively. Other classes of braided vector spaces were considered in the literature:  

\begin{enumerate}[leftmargin=*, label=\rm{(\alph*)}]
\item Triangular type, see \cite{AAH,U}.

\item Rack type, arising from non-abelian groups, see references in \cite{A}.

\item  Semisimple (but not simple) Yetter-Drinfeld modules, \cite{AAH,HS,HS-london,HV} and references therein.

\item Yetter-Drinfeld modules over Hopf algebras that are not group algebras, see for example 
\cite{AGM,GGi,HX,AGi,AA-nichols-quantum}.

\end{enumerate}

\subsection*{Nichols algebras of diagonal type}

\subsubsection{} Assume now that $\ku$ is algebraically closed and of characteristic 0.
The classification of the braided vector spaces $(V,c)$ of diagonal type with finite-dimensional $\toba(V)$
was obtained in \cite{H-classif RS}. 
(When $\car\ku > 0$, the classification is known under the hypothesis $\dim V \leq 3$ \cite{HW,Wg}).
The core of the approach is the notion of generalized root system;
actually, the paper \cite{H-classif RS} contains the list of all $(V,c)$ of diagonal type with connected Dynkin diagram
and finite generalized root system (these are called \emph{arithmetic}). The list can be roughly split in several classes:

\begin{itemize} [leftmargin=*]\renewcommand{\labelitemi}{$\diamond$}
\medbreak\item  Standard type \cite{AA}, that includes Cartan type \cite{AS2}; related to the Lie algebras in the Killing-Cartan classification.

\medbreak\item Super type \cite{AAY}, related to the finite-dimensional contragredient Lie superalgebras in characteristic 0, classified in \cite{K-super}.

\medbreak\item Modular type \cite{AA-GRS-CLS-NA}, related to the finite-dimensional  contragredient Lie (super)algebras in positive characteristic, classified in \cite{KW-exponentials,BGL}.

\medbreak\item A short list of examples not (yet) related to Lie theory, baptised \textit{UFO}'s.
\end{itemize}

The goal of this work is to give exhaustive information on the structure of these Nichols algebras.

\subsubsection{} This monograph has three Parts. Part \ref{part:general} is an exposition of the basics of Nichols algebras of diagonal type.
Section \ref{sect:Preliminaries} is a potpourri of various topics needed for further discussions. The bulk of this Part is 
Section \ref{sec:nichols-diagonal} where the main notions that we display later are explained: PBW-basis, generalized root systems, and so on. 
In Parts \ref{part:cartan-super-standard} and III we give the list of all finite-dimensional Nichols algebra of diagonal type (with connected Dynkin diagram)
classified in \cite{H-classif RS} and for each of them, its fundamental information. For more details see Section \ref{sec:outline}, page \pageref{sec:outline}.

\renewcommand\thepart{\Roman{part}}
\renewcommand\thesubsubsection{\thesubsection.\arabic{subsubsection}}
\part{General facts}\label{part:general}

\section{Preliminaries}\label{sect:Preliminaries}

\subsection{Notation} In this paper, $\N = \{1, 2, 3, \dots\}$ and $\N_0 = \N \cup \{0\}$.
If $k < \theta \in \N_0$, then we denote $\I_{k, \theta} = \{n\in \N_0: k\le n \le \theta \}$.
Thus $\I_{\theta} = \I_{1, \theta}$.

The base field $\ku$ is algebraically closed of characteristic zero (unless explicitly stated); we set $\ku^{\times} = \ku - 0$. 
All algebras will
be considered over $\k$. If $R$ is an algebra and $J\subset R$,
we will denote by $\langle J\rangle$ the 2-sided ideal generated by $J$ and by $\k \langle
J\rangle$ the subalgebra generated by $J$, or $\ku[J]$ if $R$ is commutative. Also $\Alg(R,\ku )$ denotes the set of algebra maps from $R$ to $\ku$.

For each integer $N>1$, $\G_N$  denotes
the group of $N$-th roots of unity in $\ku$, and $\G'_N$ is the corresponding subset of primitive roots (of order $N$).
Also  $\G_{\infty} = \bigcup_{N \in \N} \G_N$, $\G'_{\infty} =\G_{\infty} - \{1\}$.
We will denote  by $\Gamma$ an abelian group and by $\VGamma$ the group of  characters of $\Gamma$.

We shall use the notation for $q$-factorial numbers: for  $q\in\ku^\times$,
$n\in\N$,
\begin{align*}
(0)_q! =1,&
(n)_q&=1+q+\ldots+q^{n-1}, & (n)_q!&=(1)_q(2)_q\cdots(n)_q.
\end{align*}

\subsection{Kac-Moody algebras}\label{subsec:kac-moody}
Recall from \cite{K-libro} that $A = (a_{ij})\in \Z^{\theta \times \theta}$ is a \emph{generalized Cartan matrix} (GCM) if
for all $i, j \in \I$
\begin{align}\label{eq:gcm}
a_{ii} &= 2, & a_{ij} &\le 0, \ i\neq j,& a_{ij} &= 0 \iff a_{ji} =0.
\end{align}
It is equivalent to give Cartan matrix or to  give a Dynkin diagram, cf. \cite{K-libro}. 
Also, $A$ is indecomposable if its Dynkin diagram is connected, see \cite{K-libro};
and is symmetrizable if there exists a diagonal matrix $D$ such that $DA$ is symmetric.
Indecomposable and symmetrizable GCM's fall into one of three classes:
\begin{enumerate}[leftmargin=*, label=\rm{(\alph*)}]
\item Finite; those whose corresponding Kac-Moody algebra has finite dimension, i.e. those in Killing-Cartan classification.

\item Affine; those whose corresponding Kac-Moody algebra has infinite dimension but is of polynomial growth.

\item Indefinite; the rest.

\end{enumerate}

Let $A$ be a GCM. We denote by $\g(A)$ the corresponding Kac-Moody algebra, see \cite{K-libro} and \S \ref{subsec:Weyl-gpd-super} below.
Also $\varDelta^A_+$ denotes the set of positive roots,
so that $\varDelta^A = \varDelta^A_+ \cup -\varDelta^A_+$ is the set of all roots. Further, $\varDelta^{A, \text{re}}_+$
, $\varDelta^{A, \text{im}}_+$, are the sets of positive \emph{real}, respectively \emph{imaginary},  roots.

\subsection{Hopf algebras}
We use standard notation for coalgebras and Hopf algebras: the coproduct is denoted by $\Delta$, the counit by $\varepsilon$ and the antipode by $\Ss$. 
For the first we use the Heyneman-Sweedler notation $\Delta(x)=\sum x_{(1)}\ot x_{(2)}$; the summation sign will be often omitted. 
All Hopf algebras are supposed to have bijective antipode; the composition inverse of $\Ss$ is denoted by $\overline{\Ss}$.
Let $H$ be a Hopf algebra. 
We denote by $G(H)$ the set of group-like elements of $H$.
The tensor category of finite-dimensional representations of $H$ is denoted $\Rep H$.
If the group of group-likes $G(H)=\Gamma$ is abelian, $g\in \Gamma$ and
$\chi\in\widehat\Gamma$, then
$\mP_{(1,g)}^\chi(H)$ denotes the isotypical component of type $\chi$ of the space of
$(1,g)$-primitive elements.

For more information on Hopf algebras see \cite{Mo,radford-book}.

\subsection{Yetter-Drinfeld modules}\label{subsec:yd}
The definition of these was given in \S \ref{subsec:intro-yd} of the Introduction.

\medbreak As  in every monoidal category, there are algebras and coalgebras in $\ydh$:
\begin{itemize}[leftmargin=*]
\item $(A, \mu)$\emph{ is an algebra in} $\ydh$ means that $A$ is an object in $\ydh$ that bears an associative unital multiplication $\mu$ such that $\mu: A \ot A \to A$ 
and the unit $u: \ku \to A$ are morphisms in $\ydh$.
\item $(C, \Delta)$\emph{ is a coalgebra in} $\ydh$ means that $C$ is an object in $\ydh$ that bears a  coassociative counital 
comultiplication $\Delta$ such that $\Delta: C \to C \ot C$ 
and the counit $\eps: C\to \ku$ are morphisms in $\ydh$.
\end{itemize}

The category of algebras in $\ydh$ is again monoidal; if $(A, \mu_A)$, $(B, \mu_B)$ are algebras in $\ydh$,
then  $A \underline{\ot} B := (A \ot B, \mu_{A \ot B})$ also is, where 
\begin{align}\label{eq:tensor-algebras}
\mu_{A \ot B} = (\mu_{A} \ot \mu_{B}) (\id_A \ot c_{B, A} \ot \id_B).
\end{align}
Analogously, if $(C, \Delta_C)$, $(D, \Delta_D)$ are coalgebras in $\ydh$,
then $C \underline{\ot} D := (C \ot D, \Delta_{C \ot D})$ also is, where 
\begin{align}\label{eq:tensor-coalgebras}
\Delta_{C \ot D} = (\id_C \ot c_{C, D} \ot \id_D) (\Delta_{C} \ot \Delta_{D}).
\end{align}

\medbreak
We are mainly interested in the case 
$H=\k \Gamma$, where $\Gamma$ is an abelian group;
a Yetter-Drinfeld module over $\ku \Gamma$ is a $\Gamma$-graded vector space $M =\bigoplus_{t\in \Gamma}M_t$ 
provided with a linear action of $\Gamma$ such that 
\begin{align}\label{eq:yd-compatibility-G}
t\cdot M_h &= M_{h},& t,h&\in \Gamma.
\end{align}
Here \eqref{eq:yd-compatibility-G} is just \eqref{eq:yd-compatibility} in this setting.
Morphisms in $\ydg$ are linear maps preserving the action and the grading.
Let  $M\in\ydg$. Then  we set
\begin{align*}
M^{\chi}_t &= \{v\in M_t: h\cdot v=\chi(h)v, \forall h\in \Gamma \},& t&\in \Gamma, \, \chi\in\VGamma.
\end{align*}
The braiding $c_{M, N}: M\ot N\to N\ot M$, cf. \eqref{eq:braid}, is given by
\begin{align}\label{eq:braid-yd-gps} 
c_{M, N}(x\ot y) &= t \cdot y \ot x, & x&\in M_t,\, t\in \Gamma,\, y\in N.
\end{align}

\subsection{Braided Hopf algebras}\label{subsec:braidedhopf}
Since it is a braided monoidal category, there are also Hopf algebras in $\ydh$. 
Let us describe them explicitly when $ H = \ku \Gamma$, $\Gamma$  an abelian group, for illustration.
A \emph{Hopf algebra} in  $\ydg$  is a collection $(R, \mu, \Delta)$, where
\begin{itemize}[leftmargin=*]
\item $R\in \ydg$; 
\item $(R, \mu)$ is an algebra  in $\ydg$ and 
$(R, \Delta)$ is a coalgebra in $\ydg$;
\item $\Delta: R \to R \underline{\ot} R$ and $\eps: R \to \ku$ are algebra maps;
\item $R$ has an antipode $\Ss_R$, i.~e. a convolution inverse of the identity of $R$.
\end{itemize}

Let $R$ be a Hopf algebra in $\ydg$. Then $R\#\ku \Gamma = R\ot \ku \Gamma$ with the smash product algebra and smash
coproduct coalgebra structures is a Hopf algebra, called the \emph{bosonization  of $R$ by $\ku \Gamma$}, see \cite[Theorem 6.2.2]{Ma}.

The adjoint representation  $\ad_c: R\to \End R$ is the linear map given by
\begin{align}\label{braided-adj}
\ad_c x(y) &= \mu(\mu\otimes\mathcal S)(\id\otimes c)(\Delta\otimes\id)(x\otimes y),&
x,y&\in R.
\end{align}
It can be shown that $\ad_c x(y) = \ad x(y)$, $x,y \in R$, where ad is the adjoint representation of $R\#\ku \Gamma$.

\subsection{Nichols algebras}\label{subsec:nichols}
We are now ready to discuss the central notion of this monograph. At the beginning we place ourselves in the context
of a general Hopf algebra $H$ with bijective antipode, although for the later discussion from Section 2 on, $ H = \ku \Gamma$, $\Gamma$  an abelian group, is general enough.

\smallbreak
Let $V \in \ydh$. Clearly the tensor algebra $T(V)$  and the tensor coalgebra $T^c(V)$ are objects in $\ydh$. 
Then:

\begin{itemize} [leftmargin=*]\renewcommand{\labelitemi}{$\circ$}
\medbreak\item  There is an algebra map $\Delta: T(V) \to T(V) \underline{\ot} T(V)$ determined by 
\begin{align*}
\Delta(v) &= v \otimes 1 + 1 \ot v,& v&\in V;
\end{align*}
with this, $T(V)$ is a graded Hopf algebra in $\ydh$.

\medbreak\item  There is a coalgebra map $\mu: T^c(V)  \underline{\ot} T^c(V) \to T^c(V)$ determined by 
\begin{align*}
\mu(v \otimes 1) &= v = \mu(1 \ot v),& v&\in V;
\end{align*}
with this, $T^c(V)$ is a graded Hopf algebra in $\ydh$  \cite[Proposition 9]{Ro2}.

\medbreak\item There is a  morphism $\Omega: T(V) \to T^c(V)$ of graded Hopf algebras in $\ydh$ such that $\Omega_{|V} = \id_V$. We denote $\Omega_n= \Omega_{| T^n(V)}$, so that $\Omega = \sum_n \Omega_n$.
\end{itemize}

\begin{definition}\label{def:nichols}
The Nichols algebra $\toba(V)$ is the quotient of the tensor algebra $T(V)$ by the ideal $\J(V) := \ker \Omega$,
which is (isomorphic to) the image of the map $\Omega$. 
Thus, $\J(V) = \oplus_{n \ge 2} \J^n(V)$, where $\J^n(V) = \ker \Omega_n$.
\end{definition}

Nichols algebras play a fundamental role in the classification of pointed Hopf algebras, see \cite{AS3}
and \S \ref{subsec:nichols-classification-Hopf} below. As algebras or coalgebras, their structure
depends only on the braided vector space $(V,c)$ and not on the realization in $\ydh$, cf. Proposition \ref{prop:nichols-alternative} \ref{item:nichols-alternative-a}. We now state
various alternative descriptions of $\toba(V)$, or more precisely of the ideal $\J(V) = \oplus_{n\ge 2} \J^n(V)$. To start with we introduce the left and right skew derivations. See e.g. \cite{AHS} for more details. 
Let $f \in V^*$. Let $\partial^L_{f} = \wpartial_{f} \in \End  T(V)$ be given by
\begin{align}\label{eq:partial-0}
\wpartial_f(1) &=0,& &\wpartial_f(v) = f(v), \forall v\in V,
\\\label{eq:partial-left}
\wpartial_f(xy) &= \wpartial_f(x)y + \sum_i x_i \wpartial_{f_i}(y), & &\text{where } c^{-1} (f \ot x) = \sum_i x_i \ot f_i.
\end{align}
Analogously, let  $\partial^R_{f} = \partial_{f}\in \End  T(V)$ be   given by \eqref{eq:partial-0} and
\begin{align}
\label{eq:partial-right}
\partial_f(xy) &= x\partial_f(y) + \sum_j  \partial_{f_j}(x) y_j, & \text{where } c^{-1} (y \ot f) = \sum_j f_j \ot y_j.
\end{align}

Let us fix a basis $(x_i)_{i\in \I}$ of $V$ and let $(f_i)_{i\in \I}$  be its dual basis; set $\partial_i = \partial_{f_i}$,
$\wpartial_i = \wpartial_{f_i}$, $i\in \I$. 
There is a particular instance where \eqref{eq:partial-right} has a simpler expression:
assume that there exists a family $(g_i)_{i\in \I}$ in $G(H)$
such that $\delta(x_i) = g_i \ot x_i$, for every $i \in \I$.  Then \eqref{eq:partial-right} for all $f$ is equivalent to
\begin{align}\label{eq:partial-right-diag}
\partial_i(xy) &= x\partial_i(y) + \partial_i(x) \,g_i\cdot y,&
x,y&\in T(V),& i&\in \I.
\end{align}

\medbreak
Recall the Matsumoto section $M_n: \s_n \to \mathbb B_n$, cf. \S \ref{subsec:intro-nichols-def} of the Introduction.
Here are the promised alternative descriptions.

\begin{pro}\label{prop:nichols-alternative}
\begin{enumerate}[leftmargin=*,label=\rm{(\alph*)}]
\item\label{item:nichols-alternative-a}  $\Omega_n = \sum_{\sigma \in \s_n} \varrho_n (M_n(\sigma))$.

\medbreak\item\label{item:nichols-alternative-b} $\J(V)$ is maximal  in the class of graded Hopf ideals $J = \oplus_{n \ge 2} J^n$ in $T(V)$ that are sub-objects in $\ydh$.

\medbreak\item\label{item:nichols-alternative-c} $\J(V)$ is maximal  in the class of graded Hopf ideals $J = \oplus_{n \ge 2} J^n$ in $T(V)$ that are categorical braided subspaces of $T(V)$ in the sense of \cite{T}.

\medbreak\item\label{item:nichols-alternative-d} $\J(V)$ is the radical of the natural Hopf pairing 
$T(V^*) \otimes T(V) \to \ku$ induced by the evaluation $V^*\times V\to \Bbbk$.

\medbreak\item\label{item:nichols-alternative-dbis} If $\toba =  \oplus_{n \ge 0} \toba^n$ is a graded Hopf algebra in $\ydh$ with 
$\toba^0 = \ku$, $\mathcal{P} (\toba) = \toba^1 \simeq V$, $\toba = \ku \langle \toba^1 \rangle$, then $\toba \simeq \toba(V)$.

\medbreak\item\label{item:nichols-alternative-e} Let $x \in T^n(V)$, $n \ge 2$.  
If $\partial_{f} (x) =0$ for all $f$ in a basis of $V^*$, then $x \in \J^n(V)$.

\medbreak\item\label{item:nichols-alternative-f} Let $x \in T^n(V)$, $n \ge 2$.  
If $\wpartial_{f} (x) =0$ for all $f$ in a basis of $V^*$, then $x \in \J^n(V)$. 
\end{enumerate}
\end{pro}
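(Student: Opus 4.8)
The statement to be proved is Proposition~\ref{prop:nichols-alternative}, a list of seven equivalent (or sufficient) characterizations of the defining ideal $\J(V)$ of the Nichols algebra. I would not attempt to prove them all in one stroke; instead I would pick one description as the primary one---namely the Matsumoto-section formula in \ref{item:nichols-alternative-a}, which is essentially a reconciliation of Definition~\ref{def:nichols} (via $\Omega = \sum_n \Omega_n$) with the formula $\Omega_n = \sum_{\sigma\in\s_n}\varrho_n(M_n(\sigma))$---and then derive the remaining items from it. So the first step is to show \ref{item:nichols-alternative-a}: unwind the construction of the morphism $\Omega\colon T(V)\to T^c(V)$ of graded braided Hopf algebras with $\Omega_{|V}=\id_V$, observe that in degree $n$ the shuffle coproduct on $T^c(V)$ forces $\Omega_n$ to be the ``total symmetrization'' operator, and identify the braid-group element realizing a transposition of adjacent tensor legs with $\sigma_j$, so that the length-preserving Matsumoto section $M_n$ produces exactly $\varrho_n(M_n(\sigma))$ for each $\sigma\in\s_n$. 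The key input here is uniqueness of $\Omega$ as a Hopf algebra morphism fixing $V$, together with the explicit formula for the braided shuffle product on $T^c(V)$ from \cite[Proposition 9]{Ro2}.

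**Deriving the other items.** With \ref{item:nichols-alternative-a} in hand, the realization-independence asserted in \ref{item:nichols-alternative-a} is immediate, since $\varrho_n$ depends only on $c$. For \ref{item:nichols-alternative-b} and \ref{item:nichols-alternative-c}: a graded Hopf ideal $J=\oplus_{n\ge2}J^n$ that is a subobject (resp. a categorical braided subspace in the sense of \cite{T}) gives a graded braided Hopf algebra quotient $T(V)/J$ generated in degree $1$ with primitives exactly in degree $1$; by Definition~\ref{def:nichols} the canonical surjection $T(V)/J\twoheadrightarrow\toba(V)$ is then forced, which says $J\subseteq\J(V)$, and maximality follows because $\J(V)$ itself lies in the relevant class. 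Item \ref{item:nichols-alternative-d} is the standard fact that $\toba(V)$ is the image of $T(V)$ in its nondegenerate Hopf pairing with $T(V^*)$; one checks that the pairing is induced by $\langle f,v\rangle$ on $V^*\times V$ and that its radical is a Hopf ideal contained in every graded Hopf ideal whose quotient has no new primitives, i.e. equals $\J(V)$. Item \ref{item:nichols-alternative-dbis} is just the uniqueness characterization of $\toba(V)$ among coradically graded (equivalently, generated-in-degree-one graded with primitives only in degree one) braided Hopf algebras, which follows from \ref{item:nichols-alternative-b}.

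**The skew-derivation items.** For \ref{item:nichols-alternative-e} and \ref{item:nichols-alternative-f} I would use the compatibility of the right (resp. left) skew derivations $\partial_f$ (resp. $\wpartial_f$) with the pairing of \ref{item:nichols-alternative-d}: concretely, for $x\in T^n(V)$ one has $\langle w, x\rangle$ expressible recursively through the $\partial_f(x)$ as $f$ ranges over a basis of $V^*$ and $w$ over monomials, so that $\partial_f(x)=0$ for all $f$ forces $\langle w,x\rangle=0$ for all $w\in T^n(V^*)$, hence $x$ lies in the radical $\J^n(V)$ by \ref{item:nichols-alternative-d}. The left-derivation case \ref{item:nichols-alternative-f} is symmetric, using the opposite pairing or the antipode. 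The induction on $n$ uses the Leibniz-type rules \eqref{eq:partial-left}--\eqref{eq:partial-right} to peel off one tensor factor at a time, with the base case $n=1$ trivial since $V^*$ separates points of $V$.

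**Main obstacle.** The technically delicate point is the proof of \ref{item:nichols-alternative-a}, specifically checking that the braid-group representation appearing in the total symmetrizer on $T^n(V)$ is exactly $\sum_{\sigma\in\s_n}\varrho_n(M_n(\sigma))$ and not some other lift---this is where the length-preservation property of the Matsumoto section is essential and where a naive computation with reduced words could go wrong (different reduced expressions for the same $\sigma$ must give the same operator, which is precisely the braid relations \eqref{eq:braid-rel}, so this is consistent, but it must be stated carefully). The second delicate point is pinning down the exact normalization and well-definedness of the Hopf pairing in \ref{item:nichols-alternative-d} and its interaction with the skew derivations in \ref{item:nichols-alternative-e}--\ref{item:nichols-alternative-f}; I would handle this by citing \cite{AHS} for the derivation formalism and only sketch the pairing-radical identification, since a fully self-contained treatment would be long and is standard.
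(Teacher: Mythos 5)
The paper itself offers no proof of this Proposition; it only points to the literature (\cite{AG, AHS, AS3, Lu, Ro1, Ro2, Sch}), so there is no in-paper argument to compare with. Judged on its own terms, the backbone of your plan is the standard and correct one: establish \ref{item:nichols-alternative-a} by identifying $\Omega_n$ with the quantum symmetrizer via the uniqueness of the graded Hopf algebra map $T(V)\to T^c(V)$ restricting to $\id_V$, deduce \ref{item:nichols-alternative-d} by identifying the degree-$n$ pairing with the matrix coefficients of $\Omega_n$, and obtain \ref{item:nichols-alternative-e}, \ref{item:nichols-alternative-f} by the recursive computation of the pairing through the skew derivations, inducting on $n$.

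There is, however, a genuine gap in your treatment of \ref{item:nichols-alternative-b} and \ref{item:nichols-alternative-c}. The claim that $T(V)/J$ has ``primitives exactly in degree $1$'' for an arbitrary graded Hopf ideal $J=\oplus_{n\ge 2}J^n$ is false: already for $J=0$ the algebra $T(V)$ has primitive elements in degree $\ge 2$ whenever $\J(V)\neq 0$ (in the diagonal case, $x_i^{N}$ with $N=\ord q_{ii}<\infty$, or the quantum Serre elements, are primitive in $T(V)$); indeed, having no primitives above degree $1$ is precisely the property that singles out $\toba(V)$ among all pre-Nichols algebras. Consequently your next step, that ``the canonical surjection $T(V)/J\twoheadrightarrow\toba(V)$ is then forced'', is circular: the existence of that surjection is equivalent to the inclusion $J\subseteq\J(V)$ you are trying to prove. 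The correct argument goes the other way around: first show $\mathcal{P}(\toba(V))=V$, using $\toba(V)\simeq \img\Omega\subseteq T^c(V)$ and the fact that the deconcatenation coalgebra $T^c(V)$ has primitives only in degree $1$; then, for a given $J$, induct on the degree: if $x\in J^n$ is homogeneous of minimal degree with $x\notin\J(V)$, then $\Delta(x)-x\otimes 1-1\otimes x$ lies in $J\otimes T(V)+T(V)\otimes J$ in intermediate degrees, so by the inductive hypothesis $J^k\subseteq\J^k(V)$ for $k<n$ its image in $\toba(V)$ is primitive of degree $n\ge 2$, a contradiction. This same induction is also what is concealed in your one-line deduction of \ref{item:nichols-alternative-dbis} from \ref{item:nichols-alternative-b} (write $\toba=T(V)/J$, get $J\subseteq\J(V)$, and then rule out a nonzero kernel $\J(V)/J$ by exhibiting a higher-degree primitive in $\toba$), so it is needed in any case. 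With that repair, the remaining items of your sketch are sound.
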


The proofs of various parts of this Proposition can be found e.g. in \cite{AG, AHS, AS3, Lu, Ro1, Ro2, Sch}.
The characterizations \ref{item:nichols-alternative-a}, \ref{item:nichols-alternative-b} and \ref{item:nichols-alternative-c} are useful theoretically;
in practice, \ref{item:nichols-alternative-a} is applicable only for small $n$, mostly $n=2$. 
In turn, \ref{item:nichols-alternative-d}, \ref{item:nichols-alternative-e} or \ref{item:nichols-alternative-f}
are suitable for explicit computations; notice that an iterated application of \ref{item:nichols-alternative-e} provides another description of $\J(V)$. In fact the skew derivations $\partial_{f}$ descend to $\toba(V)$ and 
\begin{align*}
\bigcap_{f\in V^*} \ker \partial_{f} &= \ku \text{ in } \toba(V).
\end{align*}

\subsection{Nichols algebras as invariants of Hopf algebras}\label{subsec:nichols-classification-Hopf}
The applications of Nichols algebras to classification problems of Hopf algebras go through the characterization \ref{item:nichols-alternative-dbis}.
As in every classification problem, one starts by considering various invariants, seeking eventually to list all objects in terms of them.
To explain this, let us consider a Hopf algebra $A$ (with bijective antipode). If $D, E$ are subspaces of $A$, then
\begin{align*}
D\wedge E &:= \{x\in A: \Delta(x) \in D \ot A + A \ot E \}.
\end{align*}

The first invariants of the Hopf algebra $A$ are:

\begin{itemize} [leftmargin=*]\renewcommand{\labelitemi}{$\circ$}

\item The coradical $A_0$, which is the sum of all simple subcoalgebras.

\item The coradical filtration $(A_n)_{n\in \N_0}$, where $A_{n+1} = A_n \wedge A_0$.

\item The subalgebra  generated by the coradical, denoted $A_{[0]}$ and called the Hopf coradical. 

\item The standard filtration $(A_{[n]})_{n\in \N_0}$, where $A_{[n+1]} = A_{[n]} \wedge A_{[0]}$.

\item The associated graded Hopf algebra $\gr A = \oplus_{n\ge 0} \gr^n A$, $\gr^0 A = A_{[0]}$, $\gr^{n+1} A = A_{[n+1]} / A_{[n]}$.
\end{itemize}
	
The first two are just  invariants of the underlying coalgebra, while the last three mix algebra and coalgebra information.

Clearly, $A_0$ is a subalgebra iff $A_0 = A_{[0]}$; 
in this case the method outlined below was introduced in \cite{AS1,AS3}, see also \cite{AS4},
the extension being proposed in \cite{AC}.
For simplicity we address the problem of classifying finite-dimensional Hopf algebras, but this could be
adjusted to finite Gelfand-Kirillov dimension. 
The method rests on the consideration of several questions. 
First, one needs to deal with the possibility $A = A_{[0]}$. 
Formally, this means:

\begin{question}\label{que:gen-by-coradical}
	Classify all finite-dimensional Hopf algebras  generated as algebras by their coradicals.
\end{question}

This seems to be out of reach presently; see the discussion in \cite{AC}.
Notice that there are plenty of finite-dimensional Hopf algebras  generated by the coradical;
pick one of them, say $L$.
Recall that $\ydl$ is the category of its Yetter-Drinfeld modules.
Let $A$ be a finite-dimensional Hopf algebra and suppose that $A_{[0]} \simeq L$. Then
$\gr A$ splits as the bosonization of $L$ by the subalgebra of coinvariants $\cR$,  i.e.
\begin{align*}
\gr A &\simeq \cR \# L, 
\end{align*} 
see e.g. \cite{AS3,Ma} for details. Actually this gives two more invariants of $A$: 

\begin{itemize} [leftmargin=*]\renewcommand{\labelitemi}{$\circ$}
	
	\item $\cR = \oplus_{n\ge 0} \cR^n$, a graded connected  Hopf algebra in $\ydl$.
It is called the diagram of $A$. 
	
	\item $\cV  = \cR^1$, an object of $\ydl$ called the infinitesimal braiding of $A$.
\end{itemize}

It is then natural to ask:

\begin{question}\label{que:conn-R}
Classify all graded connected  Hopf algebras $R$ in $\ydl$ such that $\dim R < \infty$.
\end{question}

The subalgebra $\ku \langle \cV \rangle$ of $\cR$ projects onto the Nichols algebra $\toba(\cV)$, 
i.e. it is a pre-Nichols algebra of $\cV$; this is how Nichols algebras enter into the picture. 
Thus the classification of all finite-dimensional Nichols algebras in $\ydl$ is not only part
of Question \ref{que:conn-R} but also a crucial ingredient of its solution.
Even more, in some cases all possible $\cR$'s are Nichols algebras.
We introduce a convenient terminology to describe them.

\begin{definition}
	An object $V\in \ydl$ is \emph{fundamentally finite} if
	\begin{enumerate}
		\item\label{item-ff1} $\dim \toba(V) < \infty$;
		
		\item\label{item-ff2} if $R$ is a  pre-Nichols algebra of $V$ and $\dim R < \infty$, then $R\simeq \toba(V)$;
		
		\item\label{item-ff3} if $R$ is a post-Nichols algebra of $V$ and $\dim R < \infty$, then $R\simeq \toba(V)$. 
	\end{enumerate}
\end{definition}

Notice that there is some redundancy in this Definition: for example, 
if $V$ is of diagonal type such that \eqref{item-ff1} holds, then \eqref{item-ff2}
and \eqref{item-ff3} are equivalent.

Thus, if the infinitesimal braiding $\cV$ of $A$ is fundamentally finite, then $\cR \simeq \toba(\cV)$.
In consequence, 
if every $V \in \ydl$ with $\dim \toba(V) < \infty$ is fundamentally finite, 
then any $R$ as in Question \ref{que:conn-R} is a Nichols algebra. 

Assume that $L$ is a cosemisimple Hopf algebra, i.e. the context where $A_0 = A_{[0]}$ \cite{AS3}.
Then the subalgebra $\ku \langle \cV \rangle$ of the diagram $\cR$ is isomorphic to the Nichols algebra 
$\toba(\cV)$. The question of whether every $V \in \ydl$ with $\dim \toba(V) < \infty$ is fundamentally finite,
when $L = \ku G$ is the group algebra of a finite group, is tantamount to

\begin{conjecture}\cite{AS2}
Every finite-dimensional pointed Hopf algebra is generated by group-like and skew-primitive elements.
\end{conjecture}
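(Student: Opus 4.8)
As observed in \S\ref{subsec:nichols-classification-Hopf}, the Conjecture is \emph{tantamount} to a purely Nichols-theoretic assertion, and it is that assertion I would try to establish: \emph{for every finite group $G$ and every $V\in\ydgr$ with $\dim\toba(V)<\infty$, the object $V$ is fundamentally finite} — in fact, since $\dim\toba(V)<\infty$ is assumed, it suffices to rule out finite-dimensional post-Nichols algebras of $V$ other than $\toba(V)$ itself. Let me recall the reduction. Given a finite-dimensional pointed Hopf algebra $A$, put $G=G(A)$, so $A_0=A_{[0]}=\ku G$; then $\gr A\simeq\cR\#\ku G$ with $\cR=\oplus_{n\ge0}\cR^n$ a finite-dimensional \emph{coradically graded} connected Hopf algebra in $\ydgr$ and $\cV=\cR^1$ the infinitesimal braiding. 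The subalgebra $\ku\langle\cV\rangle\subseteq\cR$ is graded, connected and generated in degree $1$, and $\mathcal P(\ku\langle\cV\rangle)\subseteq\mathcal P(\cR)=\cR^1=\cV$, so $\ku\langle\cV\rangle\simeq\toba(\cV)$ by Proposition \ref{prop:nichols-alternative} \ref{item:nichols-alternative-dbis}; in particular $\dim\toba(\cV)\le\dim\cR<\infty$ and $\cR$ is a finite-dimensional post-Nichols algebra of $\cV$. Since $A$ is generated by group-like and skew-primitive elements if and only if $\gr A$ is generated in degrees $\le1$, if and only if $\cR=\ku\langle\cV\rangle=\toba(\cV)$, the Conjecture holds precisely when every such $\cV$ is fundamentally finite.

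\emph{The abelian case.} Suppose first $G=\Gamma$ is abelian. Then $\Gamma$ acts diagonalisably, so $(\cV,c)$ is of diagonal type: writing $\cV=\oplus_{i\in\I}\ku x_i$ with $x_i\in\cV_{g_i}$ and $h\cdot x_i=\chi_i(h)x_i$, one has $q_{ii}:=\chi_i(g_i)\neq1$, since $q_{ii}=1$ would give $\partial_i(x_i^{k})=k\,x_i^{k-1}$ in characteristic $0$, forcing $x_i^k\neq 0$ for all $k$ against $\dim\toba(\cV)<\infty$. Hence $\cV$ appears in Heckenberger's list \cite{H-classif RS}. The plan is then to combine the minimal presentation of $\toba(\cV)$ by generators and relations \cite{A-jems,A-presentation} with its convex PBW basis to show that any finite-dimensional pre-Nichols algebra $\toba\twoheadrightarrow\toba(\cV)$ is an isomorphism: if some defining relation $r$ of $\toba(\cV)$ survived nonzero in $\toba$, then — whether $r$ is a power root vector relation $x_\beta^{N_\beta}$ or one of the remaining relations of the presentation — its image would be a nonzero skew-primitive enlarging the PBW basis by a generator of infinite height, so $\dim\toba=\infty$; this is the type of analysis carried out in \cite{A-pre-Nichols}. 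Dualising — equivalently, invoking the observation after the Definition that the pre- and post-Nichols conditions coincide in diagonal type, cf.\ \cite{AAR} — yields the post-Nichols statement, hence the Conjecture for abelian $G$.

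\emph{The non-abelian case, and the main obstacle.} When $G$ is not abelian, $\cV$ is no longer of diagonal type but \emph{of rack type}: it is a semisimple $\ku G$-module (as $\car\ku=0$) whose braiding is governed by a finite rack together with a $2$-cocycle, and one would have to reduce the general situation by the semisimple Yetter--Drinfeld and Weyl-groupoid techniques of \cite{AHS,HS}. For the finitely many currently known finite-dimensional Nichols algebras over non-abelian groups — those attached, say, to transpositions in $\s_n$ for small $n$, to $4$-cycles in $\s_4$, or to certain affine racks — one can in principle imitate the abelian argument: write down a presentation and a PBW-type basis and check that dropping any relation makes the dimension infinite. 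The genuine obstruction, and where I expect the proof to stall, is that \emph{there is as yet no classification} of the $V\in\ydgr$ with $\dim\toba(V)<\infty$ for non-abelian $G$: without such a list one cannot enumerate the cases, and, outside diagonal type, the combinatorial control (convex PBW bases governed by a finite Weyl groupoid, explicit minimal relations) that makes the fundamental-finiteness step work is largely missing. A complete proof is therefore out of reach with present techniques; the realistic deliverable is the abelian case above together with the reduction of the general Conjecture to fundamental finiteness of finite-dimensional Nichols algebras over finite groups.
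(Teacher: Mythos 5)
The statement you were asked to prove is a \emph{conjecture}: the paper does not prove it, and it is open in general. The paper's only content here is the statement itself together with the remark that it holds for abelian groups by \cite{A-presentation}, which is precisely the translation into fundamental finiteness of diagonal-type braided vector spaces that you describe; so there is no proof in the paper to compare your argument against.

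Measured against that, your proposal is an accurate account of the state of the art rather than a proof of the statement. The reduction of the Conjecture to fundamental finiteness of the infinitesimal braiding $\cV\in\ydgr$ is exactly the one recorded in \S\ref{subsec:nichols-classification-Hopf}, and your abelian-case sketch follows the known route (Heckenberger's classification together with the presentations of \cite{A-jems,A-presentation} and the pre-/post-Nichols analysis of \cite{A-pre-Nichols,AAR}). Be aware, however, that the step ``if some defining relation survived nonzero in a pre-Nichols quotient it would be a skew-primitive of infinite height, hence infinite dimension'' is only the heuristic: in \cite{A-presentation} the relations are not all primitive, and the argument proceeds relation by relation using the Weyl groupoid and the convex order, so your sketch compresses genuinely nontrivial work into one sentence. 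Your diagnosis of the non-abelian case is correct — there is no classification of finite-dimensional Nichols algebras over non-abelian groups, and the Conjecture remains open there — and flagging this honestly is the right call; but it also means your text establishes only the abelian case (modulo the compression just noted) plus the standard reduction, not the statement itself, which is unavoidable given that the statement is an open conjecture.
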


For instance, the Conjecture is true for abelian groups \cite{A-presentation}; this translates to the fact
all braided vector spaces $V$ of diagonal type and $\dim \toba(V) < \infty$, are  fundamentally finite.

\medbreak
Finally, here is the last Question to be addressed within the method \cite{AC}.

\begin{question}\label{que:deformations}
Given $L$ and $R$ as in Questions \ref{que:gen-by-coradical} and \ref{que:conn-R}, 
classify all their liftings, i.e. all Hopf algebras $H$
such that $\gr H \cong R\# L$. 
\end{question}

To solve this Question, we need to know not only the classification of 
all finite-dimensional Nichols algebras in $\ydl$, but also a minimal set of relations
of each of them.

\section{Nichols algebras of diagonal type}\label{sec:nichols-diagonal}

In this Section we present the main features of Nichols algebras of diagonal type.
The central examples are the (positive parts of the) quantized enveloping algebras that where intensively studied in the literature, 
see for instance \cite{Lu-jams1990,Lu-dedicata,Lu,Ro-pbw,Ro2,Y-pbw}. We motivate each of the notions by comparison with the quantum case.

\subsection{Braidings of diagonal type}\label{subsec:braid} 
In this Subsection and the next, we set up the notation to be used in the rest of the monograph.

\smallbreak
Let $\theta \in \N$ and $\I =\I_{\theta}$.
Let $\bq = (q_{ij})_{i,j\in \I} \in( \ku^{\times})^{\I\times \I}$ such that 
\begin{align}\label{eq:diag-dif-1}
q_{ii} &\neq 1,& \text{for all } i&\in \I.
\end{align}
Let $\widetilde{q_{ij}} := q_{ij}q_{ji}$. The \emph{generalized
Dynkin diagram} of the matrix $\bq$ is a
graph with $\theta$ vertices, the vertex $i$ labeled with $q_{ii}$, and
an arrow between the vertices $i$ and $j$ only if $\widetilde{q_{ij}}\neq 1$, 
labeled with this scalar $\widetilde{q_{ij}}$.
For instance, given $\zeta\in\G'_{12}$ and $\kappa$ a square root of $\zeta$,
the matrices
$ \begin{pmatrix} \zeta^4 & 1  \\ \zeta^{11} & -1   \end{pmatrix}$,
$\begin{pmatrix} \zeta^4 & \kappa^{11}  \\ \kappa^{11} & -1 \end{pmatrix}$
have the diagram:
\begin{align}\label{eq:diagram-used}
\xymatrix{\circ^{\zeta^4} \ar@{-}[r]^{\zeta^{11}} &
\circ^{-1} }.\end{align}

Let $V$ be a vector space with a basis $(x_i)_{i\in \I}$; define $c^{\bq} = c: V \ot V \to V \ot V$ by
$c(x_i\ot x_j)=q_{ij}\, x_j\ot x_i$,  $i,j\in \I$. Then $c$ is a solution of the braid equation \eqref{eq:braid}.
The pair $(V, c)$ is called a \emph{braided vector space of diagonal type}.

Two braided vector spaces of diagonal type with the same generalized Dynkin diagram are called \emph{twist equivalent};
then the corresponding Nichols algebras are isomorphic as graded vector spaces \cite[Proposition 3.9]{AS3}.
If they correspond to matrices $\bq = (q_{ij})_{i,j\in \I}$ and $\bp = (p_{ij})_{i,j\in \I}$, 
then twist equivalence means that
\begin{align*}
q_{ij}q_{ji} &= p_{ij}p_{ji}&  q_{ii} &= p_{ii}& &\text{for all } i \neq j \in \I.
\end{align*}
For example, $\bq$ and $\bq^t$ are twist equivalent.

Now let $\Gamma$ be an abelian group, $(g_i)_{i\in \I}$ a family in $\Gamma$ and 
$(\chi_i)_{i\in \I}$ a family in $\VGamma$. Let $V$ be a vector space with a basis $(x_i)_{i\in \I}$;
then $V \in \ydg$ by imposing $x_i \in V_{g_i}^{\chi_i}$, $i\in \I$. 
The corresponding braided vector space $(V,c)$ with $c$ given by 
\eqref{eq:braid-yd-gps} is of diagonal type; indeed 
\begin{align*}
c(x_i \ot x_j) &= \chi_j(g_i) x_j \ot x_i, & i,j &\in \I.
\end{align*}

\subsection{Braided commutators}\label{subsec:nichols-braided-commutators}
Let $(V,c)$ be a braided vector space of diagonal type attached to a matrix $\bq$ as in \S \ref{subsec:braid}. 
As in \cite{AAR}, we switch to the notation $\toba_{\bq} = \toba(V)$, $\J_{\bq} = \J(V)$
and so on.

\medbreak
Let $(\alpha_i)_{i\in \I}$ be the canonical basis of $\Z^{\I}$. It is clear that
$T(V)$ admits a unique $\Z^{\I}$-graduation such that $\deg x_i=\alpha_i$ (in what follows, $\deg$ is the $\Z^{\I}$-degree).
Since $c$ is of diagonal type,  $\J_{\bq}$ is a $\Z^{\I}$-homogeneous ideal 
and $\toba_{\bq}$ is $\Z^{\I}$-graded \cite[Proposition 2.10]{AS3}, \cite[Proposition 1.2.3]{Lu}.

\medbreak Next, the matrix $\bq$ defines a $\Z$-bilinear form $\bq:\Z^{\I}\times\Z^{\I}\to\ku^\times$ by $\bq(\alpha_j,\alpha_k)=q_{jk}$ for all $j,k\in\I$.
Set $\bq_{\alpha \beta}  = \bq(\alpha,\beta)$, $\alpha,\beta  \in \Z^{\I}$; also, $\bq_{i \beta}  = \bq_{\alpha_i \beta}$.

\medbreak
Let $R$ be an algebra in $\ydg$. The braided commutator is the linear map $[\, , \,]_c :R\ot R \to R$  given by
\begin{align}
\label{eq:q-corc} [x,y]_c &= \mu \circ \left( \id - c \right) \left( x \ot y \right),& x,y &\in R.
\end{align}
In the setting of braidings of diagonal type,  braided commutators are also called  $q$-commutators. Assume that $R = T(V)$ or any quotient thereof by a $\Z^{\I}$-homogeneous 
ideal in $\ydg$, so that $R = \oplus_{\alpha\in \Z^{\I}} R_{\alpha}$. Let $u,v \in R$ be $\Z^{\I}$-homogeneous with $\deg u = \alpha$, $\deg v = \beta$. Then 
\begin{equation}\label{eq:braiding tipo diagonal}
c(u \ot v)= \bq_{\alpha \beta}\, v \ot u.
\end{equation}
Thus, if $y\in R_\alpha$, then 
\begin{align}\label{eq:q-adj}
[x_i,y]_c &= x_iy - \bq_{i \alpha}\, y x_i,& &i\in \I.
\end{align} 
Notice that $\ad_c x (y) = [x,y]_c$, in case $x\in V$ and $y$ in $R$. 
The braided commutator is a braided derivation in each variable and satisfies a braided Jacobi  identity, i.e.
\begin{align}
\label{eq:derivacion}
\left[ u,v w \right]_c &= \left[ u,v \right]_c w + \bq_{\alpha \beta}\, v  \left[ u,w \right]_c,
\\ \label{eq:derivacion 2} \left[ u  v, w \right]_c &= \bq_{\beta \gamma}\, \left[ u,w \right]_c  v + u  \left[ v,w \right]_c,
\\
\label{eq:identidad jacobi}
\left[\left[ u, v \right]_c, w \right]_c &= \left[u, \left[ v, w
\right]_c \right]_c
- \bq_{\alpha \beta}\, v  \left[ u, w \right]_c + \bq_{\beta \gamma}\, \left[ u,
w \right]_c  v,
\end{align}
for $u,v,w$ homogeneous of degrees $\alpha, \beta, \gamma \in \N^{\theta}$, respectively. 

For brevity, we set 
\begin{align}\label{eq:xij}
x_{ij} &= \ad_c x_i (x_j),& i&\neq j \in \I;
\end{align}
more generally, the iterated braided commutators are
\begin{align}\label{eq:iterated}
x_{i_1i_2\cdots i_k}& :=(\ad_c x_{i_1})\cdots(\ad_c x_{i_{k-1}})\, (x_{i_k}),& &i_1, i_2, \cdots, i_k\in\I.
\end{align}
In particular, we will use repeatedly the following further abbreviation:
\begin{align}\label{eq:roots-Atheta}
x_{(k \, l)} &:= x_{k\,(k+1)\, (k+2) \dots l},& &k < l.
\end{align}
Beware of confusing \eqref{eq:roots-Atheta} with \eqref{eq:xij}.
Also, we define recursively $x_{(m+1)\alpha_i+m\alpha_j}$, $m\in \N$, by
\begin{align}\label{eq:not-reducida-raiz}
\begin{aligned}
x_{2\alpha_i+\alpha_j}  &= (\ad_c x_i)^2 x_j = x_{iij} , \\
x_{(m+2)\alpha_i+(m+1)\alpha_j} &= [ x_{(m+1)\alpha_i+m\alpha_j}, (\ad_c x_i) x_j ]_c.
\end{aligned}
\end{align}
These commutators are instrumental to reorder products. For example, we can prove recursively on $m$  that,  for all $m,n\in\N$,
\begin{align}\label{eq:adx1x2x2}
x_1^m (\ad_cx_1)^n x_2 &= \sum_{j=0}^{m} \binom{m}{j}_{q_{11}} q_{11}^{n(m-j)}q_{12}^{m-j} (\ad_cx_1)^{n+j} x_2x_1^{m-j}.
\end{align}

\subsection{ PBW-basis and Lyndon words}\label{subsec:pbw} 

An unavoidable first step in the study of quantum groups $U_q(\g)$ is the description of the PBW-basis (alluding to the Poincar\'e-Birkhoff-Witt theorem), obtained for type A in \cite{Burr-pbw,Ro-pbw,Y-pbw} and for general $\g$ in \cite{Lu-jams1990, Lu-dedicata}. 
Here $\g$ is simple finite-dimensional, for other Kac-Moody algebras see Remark \ref{rem:quantum-affine}.
Indeed, it is enough to define the PBW-basis for the positive part $U_q^+(\g)$, where it
is an ordered basis of monomials in some elements, called root vectors. 
These root vectors are  defined using the braid automorphisms defined by Lusztig; actually they can be expressed
 as iterated braided-commutators \eqref{eq:iterated}, and there as many as $\varDelta^A_+$, where $A$ is the Cartan matrix of $\g$. 
 However they are not uniquely defined (even not up to a scalar); 
 it is necessary to fix a reduced decomposition of the longest 
 element of the Weyl group to have the precise order in which the iterated commutators
produce the root vectors. For general Nichols algebras of diagonal type, 
there is a procedure that replaces the sketched method, which consists 
in the use of Lyndon words, as pioneered in \cite{Kh}, see also 
the detailed monograph \cite{Kh-libro} (this approach also appeared later in \cite{R2}). 
In this Subsection we give a quick overview of this procedure, fundamental for the description of the Nichols algebras of diagonal type.

\medbreak
We shall adopt the following definition of PBW-basis. Let $A$ be an algebra. We consider

\begin{itemize} [leftmargin=*]\renewcommand{\labelitemi}{$\circ$}

\item a subset $\emptyset \neq P \subset A$;

\item a  subset $\emptyset \neq S \subset A$ provided with a total order $<$ (the \emph{PBW-generators});

\item a function $h: S \mapsto \N \cup \{ \infty \}$ (the \emph{height}).

\end{itemize}

Let $B = B(P,S,<,h)$ be the set
\begin{align*}
B &= \big\{p\,s_1^{e_1}\dots s_t^{e_t}:& t &\in \N_0,\ s_i \in S, \ p \in P,& s_1&>\dots >s_t, 
&  0&<e_i<h(s_i) \big\}.
\end{align*}
If $B$ is a $\ku$-basis of $A$, then we say that it is a \emph{PBW-basis}.
Our goal is to describe PBW-bases of some graded Hopf algebras $R$ in $\ydg$ (with $P = \{1\}$), following \cite{Kh}; by bosonization, one gets
PBW-bases of the graded Hopf algebras $R\# \ku \Gamma$ (with $P=\Gamma$ this time).  

\begin{remark}\label{rem:pbw-def}
In the definition of PBW-basis, one would expect that 
\begin{align}\label{eq:pbw-right-defi}
h(s) &= \min\{t \in \N:  s^t =0\},& s&\in S;
\end{align}
however this requirement is not flexible enough. For instance, consider the algebra $\B = \ku \langle x_1, x_2| x_1^N - x_2^M, x_1x_2 - q x_2x_1 \rangle$, 
where $N, M \ge 2$ and $q \in \ku^{\times}$. Then $\B$ has a PBW-basis with $P = \{1\}$, $S = \{x_1, x_2\}$, $x_1 < x_2$, $h(x_1) = N$
and $h(x_2) = \infty$. Kharchenko's theory of hyperletters based on Lyndon words does not apply with the stronger \eqref{eq:pbw-right-defi}. Indeed
let $\bq =  \begin{pmatrix}
w_N & q \\ q^{-1} & w_M
\end{pmatrix}$ where $w_N \in \G'_N$, $w_M \in \G'_M$; then $\B$ is a quotient of $T(V)$ by a Hopf ideal, that is homogeneous if $M=N$, so that Theorem \ref{thm:PBW-basis  Kharchenko} provides 
the PBW-basis described, but without  \eqref{eq:pbw-right-defi}. 
\end{remark}

\medbreak \subsubsection{}
Let $X$ be a set with $\theta$ elements and  fix a numeration $x_1,\dots, x_{\theta}$ of $X$. Let $\xx$ be the  corresponding vocabulary, 
i.e. the set of words with  letters in $X$, endowed with the lexicographic order induced by the numeration. Let $\ell: \xx \to \N_0$ be the length.

\begin{definition}
An element $u \in \xx-\left\{ 1 \right\}$ is a \emph{Lyndon word} if $u$ is smaller than any of its proper ends; i. e.,
if $u=vw$, $v,w \in\xx - \left\{ 1 \right\}$, then  $u<w$. The set of all Lyndon words is denoted by $L$.
\end{definition}

Here are some  basic properties of the Lyndon words.

\begin{enumerate}[leftmargin=*,label=\rm{(\alph*)}]
\item Let $u \in \xx-X$. Then $u$ is Lyndon if and only if for each decomposition $u=u_1 u_2$, where $u_1,u_2 \in \xx - \left\{ 1 \right\}$,
one has $u_1u_2=u < u_2u_1$.

\medbreak\item Every Lyndon word starts by its lowest letter.

\medbreak\item (Lyndon). Every word $u \in \xx$ admits a unique
decomposition as a non-increasing product of Lyndon words (the \emph{Lyndon decomposition}):
\begin{equation}\label{eq:descly}
u=l_1l_2\dots  l_r, \qquad l_i \in L, l_r \leq \dots \leq l_1;
\end{equation}
the words $l_i \in L$  appearing in \eqref{eq:descly} are the \emph{Lyndon letters} of $u$.

\medbreak\item The lexicographic order of $\xx$ turns out to coincide with the lexicographic order in the  Lyndon letters--i.e. with respect to \eqref{eq:descly}.

\medbreak\item (Shirshov). Let $u \in\xx-X$. 
Then $u \in L$ if and only if there exist $u_1,u_2 \in L$ such that $u_1<u_2$ and $u=u_1u_2$.

\end{enumerate}

\begin{definition}
The \emph{Shirshov decomposition} of $u\in L-X$ is the decomposition $u=u_1u_2$, with $u_1,u_2 \in L$, such that $u_2$ is the lowest proper end among the ends of such decompositions of $u$.
\end{definition}

Let us identify $X$ with the basis of $V$ defining the braiding and consequently $\xx$ with a basis of $T(V)$. Using the braided commutator
\eqref{eq:q-corc} and the Shirshov decomposition, we define 
$\left[ \ \right]_c : L \to \End T(V)$, by
$$
\left[ u \right]_c := \begin{cases} u,& \text{if } u = 1
\text{ or }u \in X;\\
[\left[ v \right]_c, \left[ w \right]_c]_c,  & \text{if } u \in
L, \, \ell(u)>1 \text{ and }u=vw \text{ is the Shirshov }\\
&  \hspace{140pt} \text{  decomposition of } u.
\end{cases}
$$

The element $\left[u\right]_c$ is called the \emph{hyperletter} of $u \in L$.
This leads to define a \emph{hyperword} as a word in hyperletters.
We need a more precise notion.

\begin{definition} A 
\emph{ monotone hyperword} is a hyperword $\left[u_1\right]_c^{k_1}\dots
\left[u_m\right]_c^{k_m}$, where $u_1>\dots >u_m$ are Lyndon words.
\end{definition}

\begin{remark}\label{corchete} \cite[Lemma 2.3]{Kh-libro}
Let $u \in L$, $n = \ell(u)$. Then $\left[ u \right]_c$ is a linear combination 
\begin{align*}
\left[ u \right]_c =  u+ \sum_{u < z \in \xx: \, \deg z =\deg u} p_z z,
\end{align*}
where $p_z \in \mathbb{Z} \left[q_{ij}: i, j \in \I\right]$.
\end{remark}

\medbreak  The order of the Lyndon words induces 
an order on the hyperletters. Consequently we consider the lexicographic order in the
hyperwords. Given two monotone hyperwords   $W,V$, it can be shown that
\begin{align*}
W=\left[w_1\right]_c\dots \left[w_m\right]_c &> V=\left[v_1\right]_c\dots  \left[v_t\right]_c,
& w_1 &\geq \dots  \geq w_m,& 
v_1 &\geq \dots  \geq v_t,
\end{align*}
if and only if $w=w_1\dots w_{m} > v=v_1\dots v_t$.

\medbreak
\subsubsection{}
Let $I$ be a homogeneous proper 2-sided ideal of $T(V)$ such that $I \cap V =0$, $R=T(V)/I$, $\pi: T(V) \rightarrow R$ the  canonical projection. Set
$$G_I:= \Big\{ u \in \xx: u \notin \sum_{u < z \in \xx}\ku z + I  \Big\}.$$
Let $u, v, w \in \xx$ such that $u=vw$. If $u \in G_I$, then $v,w \in G_I$. Hence every $u \in G_I$ factorizes uniquely as a non-increasing  product of Lyndon words in $G_I$. Then the set $\pi(G_I)$ is a basis of $R$ \cite{Kh,R2}.

\medbreak Assume next that $I$ is a homogeneous Hopf ideal and set $P = \left\{1\right\}$, $S_I:= G_I \cap L$ and $h_I: S_I \rightarrow \N_{\ge 2}\cup \left\{ \infty \right\}$  given by
\begin{align*}
h_I(u):= \min \Big\{ t \in \N : u^t  \in \sum_{u < z \in \xx}\ku z + I  \Big\}.
\end{align*}
Let   $B_I:= B\left(P , \pi (\left[ S_I \right]_c), <, h_I \right)$, see the beginning of this Subsection.
The next fundamental result is due to Kharchenko.

\begin{theorem}\label{thm:PBW-basis  Kharchenko} \cite{Kh} If  $I$ is a homogeneous Hopf ideal, then $B_I$ is a PBW-basis  of $T(V)/I$. 
\end{theorem}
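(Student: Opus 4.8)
The plan is to prove Theorem \ref{thm:PBW-basis Kharchenko} by building on the combinatorial framework already assembled: the Lyndon decomposition, the hyperletter map $[\ ]_c$, and Remark \ref{corchete}. The key structural input is the fact recorded just before the statement, that $\pi(G_I)$ is a $\ku$-basis of $R = T(V)/I$ and that every $u \in G_I$ factorizes uniquely as a non-increasing product of Lyndon words lying in $G_I$. So the content of the theorem is to pass from the "linear" basis $\pi(G_I)$ to the "PBW-shaped" basis $B_I = B(\{1\}, \pi([S_I]_c), <, h_I)$; i.e.\ to reorganize words into powers of Lyndon words below a height threshold.

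First I would establish the triangularity that connects words and hyperwords. By Remark \ref{corchete}, for $u \in L$ one has $[u]_c = u + \sum_{u < z,\ \deg z = \deg u} p_z z$; iterating this over a non-increasing product, the monotone hyperword $[u_1]_c^{k_1}\cdots[u_m]_c^{k_m}$ equals the word $u_1^{k_1}\cdots u_m^{k_m}$ plus a $\Z^{\I}$-homogeneous combination of strictly lexicographically larger words (using the compatibility, stated in the excerpt, between the order on monotone hyperwords and the order on the underlying words, together with the fact that $G_I$ is closed under taking factors). This means: modulo $I$, the set of monotone hyperwords $[u]_c := [u_1]_c^{k_1}\cdots[u_m]_c^{k_m}$ with all $u_i \in S_I = G_I \cap L$ and $0 < k_i < h_I(u_i)$ spans the same space as $\pi(G_I)$, because the change-of-basis matrix between them (restricted to any fixed $\Z^{\I}$-degree, which is finite-dimensional) is unitriangular with respect to the lexicographic order — provided one also checks that a monotone hyperword in which some exponent $k_i$ reaches $h_I(u_i)$ lies in $\sum_{u < z} \ku z + I$, which is exactly the definition of $h_I$.

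Next I would argue linear independence of $B_I$. Suppose a $\Z^{\I}$-homogeneous linear combination of elements of $B_I$ vanishes in $R$; take the lexicographically smallest monotone hyperword $W = [u_1]_c^{k_1}\cdots[u_m]_c^{k_m}$ occurring with nonzero coefficient. Its leading word $w = u_1^{k_1}\cdots u_m^{k_m}$ is in $G_I$ (by minimality of $h_I$ it is not absorbed into larger words plus $I$, and by the factorization property each $u_i \in G_I$ forces $w \in G_I$), and by the triangularity just described no other hyperword in the combination can contribute $w$ as its leading term — all other terms have strictly larger leading words or the same leading word with a strictly smaller hyperword, a contradiction with the $\pi(G_I)$-basis property once one sorts out the bookkeeping. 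Spanning then follows: given $\pi(G_I)$ spans $R$ and each $u \in G_I$ has a unique non-increasing Lyndon factorization with letters in $G_I \cap L = S_I$, collect equal consecutive Lyndon letters into powers; if some power hits $h_I$, rewrite via the defining relation of $h_I$ in terms of lexicographically larger words and induct downward on the (finite, in fixed degree) lexicographic order.

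The main obstacle I expect is the careful management of the height function and the two subtly different orders (on words versus on hyperwords) so that the unitriangularity claim is genuinely valid: one must verify that reducing an over-height power $[u_i]_c^{h_I(u_i)}$ using the relation $u_i^{h_I(u_i)} \in \sum_{u_i < z}\ku z + I$ produces only monotone hyperwords that are again built from $S_I$ and are lexicographically larger, so the induction terminates — this is where Kharchenko's original argument does real work, and it relies on the closure of $G_I$ under factors together with the compatibility of the orders. The Hopf-ideal hypothesis enters precisely to guarantee that $h_I$ takes values in $\N_{\ge 2} \cup \{\infty\}$ and that the hyperletters behave well under the reduction (so that $I \cap V = 0$ and the homogeneity are preserved); I would isolate that point as a lemma and then the rest is the triangular-basis bookkeeping sketched above.
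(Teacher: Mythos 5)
The paper itself offers no proof of this theorem: it is stated as a citation of \cite{Kh}, so your proposal can only be measured against Kharchenko's argument. Your spanning step is essentially sound and, as you implicitly use, does not need the Hopf hypothesis: since $\pi(G_I)$ is a basis for any homogeneous ideal with $I\cap V=0$, closure of $G_I$ under factors gives $k_i<h_I(u_i)$ for the Lyndon letters of any word of $G_I$, and the unitriangularity coming from Remark \ref{corchete}, together with a decreasing induction inside each finite-dimensional homogeneous component, rewrites every $\pi(w)$, $w\in G_I$, in terms of $B_I$.

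The genuine gap is in the independence step. What you need there is: if $u_1\ge\dots\ge u_m$ lie in $S_I$ and $0<k_i<h_I(u_i)$, then the word $w=u_1^{k_1}\cdots u_m^{k_m}$ belongs to $G_I$ (so that, after reducing all the lexicographically larger tails to the $\pi(G_I)$-basis, the minimal leading word survives with nonzero coefficient). Your justification --- ``each $u_i\in G_I$ forces $w\in G_I$ by the factorization property'' --- uses the closure property in the wrong direction: $G_I$ is closed under passing to factors, not under forming products, and ``minimality of $h_I$'' only covers the case $m=1$ of a pure power. Nothing in the purely combinatorial setup excludes that such a $w$ lies in $\sum_{w<z}\ku z+I$, and this is exactly the point where the hypothesis that $I$ is a \emph{Hopf} ideal must enter: in \cite{Kh} (see also \cite{Kh-libro}) linear independence is obtained by computing the coproduct of a monotone restricted hyperword modulo a suitable filtration, obtaining a triangularity in $T(V)\ot T(V)$, and inducting on the degree and the number of hyperletters. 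Your proposal never invokes the comultiplication; the role you assign to the Hopf hypothesis (ensuring $h_I\ge 2$, $I\cap V=0$, homogeneity) is misplaced, since those facts hold for arbitrary homogeneous ideals and already underlie the statement that $\pi(G_I)$ is a basis. As written, the independence argument is circular --- it presupposes the description of $G_I$ as the set of restricted monotone products of elements of $S_I$, which is precisely what the theorem asserts --- so the proof is incomplete at its central point.
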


That is, we get a PBW-basis whose PBW-generators are the images of the hyperletters corresponding to Lyndon words that are in $G_I$.

\smallbreak
The finiteness of the height in the PBW-basis of Theorem \ref{thm:PBW-basis  Kharchenko} is controlled by the matrix $\bq$:

\begin{rem}\label{cor:segundo} \cite[Theorem 2.3]{Kh-libro}
Let $v \in S_I$ such that $h_I(v)< \infty$, $\deg v = \alpha$. Then $\bq_{\alpha \alpha} \in \G_{\infty}$ and 
$h_I(v)= \ord q_{\alpha \alpha}$.
\end{rem}

\medbreak
By Remark  \ref{cor:segundo}, it seems convenient to consider PBW-basis $B(P, S, <, h)$ of $\toba_\bq$ with the following constraints: 

\begin{definition} A PBW-basis is \emph{good} if 
$P = \{1\}$, the elements of $S$ are $\Z^{\I}$-homogeneous and $h(v)= \ord q_{\alpha \alpha}$  for $v \in S$, $\deg v = \alpha$. 
\end{definition}

\begin{exa} \label{exa:roots-fniteheight}
	If $\dim \toba_{\bq} < \infty$, then the PBW-basis $B_{\J_{\bq}}$ of $\toba_{\bq}$ 
	as in Theorem \ref{thm:PBW-basis  Kharchenko} is good. Indeed,  
	$h_{\J_{\bq}}(v)< \infty$ for all $v \in S_{\J_{\bq}}$.
	However, there are many examples of $v \in S_{\J_{\bq}}$, $\deg v = \alpha$ with $\bq_{\alpha \alpha} \in \G_{\infty}$, 
	with  $h_{\J_{\bq}}(v) = \infty$.

	For instance, take $\theta=2$. Set for simplicity $y_n=(\ad_cx_1)^n x_2$, $n\in\N$.
	Assume that  there exists $k \in \N$ such that $\alpha= k\alpha_1+\alpha_2$ is a root; particularly,
	$y_k \neq 0$. Notice that $\partial_2(y_k)=b_k \, x_1^k$, where  $b_k=\prod_{j=0}^{k-1} (1 -q_{11}^k \widetilde{q_{12}})$, 
	hence $(k)_{q_{11}}^! b_k \neq 0$. 
	We claim that
	\begin{align*}
	(y_k)^2 &=0 \text{ in } \toba_{\bq} & &\iff &y_{k + 1} &=0, & \bq_{\alpha \alpha}&=-1.
	\end{align*}
	Indeed, $(y_k)^2 =0$ iff $\partial_1(y_k^2) \overset{\star}{=} 0 = \partial_2(y_k^2) $; but $\star$ holds always. 
	Then
	\begin{align*}
	\partial_2(y_k^2) & = b_k (y_kx_1^k + q_{21}^k q_{22} \, x_1^k y_k) 
	\\
	& \overset{\eqref{eq:adx1x2x2}}{=}  b_k \left((1+\bq_{\alpha \alpha}) \ y_kx_1^k + \sum_{j=1}^{k} \binom{k}{j}_{q_{11}} q_{11}^{k(k-j)}q_{12}^{k-j} q_{21}^k q_{22} \ y_{k+j}x_1^{k-j}\right)
	\end{align*}
	and this is 0 iff $\bq_{\alpha \alpha} =-1$ and $y_{k + 1} = 0$ (the last implies $y_{k + j} = 0$ for all $j\in \N$).
	Thus, if $\bq_{\alpha \alpha}=-1$ but $y_{k + 1} \neq 0$, then $y_k$ has infinite height by Remark \ref{cor:segundo}.
	Concretely, the matrix $\bq$ with Dynkin diagram $\xymatrix{\circ^{q} \ar@{-}[r]^{q^{-2}} &
		\circ^{-q}}$, where $q\neq -1$, gives the desired example by $k=1$.
\end{exa}

Let us illustrate the strength of Theorem \ref{thm:PBW-basis  Kharchenko} in the following example.

\begin{exa}
Let $q \in \G'_N$, $N >1$, $q_{12} \in \ku^{\times}$ and $\bq = \begin{pmatrix} q & q_{12} \\ q^{-1}q_{12}^{-1} & q\end{pmatrix}$, so that $\theta = 2$.
Then a PBW-basis of $\toba_{\bq}$ is 
\begin{align*}
B = \Big\{x_2^{e_2} x_{12}^{e_{12}} x_1^{e_1}: 0 \le e_j < N, j = 2,12,1 \Big\}.
\end{align*}
Here we use the notation \eqref{eq:iterated}.
\end{exa}

Let us outline the proof of this statement.

\begin{itemize}[leftmargin=*]\renewcommand{\labelitemi}{$\circ$}
\item The quantum Serre relations $x_{112} =0$, $x_{221} =0$ hold in $\toba_{\bq}$. This can be checked using derivations, see Proposition \ref{prop:nichols-alternative} \ref{item:nichols-alternative-e}. Alternatively, apply \cite[Appendix]{AS2}.

\medbreak
\item The  power relations $x_1^N =0$, $x_2^N =0$, $x_{12}^N =0$ hold in $\toba_{\bq}$. The first two follow directly from the quantum bilinear formula, and the third from the quantum Serre relations using derivations. By Remark \ref{cor:segundo}, we conclude that $h_{\J_{\bq}}(x_2) = h_{\J_{\bq}}(x_{12}) =h_{\J_{\bq}}(x_2)=N$.

\medbreak
\item The set $B$ generates the algebra $\toba_{\bq}$, or more generally any $R$ where the quantum Serre and the power relations hold. This follows because the subspace spanned by $B$ is a left ideal.

\medbreak
\item The element $x_1x_2 \in S_{\J_{\bq}} =  G_{\J_{\bq}} \cap L$. Now $x_{12} = [x_1x_2]_c$; hence $B$, being a subset of $B_{\J_{\bq}}$, is linearly independent by Theorem \ref{thm:PBW-basis  Kharchenko}. Alternatively, one can check the linear independence of $B$ by successive applications of derivations. 
Together with the previous claim, this implies the statement.
\end{itemize}

\begin{rem}
The results of the theory sketched in this Subsection depend heavily on the numeration of the starting set $X$, that is on a fixed total order of $X$.
Changing the numeration gives rise to different Lyndon words and so on. The outputs are equivalent but not equal.
\end{rem}

\subsection{The roots of a Nichols algebra}\label{subsec:nichols-RS} 

Let $(V,c)$ be a braided vector space of diagonal type attached to a matrix $\bq$ as in \S \ref{subsec:braid}.
Now that we have the PBW-basis $B_{\bq}$ of $\toba_\bq$ given by Theorem \ref{thm:PBW-basis  Kharchenko}, whose PBW-generators  (i.e. the elements of $S_{\J_{\bq}}$) are $\Z^{\I}$-homogeneous,
we reverse the reasoning outlined at the beginning of the previous Subsection and define following  \cite{H-Weyl gpd}
the positive roots of $\toba_\bq$ as the degrees of the PBW-generators, and the roots as plus or minus the positive ones; i.e.
\begin{align}\label{eq:root-def}
\varDelta^{\bq}_+ &= (\deg u)_{u\in S_{\J_{\bq}}}, & \varDelta^{\bq} &= \varDelta^{\bq}_+ \cup -\varDelta^{\bq}_+.
\end{align}

In principle, there might be several $u\in S_{\J_{\bq}}$ with the same deg. It is natural to define
the multiplicity of $\beta \in \Z^{\I}$ as 
\begin{align*}
\mult \beta = \mult_{\bq} \beta = \vert \{u\in S_{\J_{\bq}}: \deg u = \beta\} \vert.
\end{align*}

\begin{definition}\label{def:arithmetic} \cite{H-classif RS}
The matrix $\bq$ (or the braided vector space $(V,c)$, or the Nichols algebra $\toba_{\bq}$) is  \emph{arithmetic}
if $|\varDelta^\bq_+|< \infty$.
\end{definition}

For instance, if $\dim \toba_{\bq} < \infty$, then $(V,c)$ is arithmetic.
If $\toba_{\bq}$ is  arithmetic, then all roots are real, i.e. conjugated to simple roots by the Weyl groupoid; see the discussion
in \ref{subsection:weylgroupoid-def} below. Using this, one can prove:

\begin{rem} Assume that $\toba_{\bq}$ is  arithmetic. 
\begin{itemize} [leftmargin=*]\renewcommand{\labelitemi}{$\circ$}
\item The heights of the generators in $S_{\J_{\bq}}$ satisfy \eqref{eq:pbw-right-defi}. 

\item $\varDelta^{\bq}_+$ does not depend on the PBW-basis: if $B = B(P, S, <, h)$ is any PBW-basis with $P= \{1\}$ satisfying \eqref{eq:pbw-right-defi},
then $\varDelta^{\bq}_+ = (\deg u)_{u\in S}$. See \cite{AA}.
\end{itemize}
\end{rem}

We next explain the recursive procedure to describe the hyperletters.

\begin{rem}\label{rem:lyndon-word}
Assume that $\bq$ is arithmetic. Then every root has multiplicity one \cite{CH-at most 3}, 
so we can label the Lyndon words with $\varDelta^\bq_+$; let $l_{\beta}$ be the Lyndon word of degree $\beta \in \varDelta^\bq_+$.
The Lyndon words $l_{\beta}$'s are computed recursively \cite[Corollary 3.17]{A-jems}:
$l_{\alpha_i}=x_i$, and for $\beta \neq \alpha_i$,
\begin{equation}\label{eq:caracterizacion-palabras-Lyndon}
l_{\beta}= \max \{ l_{\delta_1}l_{\delta_2}: \ \ \delta_1, \delta_2 \in \varDelta^\bq_+, \ \delta_1+\delta_2=\beta, \ l_{\delta_1}<l_{\delta_2} \}.
\end{equation}
Let $x_{\beta}$ be the hyperletter corresponding to  the Lyndon word $l_{\beta}$, $\beta \in \varDelta^\bq_+$.
 Then
\begin{align}\label{eq:x-raiz}
\begin{aligned}
x_{\alpha_i} &= x_i,& i&\in \I, \\
x_{\beta}    &= [x_{\delta_1}, x_{\delta_2}]_c, & &\text{if }
l_\beta = l_{\delta_1}l_{\delta_2} \text{ is the Shirshov decomposition}.
\end{aligned}
\end{align}
This gives explicit formulas for the PBW-generators of the PBW-basis $B_{\bq}$ given by Theorem \ref{thm:PBW-basis  Kharchenko}.
\end{rem}

\subsubsection*{Braidings of Cartan type}
To explain the importance of the roots as defined in \eqref{eq:root-def}, 
we discuss the class of braidings of Cartan type, closely related with quantum groups.

\begin{definition}\label{def:cartantype} \cite{AS2}
The matrix $\bq$ (or $V$, or   $\toba_{\bq}$) is of \emph{Cartan type}
if there exists a GCM $A = (a_{ij})$ such that
\begin{align}\label{eq:cartan-type}
q_{ij}q_{ji} &= q_{ii}^{a_{ij}}, & \forall i,j \in \I.
\end{align}
Assume that this is the case. We fix a choice of $A$ by
\begin{align}\label{eq:cartan-type-fix}
-N_i &< a_{ij} \le 0, & \forall j\neq i \in \I & \text{ when } N_i := \ord q_{ii} \in (1, \infty) .
\end{align}
\end{definition}

This was the first class of Nichols algebras to be studied in depth.

\begin{theorem}\label{th:cartan-finite} \cite{H-Weyl gpd}, see also \cite{AS2}.
Let $\bq$ be of Cartan type with GCM $A$ indecomposable and normalized by \eqref{eq:cartan-type-fix}. Then the following are equivalent:
\begin{enumerate}[leftmargin=*]
\item\label{it:cartan-arithmetic1} The Nichols algebra $\toba_{\bq}$ is arithmetic.

\item\label{it:cartan-arithmetic2} The GCM $A$ is of finite type.
\end{enumerate}

Consequently, the following are equivalent:
\begin{enumerate}[leftmargin=*]
\item\label{it:cartan-finite1} The Nichols algebra $\toba_{\bq}$ has finite dimension.

\item\label{it:cartan-finite2} \begin{enumerate}
\item\label{it:cartan-finite2a} The GCM $A$ is of finite type.
\item\label{it:cartan-finite2b} $N_i \in (1, \infty)$ for all $i\in \I$.
\end{enumerate}
\end{enumerate}
\end{theorem}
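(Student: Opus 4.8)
The plan is to reduce the finite-dimensionality statement to the arithmeticity statement, and to prove the latter using the structure theory of Nichols algebras of Cartan type. Concretely, for the first equivalence \ref{it:cartan-arithmetic1}$\Leftrightarrow$\ref{it:cartan-arithmetic2}: assume $A$ is of finite type. Then one argues that $\toba_{\bq}$ has a PBW-basis indexed by $\varDelta^A_+$ (the positive roots of the Kac-Moody algebra $\g(A)$), so that $|\varDelta^{\bq}_+| = |\varDelta^A_+| < \infty$, hence $\bq$ is arithmetic. This is exactly the content that must be extracted from the quantum-group picture: when $A$ is of finite type, the Lyndon-word recursion of Remark \ref{rem:lyndon-word} produces precisely $|\varDelta^A_+|$ PBW-generators, with the iterated braided commutators \eqref{eq:iterated} realizing root vectors in bijection with $\varDelta^A_+$; the quantum Serre relations $(\ad_c x_i)^{1-a_{ij}}x_j = 0$ (valid in $\toba_{\bq}$ by Proposition \ref{prop:nichols-alternative}\ref{item:nichols-alternative-e}, or by \cite[Appendix]{AS2}) cut the tensor algebra down to the expected size, and the normalization \eqref{eq:cartan-type-fix} guarantees these relations are the right ones. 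Conversely, if $A$ is not of finite type, being indecomposable and of Cartan type it is either affine or indefinite; in both cases $\varDelta^A$ is infinite, and one shows the corresponding infinitely many root vectors remain linearly independent in $\toba_{\bq}$ (equivalently, $\J_{\bq}$ cannot kill them), so $|\varDelta^{\bq}_+| = \infty$. The cleanest route here is to invoke the dichotomy from \cite{H-Weyl gpd}: the Weyl groupoid of $\bq$ is finite if and only if the generalized root system is finite, and for Cartan type the Weyl groupoid collapses to the ordinary Weyl group $W(A)$, which is finite iff $A$ is of finite type.

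For the second equivalence, \ref{it:cartan-finite1}$\Rightarrow$\ref{it:cartan-finite2}: if $\dim \toba_{\bq} < \infty$ then a fortiori $\bq$ is arithmetic, so by the first part $A$ is of finite type, giving \ref{it:cartan-finite2a}. For \ref{it:cartan-finite2b}: if some $N_i = \ord q_{ii} = \infty$, then by the quantum binomial (Gauss) formula the element $x_i$ has infinite height, i.e. $\{x_i^m : m \ge 0\}$ is linearly independent in $\toba_{\bq}$ (the $q$-factorials $(m)_{q_{ii}}!$ never vanish), contradicting finite dimension; hence every $N_i \in (1,\infty)$. Conversely, \ref{it:cartan-finite2}$\Rightarrow$\ref{it:cartan-finite1}: by \ref{it:cartan-finite2a} and the first equivalence, $\toba_{\bq}$ is arithmetic, so it has a finite PBW-basis $B_{\J_{\bq}} = B(\{1\}, S_{\J_{\bq}}, <, h)$ with $|S_{\J_{\bq}}| = |\varDelta^{\bq}_+| < \infty$; it remains to see every height is finite. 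By Remark \ref{cor:segundo}, for $v \in S_{\J_{\bq}}$ of degree $\alpha$ one has $h(v) < \infty$ iff $\bq_{\alpha\alpha} \in \G_\infty$. Since all roots are real (conjugate to a simple root $\alpha_i$ under the Weyl groupoid), and the diagonal entries transform in a controlled way under Weyl groupoid reflections — in Cartan type they satisfy $\bq_{w\alpha_i\,w\alpha_i} = q_{jj}$ up to the Cartan action — one deduces $\bq_{\alpha\alpha}$ is a root of unity whenever all the $q_{ii} = \bq_{\alpha_i\alpha_i}$ are, which holds by \ref{it:cartan-finite2b}. Then every generator has finite height, $|B_{\J_{\bq}}| < \infty$, so $\dim \toba_{\bq} < \infty$.

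The main obstacle is the very first step: establishing, cleanly and without circularity, that in finite Cartan type the PBW-basis of $\toba_{\bq}$ has exactly $|\varDelta^A_+|$ generators. The subtlety is that the Lyndon-word construction of Subsection \ref{subsec:pbw} is intrinsic to $\bq$ and knows nothing a priori about $A$; one must show that the quantum Serre relations already present in $\toba_{\bq}$ suffice to reduce $T(V)$ to an algebra of the expected Gelfand–Kirillov dimension $|\varDelta^A_+|$ (so that no further relations appear to shrink $\varDelta^{\bq}_+$ below $\varDelta^A_+$), while simultaneously showing these root vectors are linearly independent (so $\varDelta^{\bq}_+$ is not smaller than $\varDelta^A_+$ either). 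The upper bound is the combinatorics of the ordered monomials in iterated commutators indexed by a reduced word for the longest element of $W(A)$; the lower bound is a derivation argument (Proposition \ref{prop:nichols-alternative}\ref{item:nichols-alternative-e},\ref{item:nichols-alternative-f}) or an appeal to the known structure of $U_q^+(\g)$ via twist equivalence, since $\bq$ is twist equivalent to the standard Drinfeld–Jimbo matrix. In practice one imports this from \cite{H-Weyl gpd} or \cite{AS2} rather than redoing it; the remaining steps — the Gauss-formula argument for \ref{it:cartan-finite2b} and the root-of-unity bookkeeping for heights — are routine by comparison.
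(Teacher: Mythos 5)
Your route is essentially the one the paper relies on: the paper does not reprove this theorem but quotes \cite{H-Weyl gpd,AS2}, and the argument behind those citations is exactly the Weyl-groupoid/Lusztig-isomorphism machinery you invoke — for Cartan type the Weyl groupoid is the Weyl group $W(A)$, reality of roots gives $\varDelta^{\bq}_+\supseteq \varDelta^{A,\mathrm{re}}_+$ (with equality in finite type, cf. Remark \ref{rem:quantum-affine}), and the nonvanishing of the powers $x_i^m$ when $q_{ii}\notin\G_{\infty}$ rules out $N_i=\infty$. Up to the imports you explicitly acknowledge, the first equivalence and the implications \ref{it:cartan-finite1}$\Rightarrow$\ref{it:cartan-finite2} are fine.

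There is, however, a concrete gap in \ref{it:cartan-finite2}$\Rightarrow$\ref{it:cartan-finite1}. You write that by Remark \ref{cor:segundo} one has $h(v)<\infty$ \emph{iff} $\bq_{\alpha\alpha}\in\G_{\infty}$; the Remark only gives one implication (finite height forces $\bq_{\alpha\alpha}$ to be a root of unity and computes the height), and the converse is false in general — the paper makes this point explicitly in Example \ref{exa:roots-fniteheight}, where a root $\alpha$ with $\bq_{\alpha\alpha}=-1$ has a PBW-generator of infinite height. So your (correct) observation that every $\bq_{\alpha\alpha}$ equals some $q_{jj}$, hence is a root of unity by \ref{it:cartan-finite2b}, does not by itself yield finite heights. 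What is actually needed is the vanishing $x_{\beta}^{N_{\beta}}=0$ for all $\beta\in\varDelta^{\bq}_+$, i.e. the power root vector relations \eqref{powerrootvector}; in the Cartan setting these are obtained by transporting $x_j^{N_j}=0$ along the reflections/Lusztig isomorphisms of Theorem \ref{th:heck-iso} (equivalently, they are part of Theorem \ref{Thm:presentacion}), and this is one of the genuinely nontrivial inputs of \cite{AS2,H-Weyl gpd} rather than routine bookkeeping. Replacing the misquoted ``iff'' by this statement closes the argument; as written, the step is unsupported.
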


\begin{remark}
If $\bq$ is of Cartan type as in Theorem \ref{th:cartan-finite}, \eqref{it:cartan-finite2a} holds but $N_i = \infty$ (for one or equivalently for any $i$), then 
$\GK \toba_\bq = \vert \varDelta^A_+ \vert$. 
\end{remark}

\begin{remark}\label{rem:quantum-affine}
	Let $\bq$ be as in Theorem \ref{th:cartan-finite}. Then $\varDelta^{\bq}_+ \supseteq \varDelta^{A, \text{re}}_+$.
	\begin{enumerate}[leftmargin=*, label=\rm{(\alph*)}]
		\item\label{it:cartan-finite} \cite{H-Weyl gpd,AS2} If $A$ is of finite type, then  $\varDelta^{\bq}_+ = \varDelta^{A}_+$.
		
		\medbreak
		\item\label{it:cartan-affine} Assume that $A$ is of affine type. 
		If $N_i = \infty$ for some (or  all) $i$, then  $\varDelta^{\bq}_+ \overset{\star}{=} \varDelta^{A}_+$ \cite{beck,Da}. 
		When $N_i < \infty$ for all $i$,   the last equality does not hold. 
		
		\medbreak
		\item\label{it:cartan-affine-polynomial-growth}
		In any case, $\varDelta^{\bq}_+ \subseteq \varDelta^{A}_+$ 
		and thus the function 
		\begin{align*}
		n \longmapsto \wp_n &:= \vert\{\beta \in \varDelta^{\bq}_+: \vert\beta\vert = n \} \vert,& n &\in \N,
		\end{align*}
		has polynomial growth. Here $\vert\beta\vert = \sum_i n_i$, if $\beta = \sum_i n_i \alpha_{i}$.

		\medbreak
		\item\label{it:cartan-indefinite} If $A$ is of indefinite type, then we do not know if 
		$(\wp_n)_{n \in \N}$ has polynomial growth.
	\end{enumerate}
	
We discuss an example for the last claim in \ref{it:cartan-affine}.
Let $\zeta \in \G'_N$,  $N>2$, and $\bq$ with Dynkin diagram $\xymatrix{\circ^{\zeta} \ar@{-}[r]^{\zeta^{-2}} &
\circ^{\zeta}}$; this is of affine Cartan 	type $A^{(1)}_1$, see \eqref{eq:A1-(1)}.
The height of the (imaginary) root $\alpha = \alpha_1+\alpha_2$ is infinite since
$\bq_{\alpha \alpha}=1$. 
Now the only possible root vector of degree $2 \alpha$ is $y = [x_1, [x_{12}, x_2]_c]_c$. 
Since 
\begin{align*}
(\widetilde{q_{12}}-1)(q_{11}+1)(q_{22}+1)(q_{11}\widetilde{q_{12}}-1)(\widetilde{q_{12}}q_{22}-1) \neq 0,
\end{align*}
we conclude from \cite{AAH2} that $y$ is  a root vector if and only if
$q_{11}(\widetilde{q_{12}})^2q_{22} \neq -1$,
iff $N \neq 4$. So if $\zeta^2=-1$, then  $2 \alpha \notin \varDelta^{\bq}_+$.
\end{remark}

Before explaining how to describe the positive roots for an arbitrary $\bq$ we need the Drinfeld double of the bosonization of $\toba_{\bq}$.

\subsection{The double of a Nichols algebra}\label{subsec:double-Nichols}

The natural construction of the Drinfeld double of the bosonization of a Nichols algebra by an 
appropriate Hopf algebra was considered by many authors. 
For a smooth exposition, we start by a general construction as in \cite{ARS}.
Let $\bq = (q_{ij})_{i, j \in \I}$ be a matrix of elements in $\ku^{\times}$ such that $q_{ii} \neq 1$ for all $i\in \I$.
Let $G$ be an abelian group. 
A \emph{ reduced YD-datum}   is 
a collection $\D_{red} = (L_i, K_i, \vartheta_i)_{i \in \I}$
where $K_i$, $L_i \in G$, $\vartheta_i \in \widehat{G}$, $i \in \I$, such that
\begin{align}
q_{ij} = \vartheta_j(K_i)&=\vartheta_i(L_j) & &\text{ for all }  i,j  \in \I,\label{eq:reduced1}\\
K_iL_i &\neq 1 &  &\text{ for all } i \in \I.\label{eq:reduced2}
\end{align}

Let us fix a reduced YD-datum as above and define
\begin{align*}
V &= \oplus_{i \in \I}\ku x_i \in {\ydgr}, &  &\text{ with basis }x_i \in V_{K_i}^{\vartheta_i}, i \in \I,\\
W &= \oplus_{i \in \I} \ku y_i \in {\ydgr}, &  & \text{ with basis }y_i \in W_{L_i}^{\vartheta_i^{-1}}, i \in \I.
\end{align*}

Let $\Uc(\D_{red})$ be the quotient of  $T(V \oplus W) \# \ku G$ by the ideal
generated by the relations of the Nichols algebras $\J(V)$ and $\J(W)$, together with
\begin{align*}
x_iy_{j} - \vartheta_j^{-1}(K_i) y_{j}x_i &- \delta_{ij} (K_iL_i - 1),&  i,j  &\in \I.
\end{align*}
Thus $\Uc(\D_{red})$ is a Hopf algebra quotient of $T(V \oplus W) \# \ku G$, with coproduct determined by $\Delta (g)=g\ot g$, $g\in G$,
\begin{align*}
\Delta(x_i)&=x_i\ot 1+K_i\ot x_i,& \Delta(y_i)&=y_i\ot 1 +L_i\ot y_i,& i & \in\I.
\end{align*}

Our exclusive  interest is in the examples of the following shape.

\begin{example}\label{exa:reducedYD-datum} Let $\Gamma$ be an abelian group.
A \emph{realization} of $\bq$ over $\Gamma$ is a pair $(\mathbf{g}, \mathbf{\chi})$ of families
$\mathbf{g} = (g_i)_{i \in \I}$ in  $\Gamma$, $\mathbf{\chi} = (\chi_i)_{i \in \I}$ in $\widehat{\Gamma}$, 
such that $q_{ij} =\chi_i(g_j)$ for all $i,j  \in \I$. 
Any realization gives rise to a reduced datum $\D_{red}$ over $G = \Gamma \times \widetilde{\Gamma}$, 
where $\widetilde{\Gamma} = \langle \chi_i: i \in \I \rangle 
\hookrightarrow \widehat{\Gamma}$, by
\begin{align*}
K_i &=g_i ,& L_i &=\chi_i, & \vartheta_i &= (\chi_i, g_i), & i&\in \I.
\end{align*}

\smallbreak
To stress the analogy with quantum groups, we set  as in \cite{ARS,H-lusztig iso,HY}, 
\begin{align}
E_i&=x_i,& F_i&=y_i\chi_i^{-1} & \text{in } \Uc(\D_{red}) \text{ for } i=1,2.
\end{align}

Let $\Uc(\D_{red})^{\mp} $be the subalgebra of $\Uc(\D_{red})$ generated by the $F_i$'s,  respectively the  $E_i$'s.
\end{example}

\begin{example}\label{exa:reducedYD-datum-free} We shall consider the following particular instance of Example \ref{exa:reducedYD-datum},
where $(\alpha_i)_{i\in \I}$ is the canonical basis of $\Z^{\I}$:
\begin{align*}
\Gamma &= \Z^{\I},& g_i &= \alpha_i, & \chi_i: \Z^{\I} &\to \ku^{\times}, \ \chi_j(\alpha_i) = q_{ij}, & i&\in \I.
\end{align*}
In this context,   we set $\Uc_{\bq} := \Uc(\D_{red})$.
\end{example}

In the following statement, \emph{quasi-triangular} has to be understood in a formal sense, as the R-matrix would belong to an appropriate completion.

\begin{theorem} Let $\D_{red}$ be a reduced datum as in Example \ref{exa:reducedYD-datum}. Then $\Uc(\D_{red})$ is a quasi-triangular Hopf algebra.
\end{theorem}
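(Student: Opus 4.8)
The plan is to exhibit $\Uc(\D_{red})$ as (a Hopf algebra isomorphic to, or a quotient of) the Drinfeld double of a suitable graded bosonization, and then to invoke the general fact that a Drinfeld double $D(H) = H^{* \cop} \bowtie H$ is quasi-triangular with (formal) R-matrix $\sum h_i \ot h^i$ coming from dual bases. Concretely, let $B = \toba(V)\# \ku G$; this is a $\Z^{\I}$-graded (hence $\N_0$-filtered) Hopf algebra with semisimple degree-zero part $\ku G$, so its graded dual restricted to the graded pieces, $B^{\vee} = \bigoplus_{\alpha} (B_{\alpha})^*$, is again a Hopf algebra, and in fact $B^{\vee}$ is the bosonization $\toba(W)\# \ku \widehat{G}$ for the "dual" braided vector space $W$ (this is exactly how $W$, with $y_i \in W_{L_i}^{\vartheta_i^{-1}}$, is engineered from $V$ via the pairing conditions \eqref{eq:reduced1}). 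First I would make the skew-Hopf pairing $B^{\vee} \ot B \to \ku$ precise — on generators it pairs $\langle y_j, x_i\rangle = \delta_{ij}$ and is the evaluation pairing $\widehat{G}\times G \to \ku$ on the group parts — and check it is nondegenerate on each graded component, using Proposition \ref{prop:nichols-alternative} \ref{item:nichols-alternative-d}, which identifies $\J(V)$ as the radical of the tensor pairing, so that $\toba(V)$ and $\toba(W)$ are in perfect graded duality.

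Next I would form the (generalized, or graded) Drinfeld double $D = B^{\vee \cop}\bowtie B$ with respect to this pairing and verify that the defining relations of $\Uc(\D_{red})$ — the Nichols relations $\J(V), \J(W)$, the commutation $x_i y_j - \vartheta_j^{-1}(K_i) y_j x_i = \delta_{ij}(K_iL_i - 1)$, and the $G$-action relations — are precisely the relations of $D$ computed from the pairing via the standard formula $b^{\vee} b = \langle b^{\vee}\_1, b\_1\rangle\, b\_2\, b^{\vee}\_2\, \langle b^{\vee}\_3, \Ss(b\_3)\rangle$ or its equivalent. Here one must be a little careful: $G$ is identified diagonally inside $D$, so the double of $B$ contains two copies of $G$-like elements that get glued; the cross-relation for $x_i, y_j$ should drop out of the Heyneman–Sweedler computation $\Delta(x_i) = x_i \ot 1 + K_i \ot x_i$, $\Delta(y_j) = y_j \ot 1 + L_j \ot y_j$ together with $\langle y_j, x_i\rangle = \delta_{ij}$ and the counit/antipode terms, producing exactly the $\delta_{ij}(K_i L_i - 1)$ term once the two group copies are identified. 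Having matched presentations, the isomorphism $\Uc(\D_{red}) \cong D$ (as Hopf algebras) follows, and then the (formal) R-matrix of a double transports to $\Uc(\D_{red})$, giving quasi-triangularity.

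The main obstacle I expect is the identification step on the nose: the ordinary Drinfeld double requires finite dimensionality, which fails here, so one is really working with a filtered/graded double where the R-matrix is an element of a completion $\widehat{D\ot D} = \prod_{\alpha} (B^{\vee}_{\alpha})^* \ot B_{\alpha}$ rather than of $D\ot D$ — this is exactly the caveat flagged before the statement ("\emph{quasi-triangular} has to be understood in a formal sense"). So the real content is (i) checking that the graded pairing is well-defined and nondegenerate componentwise, which reduces to the perfect pairing between $\toba(V)$ and $\toba(W)$ already recorded, and (ii) checking that the double construction with respect to a (possibly degenerate, but here nondegenerate) pairing still yields the quasi-triangular structure on the quotient by the radical — this is a known device (e.g. the "generalized quantum double" of Radford, or the construction in \cite{ARS, H-lusztig iso}) and I would simply cite it once the pairing is in place. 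A secondary, purely bookkeeping, obstacle is tracking the group elements: one should present $D$ with a single copy of $G = \Gamma\times\widetilde\Gamma$ obtained by pushing out the two canonical copies along the pairing-induced maps, and verify the resulting relations $K_i x_j K_i^{-1} = q_{ji}x_j$, etc., agree with those of $\Uc(\D_{red})$; this is the only place where the explicit scalars $\vartheta_j(K_i) = q_{ij}$ from \eqref{eq:reduced1} are used, and condition \eqref{eq:reduced2} guarantees the cross-relation is genuinely nontrivial (so the double does not collapse).
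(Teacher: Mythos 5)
Your proposal is correct and follows essentially the same route as the paper: a skew-Hopf pairing between the two bosonizations (nondegenerate because $\J(V)$ is the radical of the pairing between the Nichols algebras), realization of $\Uc(\D_{red})$ as the resulting generalized Drinfeld double, and quasi-triangularity read off formally from the double, with the R-matrix living in a completion. The only cosmetic difference is that the paper pairs $H=\toba(V)\#\ku\Gamma$ with $U=\toba(W)\#\ku\widetilde{\Gamma}$ and writes the double as the $2$-cocycle twist $(U\ot H)_{\sigma}$, so the group part comes out as $\Gamma\times\widetilde{\Gamma}=G$ on the nose and the gluing/push-out of group-like copies you worry about never arises.
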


\begin{proof} (Sketch). We argue as in \cite[Theorem 3.7]{ARS}. Let  $H=\toba(V)\# \ku\Gamma$ and $U=\toba(W)\# \ku\widetilde{\Gamma}$.
There is a non-degenerate skew-Hopf bilinear form  $(\, | \,):H\ot U\to \ku$ given by
\begin{align*}
(x_i|y_j)&=\delta_{ij},& (v_i|\vartheta)&=0,& (g|y_j)&=0,& (g|\vartheta)&=\vartheta(g),& g\in\Gamma, \vartheta\in\VGamma, i,j\in\I.
\end{align*}
Then $\Uc(\D_{red})\simeq (U\ot H)_{\sigma}$, where $\sigma:(U\ot H)\ot (U\ot H)\to \ku$ 
is the  2-cocycle given by $\sigma(f\ot h,f'\ot h')= \varepsilon(f)(h|f')\varepsilon(h')$ for $f,f'\in H^*$ and $h,h'\in H$. 
Therefore $\Uc(\D_{red})$ is the Drinfeld double of $H$.
\end{proof}

\medbreak
\subsection{The Weyl groupoid of a Nichols algebra}\label{subsec:Weyl-gpd} The proofs of the claims on braidings 
of Cartan type, see \S \ref{subsec:nichols-RS},
rely on the action of the braid group described by Lusztig \cite{Lu-jams1990,Lu-dedicata}, as generalized in 
\cite{H-Weyl gpd}. It turns out that this action has a subtle extension to the context of Nichols algebras of diagonal, but not necessarily Cartan, type. As we shall see, the adequate language to express this extension is that of groupoids acting on bundles of sets.

\medbreak
Recall that  $(V,c)$ and  $\bq$ are as in \S \ref{subsec:braid}.

\begin{definition}\label{def:admissible}
The matrix $\bq$ (or $V$, or   $\toba_{\bq}$) is \emph{admissible}
if for all $i  \neq j$ in $\I$, the set $\left\{ n \in \N_0: (n+1)_{q_{ii}}
(1-q_{ii}^n q_{ij}q_{ji} )=0 \right\}$ is non-empty. If this happens, then we 
consider the matrix $C^{\bq} =(c_{ij}^{\bq}) \in \Z^{\I \times \I}$ given by $c_{ii}^{\bq} = 2$ and 
\begin{align}\label{eq:defcij}
c_{ij}^{\bq}&:= -\min \left\{ n \in \N_0: (n+1)_{q_{ii}}
(1-q_{ii}^n q_{ij}q_{ji} )=0 \right\},& i&\neq j.
\end{align}
It is easy to see that $C^{\bq}$ is a GCM. If $\bq$ is of Cartan type with GCM $A$, then $C^{\bq} = A$. Thus the matrix $C^{\bq}$ suggests an approximation to Cartan type.
\end{definition}
The GCM $C^{\bq}$ induces reflections $s_i^{\bq}\in GL(\Z^\theta)$, namely 
\begin{align*}
s_i^{\bq}(\alpha_j)&=\alpha_j-c_{ij}^{\bq}\alpha_i,& i,j&\in \I.
\end{align*}
If $\bq$ is of Cartan type, then these reflections generate the Weyl group $W$ and  lift to an action of the braid group 
(corresponding to $W$) on $\Uc_{\bq}(\g)$  \cite{Lu-jams1990,Lu-dedicata}.
In general this is not quite true, but a weaker claim holds. Namely, for $i\in \I$, 
let $\rho_i(\bq)$ be defined by
\begin{align}\label{eq:rhoiq}
\rho_i(\bq)_{jk} &= \bq(s_i^{\bq}(\alpha_j),s_i^{\bq}(\alpha_k)),& j, k &\in \I.
\end{align}

The new braiding matrix $\rho_i(\bq)$ might be different from $\bq$, but nevertheless:

\begin{theorem}\label{th:heck-iso} \cite{H-Weyl gpd}
The reflection $s_i^{\bq}$ lifts to an isomorphism of algebras $T_i:\Uc_{\bq} \to \Uc_{\rho_i(\bq)}$.
\end{theorem}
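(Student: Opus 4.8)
The plan is to follow Lusztig's construction of the braid-group action on quantized enveloping algebras, adapted to the generality of $\Uc_{\bq}$: define the candidate $T_i$ on the generators by explicit Lusztig-type formulas, verify that it respects every defining relation of $\Uc_{\bq}$, and then produce its inverse by the same recipe. The only relations that resist a direct computation are those of the Nichols algebras $\J_{\bq}(V)$ and $\J_{\bq}(W)$ inside $\Uc_{\bq}$, for which no presentation is available; the key device will be the non-degenerate skew-Hopf pairing $(\,|\,)$ that exhibits $\Uc_{\bq}$ as a Drinfeld double, together with the characterization of $\J_{\bq}$ as its radical, Proposition~\ref{prop:nichols-alternative}\ref{item:nichols-alternative-d}.

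Set $\bq' := \rho_i(\bq)$, cf. \eqref{eq:rhoiq}. First I would check that $\rho_i$ is an involution, $\rho_i(\bq') = \bq$: since $\rho_i$ affects $\bq$ only through $s_i^{\bq}$, this reduces to $c_{ij}^{\bq'} = c_{ij}^{\bq}$ for $j\neq i$, a short computation from \eqref{eq:defcij}. This is what will yield the inverse of $T_i$ at no extra cost. Working with the realization of Example~\ref{exa:reducedYD-datum-free} (so that $\Uc_{\bq}$ and $\Uc_{\bq'}$ share the group $G$), I would define $T_i\colon \Uc_{\bq}\to \Uc_{\bq'}$ on generators by: on $\ku G$, the algebra map induced by $s_i^{\bq}$ on $\Gamma=\Z^{\I}$ and correspondingly on $\widetilde{\Gamma}$; on the rank-one part, $x_i\mapsto \lambda_i\, y_iK_i$ and $y_i\mapsto \lambda_i'\, K_i^{-1}x_i$ for suitable nonzero scalars $\lambda_i,\lambda_i'$ (with $K_i$ possibly replaced by another group-like, according to conventions); and for $j\neq i$, $x_j\mapsto (\ad_c x_i)^{-c_{ij}^{\bq}}(x_j)$ and $y_j\mapsto (\ad_c y_i)^{-c_{ij}^{\bq}}(y_j)$, the braided adjoint being computed inside $\Uc_{\bq'}$. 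The exponent $-c_{ij}^{\bq}$ is exactly the value dictated by Definition~\ref{def:admissible}, i.e.\ the one making the relevant $q$-binomial coefficients vanish, which is what lets the computations below close up.

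The core of the proof is that these assignments respect the defining relations of $\Uc_{\bq}$. I would organize them in three families. \textbf{(a)} The relations of $\ku G$ and those giving the action of the group-likes on the $x_j$ and $y_j$: under $T_i$ these become character identities, immediate from \eqref{eq:reduced1}. \textbf{(b)} The cross relations $x_iy_j - \vartheta_j^{-1}(K_i)\, y_jx_i = \delta_{ij}(K_iL_i-1)$: for $j=i$ this is the familiar rank-one ($\sli_2$) computation, which pins down $\lambda_i\lambda_i'$; for indices $\neq i$ it is reduced, via the braided Leibniz rules \eqref{eq:derivacion} and the reordering identity \eqref{eq:adx1x2x2}, to a finite explicit check using the vanishing of the $q$-binomials at exponent $-c_{ij}^{\bq}$. \textbf{(c)} The Nichols relations among the $x_j$'s, and among the $y_j$'s. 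Let $\varphi^+\colon T(V)\to\Uc_{\bq'}$ be the algebra map $x_j\mapsto T_i(x_j)$ and $\varphi^-\colon T(W)\to\Uc_{\bq'}$ its counterpart $y_j\mapsto T_i(y_j)$; one must show $\varphi^\pm$ kill $\J_{\bq}(V)$ and $\J_{\bq}(W)$. Here I would use that inside the Drinfeld double $\Uc_{\bq'}$ the subalgebra $\toba_{\bq'}(V')\#\ku\Gamma$ is paired non-degenerately with $\toba_{\bq'}(W')\#\ku\widetilde{\Gamma}$; tracking the group-likes produced by the coproducts of the $x_j$ and $y_j$ shows that the pair $(\varphi^+,\varphi^-)$ is compatible with the skew-Hopf pairing of $\Uc_{\bq}$, so $\ker\varphi^+$ is contained in the radical of the latter, which by Proposition~\ref{prop:nichols-alternative}\ref{item:nichols-alternative-d} is precisely $\J_{\bq}(V)$; hence $\varphi^+(\J_{\bq}(V))=0$, and symmetrically for $\varphi^-$. (Alternatively one argues with the skew-derivations of Proposition~\ref{prop:nichols-alternative}\ref{item:nichols-alternative-e}: $\varphi^+$ intertwines them up to the reflection, so $\varphi^+(T(V))$ is a pre-Nichols algebra of $(V,c^{\bq})$ surjecting onto $\toba_{\bq}(V)$.)

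Granting that $T_i$ is a homomorphism, invertibility follows from the involutivity of $\rho_i$: the same recipe defines $T_i'\colon\Uc_{\bq'}\to\Uc_{\bq}$, and $T_i'T_i=\id$, $T_iT_i'=\id$ are checked on generators --- trivially on $\ku G$, by the rank-one $\sli_2$ identity on $x_i,y_i$ (which finally fixes $\lambda_i\lambda_i'$), and for $j\neq i$ by verifying that $(\ad_c T_i(x_i))^{-c_{ij}^{\bq}}$ carries $T_i(x_j)$ to a nonzero multiple of $x_j$, again via \eqref{eq:derivacion} and the $q$-binomial vanishing. I expect family \textbf{(c)} to be the main obstacle: controlling the Nichols relations with no presentation at hand. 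The pairing argument is clean in outline, but making it rigorous requires carefully computing the coproducts of the reflected generators $T_i(x_j)$ in $\Uc_{\bq'}$ and confirming that the $(-c_{ij}^{\bq})$-fold braided adjoints genuinely carry the coproduct predicted by the reflected braiding $\bq'$; a secondary, purely bookkeeping difficulty is keeping the normalization scalars coherent across \textbf{(b)}, \textbf{(c)} and the inversion step.
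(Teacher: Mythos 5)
You are reviewing a statement for which the survey offers no proof at all: Theorem~\ref{th:heck-iso} is quoted from \cite{H-Weyl gpd} (the detailed construction of the maps $T_i$ is in \cite{H-lusztig iso}, and \cite{HS} is cited for a categorical explanation), so the only possible comparison is with that literature. Your overall plan is indeed the one used there: Lusztig-type formulas on the generators (group-likes via $s_i^{\bq}$, the rank-one swap $x_i\mapsto \lambda_i y_iK_i$, $y_i\mapsto\lambda_i' K_i^{-1}x_i$, and $(\ad_c x_i)^{-c_{ij}^{\bq}}$ applied to $x_j$, $y_j$ for $j\neq i$), explicit $q$-binomial checks for the group and cross relations, and the inverse coming from $\rho_i^2=\id$. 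The preliminary points are fine: $c^{\rho_i(\bq)}_{ij}=c^{\bq}_{ij}$ does follow from \eqref{eq:defcij} by a direct case analysis, and with it the involutivity of $\rho_i$ and the matching of the exponents in the two doubles.

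The genuine gap is your step \textbf{(c)}, which is the only step that cannot be done relation by relation, and your mechanism for it does not work as written. First, the logic is inverted: from ``$\ker\varphi^+$ is contained in the radical of the pairing, which is $\J(V)$'' one cannot conclude $\varphi^+(\J(V))=0$; what is needed is the opposite containment $\J(V)\subseteq\ker\varphi^+$, which is precisely the assertion to be proved. Second, the proposed ``compatibility with the skew-Hopf pairing'' cannot even be set up: $\varphi^+$ does not map $T(V)$ into the positive part of $\Uc_{\rho_i(\bq)}$, since $x_i\mapsto\lambda_i y_iK_i$ lands in the negative part times a group-like, and in general $\varphi^+$ is graded along $\beta\mapsto s_i^{\bq}(\beta)$, which sends positive degrees to mixed-sign degrees; hence the radical description of $\J(V)$ inside $T(V)$ (Proposition~\ref{prop:nichols-alternative}~\ref{item:nichols-alternative-d}) gives no direct control on $\ker\varphi^+$, and the same objection applies to your parenthetical skew-derivation variant (knowing the image is a pre-Nichols quotient of $T(V)$ says nothing about killing $\J(V)$). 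The device that actually closes this step in the cited proof is different: one first defines $T_i$ at the level of the Drinfeld doubles built from the tensor algebras (equivalently, from quotients by the finitely many explicit relations, including the vertex-$i$ Serre relations $(\ad_c x_i)^{1-c_{ij}^{\bq}}x_j$ and their $y$-analogues), where every defining relation can be checked by the computations you describe; one then transports the Nichols ideals using an intrinsic characterization valid inside the double --- maximality among suitable $\Z^{\I}$-graded (bi)ideals meeting the Cartan part and the generator degrees trivially, in the spirit of Proposition~\ref{prop:nichols-alternative}~\ref{item:nichols-alternative-b} --- so that the isomorphism of the big doubles necessarily maps the ideal for $\bq$ onto the ideal for $\rho_i(\bq)$ and descends, yielding $T_i$ and its inverse simultaneously. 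Without this (or an equivalent) argument, defining a map \emph{out of} $\Uc_{\bq}$, whose Nichols part has no known presentation, is not justified.
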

Recall $\Uc_{\bq}$ from Example \ref{exa:reducedYD-datum-free}.
See \cite{HS} for a generalization and categorical explanation of this result.

Assume now that $\GK \toba_{\bq} < \infty$, for instance that $\dim \toba_{\bq} < \infty$.
Then $\bq$ is admissible by \cite{Ro2}; since $\GK \toba_{\rho_i(\bq)} < \infty$
by Theorem \ref{th:heck-iso}, $\rho_i(\bq)$ is also admissible.
It can be shown without complications that
\begin{align}\label{eq:condicion Cartan scheme-grs}
c^\bq_{ij}&=c^{\rho_i(\bq)}_{ij} &  \mbox{for all }&i,j \in \I.
\end{align}

The fact that $\rho_i(\bq)$ might be different from $\bq$ 
is dealt with by considering 
\begin{align}\label{eq:Xq}
\cX_{\bq} &= \{\rho_{i_k} \dots \rho_{i_2}\rho_{i_1}(\bq),& k&\in \N_0, &i_1, \dots, i_k &\in \I\}.
\end{align}
Thus all $\bp \in \cX_{\bq}$ are admissible.
The set $\cX_{\bq}$ comes equipped with maps $\rho_i: \cX_{\bq} \to \cX_{\bq}$ given by 
\eqref{eq:rhoiq} for each $\bp \in \cX_{\bq}$. It is easy to see that $\rho_i^2 =\id$.

Thus each $\bp \in \cX_{\bq}$ gives rise to a Nichols algebra with  finite GK-dimension (or finite dimension, according to the assumption).
The \emph{generalized root system} of the Nichols algebra $\toba_{\bq}$ is the collection of sets
\begin{align}\label{eq:Deltaq}
(\varDelta^{\bp})_{\bp \in \cX_{\bq}}.
\end{align}

We also have reflections $s_i^{\bp}$ for $i\in \I$ and $\bp \in \cX_{\bq}$; they satisfy
\begin{align}\label{eq:Deltaq-action}
s_i^{\bp}(\varDelta^{\bp}) &= \varDelta^{\rho_i(\bp)}.
\end{align}

Altogether, the reflections $s_i^{\bp}$, $i\in \I$ and $\bp \in \cX_{\bq}$,  generate the so called  Weyl groupoid
$\cW$, as a subgroupoid of $\cX_{\bq} \times GL(\zt) \times \cX_{\bq}$.
By \eqref{eq:condicion Cartan scheme-grs} and \eqref{eq:Deltaq-action}, the Weyl groupoid
$\cW$ acts on  $(C^{\bp})_{\bp \in \cX_{\bq}}$ and on the generalized root system $(\varDelta^{\bp})_{\bp \in \cX_{\bq}}$,
that we think of as bundles of matrices and sets over $\cX_{\bq}$, respectively.
These actions are crucial for the study of Nichols algebras of diagonal type.

\subsection{The axiomatics}\label{subsec:axiomatics}

The ideas outlined in \S \ref{subsec:Weyl-gpd} fit into an axiomatic framework designed in \cite{HY} 
for this purpose. 
We overview now this approach following the conventions in \cite{AA-GRS-CLS-NA}. 
We denote by $\s_{\cX}$ the group of symmetries of a set $\cX$.

\subsubsection{Basic data}\label{subsubsection:basic-data}
All the combinatorial structures in this context are sorts of bundles over a \emph{basis with prescribed changes}, that
we formally call a \emph{basic datum}. This is a pair $(\cX, \rho)$, where $\I \neq \emptyset$ is a finite set, $\cX \neq \emptyset$ is  a  set and $\rho: \I  \to \s_{\cX}$ satisfies $\rho_i^2 = \id$ for all $i\in \I$.
We assume safely that $\I = \I_{\theta}$, for some $\theta\in \N$.
We say that the datum has base $\cX$  and size $\I$ (or $\theta$).

\smallbreak
We associate to a basic datum $(\cX, \rho)$ the quiver
\begin{align*}
\cQ_{\rho} = \{\sigma_i^x := (x, i, \rho_i(x)): i\in \I, x\in \cX\}
\end{align*} 
over $\cX$ (i.e. with set of points $\cX$),  source $s(\sigma_i^x) = \rho_i(x)$ and  target
$t(\sigma_i^x) = x$, $x \in \cX$. The diagram of $(\cX, \rho)$ is
the graph with points $\cX$ and one arrow between $x$ and $y$ decorated with the labels $i$
for each pair $(x, i, \rho_i(x))$, $(\rho_i(x), i, x)$ such that $x \neq \rho_i(x) = y$.
Thus we omit the loops, that can be deduced from the diagram and $\theta$. 
Here is an example with $\theta = 4$:

\begin{align}\label{eq:basicdatum-diagram}
&  \xymatrix{  &     \underset{x}{\bullet} \ar@/^1,5pc/^{1}[dd] \ar@/^{-1,5pc}/_{2}[dd]  \ar@(lu,ld)[]_{3}  \ar@(rd,ru)[]_{4}     &
\\ \\\underset{y}{\bullet} \ar@(dr,dl)[]^{3} \ar@(ur,ul)[]_{1} \ar@(lu,ld)[]_{2} \ar@/^0,5pc/^{4}[r] 
& \underset{z}{\bullet}  \ar@/^0,5pc/^{4}[l] \ar@/^0,5pc/^{3}[r] \ar@/^1pc/_{2}[uu] \ar@/^{-1pc}/^{1}[uu] &
\ar@/^0,5pc/^{3}[l]
\underset{w}{\bullet}
\ar@(ur,ul)[]_{1} \ar@(dr,dl)[]^{4}   \ar@(rd,ru)[]_{2}} &
&\leftrightsquigarrow &&\xymatrix{ \underset{x}{\vtxgpd} \ar@{-} ^{1,2}[d]
\\ 
\underset{y}{\vtxgpd} \hspace{3pt} \text{\raisebox{3pt}{$\overset{4}{\rule{25pt}{0.5pt}}$}}
\hspace{3pt}\underset{z}{\vtxgpd} \hspace{3pt} \text{\raisebox{3pt}{$\overset{3}{\rule{25pt}{0.5pt}}$}}
\hspace{3pt}\underset{w}{\vtxgpd}  }
\end{align}

We say that $(\cX, \rho)$ is connected when $\cQ_{\rho}$ is connected.
Also, we write $x \sim y$ if $x, y$ are in the same connected component of $\cQ_{\rho}$.

\subsubsection{Coxeter groupoids}\label{subsubsection:coxeter-gpd}
we assume that the reader has some familiarity with the theory of groupoids. For example, if $\cX$ is a set and $G$ is a group,
then $\cX \times G  \times \cX$ is a groupoid with the multiplication defined by
\begin{align*}
(x, g, y)(y,h,z) &= (x, gh, z),& x,y,z &\in \cX, \, g,h \in G,
\end{align*}
while $(x, g, y)(y',h,z)$ is not defined if $y \neq y'$.
Recall that

\begin{itemize} [leftmargin=*]\renewcommand{\labelitemi}{$\diamondsuit$}
\medbreak\item  A Coxeter matrix of size $\I$ is a symmetric matrix $\bm = (m_{ij})_{i,j \in \I}$ with entries in
$\Z_{\ge 0} \cup \{+\infty\}$ such that $m_{ii} = 1$ and $m_{ij} \geq 2$, for all $i\neq j\in \I$.

\medbreak\item The  forgetful functor from the category of groupoids over $\cX$ to that of quivers over $\cX$  admits a left adjoint; i.e. every quiver $\cQ$ over $\cX$ determines 
a  free groupoid $F(\cQ)$ over $\cX$, whose construction  is pretty much the same as the construction of the free group.

\medbreak\item Consequently we may speak of the groupoid $\Gc$ presented by a quiver $\cQ$ with relations $N$ (which has to be a set of loops), 
namely $\Gc = F(\cQ)/ \mathcal N$, where $\mathcal N$ is the normal subgroup bundle of $F(\cQ)$ generated by $N$.
\end{itemize}

We fix a basic datum $(\cX, \rho)$ of size $\I$. We denote in the free groupoid $F(\cQ_{\rho})$, or any quotient thereof, 
\begin{align}\label{eq:conventio}
\sigma_{i_1}^x\sigma_{i_2}\cdots \sigma_{i_t} =
\sigma_{i_1}^x\sigma_{i_2}^{\rho_{i_1}(x)}\cdots \sigma_{i_t}^{\rho_{i_{t-1}} \cdots \rho_{i_1}(x)}
\end{align}
i.~e., the implicit superscripts are the only possible allowing compositions.

\begin{definition}
A \emph{Coxeter datum} for $(\cX, \rho)$  is a bundle of Coxeter matrices $\bM = (\bm^x)_{x\in \cX}$, $\bm^x = (m^x_{ij})_{i,j\in \I}$, such that
\begin{align}\label{eq:coxeter-datum}
s((\sigma^x_i\sigma_j)^{m^x_{ij}}) &= x,& i,j&\in \I,& x&\in \cX,
\\
\label{eq:coxeter-datum2}
m^x_{ij}&=m^{\rho_i(x)}_{ij} &  \mbox{for all }&x \in \cX, \, i,j \in \I.
\end{align}
Alternatively, we say that $(\cX, \rho, \bM)$ is a Coxeter datum.
The \emph{Coxeter groupoid} $\cW = \cW(\cX, \rho, \bM)$ 
is the groupoid  generated by $\cQ_{\rho}$ with relations
\begin{align}\label{eq:def-coxeter-gpd}
(\sigma_i^x\sigma_j)^{m^x_{ij}} &= \id_x, &i, j\in \I,&  &x\in \cX.
\end{align}
\end{definition}

The requirement \eqref{eq:coxeter-datum} just says that \eqref{eq:def-coxeter-gpd} makes sense.

\begin{example}\label{ex:coxetergroupoid-base1}
A Coxeter grupoid over a basic datum of size one is just a Coxeter group.
\end{example}

It is equivalent to give a groupoid over a basis $X$ or an equivalence relation on $X$
together with one group for each equivalence class. Coxeter groupoids are more intricate than
equivalence relations with  one Coxeter group for each class.
We now describe all Coxeter groupoids over a basis of 2 elements.

\begin{example}\label{ex:coxetergroupoid-base2} \cite{AA-GRS-CLS-NA}
Let $(\cX, \rho)$ be the basic datum of size $\theta$ with $\cX =\{x, y\}$ and $\ell$ loops at each point, that
we label as follows:
\begin{align*}
\underset{x}{\vtxgpd}\hspace{3pt}
\raisebox{3pt}{$\overset{\ell + 1,\, \dots, \,\theta}{\rule{50pt}{0.5pt}}$}
\hspace{3pt} \underset{y}{\vtxgpd}.
\end{align*}
Let  $\bm^x = (m_{ij})_{i,j \in \I}$ and $\bm^y = (n_{ij})_{i,j \in \I}$ be  Coxeter matrices
such that 
\begin{align*}
m_{ih},n_{ih}&\in 2\Z, & i&\in \I_{\ell}, \, h\in \I_{\ell + 1, \theta},& 
&\text{what is tantamount to \eqref{eq:coxeter-datum},} \\
m_{kj} &= n_{kj}, & k&\in \I_{\ell + 1, \theta}, \, j\in \I,& 
&\text{what is tantamount to \eqref{eq:coxeter-datum2}.}
\end{align*}
By symmetry, $m_{jk} = n_{jk}$, for $k\in \I_{\ell + 1, \theta}$, $j\in \I$. Set
\begin{align*}
c_{ih} &= \frac{m_{ih}}2 = \frac{n_{ih}}2, \,i\in \I_{\ell} , \, h\in \I_{\ell + 1, \theta}.
\end{align*}
The associated Coxeter groupoid is isomorphic to
$\cX\times H \times \cX$, where $H$ is the group presented by generators
\begin{align*}
&s_i,& &t_i,&  i&\in \I_{\ell} ,& &u_{h},& h&\in \I_{\ell + 1, \theta},\, h<\theta;
\end{align*}
with defining relations
\begin{align*}
(s_is_j)^{m_{ij}}  &= e =(t_it_j)^{n_{ij}}, & i, j&\in \I_{\ell} ; \\
(s_it_{ih})^{c_{ih}} &= e, & i&\in \I_{\ell}, \quad  h\in \I_{\ell + 1, \theta};\\
u_{hk}^{m_{hk}} &= e,& h,k&\in \I_{\ell + 1, \theta}, \quad h<k.
\end{align*}
Here we denote
\begin{align*}
u_{hk} &= u_{h}u_{h+1} \dots u_{k-1} = u_{kh}^{-1},& t_{ih} &= u_{h\theta}t_iu_{h\theta}^{-1},&
i&\in \I_{\ell},& h < k&\in \I_{\ell + 1, \theta}.
\end{align*}
In particular, we see
that the isotropy groups of a Coxeter groupoid are not necessarily Coxeter groups.
\end{example}

In a Coxeter groupoid, we may speak of the length and a reduced expression of any element, as for Coxeter groups.

\subsubsection{Generalized root systems}\label{subsection:weylgroupoid-def}
We are now ready for the main definition of this Subsection (that is not the 
same as the one considered in \cite{Ser-root systems}).
Let $\cR = (\cX, \rho)$ be a connected basic datum of size $\I = \I_{\theta}$.
Recall that $\{\alpha_i\}_{i\in\I}$ denotes the  canonical basis of $\Z^{\I}$.

\begin{definition} \cite{HY}
A \emph{generalized root system} for $(\cX, \rho)$ (abbreviated GRS)
is a pair $(\cC, \varDelta)$,
where 
\begin{itemize} [leftmargin=*]
\medbreak\item  $\cC = (C^x)_{x\in \cX}$
is a bundle of generalized Cartan matrices $C^x = (c^x_{ij})_{i,j \in \I}$, cf. \eqref{eq:gcm}, satisfying
\begin{align}\label{eq:condicion Cartan scheme}
c^x_{ij}&=c^{\rho_i(x)}_{ij} &  \mbox{for all }&x \in \cX, \, i,j \in \I.
\end{align}
As usual, these GCM give rise to reflections  $s_i^x\in GL (\Z^{\I})$  by
\begin{align}\label{eq:reflection-x}
s_i^x(\alpha_j)&=\alpha_j-c_{ij}^x\alpha_i, & j&\in \I,&  i &\in \I, x \in \cX.
\end{align}
By \eqref{eq:condicion Cartan scheme}, $s_i^x$ is the inverse of $s_i^{\rho_i(x)}$.

\medbreak\item $\varDelta = (\varDelta^x)_{x\in \cX}$ is a bundle of subsets $\varDelta^x \subset \Z^{\I}$ (we call this a \emph{bundle of root sets}) such that
\begin{align}
\label{eq:def root system 1}
\varDelta^x &= \varDelta^x_+ \cup \varDelta^x_-, & \varDelta^x_{\pm} &:= \pm(\varDelta^x \cap \N_0^{\I}) \subset \pm\N_0^{\I};
\\ \label{eq:def root system 2}
\varDelta^x \cap \Z \alpha_i &= \{\pm \alpha_i \};& &
\\ \label{eq:def root system 3}
s_i^x(\varDelta^x)&=\varDelta^{\rho_i(x)},& \text{cf. }&\eqref{eq:reflection-x};
\\ \label{eq:def root system 4}
(\rho_i\rho_j)^{m_{ij}^x}(x)&=(x), & m_{ij}^x &:=|\varDelta^x \cap (\N_0\alpha_i+\N_0 \alpha_j)|,
\end{align}
for all $x \in \cX$, $i \neq j \in \I$.
\end{itemize}
We call $\varDelta^x_+$, respectively $ \varDelta^x_-$, 
the set of \emph{positive}, respectively \emph{negative}, roots.
\end{definition}

\begin{definition}\label{def:weylgpd} Let $\cR =  (\cC, \varDelta)$ be a generalized root system.

\begin{itemize}[leftmargin=*]\renewcommand{\labelitemi}{$\circ$}
\item
The Weyl groupoid  $\cW$ is
the subgroupoid of $\cX \times GL(\Z^\theta) \times \cX$ generated by all
$\varsigma_i^x = (x, s_i^x,\rho_i(x))$, $i \in \I$, $x \in \cX$.

\medbreak \item  If  $x\in \cX$, then we set
$\bm^x = (m^x_{ij})_{i,j\in \I}$, where $m^x_{ij}$ is defined as in \eqref{eq:def root system 4}.
By the axioms above,
$\bM = (\bm^x)_{x\in \cX}$ is a Coxeter datum for $(\cX, \rho)$.

\medbreak \item
Let $x, y \in \cX$. If $w \in \cW(x, y)$, then $w(\varDelta^x)= \varDelta^y$, by \eqref{eq:def root system 3}.
Thus the sets of \emph{real} and \emph{imaginary} roots at $x$ are
\begin{align*}
(\varDelta^{\re})^x &= \bigcup_{y\in \cX}\{ w(\alpha_i): \ i \in \I, \ w \in \cW(y,x) \}, &
(\varDelta^{\im})^x &= \varDelta^{x} - (\varDelta^{\re})^x.
\end{align*}
\end{itemize}
\end{definition}

In analogy with  Cartan matrices of finite type, finite GRS are characterized by all roots being real. 
Let $\cR =  (\cC, \varDelta)$ be a generalized root system. We say that $\cR$ is \emph{finite} if $\vert \cW \vert < \infty$.

\begin{theorem}\label{th:finite-grs}
\begin{enumerate}[leftmargin=*, label=\rm{(\alph*)}]
\item\label{item:prop-grs2}  \cite[2.11]{CH-at most 3} $\cR$ is finite $\iff \vert\varDelta^x\vert < \infty, \forall x\in \cX\iff$
\begin{align*}
\exists x\in \cX: \vert\varDelta^x\vert < \infty   
\iff 
\vert(\varDelta^{\re})^x\vert < \infty, \forall x\in \cX.
\end{align*}

\medbreak \item\label{item:prop-grs2.5} \cite[Corollary 5]{HY}
Assume that $\cR$ is finite. Pick $x\in \cX$. Then there is a unique
$\omega_0^x \in \cW$ ending at $x$ of maximal length $\ell$; 
all reduced expressions of $\omega_0^x$ have length $\ell$. 
If $y \in \cX$, then $\omega_0^y$ has length $\ell$.

\medbreak \item\label{item:prop-grs3}   \cite[Prop. 2.12]{CH-at most 3}
Assume that $\cR$ is finite. Pick $x\in \cX$ and fix
a reduced expression $\omega_0^x =\sigma^x_{i_1} \cdots \sigma_{i_\ell}$.
Then $$\varDelta^x_+ = \{\beta_j: j \in \I_\ell \},$$ where 
$\beta_j := s_{i_1}^x\cdots s_{i_{j-1}}(\alpha_{i_j}) \in \varDelta^x$, 
$j \in \I_\ell$. Hence, all roots are real.

\medbreak \item\label{item:prop-grs1} \cite{HY}
There is an epimorphism of groupoids $\cW(\cX, \rho, \bM)\to \cW(\cX, \rho, \cC)$.
If $\cR$ is finite, then this is an isomorphism.

\medbreak \item\label{item:prop-grs4} \cite[Theorem 4.2]{HV}
If $\cR$ is finite, then there is $x \in \cX$ such that $C^x$ is of finite type.

\end{enumerate}
\end{theorem}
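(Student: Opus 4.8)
The plan is to view the five assertions as facets of one structure theory, with the classification of rank-$2$ finite generalized root systems as the common base case and the inversion-set calculus for reduced words in the Weyl groupoid as the common engine. I would first clear away the purely formal implications in \ref{item:prop-grs2}. By \eqref{eq:def root system 3} each $s_i^x$ is a bijection $\varDelta^x\to\varDelta^{\rho_i(x)}$, so $|\varDelta^x|$ is constant along every arrow of $\cQ_{\rho}$; since $\cQ_{\rho}$ is connected, $|\varDelta^x|$ is the same for all $x$, which gives "$\exists x\colon|\varDelta^x|<\infty$" $\Leftrightarrow$ "$\forall x\colon|\varDelta^x|<\infty$". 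Likewise $|\cW|<\infty$ forces $|(\varDelta^{\re})^x|<\infty$, because $(\varDelta^{\re})^x$ is a finite union, over $w\in\cW(y,x)$ and $y\in\cX$, of the $\theta$-element sets $\{w(\alpha_i):i\in\I\}$. What is left in \ref{item:prop-grs2} are the two implications $|\varDelta^x|<\infty\Rightarrow|\cW|<\infty$ and $|(\varDelta^{\re})^x|<\infty\Rightarrow|\varDelta^x|<\infty$; these, together with \ref{item:prop-grs4}, carry the real weight.

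The technical core powering \ref{item:prop-grs2}, \ref{item:prop-grs2.5}, \ref{item:prop-grs3} and \ref{item:prop-grs1} is a \emph{length bound}: if $|\varDelta^x|<\infty$, then every $w\in\cW$ with source $x$ has a reduced expression of length at most $|\varDelta^x_+|$. One proves this as in Coxeter group theory, attaching to a reduced word $\varsigma^x_{i_1}\cdots\varsigma_{i_\ell}$ the roots $\beta_j:=s^x_{i_1}\cdots s_{i_{j-1}}(\alpha_{i_j})\in\varDelta^x$ and showing, via the exchange and deletion conditions (available because each rank-$2$ restriction of $\cR$ is a finite rank-$2$ GRS, so $m^x_{ij}=|\varDelta^x\cap(\N_0\alpha_i+\N_0\alpha_j)|<\infty$ and \eqref{eq:def-coxeter-gpd} makes sense), that the $\beta_j$ are distinct positive roots. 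Then there are at most $\theta^{|\varDelta^x_+|}$ reduced words out of $x$, hence finitely many morphisms with source $x$, and since $\cW$ is connected this makes $|\cX|$ and $|\cW(x,x)|$, hence $|\cW|=|\cX|^2\,|\cW(x,x)|$, finite; this is $|\varDelta^x|<\infty\Rightarrow|\cW|<\infty$. The same machinery yields \ref{item:prop-grs2.5} and \ref{item:prop-grs3}: a maximal-length element $\omega_0^x$ exists by the bound, its inversion set exhausts $\varDelta^x_+$, uniqueness and the equal-length property come from exchange, $\ell(\omega_0^y)=\ell(\omega_0^x)$ by transport along any $w\in\cW(x,y)$, and the displayed description $\varDelta^x_+=\{\beta_j:j\in\I_\ell\}$ exhibits every positive root as $w$ applied to a simple root, so all roots are real; this last fact supplies precisely the missing implication $|(\varDelta^{\re})^x|<\infty\Rightarrow|\varDelta^x|<\infty$ of \ref{item:prop-grs2}. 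For \ref{item:prop-grs1}, the functor $\sigma^x_i\mapsto\varsigma^x_i$ is well defined because the reflections satisfy \eqref{eq:def-coxeter-gpd} with $m^x_{ij}$ as in \eqref{eq:def root system 4}, and surjective by construction; injectivity in the finite case is Matsumoto's theorem for Coxeter groupoids, again a consequence of exchange.

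The one genuinely non-formal input, and where I expect the main obstacle to lie, is \ref{item:prop-grs4}: that some $C^x$ is of finite type. This is delicate precisely because it is not local—finiteness of $\cW$ is compatible with every Weyl group $W(C^x)$ being infinite (the reflections generating $\cW(x,x)$ are conjugates of the $s^x_i$ that change from object to object, not the $s^x_i$ themselves), and all rank-$2$ restrictions of an affine $C^x$ are of finite type, so there is no rank-$2$ obstruction to leverage. Following \cite[Theorem 4.2]{HV}, the route is to assume every $C^x$ is of affine or indefinite type and contradict $|\varDelta^x|<\infty$: for affine $C^x$ one uses the null direction and the translation it generates to produce an unbounded family of real roots in $\varDelta^x$, and indefinite $C^x$ is worse. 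Running this argument \emph{simultaneously} over all $x\in\cX$, so that one cannot escape the affine/indefinite regime by changing objects, is the crux, and is where the finer reduction and classification results of \cite{CH-at most 3,HV} (induction on $\theta$, the analysis in rank $\le 3$) enter; I would not expect a soft proof of this part.
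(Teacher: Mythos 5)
This theorem is presented in the paper as a compendium of results quoted from \cite{CH-at most 3}, \cite{HY} and \cite{HV} with no proof supplied, so there is no internal argument to compare against; your sketch follows exactly the route of those sources — the inversion-set/length-bound calculus and the exchange condition for the Weyl groupoid in parts \ref{item:prop-grs2}--\ref{item:prop-grs3}, the Matsumoto-type theorem of \cite{HY} for part \ref{item:prop-grs1} — and, like the paper, ultimately defers part \ref{item:prop-grs4} to \cite{HV}. The outline is sound; the only point worth stating explicitly is that in part \ref{item:prop-grs2} the implication $\vert(\varDelta^{\re})^x\vert<\infty \Rightarrow \cR$ finite uses that the roots $\beta_j$ attached to a reduced word are themselves real, so the length bound already holds with the real roots alone, after which part \ref{item:prop-grs3} gives finiteness of all of $\varDelta^x$.
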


An outcome of the Theorem is that a finite GRS is determined by the bundle $\cC$ of generalized Cartan matrices. 
It would be coherent to call \emph{arithmetic} root system to a  finite GRS. 

As expected, Nichols algebras of diagonal type with finite dimension or GK-dimension give rise to 
generalized root systems.

\begin{exa}\label{prop:q-gkdim-root-system} Let $(V,c)$ and $\bq$ be as in \S \ref{subsec:braid}.
Assume that $\GK \toba_{\bq} < \infty$. 
Let $(\cX_{\bq}, \rho)$ be as in \eqref{eq:Xq}, let
$\cC = (C^{\bp})_{\bp \in \cX_{\bq}} $ be the bundle of generalized Cartan matrices defined by \eqref{eq:defcij},
and let $\varDelta = (\varDelta^{\bp})_{\bp \in \cX_{\bq}} $ be as in \eqref{eq:Deltaq}.
Then $\cR= (\cC, \varDelta)$ is a generalized root system for  $(\cX_{\bq}, \rho)$.
\end{exa}

We summarize the relation between generalized root systems and Nichols algebras:

\begin{rem}\begin{enumerate}[leftmargin=*, label=\rm{(\alph*)}]
\item\label{it:rem-grs-a} The classification of the arithmetic Nichols algebras  of diagonal type 
(characteristic 0) was achieved in \cite{H-classif RS}, as said.

\medbreak\item\label{it:rem-grs-b} Later, the classification of the finite generalized root systems 
was obtained in \cite{CH-classification}. There are finite GRS that 
do not arise from arithmetic Nichols algebras; at least one of them 
arises from a finite dimensional Nichols algebras  of diagonal type in positive characteristic.

\medbreak\item\label{it:rem-grs-c}
Let $\cR$ be a finite GRS arising from a Nichols algebra $\toba_{\bq}$ of diagonal type. 
We say that $\toba_{\bq}$ is an incarnation of $\cR$; incarnations are by no means unique, see Remark 
\ref{rem:incarnation-not-unique}.

\medbreak\item\label{it:rem-grs-d} More generally, let $\toba$ be the 
Nichols algebra of a semisimple Yetter-Drinfeld module. If $\dim \toba < \infty$, then it gives rise
to a finite GRS \cite{HS-london}. No explicit examples are known, except diagonal type and the following:
The classification of the finite dimensional Nichols algebras 
over finite groups, semisimple but neither simple nor of diagonal type
(arbitrary characteristic) was achieved in \cite{HV}. It turns out that all GRS appearing here arise also
in diagonal type; explicitly they are standard with $|\cX|=1$ of types $A_{\theta}$, $B_{\theta}$,
$\theta \geq 2$, $C_{\theta}$,  $\theta \geq 3$, $D_{\theta}$, $\theta \geq 4$, $E_{\theta}$, $\theta \in\I_{6,8}$, $F_4$, $G_2$; the root systems of types $\Br(2)$, $\Br(3)$, and $\SBrown(2,3)$.

\medbreak\item\label{it:rem-grs-e} When $\GK\toba_{\bq} < \infty$,  it is conjectured that the associated GRS
is finite \cite{AAH}. 
There is some evidence: the conjecture is true for $\dim V = 2$ or for affine Cartan type.
We observe that apparently the imaginary roots of a GRS are not determined by the real ones, contrarily
to what happens with generalized Cartan matrices. 
See \S \ref{sec:appendix-finiteGK} for the list of Nichols algebras with arithmetic root systems and positive $\GK$.
\end{enumerate}
\end{rem}

The following Proposition will be used when discussing incarnations.

\begin{prop}\label{prop:equality-root-systems}
Let $(\cX, \rho)$ be a basic datum and let  $\cR= (\cC, \varDelta)$ and $\cR' = \cR(\cC, \varDelta')$ be two generalized root systems for  $(\cX, \rho)$
with the same bundle $\cC$. Then the bundles $\varDelta^{\re} = ((\varDelta^{\re})^{x})_{x \in \cX} $ and ${\varDelta'}^{\re} = (({\varDelta'}^{\re})^{x})_{x \in \cX} $
are equal. In particular $\cR$ is finite if and only if $\cR'$ is finite; if this happens, then $\cR = \cR'$.
\end{prop}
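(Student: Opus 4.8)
The plan is to reduce everything to the statement that the bundle of real roots depends only on the bundle $\cC$ of generalized Cartan matrices, and then deduce the remaining claims from Theorem \ref{th:finite-grs}.

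\textbf{Step 1: the real roots are determined by $\cC$.} Recall from Definition \ref{def:weylgpd} that
$(\varDelta^{\re})^x = \bigcup_{y\in \cX}\{ w(\alpha_i): i \in \I,\ w \in \cW(y,x)\}$, and that the Weyl groupoid $\cW = \cW(\cX,\rho,\cC)$ is generated by the elements $\varsigma_i^x = (x, s_i^x, \rho_i(x))$, where the reflections $s_i^x$ are defined by \eqref{eq:reflection-x} purely in terms of the entries $c_{ij}^x$ of $C^x$. Hence $\cW$, as a subgroupoid of $\cX \times GL(\Z^\theta)\times \cX$, is built solely from $(\cX,\rho)$ and $\cC$; in particular it coincides for $\cR$ and $\cR'$. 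Applying the same family of groupoid elements to the simple roots $\alpha_i$ then yields the same subsets of $\Z^{\I}$ at each point, so $(\varDelta^{\re})^x = (({\varDelta'}^{\re})^x$ for every $x\in \cX$. This is the main content of the Proposition and, I expect, the only place where one has to be slightly careful: one must check that the definition of $(\varDelta^{\re})^x$ genuinely only invokes the groupoid $\cW$ and not the root sets $\varDelta^x$ themselves — which is immediate from the formula above, since the $\alpha_i$ are the fixed canonical basis, not roots coming from $\varDelta$.

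\textbf{Step 2: finiteness transfers.} By Theorem \ref{th:finite-grs}\ref{item:prop-grs2}, $\cR$ is finite iff $|(\varDelta^{\re})^x| < \infty$ for all $x\in \cX$ (indeed iff for one $x$). Since by Step 1 the bundles $\varDelta^{\re}$ and ${\varDelta'}^{\re}$ agree, $\cR$ is finite if and only if $\cR'$ is.

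\textbf{Step 3: if finite, then $\cR = \cR'$.} Suppose $\cR$ (hence $\cR'$) is finite. By Theorem \ref{th:finite-grs}\ref{item:prop-grs3}, for each $x\in \cX$ one has $\varDelta^x_+ = \{\beta_j : j\in \I_\ell\}$ with $\beta_j = s_{i_1}^x\cdots s_{i_{j-1}}(\alpha_{i_j})$ for a fixed reduced expression $\omega_0^x = \sigma_{i_1}^x\cdots\sigma_{i_\ell}$ of the longest element, and \emph{all} roots are real, i.e. $\varDelta^x = (\varDelta^{\re})^x$; the same holds for $\cR'$ with the same longest element, since $\omega_0^x$ lives in the common groupoid $\cW$ and reduced expressions and lengths are groupoid notions (Theorem \ref{th:finite-grs}\ref{item:prop-grs2.5}). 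Therefore $\varDelta^x = (\varDelta^{\re})^x = (({\varDelta'}^{\re})^x = {\varDelta'}^x$ for all $x$, so $\cR = \cR'$. Alternatively, and even more directly: in the finite case every root is real, so $\varDelta^x = (\varDelta^{\re})^x$ by definition, and Step 1 finishes the proof without invoking the longest element at all. I would present this shorter route as the main argument and mention the reduced-expression description only as a remark.

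Overall the proof is essentially an unwinding of definitions; there is no genuine obstacle, the only thing to get right is the observation in Step 1 that $(\varDelta^{\re})^x$ is a function of $\cW$ alone, together with the input from Theorem \ref{th:finite-grs} that finiteness of a GRS is equivalent to finiteness of its real part and forces all roots to be real.
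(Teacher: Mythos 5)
Your proof is correct and follows the same route as the paper's: the Weyl groupoid is determined by $\cC$ alone, hence the bundles of real roots agree, and the remaining claims follow from Theorem \ref{th:finite-grs} (finiteness is detected by the real roots, and in the finite case all roots are real). Your version simply spells out which parts of that theorem are being invoked, which the paper leaves implicit.
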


\pf
Both $\cR$ and $\cR'$ have the same Weyl groupoid because this is defined by $\cC$, implying the first claim. Now the second claim follows from the first and Theorem \ref{th:finite-grs}.
\epf

\subsection{The Weyl groupoid of a (modular) Lie (super)algebra}\label{subsec:Weyl-gpd-super}

The generalized root systems appear in other settings. Important for this monograph is that of 
contragredient Lie superalgebras. All results in this Subsection are from  \cite{AA-GRS-CLS-NA}, unless explicitly quoted otherwise.

\medbreak
Let $\theta \in \N$, $\I = \I_\theta$. We fix 
\begin{itemize}[leftmargin=*]\renewcommand{\labelitemi}{$\circ$}
\item a  field $\kk$  of characteristic $\ell$, 

\item $A =(a_{ij})\in\kk^{\I\times\I}$ 

\item $\pa=(p_i)\in \G_2^\I$,  when $\ell\neq 2$, 

\item a vector space $\h$  of dimension $2\theta-\rk A$,  with
a basis $(h_i)_{ i\in \I_{2\theta-\rk A}}$;

\item a linearly independent family $(\xi_i)_{i\in \I}$ in $\h^*$ such that 
\begin{align*}
\xi_j(h_i) &= a_{ij},& i,j&\in\I.
\end{align*}
\end{itemize}

We call $\pa$ the \emph{parity vector}; the additive version is  $\vert i \vert = \frac{1- p_i}2 \in \Z/2$. 

\smallbreak
From these data, we define a Lie superalgebra (a Lie algebra whenever $\pa = \uno := (1, \dots, 1)$) in the usual way.
First we define the Lie superalgebra $\gt:=\gt(A,\pa)$ by generators $e_i$, $f_i$, $i\in \I$, and $\h$,
subject to the relations:
\begin{align}
\label{eq:relaciones gtilde}
[h,h']&=0, &[h,e_i] &= \xi_i(h)e_i, & [h,f_i] &= -\xi_i(h)f_i, & [e_i,f_j]&=\delta_{ij}h_i,
\end{align}
for all $i, j \in \I$, $h, h'\in \h$, with parity given by
\begin{align*}
|e_i|&=|f_i|= \vert i \vert,&  i&\in\I, & |h|&=0,&  h&\in\h.
\end{align*}
This Lie superalgebra has a triangular decomposition $\gt=\ntp\oplus\h\oplus\ntm$ that arises from the 
$\Z$-grading $\gt = \mathop{\oplus}\limits_{k\in \Z}\gt_k$ determined by $e_i\in \gt_1$, $f_i \in \gt_{-1}$,
$\h = \gt_0$.
The \emph{contragredient Lie superalgebra} associated to $A$, $\pa$ is  
\begin{align*}
\g(A,\pa):= \gt(A,\pa) / \rg
\end{align*}
where $\rg=\rg_+\oplus \rg_-$ is the maximal $\Z$-homogeneous ideal intersecting $\h$ trivially.
We set $\g:=\g(A,\pa)$,  and identify $e_i$, $f_i$, $h_i$, $\h$ with their images in $\g$.
Clearly $\g$ inherits the grading of $\gt$ and $\g=\np\oplus\h\oplus\nm$, where $\n_\pm =\nt_\pm / \rg_\pm$.
As in \cite{K-libro,BGL}, we assume from now on that $A$ satisfies
\begin{align}\label{eq:symmetrizable}
&a_{ij}=0\mbox{ if and only if }a_{ji}=0, & \mbox{for all }j&\neq i.
\end{align}
By \cite[Section 4.3]{BGL} or \cite[Remark 4.2]{CE}, $\g$ is  $\Z^{\I}$-graded by
\begin{align*}
\deg e_i&=-\deg f_i=\alpha_i, & \deg h&=0, & i\in \I,\ h\in \h.
\end{align*}
The roots, respectively the positive, or negative, roots, are the elements of
\begin{align*}
\nabla^{(A,\pa)} &=\{\alpha\in\Z^{\I} - 0:\g_\alpha\neq0\}, & \nabla_{\hspace{2pt} \pm} &=\nabla^{(A,\pa)}\cap(\pm\nct).
\end{align*}
For instance, the simple roots are $\alpha_i \in \nabla^{(A,\pa)}$, $i\in\I$. Then
\begin{align*}
\nabla^{(A,\pa)}&=\nabla_{\hspace{2pt} +}^{(A,\pa)}\cup\nabla_{\hspace{1pt} -}^{(A,\pa)}, & 
\nabla_{\hspace{2pt} -}^{(A,\pa)}&= -\nabla_{\hspace{2pt} +}^{(A,\pa)}.
\end{align*}
We say that $(A,\pa)$ is \emph{admissible} if 
\begin{align}\label{eq:f-nilpotent}
\ad f_i &\text{ is locally nilpotent in } \g = \g(A,\pa)
\end{align}
for all  $i\in\I$, cf. \cite{Ser-superKM}. For instance, $(A,\pa)$ is admissible when
$\g(A, \pa)$ is finite-dimensional, or   $\ell >0$ \cite{AA-GRS-CLS-NA}.

If $(A,\pa)$ is admissible, then we define  $C^{(A,\pa)}= \big(c_{ij}^{(A,\pa)}\big)_{i,j\in\I} \in \Z^{\I \times \I}$
by
\begin{align*}
c_{ii}^{(A,\pa)}&= 2, &
c_{ij}^{(A,\pa)}&:=-\min\{m\in\N_0:(\ad f_i)^{m+1} f_j= 0 \},& i&\neq j \in \I.
\end{align*}
Let $s_i^{(A,\pa)}\in GL(\Z^{\I})$ be the  involution given by
\begin{align}\label{eq:definicion si}
s_i^{(A,\pa)}(\alpha_j)&:= \alpha_j-c_{ij}^{(A,\pa)}\alpha_i, & j\in\I.
\end{align}

Let $i\in \I$; set $\rho_i\pa = (\overline{p}_j)_{j\in\I}$,
$\overline{p}_j = p_jp_i^{c_{ij}^{(A,\pa)}}$. 
In \cite{AA-GRS-CLS-NA}, we introduce a matrix $\rho_iA$ and the pair $\rho_i(A, \pa) := (\rho_iA, \rho_i\pa)$. 

\begin{theorem}\label{thm:isomorfismo Ti} 
Let $A\in \kk^{\I\times \I}$  satisfying \eqref{eq:symmetrizable} and
$\pa\in (\G_2)^{\I}$. Assume that $(A,\pa)$ is admissible.
Then there are Lie superalgebra isomorphisms 
\begin{align}\label{eq:Ti-Liesuperalg}
T_i^{(A,\pa)}:\g(\rho_i A,\rho_i\pa)&\to\g(A,\pa),& i&\in \I, \\ \intertext{such that}
\label{eq:Ti mueve grados via si}
T_i^{(A,\pa)}\left(\g(\rho_i A,\rho_i\pa)_\beta\right) &= \g(A,\pa)_{s_i^{(A,\pa)}(\beta)}, &  \beta &\in\pm\N_0^\theta.
\end{align}
\end{theorem}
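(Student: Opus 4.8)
The plan is to construct $T_i^{(A,\pa)}$ explicitly on generators and then verify it descends to the quotient by the maximal ideal $\rg$. First I would fix $i\in\I$ and write $m_j := -c_{ij}^{(A,\pa)} \in \N_0$ for $j\neq i$, so that by definition of $C^{(A,\pa)}$ one has $(\ad f_i)^{m_j+1}f_j = 0$ in $\g(A,\pa)$ while $(\ad f_i)^{m_j}f_j \neq 0$; the hypothesis \eqref{eq:f-nilpotent} that $(A,\pa)$ is admissible guarantees the $m_j$ are finite. Dually one checks $(\ad e_i)^{m_j+1}e_j = 0$ (this uses \eqref{eq:symmetrizable} together with the standard $\sli(2)$- or $\mathfrak{osp}(1|2)$-representation-theory argument inside the rank-two subalgebra generated by $e_i,f_i,e_j,f_j$, distinguishing the cases $p_i = 1$ and $p_i = -1$). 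Then I would define $T_i^{(A,\pa)}\colon \gt(\rho_iA,\rho_i\pa) \to \g(A,\pa)$ on generators by the Lusztig-type formulas
\begin{align*}
T_i(e_j) &= (\ad e_i)^{m_j}(e_j),& T_i(f_j) &= c_j\,(\ad f_i)^{m_j}(f_j),& j&\neq i,\\
T_i(e_i) &= f_i,& T_i(f_i) &= e_i, & T_i(h_j) &= s_i^{(A,\pa)}(\alpha_j)(\text{as a coweight, via }\h),
\end{align*}
with $c_j \in \kk^\times$ a normalizing scalar chosen so that the relation $[e_j,f_j] = h_j$ is preserved (its value comes out of the rank-two computation, and here the characteristic-$\ell$ and parity bookkeeping must be done carefully). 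The main point at this stage is to check that the images satisfy the defining relations \eqref{eq:relaciones gtilde} of $\gt(\rho_iA,\rho_i\pa)$: the $[h,h']=0$ and $[h,e_j]$, $[h,f_j]$ relations follow because $s_i^{(A,\pa)}$ is the reflection recorded in \eqref{eq:definicion si} and because the $\Z^\I$-degree of $(\ad e_i)^{m_j}e_j$ is exactly $s_i^{(A,\pa)}(\alpha_j)$; the $[e_j,f_k]=\delta_{jk}h_j$ relations are the substantive ones and reduce to the rank-$\le 2$ identities above together with $[e_i, (\ad f_i)^{m_j}f_j] = 0$ for $j\neq i$, which holds since $(\ad f_i)^{m_j}f_j$ is a lowest-weight vector there.

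Once $T_i$ is defined as a homomorphism $\gt(\rho_iA,\rho_i\pa)\to\gt(A,\pa)$ (before quotienting), I would note that the analogous formulas with $i$ applied to $\rho_i(A,\pa)$ give a candidate inverse $T_i'\colon\gt(A,\pa)\to\gt(\rho_iA,\rho_i\pa)$, using $\rho_i^2 = \id$ on the datum (this is where one needs the construction of $\rho_iA$ from \cite{AA-GRS-CLS-NA}, in particular that $c_{ij}^{\rho_i(A,\pa)} = c_{ij}^{(A,\pa)}$, i.e.\ the superalgebra analogue of \eqref{eq:condicion Cartan scheme-grs}). Checking $T_i'\circ T_i = \id$ and $T_i\circ T_i' = \id$ on generators is again a rank-$\le 2$ computation plus choosing the scalars $c_j$, $c_j'$ consistently so their product is $1$; this shows $T_i$ is an isomorphism on the free objects $\gt$. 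Therefore $T_i$ maps the maximal graded ideal $\rg_{\rho_i(A,\pa)}$ intersecting $\h$ trivially isomorphically onto the corresponding maximal ideal $\rg_{(A,\pa)}$ — indeed $T_i$ preserves the $\h$-action and hence the property "intersects $\h$ trivially", and an isomorphism carries a maximal such ideal to a maximal such ideal — so $T_i$ descends to an isomorphism $\g(\rho_iA,\rho_i\pa)\to\g(A,\pa)$, which is \eqref{eq:Ti-Liesuperalg}.

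Finally, \eqref{eq:Ti mueve grados via si} is immediate from the construction: $T_i$ sends a generator of $\Z^\I$-degree $\beta$ to an element of degree $s_i^{(A,\pa)}(\beta)$ (checked above for the $e_j$, $f_j$, and trivially $h\mapsto\h$ of degree $0$), and since $\g$ is generated in degrees $\pm\alpha_j$ and $T_i$ is an algebra map, it carries the degree-$\beta$ component into the degree-$s_i^{(A,\pa)}(\beta)$ component for all $\beta\in\pm\N_0^\theta$; surjectivity of $T_i$ forces equality of the two graded pieces. The hard part will be the normalization of the scalars $c_j$ together with the parity/characteristic bookkeeping in the rank-two computations — in particular verifying $(\ad e_i)^{m_j+1}e_j = 0$ and the precise constant in $[T_i(e_j),T_i(f_j)] = h_{s_i^{(A,\pa)}(\alpha_j)}$ in all the cases $\ell = 0$, $\ell > 0$, $p_i = \pm 1$; everything else is degree-tracking and the formal argument that an isomorphism of the $\gt$'s respects the maximal ideals.
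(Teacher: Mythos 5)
Your overall strategy (Lusztig-type reflection operators on Chevalley generators, rank-two $\sli(2)$/$\mathfrak{osp}(1|2)$ computations, $\rho_i^2=\id$ for the inverse, and degree-tracking for \eqref{eq:Ti mueve grados via si}) is the standard one and is essentially the route taken in \cite{AA-GRS-CLS-NA}. However, there is a genuine gap in the middle of your argument: the map you describe does \emph{not} exist as a homomorphism $\gt(\rho_i A,\rho_i\pa)\to\gt(A,\pa)$, so in particular it is not "an isomorphism on the free objects $\gt$", and the subsequent step "an isomorphism carries the maximal graded ideal meeting $\h$ trivially to the corresponding maximal ideal, hence descends" has no support. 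The point is that in $\gt(A,\pa)$ the subalgebras $\ntpm$ are free on the $e_j$'s (resp.\ $f_j$'s): the relations you need to verify the defining relations \eqref{eq:relaciones gtilde} for the images — $(\ad f_i)^{m_j+1}f_j=0$, $(\ad e_i)^{m_j+1}e_j=0$, and the $\sli(2)$-string identities behind $[T_i(e_j),T_i(f_j)]=h_{s_i(\alpha_j)}$ and $[e_i,(\ad f_i)^{m_j}f_j]=0$ — hold only in the quotient $\g(A,\pa)$ (this is exactly where admissibility \eqref{eq:f-nilpotent} lives), not in $\gt(A,\pa)$. Concretely, $[T_i(e_i),T_i(f_j)]=\pm c_j(\ad f_i)^{m_j+1}f_j$ is nonzero in $\gt(A,\pa)$, so the assignment does not even respect the relation $[e_i,f_j]=0$ there; your own verification of these relations silently takes place in $\g(A,\pa)$.

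The repair is to define $T_i$ from $\gt(\rho_i A,\rho_i\pa)$ \emph{into} $\g(A,\pa)$ (where the relation checks are legitimate), and then argue about its kernel and image rather than transporting maximal ideals by an isomorphism of the $\gt$'s. Since $T_i$ shifts the $\Z^{\I}$-degree by $s_i^{(A,\pa)}$ and is injective on $\h$, its kernel is a graded ideal meeting $\h$ trivially, hence contained in $\rg_{\rho_i(A,\pa)}$ by maximality; surjectivity must be proved (e.g.\ by recovering $e_j$ from $(\ad f_i)^{m_j}(\ad e_i)^{m_j}e_j$ up to a scalar that one must check is nonzero — this is precisely where the characteristic-$\ell$ and parity bookkeeping, and the choice of the constants $c_j$, genuinely matter — or by composing with the analogously defined map in the opposite direction, using $\rho_i^2(A,\pa)\equiv(A,\pa)$); and finally $\ker T_i=\rg_{\rho_i(A,\pa)}$ because the image of $\rg_{\rho_i(A,\pa)}$ would be a graded ideal of $\g(A,\pa)$ meeting $\h$ trivially, hence zero. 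With that restructuring your remaining steps, including \eqref{eq:Ti mueve grados via si}, go through as you indicate.
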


We consider the equivalence relation $\sim$ in $\kk^{\I\times \I}\times \G_2^{\I}$ generated by 
\begin{itemize}
\item $(A,\pa)\equiv (B,\bq)$  iff the rows of $B$ are obtained from those of $A$ multiplying by non-zero scalars,

\item $(A,\pa)\approx (B,\bq)$  iff $A$ is satisfies \eqref{eq:symmetrizable}
and there exists $i\in \I$ fulfilling \eqref{eq:f-nilpotent} such that $\rho_i(A,\pa)\equiv (B,\bq)$.
\end{itemize}

We denote by 
$\cX^{(A, \pa)}$ the equivalence class of $(A, \pa)$ with respect to $\sim$.

\begin{definition} A  pair $(A,\pa)$ is \emph{regular} if and only if  every $(B,\bq) \in \cX^{(A, \pa)}$ is  admissible
and satisfies \eqref{eq:symmetrizable}. Evidently, all $(B,\bq) \in \cX^{(A, \pa)}$ are regular too.
Therefore there are reflections $T_i$ for all $(B,\bq) \in \cX^{(A, \pa)}$ and  $i\in \I$.
\end{definition}

If $\ell>0$, then   `$(A,\pa)$ regular' says that all $(B,\bq) \in \cX^{(A, \pa)}$ satisfy \eqref{eq:symmetrizable}.

\medbreak
Let $(A,\pa)$ be a regular pair. We set
\begin{align}\label{eq:root system (A,p)}
\varDelta_+^{(A,\pa)} &= \nabla_{\hspace{2pt} +}^{(A,\pa)} - \{k\, \alpha: \, \alpha\in\nabla_{\hspace{2pt} +}^{(A,\pa)}, k\in\N, k\geq 2\},
\\
\cC^{(A,\pa)} &=\left(\I,\cX^{(A, \pa)},(\rho_i)_{i\in\I},(C^{(B,\bq)})_{(B,\bq)\in\cX^{(A, \pa)}}\right)
\end{align}

\begin{theorem}\label{thm:root system}
$(\cC^{(A,\pa)}, (\varDelta^{(B,\bq)})_{(B,\bq)\in\cX^{(A, \pa)}})$ is a generalized root system. 
\end{theorem}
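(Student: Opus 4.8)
The plan is to verify the axioms in the definition of a generalized root system for the pair $\bigl(\cC^{(A,\pa)}, (\varDelta^{(B,\bq)})_{(B,\bq)\in\cX^{(A,\pa)}}\bigr)$, using Theorem \ref{thm:isomorfismo Ti} (the reflection isomorphisms $T_i$) as the main tool; all of this goes back to \cite{AA-GRS-CLS-NA}. I would begin with the underlying basic datum: since $\cX^{(A,\pa)}$ is by definition the $\sim$-class of $(A,\pa)$ and regularity guarantees that each $\rho_i$ is defined on all of $\cX^{(A,\pa)}$ with $\rho_i^2=\id$, the pair $(\cX^{(A,\pa)}, (\rho_i)_{i\in\I})$ is a connected basic datum, connectedness being immediate because the $\sim$-class is generated by the $\rho_i$-moves. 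Next, for each $(B,\bq)\in\cX^{(A,\pa)}$ the matrix $C^{(B,\bq)}$ is a generalized Cartan matrix in the sense of \eqref{eq:gcm}: one has $c_{ii}^{(B,\bq)}=2$ and $c_{ij}^{(B,\bq)}\le 0$ for $i\ne j$ directly from the definition, the off-diagonal entries are finite because $\ad f_i$ is locally nilpotent (regularity implies admissibility, \eqref{eq:f-nilpotent}), and $c_{ij}^{(B,\bq)}=0 \iff [f_i,f_j]=0 \iff [f_j,f_i]=0 \iff c_{ji}^{(B,\bq)}=0$ by the (super)antisymmetry of the bracket. The Cartan-scheme identity \eqref{eq:condicion Cartan scheme}, i.e. equality of the $i$-th rows of $C^{(B,\bq)}$ and $C^{\rho_i(B,\bq)}$, is read off from the explicit recipe for $\rho_iA$ in \cite{AA-GRS-CLS-NA}; it can alternatively be obtained from Theorem \ref{thm:isomorfismo Ti} by transporting the $f_i$-string through $f_j$ along $T_i^{(B,\bq)}$.

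For the bundle of root sets, axioms \eqref{eq:def root system 1} and \eqref{eq:def root system 2} are nearly immediate from the structure of $\nabla^{(B,\bq)}$ together with the definition \eqref{eq:root system (A,p)} of $\varDelta^{(B,\bq)}$: one has $\nabla^{(B,\bq)}=\nabla_+^{(B,\bq)}\sqcup\nabla_-^{(B,\bq)}$ with $\nabla_-^{(B,\bq)}=-\nabla_+^{(B,\bq)}$ and $\nabla_\pm^{(B,\bq)}\subset\pm\N_0^{\I}$, and discarding the proper multiples of positive roots respects this decomposition, which gives \eqref{eq:def root system 1}; and since $\alpha_i$ is always a root while every $k\alpha_i$ with $k\ge 2$ is a proper multiple of $\alpha_i\in\nabla_+^{(B,\bq)}$ and is therefore discarded, we get $\varDelta^{(B,\bq)}\cap\Z\alpha_i=\{\pm\alpha_i\}$, which is \eqref{eq:def root system 2}. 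Axiom \eqref{eq:def root system 3}, $s_i^{(B,\bq)}(\varDelta^{(B,\bq)})=\varDelta^{\rho_i(B,\bq)}$, is exactly what Theorem \ref{thm:isomorfismo Ti} yields: since $T_i^{(B,\bq)}$ carries $\g(\rho_i(B,\bq))_\beta$ onto $\g(B,\bq)_{s_i^{(B,\bq)}(\beta)}$ for all $\beta\in\pm\N_0^\theta$, it identifies $\nabla^{\rho_i(B,\bq)}$ with $s_i^{(B,\bq)}(\nabla^{(B,\bq)})$ (using that $s_i^{(B,\bq)}$ is an involution); being linear, $s_i^{(B,\bq)}$ preserves the relation ``is a proper multiple of a root'', hence restricts to the claimed bijection between the sets $\varDelta$ of indivisible roots.

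The remaining axiom \eqref{eq:def root system 4}, the dihedral-type relation $(\rho_i\rho_j)^{m^{(B,\bq)}_{ij}}(B,\bq)=(B,\bq)$ with $m^{(B,\bq)}_{ij}=|\varDelta^{(B,\bq)}\cap(\N_0\alpha_i+\N_0\alpha_j)|$, is where I expect the real work, and it is the main obstacle. The plan is to reduce to rank two: the Lie sub(super)algebra of $\g(B,\bq)$ generated by $e_i,f_i,e_j,f_j$ together with $\ku h_i+\ku h_j$ is, up to the relevant quotient, a rank-two contragredient Lie (super)algebra whose root set is $\varDelta^{(B,\bq)}\cap(\Z\alpha_i+\Z\alpha_j)$, and the iterated reflections $\rho_i,\rho_j$ permute the corresponding rank-two members of $\cX^{(A,\pa)}$; one must then show that the orbit of $(B,\bq)$ under $\langle\rho_i\rho_j\rangle$ has exactly $m^{(B,\bq)}_{ij}$ elements in the finite case, and that no relation is forced when the rank-two root set is infinite (i.e. $m^{(B,\bq)}_{ij}=\infty$). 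This amounts to a finite case check against the classification of rank-two contragredient Lie (super)algebras over fields of characteristic $0$ and $\ell>0$ (see \cite{K-super,KW-exponentials,BGL}), matching each regular rank-two pair with its generalized root system. Collecting the previous steps — the GCM property, \eqref{eq:condicion Cartan scheme}, and \eqref{eq:def root system 1}--\eqref{eq:def root system 4} — gives that $\bigl(\cC^{(A,\pa)},(\varDelta^{(B,\bq)})_{(B,\bq)\in\cX^{(A,\pa)}}\bigr)$ is a generalized root system.
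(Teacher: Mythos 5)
You should first note that the survey itself does not prove Theorem \ref{thm:root system}: it is quoted from \cite{AA-GRS-CLS-NA}, so the comparison can only be with the expected argument there. Your verification of the first three groups of conditions is essentially that argument and is sound: the GCM property of $C^{(B,\bq)}$ (finiteness of the off-diagonal entries from admissibility, $c_{ij}=0\iff[f_i,f_j]=0\iff c_{ji}=0$), the compatibility \eqref{eq:condicion Cartan scheme}, and axioms \eqref{eq:def root system 1}--\eqref{eq:def root system 3}, the last one exactly as you say via Theorem \ref{thm:isomorfismo Ti} together with the remark that the linear map $s_i^{(B,\bq)}$ preserves the relation of being a proper multiple of a root. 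One point you should make explicit: the elements of $\cX^{(A,\pa)}$ are classes of pairs under the rescaling relation $\equiv$, so you must check that $C^{(B,\bq)}$, $\nabla^{(B,\bq)}$ and the reflections $\rho_i$ depend only on the class; otherwise $\rho_i^2=\id$ and the basic-datum structure only hold up to $\equiv$.

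The genuine gap is your treatment of \eqref{eq:def root system 4}, which is indeed where the real content lies and which your proposal does not prove. First, the statement you set out to prove is misformulated: the axiom requires $(\rho_i\rho_j)^{m^{(B,\bq)}_{ij}}(B,\bq)=(B,\bq)$, i.e.\ that the length of the $\langle\rho_i\rho_j\rangle$-orbit \emph{divides} $m^{(B,\bq)}_{ij}$, not that the orbit has exactly $m^{(B,\bq)}_{ij}$ elements; the latter is generally false (for a purely even pair of Cartan type every $\rho_i$ fixes the class, so the orbit is a single point while $m_{ij}\geq 2$). What must actually be tracked is how the pair, its rank-two root set and the entries $c_{ij},c_{ji}$ evolve along the alternating sequence of reflections, and why one returns to the original class (up to $\equiv$) after exactly $m_{ij}$ steps. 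Second, the rank-two reduction is asserted but not justified: you need that the subalgebra of $\g(B,\bq)$ generated by $e_i,f_i,e_j,f_j$ and the relevant part of $\h$ is governed by the contragredient algebra of the $2\times2$ submatrix, that its set of roots is $\varDelta^{(B,\bq)}\cap(\N_0\alpha_i+\N_0\alpha_j)$, that this rank-two pair is again regular, and that its reflections are the restrictions of $\rho_i,\rho_j$; in the super and positive-characteristic setting none of this is automatic (the ideal one divides by need not be the maximal one). Finally, outsourcing the finite case to ``the classification of rank-two contragredient Lie (super)algebras'' in all characteristics is both heavier than what the cited approach needs and incomplete as written: you would still have to match every entry of such a classification with its generalized root system and verify \eqref{eq:def root system 4} case by case. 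As it stands, the proposal establishes \eqref{eq:def root system 1}--\eqref{eq:def root system 3} but not the theorem.
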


This is the point we wanted to reach:

\begin{prop}\label{prop:fd regular} Let $(A,\pa)$ as above, i.e. $A \in \kk^{\I \times \I}$ and $\pa \in \G_2^{\I}$. 
If $A$ satisfies \eqref{eq:symmetrizable} and $ \dim\g(A,\pa)$ is finite, then $(A,\pa)$ is regular, thus it has a generalized root system. 
\end{prop}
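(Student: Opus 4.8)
The plan is to prove that every pair in the equivalence class $\cX^{(A,\pa)}$ is finite-dimensional and satisfies \eqref{eq:symmetrizable}; since finite-dimensionality forces admissibility (as noted just before Theorem \ref{thm:isomorfismo Ti}), this is exactly the statement that $(A,\pa)$ is regular, and then Theorem \ref{thm:root system} immediately supplies the generalized root system $(\cC^{(A,\pa)},(\varDelta^{(B,\bq)})_{(B,\bq)\in\cX^{(A, \pa)}})$. So the whole proof reduces to propagating the two properties ``finite-dimensional'' and ``satisfies \eqref{eq:symmetrizable}'' along the moves generating the equivalence relation $\sim$.

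First I would analyze the two elementary moves. If $(B,\bq)\equiv(B',\bq')$, then the rows of $B'$ are those of $B$ rescaled by nonzero scalars, so $a'_{ij}=0\iff a_{ij}=0$ and \eqref{eq:symmetrizable} is preserved; moreover such a rescaling only alters the chosen Cartan elements $h_i$ by nonzero multiples in the presentation \eqref{eq:relaciones gtilde}, so $\g(B',\bq')\cong\g(B,\bq)$ and in particular $\dim\g(B',\bq')=\dim\g(B,\bq)$. If $(B,\bq)\approx(B',\bq')$, then by definition $B$ satisfies \eqref{eq:symmetrizable}, there is $i\in\I$ with $\ad f_i$ locally nilpotent in $\g(B,\bq)$, and $\rho_i(B,\bq)\equiv(B',\bq')$; and by the construction of $\rho_i(B,\bq)=(\rho_iB,\rho_i\bq)$ in \cite{AA-GRS-CLS-NA}, the matrix $\rho_iB$ again satisfies \eqref{eq:symmetrizable}.

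The core step is an induction on the length of a chain of such moves from $(A,\pa)$. The base pair $(A,\pa)$ is finite-dimensional by hypothesis and satisfies \eqref{eq:symmetrizable}. Assume $(B,\bq)\in\cX^{(A,\pa)}$ has been reached with $B$ satisfying \eqref{eq:symmetrizable} and $\dim\g(B,\bq)=\dim\g(A,\pa)<\infty$. Being finite-dimensional, $(B,\bq)$ is admissible, so all $\ad f_i$ are locally nilpotent; hence any elementary move out of $(B,\bq)$ can be performed, and for an $\equiv$-move both properties are preserved as above. For an $\approx$-move through an index $i$, admissibility of $(B,\bq)$ together with \eqref{eq:symmetrizable} for $B$ lets us invoke Theorem \ref{thm:isomorfismo Ti}, which gives a Lie superalgebra isomorphism $T_i^{(B,\bq)}\colon\g(\rho_iB,\rho_i\bq)\to\g(B,\bq)$, whence $\dim\g(\rho_iB,\rho_i\bq)=\dim\g(B,\bq)<\infty$; combined with the fact that $\rho_iB$ satisfies \eqref{eq:symmetrizable} and with the ensuing $\equiv$-move, the target again enjoys both properties. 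By induction, every $(B,\bq)\in\cX^{(A,\pa)}$ is finite-dimensional, hence admissible, and satisfies \eqref{eq:symmetrizable}; that is, $(A,\pa)$ is regular, and Theorem \ref{thm:root system} finishes the proof.

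I do not expect a genuine obstacle here, only two bookkeeping points that must be invoked carefully: that rescaling the rows of a Cartan matrix does not change the isomorphism type of $\g(A,\pa)$ — so that finiteness of dimension is an invariant of the $\equiv$-class — and that the operation $\rho_i$ preserves \eqref{eq:symmetrizable}. The first follows from the presentation \eqref{eq:relaciones gtilde}, the second is part of the definition of $\rho_i(A,\pa)$; both are established in \cite{AA-GRS-CLS-NA}. Everything else is a straightforward transport of ``finite-dimensional'' across the reflection isomorphisms of Theorem \ref{thm:isomorfismo Ti}.
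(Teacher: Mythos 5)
Your overall skeleton is the natural one: transport finite-dimensionality along the moves generating $\sim$ via the isomorphisms of Theorem \ref{thm:isomorfismo Ti}, use that finite dimension forces admissibility, note that row rescaling does not change the isomorphism class of $\g(A,\pa)$, and then invoke Theorem \ref{thm:root system}. (The fact that chains in the generated equivalence relation may also contain ``backward'' $\approx$-steps is a point your induction should acknowledge, but it is handled by the involutivity of $\rho_i$ up to $\equiv$ and is not the real issue.)

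The genuine gap is your claim that ``by the construction of $\rho_i(B,\bq)=(\rho_iB,\rho_i\bq)$ \dots the matrix $\rho_iB$ again satisfies \eqref{eq:symmetrizable}'', which you later repeat as ``the second is part of the definition of $\rho_i(A,\pa)$''. This is not part of the definition, and it is precisely the nontrivial half of regularity. Regularity demands two things of every $(B,\bq)\in\cX^{(A,\pa)}$: admissibility \emph{and} condition \eqref{eq:symmetrizable}. Your argument uses finite-dimensionality only to secure the first; if the second were automatic from the construction of $\rho_i$, then in characteristic $\ell>0$ (where admissibility always holds) every pair whose matrix satisfies \eqref{eq:symmetrizable} would be regular, the remark in \S \ref{subsec:Weyl-gpd-super} that for $\ell>0$ regularity \emph{means} that all $(B,\bq)\in\cX^{(A,\pa)}$ satisfy \eqref{eq:symmetrizable} would be empty, and the present Proposition would be vacuous in positive characteristic. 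The entries of $\rho_iB$ are computed from the new simple coroots and root vectors, and their zero pattern need not be symmetric in general; what must be shown is that when $\g(B,\bq)$ is finite-dimensional (hence, by $T_i^{(B,\bq)}$, so is $\g(\rho_iB,\rho_i\bq)$), the reflected matrix does satisfy \eqref{eq:symmetrizable}. That step requires an actual argument exploiting finite-dimensionality of the reflected algebra (e.g.\ an analysis of the root strings / Cartan integers of $\g(\rho_iB,\rho_i\bq)$, as carried out in \cite{AA-GRS-CLS-NA}), and it cannot be dismissed as bookkeeping; note also that one must be careful in small characteristic, where a matrix violating \eqref{eq:symmetrizable} does not obviously force infinite dimension. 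As written, your proof establishes only that every member of the class is admissible, not that it satisfies \eqref{eq:symmetrizable}, so the conclusion of regularity does not yet follow.
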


Now the classification of the finite-dimensional contragredient Lie superalgebras is known and consists of the following:

\begin{itemize} [leftmargin=*]\renewcommand{\labelitemi}{$\diamond$}
\medbreak\item  If $\ell =0$ and $\pa = \uno$, then this is the Killing-Cartan classification of simple Lie algebras of types $A, \dots, G$.

\medbreak\item If $\ell =0$ and $\pa \neq \uno$, then this belongs the classification of simple Lie superalgebras \cite{K-super}.

\medbreak\item If $\ell > 0$ and $\pa = \uno$, then the analogous of Lie algebras in characteristic 0, the Brown algebras $\br(2;a)$, $\br(2)$, $\br(3)$ \cite{BGL,Br,Skryabin} for $\ell=3$, and the Kac-Weisfeiler algebras $\bgl(3;a)$, $\bgl(4;a)$ \cite{BGL,KW-exponentials} for $\ell=2$.

\medbreak\item If $\ell > 0$ and $\pa \neq \uno$, then the analogous of Lie algebras in characteristic 0, the Brown superalgebra $\brj(2;3)$, the Elduque superalgebra $\el(5;3)$, the Lie superalgebras $\g(1,6)$, $\g(2,3)$, $\g(3,3)$, $\g(4,3)$, $\g(3,6)$, $\g(2,6)$, $\g(8,3)$, $\g(4,6)$, $\g(6,6)$, $\g(8,6)$ \cite{BGL,CE,E1,E2} for $\ell=3$, and the Brown superalgebra $\brj(2;5)$, the Elduque superalgebra $\el(5;5)$ \cite{BGL} for $\ell=5$.

\end{itemize}

\subsection{Classification}\label{subsec:classification}
Recall that  $(V,c)$ is a finite-dimensional braided vector space of diagonal 
type with braiding matrix  $\bq$ as in \S \ref{subsec:braid}.
In the celebrated article \cite{H-classif RS}, the classification of the Nichols algebras of diagonal type
with arithmetic root system was presented in the form of several tables. 

Roughly, the method of the proof consists in deciding when the Weyl groupoid is finite, iterating the construction \eqref{eq:rhoiq}.
The procedure goes recursively on $\theta$; it could be shortened using the reduction given 
by Theorem \ref{th:finite-grs} \ref{item:prop-grs4}, see also \cite{HW}.

\medbreak
We propose an alternative organization of  the classification. Assume that $\bq$ is arithmetic, e.g. that
$\dim \toba_{\bq} < \infty$,  and 	let $\cR$ be its GRS.

\begin{itemize} [leftmargin=*]\renewcommand{\labelitemi}{$\diamond$}
\medbreak\item  If the bundles of matrices and of root sets are constant, then we say that $(V,c)$ is  
of \emph{standard type}; braided vector spaces of Cartan type fit here.

\medbreak\item If $\cR$ is isomorphic to the GRS of a Lie superalgebra in Kac's list above, then we say
that $(V,c)$ is of \emph{super type} \cite{AAY}.

\medbreak\item If $\cR$ is isomorphic to the GRS of a contragredient Lie superalgebra in characteristic $\ell > 0$,
 as above, then we say that $(V,c)$ is of \emph{modular type}.

\end{itemize}

Most of the $\bq$ with the assumption above fall into one of these three classes,
showing the deep relation between Nichols algebras and Lie theory. 
From the list of \cite{H-classif RS}, there are still 12 examples whose GRS 
could not be identified in Lie theory; we call them \emph{UFO}'s.
Actually, they come from 11 different GRS, as one of them incarnates in two distinct Nichols algebras.

\subsection{The relations of a  Nichols algebra and convex orders}\label{subsec:rels-convex}

\subsubsection{Convex orders}\label{subsubsec:convex}
We start by the concept of convex order in an arithmetic root system.
Let $\varDelta$ be the root system of a finite-dimensional simple Lie algebra, $W$ its Weyl group and $\omega_0 \in W$
the longest element.
Then a total order $<$ on $\varDelta_{+}$ is \emph{convex} if 
\begin{align}\label{eq:convex}
\alpha,\beta &\in \varDelta_{+}, & \alpha <\beta \text{ and }\alpha+\beta &\in\varDelta_{+}& \implies & \alpha < \alpha+\beta < \beta.
\end{align}
A priori, it is not evident why convex orders do exist, but this result gives them all:

\begin{theorem}\label{th:papi}\cite{P}
There is a bijective correspondence between the set of convex orders in $\varDelta_{+}$ and the set of reduced decompositions of $\omega_0$.
\end{theorem}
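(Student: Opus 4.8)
The plan is to identify both sides of the asserted bijection with the set of maximal chains of a single poset: the poset of \emph{biconvex} subsets of $\varDelta_{+}$, ordered by inclusion. Call $B\subseteq\varDelta_{+}$ \emph{closed} if $\alpha,\beta\in B$ and $\alpha+\beta\in\varDelta_{+}$ force $\alpha+\beta\in B$, and \emph{biconvex} if both $B$ and $\varDelta_{+}\setminus B$ are closed. First I would record the elementary reformulation of \eqref{eq:convex}: writing $\ell=\ell(\omega_0)=|\varDelta_{+}|$, a total order $\gamma_1<\dots<\gamma_{\ell}$ on $\varDelta_{+}$ is convex if and only if every initial segment $B_k:=\{\gamma_1,\dots,\gamma_k\}$ is biconvex. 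Indeed convexity immediately makes each $B_k$ and each complement $\varDelta_{+}\setminus B_k$ closed; conversely, given $\alpha<\beta$ in $\varDelta_{+}$ with $\gamma:=\alpha+\beta\in\varDelta_{+}$, write $\alpha=\gamma_a$, $\beta=\gamma_b$, $\gamma=\gamma_c$ with $a<b$, and note that closedness of $\varDelta_{+}\setminus B_{a-1}$ (which contains $\alpha,\beta$) gives $c\ge a$, hence $c>a$ since $\gamma\ne\alpha$, while closedness of $B_b$ (which contains $\alpha,\beta$) gives $c\le b$, hence $c<b$ since $\gamma\ne\beta$; thus $\alpha<\gamma<\beta$. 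So a convex order on $\varDelta_{+}$ is precisely the same datum as a maximal chain $\emptyset=B_0\subsetneq B_1\subsetneq\dots\subsetneq B_{\ell}=\varDelta_{+}$ of biconvex subsets, the order being reconstructed by $\{\gamma_k\}=B_k\setminus B_{k-1}$.

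The substantive input is the classical description of biconvex subsets of a finite root system: $B\subseteq\varDelta_{+}$ is biconvex if and only if $B$ is the inversion set $N(w)=\{\alpha\in\varDelta_{+}:w(\alpha)\in-\varDelta_{+}\}$ of a (necessarily unique) $w\in W$, in which case $|B|=\ell(w)$; moreover, for $\ell(ws_i)=\ell(w)+1$ one has $N(ws_i)=N(w)\sqcup\{w(\alpha_i)\}$, and these are exactly the biconvex sets covering $N(w)$ from above. That $N(w)$ is biconvex is straightforward; for the converse I would argue by induction on $|B|$: a nonempty biconvex $B$ contains a simple root $\alpha_i$ — an element of $B$ of minimal height cannot be a sum $\gamma+\delta$ of two positive roots, for otherwise one of $\gamma,\delta$ would lie in $B$ (if both lay in $\varDelta_{+}\setminus B$ then so would their sum) and would have smaller height — and then $s_i(B\setminus\{\alpha_i\})$ is again biconvex of cardinality $|B|-1$, so by induction it equals $N(w')$ and $B=N(s_iw')$ up to the correct placement of $s_i$. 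This is the combinatorial heart of the statement; it is standard and is exactly the material underlying \cite{P} (see also Bourbaki's treatment of root systems and Coxeter groups).

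Granting the previous paragraph, a maximal chain of biconvex subsets is the same thing as a chain $e=w_0,w_1,\dots,w_{\ell}$ in $W$ with $\ell(w_k)=\ell(w_{k-1})+1$ and $w_k=w_{k-1}s_{i_k}$ for a \emph{unique} $i_k\in\I$; since $N(w_{\ell})=\varDelta_{+}$ forces $w_{\ell}=\omega_0$, such a chain is nothing but a reduced decomposition $\omega_0=s_{i_1}\cdots s_{i_{\ell}}$. Unwinding the two bijections, the convex order attached to $(i_1,\dots,i_{\ell})$ is $\beta_1<\dots<\beta_{\ell}$ with $\beta_k=w_{k-1}(\alpha_{i_k})=s_{i_1}\cdots s_{i_{k-1}}(\alpha_{i_k})$; distinct reduced words produce distinct chains $w_0,\dots,w_{\ell}$ (recover $s_{i_k}=w_{k-1}^{-1}w_k$), hence distinct orders, and every maximal chain of biconvex sets arises, so the correspondence is bijective. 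The main obstacle is precisely the biconvex $=$ inversion-set characterization of the second paragraph — concretely, locating a simple root inside an arbitrary biconvex set and checking that the reflection step preserves biconvexity and cardinality; once that lemma is in hand, everything else is bookkeeping with inversion sets.
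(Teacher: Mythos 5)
The paper does not prove this statement: it is quoted from \cite{P}, so there is no internal argument to compare with, and your proof is correct and in substance the standard one (essentially that of \emph{loc.\ cit.}): convex orders are exactly the total orders all of whose initial segments are biconvex, biconvex subsets of $\varDelta_{+}$ are exactly the inversion sets $N(w)$, $w\in W$, and saturated chains $\emptyset=N(e)\subsetneq\dots\subsetneq N(\omega_0)=\varDelta_{+}$ are exactly the reduced decompositions of $\omega_0$, the root added at the $k$-th step being $\beta_k=s_{i_1}\cdots s_{i_{k-1}}(\alpha_{i_k})$. The two computations you write out in full (the reformulation of \eqref{eq:convex} via initial segments, and the fact that a minimal-height element of a biconvex set is simple) are fine, and the step you leave implicit — that $s_i(B\setminus\{\alpha_i\})$ is again biconvex when $\alpha_i\in B$ — does check out, since $s_i$ permutes $\varDelta_{+}\setminus\{\alpha_i\}$ and $\alpha_i\in B$ rules out the problematic sums. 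The one thing to fix is a convention slip: with your stated definition $N(w)=\{\alpha\in\varDelta_{+}\colon w(\alpha)\in-\varDelta_{+}\}$, the formulas you subsequently use ($N(ws_i)=N(w)\sqcup\{w(\alpha_i)\}$, $B=N(s_iw')$, $\beta_k=s_{i_1}\cdots s_{i_{k-1}}(\alpha_{i_k})$) are those for the convention $N(w)=\{\alpha\in\varDelta_{+}\colon w^{-1}(\alpha)\in-\varDelta_{+}\}$; replace $w$ by $w^{-1}$ in the definition (or keep your definition and build the chain by left multiplications, which merely reverses the reduced word — harmless since $\omega_0^{-1}=\omega_0$) and the write-up becomes internally consistent.
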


Let now $\cR =  (\cC, \varDelta)$ be an arithmetic root system over a basic datum $(\cX, \rho)$.
Fix $x \in \cX$. Following \cite{A-jems}, we say that total order $<$ on $\varDelta_{+}^x$ is \emph{convex} if \eqref{eq:convex} holds for all 
$\alpha,\beta \in \varDelta^x_{+}$. Recall $\omega^x_0$, Theorem \ref{th:finite-grs} \ref{item:prop-grs2.5}.

\begin{theorem}\label{th:ivan-convex}\cite{A-jems}
There is a bijective correspondence between the set of convex orders in $\varDelta^x_{+}$ and the set of reduced decompositions of $\omega^x_0$.
\end{theorem}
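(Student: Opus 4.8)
The plan is to adapt Papi's argument (Theorem \ref{th:papi}) from the setting of a single Weyl group to the groupoid setting, using the finiteness of the GRS established in Theorem \ref{th:finite-grs}. First I would set up the correspondence in the direction that sends a reduced expression to a convex order. Fix $x \in \cX$ and a reduced expression $\omega^x_0 = \sigma^x_{i_1}\sigma_{i_2}\cdots \sigma_{i_\ell}$, of length $\ell = \vert \varDelta^x_+\vert$ by Theorem \ref{th:finite-grs} \ref{item:prop-grs2.5} and \ref{item:prop-grs3}. By Theorem \ref{th:finite-grs} \ref{item:prop-grs3} the roots $\beta_j := s^x_{i_1}\cdots s_{i_{j-1}}(\alpha_{i_j})$, $j \in \I_\ell$, are exactly the positive roots $\varDelta^x_+$, each occurring once. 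Define the order $\beta_1 < \beta_2 < \dots < \beta_\ell$. I would then verify this is convex: if $\beta_r < \beta_s$ and $\beta_r + \beta_s \in \varDelta^x_+$, one shows $\beta_r + \beta_s = \beta_t$ for some $r < t < s$. The standard way is to reduce to the rank-two situation by projecting onto the $\Z\alpha_{i_r} + \Z\alpha_{i_s}$-type parabolic subsystem; here the key point is that the GRS restricts to a rank-two GRS, which is finite, and rank-two finite GRS are completely classified (and small), so the convexity inequality can be checked there directly. Concretely, $w := s^x_{i_1}\cdots s_{i_{r-1}}$ maps $\alpha_{i_r}$ to $\beta_r$ and the remaining tail of the reduced word, read inside $\varDelta^{t(w)}$ with the appropriate base point relabelling, exhibits $\beta_s$ and $\beta_r+\beta_s$ as a consecutive segment; the inequality $\beta_r < \beta_r + \beta_s < \beta_s$ then follows from the rank-two verification together with the fact that $w$ preserves the order of the segment it is applied to.

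For the reverse direction, given a convex order $<$ on $\varDelta^x_+$ I would recover a reduced expression as follows. The minimal element $\gamma$ of $(\varDelta^x_+, <)$ must be a simple root: indeed if $\gamma = \delta_1 + \delta_2$ with $\delta_1, \delta_2 \in \varDelta^x_+$ then convexity would force $\delta_1, \delta_2 < \gamma$, contradicting minimality; and $\varDelta^x \cap \Z\alpha_i = \{\pm\alpha_i\}$ by \eqref{eq:def root system 2} forbids $\gamma$ from being a proper multiple of a simple root, while the classification of rank-one (and the restriction argument) rules out other non-simple indecomposables — more cleanly, any positive root that is not simple lies in some $\N_0\alpha_i + \N_0\alpha_j$-plane containing a strictly smaller positive root, by finiteness of the associated rank-two GRS. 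So $\gamma = \alpha_{i_1}$ for some $i_1 \in \I$. Then I would apply $s^x_{i_1}$: this carries $\varDelta^x_+ \setminus \{\alpha_{i_1}\}$ bijectively onto $\varDelta^{\rho_{i_1}(x)}_+ \setminus \{\text{something}\}$ — more precisely, $s^x_{i_1}$ sends $\varDelta^x_+\setminus\{\alpha_{i_1}\}$ into $\varDelta^{\rho_{i_1}(x)}_+$ (this is the groupoid analogue of "$s_i$ permutes positive roots other than $\alpha_i$", which follows from \eqref{eq:def root system 3} and \eqref{eq:def root system 2}). The pushed-forward order on $\varDelta^{\rho_{i_1}(x)}_+$ is again convex (convexity is preserved by $s^x_{i_1}$ since it is linear and respects sums within root sets), so by induction on $\vert\varDelta^x_+\vert = \ell$ we get a reduced expression $\omega^{\rho_{i_1}(x)}_0 = \sigma^{\rho_{i_1}(x)}_{i_2}\cdots\sigma_{i_\ell}$, and then $\sigma^x_{i_1}\omega^{\rho_{i_1}(x)}_0$ is a word of length $\ell$ ending at $x$; since $\ell$ is the maximal length by Theorem \ref{th:finite-grs} \ref{item:prop-grs2.5}, it is a reduced expression of $\omega^x_0$.

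Finally I would check that the two constructions are mutually inverse. Starting from a reduced expression, building the convex order via the $\beta_j$'s, and then extracting a reduced expression: the minimal element is $\beta_1 = \alpha_{i_1}$, and after applying $s^x_{i_1}$ the induced order on $\varDelta^{\rho_{i_1}(x)}_+$ is the one given by the tail $\sigma^{\rho_{i_1}(x)}_{i_2}\cdots\sigma_{i_\ell}$, so by induction we recover $\sigma^x_{i_1}\sigma_{i_2}\cdots\sigma_{i_\ell}$. Conversely, starting from a convex order, the reduced expression we extract reproduces the original order because at each stage the chosen simple root is the current minimum. Both checks are straightforward inductions once the two constructions are in place.

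The main obstacle I expect is the convexity verification in the forward direction — specifically, justifying the reduction to rank two in the groupoid setting. In the classical case one uses that $\langle s_{i_r}, s_{i_s}\rangle$ generates a dihedral parabolic and the root subsystem it acts on is of rank two; in the groupoid case one must instead argue that the sub-GRS spanned by two simple roots along a subword is a well-defined finite rank-two GRS (this uses \eqref{eq:def root system 4} and the finiteness from Theorem \ref{th:finite-grs} \ref{item:prop-grs2}) and that the segment of the reduced word between positions $r$ and $s$ can be conjugated into this parabolic. The base-point bookkeeping — keeping track of which vertex $x' \in \cX$ each intermediate root set $\varDelta^{x'}$ lives over — is where the argument is genuinely more delicate than the group case, though conceptually it goes through unchanged. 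A cleaner alternative, which I would mention, is simply to invoke the classification of finite rank-two GRS to handle the two-element base-point situation directly, since only finitely many cases arise.
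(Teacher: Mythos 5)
Your overall architecture (reduced word $\mapsto$ the order $\beta_1<\dots<\beta_\ell$ from Theorem \ref{th:finite-grs} \ref{item:prop-grs3}; convex order $\mapsto$ peel off the minimum, which must be simple, and induct) is the same as in the cited source \cite{A-jems} — the survey itself only records the forward map and defers the proof — but both halves of your argument have concrete gaps. In the forward direction, the rank-two reduction you lean on is not available as stated: $\beta_r$ and $\beta_s$ are in general not simple, the roots in $\Z\beta_r+\Z\beta_s$ do not form a standard parabolic of the GRS, and there is no justification for "conjugating the segment of the reduced word between positions $r$ and $s$" into such a subsystem; this is exactly the step you flag, and it is where the proposal would break. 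The argument that actually works (and is the one used in the groupoid setting) avoids rank two entirely: by the basic Weyl-groupoid lemma of \cite{HY}, $\{\beta_1,\dots,\beta_k\}$ is precisely the set of positive roots sent to negative ones by $(\sigma^x_{i_1}\cdots\sigma_{i_k})^{-1}$, and since groupoid elements act by linear maps carrying $\varDelta^{y}$ to $\varDelta^{z}$ and respecting \eqref{eq:def root system 1}, both this inversion set and its complement in $\varDelta^x_+$ are closed under taking sums that are roots; applying this with $k=s$ and $k=r-1$ gives $r<t<s$ at once.

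In the backward direction there are two further problems. First, your justification that the minimal element is simple rests on the claim that any non-simple positive root lies in some $\N_0\alpha_i+\N_0\alpha_j$-plane; that is false (non-simple roots typically have support of size $>2$). The fact you actually need — every non-simple positive root of a finite GRS is a sum of two positive roots — is true but requires proof or a citation; it does not follow from axiom \eqref{eq:def root system 2} plus rank-two finiteness. Second, the induction does not typecheck: $s^x_{i_1}$ transports the order on $\varDelta^x_+\setminus\{\alpha_{i_1}\}$ to an order on $\varDelta^{\rho_{i_1}(x)}_+\setminus\{\alpha_{i_1}\}$, which has $\ell-1$ elements, whereas $\omega_0^{\rho_{i_1}(x)}$ has length $\ell$ (the length is constant over $\cX$ by Theorem \ref{th:finite-grs} \ref{item:prop-grs2.5}), so you cannot obtain a reduced expression $\sigma^{\rho_{i_1}(x)}_{i_2}\cdots\sigma_{i_\ell}$ of $\omega_0^{\rho_{i_1}(x)}$ by induction on $\vert\varDelta^x_+\vert$, which does not decrease; and the final step "a word of $\ell$ letters ending at $x$ is reduced because $\ell$ is the maximal length" is unjustified, since non-reduced words of $\ell$ letters exist. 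The standard repair is to iterate the minimum-is-simple step $\ell$ times to produce the word $\sigma^x_{i_1}\cdots\sigma_{i_\ell}$, check (using the convexity computation that $\alpha_{i_1}<s_{i_1}(\delta)<s_{i_1}(\delta+\alpha_{i_1})$, which you gloss over when you say convexity "is preserved") that at each stage the transported order is again convex, and then conclude reducedness from the criterion that a word is reduced if and only if its associated root sequence $\beta_1,\dots,\beta_\ell$ consists of pairwise distinct positive roots — here they reproduce the given convex order, which also gives the mutual-inverse check.
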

The correspondence is easy to describe: given a reduced decomposition $\omega_0^x =\sigma^x_{i_1} \cdots \sigma_{i_\ell}$,
the convex total order in $\varDelta^x_{+}$ is induced from the numeration given in Theorem \ref{th:finite-grs} \ref{item:prop-grs3}.

\subsubsection{Defining relations}\label{subsubsec:rels}
We keep the notation $(V,c)$, $\bq$, etc. from \S \ref{subsec:braid} and we assume that $\bq$ is arithmetic. 
The total order in the set of simple roots given by the numeration by $\I$ induces a total order in $\varDelta_{+}^{\bq}$
(the restriction of the lexicographic order) as explained in \S \ref{subsec:pbw}.
This total order turns out to be convex (but there are more convex orders than these when $\theta >2$).
Let $(\beta_k)_{k \in \I_{\ell}}$ be the numeration of $\varDelta^{\bq}_+$ induced by this order.
For every $k \in \I_{\ell}$, let $x_{\beta_k}$ be the corresponding root vector as in Remark \ref{rem:lyndon-word}.

Let $i<j \in \I_{\ell}$, $n_{i+1}, \dots, n_{j-1} \in\N_0$. Because the total order is convex, we conclude that 
there exist $c_{n_{i+1}, \dots, n_{j-1}}^{(i,j)} \in \ku$ such that
\begin{equation}\label{quantumSerregeneralizadas}
\left[ x_{\beta_i}, x_{\beta_j} \right]_c= 
\sum_{n_{i+1}, \dots, n_{j-1} \in\N_0} c_{n_{i+1}, \dots, n_{j-1}}^{(i,j)} \ x_{\beta_{j-1}}^{n_{j-1}} \dots x_{\beta_{i+1}}^{n_{i+1}}.
\end{equation}
The scalars $c_{n_{i+1}, \dots, n_{j-1}}^{(i,j)}$ can be computed explicitly \cite[Lemma 4.5]{A-jems}. Notice that if $\sum n_k\beta_k \neq \beta_i+\beta_j$, 
then $c_{n_{i+1}, \dots, n_{j-1}}^{(i,j)}  =0$, since $\toba_{\bq}$ is $\N_0^{\I}$-graded.

Let $\beta \in \varDelta^{\bq}_+$; we set $N_{\beta} = \ord \bq_{\beta\beta}$. If $N_\beta$ is finite, then
\begin{equation}\label{powerrootvector}
x_{\beta}^{N_{\beta}}=0.
\end{equation}

\begin{theorem}\label{Thm:presentacion} \cite[4.9]{A-jems}
The relations \eqref{quantumSerregeneralizadas},
$i<j \in \I_{\ell}$,  and \eqref{powerrootvector}, $\beta \in \varDelta^{\bq}_+$ with $N_\beta$ finite, 
generate the ideal $\J_{\bq}$ defining the Nichols algebra $\toba_{\bq}$.
\end{theorem}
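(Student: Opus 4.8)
The plan is to identify the pre-Nichols algebra cut out by the listed relations with $\toba_{\bq}$ itself, by comparing a spanning set against Kharchenko's PBW basis. Write $\widehat{\toba}:=T(V)/\langle\mathcal R\rangle$, where $\mathcal R$ is the set consisting of the relations \eqref{quantumSerregeneralizadas}, $i<j\in\I_\ell$, together with \eqref{powerrootvector}, $\beta\in\varDelta^{\bq}_+$ with $N_\beta<\infty$. Every element of $\mathcal R$ already vanishes in $\toba_{\bq}$ --- the first family by the explicit computation \cite[Lemma 4.5]{A-jems}, the second by Remark \ref{cor:segundo} --- so there is a canonical surjection of $\N_0^{\I}$-graded algebras $\pi\colon\widehat{\toba}\twoheadrightarrow\toba_{\bq}$, and the assertion $\J_{\bq}=\langle\mathcal R\rangle$ is exactly the injectivity of $\pi$. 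Number $\varDelta^{\bq}_+=\{\beta_1,\dots,\beta_\ell\}$ by the convex order of \S\ref{subsec:rels-convex}, let $x_{\beta_k}\in\widehat{\toba}$ be the root vectors of Remark \ref{rem:lyndon-word} (built from the $x_i$ through the Shirshov recursion \eqref{eq:x-raiz}), and consider the family of ordered monomials
\begin{align*}
\mathcal P=\big\{\, x_{\beta_\ell}^{\,n_\ell}\cdots x_{\beta_1}^{\,n_1}\ :\ 0\le n_k<N_{\beta_k}\ \text{ for all }k \,\big\}\subset\widehat{\toba}.
\end{align*}

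First I would prove that $\mathcal P$ spans $\widehat{\toba}$. The mechanism is a rewriting argument: the $\ku$-span of $\mathcal P$ contains $1$, and if it is a right ideal then, since by \eqref{eq:x-raiz} the root vectors $x_{\beta_k}$ --- in particular the $x_{\alpha_i}=x_i$ --- generate $\widehat{\toba}$ as an algebra, it is all of $\widehat{\toba}$. To obtain that the span of $\mathcal P$ is stable under right multiplication by each $x_i$, I would argue by a well-founded induction on monomials refining the $\N_0^{\I}$-degree: given a monomial of $\mathcal P$, multiply it by $x_i=x_{\beta_j}$ and push $x_{\beta_j}$ leftwards into its slot, each elementary step rewriting a product $x_{\beta_a}x_{\beta_b}$ with $a<b$ (hence $\beta_a<\beta_b$) through the identity $x_{\beta_a}x_{\beta_b}=\bq_{\beta_a\beta_b}\,x_{\beta_b}x_{\beta_a}+[x_{\beta_a},x_{\beta_b}]_c$; here convexity of the chosen order (Theorem \ref{th:ivan-convex}) together with relation \eqref{quantumSerregeneralizadas} replaces $[x_{\beta_a},x_{\beta_b}]_c$ by a linear combination of monomials in the strictly intermediate root vectors $x_{\beta_c}$, $a<c<b$, the $q$-derivation and $q$-Jacobi identities \eqref{eq:derivacion}, \eqref{eq:derivacion 2}, \eqref{eq:identidad jacobi} serve to re-expand products, and the power relation \eqref{powerrootvector} caps every exponent below $N_{\beta_k}$.

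The second ingredient is Kharchenko's theorem. Applying Theorem \ref{thm:PBW-basis  Kharchenko} to $I=\J_{\bq}$ gives a PBW basis $B_{\J_{\bq}}$ of $\toba_{\bq}$ made of hyperletters of the Lyndon words in $G_{\J_{\bq}}$; since $\bq$ is arithmetic, every positive root has multiplicity one \cite{CH-at most 3}, the relevant Lyndon words are the $l_\beta$, $\beta\in\varDelta^{\bq}_+$, their hyperletters are the $x_\beta$ of Remark \ref{rem:lyndon-word}, the heights are $h_{\J_{\bq}}(l_\beta)=\ord\bq_{\beta\beta}=N_\beta$ by Remark \ref{cor:segundo} and Example \ref{exa:roots-fniteheight}, and the order induced on $\varDelta^{\bq}_+$ is precisely the convex one. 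Hence $\pi$ carries the spanning set $\mathcal P$ bijectively onto the basis $B_{\J_{\bq}}$ of $\toba_{\bq}$; a graded surjection sending a spanning set bijectively onto a basis is an isomorphism, so $\pi$ is injective, which is the theorem.

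I expect the main obstacle to be the confluence of the rewriting in the second paragraph: that resolving the overlap ambiguities never forces a relation outside $\langle\mathcal R\rangle$. This is exactly where the theorem has content, and it cannot be settled by formal combinatorics, since a priori the root vectors might satisfy hidden relations. I would handle it with the Weyl-groupoid reflections: the algebra isomorphisms $T_i$ of Theorem \ref{th:heck-iso} should descend to $\widehat{\toba}$, because they permute the defining relations of $\mathcal R$ compatibly with the convex orders, making $\widehat{\toba}$ a \emph{distinguished} pre-Nichols algebra whose generalized root system coincides with that of $\toba_{\bq}$; then an induction on $\theta$ and on $\ell(\omega_0^{\bq})$, along a reduced expression as in Theorem \ref{th:finite-grs} \ref{item:prop-grs3}, should reduce the confluence check to Nichols algebras of rank $2$, where it follows from the explicit values of the scalars $c^{(i,j)}_{n_{i+1},\dots,n_{j-1}}$ of \cite[Lemma 4.5]{A-jems} and Heckenberger's classification of rank-two arithmetic root systems. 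An equivalent route is to verify directly, via the characterization Proposition \ref{prop:nichols-alternative} \ref{item:nichols-alternative-dbis}, that the subalgebras of $\widehat{\toba}$ generated by pairs $x_i,x_j$ are the expected rank-two Nichols algebras, and then bootstrap.
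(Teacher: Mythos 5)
Your first two paragraphs have the right skeleton. Note, though, that the paper itself gives no argument: it states the theorem as \cite[4.9]{A-jems} and only records that the proof there rests on the theory of finite generalized root systems and on the coideal subalgebras of \cite{HS-london}, and that it does \emph{not} appeal to the classification of \cite{H-classif RS}. Measured against that, the genuinely hard input you have silently outsourced is the validity in $\toba_{\bq}$ of \eqref{quantumSerregeneralizadas} in its triangular form (only root vectors strictly between $\beta_i$ and $\beta_j$ occur, with computable coefficients), together with the convex-order theory behind it (Theorem \ref{th:ivan-convex}); that is precisely where the coideal-subalgebra machinery enters. It is legitimate to take this as given here, since the survey asserts it just before the theorem, and granting it, your scheme --- surjection $\pi$, straightening to show $\mathcal P$ spans $\widehat{\toba}$, Kharchenko's PBW basis of $\toba_{\bq}$ downstairs to force injectivity --- is a sound route to the generation statement.

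The problem is your third paragraph, which misdiagnoses the difficulty and then addresses the wrong problem with unjustified machinery. Confluence of the rewriting is irrelevant to your own plan: you never need the ordered monomials to be linearly independent \emph{inside} $\widehat{\toba}$ by rewriting alone, because independence is imported from $\toba_{\bq}$ through $\pi$, exactly as you say at the end of your second paragraph. For spanning, hidden relations in $\widehat{\toba}$ can only help, so ``resolving overlap ambiguities'' never arises; what does require care is termination of the straightening, and that \emph{is} settled by formal combinatorics (finitely many monomials of a fixed $\N_0^{\I}$-degree together with the lexicographic behaviour of hyperletters, cf.\ Remark \ref{corchete}), contrary to your claim that it cannot be. Consequently the apparatus you invoke --- descending the reflections $T_i$ of Theorem \ref{th:heck-iso} to $\widehat{\toba}$, induction on rank, and the rank-two classification --- is both unnecessary for your argument and itself a substantial unproved assertion: verifying that the $T_i$ preserve the candidate ideal is a major computation (essentially the content of \cite{A-presentation}), not something that ``should'' hold because the relations look compatible with the convex orders; and leaning on the classification runs against the very feature of the actual proof that the paper highlights. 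A smaller slip: the quotient $\widehat{\toba}$ you are aiming to identify is $\toba_{\bq}$ itself, not the distinguished pre-Nichols algebra, which by definition omits the power relations at Cartan roots.
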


The proof of this Theorem does not appeal to the classification in \cite{H-classif RS}, but to the theory of finite GRS \cite{H-Weyl gpd,HY}
and the study of coideal subalgebras in \cite{HS-london}.
Starting from Theorem \ref{Thm:presentacion}, the defining relations of $\toba_{\bq}$ for the various $\bq$ in the list in \cite{H-classif RS}  
was given explicitly in \cite[Theorem 3.1]{A-presentation}. The approach in \emph{loc. cit.} does not follow the list 
but the possible local subdiagrams, i.e. of rank 2,3,4, up to insuring the existence of the Lusztig isomorphisms $T_i$, 
analogous to those in Theorem  \ref{th:heck-iso}. The final argument uses that the bundle of Cartan matrices determines the GRS.

\subsection{The Lie algebra of a finite-dimensional Nichols algebra}\label{subsec:Lie-algebra}

A finite-dimensional Nichols algebra $\toba_{\bq}$ of diagonal type gives rise to some remarkable objects: 
its distinguished pre-Nichols algebra $\wtoba_{\bq}$ \cite{A-presentation,A-pre-Nichols}, 
its Lusztig algebra $\luq$ \cite{AAR} and its associated Lie algebra $\mathfrak g$ \cite{AAR2}. 
If $\bq$ is of Cartan type (with entries of odd order), then $\wtoba_{\bq}$ is isomorphic to the
positive part of the quantum group defined by De Concini and Procesi \cite{DP},
while $\luq$ is the positive part of the algebra of divided powers 
introduced by Lusztig \cite{Lu-dedicata,Lu}.
We  expect that these algebras $\wtoba_{\bq}$ and $\luq$
 would give rise to interesting representation theories.
 We discuss succinctly these three notions.

\subsubsection{The distinguished pre-Nichols algebra}\label{subsubsec:distinguished}
We start with the concept of Cartan roots \cite{A-pre-Nichols} and then discuss the definition of  $\wtoba_{\bq}$. 

First, $i\in\I$ is a \emph{Cartan vertex}  of $\bq$  if
$q_{ij}q_{ji} = q_{ii}^{c_{ij}^{\bq}}$, for all $j \in \I$.
Then the set of \emph{Cartan roots} of $\bq$ is
\begin{align*}
\Oc^{\bq} &= \{s_{i_1}^{\bq} s_{i_2} \dots s_{i_k}(\alpha_i) \in \varDelta^{\bq}:
i\in \I  \text{ is a Cartan vertex of } \rho_{i_k} \dots \rho_{i_2}\rho_{i_1}(\bq) \}.
\end{align*}
Thus $\Oc^{\bq} = \Oc^{\bq}_+ \cup \Oc^{\bq}_-$, where $\Oc^{\bq}_{\pm} = \Oc^{\bq} \cap \varDelta_{\pm}^{\bq}$.
 
\medbreak
The distinguished pre-Nichols algebra is defined in terms of the presentation of $\J_{\bq}$ evoked above. 
To explain this, we need the notation:
\begin{align*}
\widetilde N_{\alpha} &= \begin{cases} N_{\alpha} =\ord \bq_{\alpha\alpha} &\mbox{ if }\alpha\notin\Oc_+^{\bq},
\\ \infty  &\mbox{ if }\alpha\in\Oc_+^{\bq}, \end{cases}& \alpha &\in \varDelta_+^{\bq}.
\end{align*}

Let $\cI_{\bq} \subset \J_{\bq}$ be the ideal of $T(V)$ generated by all the relations in \cite[Theorem 3.1]{A-presentation}, but
\begin{itemize}
\item excluding the power root vectors $x_\alpha^{N_\alpha}$, $\alpha\in\Oc_+^{\bq}$,
\item adding the quantum Serre relations $(\ad_c x_i)^{1-c_{ij}^{\bq}} x_j$ for those $i\neq j$ such that
$q_{ii}^{c_{ij}^{\bq}}=q_{ij}q_{ji}=q_{ii}$.
\end{itemize}

\begin{definition} \cite{A-presentation}
The distinguished pre-Nichols algebra $\wtoba_{\bq}$ is the quotient $T(V)/ \cI_{\bq}$. 
\end{definition}

This pre-Nichols algebra is useful for the computation of the liftings; 
it should also be present in the classification of pointed Hopf algebras with finite $\GK$.
See \cite{A-pre-Nichols} for the basic properties of $\wtoba_{\bq}$.

\subsubsection{The Lusztig algebra and the associated Lie algebra}\label{subsubsec:Lusztig-Lie}
For an easy exposition, we suppose that $\bq$ is symmetric, and by technical reasons, that
\begin{align}\label{condition cart roots}
&q_{\alpha\beta}^{N_\beta}=1, & \forall \alpha,\beta\in\Oc^{\bq}.
\end{align}

\begin{definition}
The \emph{Lusztig algebra} $\lu_{\bq} $ of $(V,c)$ is the graded dual 
of the distinguished pre-Nichols algebra $\wtoba_{\bq}$ of $(V^*,\bq)$;
thus, $\toba_{\bq} \subseteq \lu_{\bq}$.
\end{definition}

To describe the associated Lie algebra, we begin by
considering the subalgebra $Z_{\bq}$ of $\wtoba_{\bq}$ generated by $x_{\beta}^{N_{\beta}}$, $\beta \in \Oc_+^{\bq}$.
Then $Z_{\bq}$ is a commutative normal Hopf subalgebra of $\wtoba_{\bq}$ 
\cite{A-pre-Nichols}. In turn, the graded dual $\mathfrak Z_\bq$ of  $Z_\bq$; 
is a cocommutative Hopf algebra,
isomorphic to the enveloping algebra $\Uc(\mathfrak n_{\bq})$ of a Lie algebra 
$\mathfrak n_{\bq}$.
Then $\luq$ is an extension of braided Hopf algebras:
\begin{align*}
\toba_{\bq} \hookrightarrow \luq \twoheadrightarrow \mathfrak Z_{\bq}.
\end{align*}

\begin{theorem} \cite{AAR2,AAR3} The Lie algebra $\mathfrak n_{\bq}$ is either 0 or else
isomorphic to the positive part of a semisimple Lie algebra $\mathfrak g_{\bq}$.
\end{theorem}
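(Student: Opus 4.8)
The plan is to identify $\mathfrak{n}_{\bq}$ explicitly, case by case, using the classification of $\bq$ in \cite{H-classif RS} together with the structure of the distinguished pre-Nichols algebra $\wtoba_{\bq}$ and the central Hopf subalgebra $Z_{\bq}$. The starting point is that $\mathfrak{Z}_{\bq}$, the graded dual of $Z_{\bq}$, is cocommutative, hence $\mathfrak{Z}_{\bq} \simeq \Uc(\mathfrak{n}_{\bq})$ by Cartier--Milnor--Moore; so the object to understand is the Lie algebra $\mathfrak{n}_{\bq}$ of primitives. First I would record that $Z_{\bq}$ is a polynomial algebra on the generators $x_{\beta}^{N_{\beta}}$, $\beta \in \Oc^{\bq}_+$, as a consequence of the PBW-basis of $\wtoba_{\bq}$ (each such generator is a PBW-generator of infinite height in $\wtoba_{\bq}$, by the definition of $\widetilde{N}_{\alpha}$); this already shows $\GK \mathfrak{n}_{\bq} = \vert \Oc^{\bq}_+\vert$ and in particular that $\mathfrak{n}_{\bq} = 0$ exactly when $\bq$ has no Cartan roots.

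Next I would determine the Lie bracket. The bracket on $\mathfrak{n}_{\bq}$ is dual to the failure of $Z_{\bq}$ to be a cocommutative (i.e. to the non-primitive part of the coproduct of the $x_{\beta}^{N_{\beta}}$). Concretely, in $\wtoba_{\bq}$ one has $\Delta(x_{\beta}^{N_{\beta}}) = x_{\beta}^{N_{\beta}} \ot 1 + 1 \ot x_{\beta}^{N_{\beta}} + (\text{terms in lower PBW-degree})$, and expanding the cross terms modulo $Z_{\bq}^{+}\ot Z_{\bq}^{+}$-decomposables gives structure constants $[e_{\alpha}, e_{\beta}] = \sum c^{\gamma}_{\alpha\beta} e_{\gamma}$ on the dual basis $(e_{\beta})_{\beta \in \Oc^{\bq}_+}$ of $\mathfrak{n}_{\bq}$. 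Grading by $\Z^{\I}$ forces $c^{\gamma}_{\alpha\beta} = 0$ unless $\gamma = \alpha + \beta$; hypothesis \eqref{condition cart roots} is what guarantees these computations close up inside $Z_{\bq}$ and that the relevant scalars $q_{\alpha\beta}^{N_{\beta}}$ are trivial, so that the bracket is genuinely antisymmetric and satisfies Jacobi. The upshot is that $\mathfrak{n}_{\bq}$ is a $\Z^{\I}$-graded Lie algebra, generated in its lowest degrees, whose support is $\Oc^{\bq}_+$ and whose nonzero bracket components are prescribed by the additive structure of $\Oc^{\bq}_+$ inside $\varDelta^{\bq}_+$.

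The heart of the argument, and the step I expect to be the main obstacle, is recognizing this $\Z^{\I}$-graded Lie algebra as the positive part $\mathfrak{n}_+(\mathfrak{g}_{\bq})$ of a semisimple Lie algebra. The strategy is: the set of Cartan roots $\Oc^{\bq}_+$, being built from Cartan vertices along the Weyl groupoid orbit of $\bq$, is stable under the (sub)root-system combinatorics, and in each entry of the table of \cite{H-classif RS} one reads off that $\Oc^{\bq}$ is exactly a full (possibly reducible) root subsystem $\varDelta(\mathfrak{g}_{\bq})$ of $\varDelta^{\bq}$, of finite Cartan type; the Cartan vertices provide the simple roots of $\mathfrak{g}_{\bq}$. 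One then checks that the structure constants $c^{\gamma}_{\alpha\beta}$ computed above are, up to normalization of the root vectors, the Chevalley structure constants of $\mathfrak{n}_+(\mathfrak{g}_{\bq})$ — the Serre relations $[e_{\alpha}, e_{\beta}] = 0$ when $\alpha + \beta \notin \varDelta(\mathfrak{g}_{\bq})$ hold because then $\alpha + \beta \notin \Oc^{\bq}_+$, and the normalization can be arranged because a $\Z^{\I}$-graded Lie algebra with one-dimensional graded pieces on a finite-type root system and the correct zero/nonzero pattern of brackets is isomorphic to $\mathfrak{n}_+(\mathfrak{g})$ (this rigidity statement for nilpotent Lie algebras defined by a root system is classical). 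The genuinely case-dependent part — and the unavoidable appeal to the classification — is verifying that $\Oc^{\bq}$ really is a finite-type root subsystem and identifying which $\mathfrak{g}_{\bq}$ it is; this is exactly the bookkeeping carried out in \cite{AAR2,AAR3}, and the cleanest presentation is to invoke those references for the verification while the above provides the conceptual skeleton. Finally, when $\Oc^{\bq}_+ = \emptyset$ one gets $\mathfrak{n}_{\bq} = 0$, which accounts for the first alternative in the statement.
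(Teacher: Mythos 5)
Your structural setup matches what actually happens in the sources: under \eqref{condition cart roots} the subalgebra $Z_{\bq}$ is an honest polynomial Hopf algebra on the $x_\beta^{N_\beta}$, $\beta\in\Oc^{\bq}_+$, its graded dual is cocommutative, hence isomorphic to $\Uc(\mathfrak n_{\bq})$ with $\dim\mathfrak n_{\bq}=\vert\Oc^{\bq}_+\vert$, the bracket is dual to the coproducts of those powers, and $\mathfrak n_{\bq}=0$ exactly when there are no Cartan roots. Be aware, however, that this survey contains no proof of the theorem: it is quoted from \cite{AAR2,AAR3}, the text says explicitly that the only proof available is a case-by-case computation, and the Remark right after it stresses that no direct proof is known that $\Oc^{\bq}$ carries a root-system structure. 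Since your argument, at its decisive step, also sends the reader to \cite{AAR2,AAR3} for the verification, what you propose is an organization of that case-by-case proof rather than an alternative to it --- which is acceptable, but then the organization must be stated correctly.

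Two points in your third paragraph are genuinely off. First, the $\Z^{\I}$-degree of $x_\beta^{N_\beta}$ is $N_\beta\beta$, not $\beta$, so the graded support of $\mathfrak n_{\bq}$ is $\{N_\beta\beta:\beta\in\Oc^{\bq}_+\}$ and the grading only forces $c^{\gamma}_{\alpha\beta}=0$ unless $N_\gamma\gamma=N_\alpha\alpha+N_\beta\beta$. Hence it is not true in general that $\Oc^{\bq}$ is a root subsystem of $\varDelta^{\bq}$ with the Cartan vertices as simple roots of $\mathfrak g_{\bq}$: in Cartan type $B_\theta$ (or $C_\theta$) with $N$ even every root is a Cartan root, yet $\mathfrak g_{\bq}$ is of type $C_\theta$ (resp. $B_\theta$), precisely because of the rescaling $\beta\mapsto N_\beta\beta$; and in type $\superda{\alpha}$ one has $\Oc^{\bq}_+=\{\alpha_1,\ \alpha_1+2\alpha_2+\alpha_3,\ \alpha_3\}$ with $\mathfrak g_{\bq}$ of type $A_1\times A_1\times A_1$, so a simple root of $\mathfrak g_{\bq}$ is neither a simple root of $\varDelta^{\bq}$ nor attached to a Cartan vertex (similarly for standard type $B_{\theta,j}$, where the answer is $D_j\times D_{\theta-j}$). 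Second, your rigidity argument (quotient of the free Lie algebra, Serre relations for free from the support, then a dimension count) is fine as far as it goes, but it needs as input that $\mathfrak n_{\bq}$ is generated by the components sitting over the simple roots of $\mathfrak g_{\bq}$, i.e. that $[e_\alpha,e_\beta]\neq 0$ whenever the degrees add up inside the rescaled root system; only the vanishing half comes for free from the support. That non-vanishing is exactly the hard, case-dependent computation with the coproducts of the $x_\beta^{N_\beta}$ in $\wtoba_{\bq}$ carried out in \cite{AAR2,AAR3}; without it (or a substitute argument) your skeleton does not yet yield the theorem.
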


The proof we dispose of this Theorem is by computation case-by-case. 
If $\bq$ is of Cartan type, then $\mathfrak g_{\bq}$ corresponds to the same Cartan matrix,
except in even order and type $B$ or $C$, in which case is of type $C$ or $B$.
If $\bq$ is of super type, with corresponding Lie superalgebra $\g$, then $\g_{\bq}$ is isomorphic to the even part
$\g_0$.
Assume that $\bq$ is of modular type. 
Then the Theorem shows, in particular, that the modular Lie algebras or superalgebras
listed above give rise to Lie algebras in characteristic 0. 
However, the latter do not control completely the behaviour of the former.

\begin{rem}
There is a bijection from the set $\Oc_+^{\bq}$ to the set of positive roots of $\g_{\bq}$.
Thus $\Oc^{\bq}$ bears a structure of root system, albeit we do not dispose of a direct proof of this fact.
\end{rem}

\subsection{The degree of the integral}\label{subsec:ia-invariant}
Again,  $(V,c)$ and  $\bq$ are as in \S \ref{subsec:braid}. We assume that $\dim \toba_{\bq} < \infty$.
We introduce an element of the lattice $\Z^{\I}$ which is a relative of the  semi-sum of the positive roots
in the theory of semisimple Lie algebras.
Let $N_{\beta}= \ord(q_{\beta})=\text{h}(x_{\beta})$  for $\beta\in \varDelta_+^{\bq}$. 
Let
\begin{align}\label{eq:ia}
\ya &:= \sum_{\beta\in\varDelta_+^{\bq}} (N_{\beta}-1)\beta \in \Z^{\I}.
\end{align}
Suppose that $\bq$ is of Cartan type. If $\ord q_{ii}$ is relatively prime to the entries of the Cartan matrix $A$,
then $N_{\beta} = N$ is constant and $\ya = (N-1) \sum_{\beta\in\varDelta_+^{A}} \beta$.

\smallbreak
Since $\toba_{\bq} = \oplus_{n\ge 0} \toba_{\bq}^n$ is finite-dimensional, there exists
$d \in \N$ such that $\toba_{\bq}^d \neq 0$ and $\toba_{\bq}^n =0$ for $n >d$. 
Then $\toba_{\bq}$ is unimodular with $\toba_{\bq}^d$  
equal to the space of left and right integrals,
 $\dim \toba_{\bq}^d = 1$ and $\dim \toba_{\bq}^j = \dim \toba_{\bq}^{d-j}$ if $j \in \I_{0,d}$.
See \cite{AG} for details. We set ${\tp} = d$. Then 
\begin{align}\label{eq:top}
\ya &= \deg \toba_{\bq}^{\tp}.
\end{align}

Let now $\Gamma$ be a finite abelian group and  $(\mathbf{g}, \mathbf{\chi})$
a realization of $\bq$, 
cf. \S \ref{subsec:double-Nichols}, 
with $\mathbf{g} = (g_i)_{i\in\I}$, $\mathbf{\chi}) = (\chi_i)_{i\in\I}$
Then we have morphisms of groups $\Z^{\I} \to \Gamma$ and
$\Z^{\I} \to \VGamma$ given by
\begin{align*}
\beta &\longmapsto g_{\beta},& g_{\alpha_i} &= g_i;&  \beta &\longmapsto \chi_{\beta},&
\chi_{\alpha_i} &= \chi_i,& i\in \I.
\end{align*}

We refer to \cite[\S 8.11]{egno}
for the basics of ribbon Hopf algebras. Here is an application of  $\ya$. 

\begin{pro}\label{prop:ribbon-criterio1}
	The distinguished group-likes of $H=\toba(V)\# \ku\Gamma$ are
\begin{align*}
\ch_H&=\chi_{\ya}^{-1}
& &\mbox{and} & 
\drus_H&= g_{\ya}.
\end{align*}
The Drinfeld double $(D(H),\mathcal{R})$ is ribbon if and only if there exist $\brus\in\Gamma$, 
$\zh\in\widehat{\Gamma}$ such that $\brus^2=\drus_H = g_{\ya}$, $\zh^2=\ch_H =\chi_{\ya}^{-1}$ and
\begin{align}\label{eq:ribbon-cond}
	q_{ii}^{-1} &= \chi_i(\brus)\zh^{-1}(g_i), & \mbox{for all }& i\in\I.
\end{align}
\end{pro}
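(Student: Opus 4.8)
The plan is to obtain both distinguished group-likes from an explicit integral of $H=\toba(V)\#\ku\Gamma$ and then to invoke the Kauffman--Radford criterion for the existence of a ribbon element in a Drinfeld double. Two preliminary remarks make everything concrete. Since $\toba(V)=\toba_{\bq}$ is connected, graded and finite-dimensional, its augmentation ideal is nilpotent; hence $G(H)=\Gamma$ and every algebra character of $H$ vanishes on the ideal generated by that augmentation ideal, so $G(H^{*})=\Alg(H,\ku)=\VGamma$. Also, by \S\ref{subsec:ia-invariant} and \cite{AG}, $\toba(V)$ is unimodular, $\toba_{\bq}^{\tp}$ is one-dimensional and spanned by a two-sided integral $t$, and $\deg t=\ya$ by \eqref{eq:top}; via the realization $(\mathbf g,\mathbf\chi)$ the element $t$ lies in the Yetter--Drinfeld component of degree $g_{\ya}$ on which $\Gamma$ acts through $\chi_{\ya}$.

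First I would check that $\Lambda:=t\#\sum_{\gamma\in\Gamma}\chi_{\ya}(\gamma)\gamma$ is a left integral of $H$: writing out $(r\#\gamma)\Lambda$ with the smash product $(r\#\gamma)(r'\#\gamma')=r(\gamma\cdot r')\#\gamma\gamma'$ and using $\gamma\cdot t=\chi_{\ya}(\gamma)t$ together with $rt=\eps(r)t$ (as $t$ spans the top degree), a short reindexing gives $(r\#\gamma)\Lambda=\eps(r)\Lambda$. The same computation for $\Lambda(r\#\gamma)$ yields $\Lambda(r\#\gamma)=\eps(r)\chi_{\ya}(\gamma)^{-1}\Lambda$, whence $\ch_{H}=\chi_{\ya}^{-1}$ in $\VGamma$. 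For $\drus_{H}$ I would pass to the dual: $H^{*}$ is again the bosonization of a Nichols algebra of diagonal type by $\ku\VGamma$, with underlying braiding matrix twist-equivalent to $\bq$ (indeed $\bq^{t}$), so it has the same positive roots and the same orders $N_{\beta}$; hence its integral degree is again $\ya$, and the previous computation applied to $H^{*}$, combined with the standard relation $\drus_{H}=\ch_{H^{*}}^{-1}$, gives $\drus_{H}=g_{\ya}$. Alternatively one may quote the general formula for the distinguished group-likes of a bosonization.

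For the ribbon statement I would apply the Kauffman--Radford criterion, cf.\ \cite[\S8.11]{egno}: $D(H)$ is ribbon if and only if there exist $\brus\in G(H)$ and $\zh\in G(H^{*})$ with $\brus^{2}=\drus_{H}$, $\zh^{2}=\ch_{H}$ and $\Ss^{2}(h)=\brus\bigl(\zh^{-1}\rightharpoonup h\leftharpoonup\zh\bigr)\brus^{-1}$ for all $h\in H$, where $\zh^{\pm1}\rightharpoonup h\leftharpoonup\zh^{\mp1}:=\zh^{\pm1}(h_{(1)})h_{(2)}\zh^{\mp1}(h_{(3)})$. By the first paragraph $\brus\in\Gamma$ and $\zh\in\VGamma$, and by the second the first two conditions become $\brus^{2}=g_{\ya}$ and $\zh^{2}=\chi_{\ya}^{-1}$. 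For the third, both $\Ss^{2}$ and $h\mapsto\brus(\zh^{-1}\rightharpoonup h\leftharpoonup\zh)\brus^{-1}$ are algebra endomorphisms of $H$, so it suffices to test the equality on the algebra generators $\Gamma\cup\{x_{i}:i\in\I\}$. On $\Gamma$ both maps are the identity, so there is nothing to check; on $x_{i}$ one computes $\Ss^{2}(x_{i})=q_{ii}^{-1}x_{i}$ from $\Ss(x_{i})=-g_{i}^{-1}x_{i}$ and $q_{ii}=\chi_{i}(g_{i})$, and $\brus(\zh^{-1}\rightharpoonup x_{i}\leftharpoonup\zh)\brus^{-1}=\chi_{i}(\brus)\,\zh^{-1}(g_{i})\,x_{i}$ from $(\Delta\ot\id)\Delta(x_{i})=x_{i}\ot1\ot1+g_{i}\ot x_{i}\ot1+g_{i}\ot g_{i}\ot x_{i}$, $\zh(x_{i})=0$ and $\brus\cdot x_{i}=\chi_{i}(\brus)x_{i}$. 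Comparing the two scalars is exactly \eqref{eq:ribbon-cond}, which finishes the argument.

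The step I expect to be delicate is $\drus_{H}$, or rather the bookkeeping of conventions around it: making sure the answer is $g_{\ya}$ and not $g_{\ya}^{-1}$ depends on the precise conventions chosen for the distinguished group-likes and for the last Kauffman--Radford relation, and likewise the $\zh^{-1}(g_{i})$ in \eqref{eq:ribbon-cond} (as opposed to $\zh(g_{i})$) must be matched carefully. A useful safeguard is to run the entire argument on the Taft algebras $T_{n}=\toba(\ku x)\#\ku\Z_{n}$, where $\ya=(n-1)\alpha_{x}$, the distinguished group-likes are classically known, and the criterion must reproduce the fact that $D(T_{n})$ is ribbon precisely when $n$ is odd.
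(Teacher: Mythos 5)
Your proposal is correct and follows essentially the same route as the paper, whose proof consists precisely of citing \cite[4.8, 4.10]{Bu} for the distinguished group-likes and \cite[Theorem 3]{RK} for the ribbon criterion; you simply unpack those two citations, computing the integral $t\#\sum_{\gamma}\chi_{\ya}(\gamma)\gamma$ of the bosonization directly and reducing the Kauffman--Radford condition $\Ss^2(h)=\brus(\zh^{-1}\rightharpoonup h\leftharpoonup \zh)\brus^{-1}$ to the generators $x_i$, which yields \eqref{eq:ribbon-cond} exactly as you describe. The only soft spot is the identification $\drus_H=g_{\ya}$ via the dual bosonization, where the sign/inverse depends on the convention for the distinguished group-like; you flag this yourself and the Taft-algebra check you propose does resolve it.
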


\begin{proof} 
The first claim follows from \cite[4.8, 4.10]{Bu} and the discussion above.
The second claim is a consequence of \cite[Theorem 3]{RK}.
\end{proof}

We next discuss how the preceding construction behaves under the action of the Weyl groupoid.
Let us fix $i\in\I$ and set $\bq' = \rho_i \bq= (q_{jk}')_{j,k\in\I}$.
First we notice that the degree of the integral corresponding to $\bq'$ is 
\begin{align}\label{eq:iaprima}
\ya' &=  s_i^{\bq}(\ya) +2(N_i-1)\alpha_i.
\end{align}
Indeed,
\begin{align*}
\ya' &= \sum_{\beta\in\varDelta_+^{\bq'}} (N_{\beta}-1)\beta 
= \sum_{\gamma\in\varDelta_+^{\bq}-\{\alpha_i\}} (N_{s_i(\gamma)}-1)s_i^{\bq}(\gamma) +(N_i-1)\alpha_i \\
& = \sum_{\gamma\in\varDelta_+^{\bq}-\{\alpha_i\}} (N_{\gamma}-1)s_i^{\bq}(\gamma) +(N_i-1)\alpha_i = s_i^{\bq}(\ya) +2(N_i-1)\alpha_i.
\end{align*}

Now $\mathbf{g'} = (g_j')_{j\in \I}$, $\mathbf{\chi'} =(\chi_j')_{j\in\I}$
gives a a realization of $\bq'$ where
\begin{align}\label{eq:reduced-YD-datum-rhoi}
g_j'&=g_jg_i^{-c_{ij}^{\bq}}=g_{s_i^{\bq}(\alpha_j)}, &  \chi_j'&=\chi_j\chi_i^{-c_{ij}^{\bq}}=\chi_{s_i^{\bq}(\alpha_j)}, 
& j\in\I.
\end{align}

\begin{cor}\label{cor:dist-group-like-rhoi} The distinguished group-likes of
$H'=\toba_{\bq'}\#\ku\Gamma$ are
\begin{align*}
\ch_{H'}&= \ch_{H} \chi_i^{2(N_i-1)}
& &\mbox{and} & 
\drus_{H'}&= \drus_{H} g_i^{2(1-N_i)}.
\end{align*}
If $(D(H),\mathcal{R})$ is ribbon, then 
$(D(H'),\mathcal{R}')$ is also ribbon.
\end{cor}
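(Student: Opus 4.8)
The plan is to read off everything from Proposition~\ref{prop:ribbon-criterio1} applied to both $H$ and $H'$, using the explicit realization \eqref{eq:reduced-YD-datum-rhoi} of $\bq'=\rho_i\bq$ and the formula \eqref{eq:iaprima} for $\ya'$. First I would record that \eqref{eq:reduced-YD-datum-rhoi} says precisely that the group morphisms $\Z^{\I}\to\Gamma$ and $\Z^{\I}\to\VGamma$ attached to the realization $(\mathbf{g'},\mathbf{\chi'})$ of $\bq'$ are $\beta\mapsto g_{s_i^{\bq}(\beta)}$ and $\beta\mapsto\chi_{s_i^{\bq}(\beta)}$, where on the right one uses the morphisms attached to $(\mathbf{g},\mathbf{\chi})$; indeed this is \eqref{eq:reduced-YD-datum-rhoi} on the $\alpha_j$, extended multiplicatively. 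Since $\toba_{\bq'}$ is again finite dimensional (Theorem~\ref{th:heck-iso}), Proposition~\ref{prop:ribbon-criterio1} applies to $H'$ and yields $\ch_{H'}=\chi_{s_i^{\bq}(\ya')}^{-1}$ and $\drus_{H'}=g_{s_i^{\bq}(\ya')}$.

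Next I would apply the involution $s_i^{\bq}$ to \eqref{eq:iaprima}; since $s_i^{\bq}(\alpha_i)=-\alpha_i$ this gives $s_i^{\bq}(\ya')=\ya-2(N_i-1)\alpha_i$. Substituting into the previous step produces $\ch_{H'}=\chi_{\ya}^{-1}\chi_i^{2(N_i-1)}=\ch_H\chi_i^{2(N_i-1)}$ and $\drus_{H'}=g_{\ya}\,g_i^{-2(N_i-1)}=\drus_H\,g_i^{2(1-N_i)}$, the first assertion of the Corollary.

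For the ribbon claim, suppose $(D(H),\mathcal{R})$ is ribbon and pick $\brus\in\Gamma$, $\zh\in\VGamma$ as in Proposition~\ref{prop:ribbon-criterio1}. I would take $\brus':=\brus\,g_i^{1-N_i}$ and $\zh':=\zh\,\chi_i^{N_i-1}$; the formulas just proved give $(\brus')^{2}=\drus_H\,g_i^{2(1-N_i)}=\drus_{H'}$ and $(\zh')^{2}=\ch_H\,\chi_i^{2(N_i-1)}=\ch_{H'}$ at once, so only the compatibility \eqref{eq:ribbon-cond} for $\bq'$ needs checking: $(q'_{jj})^{-1}=\chi'_j(\brus')\,(\zh')^{-1}(g'_j)$ for all $j\in\I$. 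Writing $\beta_j=s_i^{\bq}(\alpha_j)=\alpha_j-c_{ij}^{\bq}\alpha_i$, so that $\chi'_j=\chi_{\beta_j}$, $g'_j=g_{\beta_j}$, $q'_{jj}=\bq_{\beta_j\beta_j}$, and $\chi_\beta(g_\gamma)=\bq_{\beta\gamma}$ by bilinearity, one expands
\begin{align*}
\chi_{\beta_j}(\brus')\,(\zh')^{-1}(g_{\beta_j}) &= \big(\chi_{\beta_j}(\brus)\,\zh^{-1}(g_{\beta_j})\big)\big(\chi_{\beta_j}(g_i)\,\chi_i(g_{\beta_j})\big)^{1-N_i}.
\end{align*}
By \eqref{eq:ribbon-cond} for $\bq$ (used at $j$ and at $i$) the first factor equals $q_{jj}^{-1}q_{ii}^{\,c_{ij}^{\bq}}$, the second equals $\big(\widetilde{q_{ij}}\,q_{ii}^{-2c_{ij}^{\bq}}\big)^{1-N_i}$, and $q_{ii}^{N_i}=1$ collapses the product to $q_{jj}^{-1}q_{ii}^{-c_{ij}^{\bq}}\,\widetilde{q_{ij}}^{\,1-N_i}$. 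Since $\bq_{\beta_j\beta_j}^{-1}=q_{jj}^{-1}\widetilde{q_{ij}}^{\,c_{ij}^{\bq}}q_{ii}^{-(c_{ij}^{\bq})^{2}}$, the condition \eqref{eq:ribbon-cond} for $\bq'$ is equivalent to the scalar identity
\begin{align*}
\widetilde{q_{ij}}^{\,1-N_i-c_{ij}^{\bq}} &= q_{ii}^{\,c_{ij}^{\bq}(1-c_{ij}^{\bq})}.
\end{align*}

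This last identity is the only genuine point, and I expect it to be the main (if modest) obstacle. I would prove it from \eqref{eq:defcij}: with $m=-c_{ij}^{\bq}$, minimality forces $(m+1)_{q_{ii}}(1-q_{ii}^{m}\widetilde{q_{ij}})=0$. If $\widetilde{q_{ij}}=q_{ii}^{-m}$, the right hand side is $q_{ii}^{-m(1+m)}$ and the left hand side is $q_{ii}^{-m(1+m)}(q_{ii}^{N_i})^{m}=q_{ii}^{-m(1+m)}$; if instead $(m+1)_{q_{ii}}=0$, minimality of $m$ forces $\ord q_{ii}=m+1$, i.e.\ $c_{ij}^{\bq}=1-N_i$, so that both exponents $1-N_i-c_{ij}^{\bq}=0$ and $c_{ij}^{\bq}(1-c_{ij}^{\bq})=N_i(1-N_i)$ are multiples of $N_i$ and both sides are $1$. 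The case $j=i$ is checked directly ($c_{ii}^{\bq}=2$, the identity reducing to $q_{ii}^{-2N_i}=1$). This gives \eqref{eq:ribbon-cond} for $\bq'$, whence $(D(H'),\mathcal{R}')$ is ribbon by Proposition~\ref{prop:ribbon-criterio1}.
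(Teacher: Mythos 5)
Your proof is correct and follows essentially the same route as the paper: the first claim by transporting $\ya'$ through $s_i^{\bq}$ via \eqref{eq:iaprima}, and the ribbon claim with the same choice $\brus'=\brus g_i^{1-N_i}$, $\zh'=\zh\chi_i^{N_i-1}$ and the same dichotomy coming from the definition \eqref{eq:defcij} of $c_{ij}^{\bq}$ (Cartan-like vertex versus $c_{ij}^{\bq}=1-N_i$). The only difference is organizational: you package the verification of \eqref{eq:ribbon-cond} into the single scalar identity $\widetilde{q_{ij}}^{\,1-N_i-c_{ij}^{\bq}}=q_{ii}^{c_{ij}^{\bq}(1-c_{ij}^{\bq})}$, whereas the paper checks the three cases $j=i$, $j\neq i$ Cartan, $j\neq i$ non-Cartan directly.
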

\pf
The first claim follows by a direct computation from \eqref{eq:iaprima}.
Let
\begin{align*}
\brus' &=\brus g_i^{1-N_i}\in\Gamma,& \zh'&=\zh \chi_i^{N_i-1}\in\widehat{\Gamma}.
\end{align*}
Clearly, 
$(\brus')^2=\drus_H'$, $(\zh')^2=\ch_H'$.
We check that \eqref{eq:ribbon-cond} holds. Let $j\in\I$. Since  $\brus$, $\zh$ satisfy \eqref{eq:ribbon-cond} and $q_{ii}^{N_i}=1$, we have
\begin{align*}
\chi_j'(\brus') & (\zh')^{-1}(g_j') = \chi_j\chi_i^{-c_{ij}^{\bq}}(\brus g_i^{1-N_i}) \zh^{-1} \chi_i^{1-N_i}(g_j g_i^{-c_{ij}^{\bq}})
\\
&= \chi_j(\brus)(\zh)^{-1}(g_j) \left(\chi_i(\brus)\zh^{-1}(g_i) \right)^{-c_{ij}^{\bq}} q_{ii}^{2c_{ij}^{\bq}(N_i-1)} \widetilde{q_{ij}}^{1-N_i}
\\
& = q_{jj}^{-1} q_{ii}^{c_{ij}^{\bq}} q_{ii}^{-2c_{ij}^{\bq}} q_{ij}^{1-N_i} q_{ji}^{1-N_i}
= q_{jj}^{-1} q_{ii}^{-c_{ij}^{\bq}} q_{ij}^{1-N_i} q_{ji}^{1-N_i}.
\end{align*}
Now
\begin{itemize} [leftmargin=*]\renewcommand{\labelitemi}{$\diamond$}
\item  If $j=i$, then $q_{ii}'=q_{ii}$, and $q_{jj}^{-1} q_{ii}^{-c_{ij}^{\bq}} q_{ij}^{1-N_i} q_{ji}^{1-N_i}=q_{ii}^{-1} q_{ii}^{-2} q_{ii}^{2-2N_i}= q_{ii}^{-1}$.

\item If $j\neq i$ and $q_{ii}^{c_{ij}^{\bq}}=q_{ij}q_{ji}$, then $q_{jj}'=q_{jj}$, and
\begin{align*}
q_{jj}^{-1} q_{ii}^{-c_{ij}^{\bq}} q_{ij}^{1-N_i} q_{ji}^{1-N_i}&=
q_{jj}^{-1} q_{ii}^{-c_{ij}^{\bq}} q_{ii}^{c_{ij}^{\bq}(1-N_i)}= q_{jj}^{-1}.
\end{align*}

\item If $j\neq i$ and $q_{ii}^{c_{ij}^{\bq}}\neq q_{ij}q_{ji}$, then $c_{ij}^{\bq}=1-N_i$, so
\begin{align*}
q_{jj}^{-1} q_{ii}^{-c_{ij}^{\bq}} q_{ij}^{1-N_i} q_{ji}^{1-N_i}&=
q_{jj}^{-1} q_{ii}^{-(c_{ij}^{\bq})^2} q_{ij}^{c_{ij}^{\bq}} q_{ji}^{c_{ij}^{\bq}}
= \chi_j\chi_i^{-c_{ij}^{\bq}}(g_j g_i^{-c_{ij}^{\bq}})^{-1}
= (q_{jj}')^{-1}.
\end{align*}

\end{itemize}
Thus $\chi_j'(\brus')(\zh')^{-1}(g_j')= (q_{jj}')^{-1}$  for all $j\in\I$;
Proposition \ref{prop:ribbon-criterio1} applies.
\epf

\part{Arithmetic  root systems: Cartan, super, standard}\label{part:cartan-super-standard}

\section{Outline}\label{sec:outline}
\subsection{Notation}\label{subsec:outline-notation}
In what follows $q\in \k^{\times}- \{1\}$, $N := \ord q \in [2, \infty]$. Recall that $(\alpha_i) _{i\in \I}$ denotes
the canonical basis of $\Z^{\I}$.

The matrices $\bq$ considered here belong to the classification list in \cite{H-classif RS}; 
they may form part of an
infinite series--or not. In the second case, we often use $i$ to denote the root $\alpha_{i}$, and more generally
\begin{align}\label{eq:notation-root-exceptional}
\begin{aligned}
i_1i_2\dots i_k   \text{ denotes }  \alpha_{i_1} + \alpha_{i_2} + \dots \alpha_{i_k} \in \Z^{\I};
\\
\text{also  } i_1^{h_1}i_2^{h_2}\dots i_k^{h_k} \text{ denotes }  
h_1\alpha_{i_1} + h_2\alpha_{i_2} + \dots h_k\alpha_{i_k} \in \Z^{\I}
\end{aligned}
\end{align}

The implicit numeration of any generalized Dynkin diagram is from the left to the right and from bottom to top; otherwise,
the numeration appears below the vertices.

Basic data are described either explicitly or by the corresponding diagram as in page \pageref{eq:basicdatum-diagram}.

If a numbered display contains several equalities (or diagrams), they will be referred to
with roman letters from left to right and from top to bottom; e.g., (\ref{eq:rels-type-A-N>2} c) below means
$x_{(kl)}^N = 0$, $k \leq l$.

If $X_m$ is a generalized Dynkin diagram (or a subset of $\Z^m$ or any variation thereof) with $m$ vertices and $\sigma\in \s_m$, 
then $\sigma(X_m)$ is the generalized Dynkin diagram (or the object in question) with the numeration of
the vertices after applying $\sigma$ to the numeration of $X_m$.
In this respect, $s_{ij}$ denotes the transposition $(ij)$, what should not be confused with the reflection $s_i$.
For brevity, we abbreviate some permutations in $\s_4$ and $\s_5$ as follows:
\begin{align*}
\kappa_1 &=s_{1234}, & \kappa_2 &= s_{234}, 
& \kappa_3 &= s_{12}s_{34}, & \kappa_4 &= s_{13}s_{24},
\\  \kappa_5 &= s_{142}, & \kappa_6 &= s_{1324}, & \kappa_7 &= s_{134}, & \kappa_8 &= s_{324}.
\\
\varpi_1 &=s_{15}s_{234}, & \varpi_2 &= s_{354}, & \varpi_3 &= s_{15}s_{23}, & \varpi_4 &= s_{345}, & \varpi_5 &=s_{14}s_{23}.
\end{align*}

Along the way, we recall the Cartan matrices of types $A$, $B$, $C$, $D$, $E$, $F$ and $G$ with the numeration we use; see \eqref{eq:dynkin-system-A}, \eqref{eq:dynkin-system-B}, \eqref{eq:dynkin-system-C}, \eqref{eq:dynkin-system-D}, \eqref{eq:dynkin-system-E}, \eqref{eq:dynkin-system-F}, \eqref{eq:dynkin-system-G}.
We also need some other generalized Cartan matrices:
\begin{align} \label{eq:An-(1)}
& \xymatrix@C-5pt{A_{n}^{(1)}: & & & & \underset{n+1}{\circ} \ar  @{-}[rrrd] & & & \\
	& \underset{1}{\circ}\ar  @{-}[r] \ar @{-} @{-}[rrru]   & \underset{2}{\circ}\ar  @{-}[r] &\underset{3}{\circ}\ar@{.}[rr]& &  \underset{n-2}{\circ}\ar  @{-}[r] & \underset{n-1}{\circ} \ar  @{-}[r]  & \underset{n}{\circ}}
\\ \label{eq:A1-(1)}
& \xymatrix{A_1^{(1)}: && \underset{1}{\circ}  \ar  @{<=>}[r]  & \underset{2}{\circ}}
\\ \label{eq:A2-(2)}
& \xymatrix{A_2^{(2)}: && }\underset{1}{\circ}  \big\langle \begin{tabular}[ht]{c} \vspace{-10pt}\\ \hline\hline \vspace{-11pt}\\ \hline\hline\vspace{-10pt}
\end{tabular} \hspace{5pt} \underset{2}{\circ}
\\ \label{eq:Cn-(1)}
& \xymatrix@C-5pt{C_n^{(1)}: && \underset{1}{\circ}  \ar  @{=>}[r]  &  \underset{2}{\circ}\ar  @{-}[r]  &
	\underset{3}{\circ}\ar@{.}[r] &  \underset{n-1}{\circ}\ar  @{-}[r] & \underset{n}{\circ} \ar  @{<=}[r]  & \underset{n+1}{\circ}}
\\ \label{eq:A2-2n-1}
& \xymatrix@C-5pt{A_{2n-1}^{(2)}: & & & & & \circ \ar  @{-}[d] & \\	
	& \circ  \ar  @{=>}[r]  & \circ\ar  @{-}[r]  &  \circ\ar@{.}[r]&  \circ\ar  @{-}[r] & \circ \ar  @{-}[r]  & \circ}
\\ \label{eq:A2-2n}
&\xymatrix@C-5pt{A_{2n}^{(2)}: && \underset{1}{\circ}  \ar  @{<=}[r]  &  \underset{2}{\circ}\ar  @{-}[r]  & \underset{3}{\circ} \ar@{.}[r]&  \underset{n-1}{\circ}\ar  @{-}[r] & \underset{n}{\circ} \ar  @{<=}[r]  & \underset{n+1}{\circ}}
\\ \label{eq:D43}
&\xymatrix@C-5pt{D_{4}^{(3)}: && \underset{1}{\circ} &  \underset{2}{\circ}\ar  @{-}[l]  & \underset{3}{\circ} \ar  @3{->}[l]}
\\ \label{eq:E6(2)}
& \xymatrix@C-5pt{E_{6}^{(2)}: & & \underset{1}{\circ} \ar@{-}[r]  & \underset{2}{\circ} \ar@{-}[r]  & \underset{3}{\circ} \ar@{<=}[r] & \underset{4}{\circ} \ar@{-}[r] & \underset{5}{\circ} }
\\ \label{eq:F4(1)}
& \xymatrix@C-5pt{F_{4}^{(1)}: &&  \underset{1}{\circ} \ar  @{-}[r]  &
	\underset{2}{\circ} \ar  @{-}[r]  & \underset{3}{\circ}\ar  @{=>}[r] & \underset{4}{\circ} \ar  @{-}[r] & \underset{5}{\circ} }
\\ \label{eq:mTn}
&\xymatrix@C-5pt{{}_mT_n: &  && \overset{m+n+1}{\circ} \ar  @{-}[rd]  & \\
	\underset{1}{\circ}\ar  @{-}[r]  &  \underset{2}{\circ}\ar@{.}[r]&  
	\underset{m}{\circ} \ar @{-}[ru] \ar @{-}[rr] & &\underset{m +1}{\circ} \ar  @{-}[r]  
	&  \underset{m+2}{\circ}\ar@{.}[r]&  \underset{m + n}{\circ}}
\\ \label{eq:T2}
& \xymatrix@C-5pt{T^{(2)}: && & \underset{3}{\circ} \ar  @{-}[rd]   & \\ &&
	\underset{1}{\circ} \ar @{-}[ru] \ar @{<=}[rr] & &\underset{2}{\circ} }
\\ \label{eq:1T2}
& \xymatrix@C-5pt{{}_1T^{(2)}: && & & \underset{4}{\circ} \ar  @{-}[rd]   & \\
	&& \underset{1}{\circ} \ar  @{-}[r]  & \underset{2}{\circ} \ar @{-}[ru] \ar @{<=}[rr] & & \underset{3}{\circ} }
\\ \label{eq:1T2t}
& \xymatrix@C-5pt{{}_1\widetilde{T}^{(2)}: & && & \underset{4}{\circ} \ar  @{<=}[rd]   & \\
	&& \underset{1}{\circ} \ar  @{-}[r]  & \underset{2}{\circ} \ar @{-}[ru] \ar @{-}[rr] & & \underset{3}{\circ} }
\\ \label{eq:Cn1-indef}
& \xymatrix@C-5pt{ C_n^{(1)\ \wedge}: && \underset{1}{\circ} \ar  @{-}[r]  & \underset{2}{\circ}  \ar  @{=>}[r]  &  \underset{3}{\circ}\ar  @{-}[r]  &
	\underset{4}{\circ}\ar@{.}[r] &  \underset{n-1}{\circ}\ar  @{-}[r] & \underset{n}{\circ} \ar  @{<=}[r]  & \underset{n+1}{\circ}}
\\ \label{eq:CEn}
& \xymatrix@C-5pt{CE_n: && &  & & \underset{n}{\circ} \ar@{-}[d]  & \\ && \underset{1}{\circ}\ar  @{-}[r]  & \underset{2}{\circ}\ar  @{.}[r]  & \underset{n-3}{\circ} \ar  @{-}[r]  & \underset{n-2}{\circ} \ar@{<=}[r] & \underset{n-1}{\circ}}
\\ \label{eq:C2+++}
& \xymatrix@C-5pt{C_2^{+++}: &&  \underset{1}{\circ} \ar@{-}[r]  & \underset{2}{\circ} \ar@{-}[r]  &  \underset{3}{\circ} \ar@{=>}[r] & \underset{4}{\circ} \ar@{<=}[r] & \underset{5}{\circ} }
\\ \label{eq:F4(1)w}
& \xymatrix@C-5pt{F_{4}^{(1)\wedge}: &&  \underset{1}{\circ}\ar  @{-}[r]  & 	\underset{2}{\circ} \ar  @{-}[r]  & \underset{3}{\circ}\ar  @{-}[r] & 
	\underset{4}{\circ} \ar  @{=>}[r] & 
	\underset{5}{\circ} \ar  @{-}[r]  & \underset{6}{\circ}   & }
\\ \label{eq:E6(2)w}
& \xymatrix@C-5pt{E_{6}^{(2)\wedge}: &&  \underset{1}{\circ}\ar  @{-}[r]  & 	\underset{2}{\circ} \ar  @{-}[r]  & \underset{3}{\circ}\ar  @{-}[r] & 
	\underset{4}{\circ} \ar  @{<=}[r] & 
	\underset{5}{\circ} \ar  @{-}[r]  & \underset{6}{\circ}   & }
\\ \label{eq:D4(3)w}
& \xymatrix@C-5pt{D_{4}^{(3)\wedge}: && \underset{1}{\circ}  \ar@{-}[r]  &  \underset{2}{\circ} \ar  @{-}[r]  & \underset{3}{\circ} \ar@3{<-}[r] & \underset{4}{\circ} & & & }
\\ \label{eq:Hcd}
& \xymatrix@C-5pt{H_{c,d}: && \underset{1}{\circ} \ar  @{-}[r]^{c,d} & \underset{2}{\circ}, && cd\geq 4}
\end{align}
We also abbreviate ${}_mT = {}_mT_1$, $A_{2}^{(1)} = {}_1T_1$.

\subsection{Information}\label{subsec:outline-information}
In this Part, we give information on Nichols algebras $\toba_{\bq}$
for  matrices  $\bq$ satisfying \eqref{eq:diag-dif-1}
such that $\bq$ is arithmetic, see Definition \ref{def:arithmetic}, and has a connected Dynkin diagram.

We organize the information as follows.

\begin{itemize}[leftmargin=*]
\item  We first describe the (abstract) generalized root system $\cR$, including

\begin{itemize}
\medbreak\item The basic datum $(\cX, \rho)$.

\medbreak\item The bundles $(C^{x})_{x\in \cX}$ of Cartan matrices and $(\varDelta^{x})_{x\in \cX}$ of sets of roots.

\medbreak\item The Weyl groupoid, see Definition \ref{def:weylgpd}. Actually, since the basic datum is connected, the  groupoid  is determined by 
the isotropy group at any point; so we describe this last one--see \cite{AA-GRS-CLS-NA} for details of the calculations.

\medbreak\item The Lie algebra or superalgebra realizing the generalized root system as explained in \S \ref{subsec:Weyl-gpd-super}, when it exists.
\end{itemize}

\medbreak\item The possible families of  matrices $(\bq^x)_{x\in \cX}$ (actually the Dynkin diagrams) with the prescribed GRS. 
We call them the \emph{incarnations}. Concretely, we exhibit  families of  matrices  $(\bq^x)_{x\in \cX}$ such that

\medbreak\begin{enumerate}[leftmargin=*, label=\rm{(\alph*)}]
\medbreak\item\label{it:encarn-a} the Cartan matrix $C^{\bq^x}$ defined by \eqref{eq:defcij} equals $C^{x}$ for all $x\in \cX$.

\medbreak\item\label{it:encarn-b} The matrix $\rho_i(\bq^x)$ defined by \eqref{eq:rhoiq} equals $\bq^{\rho_i(x)}$ for all $i \in \I$, $x\in \cX$.

\end{enumerate}

By \ref{it:encarn-a} and \ref{it:encarn-b}, the Weyl groupoid of $\cR$ is isomorphic to the Weyl groupoid of $(\bq^x)_{x\in \cX}$.
It follows at once that $\cR$ and $(\bq^x)_{x\in \cX}$ have the same sets of real roots. But $\cR$ is finite, so all roots are real,
hence $(\bq^x)_{x\in \cX}$ has a finite set of real roots, and a fortiori it is finite.

\medbreak\item The PBW-basis, consequently the dimension or the GK-dimension. That is, we give the formulae
for  the root vectors as defined in \eqref{eq:x-raiz} in terms of braided commutators, see \S \ref{subsec:nichols-braided-commutators}.
Notice that the definition of the Lyndon words depends on the ordering of $\I$, which is in our context the order of the Dynkin diagram.
Furthermore this order happens to be convex.

\medbreak\item The defining relations.

\medbreak\item The set $\Oc^{\bq}$ of Cartan roots; notice that the concept of Cartan vertex depends on $\bq$, not just on the root system.

\medbreak\item The associated Lie algebra, see \S \ref{subsec:Lie-algebra} and the degree $\ya$ of $\toba_{\bq}^{\tp}$.

\end{itemize}

\begin{remark} \label{rem:incarnation-not-unique}
The same generalized root system could have different incarnations: of course, there is a dependence on the parameter $q$ but there could be more drastic differences.
For instance, the GRS $\superb{j}{\theta-j}$, $j\in\I_{\theta-1}$ has incarnations described in \S \ref{subsec:type-B-super} and another in \S \ref{subsec:type-B-standard}.
Notice that the Cartan roots and consequently the associated Lie algebra are different in these incarnations; thus some of the data above does not depend just on the generalized root system.
Besides this, it can be shown that all possible incarnations are as listed in the corresponding Subsection.
\end{remark}

\subsection{Organization}\label{subsec:outline-organization} This Part is organized as follows: 

\begin{itemize}[leftmargin=*]\renewcommand{\labelitemi}{$\circ$}
\item Section \ref{sec:by-diagram-cartan} contains the treatment of the matrices $\bq$ of Cartan type, 
Definition \ref{def:cartantype}, with Subsections devoted to each of the types A, B, \dots, G. 
Here the Weyl groupoid is just the Weyl group.

\item  In Section \ref{sec:by-diagram-super} we deal with the matrices $\bq$ of super type, meaning that the 
generalized root system coincides with that of a finite-dimensional contragredient Lie superalgebra in characteristic 0 (no \emph{a priori} characterization is available); Cartan type is excluded. Thus we have 
Subsections devoted to the types 
$\supera{m}{n}$, $\superb{m}{n}$, $\superd{m}{n}$, $\superda{\alpha}$, $\superf$, $\superg$.

\item   Section \ref{sec:by-diagram-standard} contains the treatment of the matrices of standard, but neither Cartan nor super, type. Recall that standard means that all Cartan matrices are equal. There are such diagrams only
in types B, Subsection \ref{subsec:type-B-standard}, and G, Subsection \ref{subsec:type-G-st}. 

\end{itemize}
In Part 3, we deal with Nichols algebras of:

\begin{itemize}
\item  matrices $\bq$ of modular type, meaning that the 
generalized root system coincides with that of a finite-dimensional contragredient Lie algebra in characteristic $>0$ (no \emph{a priori} characterization is available); Cartan type is excluded. Thus we have 
Subsections devoted to the types of the Lie algebras $\bgl(4,\alpha)$ (char 2), $\br(2, a)$ and $\br(3)$ (char 3); 

\item   matrices $\bq$ of super modular type in characteristic 3, not in the previous classes, meaning again 
coincidence with the generalized root system  of a finite-dimensional contragredient Lie superalgebra  (no \emph{a priori} characterization is available). There are 
Subsections devoted to the types of the Lie superalgebras 
$\brj(2;3)$, $\el(5;3)$, $\g(1,6)$, $\g(2,3)$, $\g(3,3)$, $\g(4,3)$, $\g(3,6)$, $\g(2,6)$, $\g(8,3)$, $\g(4,6)$, $\g(6,6)$, $\g(8,6)$;

\item   analogous to the preceding but in characteristic 5. 
There are Subsections on the types of the Lie superalgebras $\brj(2;5)$ and $\el(5;5)$;

\item  (yet) unidentified  generalized roots systems, i.e. that so far have not been recognized in other areas of Lie theory. These are called $\Ufo(1)$, \dots, $\Ufo(12)$, except that there is no $\Ufo(8)$. There is a 
Subsection for each of the corresponding Nichols algebras loosely called $\ufo(1)$, \dots, $\ufo(12)$--here $\ufo(8)$
has generalized root system $\Ufo (7)$. 
\end{itemize}

\subsection{Attribution} The presentations 
of the Nichols algebras that we describe here appeared already in the literature. 
A general approach to the relations was given in \cite{A-jems,A-presentation}.
Of course, those of Cartan type, giving the positive parts of 
the small quantum groups,
were discussed in many places, first of all in \cite{Lu-jams1990,Lu-dedicata,Lu}, for a parameter $q$ of odd order (and relatively prime to 3 if of type $G_2$). For Cartan type $A_{\theta}$, there are expositions from scratch in \cite{T-gln} for generic $q$, \cite{AD} for $q = -1$, \cite{AS3}  for $q \in \G'_N$, $N \ge 3$. Other Nichols algebras of rank 2 were presented in \cite{BDR,helbig,Hrk2}. Standard type appeared in \cite{A-standard}; this paper contains a self-contained proof of the defining relations of the Nichols algebras of Cartan type at a generic parameter, i.e. the sufficiency of the quantum Serre relations.
Nichols algebras associated to Lie superalgebras appeared first in the pioneering paper \cite{Y-super}; see also the exposition \cite{AAY}. The explicit relations of the remaining Nichols algebras were given in \cite{A-ufo}.

\subsection{Gelfand-Kirillov dimension}\label{sec:appendix-finiteGK}
As we said, the classification of the matrices $\bq$ such that $\GK \toba_{\bq} < \infty$ is not known.

\begin{conjecture}\label{conj:finiteGK} \cite{AAH}
If $\GK \toba_{\bq} < \infty$, then $\bq$ is arithmetic.
\end{conjecture}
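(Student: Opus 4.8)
The conjecture is one of the central open problems of the subject, so what follows is necessarily a plan rather than a complete argument. The first move is to translate the statement into the language of Weyl groupoids. Assume $\GK\toba_{\bq}<\infty$. By Example \ref{prop:q-gkdim-root-system} the triple $(\cX_{\bq},\rho)$, $\cC=(C^{\bp})_{\bp\in\cX_{\bq}}$, $\varDelta=(\varDelta^{\bp})_{\bp\in\cX_{\bq}}$ is a generalized root system $\cR$. By Definition \ref{def:arithmetic}, $\bq$ is arithmetic exactly when $|\varDelta^{\bq}_+|<\infty$, and by Theorem \ref{th:finite-grs} \ref{item:prop-grs2} this is equivalent to $\cR$ being finite, i.e. to $|(\varDelta^{\re})^{\bp}|<\infty$ for every $\bp\in\cX_{\bq}$, i.e. to the Weyl groupoid $\cW$ being finite. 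Thus the conjecture is equivalent to the implication: \emph{if $\GK\toba_{\bq}<\infty$, then $\cW$ is finite}.

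The plan is then to argue by induction on $\theta$, the base cases $\theta\le 2$ being trivial for $\theta=1$ and the known rank-$2$ result of \cite{AAH} for $\theta=2$. For the inductive step, note that any connected proper $J\subsetneq\I$ yields a braided vector subspace $V_J=\bigoplus_{j\in J}\ku x_j$ whose Nichols algebra embeds, $\toba(V_J)\hookrightarrow\toba_{\bq}$ (for instance via Proposition \ref{prop:nichols-alternative} \ref{item:nichols-alternative-dbis} applied to the subalgebra generated by the $x_j$, $j\in J$), so $\GK\toba(V_J)\le\GK\toba_{\bq}<\infty$ and $\toba(V_J)$ is arithmetic by induction. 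The same holds at every vertex $\bp\in\cX_{\bq}$, since $\GK\toba_{\bp}<\infty$ by Theorem \ref{th:heck-iso} and the Cartan matrices are compatible by \eqref{eq:condicion Cartan scheme-grs}. Consequently every $C^{\bp}$ is a GCM all of whose proper principal submatrices are of finite type, and every proper connected restriction of the bundle $\cC$ is a finite Cartan scheme. Feeding in the rank-$2$ and rank-$3$ classifications of finite-$\GK$ Nichols algebras (the rank-$3$ list also pinning down the admissible rank-$3$ subdiagrams, cf. \cite{CH-at most 3}) should force $C^{\bp}$ into a very short list of shapes: finite, affine, or mildly indefinite.

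The hard part will be to rule out the affine-like and indefinite-like configurations, i.e.\ to prove that an \emph{infinite} Weyl groupoid forces $\GK\toba_{\bq}=\infty$. Two obstacles stand in the way. First, at the purely combinatorial level ``all proper restrictions finite'' does not imply finiteness of the Cartan scheme, as the untwisted affine and the hyperbolic GCM's show; so the argument must be internal to the Nichols algebra. Second, and more seriously, once $\bq$ is not arithmetic one loses control of the Kharchenko PBW datum of Theorem \ref{thm:PBW-basis  Kharchenko}: generators may occur in infinitely many degrees, with multiplicities greater than one, or with infinite height on would-be-finite roots, so one cannot simply read off the Hilbert series $\prod_{\beta}\tfrac{1-t^{N_{\beta}|\beta|}}{1-t^{|\beta|}}$ from $\varDelta^{\bq}_+$. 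For affine Cartan type this is circumvented by importing the structure of quantum affine algebras (\cite{beck,Da}, cf.\ Remark \ref{rem:quantum-affine}) together with an explicit growth estimate on $(\wp_n)_{n\in\N}$; but for indefinite Cartan type it is not even known whether $(\wp_n)$ grows polynomially (Remark \ref{rem:quantum-affine} \ref{it:cartan-indefinite}), and in the non-Cartan case one must contend with the fact, flagged in \S\ref{subsec:axiomatics}, that the imaginary part of a GRS is not determined by its real part.

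The most promising route out is to extract, from any $\bp\in\cX_{\bq}$ whose diagram contains an affine or indefinite subdiagram, a suitable right coideal subalgebra of $\toba_{\bp}$ in the spirit of \cite{HS-london} and estimate its growth directly; showing that such a coideal subalgebra has superpolynomial growth is the step I expect to be the main obstacle, and it is where a genuinely new idea beyond the current toolkit seems to be needed. I note that one cannot shortcut this by invoking the classification of arithmetic root systems in \cite{H-classif RS} or of finite GRS in \cite{CH-classification}, since those are available only once finiteness is already known; a classification-free growth argument is essential.
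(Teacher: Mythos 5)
The statement you were asked about is a \emph{conjecture} (attributed to \cite{AAH}); the paper contains no proof of it, only the remark that it is known to hold when $\theta=2$ or when $\bq$ is of affine Cartan type. So there is no argument of the authors to compare yours against, and your submission, as you yourself say, is a plan rather than a proof. Judged as a proof it has a genuine gap, and it is exactly the gap that makes the statement a conjecture: the implication ``$\cW$ infinite $\Rightarrow$ $\GK \toba_{\bq}=\infty$''. Your reductions are sound as far as they go — the translation via Example \ref{prop:q-gkdim-root-system} and Theorem \ref{th:finite-grs} \ref{item:prop-grs2} is correct, and the restriction to connected subsets $J\subsetneq \I$ does give Nichols subalgebras of finite GK-dimension, hence arithmetic by induction — but, as you note, this inductive information cannot close the argument by itself: affine and indefinite generalized Cartan matrices have all proper principal submatrices of finite type, so the finiteness must be extracted from the algebra, not from the combinatorics, and for indefinite or non-Cartan diagrams neither the Kharchenko PBW datum nor the growth of $(\wp_n)$ is under control (Remark \ref{rem:quantum-affine} \ref{it:cartan-indefinite}). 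The proposed route through coideal subalgebras in the spirit of \cite{HS-london} is a plausible direction, but no growth estimate is supplied, so the central step remains unproven.

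In short: your write-up is an accurate assessment of the state of the problem and correctly identifies where the new idea is needed, but it does not establish the statement, and nothing in the paper does either — the two known cases ($\theta=2$, affine Cartan type) rest on results of \cite{AAH} and on the quantum affine structure \cite{beck,Da}, not on an argument you could adapt wholesale to the general case.
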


The Conjecture is true when $\bq$ is of affine Cartan type or $\theta =2$.
For convenience, we collect the information on the arithmetic Nichols algebras 
with matrix $\bq$ such that $0 < \GK \toba_{\bq} < \infty$.

\subsubsection*{Cartan type} 
Let $\bq$ be of Cartan type with matrix $A$; we follow the conventions in \S \ref{sec:by-diagram-cartan}.
Then $0 < \GK \toba_{\bq}$ if and only if $q \notin \G_{\infty}$, in which case
$\GK \toba_{\bq} = \vert \varDelta_{+}^A \vert$.

\subsubsection*{Super type}
Let $\bq$ be of super type; we follow the conventions in \S \ref{sec:by-diagram-super}.
Then $0 < \GK \toba_{\bq}$ if and only if the following holds: 

\begin{itemize}[leftmargin=*]
\item Type $\supera{j-1}{\theta - j}$,  $j \in \I_{\lfloor\frac{\theta+1}{2} \rfloor}$, see \S \ref{subsubsec:type-A-super-PBW}:
$q \notin \G_{\infty}$, in which case 
\begin{align*}
\GK \toba_{\bq}= \binom{j}{2}+\binom{\theta-j}{2}.
\end{align*}

\item Type $\superb{j}{\theta-j}$, $j\in\I_{\theta-1}$, see \S \ref{subsubsec:type-B-super-PBW}: $q \notin \G_{\infty}$, in which case 
\begin{align*}
\GK \toba_{\bq}= \theta^2-2j(\theta-j-1).
\end{align*}

\item Type $\superd{j}{\theta-j}$, $j\in\I_{\theta - 1}$, see \S \ref{subsubsec:type-CD-super-PBW}: $q \notin \G_{\infty}$, in which case 
\begin{align*}
\GK \toba_{\bq}= (\theta-j)(\theta-j-1)+j^2.
\end{align*}

\item Type $\superda{\alpha}$, see \S \ref{subsec:type-D2-1-alpha}: Here $q,r,s\neq 1$, $qrs=1$; the condition is that either exactly 2 or all 3 of $q,r,s$ do not belong to $\G_{\infty}$, in which case $\GK \toba_{\bq}$ is either 2 or 3, accordingly.

\smallbreak
\item Type $\superf$, see \S \ref{subsubsec:type-F4-super-PBW}: $q \notin \G_{\infty}$, in which case $\GK \toba_{\bq} = 10$.

\smallbreak
\item Type $\superg$, see \S \ref{subsubsec:type-G3-super-PBW}: $q \notin \G_{\infty}$, in which case $\GK \toba_{\bq}=7$.
\end{itemize}

\subsubsection*{Modular type} Let $\bq$ be of modular type.
Then $0 < \GK \toba_{\bq}$ if and only if the following holds: 

\begin{itemize}[leftmargin=*]
	\item Type $\Bgl(4)$, see \S \ref{subsubsec:type-bgl4a-PBW}:
	$q \notin \G_{\infty}$, in which case  $\GK \toba_{\bq}= 6$.
	
	\item Type $\Brown(2)$, see \S \ref{subsubsec:type-br2a-PBW}: $q \notin \G_{\infty}$, in which case  $\GK \toba_{\bq}= 2$.

\end{itemize}

\section{Cartan type}\label{sec:by-diagram-cartan}

Here the basic datum has just one point, hence there is just one Cartan matrix and
the Weyl groupoid $\cW$ is  the corresponding Weyl group $W$.
All roots are of Cartan type.
Throughout, we shall use the notation
\begin{align}\label{eq:alfaij}
\alpha_{i j} &= \sum_{k \in \I_{i,j}} \alpha_k,&  i&\leq j \in \I.
\end{align}

\subsection{Type $A_{\theta}$, $\theta \ge 1$}\label{subsec:type-A}

\subsubsection{Root system}
The Cartan matrix is of type $A_{\theta}$, with the numbering determined by the Dynkin diagram, which is
\begin{align}\label{eq:dynkin-system-A}
\xymatrix@C-5pt{  \overset{\ }{\underset{1}{\circ}}\ar  @{-}[r]  &
\overset{\ }{\underset{2}{\circ}} \ar@{.}[r] & \overset{\ }{\underset{\theta - 1}{\circ}} \ar  @{-}[r]  & \overset{\ }{\underset{\theta}{\circ}}}.
\end{align}
The set of positive roots is
\begin{align}\label{eq:root-system-A}
\varDelta^+&=\{\alpha_{k\, j}\,|\, k, j \in \I,\, k\leq j\}.
\end{align}

\subsubsection{Weyl group}\label{subsubsec:type-A-Weyl} Let $s_i\in GL(\Z^\I)$, $s_i(\alpha_i) =  -\alpha_i$,
$s_i(\alpha_j) =  \alpha_j + \alpha_i$ if $\vert i-j\vert = 1$, $s_i(\alpha_j) =  \alpha_j$ 
if $\vert i-j\vert > 1$, $i,j \in \I$. Then $W = \langle s_i: i\in \I\rangle \simeq \s_{\theta + 1}$ \cite[Planche I]{Bourbaki}.

\subsubsection{Incarnation}
The generalized Dynkin
diagram is of the form
\begin{align}\label{eq:dynkin-type-A}
\xymatrix{ \overset{q}{\underset{\ }{\circ}}\ar  @{-}[r]^{q^{-1}}  &
\overset{q}{\underset{\ }{\circ}}\ar  @{-}[r]^{q^{-1}} &  \overset{q}{\underset{\
}{\circ}}\ar@{.}[r] & \overset{q}{\underset{\ }{\circ}} \ar  @{-}[r]^{q^{-1}}  &
\overset{q}{\underset{\ }{\circ}}}
\end{align}

\subsubsection{PBW-basis and (GK-)dimension}\label{subsubsec:type-A-PBW}
The root vectors are 
\begin{align*}
x_{\alpha_{ii}} &= x_{\alpha_{i}} = x_{i},& i \in \I, \\
x_{\alpha_{i j}} &= x_{(ij)} = [x_{i}, x_{\alpha_{i+1\, j}}]_c,& i <  j \in \I,
\end{align*}
cf. \eqref{eq:roots-Atheta}. Thus
\begin{align*}
\{ x_{ \theta}^{n_{\theta  \theta}} x_{(\theta-1  \theta)}^{n_{\theta-1  \theta}} x_{\theta-1}^{n_{\theta-1  \theta-1}} \dots x_{(1  \theta)}^{n_{1  \theta}} \dots x_{1}^{n_{11}} \, | \, 0\le n_{ij}<N \}
\end{align*}
is a PBW-basis of $\toba_{\bq}$. If $N<\infty$, then
\begin{align*}
\dim \toba_{\bq}= N^{\binom{\theta+1}{2}}.
\end{align*}
If $N=\infty$ (that is, if $q\notin \G_{\infty}$), then
\begin{align*}
\GK \toba_{\bq}= \binom{\theta+1}{2}.
\end{align*}

\subsubsection{Relations, $N > 2$}\label{subsubsec:type-A-N>2}
Recall the notations \eqref{eq:xij}, \eqref{eq:iterated},  \eqref{eq:roots-Atheta}. 
The Nichols algebra $\toba_{\bq}$ is generated by $(x_i)_{i\in \I}$ with defining
relations
\begin{align}\label{eq:rels-type-A-N>2}
x_{ij} &= 0, \quad i < j - 1; & x_{iij} &= 0, \quad \vert j - i\vert = 1;& x_{(kl)}^N &=0,
\quad k\leq l.
\end{align}

If $N = \infty$, i.e. $q\notin \G_{\infty}$, then we omit the last set of relations.

\subsubsection{Relations, $N = 2$}\label{subsubsec:type-A-N=2}
The Nichols algebra $\toba_{\bq}$ is generated by $(x_i)_{i\in \I}$ with defining
relations
\begin{align}\label{eq:rels-type-A-N=2}
x_{ij} &= 0, \quad i < j - 1; & [x_{(i\, i+2)}, x_{i+1}]_c &= 0; & x_{(kl)}^2 &=0, \quad
k\leq l.
\end{align}

\subsubsection{The associated Lie algebra and $\ya$}\label{subsubsec:type-A-Lie-alg} 
The first is of type $A_\theta$, while
\begin{align*}
\ya &=  (N-1) \sum_{i\in\I} i(\theta-i+1) \alpha_i.
\end{align*}

\subsection{Type $B_{\theta}$, $\theta \ge 2$}\label{subsec:type-B}
Here  $N > 2$.
\subsubsection{Root system}
The Cartan matrix is of type $B_{\theta}$, with the numbering determined by the Dynkin diagram, which is
\begin{align}\label{eq:dynkin-system-B}
\xymatrix@C-5pt{  \overset{\ }{\underset{1}{\circ}}\ar  @{-}[r]  &
\overset{\ }{\underset{2}{\circ}} \ar@{.}[r] & \overset{\ }{\underset{\theta - 1}{\circ}} \ar  @{=>}[r]  & \overset{\ }{\underset{\theta}{\circ}}}.
\end{align}

Recall the notation \eqref{eq:alfaij}.
The set of positive roots is
\begin{align}\label{eq:root-system-B}
\varDelta^+ &=\{\alpha_{ij}\,|\, i\leq j \in\I \} \cup 
\{\alpha_{i\theta} + \alpha_{j\theta}\,|\, i< j\in\I \}.
\end{align}

\subsubsection{Weyl group}\label{subsubsec:type-B-Weyl} 
Let $i \in \I$ and define $s_i\in GL(\Z^\I)$ by 
\begin{align*}
s_i(\alpha_j) &= \begin{cases} -\alpha_i, & i=j, \\
\alpha_j + \alpha_i, &\vert i-j\vert = 1, i <\theta, \\
\alpha_{\theta-1}+2\alpha_{\theta}, &j= \theta -1, i = \theta, \\
\alpha_j, & \vert i-j\vert > 1,
\end{cases}
\end{align*}
$j\in \I$. Then $W = \langle s_i: i\in \I\rangle \simeq (\Z/2)^{\theta}\rtimes \s_{\theta}$ \cite[Planche II]{Bourbaki}.

\subsubsection{Incarnation} The generalized Dynkin diagram is of the form
\begin{align}\label{eq:dynkin-type-B}
\xymatrix{ \overset{\,\,q^2}{\underset{\ }{\circ}}\ar  @{-}[r]^{q^{-2}}  &
\overset{\,\,q^2}{\underset{\ }{\circ}}\ar  @{-}[r]^{q^{-2}} &
\overset{\,\,q^2}{\underset{\ }{\circ}}\ar@{.}[r] & \overset{\,\,q^2}{\underset{\
}{\circ}} \ar  @{-}[r]^{q^{-2}}  & \overset{q}{\underset{\ }{\circ}}}
\end{align}

\subsubsection{PBW-basis and (GK-)dimension}\label{subsubsec:type-B-PBW}
The root vectors are 
\begin{align*}
x_{\alpha_{ii}} &= x_{\alpha_{i}} = x_{i},& i \in \I, \\
x_{\alpha_{ij}} &= x_{(ij)} = [x_{i}, x_{\alpha_{(i+1) j}}]_c,& i <  j \in \I, \\
x_{\alpha_{i\theta} + \alpha_{\theta}} &= [x_{\alpha_{i\theta}}, x_\theta]_c, & i  \in \I_{\theta - 1},
\\
x_{\alpha_{i\theta} + \alpha_{j\theta}} &= [x_{\alpha_{i\theta} + \alpha_{(j+1) \theta}}, x_j]_c, & i <  j \in \I_{\theta - 1},
\end{align*}
cf. \eqref{eq:roots-Atheta}. Let $M=\ord q^2$. Thus
\begin{multline*}
\{ x_{\theta  }^{n_{\theta  \theta}} 
x_{\alpha_{\theta-1\theta} + \alpha_{\theta\theta}}^{m_{\theta-1\theta}}
x_{\alpha_{\theta-1\theta}}^{n_{\theta-1  \theta}} 
x_{ \theta-1}^{n_{\theta-1  \theta-1}} 
\dots
x_{\alpha_{1\theta} + \alpha_{2\theta}}^{m_{12}}
\dots
x_{\alpha_{1\theta} + \alpha_{\theta\theta}}^{m_{1\theta}}
\dots
x_{\alpha_{1\theta}}^{n_{1  \theta}} 
\dots 
x_{1}^{n_{1 1}} \, \\ 
| \, 0\le n_{i\theta}<N; \, 0\le n_{ij}< M, \, j\neq \theta; 
\, 0\le m_{ij}<M\}
\end{multline*}
is a PBW-basis of $\toba_{\bq}$. If $N<\infty$, then
\begin{align*}
\dim \toba_{\bq}= M^{\theta(\theta-1)}N^{\theta}.
\end{align*}
If $N=\infty$ (that is, if $q$ is not a root of unity), then
\begin{align*}
\GK \toba_{\bq}= \theta^2.
\end{align*}

\subsubsection{Relations, $N>4$}\label{subsubsec:type-B-N>4}
The Nichols algebra $\toba_{\bq}$ is generated by $(x_i)_{i\in \I}$ with defining
relations
\begin{align}\label{eq:rels-type-B-N>4-qsr}
&x_{iii\pm1}=0, \quad i < \theta;  & x_{ij} &= 0, \quad i < j - 1; \quad 
x_{\theta\theta\theta\theta-1}=0;
\\ \label{eq:rels-type-B-N>4-even}
&\begin{aligned}
&x_{\alpha}^{N} =0,& \alpha&\in\{\alpha_{i\,\theta}\,|\,i\in\I\};
\\ & x_{\alpha}^{M} =0,& \alpha&\notin\{\alpha_{i\,\theta}\,|\,i\in\I\}, 
\end{aligned} & N &= 2M \text{ even.}
\\ \label{eq:rels-type-B-N>4-odd}
&x_{\alpha}^{N} =0, \quad\alpha \in\varDelta_{+},& N& \text{ odd.}
\end{align}

If $N = \infty$, i.e. $q\notin \G_{\infty}$, then we have only  the relations \eqref{eq:rels-type-B-N>4-qsr}.

\subsubsection{Relations, $N=4$}\label{subsubsec:type-B-N=4}
The Nichols algebra $\toba_{\bq}$ is generated by $(x_i)_{i\in \I}$ with defining
relations
\begin{align}\label{eq:rels-type-B-N=4}
\begin{aligned}
x_{ij} &= 0, \quad i < j - 1; \quad
x_{\theta\theta\theta\theta-1}=0; &
&[x_{(i\, i+2)}, x_{i+1}]_c=0, \quad i < \theta; 
\\
x_{\alpha}^2 &=0, \quad
\alpha\notin\{\alpha_{i\,\theta}\,|\,i\in\I\}; &
&x_{\alpha}^4 =0, \quad \alpha\in\{\alpha_{i\,\theta}\,|\,i\in\I\}. 
\end{aligned}
\end{align}

\subsubsection{Relations, $N=3$}\label{subsubsec:type-B-N=3}
The Nichols algebra $\toba_{\bq}$ is generated by $(x_i)_{i\in \I}$ with defining
relations
\begin{align}\label{eq:rels-type-B-N=3}
\begin{aligned}
&x_{ij} = 0, \quad i < j - 1; & x_{iii\pm1}&=0, \quad i < \theta;   \\
&[x_{\theta\theta\theta-1\theta-2},x_{\theta\theta-1}]_c=0; & x_{\alpha}^3 &=0, \quad \alpha\in\varDelta_{+}.
\end{aligned}\end{align}

\subsubsection{The associated Lie algebra and $\ya$}\label{subsubsec:type-B-Lie-alg} 
If $N$ is odd (respectively  even), the associated Lie algebra is of type $B_\theta$ 
(respectively  $C_\theta$), while
\begin{align*}
\ya &= \sum_{i\in\I} [(M-1)i(2\theta-i-1)+(N-1)(\theta-i)] \alpha_i.
\end{align*}

\subsection{Type $C_{\theta}$, $\theta \ge 3$}\label{subsec:type-C}
Here  $N > 2$.
\subsubsection{Root system}
The Cartan matrix is of type $C_{\theta}$, with the numbering determined by the Dynkin diagram, which is
\begin{align}\label{eq:dynkin-system-C}
\xymatrix@C-5pt{  \overset{\ }{\underset{1}{\circ}}\ar  @{-}[r]  &
\overset{\ }{\underset{2}{\circ}} \ar@{.}[r] & \overset{\ }{\underset{\theta - 1}{\circ}} \ar  @{<=}[r]  & \overset{\ }{\underset{\theta}{\circ}}}.
\end{align}

Recall the notation \eqref{eq:alfaij}.
The set of positive roots is
\begin{align}\label{eq:root-system-C}
\varDelta^+&=\{\alpha_{i\, j}\,|\, i\leq j\in\I \}\cup \{\alpha_{i\, \theta}+\alpha_{j\, \theta-1}\,|\, i\leq j\in\I_{\theta-1} \}.
\end{align}

\subsubsection{Weyl group}\label{subsubsec:type-C-Weyl} 
Let $i \in \I$ and define $s_i\in GL(\Z^\I)$ by 
\begin{align*}
s_i(\alpha_j) &= \begin{cases} -\alpha_i, & i=j, \\
\alpha_j + \alpha_i, &\vert i-j\vert = 1, j <\theta, \\
2\alpha_{\theta-1} + \alpha_{\theta}, &j= \theta, i = \theta -1, \\
\alpha_j, & \vert i-j\vert > 1,
\end{cases}
\end{align*}
$j\in \I$. Then $W = \langle s_i: i\in \I\rangle \simeq (\Z/2)^{\theta}\rtimes \s_{\theta}$ \cite[Planche III]{Bourbaki}.

\subsubsection{Incarnation} The generalized Dynkin diagram is of the form
\begin{align}\label{eq:dynkin-type-C}
\xymatrix{ \overset{q}{\underset{\ }{\circ}}\ar  @{-}[r]^{q^{-1}}  &
\overset{q}{\underset{\ }{\circ}}\ar  @{-}[r]^{q^{-1}} &  \overset{q}{\underset{\
}{\circ}}\ar@{.}[r] & \overset{q}{\underset{\ }{\circ}} \ar  @{-}[r]^{q^{-2}}  &
\overset{\,\,q^2}{\underset{\ }{\circ}}}
\end{align}

\subsubsection{PBW-basis and (GK-)dimension}\label{subsubsec:type-C-PBW}
The root vectors are 
\begin{align*}
x_{\alpha_{ii}} &= x_{\alpha_{i}} = x_{i},& i \in \I, \\
x_{\alpha_{ij}} &= x_{(ij)} = [x_{i}, x_{\alpha_{i+1\, j}}]_c,& i <  j \in \I, \\
x_{\alpha_{i\theta} + \alpha_{i\theta-1}} &= [x_{(i\theta)}, x_{(i\theta-1)}]_c,& i \in \I_{\theta-1}, \\
x_{\alpha_{i\theta} + \alpha_{\theta-1}} &= [x_{(i\theta)}, x_{\theta-1}]_c ,& i \in \I_{\theta-1}, \\
x_{\alpha_{i\theta} + \alpha_{j\theta-1}} &= [x_{\alpha_{i\theta} + \alpha_{j+1\theta-1}}, x_j]_c, & i <  j \in \I_{\theta-2},
\end{align*}
cf. \eqref{eq:roots-Atheta}. Let $M=\ord q^2$. Thus
\begin{multline*}
\{ x_{\theta}^{n_{\theta  \theta}} 
x_{\alpha_{\theta-1\theta} + \alpha_{\theta-1\theta-1}}^{m_{\theta-1\theta-1}}
x_{(\theta-1  \theta)}^{n_{\theta-1  \theta}} 
x_{\theta-1}^{n_{\theta-1  \theta-1}} 
\dots
x_{\alpha_{1\theta} + \alpha_{2\theta-1}}^{m_{12}}
\dots
x_{\alpha_{1\theta} + \alpha_{\theta-1\theta-1}}^{m_{1\theta-1}}
\dots
\\ 
x_{(1  \theta)}^{n_{1  \theta}} 
\dots 
x_{1}^{n_{1 1}}  |\,  0\le n_{i\theta}<M;  0\le n_{ij}<N,  j\neq \theta; 
\, 0\le m_{ij}<N \}
\end{multline*}
is a PBW-basis of $\toba_{\bq}$. If $N<\infty$, then
\begin{align*}
\dim \toba_{\bq}= M^{\theta}N^{\theta(\theta-1)}.
\end{align*}
If $N=\infty$ (that is, if $q$ is not a root of unity), then
\begin{align*}
\GK \toba_{\bq}= \theta^2.
\end{align*}

\subsubsection{Relations, $N>3$}\label{subsubsec:type-C-N>3}
The Nichols algebra $\toba_{\bq}$ is generated by $(x_i)_{i\in \I}$ with defining
relations
\begin{align}\label{eq:rels-type-C-N>3-qsr}
&x_{ij} = 0, \quad i < j - 1; &  x_{iij}&=0, \quad j=i\pm 1, (i,j)\neq(\theta-1,\theta);
\\ \label{eq:rels-type-C-N>3-qsr-bis}
&x_{iii\theta}=0, \ i = \theta - 1; &&
\\ \label{eq:rels-type-C-N>3-even}
&\begin{aligned}
&x_{\alpha}^{N}=0, &&
\alpha\in\varDelta_{+}\mbox{ short};  
\\& x_{\alpha}^M =0, && \alpha\in\varDelta_{+} \mbox{ long}.
\end{aligned} & N &= 2M \text{ even.}
\\ \label{eq:rels-type-C-N>3-odd}
&x_{\alpha}^{N} =0, \quad\alpha \in\varDelta_{+},& N& \text{ odd.}
\end{align}

If $N = \infty$, i.e. $q\notin \G_{\infty}$, then we have only  the relations \eqref{eq:rels-type-C-N>3-qsr}, \eqref{eq:rels-type-C-N>3-qsr-bis}.

\subsubsection{Relations, $N=3$}\label{subsubsec:type-C-N=3}

The Nichols algebra $\toba_{\bq}$ is generated by $(x_i)_{i\in \I}$ with defining
relations
\begin{align}\label{eq:rels-type-C-N=3}
\begin{aligned}
&x_{iij}=0, \quad j=i\pm 1, \, (i,j)\neq(\theta-1,\theta); & x_{ij} &= 0, \quad i < j - 1;
\\
&[[ x_{(\theta-2\theta)}, x_{\theta-1}]_c, x_{\theta-1}]_c=0; & x_{\alpha}^3&=0, \quad
\alpha\in\varDelta_{+}.
\end{aligned}\end{align}

\subsubsection{The associated Lie algebra and $\ya$}\label{subsubsec:type-C-Lie-alg} If $N$ is odd (respectively  even), 
the associated Lie algebra is of type $C_\theta$ (respectively  $B_\theta$), while
\begin{align*}
\ya &= \sum_{i\in\I} [(N-1)i(2\theta-i-1)+(M-1)(\theta-i)] \alpha_i.
\end{align*}

\subsection{Type $D_{\theta}$, $\theta \ge 4$}\label{subsec:type-D}
\subsubsection{Root system}
The Cartan matrix is of type $D_{\theta}$, with the numbering determined by the Dynkin diagram, which is
\begin{align}\label{eq:dynkin-system-D}
\xymatrix@C-5pt{  & & \overset{\ }{\underset{\theta}{\circ}} \ar  @{-}[d] & \\
\overset{\ }{\underset{1}{\circ}}\ar  @{-}[r]  &  \overset{\ }{\underset{2}{\circ}}\ar@{.}[r]&  \overset{\ }{\underset{\theta-2}{\circ}} \ar  @{-}[r]  & \overset{\ }{\underset{\theta-1}{\circ}}}.
\end{align}

Recall the notation \eqref{eq:alfaij}.
The set of positive roots is
\begin{align}\label{eq:root-system-D}
\begin{aligned}
\varDelta^+&=\{\alpha_{i\, j}\,|\, i\leq j\in\I, \, (i,j)\neq (\theta-1,\theta) \}
\\ 
& \qquad \cup \{\alpha_{i\, \theta-2}+\alpha_{\theta}\,|\, i\in\I_{\theta-2} \} \cup \{\alpha_{i\, \theta}+\alpha_{j\, \theta-2}\,|\, i<j\in\I_{\theta-2} \}.
\end{aligned}\end{align}

\subsubsection{Weyl group}\label{subsubsec:type-D-Weyl}
Let $i \in \I$ and define $s_i\in GL(\Z^\I)$ by 
\begin{align*}
s_i(\alpha_j) &= \begin{cases} -\alpha_i, & i=j, \\
\alpha_j + \alpha_i, &\vert i-j\vert = 1, i, j \in \I_{\theta - 1},\text{ or } \{i,j\}=\{\theta-2,\theta\},   \\
\alpha_j, & \text{otherwise,}
\end{cases}
\end{align*}
$j\in \I$. Then $W = \langle s_i: i\in \I\rangle \simeq (\Z/2)^{\theta-1}\rtimes \s_{\theta}$ \cite[Planche IV]{Bourbaki}.

\subsubsection{Incarnation}The generalized Dynkin diagram is of the form
\begin{align}\label{eq:dynkin-type-D}
\xymatrix{ & & & &  \overset{q}{\circ} &\\
\overset{q}{\underset{\ }{\circ}}\ar  @{-}[r]^{q^{-1}}  & \overset{q}{\underset{\
}{\circ}}\ar  @{-}[r]^{q^{-1}} &  \overset{q}{\underset{\ }{\circ}}\ar@{.}[r] &
\overset{q}{\underset{\ }{\circ}} \ar  @{-}[r]^{q^{-1}}  & \overset{q}{\underset{\
}{\circ}} \ar @<0.7ex> @{-}[u]_{q^{-1}}^{\qquad} \ar  @{-}[r]^{q^{-1}} &
\overset{q}{\underset{\ }{\circ}}}
\end{align}

\subsubsection{PBW-basis and (GK-)dimension}\label{subsubsec:type-D-PBW}
The root vectors are 
\begin{align*}
x_{\alpha_{ii}} &= x_{\alpha_{i}} = x_{i},& i \in \I, \\
x_{\alpha_{ij}} &= x_{(ij)} = [x_{i}, x_{\alpha_{i+1\, j}}]_c,& i <  j \in \I_{\theta-1}, \\
x_{\alpha_{i\theta-2}+\alpha_{\theta}} &= [x_{(i\theta-2)}, x_{\theta}]_c,& i \in \I_{\theta-2}, \\
x_{\alpha_{i\theta}} &= [x_{\alpha_{i\theta-2}+\alpha_{\theta}}, x_{\theta-1}]_c ,& i \in \I_{\theta-2}, \\
x_{\alpha_{i\theta} + \alpha_{j\theta-2}} &= [x_{\alpha_{i\theta} + \alpha_{j+1\theta-2}}, x_j]_c, & i <  j \in \I_{\theta-2},
\end{align*}
cf. \eqref{eq:roots-Atheta}. Thus
\begin{multline*}
\{ x_{\theta}^{n_{\theta  \theta}}
x_{\theta-1}^{n_{\theta-1  \theta-1}} 
x_{\alpha_{\theta-2}+\alpha_{\theta}}^{m_{\theta-2\theta}}
x_{(\theta-2\theta)}^{n_{\theta-2\theta}}
x_{(\theta-2  \theta-1)}^{n_{\theta-2  \theta-1}} 
x_{\theta-2}^{n_{\theta-2  \theta-2}} 
\dots
x_{\alpha_{1\theta} + \alpha_{2\theta-2}}^{m_{12}}
\dots
\\ 
x_{\alpha_{1\theta} + \alpha_{\theta-2\theta-2}}^{m_{1\theta-2}}
\dots
x_{(1  \theta)}^{n_{1  \theta}} 
\dots 
x_{1}^{n_{11}} \, | \, 0\le n_{ij}, \, m_{ij}<N \}
\end{multline*}
is a PBW-basis of $\toba_{\bq}$. If $N<\infty$, then
\begin{align*}
\dim \toba_{\bq}= N^{\theta(\theta-1)}.
\end{align*}
If $N=\infty$ (that is, if $q$ is not a root of unity), then
\begin{align*}
\GK \toba_{\bq}= \theta(\theta-1).
\end{align*}

\subsubsection{Relations, $N > 2$}\label{subsubsec:type-D-N>2}
The Nichols algebra $\toba_{\bq}$ is generated by $(x_i)_{i\in \I}$ with defining
relations
\begin{align}\label{eq:rels-type-D-N>2}
\begin{aligned}
x_{(\theta-1)\theta} &=0; & x_{ij} &= 0, \quad i < j - 1, (i,j)\neq (\theta-2,\theta); \\
x_{ii\theta}&=0, \ i= \theta-2; & x_{iij} &= 0, \quad \vert j - i\vert = 1, i,j\neq \theta;
\\
x_{ii(\theta-2)}&=0, \ i= \theta; & x_{\alpha}^{N} &=0,
\quad \alpha\in\varDelta_{+}.
\end{aligned}
\end{align}
If $N = \infty$, i.e. $q\notin \G_{\infty}$, then we omit the last set of relations.

\subsubsection{Relations, $N = 2$}\label{subsubsec:type-D-N=2}
 $\toba_{\bq}$ is presented by $(x_i)_{i\in \I}$ with defining
relations
\begin{align}\label{eq:rels-type-D-N=2}
\begin{aligned}
& x_{ij} = 0, \quad i < j - 1,\, (i,j)\neq (\theta-2,\theta);  &  &x_{(\theta-1)\theta} =0; \\
& [x_{(ii+2)},x_{i+1}]_c = 0, \quad i\leq \theta-3; &
&[x_{(\theta-3)(\theta-2)\theta},x_{\theta-2}]_c=0;
\\
& x_{\alpha}^{2}=0,
\quad \alpha\in\varDelta_{+}.
\end{aligned}
\end{align}

\subsubsection{The associated Lie algebra and $\ya$}\label{subsubsec:type-D-Lie-alg} 
The first is of type $D_\theta$, while
\begin{align*}
\ya &= (N-1) \big(\sum_{i\in \I_{\theta-2}} j(2\theta-j-1) \alpha_i + \frac{\theta(\theta-1)}{2} (\alpha_{\theta-1} + \alpha_{\theta}) \big).
\end{align*}

\subsection{Type $E_{\theta}$, $\theta \in \I_{6,8}$}\label{subsec:type-E}\subsubsection{Root system}
The Cartan matrix is of type $E_{\theta}$, with the numbering determined by the Dynkin diagram, which is

\begin{align}\label{eq:dynkin-system-E}
\xymatrix@C-5pt{  & & \overset{\ }{\underset{\theta}{\circ}} \ar  @{-}[d] & \\
\overset{\ }{\underset{1}{\circ}}\ar  @{-}[r]  &  \overset{\ }{\underset{2}{\circ}}\ar@{.}[r]&  \overset{\ }{\underset{\theta-3}{\circ}} \ar  @{-}[r]  & \overset{\ }{\underset{\theta-2}{\circ}} \ar  @{-}[r]  & \overset{\ }{\underset{\theta-1}{\circ}}}.
\end{align}

Recall the notation \eqref{eq:notation-root-exceptional}.
The  positive roots of $E_6$ are
\begin{align}\label{eq:root-system-E6} 
&{ \scriptsize\begin{aligned}
\Big\{ & 1, 12, 2, 123, 23, 3, 1234, 234, 34, 4, 12345, 2345, 345, 45, 5, 12^23^34^256, 12^23^24^256, \\
& 12^23^2456, 12^23^246, 123^24^256, 123^2456, 123^246, 23^24^256, 23^2456, 23^246, \\ 
& 12^23^34^256^2, 123456, 23456, 3456, 12346, 2346, 346, 1236, 236, 36, 6 \Big\}
\end{aligned}}
\\ \notag & =\{\beta_1, \dots, \beta_{36}\}.
\end{align}

The set of positive roots of $E_7$ is
\begin{align}\label{eq:root-system-E7}
&{ \scriptsize\begin{aligned}
\Big\{ &  1, 12, 2, 123, 23, 3, 1234, 234, 34, 4, 12345, 
2345, 345, 45, 5, 123456, 23456, 3456, \\
& 456, 56, 6, 12^23^34^35^267, 12^23^24^35^267, 12^23^24^25^267, 12^23^24^2567, 12^23^24^257, \\
& 123^24^35^267, 123^24^25^267, 123^24^2567, 123^24^257, 23^24^35^267, 23^24^25^267, 23^24^2567, \\ 
& 23^24^257, 12^23^34^45^36^27^2, 12^23^34^45^367^2, 1234^25^267, 234^25^267, 34^25^267, \\
& 12^23^34^45^267^2, 1234^2567, 234^2567, 34^2567, 12^23^34^35^267^2, 1234567, 234567, 34567, \\
& 12^23^24^35^267^2, 123^24^35^267^2, 1234^257, 123457, 12347, 23^24^35^267^2, 234^257, 23457, \\
& 2347, 4567, 34^257, 3457, 347, 457, 47, 7 \Big\}
\end{aligned}}
\\ \notag & =\{\beta_1, \dots, \beta_{63}\}.
\end{align}

The set of positive roots of $E_8$ is
\begin{align}
\label{eq:root-system-E8}
&{ \scriptsize\begin{aligned}
\Big\{ & 1, 12, 2, 123, 23, 3, 1234, 234, 34, 4, 12345, 2345, 345, 45, 5, 123456, 23456, 3456, \\
& 456, 56, 6, 1234567, 234567, 34567, 4567, 567, 67, 7, 12^23^34^35^36^278, 12^23^24^35^36^278, \\
& 12^23^24^25^36^278, 12^23^24^25^26^278, 12^23^24^25^2678, 12^23^24^25^268, 123^24^35^36^278, \\
& 123^24^25^36^278, 123^24^25^26^278, 123^24^25^2678, 123^24^25^268, 23^24^35^36^278, 23^24^25^36^278, \\
& 23^24^25^26^278, 23^24^25^2678, 23^24^25^268, 12^23^34^45^56^47^28^2, 12^23^34^45^56^37^28^2, \\
& 12^23^34^45^56^378^2, 1234^25^36^278, 234^25^36^278, 
34^25^36^278, 12^23^34^45^46^37^28^2, \\
& 12^23^34^45^46^378^2, 1234^25^26^278, 234^25^26^278, 
34^25^26^278, 12^23^34^35^46^37^28^2, \\
& 12^23^34^35^46^378^2, 12345^26^278, 2345^26^278, 345^26^278, 1^22^33^44^55^66^47^28^3, \\
& 12^33^44^55^66^47^28^3, 12^33^34^45^46^278^2, 12^23^34^35^46^278^2, 12^23^34^35^36^278^2, \\
& 12^23^24^35^46^37^28^2, 123^24^35^46^37^28^2, 1234^25^2678, 12345^2678, 12345678, \\
& 12^23^24^35^46^378^2, 123^24^35^46^378^2, 1234^25^268, 12345^268, 1234568, 12^23^34^55^66^47^28^3, \\
& 12^23^34^45^66^47^28^3, 12^23^34^45^56^47^28^3, 23^24^35^46^37^28^2, 23^24^35^46^378^2, 45^26^278, \\
& 12^23^24^35^46^278^2, 12^23^24^35^36^278^2, 234^25^2678, 234^25^268, 123^24^35^46^278^2, 
\\
& 123^24^35^36^278^2, 34^25^2678, 34^25^268, 12^23^34^45^56^37^28^3, 12^23^34^45^56^378^3, \\
& 12^23^24^25^36^278^2, 123^24^25^36^278^2, 1234^25^36^278^2, 123458, 23^24^35^46^278^2, 2345^2678, \\
& 345^2678, 45^2678, 23^24^35^36^278^2, 2345678, 345678, 45678, 23^24^25^36^278^2, \\
& 234^25^36^278^2, 2345^268, 234568, 23458, 34^25^36^278^2, 345^268, 34568, 3458, 5678, \\
& 45^268, 4568, 458, 568, 58, 8 \Big\} =\{\beta_1, \dots, \beta_{120}\}.
\end{aligned}}
\end{align}

For brevity, we introduce the notation
\begin{align*}
d_6 &= 36,& d_7 &= 63,& d_8 &= 120.
\end{align*}
Notice that the roots in \eqref{eq:root-system-E6}, respectively \eqref{eq:root-system-E7},
\eqref{eq:root-system-E8}, are ordered from left to right, justifying the notation $\beta_1, \dots, \beta_{d_\theta}$,
$\theta \in \I_{6,8}$.

\subsubsection{Weyl group}\label{subsubsec:type-E-Weyl} Let $i \in \I$ and define $s_i\in GL(\Z^\I)$ by 
\begin{align*}
s_i(\alpha_j) &= \begin{cases} -\alpha_i, & i=j, \\
\alpha_j + \alpha_i, &\vert i-j\vert = 1, i, j \in \I_{\theta - 1},\text{ or } \{i,j\}=\{\theta-3,\theta\},   \\
\alpha_j, & \text{otherwise,}
\end{cases}
\end{align*}
$j\in \I$. Then $W = \langle s_i: i\in \I\rangle$ \cite[Planches V-VII]{Bourbaki}.

\subsubsection{Incarnation}
The generalized Dynkin diagram is of the form
\begin{align}\label{eq:dynkin-type-E}
\xymatrix{ &  &   \overset{q}{\circ} &\\
\overset{q}{\underset{\ }{\circ}}\ar  @{-}[r]^{q^{-1}}  &  \overset{q}{\underset{\
}{\circ}}\ar@{.}[r]  & \overset{q}{\underset{\ }{\circ}} \ar @<0.7ex> @{-}[u]_{q^{-1}} \ar
@{-}[r]^{q^{-1}} &  \overset{q}{\underset{\ }{\circ}}  \ar  @{-}[r]^{q^{-1}} &
\overset{q}{\underset{\ }{\circ}}}
\end{align}

\subsubsection{PBW-basis and (GK-)dimension}\label{subsubsec:type-E-PBW}
The root vectors $x_{\beta_j}$ are explicitly described in \cite[pp. 63 ff]{A-standard}, see also \cite{LR}.
Thus a PBW-basis of $\toba_{\bq}$  is
\begin{align*}
\left\{ x_{\beta_{d_\theta}}^{n_{d_\theta}} x_{\beta_{d_\theta - 1}}^{n_{d_\theta - 1}} \dots x_{\beta_2}^{n_{2}}  x_{\beta_1}^{n_{1}} \, | \, 0\le n_{i}<N \right\}.
\end{align*}

If $N<\infty$, then $\dim \toba_{\bq}$ is $N^{{d_\theta}}$.
If $N=\infty$, then $\GK \toba_{\bq}$ is $d_\theta$.

\subsubsection{Relations, $N > 2$}\label{subsubsec:type-E-N>2}
 $\toba_{\bq}$ is generated by $(x_i)_{i\in \I}$ with relations
\begin{align}\label{eq:rels-type-E-N>2}
x_{ij} &= 0, \quad \widetilde q_{ij}=1; & x_{iij} &= 0, \quad \widetilde q_{ij}\neq 1;&
x_{\alpha}^N &=0, \quad \alpha\in\varDelta_{+}.
\end{align}

If $N = \infty$, i.e. $q\notin \G_{\infty}$, then we omit the last set of relations.

\subsubsection{Relations, $N = 2$}\label{subsubsec:type-E-N=2}
 $\toba_{\bq}$ is generated by $(x_i)_{i\in \I}$ with 
relations
\begin{align}\label{eq:rels-type-E-N=2}
x_{ij} &= 0, \quad \widetilde q_{ij}=1; & [x_{ijk},x_{j}]_c &= 0, \quad \widetilde
q_{ij}, \widetilde q_{jk}\neq 1;&
x_{\alpha}^2 &=0, \, \alpha\in\varDelta_{+}.
\end{align}

\subsubsection{The associated Lie algebras and $\ya$}\label{subsubsec:type-E-Lie-alg} 
Those are of type $E_\theta$,  while
\begin{align*}
&E_6:&
\ya &= (N-1)(16 \alpha_1 + 30\alpha_2 + 42\alpha_3 + 30 \alpha_4 + 16\alpha_5 + 22\alpha_6), 
\\
&E_7:&
\ya &= (N-1)(27\alpha_1 + 52\alpha_2 + 75 \alpha_3 + 96\alpha_4 + 66\alpha_5 + 34\alpha_6 + 49\alpha_7), 
\\
&E_8:&
\ya &= (N-1)(58\alpha_1 + 114\alpha_2 + 168\alpha_3 + 220\alpha_4 + 270\alpha_5 
\\
& & & \qquad \qquad + 182\alpha_6 + 92\alpha_7 + 136\alpha_8).
\end{align*}

\subsection{Type $F_{4}$}\label{subsec:type-F}
Here  $N > 2$.
\subsubsection{Root system}

The Cartan matrix is of type $F_{4}$, with the numbering determined by the Dynkin diagram, which is
\begin{align}\label{eq:dynkin-system-F}
\xymatrix@C-5pt{\overset{\ }{\underset{1}{\circ}}\ar  @{-}[r]  &  \overset{\ }{\underset{2}{\circ}}\ar@{<=}[r]& \overset{\ }{\underset{3}{\circ}} \ar  @{-}[r]  & \overset{\ }{\underset{4}{\circ}}}.
\end{align}

The set of positive roots is
\begin{align} \label{eq:root-system-F}
\begin{aligned}
\varDelta^+&=\{ 1, 12, 2, 1^22^23, 12^23, 123, 2^23, 23, 3, 1^22^43^34, 1^22^43^24,
\\ 
& \qquad  1^22^33^24, 1^22^23^24, 1^22^234, 12^33^24, 12^23^24, 1^22^43^34^2, 
\\
& \qquad 12^234, 1234, 2^23^24, 2^234, 234, 34, 4 \} =\{\beta_1, \dots, \beta_{24}\}.
\end{aligned}\end{align}

\subsubsection{Weyl group}\label{subsubsec:type-F-Weyl} 
Let $i \in \I$ and define $s_i\in GL(\Z^\I)$ by 
\begin{align*}
s_i(\alpha_j) &= \begin{cases} -\alpha_i, & i=j, \\
\alpha_j + \alpha_i, & \vert i-j\vert = 1, \, (i,j)\neq (2,3)\\ \alpha_3 + 2\alpha_2, & (i,j)=(2,3),   \\
\alpha_j, & \vert i-j\vert > 1,
\end{cases}
\end{align*}
$j\in \I$. Then $W = \langle s_i: i\in \I\rangle \simeq \big( (\Z/2)^{3}\rtimes \s_{4} \big) \rtimes \s_3$ \cite[Planche VIII]{Bourbaki}.

\subsubsection{Incarnation}
The generalized Dynkin diagram is of the form
\begin{align}\label{eq:dynkin-type-F}
\xymatrix{ \overset{\,\,q}{\underset{\ }{\circ}}\ar  @{-}[r]^{q^{-1}}  &
\overset{\,\,q}{\underset{\ }{\circ}}\ar  @{-}[r]^{q^{-2}} &   \overset{q^2}{\underset{\
}{\circ}} \ar  @{-}[r]^{q^{-2}}  &  \overset{q^2}{\underset{\ }{\circ}} }
\end{align}

\subsubsection{PBW-basis and (GK-)dimension}\label{subsubsec:type-F-PBW}
Let $M=\ord q^2$. The root vectors $x_{\beta_j}$ are explicitly described in \cite[pp. 65 ff]{A-standard}, see also \cite{LR}.
Thus a PBW-basis of $\toba_{\bq}$ for type $F_4$ is
\begin{align*}
\left\{ x_{\beta_{24}}^{n_{24}} x_{\beta_{23}}^{n_{23}} \dots  x_{\beta_1}^{n_{1}} \, | \, 0\le n_{j}<N 
\text{ if }\beta_j\mbox{ is short}; \, 0\le n_{j}<M 
\text{ if }\beta_j\mbox{ is long} \right\}.
\end{align*}

If $N<\infty$, then $\dim \toba_{\bq}=M^{12}N^{12}$.
If $N=\infty$, then $\GK \toba_{\bq}=24$.

\subsubsection{Relations, $N>4$}\label{subsubsec:type-F-N>4}
The Nichols algebra $\toba_{\bq}$ is generated by $(x_i)_{i\in \I_4}$ with defining
relations
\begin{align}\label{eq:rels-type-F-N>4-qsr}
&x_{ij} = 0, \, i < j - 1; \, x_{2223}=0; \ x_{iij}=0, \, |j-i|=1,&  &(i,j)\neq (2,3); 
\\ \label{eq:rels-type-F-N>4-even}
&\begin{aligned}
&x_{\alpha}^{N} =0, &&\alpha\in\varDelta_{+} \mbox{ short};\\
& x_{\alpha}^M =0, && \alpha\in\varDelta_{+} \mbox{ long}.
\end{aligned}  & &N = 2M \text{ even},
\\ \label{eq:rels-type-F-N>4-odd}
 &x_{\alpha}^{N} =0, \quad\alpha \in\varDelta_{+},& &N \text{ odd.}
\end{align}

If $N = \infty$, i.e. $q\notin \G_{\infty}$, then we have only  the relations \eqref{eq:rels-type-F-N>4-qsr}.

\subsubsection{Relations, $N=4$}\label{subsubsec:type-F-N=4}
The Nichols algebra $\toba_{\bq}$ is generated by $(x_i)_{i\in \I_4}$ with defining
relations
\begin{align}\label{eq:rels-type-F-N=4}
\begin{aligned}
&[x_{(24)}, x_{3}]_c=0; & x_{221}&=0; \quad x_{112}=0;  
\\
&x_{ij}= 0, \quad i < j - 1; & x_{2223}&=0;
\\
&x_{\alpha}^4 =0, \quad \alpha\in\varDelta_{+} \mbox{ short};
& x_{\alpha}^2 &=0, \quad \alpha\in\varDelta_{+} \mbox{ long}.
\end{aligned}
\end{align}

\subsubsection{Relations, $N=3$}\label{subsubsec:type-F-N=3}
The Nichols algebra $\toba_{\bq}$ is generated by $(x_i)_{i\in \I_4}$ with defining
relations
\begin{align}\label{eq:rels-type-F-N=3}
\begin{aligned}
x_{ij} &= 0, \quad i < j - 1; &  &[x_{2234},x_{23}]_c=0; \\
x_{iij}&=0, \quad j=i\pm 1, (i,j)\neq (2,3);
& & x_{\alpha}^3=0, \quad \alpha\in\varDelta_{+}.
\end{aligned}
\end{align}

\subsubsection{The associated Lie algebra and $\ya$}\label{subsubsec:type-F-Lie-alg} 
The first is of type $F_4$, while
\begin{multline*}
\ya = (12M+10N-22)\alpha_1 + (24M+18N-42) \alpha_2\\ + (18M+12N-30)\alpha_3 + (10M+6N-16) \alpha_4.
\end{multline*}

\subsection{Type  $G_{2}$}\label{subsec:type-G}
Here  $N > 3$.
\subsubsection{Root system}
The Cartan matrix is of type $G_{2}$, with the numbering determined by the Dynkin diagram, which is
\begin{align}\label{eq:dynkin-system-G}
\xymatrix@C-5pt{\overset{\ }{\underset{1}{\circ}}\ar  @3{<-}[r] &  \overset{\ }{\underset{2}{\circ}}}.
\end{align}

The set of positive roots is
\begin{align}\label{eq:root-system-G}
\varDelta_+&=\{\alpha_1, 3\alpha_1+\alpha_2, 2\alpha_1+\alpha_2, 3\alpha_1+2\alpha_2, \alpha_1+\alpha_2, \alpha_2 \}.
\end{align}

\subsubsection{Weyl group}\label{subsubsec:type-G-Weyl} Let $s_i\in GL(\Z^\I)$, $s_i(\alpha_i) =  -\alpha_i$,
$s_1(\alpha_2) =  \alpha_2 + 3\alpha_1$, $s_{2}(\alpha_1) =  \alpha_1 + \alpha_2$. Then $W = \langle s_i: i\in \I\rangle$ is the dihedral group of order 12 \cite[Planche IX]{Bourbaki}.

\subsubsection{Incarnation}The generalized Dynkin diagram is of the form
\begin{align}\label{eq:dynkin-type-G}
\xymatrix{  \overset{\,\,q}{\underset{\ }{\circ}} \ar  @{-}[r]^{q^{-3}} &
\overset{q^3}{\underset{\ }{\circ}}}
\end{align}

\subsubsection{PBW-basis and (GK-)dimension}\label{subsubsec:type-G-PBW}
The root vectors are 
\begin{align*}
x_{\alpha_{i}} &= x_{i}, &
x_{m\alpha_{1}+\alpha_2} &= \ad_c x_1^m (x_2) =x_{1 \dots 1 2}, & 
x_{3\alpha_{1}+2\alpha_2} = [x_{112}, x_{12}]_c,
\end{align*}
cf. \eqref{eq:roots-Atheta} and \eqref{eq:not-reducida-raiz}. 
Let $M=\ord q^3$. Thus
\begin{align*}
\{ x_{2}^{n_1} x_{12}^{n_2} x_{3\alpha_1+2\alpha_2}^{n_{3}}x_{112}^{n_4} x_{1112}^{n_5} x_1^{n_6} \, \,|\, \, 0\le n_{11}, n_{3}, n_{5}, <M, \, 0\le n_{2}, n_{4}, n_{6}<N\}.
\end{align*}
is a PBW-basis of $\toba_{\bq}$. If $N<\infty$, then
\begin{align*}
\dim \toba_{\bq}= M^{3}N^{3}.
\end{align*}
If $N=\infty$ (that is, if $q\notin \G_{\infty}$), then
$\GK \toba_{\bq}= 6$.

\subsubsection{Relations, $N>4$}\label{subsubsec:type-G-N>4}
The Nichols algebra $\toba_{\bq}$ is generated by $(x_i)_{i\in \I_2}$ with defining
relations
\begin{align}\label{eq:rels-type-G-N>4-qsr}
&x_{11112} = 0;& &x_{221}=0; 
\\ \label{eq:rels-type-G-N>4-div3}
&\begin{aligned}
& x_1^{N}=0; & & x_{112}^{N}=0; & & x_{12}^{N}=0; \\
& x_{1112}^{M}=0; & & x_{3\alpha_1+2\alpha_2}^{M}=0; & & x_2^{M}=0;
\end{aligned}  & &N = 3M \in 3\Z,
\\ \label{eq:rels-type-G-N>4-notdiv3}
&x_{\alpha}^{N} =0, \quad\alpha \in\varDelta_{+},& &N\notin 3\Z.
\end{align}

If $N = \infty$, i.e. $q\notin \G_{\infty}$, then we have only  the relations \eqref{eq:rels-type-G-N>4-qsr}.

\subsubsection{Relations, $N=4$}\label{subsubsec:type-G-N=4}
The Nichols algebra $\toba_{\bq}$ is generated by $(x_i)_{i\in \I_2}$ with defining
relations
\begin{align}\label{eq:rels-type-G}
[x_{3\alpha_1+2\alpha_2}, x_{12}]_c &=0; & x_{221} &= 0; & x_{\alpha}^4&=0, \quad
\alpha\in\varDelta_{+}.
\end{align}

\subsubsection{The associated Lie algebra and $\ya$}\label{subsubsec:type-G-Lie-alg} The first is of type $G_2$,
while
\begin{align*}
\ya &= (6M+4N-10)\alpha_1 + (4M+2N-6)\alpha_2.
\end{align*}

\section{Super type}\label{sec:by-diagram-super}

In this Section we consider the matrices $\bq$ of super type, i.e. with the same
generalized root system as that of a finite-dimensional contragredient Lie superalgebra (not a Lie algebra) in characteristic 0.

\medbreak
We start by a useful notation. As always $\theta \in \N$ and $\I = \I_{\theta}$.
Let $\qt\in\Bbbk^\times - \G_2$ and let $\Jb\subset\I$.
Let ${\bf A}_{\theta}(\qt;\Jb)$ be the generalized Dynkin diagram 
\begin{align*}
\xymatrix{ \overset{q_{11}}{\underset{\ }{\circ}}\ar  @{-}[rr]^{\widetilde{q}_{12}}  &&
\overset{q_{22}}{\underset{\ }{\circ}}\ar@{.}[r] &  \overset{\quad q_{\theta-1 \theta-1}}{\underset{\ }{\circ}} \ar  @{-}[rr]^{\widetilde{q}_{\theta-1 \theta}}  &&
\overset{\quad q_{\theta\theta}}{\underset{\ }{\circ}}}
\end{align*}
where the scalars satisfy the following requirements:
\begin{enumerate} [leftmargin=*]
\item\label{it:aqj-1} $\qt = q_{\theta\theta}^2\widetilde{q}_{\theta-1 \theta}$;

\smallbreak\item\label{it:aqj-2} if $i\in\Jb$, then $q_{ii}=-1$ and $\widetilde{q}_{i-1 i}=\widetilde{q}_{i i+1}^{-1}$;

\smallbreak\item\label{it:aqj-4} if $i\notin\Jb$, then $\widetilde{q}_{i-1 i}= q_{ii}^{-1} = \widetilde{q}_{i i+1}$ (only the second equality if $i=1$, only the first if $i= \theta$).
\end{enumerate}

This is a variation of an analogous notation in \cite{H-classif RS}.
We notice that the diagram ${\bf A}_{\theta}(\qt;\Jb)$ is determined by $\qt$ and $\Jb$, that $q_{ii} = \qt^{\pm 1}$ if $i\notin\Jb$,
and that $\widetilde{q}_{i i+1} = \qt^{\pm 1}$
for all $i < \theta$:

\begin{itemize}
\item If $\theta\in\Jb$, then $q_{\theta\theta} \overset{\eqref{it:aqj-2}}{=} -1$, hence $\widetilde{q}_{\theta-1 \theta} \overset{\eqref{it:aqj-1}}{=} \qt$. 

\item If $\theta\notin\Jb$, then $\widetilde{q}_{\theta-1 \theta} \overset{\eqref{it:aqj-4}}{=} q_{\theta\theta}^{-1}$;
hence $ q_{\theta\theta} \overset{\eqref{it:aqj-1}}{=} \qt$ and $\widetilde{q}_{\theta-1 \theta} = \qt^{-1}$.

\item Let $j \in \I$, $j < \theta$. Suppose we have determined $q_{ii}$ and $\widetilde{q}_{i-1 i}$ for all $i > j$.
Then \eqref{it:aqj-2}  and \eqref{it:aqj-4} determine $q_{jj}$ and $\widetilde{q}_{j-1 j}$.
\end{itemize}

\smallbreak
If $\Jb=\{j\}$, then ${\bf A}_{\theta}(\qt;\Jb)$ is
\begin{align*}
&\xymatrix{ \overset{\qt^{-1}}{\underset{\ }{\circ}}\ar  @{-}[r]^{\qt}  &
\overset{\qt^{-1}}{\underset{\ }{\circ}}\ar@{.}[r] &  \underset{j}{\overset{{-1}}{\circ}}
\ar@{.}[r] & \overset{\qt}{\underset{\ }{\circ}} \ar  @{-}[r]^{\qt^{-1}}  &
\overset{\qt}{\underset{\ }{\circ}}}
\end{align*}

Below we shall specialize $\qt$ to our parameter $q$ or variations thereof.
Also, the symbol $\xymatrix{ {\bf A}_{\theta}(\qt;\Jb) \ar  @{-}[r]^{\qquad p}  &
\overset{r}{\underset{\ }{\circ}}}$ means   a diagram with $\theta + 1$ points; the first $\theta$ of them span ${\bf A}_{\theta}(\qt;\Jb)$,
there is an edge labelled $p$ between the $\theta$  and the $\theta + 1$ points, the last labelled by $r$. 
Symbols like this appear here and there.

\smallbreak
Given $\Jb\subset\I$, $\Jb = \{i_1,\ldots,i_k\}$ with $i_1<\ldots<i_k$, we shall need
\begin{align*}
\SJ_{\Jb}:=\left|\sum_{l=1}^k (-1)^l i_l\right|.
\end{align*}

Notice that  $\Jb = \emptyset$, if and only if $\SJ_{\Jb} = 0$,  if and only if   ${\bf A}_{\theta}(\qt;\Jb)$ is of Cartan type $A_{\theta}$ (because $\qt\notin \G_2$). 

\subsection{Type $\supera{j-1}{\theta - j}$,  $j \in \I_{\lfloor\frac{\theta+1}{2} \rfloor}$}\label{subsec:type-A-super}
Here $N >2$.
We first define
\begin{align}\label{eq:basic-datum-A-super-att}
\att_{\theta, j} &= \left\{\Jb\subseteq \I : \SJ_{\Jb} = j  \right\}.
\end{align}
Observe that for $k \in \I$,  $\{k\} \in \att_{\theta, j}$ if and only if $k = j$.

\subsubsection{Basic datum, $1 \leq j < \frac{\theta+1}{2}$}

The basic datum is $(\Att_{\theta, j}, \rho)$, where 
\begin{align}\label{eq:basic-datum-A-super}
\Att_{\theta, j} &= \att_{\theta, j} \mathbin{\dot{\cup}} \att_{\theta, \theta +1 - j}
\end{align}
and
$\rho: \I \to \s_{\Att_{\theta, j}}$ is  as follows. If $i \in \I$, then $\rho_i:\Att_{\theta, j}\to\Att_{\theta, j}$ is given by
\begin{align}\label{eq:rho-A-super}
\rho_i(\Jb):=\left\{ \begin{array}{ll} \Jb, & i\notin\Jb, \\  \Jb\cup\{i-1,i+1\}, & i\in\Jb, i-1,i+1\notin\Jb, \\
(\Jb - \{i\mp1\})\cup\{i\pm1\}, & i,i\mp1\in\Jb, i\pm1\notin\Jb, \\  \Jb - \{i\pm1\}, & i,i-1,i+1\in\Jb. \end{array} \right.
\end{align} 
If $i = 1$, respectively $\theta$, then $i-1$, respectively $i+1$, is omitted in the definition above.
It is not difficult to see that 

\begin{itemize}
\item $\rho_i$ is well-defined and $\rho_i^2 = \id$. Hence $(\Att_{\theta, j},\rho)$ is a  basic datum. 

\item Let $\Jb \in \Att_{\theta, j}$. Then there exists $k \in \I$ such that $\Jb \sim \{k\}$. Hence $(\Att_{\theta, j},\rho)$ is connected. 
\end{itemize}

Indeed, we see that $\rho_i (\att_{\theta, j}) = \att_{\theta, j}$, $\rho_i (\att_{\theta, \theta +1 - j}) =  \att_{\theta, \theta +1 - j}$
if $i <\theta$, but 
\begin{align*}
\rho_\theta(\Jb) \in \att_{\theta, \theta +1 - j} \text{ if } \Jb \in \att_{\theta, j}, \theta \in \Jb. 
\end{align*}

\subsubsection{Basic datum, $\theta$ odd, $j= \frac{\theta+1}{2}$}

Here $j = \theta+1-j$ and  we need to consider two copies of $\att_{\theta, j}$, see \eqref{eq:basic-datum-A-super-att}.
Let $\oatt_{\theta, j} = \left\{\overline{\Jb}: \Jb\in \att_{\theta, j}  \right\}$ be a disjoint copy of $\att_{\theta, j}$.
Then the basic datum is $(\Att_{\theta, j}, \rho)$, where
\begin{align}\label{eq:basic-datum-A-super-bis}
\Att_{\theta, j} &= \att_{\theta, j} \mathbin{\dot{\cup}} \overline{\att}_{\theta, j}.
\end{align}
and
$\rho: \I \to \s_{\Att_{\theta, j}}$ defined as follows. If $i \in \I$, $i < \theta$, then $\rho_i:\att_{\theta, j}\to\att_{\theta, j}$
and $\rho_i:\oatt_{\theta, j}\to\oatt_{\theta, j}$ are  given by \eqref{eq:rho-A-super}. If $i = \theta$ and $\Jb \in \att_{\theta, j}$, then 
\begin{align}\label{eq:rho-A-super-bis}
\begin{tabular}{c | c c}
\noalign{\smallskip} $\rho_\theta$ &   $\Jb$  & $\overline{\Jb}$       \\ \hline
$\theta\notin\Jb$ & $\Jb$  & $\overline{\Jb}$  \\
$\theta\in\Jb$, $\theta-1\notin\Jb$ &  $\overline{\Jb\cup\{\theta-1\}}$ & $ \Jb\cup\{\theta-1\}$ \\
$\theta\in\Jb$, $\theta-1\in\Jb$ &  $\overline{\Jb-\{\theta-1\}}$ & $ \Jb - \{\theta-1\}$ 
\end{tabular}
\end{align}
It is not difficult to see that 

\begin{itemize}

\item $\rho_i$ is well-defined and $\rho_i^2 = \id$. Hence $(\Att_{\theta, j},\rho)$ is a  basic datum. 

\item $(\Att_{\theta, j},\rho)$ is connected. 
\end{itemize}

\subsubsection{Root system}
The bundle of Cartan matrices $(C^{\Jb})_{\Jb \in\Att_{\theta, j}}$  is constant: $C^{\Jb}$ is the Cartan matrix of type $A_\theta$ as in \eqref{eq:dynkin-system-A}
for any $\Jb \in\Att_{\theta, j}$.

The bundle of sets of roots $(\varDelta^{\Jb})_{\Jb \in\Att_{\theta, j}}$  is constant: 
\begin{align*}
\varDelta^{\Jb}:= \{\pm \alpha_{i,j}: i,j\in\I, i\leq j \}.
\end{align*} 

Hence the root system is standard, with  positive roots \eqref{eq:root-system-A}.
Notice that this is the generalized root system of type $\supera{j-1}{\theta - j}$, i.~e. of $\sli(j \vert\theta - j)$.

\subsubsection{Weyl groupoid}\label{subsubsec:type-superA-Weyl} 
The isotropy group  at $\{j\} \in \Att_{\theta, j}$ is 
\begin{align*}
\cW(\{j\})= \langle s_i: i \in \I, i\neq j\rangle \simeq \s_{j}\times \s_{\theta-j + 1} \leq  GL(\Z^\I).
\end{align*}

\subsubsection{Lie superalgebras realizing this generalized root system}\label{subsubsec:super-structure}

\

Let $\pa_{\Jb}:\Z^{\I}\to\G_2$ be the group homomorphism such that 
\begin{align}\label{eq:superstructure}
\pa_{\Jb}(\alpha_i)=-1  \iff i\in\Jb.
\end{align}
We say that $\alpha\in\Z^I$ is even, respectively odd, if $\pa_{\Jb}(\alpha)=1$, respectively $\pa_{\Jb}(\alpha)=-1$. Thus
$\Jb$ is just the set of simple odd roots. 

To describe the incarnation in the setting of Lie superalgebras, we need the matrices ${\bf A}_{\theta}(\Jb)= (a_{ij}^{\Jb})_{i,j\in\I} \in\ku^{\I\times\I}$, $\Jb \in\Att_{\theta, j}$, where for $i,j\in\I$,
\begin{align*}
a_{ii}^{\Jb}&=\left\{ \begin{array}{ll} 2, & i\notin\Jb, \\ 0, & i\in\Jb, \end{array} \right. &
a_{ij}^{\Jb}&=\left\{ \begin{array}{ll} -1, & i\notin\Jb, j=i\pm1 \\ \pm1, & i\in\Jb,j=i\pm1, \\ 0, & |i-j|\geq2.
\end{array} \right.
\end{align*}
The assignment
\begin{align}\label{eq:incarnation-A-super-Lie}
\Jb \longmapsto & \big({\bf A}_{\theta}(\Jb),\pa_{\Jb}\big)
\end{align}
provides an isomorphism of generalized root systems, cf. \S \ref{subsec:Weyl-gpd-super}.

\subsubsection{Incarnation, $1 \leq j < \frac{\theta+1}{2}$}
The assignment
\begin{align}\label{eq:dynkin-A-super}
\Jb \longmapsto & \left\{\begin{aligned}
&{\bf A}_{\theta}(q;\Jb),& &\text{ if } \SJ_{\Jb} =j; \\
&{\bf A}_{\theta}(q^{-1};\Jb),&  &\text{ if } \SJ_{\Jb} = \theta + 1 - j.
\end{aligned}\right.
\end{align}
gives an incarnation. Indeed, let $\bq$ be the matrix corresponding to ${\bf A}_{\theta}(q;\{j\})$.
\begin{itemize}[leftmargin=*]\renewcommand{\labelitemi}{$\circ$}

\item First, the map \eqref{eq:dynkin-A-super}  $\Att_{\theta, j} \to \cX_{\bq}$ is bijective. 
By the definition \eqref{eq:rhoiq},  $\rho_i({\bf A}_{\theta}(q;\Jb))$ equals ${\bf A}_{\theta}(q^{\pm 1};\rho_i(\Jb))$, depending on $\SJ_{\Jb}$, 
cf. \eqref{eq:rho-A-super},  for all $i\in\I$ and $\Jb\in\Att_{\theta, j}$. 
Thus the basic data $(\Att_{\theta, j}, \rho)$ and  $(\cX_{\bq}, \rho)$ are isomorphic, cf. Proposition \ref{prop:equality-root-systems}.

\item By the definition \eqref{eq:defcij}, $C^{{\bf A}_{\theta}(q;\Jb)}$ is the Cartan matrix of type $A_{\theta}$ for all $\Jb\in\Att_{\theta, j}$, thus coincides with $C^{\Jb}$.
By Proposition \ref{prop:equality-root-systems}, the generalized root systems are equal.
\end{itemize}

\subsubsection{Incarnation, $j = \frac{\theta+1}{2}$}
The assignment
\begin{align}\label{eq:dynkin-A-super-bis}
\Jb \longmapsto & {\bf A}_{\theta}(q;\Jb),&  \overline{\Jb} \longmapsto &{\bf A}_{\theta}(q^{-1};\Jb),& \Jb \in \att_{\theta, j},
\end{align}
gives an incarnation. Indeed, let $\bq$ be the matrix corresponding to ${\bf A}_{\theta}(q;\{j\})$.
\begin{itemize}[leftmargin=*]\renewcommand{\labelitemi}{$\circ$}

\item First, the map \eqref{eq:dynkin-A-super-bis}  $\Att_{\theta, j} \to \cX_{\bq}$ is bijective. 
By the definition \eqref{eq:rhoiq},  cf. \eqref{eq:rho-A-super}, we see that the basic data $(\Att_{\theta, j}, \rho)$ and  $(\cX_{\bq}, \rho)$ are isomorphic, cf. Proposition \ref{prop:equality-root-systems}.

\item By the definition \eqref{eq:defcij}, and Proposition \ref{prop:equality-root-systems}, the generalized root systems are equal.
\end{itemize}

\subsubsection{PBW-basis and (GK-)dimension}\label{subsubsec:type-A-super-PBW} Let $\Jb \in\Att_{\theta, j}$.
The root vectors are 
\begin{align*}
x_{\alpha_{ii}} &= x_{\alpha_{i}} = x_{i},& i \in \I, \\
x_{\alpha_{i j}} &= x_{(ij)} = [x_{i}, x_{\alpha_{i+1\, j}}]_c,& i <  j \in \I,
\end{align*}
with notation \eqref{eq:roots-Atheta}.  Recall the super structure defined in \S \ref{subsubsec:super-structure}. 
Then
\begin{align*}
\left\{ x_{\theta}^{n_{\theta \, \theta}} x_{(\theta-1 \, \theta)}^{n_{\theta-1 \, \theta}} x_{\theta-1
}^{n_{\theta-1 \, \theta-1}} \dots x_{(1 \, \theta)}^{n_{1 \, \theta}} \dots x_{1}^{n_{11}} \, \,\Big|\, \, \begin{aligned}&0\le n_{ij}<N \mbox{ if }\alpha_{ij}\mbox{ is even,}
\\
&0\le n_{ij}<2 \mbox{ if }\alpha_{ij}\mbox{ is odd}\end{aligned}\right\}.
\end{align*}
is a PBW-basis of $\toba_{\bq}$. 
Notice that it depends on $\Jb$.
If $N<\infty$, then
\begin{align*}
\dim \toba_{\bq}= 2^{j(\theta-j)} N^{\binom{j}{2}+\binom{\theta-j}{2}}.
\end{align*}
If $N=\infty$ (that is, if $q$ is not a root of unity), then
\begin{align*}
\GK \toba_{\bq}= \binom{j}{2}+\binom{\theta-j}{2}.
\end{align*}

\subsubsection{Presentation}\label{subsubsec:type-A-super}
First, the set of positive Cartan roots is 
\begin{align*}
\Oc_+^{\bq} = \{\alpha_{ij} \in \varDelta_{+}^{\bq}: \alpha_{ij} \text{ even}\}.
\end{align*}

The Nichols algebra $\toba_{\bq}$ is generated by $(x_i)_{i\in \I}$ with defining
relations
\begin{align}\label{eq:rels-type-A-super}
\begin{aligned}
& x_{ij} = 0, \quad i < j - 1; & x_{iii\pm1} &= 0, \quad
q_{ii}\neq-1;  \\
&[x_{(i-1i+1)},x_i]_c=0, \quad q_{ii}=-1; &
x_i^2&=0, \quad q_{ii}=-1; \\
&x_{\alpha_{ij}}^{N}=0, \quad \alpha_{ij}\in \Oc_+^{\bq}.
\end{aligned}
\end{align}

If $N = \infty$, i.e. $q\notin \G_{\infty}$, then we omit the last set of relations.

\subsubsection{The associated Lie algebra and $\ya$}\label{subsubsec:type-A-super-Lie-alg} As the roots of Cartan type are the  even  ones, 
the associated Lie algebra is of type $A_{j-1}\times A_{\theta-j}$.

\begin{align*}
\ya &= (N-1)\sum_{\alpha_{ij}\text{ even}} (\alpha_i + \alpha_{i+1} + \dots +\alpha_j) 
+ \sum_{\alpha_{ij}\text{ odd}} (\alpha_i + \alpha_{i+1} + \dots +\alpha_j).
\end{align*}

\subsubsection{Type $\supera{1}{1}$,  $N >2$}\label{subsubsec:type-A(1|1)}
We illustrate the material of this Subsection with the example $\theta = 2$.
Here  $\SJ(\{1\}) =  \SJ(\I_2) = 1$ and $\SJ(\{2\}) = 2 = 2 +1 -1$. Thus $\Att_{2,1} = \left\{\{1\},\I_2, \{2\}\right\}$.
The diagram of $(\Att_{2,1}, \rho)$ is
\begin{align*}
\xymatrix{\underset{\{1\}}{\vtxgpd} \hspace{3pt} \text{\raisebox{3pt}{$\overset{1}{\rule{25pt}{0.5pt}}$}}
\hspace{3pt}\underset{\I_{2}}{\vtxgpd} \hspace{3pt} \text{\raisebox{3pt}{$\overset{2}{\rule{25pt}{0.5pt}}$}}
\hspace{3pt}\underset{\{2\}}{\vtxgpd} . }
\end{align*}

In all cases, $x_{\alpha_{1}} = x_1$, $x_{\alpha_{2}} = x_2$, $x_{\alpha_{12}} = x_1x_2 - q_{12} x_2x_1 = x_{12}$.

\begin{description}
\bigbreak\item[$\Jb = \{1\}$] This is ${\bf A}_{2}(q;\{1\})$, i.e. $\xymatrix{ \overset{-1}{\underset{\ }{\circ}}\ar  @{-}[rr]^{q^{-1}}  &&
\overset{q}{\underset{\ }{\circ}}}$. 
Here $\alpha_{1}$ and $\alpha_{12}$ are odd;  the PBW-basis is
$\left\{ x_{2}^{n_{2}} x_{1 2}^{n_{12}}  x_{1}^{n_{1}} \,|\, 0\le n_{2}<N, 0\le n_{1}, n_{12}<2\right\}$.
Also $\toba_{\bq}$ is generated by $(x_i)_{i\in \I}$ with defining
relations
\begin{align}\label{eq:rels-type-A-super(1|1)-1}
x_{221} &= 0; & x_1^2&=0, &x_{2}^{N}=0.
\end{align}

\medbreak\item[$\Jb = \I_2$] This is ${\bf A}_{2}(q;\I_2)$, i.e.  $\xymatrix{ \overset{-1}{\underset{\ }{\circ}}\ar  @{-}[rr]^{q}  &&
\overset{-1}{\underset{\ }{\circ}}}$.
Here $\alpha_{1}$ and $\alpha_{2}$ are the odd roots;  the PBW-basis is
$\left\{ x_{2}^{n_{2}} x_{1 2}^{n_{12}}  x_{1}^{n_{1}} \,|\, 0\le n_{12}<N, 0\le n_{1}, n_{2}<2\right\}$.
Also $\toba_{\bq}$ is generated by $(x_i)_{i\in \I}$ with defining
relations
\begin{align}\label{eq:rels-type-A-super(1|1)-12}
x_1^2&=0,&
x_2^2&=0, & x_{12}^{N} &=0,
\end{align}

\medbreak\item[$\Jb = \{2\}$] This is ${\bf A}_{2}(q^{-1};\{2\})$, i.e. $\xymatrix{ \overset{q}{\underset{\ }{\circ}}\ar  @{-}[rr]^{q^{-1}}  &&
\overset{-1}{\underset{\ }{\circ}}}$.
Here $\alpha_{2}$ and $\alpha_{12}$ are odd;  the PBW-basis is
$\left\{ x_{2}^{n_{2}} x_{1 2}^{n_{12}}  x_{1}^{n_{1}} \,|\, 0\le n_{1}<N, 0\le n_{2}, n_{12}<2\right\}$.
Also $\toba_{\bq}$ is generated by $(x_i)_{i\in \I}$ with defining
relations
\begin{align}\label{eq:rels-type-A-super(1|1)-2}
x_{112} &= 0, &
x_2^2&=0, &x_{1}^{N}=0.
\end{align}
\end{description}
Clearly, there is a graded algebra isomorphism with the Nichols algebra corresponding to ${\bf A}_{2}(q;\{1\})$, interchanging $x_1$ with $x_2$.

\subsubsection{Example $\Att_{3,2}$} \label{subsubsec:type-A-super-exa32} 
Here $\att_{3, 2} = \{ \{2\}, \{1,3\}, \I_3\}$. We exemplify the incarnation when $j = \theta + 1 - j$ in the case $\theta = 3$, $j = 2$.
We give the diagram  of the basic datum and its incarnation:
\begin{align*}
\xymatrix{ &\vtxgpd  \ar@{-} ^{2}[d] &  &&&
\\ &\vtxgpd  \ar@{-}_{1}[dl] \ar@{-} ^{3}[dr] & &&& 
\\\vtxgpd \ar@{-}_{3}[dr] & &  \vtxgpd \ar@{-} ^{1}[dl]   & \ar@{~>}[r]&& 
\\ &\vtxgpd  \ar@{-} ^{2}[d] &  &&&
\\ &\vtxgpd   & &&& 
}
\xymatrix@R-10pt@C-20pt{ & \overset{\Att_3(\{2\}, q)}{\vtxgpd}  \ar@{-} ^{2}[d] &
\\  & \underset{\Att_3(\I_3, q)}{\vtxgpd}  \ar@{-}_{1}[dl] \ar@{-} ^{3}[dr] &
\\\overset{ \Att_3(\{1,3\}, q)}{\vtxgpd} \ar@{-}_{3}[dr] & &  \overset{ \Att_3(\{1,3\}, q^{-1})}{\vtxgpd} \ar@{-} ^{1}[dl] 
\\ &\overset{\Att_3(\I_3, q^{-1})}{\vtxgpd}  \ar@{-} ^{2}[d] & 
\\  & \underset{\Att_3(\{2\}, q^{-1})}{\vtxgpd}   &
}
\end{align*}

\subsection{Type $\superb{j}{\theta-j}$, $j\in\I_{\theta-1}$}\label{subsec:type-B-super} Here $q\notin\G_4$.

\subsubsection{Basic datum}
The basic datum is $(\mathtt B_{\theta, j}, \rho)$, where 
\begin{align*}
\mathtt B_{\theta, j} &= \left\{\Jb\subseteq \I : \SJ_{\Jb}=j   \right\}
\end{align*}
and
$\rho: \I \to \s_{\mathtt B_{\theta, j}}$ is defined by \eqref{eq:rho-A-super} if $i \in \I_{\theta-1}$, while  $\rho_{\theta}=\id$.
We see that 

\begin{itemize}
\item  If $k \in \I$, then $\{k\} \in \mathtt B_{\theta, j}$ if and only if $k=j$. 

\item $\rho_i$ is well-defined and $\rho_i^2 = \id$. Hence $(\mathtt B_{\theta, j},\rho)$ is a  basic datum. 

\item Let $\Jb \in \mathtt B_{\theta, j}$. Then there exists $k \in \I$ such that $\Jb \sim \{k\}$. Hence $(\mathtt B_{\theta, j},\rho)$ is connected. 
\end{itemize}

\subsubsection{Root system}\label{subsubsec:root-system-B theta j}
The bundle of Cartan matrices $(C^{\Jb})_{\Jb \in\mathtt B_{\theta, j}}$  is constant: $C^{\Jb}$ is the Cartan matrix of type $B_\theta$ as in \eqref{eq:dynkin-system-B}
for any $\Jb \in\mathtt B_{\theta, j}$.

The bundle of sets of roots $(\varDelta^{\Jb})_{\Jb \in\mathtt B_{\theta, j}}$  is constant; 
$\varDelta^{\Jb}$ is given by \eqref{eq:root-system-B}.

\begin{rem}\label{rem:type-superB-bijection}
There exists a bijection
\begin{align*}
\mathtt B_{\theta, j}& \to \mathtt B_{\theta, \theta-j}, &
\Jb & \mapsto \overline{\Jb} = 
\begin{cases}
\Jb\cup \{\theta\}, & \theta\notin \Jb; \\
\Jb - \{\theta\}, & \theta\in \Jb.
\end{cases}
\end{align*}
Notice that $\rho_i(\overline{\Jb})= \overline{\rho_i(\Jb)}$ for all $\Jb \in \mathtt B_{\theta, j}$ and all $i\in\I$, so the bijection above establishes an isomorphism of basic data between $(\mathtt B_{\theta, j}, \rho)$ and $(\mathtt B_{\theta, \theta-j}, \rho)$, which gives an isomorphism between the root systems. 
Otherwise, the GRS of type $\superb{j}{\theta-j}$ and $\superb{k}{\theta-k}$, $j, k\in\I_{\theta-1}$ with $k \neq j, \theta-j$ are not isomorphic, e.g. compute 
$\vert \cW(\{j\})\vert$.
\end{rem}

\subsubsection{Weyl groupoid}\label{subsubsec:type-superB-Weyl} 

The isotropy group  at $\{j\} \in \mathtt B_{\theta, j}$ is 
\begin{align*}
\cW(\{j\}) & = \left\langle \widetilde{\varsigma}^{\{j\}}_j, \varsigma_i^{\{j\}}: i \in \I, i\neq j \right\rangle \\ & \simeq \big((\Z/2)^{j}\rtimes \s_{j} \big)\times \big((\Z/2)^{\theta-j}\rtimes \s_{\theta-j} \big) \leq  GL(\Z^\I).
\end{align*}
where $\widetilde{\varsigma}^{\{j\}}_j= \varsigma^{\{j\}}_j \varsigma_{j+1} \dots \varsigma_{\theta-1}\varsigma_{\theta}\varsigma_{\theta-1} \dots \varsigma_{j}\in \cW(\{j\})$.
In other words, it is generated by $\theta - 1$ loops and one cycle (which is not a loop). 

\subsubsection{Lie superalgebras realizing this generalized root system}

\

Let $\pa_{\Jb}:\Z^{\I}\to\G_2$ be the group homomorphism as in
\eqref{eq:superstructure}.

\medbreak
To describe the incarnation in the setting of Lie superalgebras, we need the matrices ${\bf B}_{\theta}(\Jb)= (b_{ij}^{\Jb})_{i,j\in\I} \in\ku^{\I\times\I}$, $\Jb \in\Att_{\theta, j}$, where for $i,j \in\I$, $i\neq \theta$,

\begin{align*}
b_{ii}^{\Jb}&=\left\{ \begin{aligned} &2, & &i\notin\Jb, \\ &0, & &i\in\Jb; \end{aligned} \right. &
b_{ij}^{\Jb}&=\left\{ \begin{aligned} &-1, & &i\notin\Jb, j=i\pm1, \\ &\pm1, & &i\in\Jb,j=i\pm1, \\ &0, & &|i-j|\geq2; \end{aligned} \right. &
b_{\theta j}^{\Jb}&=\left\{ \begin{aligned} &2, & &j=\theta, \\ &-2, & &j=\theta-1, \\ &0, & &j\le \theta-2.
\end{aligned} \right.
\end{align*}
Then $\g\big({\bf B}_{\theta}(\Jb),\pa_{\Jb}\big) \simeq \mathfrak{osp}(2j+1,2(\theta-j))$.
The assignment 
\begin{align}\label{eq:incarnation-B-super-Lie}
\Jb \longmapsto & \big({\bf B}_{\theta}(\Jb),\pa_{\Jb}\big)
\end{align}
provides an isomorphism of generalized root systems
between the GRS of type $\superb{j}{\theta-j}$ 
and the root system of $\mathfrak{osp}(2j+1,2(\theta-j))$, cf. \S \ref{subsec:Weyl-gpd-super}.

\begin{rem}\label{rem:type-superB-bijection-lie}
The bijection in Remark \ref{rem:type-superB-bijection} gives also an isomorphism 
between the GRS of type $\superb{j}{\theta-j}$ 
and the root system of $\mathfrak{osp}(2(\theta-j)+1,2j) \not\simeq \mathfrak{osp}(2j+1,2(\theta-j))$.
\end{rem}

\subsubsection{Incarnation}
The assignment
\begin{align}\label{eq:dynkin-B-super}
\Jb \mapsto &
\begin{cases}
\xymatrix@R-6pt{ {\bf A}_{\theta-1}(q^2;\Jb) \ar @{-}[rr]^{\hspace*{1.2cm} q^{-2}} & & \overset{q}{\underset{\
}{\circ}}}, & \theta\notin\Jb; \\
\xymatrix@R-6pt{ {\bf A}_{\theta-1}(q^{-2};\Jb) \ar@{-}[rr]^{\hspace*{0.7cm} q^{2}} & & \overset{-q^{-1}}{\underset{\ }{\circ}}}, & \theta\in\Jb
\end{cases}
\end{align}
gives an incarnation. Here the diagram in the second row in \eqref{eq:dynkin-B-super} is obtained from the first one interchanging $q$ by $-q^{-1}$. 
Below we give information for the diagram in the first row; the information for the other follows as mentioned.

\begin{rem}\label{rem:type-superB-bijection-nichols}
The bijection in Remark \ref{rem:type-superB-bijection} provides  another incarnation of $\superb{j}{\theta-j}$,
which is different from the first one.
\end{rem}

\subsubsection{PBW-basis, dimension} \label{subsubsec:type-B-super-PBW}
With the notation \eqref{eq:roots-Atheta}, the root vectors are 
\begin{align*}
x_{\alpha_{ii}} &= x_{\alpha_{i}} = x_{i},& i \in \I, \\
x_{\alpha_{ij}} &= x_{(ij)} = [x_{i}, x_{\alpha_{(i+1) j}}]_c,& i <  j \in \I, \\
x_{\alpha_{i\theta} + \alpha_{\theta}} &= [x_{\alpha_{i\theta}}, x_\theta]_c, & i  \in \I_{\theta - 1},
\\
x_{\alpha_{i\theta} + \alpha_{j\theta}} &= [x_{\alpha_{i\theta} + \alpha_{(j+1) \theta}}, x_j]_c, & i <  j \in \I_{\theta - 1},
\end{align*}
Let $N_{\alpha} = \ord q_{\alpha\alpha}$; it takes different values when $N$ is even. Thus
\begin{multline*}
\big\{ x_{\theta  }^{n_{\theta  \theta}} 
x_{\alpha_{\theta-1\theta} + \alpha_{\theta\theta}}^{m_{\theta-1\theta}}
x_{\alpha_{\theta-1\theta}}^{n_{\theta-1  \theta}} 
x_{ \theta-1}^{n_{\theta-1  \theta-1}} 
\dots
x_{\alpha_{1\theta} + \alpha_{2\theta}}^{m_{12}}
\dots
x_{\alpha_{1\theta} + \alpha_{\theta\theta}}^{m_{1\theta}}
\dots
x_{\alpha_{1\theta}}^{n_{1  \theta}} 
\dots 
x_{1}^{n_{1 1}} \, \\ 
| \, 0\le n_{ij}<N_{\alpha_{ij}}; \, 0\le m_{ij}<N_{\alpha_{i\theta}+\alpha_{j\theta}}\big\}
\end{multline*}
is a PBW-basis of $\toba_{\bq}$. Let $M=\ord q^2$, $P=\ord (-q)$.  If $N<\infty$, then
\begin{align*}
\dim \toba_{\bq}= 2^{2j(\theta-j-1)} M^{\theta^2-\theta-2j(\theta-j-1)}N^{\theta-j}P^j.
\end{align*}
If $N=\infty$ (that is, if $q$ is not a root of unity), then
\begin{align*}
\GK \toba_{\bq}= \theta^2-2j(\theta-j-1).
\end{align*}

\subsubsection{Relations, $N>4$}\label{subsubsec:type-B-super-N>4}
The set of positive Cartan roots is 
\begin{align}\label{eq:Cartan-roots-B-super}
\begin{aligned}
\Oc_+^{\bq} = & \{\alpha_{ij}: i\le j\in\I_{\theta - 1}, \alpha_{ij} \text{ even}\} \cup \{\alpha_{i\theta}: i \in\I_{\theta - 1}\} \\ & \cup \{ \alpha_{i\theta}+ \alpha_{j\theta}: i< j\in\I_{\theta}, \alpha_{i(j-1)} \text{ even}\}.
\end{aligned}
\end{align}

The Nichols algebra $\toba_{\bq}$ is generated by $(x_i)_{i\in \I}$ with defining relations
\begin{align}\label{eq:rels-type-B-super-N>4}
\begin{aligned}
&x_{ij}=0, \quad i < j - 1; & &x_{ii(i\pm1)}=0, \quad i < \theta, q_{ii}\neq -1;  \\
&x_{\theta\theta\theta(\theta-1)}=0; & & [x_{(i-1\, i+1)},x_i]_c=0, \quad q_{ii}=-1;\\
&x_i^2=0, \quad q_{ii}=-1; & &x_{\alpha}^{N_\alpha}=0, \quad
\alpha\in\Oc_+^{\bq}.
\end{aligned}
\end{align}

If $N = \infty$, i.e. $q\notin \G_{\infty}$, then we omit the last set of relations.

\subsubsection{Relations, $N=3$}\label{subsubsec:type-B-super-N=3}
The set of positive Cartan roots is also
\eqref{eq:Cartan-roots-B-super}.
The Nichols algebra $\toba_{\bq}$ is generated by $(x_i)_{i\in \I}$ with defining relations
\begin{align}\label{eq:rels-type-B-super-N=3}
\begin{aligned}
&x_{ij}= 0, \quad i < j - 1; &  
&[x_{(i-1i+1)},x_i]_c=0, \quad q_{ii}=-1;  \\
&x_{ii(i\pm1)}= 0, \quad q_{ii}\neq -1; &
&[x_{\theta\theta(\theta-1)},x_{\theta(\theta-1)}]_c=0, \quad q_{\theta-1\theta-1}=-1;\\
& & 
&[x_{\theta\theta(\theta-1)(\theta-2)},x_{\theta(\theta-1)}]_c=0;  \\
&x_i^2=0, \quad q_{ii}=-1;&
&x_{\alpha}^{N_\alpha}=0,\quad\alpha\in\Oc_+^{\bq}.
\end{aligned}
\end{align}

\subsubsection{The associated Lie algebra and $\ya$}\label{subsubsec:type-B-super-Lie-alg} If $N$ is odd (respectively even), then the associated Lie algebra is of type $C_{j} \times B_{\theta-j}$ (respectively $C_{j}\times C_{\theta-j}$),
while
\begin{align*}
\ya &=\sum_{\substack{i\le j\in\I_{\theta-1},\\  \alpha_{ij} \text{ odd}}} \alpha_{ij} + 
\sum_{\substack{i\le j\in\I_{\theta-1},\\  \alpha_{ij} \text{ even}}} (M-1)\alpha_{ij} +
\sum_{\substack{i\in\I_{\theta},\\  \alpha_{i\theta} \text{ odd}}} (P-1)\alpha_{i\theta} \\ & + 
\sum_{\substack{i\in\I_{\theta},\\  \alpha_{i\theta} \text{ even}}} (N-1)\alpha_{i\theta} +
\sum_{\substack{i< j\in\I_{\theta},\\  \alpha_{i(j-1)} \text{ odd}}} (\alpha_{i\theta}+\alpha_{j\theta}) + 
\sum_{\substack{i< j\in\I_{\theta},\\  \alpha_{i(j-1)} \text{ even}}} (M-1)(\alpha_{i\theta}+\alpha_{j\theta}).
\end{align*}

\subsubsection{Types $\superb{2}{1}$ and $\superb{1}{2}$}\label{subsubsec:type-b(2|1)}
We exemplify the incarnation in the case $\theta = 3$, $j =1, 2$.
To describe the first example, we need the matrices:
\begin{align*}
\bp^{(1)}: & \xymatrix@C-5pt{ 
	\overset{-1}{\circ}\ar@{-}[r]^{q^{\text{-}12}}  &
	\overset{q^2}{\circ} \ar@{-}[r]^{q^{\text{-}2}}  & 
	\overset{q}{\circ}, }
& 
\bp^{(2)}: & \xymatrix@C-5pt{ 
	\overset{\text{-}1}{\circ}\ar@{-}[r]^{q^{2}}  &
	\overset{\text{-}1}{\circ} \ar@{-}[r]^{q^{\text{-}2}}  & 
	\overset{q}{\circ}, }
&
\bp^{(3)}: &\xymatrix@C-5pt{ 
	\overset{q^2}{\circ}\ar@{-}[r]^{q^{\text{-}2}}  &
	\overset{-1}{\circ} \ar@{-}[r]^{q^2}  & 
	\overset{\text{-}q^{\text{-}1}}{\circ}.}
\end{align*}
Here $\Btt_{3, 1} = \{ \{1\}, \{1,2\}, \{2,3\} \}$. 
The basic datum and the incarnation are:
\begin{align*}
&\xymatrix{ \overset{\{1\}}{\vtxgpd} \ar@{-}^{1}[r] & \overset{\{1,2\}}{\vtxgpd} \ar@{-}^{2}[r] & \overset{\{2,3\}}{\vtxgpd}, }
&
&\xymatrix{ 
	\overset{\bp^{(1)}}{\vtxgpd} \ar@{-}^{1}[r] & \overset{\bp^{(2)}}{\vtxgpd} \ar@{-}^{2}[r] & \overset{\bp^{(3)}}{\vtxgpd}.}
\end{align*}

For the second example, we need the matrices:
\begin{align*}
\bq^{(1)}: & \xymatrix@C-5pt{ 
	\overset{q^{\text{-}2}}{\circ}\ar@{-}[r]^{q^2}  &
	\overset{\text{-}1}{\circ} \ar@{-}[r]^{q^{\text{-}2}}  & 
	\overset{q}{\circ}, }
& 
\bq^{(2)}: & \xymatrix@C-5pt{ 
	\overset{\text{-}1}{\circ}\ar@{-}[r]^{q^{\text{-}2}}  &
	\overset{\text{-}1}{\circ} \ar@{-}[r]^{q^2}  & 
	\overset{\text{-}q^{\text{-}1}}{\circ}, }
&
\bq^{(3)}: &\xymatrix@C-5pt{ 
	\overset{\text{-}1}{\circ}\ar@{-}[r]^{q^2}  &
	\overset{q^{\text{-}2}}{\circ} \ar@{-}[r]^{q^2}  & 
	\overset{\text{-}q^{\text{-}1}}{\circ}.}
\end{align*}
Here $\Btt_{3, 2} = \{ \{2\}, \{1,3\}, \I_3\}$. 
The basic datum and the incarnation are:
\begin{align*}
&\xymatrix{ \overset{\{2\}}{\vtxgpd} \ar@{-}^{2}[r] & \overset{\I_3}{\vtxgpd} \ar@{-}^{1}[r] & \overset{\{1,3\}}{\vtxgpd}, }
&
&\xymatrix{ 
	\overset{\bq^{(1)}}{\vtxgpd} \ar@{-}^{2}[r] & \overset{\bq^{(2)}}{\vtxgpd} \ar@{-}^{1}[r] & \overset{\bq^{(3)}}{\vtxgpd}.}
\end{align*}

By Remark \ref{rem:type-superB-bijection}, 
$\superb{2}{1}$ and $\superb{1}{2}$ are isomorphic, but the incarnations are not.

\subsection{Type $\superd{j}{\theta-j}$, $j\in\I_{\theta - 1}$}\label{subsec:type-D-super}
Here $N > 2$.
We consider different settings, according to whether $j$ is $<$, $=$ or $> \frac{\theta}{2}$.
Below $c, \widetilde{c}, d$ are three different symbols alluding to Cartan types C and D. 

\subsubsection{Basic datum, $j < \frac{\theta}{2}$}

We first recall \eqref{eq:basic-datum-A-super}:
$\Att_{\theta - 1, j}  = \att_{\theta - 1, j} \mathbin{\dot{\cup}} \att_{\theta - 1,\theta-j}$, 
and define $\rho': \I_{\theta-1} \to \s_{\Att_{\theta - 1, j}}$  as in \eqref{eq:rho-A-super}. 
The basic datum is $(\Dtt_{\theta,j}, \rho)$, where 
\begin{align}\label{eq:basic-datum-D-super}
\Dtt_{\theta,j} &= (\att_{\theta-1, j}\times \{ c,\widetilde{c} \}) \mathbin{\dot{\cup}} (\att_{\theta-1,\theta-j} \times \{d\}).
\end{align}
and for  $i \in \I$,  $\rho_i:\Dtt_{\theta,j}\to\Dtt_{\theta,j}$ is given by
\begin{align}\label{eq:rho-D-super}
\begin{aligned}
\rho_i(\Jb,c) &:=\left\{ \begin{array}{ll} 
(\rho_i'(\Jb),c), & i\in\I_{\theta-2}, \\  
(\rho_{\theta-1}'(\Jb),d), & i=\theta-1\in\Jb, \\
(\rho_{\theta-1}'(\Jb),c), & i=\theta-1\notin\Jb, \\  
(\Jb, c), & i=\theta; \end{array} \right.
\\
\rho_i(\Jb,\widetilde{c})&:=\left\{ \begin{array}{ll} 
(\rho_i'(\Jb),\widetilde{c}), & i\in\I_{\theta-2}, \\  
(\rho_{\theta-1}'(\Jb),d), & i=\theta, \text{ when } \theta-1\in\Jb, \\
(\rho_{\theta-1}'(\Jb),\widetilde{c}), & i=\theta,  \text{ when } \theta-1\notin\Jb, \\  
(\Jb, \widetilde{c}), & i=\theta-1; \end{array} \right.
\\
\rho_i(\Jb, d) &:=\left\{ \begin{array}{ll} 
(\rho_i'(\Jb),d), & i\in\I_{\theta-2}, \\  
(\rho_{\theta-1}'(\Jb),c), & i=\theta-1\in\Jb, \\
(\rho_{\theta-1}'(\Jb),\widetilde{c}), & i=\theta, \text{ when } \theta-1\in\Jb, \\  
(\Jb, d), & i\in\{\theta-1,\theta\},  \text{ when } \theta-1\notin\Jb. \end{array} \right.
\end{aligned}
\end{align} 

\subsubsection{Basic datum, $j > \frac{\theta}{2}$}
Here we extend \eqref{eq:basic-datum-A-super} as follows:
\begin{align*}
\Att_{\theta - 1, j} :=\Att_{\theta - 1,\theta-j} &= \att_{\theta - 1, \theta-j} \mathbin{\dot{\cup}} \att_{\theta - 1,j}, 
& &\text{ for } \frac{\theta}{2}< j< \theta.
\end{align*}
Then we define $\rho': \I_{\theta-1} \to \s_{\Att_{\theta - 1, j}} $  as in \eqref{eq:rho-A-super}; with this, the basic datum is $(\Dtt_{\theta,j}, \rho)$, where $\Dtt_{\theta,j}$ and $\rho$ are defined exactly as in   \eqref{eq:basic-datum-D-super} and \eqref{eq:rho-D-super}.

\subsubsection{Basic datum, $j=\frac{\theta}{2}$}
We first recall \eqref{eq:basic-datum-A-super-bis}:
$\Att_{\theta - 1, j}  = \att_{\theta - 1, j} \mathbin{\dot{\cup}} \oatt_{\theta - 1, j}$, 
and define $\rho': \I_{\theta-1} \to \s_{\Att_{\theta - 1, j}}$  as in \eqref{eq:rho-A-super-bis}. 
The basic datum is $(\Dtt_{\theta,j}, \rho)$, where 
\begin{align*}
\Dtt_{\theta,j} &= (\att_{\theta-1, j}\times \{ c,\widetilde{c} \}) \mathbin{\dot{\cup}} (\oatt_{\theta-1, j}\times \{d\}),
\end{align*}
and $\rho_i:\Dtt_{\theta,j}\to\Dtt_{\theta,j}$, $i \in \I$, is defined as follows. If $\Jb \in \att_{\theta-1, j}$,  then  
\begin{align*}
\rho_i(\Jb,c) &:=\left\{ \begin{array}{ll} 
(\rho_i'(\Jb),c), & i\in\I_{\theta-2}, \\  
( \rho_{\theta-1}'(\Jb),d ), & i=\theta-1\in\Jb, \\
(\rho_{\theta-1}'(\Jb),c), & i=\theta-1\notin\Jb, \\  
(\Jb, c), & i=\theta; \end{array} \right.
\\
\rho_i(\Jb,\widetilde{c})&:=\left\{ \begin{array}{ll} 
(\rho_i'(\Jb),\widetilde{c}), & i\in\I_{\theta-2}, \\  
(\rho_{\theta-1}'(\Jb),d), & i=\theta, \text{ when } \theta-1\in\Jb, \\
(\rho_{\theta-1}'(\Jb),\widetilde{c}), & i=\theta,  \text{ when } \theta-1\notin\Jb, \\  
(\Jb, \widetilde{c}), & i=\theta-1; \end{array} \right.
\\
\rho_i \big( \overline{\Jb}, d \big) &:=\left\{ \begin{array}{ll} 
\big( \rho_{i}'(\overline{\Jb}),d \big), & i\in\I_{\theta-2}, \\  
(\rho_{\theta-1}'(\Jb),c), & i=\theta-1\in\Jb, \\
(\rho_{\theta-1}'(\Jb),\widetilde{c}), & i=\theta, \text{ when } \theta-1\in\Jb, \\  
\big( \overline{\Jb}, d \big), & i\in\{\theta-1,\theta\},  \text{ when } \theta-1\notin\Jb. \end{array} \right.
\end{align*} 
Here and in the previous cases, all $\rho_i$, $i\in\I_\theta$, are well-defined, and $(\Dtt_{\theta,j}, \rho)$ is a connected basic datum.

\subsubsection{Root system, $j\neq \frac{\theta}{2}$}\label{subsubsec:superD-jneq-theta/2-rootsystem}
The bundle of Cartan matrices $(C^{(\Jb,x)})_{(\Jb,x) \in\Dtt_{\theta,j}}$ is the following: 
\begin{itemize}
\item Let $\Jb \in\att_{\theta-1,j}$. If $\theta-1\notin \Jb$, then $C^{(\Jb,c)}$ is the Cartan matrix of type $C_\theta$ as in \eqref{eq:dynkin-system-C}. If $\theta-1\in \Jb$, then $C^{(\Jb,c)}$ is and of type $A_\theta$ as in \eqref{eq:dynkin-system-A}.
\item $C^{(\Jb,\widetilde{c})}$ 
has the same Dynkin diagram as $C^{(\Jb,c)}$, but  changing the numeration of the diagram by $\theta-1 \longleftrightarrow\theta$.
\item Let $\Jb \in\att_{\theta-1, \theta-j}$. If $\theta-1\notin \Jb$, then $C^{(\Jb,d)}$ 
is the Cartan matrix of type $D_\theta$ \eqref{eq:dynkin-system-D}.
If $\theta-1\in \Jb$, then the Dynkin diagram of $C^{(\Jb,d)}$ is of type ${}_{\theta-2}T$, see \eqref{eq:mTn}.
\end{itemize}

As in \eqref{eq:superstructure}, let $\pa_{\Jb}: \Z^{\theta-1} \to \G_2$ be the group homomorphism such that $\pa_{\Jb}(\alpha_k)= -1$ iff $k \in \Jb$. 
Using this parity vector, we define the bundle of root sets $(\varDelta^{(\Jb,x)})_{(\Jb,x) \in\Dtt_{\theta,j}}$  as follows:
\begin{align} \label{eq:root-D-super}
\begin{aligned}
\varDelta^{(\Jb,c)} &=  \{ \pm \alpha_{ij}: i \leq j \in\I \}
\cup \{ \pm (\alpha_{i,\theta} + \alpha_{j, \theta-1}): i < j \in\I_{\theta-1} \}  
\\ 
& \quad\cup \{ \pm(\alpha_{i,\theta-1} + \alpha_{i, \theta}): \ i \in\I_{\theta-1}, \ \pa_{\Jb}(\alpha_{i, \theta-1})= 1\},
\\
\varDelta^{(\Jb,\widetilde{c})} &= s_{\theta-1 \theta} \big(\varDelta^{(\Jb,c)} \big), 
\\
\varDelta^{(\Jb,d)}  & =  \{ \pm \alpha_{ij}: \ 1 \leq i \leq j \leq \theta, (i,j) \neq ( \theta-1, \theta) \}
\\ 
& \quad\cup \{ \pm (\alpha_{\theta-1}+ \alpha_\theta): \pa_{\Jb}(\alpha_{\theta-1})=-1 \} 
\\ & \quad\cup \{ \pm(\alpha_{i,\theta-2} + \alpha_{\theta}): i \in\I_{\theta-2} \} 
\\ & \quad\cup \{ \pm (\alpha_{i,\theta} + \alpha_{j, \theta-2}): i < j \in\I_{\theta-2} \}
\\ & \quad\cup \{ \pm (\alpha_{i,\theta} + \alpha_{i, \theta-2}): i \in\I_{\theta-2}, \ \pa_{\Jb}(\alpha_{i, \theta-1})=-1 \}.
\end{aligned}
\end{align}

\subsubsection{Root system, $j=\frac{\theta}{2}$} 
The bundle of Cartan matrices $(C^{(\Jb,x)})_{(\Jb,x) \in\Dtt_{\theta,j}}$ is the following, where  $\Jb \in\att_{\theta-1,j}$:
\begin{itemize}
\item  $C^{(\Jb,c)}$  and $C^{(\Jb,\widetilde{c})}$ are exactly as in \S \ref{subsubsec:superD-jneq-theta/2-rootsystem}.
\item If $\theta-1\notin \Jb$, then $C^{(\overline{\Jb},d)}$ 
is the Cartan matrix of type $D_\theta$ \eqref{eq:dynkin-system-D}.
If $\theta-1\in \Jb$, then the Dynkin diagram of $C^{(\overline{\Jb},d)}$ is of type ${}_{\theta-2}T$, see  \eqref{eq:mTn}.
\end{itemize}
The bundle of root sets $(\varDelta^{(\Jb,x)})_{(\Jb,x) \in\Dtt_{\theta,j}}$ is defined analogously:
$\varDelta^{(\Jb,c)}$ and $\varDelta^{(\Jb,\widetilde{c})}$, are as in \eqref{eq:root-D-super}, while $\varDelta^{(\overline{\Jb},d)}$ is as $\varDelta^{(\Jb,d)}$ in \eqref{eq:root-D-super}.

\subsubsection{Weyl groupoid}\label{subsubsec:type-superC-Weyl} 
The isotropy group  at $(\{j\},c) \in \Dtt_{\theta,j}$ is 
\begin{align*}
\cW(\{j\}) & = \langle \widetilde{\varsigma}^{\{j\}}_j, \varsigma_i^{\{j\}}: i \in \I, i\neq j\rangle \\ & \simeq \big((\Z/2\Z)^{j-1}\rtimes \s_{j} \big)\times \big((\Z/2\Z)^{\theta-j}\rtimes \s_{\theta-j} \big) \leq  GL(\Z^\I).
\end{align*}
where $\widetilde{\varsigma}^{\{j\}}_j= \varsigma^{\{j\}}_j \varsigma_{j+1} \dots \varsigma_{\theta-1}\varsigma_{\theta}\varsigma_{\theta-1} \dots \varsigma_{j}\in \cW(\{j\})$.

\subsubsection{Lie superalgebras realizing this generalized root system, $j\neq \frac{\theta}{2}$}

Let $(\Jb,x) \in\Dtt_{\theta,j}$.
Let $\pa_{(\Jb,x)}:\Z^{\I}\to\G_2$ be the group homomorphism such that 
\begin{itemize}[leftmargin=*]\renewcommand{\labelitemi}{$\circ$}
\item If $x=c$, then $\pa_{(\Jb,c)}(\alpha_i)=-1  \iff i\in\Jb$.
\item If $x=\widetilde{c}$, then $\pa_{(\Jb,\widetilde{c})}$ is $\pa_{(\Jb,c)}$ but changing the numeration by $\theta-1 \longleftrightarrow\theta$.
\item If $x=d$, then $\pa_{(\Jb,d)}(\alpha_i)=-1  \iff$ either $i\in\Jb$ or else $i=\theta$, $\theta-1\in\Jb$.
\end{itemize}


To describe the incarnation in this setting, we need matrices
\begin{itemize}[leftmargin=*]
\item ${\bf C}_{\theta}(\Jb)= (c_{ij}^{\Jb})_{i,j\in\I} \in \ku^{\I\times\I}$, $\Jb \in\att_{\theta, j}$, where for $i,j \in\I$,
\begin{align*}
c_{ii}^{\Jb}&=\left\{ \begin{array}{ll} 2, & i\notin\Jb, \\ 0, & i\in\Jb, \end{array} \right. &
c_{ij}^{\Jb}&=\left\{ \begin{array}{ll} 
-1, & i\notin\Jb, j=i\pm1, (i,j)\neq (\theta-1,\theta), \\ 
\mp1, & i\in\Jb,j=i\pm1, (i,j)\neq (\theta-1,\theta), \\ 
-2, & i=\theta-1,j=\theta, \\
0, & |i-j|\geq2.
\end{array} \right.
\end{align*}
\item ${\bf D}_{\theta}(\Jb)= (d_{ij}^{\Jb})_{i,j\in\I} \in\ku^{\I\times\I}$, $\Jb \in\att_{\theta,\theta-j}$, where for $i,j \in\I$,
\begin{align*}
d_{ii}^{\Jb}&=\left\{ \begin{array}{ll} 
2, & i\notin\Jb, \\ 
0, & i\in\Jb, \\
2, & i=\theta, \theta-1\notin\Jb, \\ 
0, & i=\theta, \theta-1\in\Jb,\end{array} \right. &
d_{ij}^{\Jb}&=\left\{ \begin{array}{ll} 
-1, & i,j \in\I_{\theta-1}, i\notin\Jb, j=i\pm1, \\ 
\mp1, & i,j \in\I_{\theta-1},i\in\Jb,j=i\pm1, \\ 
0, & i,j \in\I_{\theta-1},|i-j|\geq2.
\end{array} \right.
\\
d_{\theta j}^{\Jb}&=\left\{ \begin{array}{ll} 
-1, & j=\theta-2, \\ 
2, & j=\theta-1\in\Jb, \\
0, & j=\theta-1\notin\Jb, \\
0, & j\in\I_{\theta-3}.
\end{array} \right. &
d_{i\theta}^{\Jb}&=\left\{ \begin{array}{ll} 
-1, & i=\theta-2, \\ 
-1, & j=\theta-1\in\Jb, \\
0, & j=\theta-1\notin\Jb, \\
0, & j\in\I_{\theta-3}.
\end{array} \right.
\end{align*}
\end{itemize}

Then $\g\big({\bf C}_{\theta}(\Jb),\pa_{(\Jb,c)}\big) \simeq \mathfrak{osp}(2j,2(\theta-j))$.
The assignment 
\begin{align}\label{eq:incarnation-D-super-Lie}
\begin{aligned}
(\Jb,c) \longmapsto & \big({\bf C}_{\theta}(\Jb),\pa_{(\Jb,c)}\big),
\\
(\Jb,\widetilde{c}) \longmapsto & \, \big( s_{\theta - 1 \theta}({\bf C}_{\theta}(\Jb)), \pa_{(\Jb,\widetilde{c})}\big),
\\
(\Jb,d) \longmapsto & \big({\bf D}_{\theta}(\Jb), \pa_{(\Jb,d)}\big).
\end{aligned}
\end{align}
provides an isomorphism of generalized root systems
between $(\Dtt_{\theta, j}, \rho)$ 
and the root system of $\mathfrak{osp}(2j,2(\theta-j))$, cf. \S \ref{subsec:Weyl-gpd-super}.

\subsubsection{Lie superalgebras realizing this generalized root system, $j=\frac{\theta}{2}$}

\

There is an incarnation of  $\Dtt_{\theta,j}$ as follows: $(\Jb,c)$, respectively $(\Jb,\widetilde{c})$, $(\overline{\Jb},d)$, maps to the pairs in \eqref{eq:incarnation-D-super-Lie}, accordingly.

\subsubsection{Incarnation, $j\neq \frac{\theta}{2}$} 
Here is an incarnation of $\Dtt_{\theta,j}$:
\begin{align}\label{eq:dynkin-Dtheta-super-c}
(\Jb,c) \longmapsto &\xymatrix@R-6pt{
{\bf A}_{\theta-1}(q;\Jb) \ar  @{-}[r]^{\hspace*{1.1cm} q^{-2}}  & \overset{q^2}{\underset{\ }{\circ}}},
\\ \label{eq:dynkin-Dtheta-super-ct}
(\Jb,\widetilde{c}) \longmapsto & \, s_{\theta - 1 \theta}\Big(\xymatrix@R-6pt{
{\bf A}_{\theta-1}(q;\Jb) \ar  @{-}[r]^{\hspace*{1.1cm} q^{-2}}  & \overset{q^2}{\underset{\ }{\circ}}}\Big),
\\\label{eq:dynkin-Dtheta-super-d1}
(\Jb,d) \longmapsto &
\xymatrix@R-6pt{ & \overset{-1}{\circ} \ar  @{-}[d]_{q^{-1}}\ar  @{-}[dr]^{q^2} & \\
{\bf A}_{\theta-2}(q;\Jb\cap \I_{\theta - 2}) \hspace*{-1.2cm} & \ar  @{-}[r]^{ q^{-1}}  & \overset{-1}{\underset{\ }{\circ}},}
& \theta-1 \in \Jb, 
\\ \label{eq:dynkin-Dtheta-super-d2}
(\Jb,d) \longmapsto & \xymatrix@R-6pt{ & \overset{q^{-1}}{\circ} \ar @{-}[d]_{q} & \\
{\bf A}_{\theta-2}(q^{-1};\Jb\cap \I_{\theta - 2}) \hspace*{-1.1cm} & \ar  @{-}[r]^{q}  & \overset{q^{-1}}{\underset{\ }{\circ}},}
& \theta-1 \notin \Jb.
\end{align}

\subsubsection{Incarnation, $j=\frac{\theta}{2}$} 
There is an incarnation of  $\Dtt_{\theta,j}$ as follows: $(\Jb,c)$, respectively $(\Jb,\widetilde{c})$, $(\overline{\Jb},d)$, maps to the Dynkin diagram in \eqref{eq:dynkin-Dtheta-super-c}, respectively \eqref{eq:dynkin-Dtheta-super-ct}, \eqref{eq:dynkin-Dtheta-super-d1} or \eqref{eq:dynkin-Dtheta-super-d2}, accordingly.

\subsubsection{PBW-basis and (GK-)dimension} \label{subsubsec:type-CD-super-PBW}

The root vectors $x_{\beta_k}$ are described as in Cartan type  $C$, $D$, c.f. \S \ref{subsubsec:type-C-PBW} and \ref{subsubsec:type-D-PBW}, $k\in\I_{\theta^2-\theta+j}$.
Thus
\begin{align*}
\left\{ x_{\beta_{\theta^2-\theta+j}}^{n_{\theta^2-\theta+j}} x_{\beta_{\theta^2-\theta+j - 1}}^{n_{\theta^2-\theta+j- 1}} \dots x_{\beta_2}^{n_{2}}  x_{\beta_1}^{n_{1}} \, | \, 0\le n_{k}<N_{\beta_k} \right\}.
\end{align*}
is a PBW-basis of $\toba_{\bq}$. Let $M=\ord q^2$. If $N<\infty$, then
\begin{align*}
\dim \toba_{\bq}= 2^{2j(\theta-j)} M^{\theta-j} N^{(\theta-j-1)^2+j^2-1}.
\end{align*}
If $N=\infty$ (that is, if $q$ is not a root of unity), then
\begin{align*}
\GK \toba_{\bq}= (\theta-j)(\theta-j-1)+j^2.
\end{align*}

\begin{itemize}[leftmargin=*]\renewcommand{\labelitemi}{$\circ$}
\item The set of positive Cartan roots for \eqref{eq:dynkin-Dtheta-super-c} is 
\begin{align}\label{eq:Cartan-Dtheta-super-c}
\begin{aligned}
\Oc_+^{\bq} = & 
\{ \alpha_{ij}: i \leq j \in\I, \ \pa_{\Jb}(\alpha_{ij})= 1 \}
\\
& \quad \cup \{ \alpha_{i,\theta} + \alpha_{j, \theta-1}: i \leq j \in\I_{\theta-1}, \ \pa_{\Jb}(\alpha_{i,\theta} + \alpha_{j, \theta-1})= 1\}.
\end{aligned}
\end{align}

\item The set of positive Cartan roots for \eqref{eq:dynkin-Dtheta-super-ct} is $s_{\theta-1 \theta}$ of the set described in \eqref{eq:Cartan-Dtheta-super-c}.

\item The set of positive Cartan roots for \eqref{eq:dynkin-Dtheta-super-d1} or \eqref{eq:dynkin-Dtheta-super-d2} is 
\begin{align}\label{eq:Cartan-Dtheta-super-d1}
\begin{aligned}
\Oc_+^{\bq} = & 
\{\alpha_{ij}:i \leq j\in\I_{\theta}, \ \pa_{\Jb}(\alpha_{ij})= 1 \}
\\ & \quad\cup \{\alpha_{i,\theta-2} + \alpha_{\theta}: i \in\I_{\theta-2}, , \ \pa_{\Jb}(\alpha_{i(\theta-2)}+\alpha_{\theta} ) = 1 \} 
\\ & \quad\cup \{ \alpha_{i,\theta} + \alpha_{j, \theta-2}: i \leq j \in\I_{\theta-2} , \ \pa_{\Jb}(\alpha_{i,\theta} + \alpha_{j, \theta-2})= 1 \}.
\end{aligned}
\end{align}
\end{itemize}

We now provide the defining relations of the Nichols algebras according to the Dynkin diagram \eqref{eq:dynkin-Dtheta-super-c}, \eqref{eq:dynkin-Dtheta-super-d1} or \eqref{eq:dynkin-Dtheta-super-d2}. The relations for the Dynkin diagram \eqref{eq:dynkin-Dtheta-super-ct} follow from those in \eqref{eq:dynkin-Dtheta-super-c}
applying the transposition $s_{\theta - 1 \, \theta}$.

\smallbreak \subsubsection{The  Dynkin diagram \eqref{eq:dynkin-Dtheta-super-c}, $q_{\theta-1 \theta-1}\neq -1$, $N>4$}\label{subsubsec:type-D-super-a-N>4}

The Nichols algebra $\toba_{\bq}$ is generated by $(x_i)_{i\in \I}$ with defining relations
\begin{align}\label{eq:rels-type-D-super-a-N>4}
\begin{aligned}
&x_{ij}= 0, & &i < j - 1; & 
&x_{iii\pm1}=0,& &i \in\I_{\theta - 1}-\Jb ;  
\\
&x_{\theta-1\theta-1\theta-2}=0; & &&
&x_{\theta-1\theta-1\theta-1\theta}=0; &&
\\
&x_{\theta\theta\theta-1}=0; & &&
&[x_{(i-1i+1)},x_i]_c=0,& &i\in\I_{2,\theta - 2}\cap \Jb ;\\
&x_i^2=0, \quad i\in \Jb ; & && 
&x_{\alpha}^{N_\alpha}=0,&  &\alpha\in\Oc_+^{\bq}.
\end{aligned}
\end{align}

If $N = \infty$, i.e. $q\notin \G_{\infty}$, then we omit the last set of relations.

\smallbreak \subsubsection{The  Dynkin diagram \eqref{eq:dynkin-Dtheta-super-c},  $q_{\theta-1 \theta-1}\neq -1$, $N=4$}\label{subsubsec:type-D-super-a-N=4}

The Nichols algebra $\toba_{\bq}$ is generated by $(x_i)_{i\in \I}$ with defining
relations
\begin{align}\label{eq:rels-type-D-super-a-N=4}
\begin{aligned}
& [x_{(\theta-2\theta)},x_{\theta-1\theta}]_c=0; &
& x_{iii\pm1}=0,& &i\in\I_{\theta - 2}- \Jb ;  \\
& x_{\theta-1\theta-1\theta-2}=0; & 
& x_{ij} = 0,& &i < j - 1; \\
& x_{\theta-1\theta-1\theta-1\theta}=0; & 
& [x_{(i-1i+1)},x_i]_c=0,& &i\in\I_{2,\theta - 2}\cap \Jb ;\\
& x_i^2=0, \quad i\in \Jb ; &  &x_{\alpha}^{N_\alpha}=0,&
&\alpha\in\Oc_+^{\bq} .
\end{aligned}
\end{align}

\smallbreak \subsubsection{The  Dynkin diagram \eqref{eq:dynkin-Dtheta-super-c}, $q_{\theta-1 \theta-1}\neq -1$, $N=3$}\label{subsubsec:type-D-super-a-N=3}

The Nichols algebra $\toba_{\bq}$ is generated by $(x_i)_{i\in \I}$ with defining
relations
\begin{align}\label{eq:rels-type-D-super-a-N=3}
\begin{aligned}
&[[x_{(\theta-2\theta)},x_{\theta-1}]_c,x_{\theta-1}]_c=0;  & 
& x_{iii\pm1}=0, & &i\in\I_{\theta - 2}- \Jb;  \\
& x_{\theta-1\theta-1\theta-2}=0; &
& x_{ij} = 0, & &i < j - 1;\\
& x_{\theta\theta\theta-1}=0; & 
& [x_{(i-1i+1)},x_i]_c=0, && i\in\I_{2,\theta - 2} \cap \Jb ;\\
& x_i^2=0, \quad i\in \Jb ; & 
& x_{\alpha}^{N_\alpha} =0, & &\alpha\in\Oc_+^{\bq}.
\end{aligned}
\end{align}

\smallbreak \subsubsection{The  Dynkin diagram \eqref{eq:dynkin-Dtheta-super-c}, $q_{\theta-1 \theta-1}=-1$, $N \neq 4$}\label{subsubsec:type-D-super-b-Nneq4}

The Nichols algebra $\toba_{\bq}$ is generated by $(x_i)_{i\in \I}$ with defining
relations
\begin{align}\label{eq:rels-type-D-super-b-Nneq4}
\begin{aligned}
&[[x_{\theta-2\theta-1},x_{(\theta-2\theta)}]_c,x_{\theta-1}]_c=0, && \theta-2 \in \Jb ; \\
&[[[x_{(\theta-3\theta)},x_{\theta-1}]_c,x_{\theta-2}]_c,x_{\theta-1}]_c=0, && \theta-2 \notin \Jb ;\\
&[x_{(i-1i+1)},x_i]_c=0, \quad i\in\I_{2,\theta - 2}\cap \Jb; &&x_{ij} = 0, \quad i < j - 1; \\
&x_{iii\pm1}=0, \quad i\in\I_{\theta - 2}-\Jb ;  && x_{\theta\theta\theta-1}=0; \\
& x_i^2=0, \quad i\in \Jb ; &&
x_{\alpha}^{N_\alpha}=0, \quad \alpha\in\Oc_+^{\bq}.
\end{aligned}
\end{align}

If $N = \infty$, i.e. $q\notin \G_{\infty}$, then we omit the last set of relations.

\smallbreak \subsubsection{The  Dynkin diagram \eqref{eq:dynkin-Dtheta-super-c}, $q_{\theta-1 \theta-1}=-1$, $N=4$}\label{subsubsec:type-D-super-b-N=4}

The Nichols algebra $\toba_{\bq}$ is generated by $(x_i)_{i\in \I}$ with defining
relations
\begin{align}\label{eq:rels-type-D-super-b-N=4}
\begin{aligned}
& [[[x_{(\theta-3\theta)},x_{\theta-1}]_c, x_{\theta-2}]_c,x_{\theta-1}]_c=0, && \theta-2 \notin \Jb ;
\\
& [[x_{\theta-2\theta-1},x_{(\theta-2\theta)}]_c,x_{\theta-1}]_c=0, && \theta-2 \in \Jb ;  \\
& x_{iii\pm1}=0, \quad  i\in\I_{\theta - 2}- \Jb ;  && x_{ij} = 0, \ \ i < j - 1;  \\
&[x_{(i-1i+1)},x_i]_c=0, \quad i\in\I_{2,\theta - 2}\cap \Jb; && x_i^2=0, \ \ i\in \Jb; \\
& x_{\alpha}^{N_\alpha} =0, \quad \alpha\in\Oc_+^{\bq}; && x_{\theta-1\theta}^2=0.
\end{aligned}
\end{align}

\smallbreak \subsubsection{The  Dynkin diagram \eqref{eq:dynkin-Dtheta-super-d1},  $q_{\theta-2 \theta-2}\neq -1$,  $N\neq 4$}\label{subsubsec:type-D-super-c-Nneq4}

The Nichols algebra $\toba_{\bq}$ is generated by $(x_i)_{i\in \I}$ with defining
relations
\begin{align}\label{eq:rels-type-D-super-c-Nneq4}
\begin{aligned}
\begin{aligned}
& x_{\theta-2\, \theta-2 \, \theta}=0;&
& x_{iii\pm1}=0, \quad i\in\I_{\theta - 2}- \Jb ; \\
& x_{ij} = 0, \quad  i < j - 1,\theta-2;  &
& [x_{(i-1i+1)},x_i]_c=0, \quad i\in\I_{\theta - 3}\cap \Jb ;\\
& x_i^2=0, \quad  i\in \Jb ; & 
& x_{\alpha}^{N_\alpha} =0, \quad \alpha\in\Oc_+^{\bq};
\end{aligned}\\
\raggedright \begin{aligned}
x_{(\theta-2\theta)} = q_{\theta-2\theta-1}(1-q^2)x_{\theta-1}x_{\theta-2\theta}
-q_{\theta-1\theta}(1+q^{-1})[x_{\theta-2\theta},x_{\theta-1}]_c.
\end{aligned}
\end{aligned}
\end{align}

If $N = \infty$, i.e. $q\notin \G_{\infty}$, then we omit the relations $x_{\alpha}^{N_{\alpha}} =0$, $\alpha \in \Oc_+^{\bq}$.

\smallbreak \subsubsection{The  Dynkin diagram \eqref{eq:dynkin-Dtheta-super-d1}, $q_{\theta-2 \theta-2}\neq -1$, $N=4$}\label{subsubsec:type-D-super-c-N=4}

The Nichols algebra $\toba_{\bq}$ is generated by $(x_i)_{i\in \I}$ with defining
relations
\begin{align}\label{eq:rels-type-D-super-c-N=4}
\begin{aligned}
\begin{aligned}
& x_{iii\pm1}=0, \quad i\in\I_{\theta - 2}-\Jb; && [x_{(i-1i+1)},x_i]_c=0, \quad  i\in\I_{\theta - 3} \cap \Jb ; \\
& x_{\theta-2\, \theta-2 \, \theta}=0; &&x_{ij}= 0, \quad  i < j - 1,\theta-2;  \\
& x_i^2=0, \quad  i\in \Jb; && x_{\alpha}^{N_\alpha}=0, \quad \alpha\in\Oc_+^{\bq};
\end{aligned}
\\
\raggedright
\begin{aligned}
x_{(\theta-2\theta)}= q_{\theta-2\theta-1}(1-q^2)x_{\theta-1}x_{\theta-2\theta} -q_{\theta-1\theta}(1+q^{-1})[x_{\theta-2\theta},x_{\theta-1}]_c.
\end{aligned}
\end{aligned}
\end{align}

\smallbreak \subsubsection{The  Dynkin diagram \eqref{eq:dynkin-Dtheta-super-d1}, $q_{\theta-2 \theta-2}=-1$, $N\neq 4$}\label{subsubsec:type-D-super-d-Nneq4}

The Nichols algebra $\toba_{\bq}$ is generated by $(x_i)_{i\in \I}$ with defining
relations
\begin{align}\label{eq:rels-type-D-super-d-Nneq4}
\begin{aligned}
\begin{aligned}
& [x_{\theta-3\, \theta-2 \, \theta},x_{\theta-2}]_c=0; &
& x_{iii\pm1}=0, &&  i\in\I_{\theta - 2} \cap \Jb ; \\
& x_{ij} = 0, \quad i < j - 1,\theta-2;  &
& [x_{(i-1i+1)},x_i]_c=0, && i\in \I_{\theta - 3} \cap \Jb;\\
& x_i^2=0, \quad i\in \Jb; & 
& x_{\alpha}^{N_\alpha}=0, && \alpha\in\Oc_+^{\bq};
\end{aligned}
\\
\raggedright
\begin{aligned}
x_{(\theta-2\theta)} = q_{\theta-2\theta-1}(1-q^2)x_{\theta-1}x_{\theta-2\theta}
-q_{\theta-1\theta}(1+q^{-1})[x_{\theta-2\theta},x_{\theta-1}]_c.
\end{aligned}
\end{aligned}
\end{align}

If $N = \infty$, i.e. $q\notin \G_{\infty}$, then we omit the relations $x_{\alpha}^{N_{\alpha}} =0$, $\alpha \in \Oc_+^{\bq}$.

\smallbreak \subsubsection{The  Dynkin diagram \eqref{eq:dynkin-Dtheta-super-d1}, $q_{\theta-2 \theta-2}=-1$, $N=4$}\label{subsubsec:type-D-super-d-N=4}

The Nichols algebra $\toba_{\bq}$ is generated by $(x_i)_{i\in \I}$ with defining
relations
\begin{align}\label{eq:rels-type-D-super-d-N=4}
\begin{aligned}
\begin{aligned}
&x_{iii\pm1}=0, \quad i\in\I_{\theta - 2}-\Jb;  && x_{ij}= 0, \quad i < j - 1,\theta-2;  
\\
& [x_{\theta-3\, \theta-2 \, \theta},x_{\theta-2}]_c=0; &&
[x_{(i-1i+1)},x_i]_c=0, \quad i\in\I_{\theta - 3}\cap \Jb; 
\\
&x_i^2=0, \quad i\in\Jb;&& x_{\alpha}^{N_\alpha}=0, \quad \alpha\in\Oc_+^{\bq};
\end{aligned}
\\
\begin{aligned}
x_{(\theta-2\theta)}= 2 q_{\theta-2\theta-1} x_{\theta-1}x_{\theta-2\theta} -q_{\theta-1\theta}(1+q^{-1}) [x_{\theta-2\theta},x_{\theta-1}]_c.
\end{aligned}
\end{aligned}
\end{align}

\smallbreak \subsubsection{The  Dynkin diagram \eqref{eq:dynkin-Dtheta-super-d2}, $q_{\theta-2 \theta-2}\neq -1$, $N\neq 4$}\label{subsubsec:type-D-super-e-Nneq4}

The Nichols algebra $\toba_{\bq}$ is generated by $(x_i)_{i\in \I}$ with defining
relations
\begin{align}\label{eq:rels-type-D-super-e-Nneq4}
\begin{aligned}
& x_{iii\pm1}=0, \   i\in\I_{\theta - 2}-\Jb; && x_{\theta-2\, \theta-2 \, \theta}=0; & 
& x_{\theta-1\theta-1\theta-2}=0; \\
& x_{ij}=0, \  i < j - 1, \theta-2; && x_{\theta-1\theta}=0; & 
& x_{\theta\theta\theta-2}=0; \\
& [x_{(i-1i+1)},x_i]_c=0, \  i\in \I_{\theta - 3}\cap \Jb; &
& x_i^2=0, \  i\in \Jb ; & 
& x_{\alpha}^{N_\alpha}=0, \ \alpha\in\Oc_+^{\bq}.
\end{aligned}
\end{align}

If $N = \infty$, i.e. $q\notin \G_{\infty}$, then we omit the last set of relations.

\smallbreak \subsubsection{The  Dynkin diagram \eqref{eq:dynkin-Dtheta-super-d2}, $q_{\theta-2 \theta-2}\neq -1$, $N=4$}\label{subsubsec:type-D-super-e-N=4}

The Nichols algebra $\toba_{\bq}$ is generated by $(x_i)_{i\in \I}$ with defining
relations
\begin{align}\label{eq:rels-type-D-super-e-N=4}
\begin{aligned}
&x_{\theta\theta\theta-2}=0; &
& x_{iii\pm1}=0, &&   i\in\I_{\theta - 2}-\Jb ; \\
&x_{\theta-2\, \theta-2 \, \theta}=0; & 
& [x_{(i-1i+1)},x_i]_c=0, && i\in \I_{\theta - 3} \cap \Jb; \\
&x_{\theta-1\theta-1\theta-2}=0; & 
&x_{ij}= 0, &&  i < j - 1,\theta-2; \\
&x_{\theta-1\theta}=0; &
&x_i^2=0, \quad i\in \Jb ; & & x_{\alpha}^{N_\alpha}=0, \
\alpha\in\Oc_+^{\bq}.
\end{aligned}
\end{align}

\smallbreak \subsubsection{The  Dynkin diagram \eqref{eq:dynkin-Dtheta-super-d2}, $q_{\theta-2 \theta-2}=-1$, $N\neq 4$}\label{subsubsec:type-D-super-f-Nneq4}

The Nichols algebra $\toba_{\bq}$ is generated by $(x_i)_{i\in \I}$ with defining
relations
\begin{align}\label{eq:rels-type-D-super-f-Nneq4}
\begin{aligned}
&x_{iii\pm1}=0, \   i \in \I_{\theta - 2}-\Jb ; && x_{\theta\theta\theta-2} =0;  && x_{\theta-1\theta-1\theta-2}=0; 
\\
& [x_{(i-1i+1)},x_i]_c=0, \  i\in\I_{\theta - 2}\cap \Jb; & &x_{\theta-1\theta}=0; & 
& [x_{\theta-3\, \theta-2 \, \theta},x_{\theta-2}]_c=0;
\\
& x_{ij} = 0, \  i < j - 1, \theta-2;
&&
x_i^2=0, \   i\in \Jb; & 
& x_{\alpha}^{N_\alpha}=0, \  \alpha\in\Oc_+^{\bq}.
\end{aligned}
\end{align}

If $N = \infty$, i.e. $q\notin \G_{\infty}$, then we omit the last set of relations.

\smallbreak \subsubsection{The  Dynkin diagram \eqref{eq:dynkin-Dtheta-super-d2}, $q_{\theta-2 \theta-2}=-1$, $N=4$}\label{subsubsec:type-D-super-f-N=4}

The Nichols algebra $\toba_{\bq}$ is generated by $(x_i)_{i\in \I}$ with defining
relations
\begin{align}\label{eq:rels-type-D-super-f-N=4}
\begin{aligned}
&x_{\theta-1\theta}=0; & &x_{ij} = 0,&& i < j - 1, \theta-2; 
\\
& [x_{\theta-3\, \theta-2 \, \theta},x_{\theta-2}]_c=0; &
& [x_{(i-1i+1)},x_i]_c=0, && i\in \I_{\theta - 2} \cap \Jb;
\\
& x_{\theta\theta\theta-2}=0; &
& x_{iii\pm1}=0, &&   i \in\I_{\theta - 2}-\Jb ; 
\\
& x_{\theta-1\theta-1\theta-2}=0; & 
& x_i^2=0, \quad i\in \Jb ; & 
& x_{\alpha}^{N_\alpha} =0, \ \alpha\in\Oc_+^{\bq}.
\end{aligned}
\end{align}

\smallbreak \subsubsection{The associated Lie algebra and $\ya$}\label{subsubsec:type-CD-Lie-alg} 
If $N$ is odd (respectively even), then the corresponding Lie algebra is of type 
$D_{j} \times C_{\theta-j}$ (respectively $D_{j}\times B_{\theta-j}$).
We present $\ya$ for each generalized Dynkin diagram.

\begin{itemize}[leftmargin=*]\renewcommand{\labelitemi}{$\circ$}

\item For \eqref{eq:dynkin-Dtheta-super-c},
\begin{align*}
\ya &=
\sum_{\substack{i\le j\in\I_{\theta},\\  \alpha_{ij} \text{ odd}}} \alpha_{ij} + 
\sum_{\substack{i\le j\in\I_{\theta},\\  \alpha_{ij} \text{ even}}} (N-1)\alpha_{ij} +
\sum_{\substack{i\in\I_{\theta},\\  \alpha_{i\theta-1} \text{ even}}} (M-1)(\alpha_{i,\theta-1} + \alpha_{i, \theta}) \\ & + 
\sum_{\substack{i< j\in\I_{\theta-1},\\  \alpha_{ij-1} \text{ odd}}} (\alpha_{i\theta}+\alpha_{j\theta-1}) + 
\sum_{\substack{i< j\in\I_{\theta-1},\\  \alpha_{ij-1} \text{ even}}} (N-1)(\alpha_{i\theta}+\alpha_{j\theta-1}).
\end{align*}

\item The expression of $\ya$ for \eqref{eq:dynkin-Dtheta-super-ct} follows from the previous case by changing the numeration of the diagram by $\theta-1 \longleftrightarrow\theta$.

\item For \eqref{eq:dynkin-Dtheta-super-d1} or \eqref{eq:dynkin-Dtheta-super-d2},
\begin{align*}
\ya &=\sum_{\substack{i\le j\in\I_{\theta},\\  \alpha_{ij} \text{ odd}}} \alpha_{ij} + 
\sum_{\substack{i\le j\in\I_{\theta},\\  \alpha_{ij} \text{ even}}} (N-1)\alpha_{ij} +
\sum_{\substack{i\in\I_{\theta-2},\\  \alpha_{i\theta-1} \text{ odd}}} (N-1)(\alpha_{i\theta-1}+\alpha_{\theta}) \\ & + 
\sum_{\substack{i\in\I_{\theta-2},\\  \alpha_{i\theta-2} \text{ even}}} (\alpha_{i\theta-2}+\alpha_{\theta}) +
\sum_{\substack{i< j\in\I_{\theta-2},\\  \alpha_{ij-1} \text{ odd}}} (\alpha_{i\theta}+\alpha_{j\theta-2}) \\ & + 
\sum_{\substack{i< j\in\I_{\theta-2},\\  \alpha_{ij-1} \text{ even}}} (N-1)(\alpha_{i\theta}+\alpha_{j\theta-2}) + 
\sum_{\substack{i\in\I_{\theta-2},\\  \alpha_{i\theta-1} \text{ odd}}} (M-1)(\alpha_{i\theta}+\alpha_{i\theta-2}).
\end{align*}

\end{itemize}

\subsubsection{Example $\Dtt_{4,2}$} \label{subsubsec:type-D-super-exa42} 
Here $\att_{3, 2} = \{ \{2\}, \{1,3\}, \I_3\}$. We exemplify the incarnation when $j = \theta - j$ in the case $\theta = 4$, $j = 2$.
Here is the basic datum:
\begin{align*}
\xymatrix{ \overset{(\{2\},c)}{\vtxgpd} \ar@{-}^{2}[r] & \overset{(\I_3,c)}{\vtxgpd} \ar@{-}^{1}[r] \ar@{-}^{3}[d] & \overset{(\{1,3\},c)}{\vtxgpd} \ar@{-}^{3}[d] & 
\\
& \overset{(\overline{\{1,3\}},d)}{\vtxgpd} \ar@{-}^{1}[r] \ar@{-}^{4}[d] & \overset{(\overline{\I_3},d)}{\vtxgpd} \ar@{-}^{4}[d] \ar@{-}^{2}[r] & \overset{(\overline{\{2\}},d)}{\vtxgpd} 
\\
\overset{(\{2\},\widetilde{c})}{\vtxgpd} \ar@{-}^{2}[r] & \overset{(\I_3,\widetilde{c})}{\vtxgpd} \ar@{-}^{1}[r] & \overset{(\{1,3\},\widetilde{c})}{\vtxgpd}. & }
\end{align*}
To describe the incarnation, we need the matrices $(\bq^{(i)})_{i\in\I_6}$, from left to right and  from up to down:
\begin{align*}
&\xymatrix@C-9pt{ 
\overset{q^{\text{-}1}}{\circ}\ar@{-}[r]^{q}  &
\overset{\text{-}1}{\circ} \ar@{-}[r]^{q^{\text{-}1}}  & 
\overset{q}{\circ} \ar@{-}[r]^{q^{\text{-}2}}  & 
\overset{q^2}{\circ}, }
&&
\xymatrix@C-9pt{ 
\overset{\text{-}1}{\circ}\ar@{-}[r]^{q^{\text{-}1}}  &
\overset{\text{-}1}{\circ} \ar@{-}[r]^{q}  & 
\overset{\text{-}1}{\circ} \ar@{-}[r]^{q^{\text{-}2}}  & 
\overset{q^2}{\circ}, }
&
&\xymatrix@C-9pt{ 
\overset{\text{-}1}{\circ}\ar@{-}[r]^{q}  &
\overset{q^{\text{-}1}}{\circ} \ar@{-}[r]^{q}  & 
\overset{\text{-}1}{\circ} \ar@{-}[r]^{q^{\text{-}2}}  & 
\overset{q^2}{\circ}, }
\\&
\xymatrix{ 
& \overset{\text{-}1}{\circ}\ar@{-}[d]_{q^{\text{-}1}} \ar@{-}[rd]^{q^2} & \\
\overset{\text{-}1}{\circ} \ar@{-}[r]^{q^{\text{-}1}}  & 
\overset{q}{\circ} \ar@{-}[r]^{q^{\text{-}1}}  & 
\overset{\text{-}1}{\circ}, }
&&\xymatrix{ 
& \overset{\text{-}1}{\circ}\ar@{-}[d]_{q^{\text{-}1}} \ar@{-}[rd]^{q^2} & \\
\overset{\text{-}1}{\circ} \ar@{-}[r]^{q}  & 
\overset{\text{-}1}{\circ} \ar@{-}[r]^{q^{\text{-}1}}  & 
\overset{\text{-}1}{\circ}, }
&&
\xymatrix{ 
& \overset{q^{\text{-}1}}{\circ}\ar@{-}[d]_{q} & \\
\overset{q}{\circ} \ar@{-}[r]^{q^{\text{-}1}}  & 
\overset{\text{-}1}{\circ} \ar@{-}[r]^{q}  & 
\overset{q^{\text{-}1}}{\circ}. }
\end{align*}
 Now, this is the incarnation:
\begin{align*}
\xymatrix{ \overset{\bq^{(1)}}{\vtxgpd} \ar@{-}^{2}[r] & \overset{\bq^{(2)}}{\vtxgpd} \ar@{-}^{1}[r] \ar@{-}^{3}[d] & \overset{\bq^{(3)}}{\vtxgpd} \ar@{-}^{3}[d] & 
\\
& \overset{\bq^{(4)}}{\vtxgpd} \ar@{-}^{1}[r] \ar@{-}^{4}[d] & \overset{\bq^{(5)}}{\vtxgpd} \ar@{-}^{4}[d] \ar@{-}^{2}[r] & \overset{\bq^{(6)}}{\vtxgpd} 
\\
\overset{s_{34}(\bq^{(1)})}{\vtxgpd} \ar@{-}^{2}[r] & \overset{s_{34}(\bq^{(2)})}{\vtxgpd} \ar@{-}^{1}[r] & \overset{s_{34}(\bq^{(3)})}{\vtxgpd}. & }
\end{align*}

\subsection{Type $\superda{\alpha}$}\label{subsec:type-D2-1-alpha}
Here , $q,r,s\neq 1$, $qrs=1$.
$\superda{\alpha}$, $\alpha\neq 0,-1$, is a Lie superalgebra
of superdimension $9|8$ \cite[Proposition 2.5.6]{K-super}.
There exist 4 pairs $(A, \pa)$ of (families of) matrices and parity vectors as in \S \ref{subsec:Weyl-gpd-super}
such that the corresponding contragredient Lie superalgebra is isomorphic to $\superda{\alpha}$.

\subsubsection{Basic datum and root system}

The basic datum $(\cX, \rho)$, where $\cX = \{a_j: j \in \I_4\}$; 
and the bundle $(C^{a_j})_{j\in \I_4}$ of Cartan matrices are described by the following diagram:

\begin{align*}
\xymatrix@R-8pt{& \overset{A_3}{\underset{a_1}{\vtxgpd}} \ar@{-}^-{2 \qquad} [d]  & 
\\
\overset{s_{12}(A_3)}{\underset{a_2}{\vtxgpd}} \ar@{-}^{1} [r]
& \overset{A_2^{(1)}}{\underset{a_3}{\vtxgpd}} \ar@{-}^{3} [r]
& \overset{s_{23}(A_3)}{\underset{a_4}{\vtxgpd}}.
}
\end{align*}
Here the numeration of $A_3$ is as in \eqref{eq:dynkin-system-A} while $A_2^{(1)}$ is as in \eqref{eq:mTn} and below.
Using the notation \eqref{eq:notation-root-exceptional}, the bundle of root sets is the following:
\begin{align*}
\varDelta_{+}^{a_1} &= \{ 1,12,123,12^23,2,23,3 \}, & 
\varDelta_{+}^{a_2} &= s_{12}(\varDelta_{+}^{a_1}), \\
\varDelta_{+}^{a_3} &= \{1,12,13,123,2,23,3 \}, & 
\varDelta_{+}^{a_4} &= s_{23}(\varDelta_{+}^{a_1}).
\end{align*}
We denote this generalized root system by $\mathtt D(2,1)$.

\subsubsection{Weyl groupoid}
The isotropy group  at $a_1 \in \cX$ is 
\begin{align*}
\cW(a_1)= \langle \varsigma_1^{a_1}, \varsigma_2^{a_1}\varsigma_3 \varsigma_1\varsigma_3 \varsigma_2, \varsigma_3^{a_1} \rangle \simeq \Z/2 \times \Z/2 \times \Z/2 \leq  GL(\Z^\I).
\end{align*}

\subsubsection{Lie superalgebras realizing this generalized root system}

\

To describe the incarnation in the setting of Lie superalgebras, we need parity vectors $\pa=(1,-1,1)$, $\pa'=(-1,-1,-1) \in\G_2^3$, and matrices 
\begin{align*}
\dn{\alpha}&:= \begin{pmatrix} 2 & -1 & 0 \\ 1 & 0 & \alpha \\ 0 & -1 & 2 \end{pmatrix}   ,&
\dnp{\alpha} &:= \begin{pmatrix} 0 & 1 & -1-\alpha \\ 1 & 0 & \alpha \\ 1+\alpha & -\alpha & 0 \end{pmatrix}, & 
\alpha\neq\{0,-1\}.
\end{align*}

Let $\alpha\neq\{0,-1\}$. The assignment 
\begin{align}\label{eq:incarnation-D2-1-alpha-Lie}
\begin{aligned}
a_1 &\longmapsto \big(\dn{\alpha}, \pa\big),
&
a_2 &\longmapsto s_{12 }\big(\dn{-1-\alpha}, \pa\big),
\\
a_3 &\longmapsto \big(\dnp{\alpha}, \pa' \big),
& 
a_4 &\longmapsto s_{23}\left(\dn{-1-\alpha^{-1}}, \pa\right),
\end{aligned}
\end{align}
provides an isomorphism of generalized root systems, cf. \S \ref{subsec:Weyl-gpd-super}. Moreover,
the Lie superalgebras associated to $\big(\dn{\alpha}, \pa\big)$, $\big(\dn{\beta}, \pa\big)$ if and only if
$$ \beta\in\{ \alpha^{\pm 1}, -(1+\alpha)^{\pm 1}, -(1+\alpha^{-1})^{\pm 1}\}. $$

\subsubsection{Incarnation}

Here is an incarnation of $\mathtt D(2,1)$:
\begin{align}\label{eq:dynkin-D2-1-alpha}
\begin{aligned}
a_1 &\longmapsto \xymatrix{ \overset{q}{\underset{1 }{\circ}}\ar  @{-}[r]^{q^{-1}}  &
\overset{-1}{\underset{2 }{\circ}} \ar  @{-}[r]^{r^{-1}}  & \overset{r}{\underset{3}{\circ}}},&
a_2 &\longmapsto \xymatrix{ \overset{q}{\underset{2 }{\circ}}\ar  @{-}[r]^{q^{-1}}  &
\overset{-1}{\underset{1 }{\circ}} \ar  @{-}[r]^{s^{-1}}  & \overset{s}{\underset{3}{\circ}}},
\\
a_3 &\longmapsto \xymatrix@C-4pt@R-10pt{
&\overset{-1} {\underset{3}{\circ}} \ar  @{-}[dl]_{s} \ar  @{-}[dr]^{r} & \\
\overset{-1}{\underset{1}{\circ}} \ar  @{-}[rr]^{q}& & \overset{-1}{\underset{2}{\circ}}},
& a_4 &\longmapsto \xymatrix{ \overset{r}{\underset{2}{\circ}}\ar  @{-}[r]^{r^{-1}}  &
\overset{-1}{\underset{3}{\circ}} \ar  @{-}[r]^{s^{-1}}  & \overset{s}{\underset{1}{\circ}}}.
\end{aligned}
\end{align}

We set $N=\ord q$ (as always), $M=\ord r$, $L=\ord s$. Also, 
\begin{align*}
x_{123,2} := [x_{123},x_2]_c.
\end{align*}

\subsubsection{The generalized Dynkin diagram \emph{(\ref{eq:dynkin-D2-1-alpha} a)}, $q,r,s\neq -1$}\label{subsubsec:type-D2-1-alpha-a-neq-1}
The set
\begin{multline*}
\{ x_{3}^{n_1} x_{23}^{n_2} x_{2}^{n_{3}} x_{123,2}^{n_4} x_{123}^{n_5} x_{12}^{n_6} x_1^{n_7} \, | \, 0\le n_{1}<M, \, 0\le n_{4}<L, \, 0\le n_7<N, \\ 0\le n_2, n_{3}, n_{5}, n_6 <2\}.
\end{multline*}
is a PBW-basis of $\toba_{\bq}$.  If $N, M, L<\infty$, then
\begin{align*}
\dim \toba_{\bq}= 2^4LMN.
\end{align*}
If exactly two, respectively all, of $N, L, M$ are $\infty$, then
\begin{align*}
\GK \toba_{\bq}= 2, \text{ respectively } 3.
\end{align*}

\medskip
The set of positive Cartan roots is 
$\Oc_+^{\bq} = \{1,12^23,3\}$. The Nichols algebra $\toba_{\bq}$ is generated by $(x_i)_{i\in \I_3}$ with defining
relations
\begin{align}\label{eq:rels-type-D2-1-alpha-a-neq-1}
\begin{aligned}
x_1^N&=0; & x_2^2&=0; & x_3^M&=0; & x_{123,2}^L&=0;\\
x_{112}&=0; & x_{332}&=0; & x_{13}&=0.
\end{aligned}
\end{align}

If $N = \infty$, i.e. $q\notin \G_{\infty}$, respectively $L = \infty$, $M= \infty$, then we omit the relation where it appears as exponent.

\medskip
The degree of the integral is $\ya = (L+N)\alpha_1 + (2L+2)\alpha_2 + (L+M)\alpha_3$.

\subsubsection{The generalized Dynkin diagram \emph{(\ref{eq:dynkin-D2-1-alpha} a)}, $q=-1,r,s\neq -1$}\label{subsubsec:type-D2-1-alpha-a-q=-1}
The Nichols algebra $\toba_{\bq}$ is generated by $(x_i)_{i\in \I_3}$ with defining
relations
\begin{align}\label{eq:rels-type-D2-1-alpha-b-q=-1}
\begin{aligned}
x_1^2&=0; & x_2^2&=0; & x_3^M&=0; & x_{123,2}^L&=0;\\
x_{12}^2&=0; & x_{332}&=0; & x_{13}&=0.
\end{aligned}
\end{align}

If $L = \infty$, respectively  $M= \infty$, then we omit the relation where it appears as exponent.
The PBW basis, the dimension, the GK-dimension, the set of Cartan roots and $\ya$ are as in \S \ref{subsubsec:type-D2-1-alpha-a-neq-1}.

\subsubsection{The generalized Dynkin diagrams \emph{(\ref{eq:dynkin-D2-1-alpha}
b and d)}}

These diagrams are of the shape of (\ref{eq:dynkin-D2-1-alpha} a) but with $s$
interchanged with $q$, respectively with  $r$. Hence 
the corresponding Nichols algebras are as in \S 
\ref{subsubsec:type-D2-1-alpha-a-neq-1}  and \S \ref{subsubsec:type-D2-1-alpha-a-q=-1}.

\subsubsection{The generalized Dynkin diagram \emph{(\ref{eq:dynkin-D2-1-alpha}
c)}}\label{subsubsec:type-D2-1-alpha-c}

The set
\begin{multline*}
\{ x_{3}^{n_1} x_{23}^{n_2} x_{2}^{n_3} x_{123}^{n_4} x_{13}^{n_5} x_{12}^{n_6} x_1^{n_7} \, | \, 0\le n_{2}<M, \, 0\le n_{6}<L, \, 0\le n_5<N, \\ 0\le n_1, n_{3}, n_{4}, n_7 <2\}.
\end{multline*}
is a PBW-basis of $\toba_{\bq}$. The dimension and the GK-dimension are as in \S \ref{subsubsec:type-D2-1-alpha-a-neq-1}.

\medskip
The set of positive Cartan roots is 
$\Oc_+^{\bq} = \{12,13,23\}$. The Nichols algebra $\toba_{\bq}$ is generated by $(x_i)_{i\in \I_3}$ with defining
relations
\begin{align}\notag
x_1^2&=0; & x_2^2&=0; & x_3^2&=0; \\
\label{eq:rels-type-D2-1-alpha-c} x_{12}^N&=0; & x_{23}^M&=0; & x_{13}^L&=0;
\end{align}
\vspace*{-0.7cm}
\begin{align*}
x_{(13)}-\frac{1-s}{q_{23}(1-r)}[x_{13},x_2]_c-q_{12}(1-s)x_2x_{13}=0.
\end{align*}

If $N = \infty$, i.e. $q\notin \G_{\infty}$, respectively $L = \infty$, $M= \infty$, then we omit the relation where it appears as exponent. 
The degree of the integral is $$\ya = (L+N)\alpha_1 + (L+M)\alpha_2 + (M+N)\alpha_3.$$

\subsubsection{The associated Lie algebra} This is of type $A_1\times A_1\times A_1$.

\subsection{Type $\superf$}\label{subsec:type-F-super}
Here $N > 3$.
$\superf$ is a Lie superalgebra of superdimension $24|16$ \cite[Proposition 2.5.6]{K-super}.
There exist 6 pairs $(A, \pa)$ of matrices and parity vectors as in \S \ref{subsec:Weyl-gpd-super} such that the corresponding
contragredient Lie superalgebra is isomorphic to $\superf$.

\subsubsection{Basic datum and root system}
Below, $A_4$, $C_4$, $F_4$ and ${}_2T$ are numbered as in \eqref{eq:dynkin-system-A}, \eqref{eq:dynkin-system-C}, \eqref{eq:dynkin-system-F} and  
\eqref{eq:mTn}, respectively.
Also,  we denote 
\begin{align*}
\kappa_0 &= (1 4)(2 3),& \kappa_1 &=(1234), & \kappa_2 &= (234) \in \s_4. 
\end{align*}

The basic datum and the bundle of Cartan matrices are described the following diagram, that we call  $\Ftt(4)$:
\begin{align*}
\xymatrix@R-8pt{& & & \overset{\kappa_2(C_4)}{\underset{a_6}{\vtxgpd}} \ar@{-}^-{4} [d]  & 
\\
\overset{\kappa_0(F_4)}{\underset{a_1}{\vtxgpd}} \ar@{-}^{4} [r]
&\overset{A_4}{\underset{a_2}{\vtxgpd}} \ar@{-}^{3} [r]
&\overset{{}_2T}{\underset{a_3}{\vtxgpd}} \ar@{-}^{2} [r]
& \overset{s_{13}({}_2T)}{\underset{a_4}{\vtxgpd}} \ar@{-}^{1} [r]
& \overset{\kappa_1(A_4)}{\underset{a_5}{\vtxgpd}}.  }
\end{align*}

Using the notation \eqref{eq:notation-root-exceptional}, the bundle of root sets is the following: {\scriptsize
\begin{align*}
\varDelta_{+}^{a_1}= & \{ 1, 12, 2, 123, 12^23^2, 123^2, 23,
23^2, 3, 12^23^34, 12^23^24, 123^24, 23^24, 12^23^34^2, 1234, 234, 34,
4 \}, \\
\varDelta_{+}^{a_2}= & \{ 1, 12, 2, 123, 23, 3, 12^23^34,
12^23^24, 123^24, 23^24, 12^23^34^2, 1234, 12^23^24^2, 
\\ & \qquad \qquad 123^24^2, 234, 23^24^2, 34,
4 \}, 
\\
\varDelta_{+}^{a_3}= & \{ 1, 12, 2, 123, 23, 3, 12^234,
12^24, 1234, 12^23^24^2, 234, 12^234^2, 34, 1234^2, 124, 234^2, 24,
4 \}, \\
\varDelta_{+}^{a_4}= & s_{13} (\{ 1, 12, 2, 12^23, 123, 12^23^2, 23,
3, 1^22^33^24, 12^33^24, 12^23^24, 12^234, 1234, 124, 234, 24, 34,
4 \}), \\
\varDelta_{+}^{a_5}= & \kappa_1(\{ 1, 12, 2, 123, 23, 3, 12^23^34,
12^23^24, 1^22^33^44^2, 123^24, 12^33^44^2, 12^23^44^2, 23^24, 12^23^34^2, 
\\ & \qquad \qquad 1234, 234, 34, 4 \}), 
\\
\varDelta_{+}^{a_6}= & \kappa_2 (\{ 1, 12, 12^2, 2, 12^23, 123, 23,
3, 1^22^33^24, 12^33^24, 12^23^24, 123^24, 12^234, 1234, 23^24, 234, 34,
4 \}).
\end{align*}
}

\subsubsection{Weyl groupoid}
\label{subsubsec:type-superF4-Weyl} 
The isotropy group  at $a_1 \in \cX$ is  
\begin{align*}
\cW(a_1)= \langle \varsigma_1^{a_1}, \varsigma_2^{a_1},  \varsigma_3^{a_1}, \varsigma_4^{a_1}\varsigma_3 \varsigma_2 \varsigma_1 \varsigma_4 \varsigma_1 
\varsigma_2 \varsigma_3 \varsigma_4 \rangle \simeq W(B_3) \times \Z/2 \leq  GL(\Z^\I).
\end{align*}

\subsubsection{Lie superalgebras realizing this generalized root system}

\

To describe the incarnation in the setting of Lie superalgebras, we need parity vectors $\pa_{\Jb}$ as in \eqref{eq:superstructure}, $\Jb\subset \I$, and matrices 
\begin{align*}
A_1&=\left( \begin{smallmatrix} 
2 & -1 & 0 & 0 \\ 
\text{--}1 & 2 & \text{--}1 & 0 \\ 
0 & \text{--}2 & 2 & \text{--}1 \\ 
0 & 0 & 1 & 0 \end{smallmatrix} \right), &
A_2&=\left( \begin{smallmatrix} 
2 & \text{--}1 & 0 & 0 \\ 
\text{--}1 & 2 & \text{--}1 & 0 \\ 
0 & \text{--}2 & 0 & 1 \\ 
0 & 0 & 1 & 0 \end{smallmatrix} \right), &
A_3&=\left( \begin{smallmatrix} 
2 & \text{--}1 & 0 & 0 \\ 
\text{--}1 & 0 & 2 & 1 \\ 
0 & \text{--}2 & 0 & 1 \\ 
0 & \text{--}1 & \text{--}1 & 2 \end{smallmatrix} \right), 
\\
A_4&=\left( \begin{smallmatrix} 
0 & 2 & 0 & \text{--}3 \\ 
2 & 0 & \text{--}2 & 1 \\ 
0 & \text{--}1 & 2 & 0 \\ 
3 & 1 & 0 & 0 \end{smallmatrix} \right), &
A_5&=\left( \begin{smallmatrix} 
2 & 0 & 0 & \text{--}1 \\ 
0 & 2 & \text{--}2 & \text{--}1 \\ 
0 & \text{--}1 & 2 & 0 \\ 
3 & \text{--}1 & 0 & 0 \end{smallmatrix} \right), &
A_6&=\left( \begin{smallmatrix} 
0 & 2 & 0 & 3 \\ 
\text{--}1 & 2 & \text{--}1 & 0 \\ 
0 & \text{--}1 & 2 & 0 \\ 
\text{--}1 & 0 & 0 & 2 \end{smallmatrix} \right).
\end{align*}
The assignment 
\begin{align}\label{eq:incarnation-F(4)-Lie}
\begin{aligned}
a_1 &\longmapsto \big(A_1, \pa_{\{4\}}\big),
&
a_2 &\longmapsto \big(A_2, \pa_{\{3,4\}}\big),
&
a_3 &\longmapsto \big(A_3, \pa_{\{2,3\}}\big),
\\ 
a_4 &\longmapsto \big(A_4, \pa_{\{1,2,4\}}\big),
&
a_5 &\longmapsto \big(A_5, \pa_{\{1\}}\big),
&
a_6 &\longmapsto \big(A_6, \pa_{\{4\}}\big),
\end{aligned}
\end{align}
provides an isomorphism of generalized root systems, cf. \S \ref{subsec:Weyl-gpd-super}.

\subsubsection{Incarnation}
Here it is:
\begin{align}\label{eq:dynkin-F4-super}
\begin{aligned}
a_1\longmapsto&\xymatrix@C-4pt{ \overset{\,\, q^2}{\underset{1}{\circ}}\ar  @{-}[r]^{q^{-2}}  & \overset{\,\,
q^2}{\underset{2}{\circ}} \ar  @{-}[r]^{q^{-2}}  & \overset{q}{\underset{3}{\circ}} &
\overset{-1}{\underset{4}{\circ}} \ar  @{-}[l]_{q^{-1}}} &
a_2\longmapsto&\xymatrix@C-4pt{ \overset{\,\, q^2}{\underset{1}{\circ}}\ar  @{-}[r]^{q^{-2}}  & \overset{\,\,
q^2}{\underset{2}{\circ}} \ar  @{-}[r]^{q^{-2}}  & \overset{-1}{\underset{3}{\circ}} &
\overset{-1}{\underset{4}{\circ}} \ar  @{-}[l]_{q}}
\\
a_3\longmapsto&\xymatrix@R-6pt@C-4pt{ &  \overset{q}{\underset{4}{\circ}}\ar  @{-}[d]_{q^{-1}} \ar
@{-}[rd]^{q^{-1}} &
\\ \overset{q^2}{\underset{1}{\circ}} \ar  @{-}[r]^{q^{-2}}  &  \overset{-1}{\underset{2}{\circ}} \ar  @{-}[r]^{q^{2}}  
& \overset{-1}{\underset{3}{\circ}}} &
a_4\longmapsto&\xymatrix@R-6pt@C-4pt{  & \overset{-1}{\underset{4}{\circ}} \ar  @{-}[d]^{q} \ar  @{-}[ld]_{q^{-3}} &
\\ 
\overset{-1}{\underset{1}{\circ}} \ar  @{-}[r]^{q^{2}}  & \overset{-1}{\underset{2}{\circ}} & \overset{q^2}{\underset{3}{\circ}} \ar  @{-}[l]_{q^{-2}}  }
\\
a_5\longmapsto&\xymatrix@C-4pt{ \overset{\,\, q^{-3}}{\underset{4}{\circ}}\ar  @{-}[r]^{q^{3}}  
& \overset{-1}{\underset{1}{\circ}} \ar  @{-}[r]^{q^{-2}}  
& \overset{q^2}{\underset{2}{\circ}}\ar @{-}[r]^{q^{-2}} & \overset{q^{2}}{\underset{3}{\circ}}}&
a_6\longmapsto& \xymatrix@C-4pt{ \overset{\,\, q^{-3}}{\underset{1}{\circ}}\ar  @{-}[r]^{q^{3}}  
& \overset{-1}{\underset{4}{\circ}} \ar  @{-}[r]^{q^{-1}}  
& \overset{q}{\underset{2}{\circ}}\ar @{-}[r]^{q^{-2}} & \overset{q^{2}}{\underset{3}{\circ}}}.
\end{aligned}
\end{align}

\subsubsection{PBW-basis and (GK-)dimension}\label{subsubsec:type-F4-super-PBW}
Notice that the roots in each $\varDelta_{+}^{a_i}$, $i\in\I_6$, are ordered from left to right, justifying the notation $\beta_1, \dots, \beta_{18}$.

The root vectors $x_{\beta_k}$ are described as in Remark \ref{rem:lyndon-word}.
Thus
\begin{align*}
\left\{ x_{\beta_{18}}^{n_{18}} x_{\beta_{17}}^{n_{17}} \dots x_{\beta_2}^{n_{2}}  x_{\beta_1}^{n_{1}} \, | \, 0\le n_{k}<N_{\beta_k} \right\}.
\end{align*}
is a PBW-basis of $\toba_{\bq}$. Let $L=\ord q^3$, $M=\ord q^2$. If $N<\infty$, then
\begin{align*}
\dim \toba_{\bq}= 2^8LM^3N^6.
\end{align*}
If $N=\infty$ (that is, if $q$ is not a root of unity), then
$\GK \toba_{\bq}= 10$.

\subsubsection{The generalized Dynkin diagram \emph{(\ref{eq:dynkin-F4-super}
a)}, $N> 4$}\label{subsubsec:type-F4-super-a-Nneq4}

The Nichols algebra $\toba_{\bq}$ is generated by $(x_i)_{i\in \I_4}$ with defining
relations
\begin{align}\label{eq:rels-type-F4-super-a-Nneq4}
\begin{aligned}
x_{13}&=0; & x_{14}&=0; & x_{24}&=0; \\
x_{112}&=0; & x_{221}&=0; & x_{223}&=0; \quad x_{334}=0;\\
x_{3332}&=0; & x_{4}^2&=0; & x_\alpha^{N_\alpha}&=0, \ \alpha\in\Oc_+^{\bq};
\end{aligned}
\end{align}
where $\Oc_+^{\bq}=\{1,12, 2, 123, 12^23^2, 123^2, 23, 23^2, 3, 12^23^34^2 \}$.
If $N = \infty$, i.e. $q\notin \G_{\infty}$, then we omit the last set of relations. 
Here
\begin{multline*}
\ya = (L+4M+N-2)\alpha_1 + (2L+6M+2N-2)\alpha_2 \\ + (3L+6M+3N)\alpha_3
+ (2L+6)\alpha_4
\end{multline*}

\subsubsection{The generalized Dynkin diagram \emph{(\ref{eq:dynkin-F4-super}
a)}, $N =4$}\label{subsubsec:type-F4-super-a-N=4}

The Nichols algebra $\toba_{\bq}$ is generated by $(x_i)_{i\in \I_4}$ with defining
relations
\begin{align}\label{eq:rels-type-F4-super-a-N=4}
\begin{aligned}
x_{13}&=0; & x_{14}&=0; & &[x_{(13)},x_2]_c=0; \\
x_{334}&=0; & x_{24}&=0; & &[x_{23},x_{(24)}]_c=0; \\
x_{3332}&=0; & x_{4}^2&=0; & &x_\alpha^{N_\alpha}=0, \ \alpha\in\Oc_+^{\bq};
\end{aligned}
\end{align}
here, $\Oc_+^{\bq}$, $\ya$  are as in \S \ref{subsubsec:type-F4-super-a-Nneq4}.

\subsubsection{The generalized Dynkin diagram \emph{(\ref{eq:dynkin-F4-super}
b)}, $N> 4$}\label{subsubsec:type-F4-super-b-Nneq4}

The Nichols algebra $\toba_{\bq}$ is generated by $(x_i)_{i\in \I_4}$ with defining
relations
\begin{align}\label{eq:rels-type-F4-super-b-Nneq4}
\begin{aligned}
x_{13}&=0; & x_{14}&=0; & &x_{24}=0; \quad x_{112}=0;\\
x_{221}&=0; & x_{223}&=0; & &[[x_{43},x_{432}]_c,x_3]_c=0;\\
x_{3}^2&=0; & x_{4}^2&=0; & &x_\alpha^{N_\alpha}=0, \ \alpha\in\Oc_+^{\bq};
\end{aligned}
\end{align}
where $\Oc_+^{\bq}=\{1, 12, 2, 12^23^34, 1234, 12^23^24^2, 123^24^2, 234, 23^24^2, 34 \}$.
If $N = \infty$, i.e. $q\notin \G_{\infty}$, then we omit the last set of relations. 
Here
\begin{multline*}
\ya= (L+4M+N-2)\alpha_1 + (2L+6M+2N-2)\alpha_2\\ + (3L+6M+3N)\alpha_3 + (L+6M+3N-4)\alpha_4.
\end{multline*}

\subsubsection{The generalized Dynkin diagram \emph{(\ref{eq:dynkin-F4-super}
b)}, $N =4$}\label{subsubsec:type-F4-super-b-N=4}

The Nichols algebra $\toba_{\bq}$ is generated by $(x_i)_{i\in \I_4}$ with defining
relations
\begin{align}\label{eq:rels-type-F4-super-b-N=4}
\begin{aligned}
x_{13}&=0; & x_{14}&=0; & &[x_{(13)},x_2]_c=0; \\
x_{24}&=0; & x_{23}^2&=0; & &[[x_{43},x_{432}]_c,x_3]_c=0; \\
x_{3}^2&=0; & x_{4}^2&=0; & &x_\alpha^{N_\alpha}=0, \ \alpha\in\Oc_+^{\bq};
\end{aligned}
\end{align}
here, $\Oc_+^{\bq}$, $\ya$  are as in \S \ref{subsubsec:type-F4-super-b-Nneq4}.

\subsubsection{The generalized Dynkin diagram \emph{(\ref{eq:dynkin-F4-super}
c)}, $N>4$}\label{subsubsec:type-F4-super-c-Nneq4}

The Nichols algebra $\toba_{\bq}$ is generated by $(x_i)_{i\in \I_4}$ with defining
relations
\begin{align}\label{eq:rels-type-F4-super-c-Nneq4}
\begin{aligned}
x_{13}&=0; & x_{14}&=0; & &x_{112}=0; \\
x_{442}&=0; & x_{443}&=0; & &[x_{(13)},x_2]_c=0; \\
x_{2}^2&=0; & x_{3}^2&=0; & &x_\alpha^{N_\alpha}=0, \ \alpha\in\Oc_+^{\bq};
\end{aligned}
\end{align}
\vspace*{-0.3cm}
\begin{align*}
x_{(24)}- q_{34}q[x_{24},x_3]_c-q_{23}(1-q^{-1})x_3x_{24}=0;
\end{align*}
where $\Oc_+^{\bq}=\{1, 123, 23, 12^24, 1234, 12^23^24^2, 234, 1234^2, 234^2, 4 \}$.
If $N = \infty$, i.e. $q\notin \G_{\infty}$, then we omit the relations $x_\alpha^{N_\alpha}=0,$ $\alpha\in\Oc_+^{\bq}$. 
Here
\begin{multline*}
\ya= (L+4M+N-2)\alpha_1 + (2L+6M+2N-2)\alpha_2\\ + (6M+2N-4)\alpha_3 + (L+6M+3N-4)\alpha_4.
\end{multline*}

\subsubsection{The generalized Dynkin diagram \emph{(\ref{eq:dynkin-F4-super}
c)}, $N =4$}\label{subsubsec:type-F4-super-c-N=4}

The Nichols algebra $\toba_{\bq}$ is generated by $(x_i)_{i\in \I_4}$ with defining
relations
\begin{align}\label{eq:rels-type-F4-super-c-N=4}
\begin{aligned}
x_{13}&=0; & x_{14}&=0; & &x_{12}^2=0; \\
x_{442}&=0; & x_{443}&=0; & &[x_{(13)},x_2]_c=0; \\
x_{2}^2&=0; & x_{3}^2&=0; & &x_\alpha^{N_\alpha}=0, \ \alpha\in\Oc_+^{\bq};
\end{aligned}
\end{align}
\vspace*{-0.3cm}
\begin{align*}
x_{(24)}- q_{34}q[x_{24},x_3]_c-q_{23}(1-q^{-1})x_3x_{24}=0;
\end{align*}
here, $\Oc_+^{\bq}$, $\ya$  are as in \S \ref{subsubsec:type-F4-super-c-Nneq4}.

\subsubsection{The generalized Dynkin diagram \emph{(\ref{eq:dynkin-F4-super}
d)}, $N> 4$}\label{subsubsec:type-F4-super-d-Nneq4}

The Nichols algebra $\toba_{\bq}$ is generated by $(x_i)_{i\in \I_4}$ with defining
relations
\begin{align}\label{eq:rels-type-F4-super-d-Nneq4}
\begin{aligned}
x_{13}&=0; & x_{14}&=0; & &[x_{124},x_2]_c=0; \\
x_{112}&=0; & x_{4}^2&=0; & &[[x_{32},x_{321}]_c,x_2]_c=0; \\
x_{2}^2&=0; & x_{3}^2&=0; & &x_\alpha^{N_\alpha}=0, \ \alpha\in\Oc_+^{\bq};
\end{aligned}
\end{align}
\vspace*{-0.3cm}
\begin{align*}
x_{(24)}+ q_{34}\frac{1-q^3}{1-q^2}[x_{24},x_3]_c-q_{23}(1-q^{-3})x_3x_{24}=0;
\end{align*}
where $\Oc_+^{\bq}=\{1,123, 12^23^2, 23, 1^22^33^24, 12^33^24, 12^234, 124, 24, 34\}$.
If $N = \infty$, i.e. $q\notin \G_{\infty}$, then we omit the relations $x_\alpha^{N_\alpha}=0,$ $\alpha\in\Oc_+^{\bq}$. 
Here
\begin{multline*}
\ya= (6M+2N-4)\alpha_1 + (10M+4N-6)\alpha_2\\ + (L+6M+3N-4)\alpha_3 + (L+4M+N-2)\alpha_4.
\end{multline*}

\subsubsection{The generalized Dynkin diagram \emph{(\ref{eq:dynkin-F4-super}
d)}, $N =4$}\label{subsubsec:type-F4-super-d-N=4}

The Nichols algebra $\toba_{\bq}$ is generated by $(x_i)_{i\in \I_4}$ with defining
relations
\begin{align}\label{eq:rels-type-F4-super-d-N=4}
\begin{aligned}
x_{13}&=0; & x_{14}&=0; & & [x_{124},x_2]_c=0; \\
x_{12}^2&=0; & x_{4}^2&=0; & & [[x_{32},x_{321}]_c,x_2]_c=0; \\
x_{2}^2&=0; & x_{3}^2&=0; & & x_\alpha^{N_\alpha}=0, \ \alpha\in\Oc_+^{\bq};
\end{aligned}
\end{align}
\vspace*{-0.3cm}
\begin{align*}
x_{(24)}+ q_{34}\frac{1-q^3}{1-q^2}[x_{24},x_3]_c-q_{23}(1-q^{-3})x_3x_{24}=0;
\end{align*}
here, $\Oc_+^{\bq}$, $\ya$  are as in \S \ref{subsubsec:type-F4-super-d-Nneq4}.

\subsubsection{The generalized Dynkin diagram \emph{(\ref{eq:dynkin-F4-super}
e)}, $N\neq 4,6$}\label{subsubsec:type-F4-super-e-Nneq46}

The Nichols algebra $\toba_{\bq}$ is generated by $(x_i)_{i\in \I_4}$ with defining
relations
\begin{align}\label{eq:rels-type-F4-super-e-Nneq46}
\begin{aligned}
x_{13}&=0; & x_{14}&=0; & &x_{24}=0; \quad x_{112}=0; \\
x_{221}&=0; & x_{223}&=0; &
& [[[x_{432},x_3]_c,[x_{4321},x_3]_c]_c,x_{32}]_c=0;\\
x_{443}&=0; & x_{3}^2&=0; & & x_\alpha^{N_\alpha}=0, \ \alpha\in\Oc_+^{\bq};
\end{aligned}
\end{align}
where $\Oc_+^{\bq}=\{1,12, 2, 12^23^24, 1^22^33^44^2, 123^24, 12^33^44^2, 12^23^44^2, 23^24, 4 \}$.
If $N = \infty$, i.e. $q\notin \G_{\infty}$, then we omit the last set of relations. 
Here
\begin{multline*}
\ya= (6M+2N-4)\alpha_1 + (10M+4N-6)\alpha_2\\ + (12M+6N-6)\alpha_3 + (L+6M+3N-4)\alpha_4.
\end{multline*}

\subsubsection{The generalized Dynkin diagram \emph{(\ref{eq:dynkin-F4-super}
e)}, $N =6$}\label{subsubsec:type-F4-super-e-N=6}

The Nichols algebra $\toba_{\bq}$ is generated by $(x_i)_{i\in \I_4}$ with defining
relations
\begin{align}\label{eq:rels-type-F4-super-e-N=6}
\begin{aligned}
x_{13}&=0; & x_{14}&=0; & & x_{24}=0; \quad x_{112}=0;  \\
x_{221}&=0; & x_{223}&=0; &
& [[[x_{432},x_3]_c,[x_{4321},x_3]_c]_c,x_{32}]_c=0;\\
x_{34}^2&=0; & x_{3}^2&=0; & & x_\alpha^{N_\alpha}=0, \ \alpha\in\Oc_+^{\bq};
\end{aligned}
\end{align}
here, $\Oc_+^{\bq}$, $\ya$  are as in \S \ref{subsubsec:type-F4-super-f-Nneq46}.

\subsubsection{The generalized Dynkin diagram \emph{(\ref{eq:dynkin-F4-super}
e)}, $N =4$}\label{subsubsec:type-F4-super-e-N=4}

The Nichols algebra $\toba_{\bq}$ is generated by $(x_i)_{i\in \I_4}$ with defining
relations
\begin{align}\label{eq:rels-type-F4-super-e-N=4}
\begin{aligned}
x_{13}&=0; & x_{14}&=0; & & [x_{(13)},x_2]_c=0; \\
x_{24}&=0; & x_{443}&=0; &
& [[[x_{432},x_3]_c,[x_{4321},x_3]_c]_c,x_{32}]_c=0;\\
x_{23}^2&=0; & x_{3}^2&=0; & & x_\alpha^{N_\alpha}=0, \ \alpha\in\Oc_+^{\bq};
\end{aligned}
\end{align}
here, $\Oc_+^{\bq}$, $\ya$  are as in \S \ref{subsubsec:type-F4-super-e-Nneq46}.

\subsubsection{The generalized Dynkin diagram \emph{(\ref{eq:dynkin-F4-super}
f)}, $N\neq 4,6$}\label{subsubsec:type-F4-super-f-Nneq46}

The Nichols algebra $\toba_{\bq}$ is generated by $(x_i)_{i\in \I_4}$ with defining
relations
\begin{align}\label{eq:rels-type-F4-super-f-Nneq46}
\begin{aligned}
x_{13}&=0; & x_{14}&=0; & x_{24}&=0; \\
x_{112}&=0; & x_{2221}&=0; & x_{223}&=0; \\
x_{443}&=0; & x_{3}^2&=0; & x_\alpha^{N_\alpha}&=0, \ \alpha\in\Oc_+^{\bq};
\end{aligned}
\end{align}
\vspace*{-0.3cm}
\begin{align*}
[[x_{(14)},x_2]_c,x_3]_c-q_{23}(q^2-q)[[x_{(14)},x_3]_c,x_2]_c=0;
\end{align*}
where $\Oc_+^{\bq}=\{ 1, 12, 12^2, 2, 1^22^33^24, 12^33^24, 12^23^24, 123^24, 23^24, 4 \}$.
If $N = \infty$, i.e. $q\notin \G_{\infty}$, then we omit the relations $x_\alpha^{N_\alpha}=0,$ $\alpha\in\Oc_+^{\bq}$. 
Here
\begin{multline*}
\ya= (6M+2N-4)\alpha_1 + (10M+4N-6)\alpha_2\\ + (8M+2N-2)\alpha_3 + (L+4M+N-2)\alpha_4.
\end{multline*}

\subsubsection{The generalized Dynkin diagram \emph{(\ref{eq:dynkin-F4-super}
f)}, $N =6$}\label{subsubsec:type-F4-super-f-N=6}

The Nichols algebra $\toba_{\bq}$ is generated by $(x_i)_{i\in \I_4}$ with defining
relations
\begin{align}\label{eq:rels-type-F4-super-f-N=6}
\begin{aligned}
x_{13}&=0; & x_{14}&=0; & x_{24}&=0; \\
x_{112}&=0; & x_{2221}&=0; & x_{223}&=0; \\
x_{34}^2&=0; & x_{3}^2&=0; & x_\alpha^{N_\alpha}&=0, \ \alpha\in\Oc_+^{\bq};
\end{aligned}
\end{align}
\vspace*{-0.3cm}
\begin{align*}
[[x_{(14)},x_2]_c,x_3]_c-q_{23}(q^2-q)[[x_{(14)},x_3]_c,x_2]_c=0;
\end{align*}
here, $\Oc_+^{\bq}$, $\ya$  are as in \S \ref{subsubsec:type-F4-super-f-Nneq46}.

\subsubsection{The generalized Dynkin diagram \emph{(\ref{eq:dynkin-F4-super}
f)}, $N =4$}\label{subsubsec:type-F4-super-f-N=4}

The Nichols algebra $\toba_{\bq}$ is generated by $(x_i)_{i\in \I_4}$ with defining
relations
\begin{align}\label{eq:rels-type-F4-super-f-N=4}
\begin{aligned}
x_{13}&=0; & x_{14}&=0; & x_{24}&=0; \\
x_{223}&=0; & x_{2221}&=0; & [x_{12}&,x_{(13)}]_c=0; \\
x_{443}&=0; & x_{3}^2&=0; & x_\alpha^{N_\alpha}&=0, \ \alpha\in\Oc_+^{\bq};
\end{aligned}
\end{align}
\vspace*{-0.3cm}
\begin{align*}
[[x_{(14)},x_2]_c,x_3]_c-q_{23}(q^2-q)[[x_{(14)},x_3]_c,x_2]_c=0;
\end{align*}
here, $\Oc_+^{\bq}$, $\ya$  are as in \S \ref{subsubsec:type-F4-super-f-Nneq46}.

\subsubsection{The associated Lie algebra} This is of type $A_1\times B_3$.

\subsection{Type  $\superg$}\label{subsec:type-G-super} Here $N > 3$.
$\superg$ is a Lie superalgebra (over a field of characteristic $\neq 2,3$)
of superdimension $17|14$ \cite[Proposition 2.5.6]{K-super}.
There exist 4 pairs $(A, \pa)$ of matrices and parity vectors as in \S \ref{subsec:Weyl-gpd-super}
such that the corresponding
contragredient Lie superalgebra is isomorphic to $\superg$.

\subsubsection{Basic datum and root system}
Below, $A_3$, $B_3$, $D_{4}^{(3)}$ and $T^{(2)}$ are numbered as in \eqref{eq:dynkin-system-A}, \eqref{eq:dynkin-system-B}, \eqref{eq:D43} and  
\eqref{eq:T2}, respectively.
Also,  we denote 
$\kappa = (123) \in \s_3$.
The basic datum and the bundle of Cartan matrices are described by the following diagram, that we call $\Gtt(3)$:
\begin{align*}
\xymatrix@R-8pt{ \overset{D_{4}^{(3)}}{\underset{a_1}{\vtxgpd}} \ar@{-}^{1} [r]
&\overset{A_3}{\underset{a_2}{\vtxgpd}} \ar@{-}^{2} [r]
&\overset{T^{(2)}}{\underset{a_3}{\vtxgpd}} \ar@{-}^{3} [r]
& \overset{\kappa(B_3)}{\underset{a_4}{\vtxgpd}}.  }
\end{align*}

Using the notation \eqref{eq:notation-root-exceptional}, the bundle of root sets is the following:
\begin{align*}
\varDelta_+^{a_1} &= \{ 1,12,123,12^23,12^33,12^33^2,12^43^2,2,23,2^23,3,2^33,2^33^2 \}, \\
\varDelta_+^{a_2} &= \{ 1,12,23,12^23,1^22^33,1^22^33^2,1^32^43^2,2,123,1^22^23,3,1^32^33,1^32^33^2 \}, \\
\varDelta_+^{a_3} &= \{ 12,1,3,13,1^23,1^223^2,1^323^2,2,123,1^223,23,1^323,1^32^23^2 \}, \\
\varDelta_+^{a_4} &= \{ 123^2,13,3,1,1^23,1^223,1^323^2,23,123,1^223^2,2,1^323^3,1^32^23^3 \}.
\end{align*}

\subsubsection{Weyl groupoid}
The isotropy group  at $a_1 \in \cX$ is  
\begin{align*}
\cW(a_1)= \langle \varsigma_1^{a_1}\varsigma_2 \varsigma_3\varsigma_1 \varsigma_3 \varsigma_2 \varsigma_1,
\varsigma_2^{a_1},  \varsigma_3^{a_1}   \rangle \simeq \Z/2  \times W(G_2).
\end{align*}

\subsubsection{Lie superalgebras realizing this generalized root system}
\

To describe the incarnation in the setting of Lie superalgebras, we need parity vectors $\pa_{\Jb}$ as in \eqref{eq:superstructure}, $\Jb\subset \I$, and matrices 
\begin{align*}
A_1&=\left( \begin{smallmatrix} 0 & 1 & 0 \\ -1 & 2 & -3 \\ 0 & -1 & 2 \end{smallmatrix} \right), &
A_2&=\left( \begin{smallmatrix} 0 & 1 & 0 \\ 1 & 0 & -3 \\ 0 & -1 & 2 \end{smallmatrix} \right), &
A_3&=\left( \begin{smallmatrix} 2 & -1 & -2 \\ 1 & 0 & -3 \\ 1 & 1 & 0 \end{smallmatrix} \right), &
A_4&=\left( \begin{smallmatrix} 2 & 0 & -2 \\ 0 & 2 & -1 \\ 1 & 3 & 0 \end{smallmatrix} \right).
\end{align*}
The assignment 
\begin{align}\label{eq:incarnation-G(3)-Lie}
\begin{aligned}
a_1 &\mapsto \big(A_1, \pa_{\{1\}}\big),
&
a_2 &\mapsto \big(A_2, \pa_{\{1,2\}}\big),
\\
a_3 &\mapsto \big(A_3, \pa_{\{2,3\}}\big),
&
a_4 &\mapsto \big(A_4, \pa_{\{1,3\}}\big),
\end{aligned}
\end{align}
provides an isomorphism of generalized root systems, cf. \S \ref{subsec:Weyl-gpd-super}.

\subsubsection{Incarnation}
Here it is:
\begin{align}\label{eq:dynkin-G-super}
\begin{aligned}
a_1 \longmapsto&\xymatrix{ \overset{-1}{\underset{1}{\circ}}\ar  @{-}[r]^{q^{-1}}  &
\overset{q}{\underset{2}{\circ}} \ar  @{-}[r]^{q^{-3}}  & \overset{\,\, q^3}{\underset{3}{\circ}},}&
a_2 \longmapsto&\xymatrix{ \overset{-1}{\underset{1}{\circ}} \ar  @{-}[r]^{q}  &
\overset{-1}{\underset{2}{\circ}} \ar  @{-}[r]^{q^{-3}}  & \overset{\,\, q^3}{\underset{3}{\circ}},}
\\
a_3 \longmapsto&\xymatrix@C-4pt{ & \overset{-1}{\underset{3}{\circ}} &
\\
\overset{q}{\underset{1}{\circ}} \ar  @{-}[ru]^{q^{-2}} \ar@{-}[rr]^{q^{-1}}
& &\overset{-1}{\underset{2}{\circ}} \ar  @{-}[ul]_{q^{3}},}
&
a_4 \longmapsto&\xymatrix{ \overset{-q^{-1}}{\underset{1}{\circ}} \ar  @{-}[r]^{q^2}  &
\overset{-1}{\underset{3}{\circ}} \ar  @{-}[r]^{q^{-3}}  & \overset{\,\, q^3}{\underset{2}{\circ}}.}
\end{aligned}
\end{align}

\subsubsection{PBW-basis and (GK-)dimension}\label{subsubsec:type-G3-super-PBW}
Notice that the roots in each $\varDelta_{+}^{a_i}$, $i\in\I_4$, are ordered from left to right, justifying the notation $\beta_1, \dots, \beta_{13}$.

The root vectors $x_{\beta_k}$ are described as in Remark \ref{rem:lyndon-word}.
Thus
\begin{align*}
\left\{ x_{\beta_{13}}^{n_{13}} x_{\beta_{12}}^{n_{12}} \dots x_{\beta_2}^{n_{2}}  x_{\beta_1}^{n_{1}} \, | \, 0\le n_{k}<N_{\beta_k} \right\}.
\end{align*}
is a PBW-basis of $\toba_{\bq}$. Let $L=\ord -q$, $M=\ord q^3$. If $N<\infty$, then
\begin{align*}
\dim \toba_{\bq}= 2^6LM^3N^3.
\end{align*}
If $N=\infty$ (that is, if $q$ is not a root of unity), then
$\GK \toba_{\bq}= 7$.

\subsubsection{The generalized Dynkin diagram \emph{(\ref{eq:dynkin-G-super}
a)}, $N\neq 4,6$}\label{subsubsec:type-G-super-a-Nneq46}

The Nichols algebra $\toba_{\bq}$ is generated by $(x_i)_{i\in \I_3}$ with defining
relations
\begin{align}\label{eq:rels-type-G-super-a-Nneq46}
\begin{aligned}
x_{13}&=0; & x_{221}&=0; &  x_{332}&=0; \\
x_{22223}&=0; & x_{1}^2&=0; &  x_{\alpha}^{N_\alpha}&=0, \ \alpha\in\Oc_+^{\bq};
\end{aligned}
\end{align}
where $\Oc_+^{\bq}=\{2,3,23,2^23,2^33,12^23,2^33^2\}$.
If $N = \infty$, i.e. $q\notin \G_{\infty}$, then we omit the last set of relations. 
Here
\begin{multline*}
\ya= (L+5)\alpha_1 + (2L+6M+4N)\alpha_2  + (L+4M+2N-1)\alpha_3.
\end{multline*}

\subsubsection{The generalized Dynkin diagram \emph{(\ref{eq:dynkin-G-super}
a)}, $N =6$}\label{subsubsec:type-G-super-a-N=6}

The Nichols algebra $\toba_{\bq}$ is generated by $(x_i)_{i\in \I_3}$ with defining
relations
\begin{align}\label{eq:rels-type-G-super-a-N=6}
\begin{aligned}
x_{13}&=0; & x_{221}&=0; & & [x_{12},x_{(13)}]_c=0; \\
x_{22223}&=0; & x_{1}^2&=0; & &x_{\alpha}^{N_\alpha}=0, \ \alpha\in\Oc_+^{\bq};
\end{aligned}
\end{align}
here, $\Oc_+^{\bq}$, $\ya$  are as in \S \ref{subsubsec:type-G-super-a-Nneq46}.

\subsubsection{The generalized Dynkin diagram \emph{(\ref{eq:dynkin-G-super}
a)}, $N =4$}\label{subsubsec:type-G-super-a-N=4}

The Nichols algebra $\toba_{\bq}$ is generated by $(x_i)_{i\in \I_3}$ with defining
relations
\begin{align}\label{eq:rels-type-G-super-a-N=4}
\begin{aligned}
x_{13}&=0; & x_{221}&=0; & [[[x_{(13)},&x_2]_c,x_2]_c,x_2]_c=0; \\
x_{332}&=0; & x_{1}^2&=0; &  x_{\alpha}^{N_\alpha}&=0, \ \alpha\in\Oc_+^{\bq};
\end{aligned}
\end{align}
here, $\Oc_+^{\bq}$, $\ya$  are as in \S \ref{subsubsec:type-G-super-a-Nneq46}.

\subsubsection{The generalized Dynkin diagram \emph{(\ref{eq:dynkin-G-super}
b)}, $N\neq 6$}\label{subsubsec:type-G-super-b-Nneq6}

The Nichols algebra $\toba_{\bq}$ is generated by $(x_i)_{i\in \I_3}$ with defining
relations
\begin{align}\label{eq:rels-type-G-super-b-Nneq6}
\begin{aligned}
x_{13}&=0; & x_{332}&=0; &  [[x_{12},&[x_{12},x_{(13)}]_c]_c,x_2]_c=0; \\
x_{1}^2&=0; & x_{2}^2&=0; &  x_{\alpha}^{N_\alpha}&=0, \ \alpha\in\Oc_+^{\bq};
\end{aligned}
\end{align}
where $\Oc_+^{\bq}=\{3,12,123,12^23,1^22^23,1^32^33,1^32^33^2\}$.
If $N = \infty$, i.e. $q\notin \G_{\infty}$, then we omit the last set of relations. 
Here
\begin{multline*}
\ya= (L+6M+4N-3)\alpha_1 + (2L+6M+4N)\alpha_2  + (L+4M+2N-1)\alpha_3.
\end{multline*}

\subsubsection{The generalized Dynkin diagram \emph{(\ref{eq:dynkin-G-super}
b)}, $N =6$}\label{subsubsec:type-G-super-b-N=6}

The Nichols algebra $\toba_{\bq}$ is generated by $(x_i)_{i\in \I_3}$ with defining
relations
\begin{align}\label{eq:rels-type-G-super-b-N=6}
\begin{aligned}
x_{13}&=0; & x_{23}^2&=0; &  [[x_{12},&[x_{12},x_{(13)}]_c]_c,x_2]_c=0; \\
x_{1}^2&=0; & x_{2}^2&=0; &  x_{\alpha}^{N_\alpha}&=0, \ \alpha\in\Oc_+^{\bq};
\end{aligned}
\end{align}
here, $\Oc_+^{\bq}$, $\ya$  are as in \S \ref{subsubsec:type-G-super-b-Nneq6}.

\subsubsection{The generalized Dynkin diagram \emph{(\ref{eq:dynkin-G-super}
c)}, $N\neq 6$}\label{subsubsec:type-G-super-c-Nneq6}

The Nichols algebra $\toba_{\bq}$ is generated by $(x_i)_{i\in \I_3}$ with defining
relations
\begin{align}\label{eq:rels-type-G-super-c-Nneq6}
\begin{aligned}
x_{13}&=0; \quad x_{332}=0; \quad x_{1112}=0; \quad x_{2}^2=0; \quad  x_{\alpha}^{N_\alpha}=0, \ \alpha\in\Oc_+^{\bq};
\\
[x_1, & [x_{123},x_2]_c]_c = \frac{q_{12}q_{32}}{1+q}[x_{12},x_{123}]_c-(q^{-1}-q^{-2})q_{12}q_{13} x_{123}x_{12};
\end{aligned}
\end{align}
where $\Oc_+^{\bq}=\{1,3,12,123,1^22^23,1^32^33,1^32^33^2\}$.
If $N = \infty$, i.e. $q\notin \G_{\infty}$, then we omit the relations $x_\alpha^{N_\alpha}=0,$ $\alpha\in\Oc_+^{\bq}$. 
Here
\begin{multline*}
\ya= (L+6M+4N-3)\alpha_1 + (L+4M+2N-1)\alpha_2  + (4M+2N-2)\alpha_3.
\end{multline*}

\subsubsection{The generalized Dynkin diagram \emph{(\ref{eq:dynkin-G-super}
c)}, $N =6$}\label{subsubsec:type-G-super-c-N=6}

The Nichols algebra $\toba_{\bq}$ is generated by $(x_i)_{i\in \I_3}$ with defining
relations
\begin{align}\label{eq:rels-type-G-super-c-N=6}
\begin{aligned}
x_{13}&=0; \ \ x_{23}^2=0; \ \ [x_{112},x_{12}]_c=0; \quad x_{2}^2=0; \quad x_{\alpha}^{N_\alpha}=0, \ \alpha\in\Oc_+^{\bq};
\\
[x_1, & [x_{123},x_2]_c]_c = \frac{q_{12}q_{32}}{1+q}[x_{12},x_{123}]_c-(q^{-1}-q^{-2})q_{12}q_{13} x_{123}x_{12};
\end{aligned}
\end{align}
here, $\Oc_+^{\bq}$, $\ya$  are as in \S \ref{subsubsec:type-G-super-c-Nneq6}.

\subsubsection{The generalized Dynkin diagram \emph{(\ref{eq:dynkin-G-super}
d)}}\label{subsubsec:type-G-super-d}

The Nichols algebra $\toba_{\bq}$ is generated by $(x_i)_{i\in \I_3}$ with defining
relations
\begin{align}\label{eq:rels-type-G-super-d}
\begin{aligned}
x_{1112}&=0; \quad x_{2}^2=0; \quad x_3^2=0; \quad x_{\alpha}^{N_\alpha}=0, \ \alpha\in\Oc_+^{\bq};
\\
x_{113}&=0; \ \   x_{(13)}+q^{-2}q_{23}\frac{1-q^3}{1-q}[x_{13},x_2]_c-q_{12}(1-q^3)x_2x_{13}=0;
\end{aligned}
\end{align}
where $\Oc_+^{\bq}=\{1,12,23,123,1^223,1^323,1^32^23^2\}$.
If $N = \infty$, i.e. $q\notin \G_{\infty}$, then we omit the relations $x_\alpha^{N_\alpha}=0,$ $\alpha\in\Oc_+^{\bq}$. 
Here
\begin{multline*}
\ya= (L+6M+4N-3)\alpha_1 + (6M+2N)\alpha_2  + (4M+2N-2)\alpha_3.
\end{multline*}

\subsubsection{The associated Lie algebra} This is of type $A_1\times G_2$.

\section{Standard type}\label{sec:by-diagram-standard}

\subsection{Standard type $B_{\theta, j}$, $\theta \ge 2$, $j\in \I_{\theta - 1}$}\label{subsec:type-B-standard}
Here $\zeta \in\G'_3$.

\subsubsection{Basic datum and root system}

The basic datum is $(\mathtt B_{\theta, j}, \rho)$ and the root system is  
$\superb{j}{\theta-j}$, $j\in\I_{\theta-1}$  as in \S 
\ref{subsubsec:root-system-B theta j}; hence the Weyl groupoid is as in \S 
\ref{subsubsec:type-superB-Weyl}.
But we have new incarnations.

\subsubsection{Incarnation}
The assignment
\begin{align}\label{eq:dynkin-B-stadard}
\Jb \mapsto &
\begin{cases}
\xymatrix{ {\bf A}_{\theta-1}(-\ztu;\Jb) \ar @{-}[r]^(.7){-\zeta }  & \overset{\zeta}{\underset{\ }{\circ}}}, & \theta\notin\Jb; \\
\xymatrix{ {\bf A}_{\theta-1}(-\zeta;\Jb) \ar @{-}[r]^(.7){-\ztu }  &
\overset{\ztu}{\underset{\ }{\circ}}}, & \theta\in\Jb.
\end{cases}
\end{align}
gives an incarnation. Notice that albeit $\theta$ is not a Cartan vertex, $\rho_{\theta} = \id$.

\subsubsection{PBW-basis and dimension}\label{subsubsec:type-B-standard-PBW}
The root vectors are 
\begin{align*}
x_{\alpha_{ii}} &= x_{\alpha_{i}} = x_{i},& i \in \I, \\
x_{\alpha_{ij}} &= x_{(ij)} = [x_{i}, x_{\alpha_{(i+1) j}}]_c,& i <  j \in \I, \\
x_{\alpha_{i\theta} + \alpha_{\theta}} &= [x_{\alpha_{i\theta}}, x_\theta]_c, & i  \in \I_{\theta - 1},
\\
x_{\alpha_{i\theta} + \alpha_{j\theta}} &= [x_{\alpha_{i\theta} + \alpha_{(j+1) \theta}}, x_j]_c, & i <  j \in \I_{\theta - 1},
\end{align*}
cf. \eqref{eq:roots-Atheta}. Thus
\begin{multline*}
\{ x_{\theta  }^{n_{\theta  \theta}} 
x_{\alpha_{\theta-1\theta} + \alpha_{\theta\theta}}^{m_{\theta-1\theta}}
x_{\alpha_{\theta-1\theta}}^{n_{\theta-1  \theta}} 
x_{ \theta-1}^{n_{\theta-1  \theta-1}} 
\dots
x_{\alpha_{1\theta} + \alpha_{2\theta}}^{m_{12}}
\dots
x_{\alpha_{1\theta} + \alpha_{\theta\theta}}^{m_{1\theta}}
\dots
x_{\alpha_{1\theta}}^{n_{1  \theta}} 
\dots 
x_{1}^{n_{1 1}} \, \\ 
| \, 0\le n_{ij}<N_{\alpha_{ij}}; \, 0\le m_{ij}<N_{\alpha_{i\theta}+\alpha_{j\theta}}\}
\end{multline*}
is a PBW-basis of $\toba_{\bq}$. Hence
\begin{align*}
\dim \toba_{\bq}= 2^{\theta(\theta-1)} 3^{j^2+(\theta-j)^2}.
\end{align*}

\subsubsection{Presentation}\label{subsubsec:type-B-standard}
The set of positive Cartan roots is
\begin{align}\label{eq:Cartan-roots-B-standard} 
\Oc_+^{\bq} = & \{\alpha_{ij}, \alpha_{i\theta}+ \alpha_{(j+1)\theta} \ : i\le j\in\I_{\theta - 1}, \alpha_{ij} \text{ even}\}.
\end{align}
Assume that $\theta = 2$. Then $\Oc_+^{\bq} =  \{\alpha_{1}, \alpha_{1}+ 2\alpha_{2}\}$  if $\alpha_{1}$ is even, i.e. $\Jb = \emptyset$, and $\Oc_+^{\bq} = \emptyset$
if $\alpha_{1}$ is odd, i.e. $\Jb = \{1\}$.

The Nichols algebra $\toba_{\bq}$ is generated by $(x_i)_{i\in \I}$ with defining relations
\begin{align}\label{eq:rels-type-B-standard}
\begin{aligned}
& x_{ij}= 0, && i < j - 1; &  
& [x_{(i-1i+1)},x_i]_c=0, && i\in\Jb;  
\\
& x_{ii(i\pm1)}= 0, && i\in\I_{\theta-1}-\Jb; & 
& [x_{\theta\theta(\theta-1)(\theta-2)}, x_{\theta(\theta-1)}]_c=0;  
\\
& x_i^2=0, && i\in\Jb; & & [x_{\theta\theta(\theta-1)}, x_{\theta(\theta-1)}]_c=0, && \theta-1\in\Jb; 
\\
&x_\theta^3=0; & &&
& x_{\alpha}^6 =0, && \alpha\in\Oc_+^{\bq}.
\end{aligned}
\end{align}

\subsubsection{The associated Lie algebra and $\ya$} This is of type $D_j\times D_{\theta-j}$.
In this case, the Weyl group of the associated  Lie algebra is isomorphic to a proper subgroup of the  isotropy group of the Weyl groupoid. Here
\begin{align*}
\ya &=
\sum_{\substack{i\le j\in\I_{\theta-1},\\  \alpha_{ij} \text{ odd}}} \alpha_{ij} + 
\sum_{\substack{i\le j\in\I_{\theta-1},\\  \alpha_{ij} \text{ even}}} 5 \alpha_{ij} +
\sum_{i\in\I_{\theta}} 2\alpha_{i\theta} \\ & \qquad +
\sum_{\substack{i< j\in\I_{\theta},\\  \alpha_{ij-1} \text{ odd}}} (\alpha_{i\theta}+\alpha_{j\theta}) + 
\sum_{\substack{i< j\in\I_{\theta},\\  \alpha_{ij-1} \text{ even}}} 5(\alpha_{i\theta}+\alpha_{j\theta}).
\end{align*}

\subsection{Standard type $G_{2}$}\label{subsec:type-G-st}
Here $\zeta \in \G'_8$.

\subsubsection{Basic datum}

This is described by the diagram
\begin{align*}
\xymatrix{ \underset{a_1}{\vtxgpd} \ar@{-}^{1} [r] & \underset{a_2}{\vtxgpd} \ar@{-}^{2} [r] & \underset{a_3}{\vtxgpd}.}
\end{align*}

\subsubsection{Root system}
The bundle of Cartan matrices $(C^{a_j})_{j\in\I_3}$  is constant: $C^{a_j}$ is the Cartan matrix of type $G_2$ as in \eqref{eq:dynkin-system-G}
for any $j \in\I_3$.

The bundle of root sets $(\varDelta^{a_j})_{j \in\I_3}$  is constant: 
\begin{align*}
\varDelta^{a_j}&=\{ \pm \alpha_1,\pm (3\alpha_1+\alpha_2), \pm(2\alpha_1+\alpha_2), \pm(3\alpha_1+2\alpha_2), \pm(\alpha_1+\alpha_2), \pm \alpha_2 \}.
\end{align*} 

\subsubsection{Weyl groupoid}\label{subsubsec:type-standard-G-Weyl} 
The isotropy group  at $a_1 \in \cX$ is 
\begin{align*}
\cW(a_1) & = \langle \varsigma^{a_1}_1 \varsigma_{2} \varsigma_{1} \varsigma_{2} \varsigma_{1}, \varsigma_2^{a_1} \rangle \simeq \Z/2 \times \Z/2 \leq  GL(\Z^\I).
\end{align*}

\subsubsection{Incarnation}
We assign the following Dynkin diagrams to $a_i$, $i\in\I_3$: 
\begin{align}\label{eq:dynkin-G2-st}
&\xymatrix{a_1 \ar  @{|->}[r]  & \overset{\,\, \zeta^2}{\underset{\ }{\circ}} \ar  @{-}[r]^{\zeta}  &
\overset{\ztu}{\underset{\ }{\circ}},} &
&\xymatrix{a_2 \ar  @{|->}[r]  & \overset{\,\, \zeta^2}{\underset{\ }{\circ}} \ar  @{-}[r]^{\zeta^3}  &
\overset{-1}{\underset{\ }{\circ}},}& 
&\xymatrix{a_3 \ar  @{|->}[r]  & \overset{\zeta}{\underset{\ }{\circ}} \ar
@{-}[r]^{\zeta^5}  & \overset{-1}{\underset{\ }{\circ}}.}
\end{align}

\subsubsection{The generalized Dynkin diagram \emph{(\ref{eq:dynkin-G2-st}a)}}\label{subsubsec:G2-st-a}

The set
\begin{multline*}
\{ x_{2}^{n_1} x_{12}^{n_2} x_{3\alpha_1+2\alpha_2}^{n_{3}}x_{112}^{n_4} x_{1112}^{n_5} x_1^{n_6} \, | \,  n_{1}, n_{4}\in\I_{0,7}, 
\  n_{2}, n_{6 }\in\I_{0,3}, \  n_{3}, n_{5}\in\I_{0,1}\}
\end{multline*}
is a PBW-basis of $\toba_{\bq}$. Hence $\dim \toba_{\bq}= 2^24^28^2=4096$.

\medskip
The Nichols algebra $\toba_{\bq}$ is generated by $(x_i)_{i\in \I_2}$ with defining relations
\begin{align}\label{eq:rels-G2-st-a}
\begin{aligned}
x_1^4&=0; & x_{221}&=0; & [x_{3\alpha_1+2\alpha_2}, x_{12}]_c &=0; \\
x_2^8&=0; &  x_{112}^8&=0.
\end{aligned}
\end{align}
In this case, $\Oc_+^{\bq}=\{\alpha_2, 2\alpha_1+\alpha_2\}$ and $\ya= 26\alpha_1 + 20 \alpha_2$.

\subsubsection{The generalized Dynkin diagram \emph{(\ref{eq:dynkin-G2-st} b)}}\label{subsubsec:G2-st-b}

The set
\begin{multline*}
\{ x_{2}^{n_1} x_{12}^{n_2} x_{3\alpha_1+2\alpha_2}^{n_{3}}x_{112}^{n_4} x_{1112}^{n_5} x_1^{n_6} \, | \,  n_{2}, n_{5}\in\I_{0,7}, \\  n_{4}, n_{6}\in\I_{0,3}, \,  n_{1}, n_{3}\in\I_{0,1}\}
\end{multline*}
is a PBW-basis of $\toba_{\bq}$. Hence $\dim \toba_{\bq}= 2^24^28^2=4096$.

\medskip
The Nichols algebra $\toba_{\bq}$ is generated by $(x_i)_{i\in \I_2}$ with defining relations
\begin{align}\label{eq:rels-G2-st-b}
\begin{aligned}
x_1^4&=0; & x_2^2&=0; &
[x_1,x_{3\alpha_1+2\alpha_2}]_c+\frac{q_{12}}{1-\zeta}x_{112}^2&=0;   \\
x_{12}^8&=0; & x_{1112}^8&=0.
\end{aligned}
\end{align}
In this case, $\Oc_+^{\bq}=\{\alpha_1+\alpha_2, 3\alpha_1+\alpha_2\}$.
and $\ya= 40\alpha_1 + 20 \alpha_2$.

\subsubsection{The generalized Dynkin diagram \emph{(\ref{eq:dynkin-G2-st} c)}}\label{subsubsec:G2-st-c}

The set
\begin{multline*}
\{ x_{2}^{n_1} x_{12}^{n_2} x_{3\alpha_1+2\alpha_2}^{n_{3}}x_{112}^{n_4} x_{1112}^{n_5} x_1^{n_6} \, | \,  n_{3}, n_{6}\in\I_{0,7}, 
\\  n_{2}, n_{4}\in\I_{0,3}, \,  n_{1}, n_{5}\in\I_{0,1}\}
\end{multline*}
is a PBW-basis of $\toba_{\bq}$. Hence $\dim \toba_{\bq}= 2^24^28^2=4096$.

\medskip

The Nichols algebra $\toba_{\bq}$ is generated by $(x_i)_{i\in \I_2}$ with defining relations
\begin{align}\label{eq:rels-G2-st-c}
\begin{aligned}
x_1^8&=0; & x_{11112}&=0;  & [x_{3\alpha_1+2\alpha_2}, x_{12}]_c &=0; \\
x_2^2&=0; & x_{3\alpha_1+2\alpha_2}^8&=0.
\end{aligned}
\end{align}
In this case, $\Oc_+^{\bq}=\{\alpha_1, 3\alpha_1+2\alpha_2\}$.
and $\ya= 40\alpha_1 + 22 \alpha_2$.

\subsubsection{The associated Lie algebra} This is of type $A_1\times A_1$.

\part{Arithmetic  root systems: modular, UFO}\label{part:modular-ufo}
\def\kk{\mathbb F}

\section{Modular type, characteristic 2 or 3}\label{sec:by-diagram-modular-char3}

\subsection{Type $\Bgl(4)$}\label{subsec:type-bgl(4,alpha)}
Here $\theta = 4$, $q\neq \pm 1$. Let $\kk$ be a field of characteristic 2, $\alpha \in \kk - \kk_2$ and
$$A = \begin{pmatrix}
0 & 1 & 0 & 0 \\ 
1 & 0 & 1 & 0 \\
0 & \alpha & 0 & 1 \\
0 & 0 & 1 & 0
\end{pmatrix} \in \kk^{4 \times 4}.$$ 
Let $\bgl(4,\alpha) = \g(A)$ be the corresponding  contragredient Lie algebra.
Then  $\dim \bgl(4,\alpha) = 34$ \cite{KW-exponentials}.
Notice that there are 4 other matrices $A'$ for which $\bgl(4,\alpha) \simeq \g(A')$. 
Here is the root system $\Bgl(4)$ of $\bgl(4,\alpha)$, see \cite{AA-GRS-CLS-NA} for details.

\subsubsection{Basic datum and root system}

Below, $A_4$ and ${}_1T$ are numbered as in \eqref{eq:dynkin-system-A} and  
\eqref{eq:mTn}, respectively.
The basic datum and the bundle of Cartan matrices are described 
by the following diagram: 
\begin{center}
	\begin{tabular}{c c c c c c c c c}
		$\overset{A_4}{\underset{a_{1}}{\vtxgpd}}$
		& \hspace{-5pt}\raisebox{3pt}{$\overset{3}{\rule{30pt}{0.5pt}}$}\hspace{-5pt}
		& $\overset{{}_1T} {\underset{a_{2}}{\vtxgpd}}$
		& \hspace{-5pt}\raisebox{3pt}{$\overset{2}{\rule{30pt}{0.5pt}}$}\hspace{-5pt}
		& $\overset{s_{13}({}_1T)} {\underset{a_{3}}{\vtxgpd}}$
		& \hspace{-5pt}\raisebox{3pt}{$\overset{1}{\rule{30pt}{0.5pt}}$}\hspace{-5pt}
		& $\overset{\kappa_1(A_4)} {\underset{a_{4}}{\vtxgpd}}$ & &
		\\
		& & {\scriptsize 4} \vline\hspace{5pt}  & & {\scriptsize 4} \vline\hspace{5pt}
		& & & &
		\\
		& &
		$\overset{s_{34}(A_4)} {\underset{a_{5}}{\vtxgpd}}$ & & $\overset{\kappa_2(A_4)} {\underset{a_{6}}{\vtxgpd}}$
		& & & &
		\\
		& & {\scriptsize 2} \vline\hspace{5pt} & & {\scriptsize 2} \vline\hspace{5pt}
		& & & &
		\\
		$\overset{\kappa_3(A_4)} {\underset{a_{7}}{\vtxgpd}}$
		& \hspace{-5pt}\raisebox{3pt}{$\overset{1}{\rule{30pt}{0.5pt}}$}\hspace{-5pt}
		& $\overset{\kappa_4({}_1T)} {\underset{a_{8}}{\vtxgpd}}$
		& \hspace{-5pt}\raisebox{3pt}{$\overset{4}{\rule{30pt}{0.5pt}}$}\hspace{-5pt}
		& $\overset{s_{24}({}_1T)} {\underset{a_{9}}{\vtxgpd}}$
		& \hspace{-5pt}\raisebox{3pt}{$\overset{3}{\rule{30pt}{0.5pt}}$}\hspace{-5pt}
		& $\overset{s_{24}(A_4)} {\underset{a_{10}}{\vtxgpd}}$  & &
	\end{tabular}
\end{center}
Using the notation \eqref{eq:notation-root-exceptional}, the bundle of root sets is the following: 
{ \scriptsize
	\begin{align*}
	\begin{aligned}
	\varDelta_{+}^{a_1}= & \{ 1, 12, 2, 123, 23, 3, 12^23^34, 12^23^24, 123^24, 23^24, 12^23^34^2, 1234, 234, 34, 4 \}, \\
	\varDelta_{+}^{a_2}= & \{ 1, 12, 2, 123, 23, 3, 12^234, 12^24, 1234, 234, 12^234^2, 34, 124, 24, 4 \}, \\
	\varDelta_{+}^{a_5}= & \{ 1, 12, 2, 12^23, 123, 23, 3, 
	12^23^24, 123^24, 23^24, 12^234, 1234, 234, 34, 4 \},
	\end{aligned}
	\\
	\begin{aligned}
	\varDelta_{+}^{a_3}= & s_{13}(\varDelta_{+}^{a_2}), &
	\varDelta_{+}^{a_4}= & \kappa_1(\varDelta_{+}^{a_1}), &
	\varDelta_{+}^{a_6}= & \kappa_2(\varDelta_{+}^{a_5}), &
	\varDelta_{+}^{a_7}= & \kappa_3(\varDelta_{+}^{a_1}),
	\\
	\varDelta_{+}^{a_8}= & \kappa_4(\varDelta_{+}^{a_2}), &
	\varDelta_{+}^{a_9}= & s_{24}(\varDelta_{+}^{a_2}), &
	\varDelta_{+}^{a_{10}}= & s_{24}(\varDelta_{+}^{a_1}).
	\end{aligned}
	\end{align*}
}

\subsubsection{Weyl groupoid}
\label{subsubsec:type-bgl4a-Weyl}
The isotropy group  at $a_1 \in \cX$ is
\begin{align*}
\cW(a_1)= \langle \varsigma_1^{a_1},\varsigma_2^{a_1}, \varsigma_3^{a_1}\varsigma_2 \varsigma_1 \varsigma_4 \varsigma_1\varsigma_2\varsigma_3, \varsigma_4^{a_1}  \rangle \simeq W(A_2) \times W(A_2).
\end{align*}

\subsubsection{Incarnation}
To describe it, we need the matrices $(\bq^{(i)})_{i\in\I_5}$ corresponding to the following Dynkin diagrams, from left to right and  from up to down
(also denoted below as a,\dots, e as customary).
\begin{align}\label{eq:dynkin-bgl(4,alpha)}
\begin{aligned}
&
\xymatrix@C-4pt{\overset{q}{\underset{\ }{\circ}}\ar  @{-}[r]^{q ^{-1}}  &
	\overset{q}{\underset{\ }{\circ}}
	\ar  @{-}[r]^{q^{-1}}  & \overset{-1}{\underset{\ }{\circ}}
	\ar  @{-}[r]^{-q}  & \overset{-q^{-1}}{\underset{\ }{\circ}}}
& &
\xymatrix@C-4pt{\overset{-q^{-1}}{\underset{\ }{\circ}}\ar  @{-}[r]^{-q}
	& \overset{-q^{-1}}{\underset{\ }{\circ}}
	\ar  @{-}[r]^{-q}  & \overset{-1}{\underset{\ }{\circ}}
	\ar  @{-}[r]^{q^{-1}}  & \overset{q}{\underset{\ }{\circ}}}
\\
&
\xymatrix@C-4pt{\overset{q}{\underset{\ }{\circ}}\ar  @{-}[r]^{q ^{-1}}  &
	\overset{-1}{\underset{\ }{\circ}}
	\ar  @{-}[r]^{-1}  & \overset{-1}{\underset{\ }{\circ}}
	\ar  @{-}[r]^{-q}  & \overset{-q^{-1}}{\underset{\ }{\circ}}}
& &
\\
&\xymatrix@R-6pt{ 
	& & \overset{-1}{\underset{\ }{\circ}} \ar@{-}[d]^{-q^{-1}} \ar@{-}[dl]_{-1} \\
	\overset{q}{\underset{\ }{\circ}} \ar@{-}[r]^{q^{-1}}  
	& \overset{-1}{\underset{\ }{\circ}} \ar@{-}[r]^{q}
	&\overset{-1}{\underset{\ }{\circ}} }
&& 
\xymatrix@R-6pt{ 
	& & \overset{-1}{\underset{\ }{\circ}} \ar@{-}[d]^{q} \ar@{-}[dl]_{-1} \\
	\overset{-q^{-1}}{\underset{\ }{\circ}} \ar@{-}[r]^{-q}  
	& \overset{-1}{\underset{\ }{\circ}} \ar@{-}[r]^{-q^{-1}}
	&\overset{-1}{\underset{\ }{\circ}} }
\end{aligned}
\end{align}

Now, this is the incarnation:
\begin{align*}
a_1 & \mapsto \bq^{(1)}, & 
a_2 & \mapsto \bq^{(4)}, & 
a_3 & \mapsto s_{13}(\bq^{(4)}), & 
a_4 & \mapsto \kappa_1(\bq^{(1)}), \\ 
& & a_5 & \mapsto s_{34}(\bq^{(3)}), &
a_6 & \mapsto \kappa_2(\bq^{(3)}), & & \\
a_7 & \mapsto \kappa_3(\bq^{(2)}), & 
a_8 & \mapsto \kappa_4(\bq^{(5)}), & 
a_9 & \mapsto s_{24}(\bq^{(5)}), & 
a_{10} & \mapsto s_{24}(\bq^{(2)}). & 
\end{align*}

We set $N=\ord q$, $M=\ord -q^{-1}$.

\subsubsection{PBW-basis and (GK-)dimension} \label{subsubsec:type-bgl4a-PBW}
Notice that the roots in each $\varDelta_{+}^{a_i}$, $i\in\I_{10}$, are ordered from left to right, justifying the notation $\beta_1, \dots, \beta_{15}$.

The root vectors $x_{\beta_k}$ are described as in Remark \ref{rem:lyndon-word}.
Thus
\begin{align*}
\left\{ x_{\beta_{15}}^{n_{15}} x_{\beta_{14}}^{n_{14}} \dots x_{\beta_2}^{n_{2}}  x_{\beta_1}^{n_{1}} \, | \, 0\le n_{k}<N_{\beta_k} \right\}.
\end{align*}
is a PBW-basis of $\toba_{\bq}$. If $N<\infty$, then
\begin{align*}
\dim \toba_{\bq}= 2^9M^3N^3.
\end{align*}
If $N=\infty$ (that is, if $q$ is not a root of unity), then
$\GK \toba_{\bq}= 6$.

\subsubsection{The Dynkin diagram \emph{(\ref{eq:dynkin-bgl(4,alpha)}
		a)}}\label{subsubsec:bgl(4,alpha)-a}

\

The Nichols algebra $\toba_{\bq}$ is generated by $(x_i)_{i\in \I_4}$ with defining relations
\begin{align}\label{eq:rels-bgl(4,alpha)-a}
\begin{aligned}
x_3^2&=0; & x_{13}&=0; & x_{14}&=0; & &x_{24}=0; \\
x_{112}&=0; & x_{221}&=0; & x_{\alpha}^N&=0, & &\alpha\in\{1,2,12\}; \\
x_{223}&=0; &  x_{443}&=0; & x_{\alpha}^M&=0, & &\alpha\in\{ 4,12^23^34,12^23^34^2\}.
\end{aligned}
\end{align}
If $N = \infty$, i.e. $q\notin \G_{\infty}$, then we omit the relations $x_\alpha^{N}=0$, $x_\alpha^{M}=0$. 
Here the degree of the integral is
\begin{equation*}
\ya= (2M+2N)\alpha_1 + (4M+2N+2)\alpha_2   + (6M+6)\alpha_3  + (4M+2)\alpha_4.
\end{equation*}

\subsubsection{The Dynkin diagram \emph{(\ref{eq:dynkin-bgl(4,alpha)}
		b)}}\label{subsubsec:bgl(4,alpha)-b}

\

This diagram is of the shape of (\ref{eq:dynkin-bgl(4,alpha)} a) but with $-q^{-1}$
instead of $q$. Thus the information on the corresponding Nichols algebra is analogous to
\S \ref{subsubsec:bgl(4,alpha)-a}.

\subsubsection{The Dynkin diagram \emph{(\ref{eq:dynkin-bgl(4,alpha)}
		c)}}\label{subsubsec:bgl(4,alpha)-c}

\

The Nichols algebra $\toba_{\bq}$ is generated by $(x_i)_{i\in \I_4}$ with defining relations
\begin{align}\label{eq:rels-bgl(4,alpha)-c}
\begin{aligned}
\begin{aligned}
x_2^2&=0; & x_{13}&=0; & x_{14}&=0; & &x_{24}=0; \\
x_{3}^2&=0; & x_{23}^2&=0; &    x_{\alpha}^N&=0, & & \alpha\in\{1,23^24,123^24\}; \\
x_{112}&=0; &  x_{443}&=0; & x_{\alpha}^M&=0, & & \alpha\in\{4,12^23,12^234\};
\end{aligned}
\\
[[x_{(14)},x_2 ]_c,x_3]_c-q_{23}\frac{1+q}{1-q}[[ x_{(14)},x_3]_c,x_2]_c=0.
\end{aligned}
\end{align}
If $N = \infty$, i.e. $q\notin \G_{\infty}$, then we omit the relations $x_\alpha^{N}=0$, $x_\alpha^{M}=0$. 
Here the degree of the integral is
\begin{equation*}
\ya= (2M+2N)\alpha_1 + (4M+2N+2)\alpha_2   + (2M+4N+2)\alpha_3  + (2M+2N)\alpha_4.
\end{equation*}

\subsubsection{The Dynkin diagram \emph{(\ref{eq:dynkin-bgl(4,alpha)}
		d)}}\label{subsubsec:bgl(4,alpha)-d}

\

The Nichols algebra $\toba_{\bq}$ is generated by $(x_i)_{i\in \I_4}$ with defining relations
\begin{align}\label{eq:rels-bgl(4,alpha)-d}
\begin{aligned}
\begin{aligned}
x_{112}&=0; & x_{13}&=0; & x_{14}&=0; & &[x_{(13)},x_2]_c=0; \\
x_{2}^2&=0; & x_{24}^2&=0; & x_{\alpha}^N&=0, & & \alpha\in\{1,23,123\}; \\
x_{3}^2&=0; &  x_{4}^2&=0; & x_{\alpha}^M&=0, & &\alpha\in\{34,12^24,12^234^2\};
\end{aligned}
\\
x_{(24)}+\frac{(1+q)q_{43}}{2}[x_{24},x_3]_c-q_{23}(1+q^{-1})x_3x_{24}=0.
\end{aligned}
\end{align}
If $N = \infty$, i.e. $q\notin \G_{\infty}$, then we omit the relations $x_\alpha^{N}=0$, $x_\alpha^{M}=0$. 
Here the degree of the integral is
\begin{equation*}
\ya= (2M+2N)\alpha_1 + (4M+2N+2)\alpha_2   + (2M+2N)\alpha_3  + (4M+2)\alpha_4.
\end{equation*}

\subsubsection{The Dynkin diagram \emph{(\ref{eq:dynkin-bgl(4,alpha)}
		e)}}\label{subsubsec:bgl(4,alpha)-e}

This diagram is of the shape of (\ref{eq:dynkin-bgl(4,alpha)} d) but with $-q^{-1}$
instead of $q$. Thus the information  is as in
\S \ref{subsubsec:bgl(4,alpha)-d}.

\subsubsection{The associated Lie algebra} This is of type $A_2\times A_2$.

\subsection{Type $\Brown(2)$}\label{subsec:type-br(2,a)}
Here $\theta = 2$, $\zeta \in \G_3$, $q\notin \G_3$. Let $\kk$ be a field of characteristic 3,  $a \in \kk - \kk_3$,
\begin{align*}
A &= \begin{pmatrix}
2 & -1 \\ a & 2
\end{pmatrix},& A' &= \begin{pmatrix}
2 & -1 \\ -1-a & 2
\end{pmatrix} \in \kk^{2\times 2}
\end{align*}
Let $\br(2,a) = \g(A) \simeq \g(A')$, the contragredient Lie algebras corresponding to $A, A'$. 
Then  $\dim \br(2,a) = 10$ \cite{BGL}.
We describe now the root system $\Brown(2)$ of $\br(2,a)$, see \cite{AA-GRS-CLS-NA} for details.

\subsubsection{Basic datum and root system}

Below, $B_2$ is numbered as in \eqref{eq:dynkin-system-B}.
The basic datum and the bundle of Cartan matrices are described by the diagram:
\begin{align*}
&\text{$\overset{B_2}{\vtxgpd}$   \hspace{-5pt}\raisebox{3pt}{$\overset{2}{\rule{40pt}{0.5pt}}$}\hspace{-5pt}  $\overset{B_2}{\vtxgpd}$}
&& \xymatrix{ \ar@{->}[rr]^{s_{12}} & & }
&\text{$\overset{C_2}{\vtxgpd}$   \hspace{-5pt}\raisebox{3pt}{$\overset{1}{\rule{40pt}{0.5pt}}$}\hspace{-5pt}  $\overset{C_2}{\vtxgpd}$.}
\end{align*}
This is a standard Dynkin diagram, that might be called of type $C_2$; indeed $\Br$ is not isomorphic to $\superb{1}{1}$. 
We include it here because of the relation 
with the modular Lie algebra $\br(2, a)$.
The bundle of root sets $(\varDelta^{a_j})_{j \in\I_2}$ is constant:
\begin{align*}
\varDelta^{a_j}&=\{ \pm \alpha_1, \pm(2\alpha_1+\alpha_2),  \pm(\alpha_1+\alpha_2), \pm \alpha_2 \}.
\end{align*}

\subsubsection{Weyl groupoid}
\label{subsubsec:type-br2a-Weyl}
The isotropy group  at $a_1 \in \cX$ is
\begin{align*}
\cW(a_1)= \langle \varsigma_1^{a_1}\varsigma_2\varsigma_1,
\varsigma_2^{a_1}, \rangle \simeq \Z/2  \times \Z/2.
\end{align*}

\subsubsection{Incarnation}
We assign the following Dynkin diagrams to $a_i$, $i\in\I_2$:
\begin{align}\label{eq:dynkin-br(2,a)}
&\xymatrix{a_1 \ar  @{|->}[r]  &  \overset{\zeta}{\underset{\ }{\circ}} \ar  @{-}[r]^{q^{-1}}  &
	\overset{q}{\underset{\ }{\circ}}},
& &\xymatrix{a_2 \ar  @{|->}[r]  &  \overset{\zeta}{\underset{\ }{\circ}} \ar  @{-}[r]^{\zeta^2q}  &
	\overset{\zeta q^{-1}}{\underset{\ }{\circ}}}.
\end{align}

The Dynkin diagram (\ref{eq:dynkin-br(2,a)} b)
has the same shape as (\ref{eq:dynkin-br(2,a)} a) but with $\zeta q^{-1}$
instead of $q$. Thus, we just discuss  the latter.

\subsubsection{PBW-basis and (GK-)dimension} \label{subsubsec:type-br2a-PBW}
We set $N=\ord q$, $M=\ord \zeta q^{-1}$. The root vectors $x_{\beta_k}$ are described as in Remark \ref{rem:lyndon-word}.
Thus
\begin{align*}
\{ x_{2}^{n_1} x_{12}^{n_2} x_{112}^{n_3} x_1^{n_4} \, | \, 0\le n_{3}<M, \, 0\le n_{4}<N, \, 0\le n_{1}, n_{2}<3\}.
\end{align*}
is a PBW-basis of $\toba_{\bq}$. If $N<\infty$, then
$\dim \toba_{\bq}= 3^2MN$. If $N=\infty$ (that is, if $q$ is not a root of unity), then $\GK \toba_{\bq}= 2$.

\subsubsection{Relations, $q=-1$}\label{subsubsec:br(2,a)-q=-1}
The Nichols algebra $\toba_{\bq}$ is generated by $(x_i)_{i\in \I_2}$ with defining relations
\begin{align}\label{eq:rels-br(2,a)-a-q=-1}
x_{1}^3 &= 0; & x_{12}^3 &= 0;& x_{112}^{6} &=0,& x_{2}^{2} &=0; & [x_{112}, x_{12}]_c
&=0.
\end{align}
Here the degree of the integral is
\begin{equation*}
\ya= 13\alpha_1 + 9\alpha_2.
\end{equation*}

\subsubsection{Relations, $q \neq -1$}\label{subsubsec:br(2,a)-qneq-1}
The Nichols algebra $\toba_{\bq}$ is generated by $(x_i)_{i\in \I_2}$ with defining relations
\begin{align}\label{eq:rels-br(2,a)-a-qneq-1}
x_{1}^3 &= 0; & x_{12}^3 &= 0;& x_{112}^{M} &=0,& x_{2}^{N} &=0; & x_{221}&=0.
\end{align}
If $N = \infty$, i.e. $q\notin \G_{\infty}$, then we omit the relations $x_2^{N}=0$, $x_{112}^{M}=0$. 
Here the degree of the integral is
\begin{equation*}
\ya= (2M+N-1)\alpha_1 + (M+3)\alpha_2.
\end{equation*}

\subsubsection{The associated Lie algebra} This is of type $A_1\times A_1$.

\subsection{Type $\Brown(3)$}\label{subsec:type-br(3)}
Here $\theta = 3$, $\zeta \in \G'_{9}$. Let $\kk$ be a field of characteristic 3 and
\begin{align*}
A &= \begin{pmatrix}
2 & -1 & 0 \\ -2 & 2 & -1 \\ 0 & 1 & 0
\end{pmatrix},&
A' &= \begin{pmatrix}
2 & -1 & 0 \\ -1 & 2 & -1 \\ 0 & 1 & 0
\end{pmatrix}
\end{align*}
Let $\br(3) = \g(A) \simeq \g(A')$, the contragredient Lie algebras corresponding to $A$, $A'$. 
Then  $\dim \br(3) = 29$ \cite{BGL}.
We describe now the root system $\Brown(3)$ of $\br(3)$, see \cite{AA-GRS-CLS-NA} for details.

\subsubsection{Basic datum and root system}

Below, $B_3$ and $A_4^{(2)}$ are numbered as in \eqref{eq:dynkin-system-B} and  
\eqref{eq:A2-2n}, respectively.
The basic datum and the bundle of Cartan matrices are described by the following diagram:
\begin{center}
	$\overset{B_3}{\underset{a_{1}}{\vtxgpd}}$   \hspace{-5pt}\raisebox{3pt}{$\overset{3}{\rule{40pt}{0.5pt}}$}\hspace{-5pt}  $\overset{\tau(A_4^{(2)})}{\underset{a_{2}}{\vtxgpd}}$.
\end{center}
Using the notation \eqref{eq:notation-root-exceptional}, the bundle of root sets is the following:
\begin{align*}
\varDelta_{+}^{a_1}= & \{ 1, 12, 123,  1^22^33^4, 12^23^2, 12^23^3, 12^23^4, 12^33^4, 123^2,  2, 23^2, 23, 3\}, \\
\varDelta_{+}^{a_2}= & \{ 1, 12^2, 12, 123^2, 12^33^2, 1^22^33^2, 12^23^2, 123, 12^23, 2, 23^2, 23, 3 \}.
\end{align*}

\subsubsection{Weyl groupoid}
\label{subsubsec:type-br3-Weyl}
The isotropy group at $a_1 \in \cX$ is
\begin{align*}
\cW(a_1)= \langle \varsigma_1^{a_1}, \varsigma_2^{a_1},  \varsigma_3^{a_1} \varsigma_2 \varsigma_3 \rangle \simeq W(B_3).
\end{align*}

\subsubsection{Incarnation}
We assign the following Dynkin diagrams to $a_i$, $i\in\I_2$: 
\begin{align}\label{eq:dynkin-br(3)}
&\xymatrix{a_1 \ar  @{|->}[r]  & \overset{\zeta}{\underset{\ }{\circ}}\ar  @{-}[r]^{\ztu}  &
	\overset{\zeta}{\underset{\ }{\circ}} \ar  @{-}[r]^{\ztu}  & \overset{\ztu^{\,
			3}}{\underset{\ }{\circ}}}
& &\xymatrix{a_2 \ar  @{|->}[r]  &  \overset{\zeta}{\underset{\ }{\circ}}\ar  @{-}[r]^{\ztu}  &
	\overset{\ztu^{\, 4}}{\underset{\ }{\circ}} \ar  @{-}[r]^{\zeta^{4}}  & \overset{\ztu^{\,
			3}}{\underset{\ }{\circ}}}
\end{align}

\subsubsection{PBW-basis and dimension} \label{subsubsec:type-br3-PBW}
Notice that the roots in each $\varDelta_{+}^{a_i}$, $i\in\I_{2}$, are ordered from left to right, justifying the notation $\beta_1, \dots, \beta_{13}$.

The root vectors $x_{\beta_k}$ are described as in Remark \ref{rem:lyndon-word}.
Thus
\begin{align*}
\left\{ x_{\beta_{13}}^{n_{13}} x_{\beta_{12}}^{n_{12}} \dots x_{\beta_2}^{n_{2}}  x_{\beta_1}^{n_{1}} \, | \, 0\le n_{k}<N_{\beta_k} \right\}.
\end{align*}
is a PBW-basis of $\toba_{\bq}$. Hence $\dim \toba_{\bq}= 3^49^9=3^{22}$.

\subsubsection{The Dynkin diagram \emph{(\ref{eq:dynkin-br(3)}
		a)}}\label{subsubsec:br(3)-a}

The Nichols algebra $\toba_{\bq}$ is generated by $(x_i)_{i\in \I_3}$ with defining
relations
\begin{align}\label{eq:rels-br(3)-a}
\begin{aligned}
x_{13}&=0; & x_{112}&=0; & x_3^3&=0; & & [[x_{332}, x_{3321}]_c,x_{32}]_c=0;\\
x_{221}&=0; & x_{223}&=0; & x_\alpha^{9}&=0, & & \alpha\in\Oc_+^{\bq};
\end{aligned}
\end{align}
where $\Oc_+^{\bq}=\{1,2,12,23^2,123^2,12^23^2,12^23^4,12^33^4,1^22^33^4\}$.
Here the degree of the integral is
\begin{equation*}
\ya= 68\alpha_1 + 120\alpha_2   +156\alpha_3 .
\end{equation*}

\subsubsection{The Dynkin diagram \emph{(\ref{eq:dynkin-br(3)}
		b)}}\label{subsubsec:br(3)-b}

The Nichols algebra $\toba_{\bq}$ is generated by $(x_i)_{i\in \I_3}$ with defining
relations
\begin{align}\label{eq:rels-br(3)-b}
\begin{aligned}
\begin{aligned}
x_{13}&=0; & x_{112}&=0; &
x_{2221}&=0; & x_{223}&=0; & x_\alpha^{9}&=0, \ \alpha\in\Oc_+^{\bq};
\end{aligned}
\\
\begin{aligned}
x_3^3&=0; & & (1+\zeta^4)[[x_{(13)}, x_{2}]_c,x_{3}]_c = q_{23}[[x_{(13)}, x_{3}]_c,x_{2}]_c;
\end{aligned}
\end{aligned}
\end{align}
where $\Oc_+^{\bq}=\{1,2,12,12^2,23^2,123^2,12^23^2,12^33^2,1^22^33^2\}$.
Here the degree of the integral is
\begin{equation*}
\ya= 68\alpha_1 + 120\alpha_2   + 88\alpha_3 .
\end{equation*}

\subsubsection{The associated Lie algebra} This is of type $B_3$.

\section{Super modular type, characteristic 3}\label{sec:by-diagram-super-modular-char3}
In this Section $\kk$ is a field of characteristic  $3$.

\subsection{\ Type $\Sbrown(2; 3)$}\label{subsec:type-brj(2,3)}
Here $\theta = 2$, $\zeta \in \G'_9$. Let 
\begin{align*}
A&=\begin{pmatrix} 0 & 1 \\ 1 & 0 \end{pmatrix}, &
A'&=\begin{pmatrix} 0 & 1 \\ -2 & 2 \end{pmatrix}, &
A''&=\begin{pmatrix} 2 & -1 \\ -1 & 0 \end{pmatrix}
 \in \kk^{2\times 2}; \\
\pa &= (-1,1),& \pa''&= (-1,-1) \in \G_2^2.
\end{align*}
Let $\brj(2; 3) = \g(A, \pa)\simeq \g(A',\pa)\simeq \g(A'',\pa'')$,
 the contragredient Lie superalgebras corresponding to $(A, \pa)$, $(A', \pa)$, $(A'', \pa'')$. 
We know \cite{BGL} that $$\sdim \brj(2; 3) = 10|8.$$ 
We describe now the root system $\Sbrown(2; 3)$ of $\brj(2; 3)$, see \cite{AA-GRS-CLS-NA} for details.

\subsubsection{Basic datum and root system}
Below, $A_1^{(1)}$, $C_2$ and $A_2^{(2)}$ are numbered as in \eqref{eq:An-(1)}, \eqref{eq:dynkin-system-C} and  	
\eqref{eq:A2-2n}, respectively.
The basic datum and the bundle of Cartan matrices are described by the following diagram:
\begin{center}
	$\overset{A_1^{(1)}}{\underset{a_1}{\vtxgpd}}$ \hspace{-5pt}\raisebox{3pt}{$\overset{1}{\rule{40pt}{0.5pt}}$}\hspace{-5pt}
	$\overset{C_2}{\underset{a_2}{\vtxgpd}}$  \hspace{-5pt}\raisebox{3pt}{$\overset{2}{\rule{40pt}{0.5pt}}$}\hspace{-5pt}
	$\overset{A_2^{(2)}}{\underset{a_3}{\vtxgpd}}$.
\end{center}
Using the notation \eqref{eq:notation-root-exceptional}, the bundle of root sets is the following:
\begin{align*}
\varDelta_{+}^{a_1}= & \{1,1^22,1^32^2,12,12^2,2 \}, &
\varDelta_{+}^{a_2}= & \{1,1^22,1^32^2,1^42^3,12,2 \}, \\
\varDelta_{+}^{a_3}= & \{1,1^42,1^32,1^22,12,2 \}.
\end{align*}

\subsubsection{Weyl groupoid}
\label{subsubsec:type-brj23-Weyl}
The isotropy group  at $a_2 \in \cX$ is
\begin{align*}
\cW(a_2)= \langle \varsigma_1^{a_2}\varsigma_2\varsigma_1, \varsigma_2^{a_2} \varsigma_1 \varsigma_2 \rangle \simeq \Z/2  \times \Z/2.
\end{align*}

\subsubsection{Incarnation}

We assign the following Dynkin diagrams to $a_i$, $i\in\I_3$:
\begin{align}\label{eq:dynkin-brj(2,3)}
& a_1\mapsto \xymatrix{ \overset{-\zeta}{\underset{\ }{\circ}} \ar  @{-}[r]^{\ztu^{\, 2}}  &
	\overset{\zeta^3}{\underset{\ }{\circ}}},
& &a_2\mapsto \xymatrix{ \overset{\zeta^3}{\underset{\ }{\circ}} \ar  @{-}[r]^{\ztu}  &
	\overset{-1}{\underset{\ }{\circ}}},&
&a_3\mapsto \xymatrix{ \overset{-\zeta^2}{\underset{\ }{\circ}} \ar  @{-}[r]^{\zeta}  &
	\overset{-1}{\underset{\ }{\circ}}}.
\end{align}

\subsubsection{PBW-basis and dimension} \label{subsubsec:type-br23j-PBW}
Notice that the roots in each $\varDelta_{+}^{a_i}$, $i\in\I_{3}$, are ordered from left to right, justifying the notation $\beta_1, \dots, \beta_{6}$.

The root vectors $x_{\beta_k}$ are described as in Remark \ref{rem:lyndon-word}.
Thus
\begin{align*}
\left\{ x_{\beta_{6}}^{n_{6}} x_{\beta_{5}}^{n_{5}} 
x_{\beta_4}^{n_{4}}  x_{\beta_3}^{n_{3}}
x_{\beta_2}^{n_{2}}  x_{\beta_1}^{n_{1}} \, | \, 0\le n_{k}<N_{\beta_k} \right\}.
\end{align*}
is a PBW-basis of $\toba_{\bq}$. Hence $\dim \toba_{\bq}= 2^23^218^2=11664$.

\subsubsection{The Dynkin diagram \emph{(\ref{eq:dynkin-brj(2,3)}
		a)}}\label{subsubsec:br(2,3)-a}

The Nichols algebra $\toba_{\bq}$ is generated by $(x_i)_{i\in \I_2}$ with defining relations
\begin{align}\label{eq:rels-brj(2,3)-a}
\begin{aligned}
x_1^{18}&=0; & x_2^3&=0; &  &[x_1,[x_{12},x_2]_c]_c=\frac{\zeta^7q_{12}}{1+\zeta}
x_{12}^2;\\
x_{1112}&=0; & x_{12}^{18}&=0.
\end{aligned}
\end{align}
Here $\Oc_+^{\bq}= \{1, 12 \}$ and the degree of the integral is
\begin{equation*}
\ya= 42\alpha_1 + 25\alpha_2.
\end{equation*}

\subsubsection{The Dynkin diagram \emph{(\ref{eq:dynkin-brj(2,3)}
		b)}}\label{subsubsec:br(2,3)-b}

The Nichols algebra $\toba_{\bq}$ is generated by $(x_i)_{i\in \I_2}$ with defining relations
\begin{align}\label{eq:rels-brj(2,3)-b}
x_1^{3}&=0; & x_2^2&=0; &  [x_{112},[x_{112},x_{12}]_c]_c&=0; &
x_{112}^{18}&=0; & x_{12}^{18}&=0.
\end{align}
Here $\Oc_+^{\bq}= \{1^22, 12 \}$ and the degree of the integral is
\begin{equation*}
\ya= 63\alpha_1 + 42\alpha_2.
\end{equation*}

\subsubsection{The Dynkin diagram \emph{(\ref{eq:dynkin-brj(2,3)}
		c)}}\label{subsubsec:br(2,3)-c}

The Nichols algebra $\toba_{\bq}$ is generated by $(x_i)_{i\in \I_2}$ with defining relations
\begin{align}\label{eq:rels-brj(2,3)-c}
x_1^{18}&=0; & x_2^2&=0; &  [x_{112},x_{12}]_c&=0; &
x_{111112}&=0; & x_{112}^{18}&=0.
\end{align}
Here $\Oc_+^{\bq}= \{1, 1^22 \}$ and the degree of the integral is
\begin{equation*}
\ya= 63\alpha_1 + 23\alpha_2.
\end{equation*}

\subsubsection{The associated Lie algebra} This is of type $A_1\times A_1$.

\subsection{\ Type $\gtt(1,6)$}\label{subsec:type-g(1,6)}
Here $\theta = 3$, $\zeta \in \G'_{3} \cup \G'_{6}$. Let 
\begin{align*}
A&=\begin{pmatrix} 2 & -1 & 0 \\ -2 & 2 & -2 \\ 0 & 1 & 0 \end{pmatrix}, &
A'&=\begin{pmatrix} 2 & -1 & 0 \\ -1 & 2 & -2 \\ 0 & 1 & 0 \end{pmatrix} \in \kk^{3\times 3},\\ 
\pa&= (1,-1,-1),& \pa'&= (1,1,-1) \in \G_2^3.
\end{align*}
Let $\g(1, 6) = \g(A, \pa) \simeq \g(A', \pa')$, the contragredient Lie
superalgebras corresponding to $(A, \pa)$, $(A', \pa')$. 
Then $\sdim \g(1, 6) = 21|14$ \cite{BGL}. We describe now its root system $\gtt(1,6)$,
see \cite{AA-GRS-CLS-NA}.

\subsubsection{Basic datum and root system}
Below, $C_3$ and $C_2^{(1)}$ are numbered as in \eqref{eq:dynkin-system-C} and \eqref{eq:Cn-(1)}, respectively.
The basic datum and the bundle of Cartan matrices are described by the following diagram:
\begin{center}
	$\overset{C_3}{\underset{a_1}{\vtxgpd}}$   \hspace{-5pt}\raisebox{3pt}{$\overset{3}{\rule{40pt}{0.5pt}}$}\hspace{-5pt}  $\overset{C_2^{(1)}}{\underset{a_2}{\vtxgpd}}$.
\end{center}

Using the notation \eqref{eq:notation-root-exceptional}, the bundle of root sets is the following:
\begin{align*}
\varDelta_{+}^{a_1}=&\{1,12,1^22^23,123,12^23,12^23^2,12^33^2,1^22^33^2,1^22^43^3,2,2^23,23,3 \}, \\
\varDelta_{+}^{a_2}=&\{1,12,12^2,1^22^23,1^22^33,1^22^43,12^33,123,12^2,2,2^23,23,3\}.
\end{align*}

\subsubsection{Weyl groupoid}
\label{subsubsec:type-g16-Weyl}
The isotropy group  at $a_1 \in \cX$ is
\begin{align*}
\cW(a_1)= \langle \varsigma_1^{a_1}, \varsigma_2^{a_1},  \varsigma_3^{a_1} \varsigma_2 \varsigma_3 \rangle \simeq W(C_3).
\end{align*}

\subsubsection{Incarnation}
We assign the following Dynkin diagrams to $a_i$, $i\in\I_2$:
\begin{align}\label{eq:dynkin-g(1,6)}
&a_1\mapsto \xymatrix{ \overset{\zeta}{\underset{\ }{\circ}}\ar  @{-}[r]^{\ztu}  &
	\overset{\zeta}{\underset{\ }{\circ}} \ar  @{-}[r]^{\ztu^{\, 2}}  &
	\overset{-1}{\underset{\ }{\circ}}}
& & a_2\mapsto \xymatrix{ \overset{\zeta}{\underset{\ }{\circ}}\ar  @{-}[r]^{\ztu}  &
	\overset{-\ztu}{\underset{\ }{\circ}} \ar  @{-}[r]^{\zeta^{2}}  & \overset{-1}{\underset{\
		}{\circ}}}
\end{align}

\subsubsection{PBW-basis and dimension} \label{subsubsec:type-g16-PBW}
Notice that the roots in each $\varDelta_{+}^{a_i}$, $i\in\I_{2}$, are ordered from left to right, justifying the notation $\beta_1, \dots, \beta_{13}$.

The root vectors $x_{\beta_k}$ are described as in Remark \ref{rem:lyndon-word}. Thus
\begin{align*}
\left\{ x_{\beta_{13}}^{n_{13}} x_{\beta_{12}}^{n_{12}} \dots
x_{\beta_2}^{n_{2}}  x_{\beta_1}^{n_{1}} \, | \, 0\le n_{k}<N_{\beta_k} \right\}
\end{align*}
is a PBW-basis of $\toba_{\bq}$. If $N=6$, then $\dim \toba_{\bq}= 2^43^36^6=2^{10}3^{9}$. If $N=3$, then $\dim \toba_{\bq}= 2^43^66^3=2^{7}3^{9}$.

\subsubsection{The Dynkin diagram \emph{(\ref{eq:dynkin-g(1,6)}
		a)}, $N=6$}\label{subsubsec:g(1,6)-a-N=6}

The Nichols algebra $\toba_{\bq}$ is generated by $(x_i)_{i\in \I_3}$ with defining relations
\begin{align}\label{eq:rels-g(1,6)-a-N=6}
\begin{aligned}
x_{221}&=0; &x_{112}&=0; & x_{\alpha}^6&=0, & &\alpha\in \{1,2,12,12^23^2,12^33^2,1^22^33^2\}; \\
x_{13}&=0; & x_{2223}&=0; & x_{3}^2&=0; &  & x_{\alpha}^3=0, \, \alpha\in \{23,123,12^23\}.
\end{aligned}
\end{align}
Here $\Oc_+^{\bq}= \{1, 2, 12, 12^23^2, 12^33^2, 1^22^33^2, 23, 123, 12^23 \}$ and the degree of the integral is
\begin{equation*}
\ya= 38\alpha_1 + 66\alpha_2 + 42\alpha_3.
\end{equation*}

\subsubsection{The Dynkin diagram \emph{(\ref{eq:dynkin-g(1,6)}
		a)}, $N=3$}\label{subsubsec:g(1,6)-a-N=3}

The Nichols algebra $\toba_{\bq}$ is generated by $(x_i)_{i\in \I_3}$ with defining relations
\begin{align}\label{eq:rels-g(1,6)-a-N=3}
\begin{aligned}
&x_{13}=0; & &[[x_{(13)},x_2]_c,x_2]_c=0; \ [x_{223},x_{23}]_c=0;
\\
&x_{112}=0; & &x_{3}^2=0; \ x_{\alpha}^6=0, \, \alpha\in \{23,123,12^23\}
\\
&x_{221}=0; & &x_{\alpha}^3=0, \, \alpha\in \{1,2,12,12^23^2,12^33^2,1^22^33^2\}.
\end{aligned}
\end{align}
Here $\Oc_+^{\bq}= \{1, 2, 12, 12^23^2, 12^33^2, 1^22^33^2, 23, 123, 12^23 \}$ and the degree of the integral is
\begin{equation*}
\ya= 26\alpha_1 + 48\alpha_2 + 33\alpha_3.
\end{equation*}

\subsubsection{The Dynkin diagram \emph{(\ref{eq:dynkin-g(1,6)}
		b)}, $N=6$}\label{subsubsec:g(1,6)-b-N=6}

The Nichols algebra $\toba_{\bq}$ is generated by $(x_i)_{i\in \I_3}$ with defining relations
\begin{align} \label{eq:rels-g(1,6)-b-N=6}
\begin{aligned}
\begin{aligned}
x_{13}&=0; & x_{112}&=0;  \quad x_{\alpha}^6=0, \, \alpha\in \{1,23,123,12^2,12^33,1^22^33\}; \\
x_{3}^2&=0; &  &[x_{223},x_{23}]_c=0; \quad x_{\alpha}^3=0, \, \alpha\in \{2,12,12^23\};
\end{aligned}
\\
[x_2, [x_{21},x_{23}]_c]_c+q_{13}q_{23}q_{21}[x_{223},x_{21}]_c+q_{21}x_{21}x_{223}=0.
\end{aligned}
\end{align}
Here $\Oc_+^{\bq}= \{1, 23, 123, 12^2, 12^33, 1^22^33, 2, 12, 12^23 \}$ and the degree of the integral is
\begin{equation*}
\ya= 38\alpha_1 + 66\alpha_2 + 26\alpha_3.
\end{equation*}

\subsubsection{The Dynkin diagram \emph{(\ref{eq:dynkin-g(1,6)}
		b)}, $N=3$}\label{subsubsec:g(1,6)-b-N=3}

The Nichols algebra $\toba_{\bq}$ is generated by $(x_i)_{i\in \I_3}$ with defining relations
\begin{align}\label{eq:rels-g(1,6)-b-N=3}
\begin{aligned}
\begin{aligned}
x_{2221}&=0; &x_{112}&=0; & x_{\alpha}^3&=0, \, \alpha\in \{1,23,123,12^2,12^33,1^22^33\}; \\
x_{2223}&=0; & x_{13}&=0; &x_{3}^2&=0; \quad x_{\alpha}^6=0, \, \alpha\in \{2,12,12^23\};
\end{aligned}
\\
[x_{1},x_{223}]_c+q_{23}[x_{(13)},x_{2}]_c+(\zeta^2-\zeta)q_{12}x_{2}x_{(13)}=0.
\end{aligned}
\end{align}
Here $\Oc_+^{\bq}= \{1, 23, 123, 12^2, 12^33, 1^22^33, 2, 12, 12^23 \}$ and the degree of the integral is
\begin{equation*}
\ya= 26\alpha_1 + 48\alpha_2 + 17\alpha_3.
\end{equation*}

\subsubsection{The associated Lie algebra} This is of type $C_3$.

\subsection{\ Type $\gtt(2,3)$}\label{subsec:type-g(2,3)}
Here $\theta = 3$, $\zeta \in \G'_{3}$. Let
\begin{align*}
A&=\begin{pmatrix} 0 & 1 & 0 \\ -1 & 2 & -2 \\ 0 & 1 & 0 \end{pmatrix}
\in \kk^{3\times 3},& 
\pa&= (-1,1,-1) \in \G_2^3.
\end{align*}
Let $\g(2, 3) = \g(A, \pa)$, the contragredient Lie
superalgebra corresponding to $(A, \pa)$. 
Then $\sdim \g(2, 3) = 12|14$ \cite{BGL}. 
There are 4 other pairs of matrices and parity vectors for which the associated contragredient Lie superalgebra is isomorphic to $\g(2,3)$. We describe now its root system $\gtt(2,3)$,
see \cite{AA-GRS-CLS-NA}.

\subsubsection{Basic datum and root system}
Below, $A_3$, $A_2^{(1)}$, $C_3$ and $C_2^{(1)}$ are numbered as in \eqref{eq:dynkin-system-A}, \eqref{eq:An-(1)}, \eqref{eq:dynkin-system-C} and  	
\eqref{eq:Cn-(1)}, respectively.
The basic datum and the bundle of Cartan matrices are described by the following diagram:
\begin{center}
	\begin{tabular}{c c c c c c c c c c c c}
		&&
		& $\overset{C_3}{\underset{a_1}{\vtxgpd}}$
		& \hspace{-5pt}\raisebox{3pt}{$\overset{1}{\rule{40pt}{0.5pt}}$}\hspace{-5pt}
		& $\overset{A_3}{\underset{a_2}{\vtxgpd}}$
		& \hspace{-5pt}\raisebox{3pt}{$\overset{2}{\rule{40pt}{0.5pt}}$}\hspace{-5pt}
		& $\overset{A_2^{(1)}}{\underset{a_3}{\vtxgpd}}$ & &
		\\
		&& & {\scriptsize 3} \vline\hspace{5pt} & & {\scriptsize 3} \vline\hspace{5pt} & & &&
		\\
		&& &  $\overset{C_2^{(1)}}{\underset{a_4}{\vtxgpd}}$
		&\hspace{-5pt}\raisebox{3pt}{$\overset{1}{\rule{40pt}{0.5pt}}$}\hspace{-5pt}
		& $\overset{\tau(C_3)}{\underset{a_5}{\vtxgpd}}$
		& & & &
	\end{tabular}
\end{center}
Using the notation \eqref{eq:notation-root-exceptional}, the bundle of root sets is the following:
\begin{align*}
\varDelta_{+}^{a_1} &= \{ 1, 12, 2, 12^23, 123, 12^33^2, 2^23, 12^23^2, 23, 3 \}=\tau (\varDelta_{+}^{a_5}),\\
\varDelta_{+}^{a_2} &= \{ 1, 12, 2, 1^22^23, 12^23, 1^22^33^2, 123, 12^23^2, 23, 3 \},\\
\varDelta_{+}^{a_3} &= \{ 1, 12, 2, 1^223, 123, 1^223^2, 13, 123^2, 23, 3 \} ,\\
\varDelta_{+}^{a_4} &= \{ 1, 12, 12^2, 2, 1^22^33, 1^22^23, 12^23, 123, 23, 3 \}.
\end{align*}

\subsubsection{Weyl groupoid}
\label{subsubsec:type-g23-Weyl}
The isotropy group  at $a_3 \in \cX$ is
\begin{align*}
\cW(a_3)= \langle \varsigma_2^{a_3}\varsigma_1 \varsigma_3 \varsigma_2 \varsigma_3 \varsigma_1 \varsigma_2, \varsigma_1^{a_3}, \varsigma_3^{a_3} \rangle \simeq \Z/2  \times W(A_2).
\end{align*}

\subsubsection{Incarnation}
To describe it, we need the matrices $(\bq^{(i)})_{i\in\I_4}$, from left to right and  from up to down:
\begin{align}\label{eq:dynkin-g(2,3)}
\begin{aligned}
&\xymatrix{ \overset{-1}{\underset{\ }{\circ}}\ar  @{-}[r]^{\ztu}  &
	\overset{\zeta}{\underset{\ }{\circ}} \ar  @{-}[r]^{\zeta}  & \overset{-1}{\underset{\
		}{\circ}}}
& &\xymatrix{ \overset{-1}{\underset{\ }{\circ}}\ar  @{-}[r]^{\zeta}  &
	\overset{-1}{\underset{\ }{\circ}} \ar  @{-}[r]^{\zeta}  & \overset{-1}{\underset{\
		}{\circ}}}
\\
&\xymatrix{ \overset{-1}{\underset{\ }{\circ}}\ar  @{-}[r]^{\ztu}  &
	\overset{-\ztu}{\underset{\ }{\circ}} \ar  @{-}[r]^{\ztu}  & \overset{-1}{\underset{\ }{\circ}}}
&
&\xymatrix@R8pt{ & \overset{\zeta}{\underset{\ }{\circ}} \ar@{-}[rd]^{\ztu} &
	\\
	\overset{\zeta}{\underset{\ }{\circ}} \ar@{-}[rr]^{\ztu} \ar@{-}[ru]^{\ztu} & & \overset{-1}{\underset{\ }{\circ}}}.
\end{aligned}
\end{align}
Now, this is the incarnation:
\begin{align*}
a_i & \mapsto \bq^{(i)}, \quad i\in\I_4; & 
a_5 & \mapsto \tau(\bq^{(1)}).
\end{align*}

\subsubsection{PBW-basis and dimension} \label{subsubsec:type-g23-PBW}
Notice that the roots in each $\varDelta_{+}^{a_i}$, $i\in\I_{5}$, are ordered from left to right, justifying the notation $\beta_1, \dots, \beta_{10}$.

The root vectors $x_{\beta_k}$ are described as in Remark \ref{rem:lyndon-word}.
Thus
\begin{align*}
\left\{ x_{\beta_{10}}^{n_{10}} \dots x_{\beta_2}^{n_{2}}  x_{\beta_1}^{n_{1}} \, | \, 0\le n_{k}<N_{\beta_k} \right\}.
\end{align*}
is a PBW-basis of $\toba_{\bq}$. Hence $\dim \toba_{\bq}= 2^63^36=2^73^4$.

\subsubsection{The Dynkin diagram \emph{(\ref{eq:dynkin-g(2,3)}
		a)}}\label{subsubsec:g(2,3)-a}

\

The Nichols algebra $\toba_{\bq}$ is generated by $(x_i)_{i\in \I_3}$ with defining relations
\begin{align}\label{eq:rels-g(2,3)-a}
\begin{aligned}
&[x_{223},x_{23}]_c=0; & x_{13}&=0; & x_{221}&=0; \\
&[[x_{(13)},x_2]_c,x_2]_c=0; & x_{2}^3&=0;  & x_1^2&=0; \quad x_3^2=0; \\
&[x_{(13)},x_2]_c^3=0; & x_{23}^6&=0; & x_{(13)}^3&=0.
\end{aligned}
\end{align}
Here $\Oc_+^{\bq}= \{123, 12^23, 2, 23 \}$ and the degree of the integral is
\begin{equation*}
\ya= 8\alpha_1 + 21\alpha_2 + 15\alpha_3.
\end{equation*}

\subsubsection{The Dynkin diagram \emph{(\ref{eq:dynkin-g(2,3)}
		b)}}\label{subsubsec:g(2,3)-b}

\

The Nichols algebra $\toba_{\bq}$ is generated by $(x_i)_{i\in \I_3}$ with defining relations
\begin{align}\label{eq:rels-g(2,3)-b}
\begin{aligned}
& [[x_{12},x_{(13)}]_c,x_2]_c=0; & x_{13}&=0; & x_{(13)}^6&=0; \\
& [[x_{32},x_{321}]_c,x_2]_c=0; & x_{1}^2&=0; & x_2^2&=0; \quad x_3^2=0; \\
& [x_{(13)},x_2]_c^3=0; & x_{12}^3&=0; & x_{23}^3&=0.
\end{aligned}
\end{align}
Here $\Oc_+^{\bq}= \{12, 12^23, 23, 123 \}$ and the degree of the integral is
\begin{equation*}
\ya= 15\alpha_1 + 21\alpha_2 + 15\alpha_3.
\end{equation*}

\subsubsection{The Dynkin diagram \emph{(\ref{eq:dynkin-g(2,3)}
		c)}}\label{subsubsec:g(2,3)-c}

\

The Nichols algebra $\toba_{\bq}$ is generated by $(x_i)_{i\in \I_3}$ with defining relations
\begin{align}
\begin{aligned}
\begin{aligned}
x_{13}&=0; & x_{2221}&=0; & x_{2223}&=0; &  & x_1^2=0; \quad x_3^2=0; \\ \label{eq:rels-g(2,3)-c}
x_{2}^6&=0; & x_{12}^3&=0; & x_{23}^3&=0; &  & [x_{(13)},x_2]_c^3=0;
\end{aligned}
\\
[x_{1},x_{223}]_c+q_{23}[x_{(13)},x_2]_c-(1-\zeta)q_{12}x_2x_{(13)}=0.
\end{aligned}
\end{align}
Here $\Oc_+^{\bq}= \{12, 12^23, 23, 2 \}$ and the degree of the integral is
\begin{equation*}
\ya= 8\alpha_1 + 21\alpha_2 + 8\alpha_3.
\end{equation*}

\subsubsection{The Dynkin diagram \emph{(\ref{eq:dynkin-g(2,3)}
		d)}}\label{subsubsec:g(2,3)-d}

\

The Nichols algebra $\toba_{\bq}$ is generated by $(x_i)_{i\in \I_3}$ with defining relations
\begin{align}
\begin{aligned}
&\begin{aligned}
x_{112}&=0; & x_{113}&=0; & x_{331}&=0; &  x_{332}&=0;\\ \label{eq:rels-g(2,3)-d}
x_{1}^3&=0; & x_{3}^3&=0; & x_{13}^3&=0; &  x_{(13)}^6&=0;
\end{aligned}
\\
& x_2^2=0; \qquad x_{(13)} -
q_{23}\zeta [x_{13},x_{2}]_c-q_{12}(1-\ztu)x_2x_{13}=0.
\end{aligned}
\end{align}
Here $\Oc_+^{\bq}= \{1, 13, 3, 123 \}$ and the degree of the integral is
\begin{equation*}
\ya= 15\alpha_1 + 11\alpha_2 + 15\alpha_3.
\end{equation*}

\subsubsection{The associated Lie algebra} This is of type $A_2\times A_1$.

\subsection{\ Type $\gtt(3, 3)$}\label{subsec:type-g(3,3)}
Here $\theta = 4$, $\zeta \in \G'_3$. Let 
\begin{align*}
A&=\begin{pmatrix} 2 & -1 & 0 & 0 \\ -1 & 2 & -1 & 0 \\ 0 & -2 & 2 & -1 \\ 0 & 0 & 1 & 0 \end{pmatrix}
\in \kk^{4\times 4},& 
\pa&= (1,1, -1, -1) \in \G_2^3.
\end{align*}
Let $\g(3, 3) = \g(A, \pa)$, the contragredient Lie
superalgebra corresponding to $(A, \pa)$. 
Then $\sdim \g(1, 6) = 23|16$ \cite{BGL}. 
There are 6 other pairs of matrices and parity vectors for which the associated contragredient Lie superalgebra is isomorphic to $\g(3,3)$. We describe now its root system $\gtt(3,3)$,
see \cite{AA-GRS-CLS-NA}.

\subsubsection{Basic datum and root system}
Below, $F_4$, $A_4$, $_{1}T$, $D_4$ and $A_5^{(2)}$ are numbered as in \eqref{eq:dynkin-system-F}, \eqref{eq:dynkin-system-A}, \eqref{eq:mTn}, \eqref{eq:dynkin-system-D} and  	
\eqref{eq:A2-2n-1}, respectively.
The basic datum and the bundle of Cartan matrices are described by the following diagram:
\begin{center}
	\begin{tabular}{c c c c c c c c c}
		$\overset{A_4}{\underset{a_2}{\vtxgpd}}$
		& \hspace{-5pt}\raisebox{3pt}{$\overset{3}{\rule{30pt}{0.5pt}}$}\hspace{-5pt}
		& $\overset{{}_1T}{\underset{a_3}{\vtxgpd}}$
		& \hspace{-5pt}\raisebox{3pt}{$\overset{2}{\rule{30pt}{0.5pt}}$}\hspace{-5pt}
		& $\overset{D_4}{\underset{a_4}{\vtxgpd}}$
		& \hspace{-5pt}\raisebox{3pt}{$\overset{1}{\rule{30pt}{0.5pt}}$}\hspace{-5pt}
		& $\overset{D_4}{\underset{a_5}{\vtxgpd}}$     & &
		\\
		{\scriptsize 4} \vline\hspace{5pt}
		& & & &
		{\scriptsize 4} \vline\hspace{5pt}
		& & {\scriptsize 4} \vline\hspace{5pt} & &
		\\
		$\overset{\tau(F_4)}{\underset{a_1}{\vtxgpd}}$
		& & & & $\overset{s_{13}(A_5^{(2)})}{\underset{a_6}{\vtxgpd}}$
		& \hspace{-5pt}\raisebox{3pt}{$\overset{1}{\rule{30pt}{0.5pt}}$}\hspace{-5pt}
		& $\overset{D_4}{\underset{a_7}{\vtxgpd}}$ & &
		\\
		& & & & & & {\scriptsize 2} \vline\hspace{5pt}   & &
		\\
		& & $\overset{s_{23}(F_4)}{\underset{a_{10}}{\vtxgpd}}$
		& \hspace{-5pt}\raisebox{3pt}{$\overset{1}{\rule{30pt}{0.5pt}}$}\hspace{-5pt}
		& $\overset{s_{14}(A_4)}{\underset{a_9}{\vtxgpd}}$
		& \hspace{-5pt}\raisebox{3pt}{$\overset{3}{\rule{30pt}{0.5pt}}$}\hspace{-5pt}
		& $\overset{s_{14}({}_1T)}{\underset{a_8}{\vtxgpd}}$ & &
	\end{tabular}
\end{center}
Using the notation \eqref{eq:notation-root-exceptional}, the bundle of root sets is the following:
{\scriptsize
	\begin{align*}
	\varDelta_{+}^{a_1}= & \{ 1, 12, 2, 123, 12^23^2, 123^2, 23, 23^2, 3, 12^23^34, 12^23^24, 123^24, 23^24, 1234,234, 34, 4 \}, \\
	\varDelta_{+}^{a_2}= & \{ 1, 12, 2, 123, 23, 3, 12^23^24, 123^24, 1234, 12^23^34^2, 23^24, 12^23^24^2, 234, 123^24^2, 23^24^2, 34, 4 \}, \\
	\varDelta_{+}^{a_3}= & \{ 1, 12, 2, 12^23, 123, 12^23^2, 23, 3, 1^22^33^24, 12^33^24, 12^23^24, 12^234, 1234,124, 234, 24, 4 \}, \\
	\varDelta_{+}^{a_4}= & \{ 1, 12, 2, 123, 23, 3, 12^23^24, 12^234, 1^22^33^24^2, 1234, 12^33^24^2, 12^23^24^2,234, 12^234^2, 124, 24, 4 \}, \\
	\varDelta_{+}^{a_5}= & \{ 1, 12, 12^2, 2, 12^23, 123, 23, 3, 1^22^334, 12^334, 12^234, 1234, 12^24, 124, 234, 24, 4 \}, \\
	\varDelta_{+}^{a_6}= & \{ 1, 12, 2, 123, 23, 3, 12^23^24, 12^234, 123^24, 23^24, 1234, 12^23^24^2, 124, 234, 34, 24, 4 \}, \\
	\varDelta_{+}^{a_7}= & s_{14}(\varDelta_{+}^{a_4}), \qquad \varDelta_{+}^{a_8}=s_{14}(\varDelta_{+}^{a_3}), \qquad \varDelta_{+}^{a_9}=s_{14}(\varDelta_{+}^{a_2}), \qquad \varDelta_{+}^{a_{10}}=s_{14}(\varDelta_{+}^{a_1}).
	\end{align*}
}%

\subsubsection{Weyl groupoid}
\label{subsubsec:type-g33-Weyl}
The isotropy group  at $a_5 \in \cX$ is
\begin{align*}
\cW(a_5)= \langle \varsigma_1^{a_5}, \varsigma_2^{a_5},  \varsigma_3^{a_5} \rangle \simeq W(B_3).
\end{align*}

\subsubsection{Incarnation}
To describe it, we need the matrices $(\bq^{(i)})_{i\in\I_6}$, from left to right and  from up to down:
\begin{align}\label{eq:dynkin-g(3,3)}
\begin{aligned}
&\xymatrix{ \overset{\ztu}{\underset{\ }{\circ}}\ar  @{-}[r]^{\zeta}  &
	\overset{\ztu}{\underset{\ }{\circ}} \ar  @{-}[r]^{\zeta}  & \overset{\zeta}{\underset{\
		}{\circ}}
	\ar  @{-}[r]^{\ztu}  & \overset{-1}{\underset{\ }{\circ}}}
& &\xymatrix{ \overset{\ztu}{\underset{\ }{\circ}}\ar  @{-}[r]^{\zeta}  &
	\overset{\ztu}{\underset{\ }{\circ}} \ar  @{-}[r]^{\zeta}  & \overset{-1}{\underset{\
		}{\circ}}
	\ar  @{-}[r]^{\zeta}  & \overset{-1}{\underset{\ }{\circ}}}
\\
& \xymatrix@R-6pt{  &    \overset{\zeta}{\circ} \ar  @{-}[d]_{\ztu}\ar  @{-}[dr]^{\ztu} & \\
	\overset{\ztu}{\underset{\ }{\circ}} \ar  @{-}[r]^{\zeta}  & \overset{-1}{\underset{\
		}{\circ}} \ar  @{-}[r]^{\ztu}  & \overset{-1}{\underset{\ }{\circ}}}
&& \xymatrix@R-6pt{  &    \overset{-1}{\circ} \ar  @{-}[d]^{\ztu} & \\ \overset{-1}{\underset{\ }{\circ}} \ar  @{-}[r]^{\zeta}  & \overset{-1}{\underset{\
		}{\circ}} \ar  @{-}[r]^{\zeta}  & \overset{\ztu}{\underset{\ }{\circ}}}
\\
&\xymatrix@R-6pt{  &    \overset{-1}{\circ} \ar  @{-}[d]^{\zeta} & \\ \overset{-1}{\underset{\ }{\circ}} \ar  @{-}[r]^{\zeta}  & \overset{\ztu}{\underset{\
		}{\circ}} \ar  @{-}[r]^{\zeta}  & \overset{\ztu}{\underset{\ }{\circ}}}
& &\xymatrix@R-6pt{  &    \overset{-1}{\circ} \ar  @{-}[d]^{\ztu} & \\ \overset{-1}{\underset{\ }{\circ}} \ar  @{-}[r]^{\zeta}  & \overset{\zeta}{\underset{\
		}{\circ}} \ar  @{-}[r]^{\ztu}  & \overset{\ztu}{\underset{\ }{\circ}}}
\end{aligned}
\end{align}
Now, this is the incarnation:
\begin{align*}
a_i & \mapsto \bq^{(i)}, \quad i\in\I_6; & 
a_i & \mapsto s_{14}(\bq^{(11-i)}), \quad i\in\I_{7,10}.
\end{align*}

\subsubsection{PBW-basis and dimension} \label{subsubsec:type-g33-PBW}
Notice that the roots in each $\varDelta_{+}^{a_i}$, $i\in\I_{10}$, are ordered from left to right, justifying the notation $\beta_1, \dots, \beta_{17}$.

The root vectors $x_{\beta_k}$ are described as in Remark \ref{rem:lyndon-word}.
Thus
\begin{align*}
\left\{ x_{\beta_{17}}^{n_{17}} \dots x_{\beta_2}^{n_{2}}  x_{\beta_1}^{n_{1}} \, | \, 0\le n_{k}<N_{\beta_k} \right\}.
\end{align*}
is a PBW-basis of $\toba_{\bq}$. Hence $\dim \toba_{\bq}= 2^83^9$.

\subsubsection{The Dynkin diagram \emph{(\ref{eq:dynkin-g(3,3)}
		a)}}\label{subsubsec:g(3,3)-a}

\

The Nichols algebra $\toba_{\bq}$ is generated by $(x_i)_{i\in \I_4}$ with defining relations
\begin{align}\label{eq:rels-g(3,3)-a}
\begin{aligned}
x_{13}&=0; &x_{14}&=0; & x_{24}&=0; & [x_{3321},&x_{32}]_c=0;\\
&& x_{112}&=0; & x_{221}&=0; & [[x_{(24)},&x_{3}]_c,x_3]_c=0;\\
x_{223}&=0; &x_{334}&=0; & x_{4}^2&=0; & x_{\alpha}^3&=0, \ \alpha\in\Oc_+^{\bq}.
\end{aligned}
\end{align}
Here $\Oc_+^{\bq}=\{ 1, 12, 2, 123, 12^23^2, 123^2, 23, 23^2, 3 \}$ 
and the degree of the integral is
\begin{equation*}
\ya= 14\alpha_1 + 24\alpha_2 + 30\alpha_3 + 8\alpha_4.
\end{equation*}

\subsubsection{The Dynkin diagram \emph{(\ref{eq:dynkin-g(3,3)}
		b)}}\label{subsubsec:g(3,3)-b}

\

The Nichols algebra $\toba_{\bq}$ is generated by $(x_i)_{i\in \I_4}$ with defining relations
\begin{align}\label{eq:rels-g(3,3)-b}
\begin{aligned}
x_{223}&=0; & x_{24}&=0; & [[x_{43},&x_{432}]_c,x_3]_c=0;\\
x_{112}&=0; & x_{221}&=0; & x_{13}&=0; \quad x_{14}=0; \\
x_{3}^2&=0; & x_{4}^2&=0; & x_{\alpha}^3&=0, \ \alpha\in\Oc_+^{\bq}.
\end{aligned}
\end{align}
Here $\Oc_+^{\bq}=\{ 1, 12, 2, 1234, 12^23^24^2, 234, 123^24^2, 23^24^2, 34  \}$
and the degree of the integral is
\begin{equation*}
\ya= 14\alpha_1 + 24\alpha_2 + 30\alpha_3 + 24\alpha_4.
\end{equation*}

\subsubsection{The Dynkin diagram \emph{(\ref{eq:dynkin-g(3,3)} c)}}\label{subsubsec:g(3,3)-c}

\

The Nichols algebra $\toba_{\bq}$ is generated by $(x_i)_{i\in \I_4}$ with defining relations
\begin{align}\label{eq:rels-g(3,3)-c}
\begin{aligned}
&\begin{aligned}
x_{14}&=0; & x_{112}&=0; & [x_{(13)},&x_{2}]_c=0;\\
x_{442}&=0; & x_{443}&=0; & [x_{124},&x_{2}]_c=0;\\
x_{2}^2&=0; & x_{3}^2&=0; & x_{\alpha}^3&=0, \ \alpha\in\Oc_+^{\bq};
\end{aligned}
\\
&x_{13}=0; \quad x_{(24)}=\zeta q_{34}[x_{24},x_3]_c+q_{23}(1-\ztu)x_3x_{24}.
\end{aligned}
\end{align}
Here  $\Oc_+^{\bq}=\{ 1, 4, 1234^2, 23^24, 1234, 12^23^24^2, 123, 234, 23 \}$
and the degree of the integral is
\begin{equation*}
\ya= 14\alpha_1 + 24\alpha_2 + 20\alpha_3 + 24\alpha_4.
\end{equation*}

\subsubsection{The Dynkin diagram \emph{(\ref{eq:dynkin-g(3,3)}
		d)}}\label{subsubsec:g(3,3)-d}

\

The Nichols algebra $\toba_{\bq}$ is generated by $(x_i)_{i\in \I_4}$ with defining relations
\begin{align}\label{eq:rels-g(3,3)-d}
\begin{aligned}
x_{13}&=0; &x_{14}&=0; & x_{34}&=0; & [x_{124},&x_{2}]_c=0;\\
&& x_{332}&=0; & x_{2}^2&=0; & [x_{324},&x_{2}]_c=0;\\
& & x_{1}^2&=0; & x_{4}^2&=0; & x_{\alpha}^3&=0, \ \alpha\in\Oc_+^{\bq}.
\end{aligned}
\end{align}
Here  $\Oc_+^{\bq}=\{ 3, 123, 1^22^23, 12, 1^22^33^24, 1^22^334, 12^234, 234, 24 \}$
and the degree of the integral is
\begin{equation*}
\ya= 24\alpha_1 + 36\alpha_2 + 20\alpha_3 + 14\alpha_4.
\end{equation*}

\subsubsection{The Dynkin diagram \emph{(\ref{eq:dynkin-g(3,3)}
		e)}}\label{subsubsec:g(3,3)-e}

\

The Nichols algebra $\toba_{\bq}$ is generated by $(x_i)_{i\in \I_4}$ with defining relations
\begin{align}\label{eq:rels-g(3,3)-e}
\begin{aligned}
x_{221}&=0; & x_{13}&=0; & & x_{332}=0;
\\
x_{223}&=0; & x_{14}&=0; & & x_{1}^2=0; \quad x_{4}^2=0;
\\
x_{224}&=0; & x_{34}&=0; & & x_{\alpha}^3=0, \ \alpha\in\Oc_+^{\bq}.
\end{aligned}
\end{align}
Here $\Oc_+^{\bq}=\{ 3, 23, 2, 12^234, 1^22^33^24^2, 1234, 1^22^334^2, 1^22^234^2, 124 \}$
and the degree of the integral is
\begin{equation*}
\ya= 24\alpha_1 + 36\alpha_2 + 20\alpha_3 + 24\alpha_4.
\end{equation*}

\subsubsection{The Dynkin diagram \emph{(\ref{eq:dynkin-g(3,3)}
		f)}}\label{subsubsec:g(3,3)-f}

\

The Nichols algebra $\toba_{\bq}$ is generated by $(x_i)_{i\in \I_4}$ with defining relations
\begin{align}\label{eq:rels-g(3,3)-f}
\begin{aligned}
x_{13}&=0; &x_{14}&=0; & x_{34}&=0; & [[x_{(13)},&x_2]_c,x_2]_c=0;\\
&& x_{223}&=0; & x_{224}&=0; & [[x_{324},&x_2]_c,x_2]_c=0;\\
x_{332}&=0 & x_{1}^2&=0; & x_{4}^2&=0; & x_{\alpha}^3&=0, \ \alpha\in\Oc_+^{\bq}.
\end{aligned}
\end{align}
Here $\Oc_+^{\bq}=\{ 1, 12, 12^2, 2, 1^22^334, 12^334, 12^234, 1234, 234 \}$
and the degree of the integral is
\begin{equation*}
\ya= 14\alpha_1 + 36\alpha_2 + 20\alpha_3 + 14\alpha_4.
\end{equation*}

\subsubsection{The associated Lie algebra} This is of type $B_3$.	

\subsection{\ Type $\gtt(4,3)$}\label{subsec:type-g(4,3)}

Here $\theta = 4$, $\zeta \in \G'_3$. Let 
\begin{align*}
A&=\begin{pmatrix}  2 & -1 & 0 & 0 \\ -1 & 0 & 1 & 0 \\ 0 & -2 & 2 & -2 \\ 0 & 0 & 1 & 0 \end{pmatrix}
\in \kk^{4\times 4},& 
\pa&= (1,-1,-1,-1) \in \G_2^4.
\end{align*}
Let $\g(4, 3) = \g(A, \pa)$, the contragredient Lie
superalgebra corresponding to $(A, \pa)$. 
Then $\sdim \g(2, 3) = 24|26$ \cite{BGL}. 
There are 9 other pairs of matrices and parity vectors for which the associated contragredient Lie superalgebra is isomorphic to $\g(4,3)$. We describe now its root system $\gtt(4,3)$,
see \cite{AA-GRS-CLS-NA}.

\subsubsection{Basic datum and root system}
Below, $C_n^{(1)\wedge}$, $C_4$, $F_4$, $A_4$, $_1 T$, $D_4$ and $A_5^{(2)}$ are numbered as in  \eqref{eq:Cn1-indef}, \eqref{eq:dynkin-system-C}, \eqref{eq:dynkin-system-F}, \eqref{eq:dynkin-system-A},  \eqref{eq:mTn}, \eqref{eq:dynkin-system-D} and \eqref{eq:A2-2n-1}, respectively.
The basic datum and the bundle of Cartan matrices are described by the following diagram, called $\gtt(4,3)$:
\begin{center}
	\begin{tabular}{c c c c c c c c c}
		$\overset{C_2^{(1)\wedge}}{\underset{a_1}{\vtxgpd}}$
		& \hspace{-5pt}\raisebox{3pt}{$\overset{4}{\rule{30pt}{0.5pt}}$}\hspace{-5pt}
		& $\overset{C_4}{\underset{a_2}{\vtxgpd}}$
		& & & & & &
		\\
		{\scriptsize 2} \vline\hspace{5pt}
		& & {\scriptsize 2} \vline\hspace{5pt}
		& &
		& & & &
		\\
		$\overset{\tau(F_4)}{\underset{a_3}{\vtxgpd}}$
		& \hspace{-5pt}\raisebox{3pt}{$\overset{4}{\rule{30pt}{0.5pt}}$}\hspace{-5pt}
		& $\overset{A_4}{\underset{a_4}{\vtxgpd}}$
		& \hspace{-5pt}\raisebox{3pt}{$\overset{3}{\rule{30pt}{0.5pt}}$}\hspace{-5pt}
		& $\overset{{}_1T}{\underset{a_7}{\vtxgpd}}$
		& & $\overset{s_{13}(A_5^{(2)})}{\underset{a_{10}}{\vtxgpd}}$ & &
		\\
		{\scriptsize 1} \vline\hspace{5pt}
		& &{\scriptsize 1} \vline\hspace{5pt}
		& & {\scriptsize 1} \vline\hspace{5pt}
		& & {\scriptsize 4} \vline\hspace{5pt} & &
		\\
		$\overset{\tau(F_4)}{\underset{a_5}{\vtxgpd}}$
		& \hspace{-5pt}\raisebox{3pt}{$\overset{4}{\rule{30pt}{0.5pt}}$}\hspace{-5pt}
		& $\overset{A_4}{\underset{a_6}{\vtxgpd}}$
		& \hspace{-5pt}\raisebox{3pt}{$\overset{3}{\rule{30pt}{0.5pt}}$}\hspace{-5pt}
		& $\overset{{}_1T}{\underset{a_8}{\vtxgpd}}$
		& \hspace{-5pt}\raisebox{3pt}{$\overset{2}{\rule{30pt}{0.5pt}}$}\hspace{-5pt}
		& $\overset{D_4}{\underset{a_9}{\vtxgpd}}$  & &
	\end{tabular}
\end{center}
Using the notation \eqref{eq:notation-root-exceptional}, the bundle of root sets is the following: {\scriptsize
	\begin{align*}
	\varDelta_{+}^{a_1}= & \{ 1, 12, 2, 123, 12^23^2, 123^2, 23,  23^2, 3, 12^23^44, 12^23^24, 123^34, 23^34, \\
	& 12^23^24, 123^24, 12^23^44^2, 1234, 23^24, 234, 3^24, 34, 4 \}, \\
	\varDelta_{+}^{a_2}= & \{ 1, 12, 2, 123, 23, 3, 12^23^24, 123^24, 1234, 12^23^44^2, 12^23^34^2, 23^24, \\
	& 12^23^24^2, 234, 12^23^44^3, 123^34^2, 123^24^2, 23^34^2, 23^24^2, 3^24, 34, 4 \}, \\
	\varDelta_{+}^{a_3}= & \{ 1, 12, 2, 123, 12^23^2, 123^2, 23, 23^2, 3, 1^22^33^44, 1^22^33^34, 1^22^23^34, 12^23^24, \\
	& 1^22^23^24, 12^23^24, 1^22^33^44^2, 123^24, 1234, 23^24, 234, 34, 4 \}, \\
	\varDelta_{+}^{a_4}= & \{ 1, 12, 2, 123, 23, 3, 1^22^23^24, 12^23^24, 1^22^33^44^2, 1^22^33^34^2, 123^24, \\
	& 1^22^23^34^2, 1234, 1^22^33^44^3, 12^23^34^2, 12^23^24^2, 123^24^2, 23^24, 234, 23^24^2, 34, 4 \}, \\
	\varDelta_{+}^{a_5}= & \{ 1, 12, 2, 123, 12^23^2, 123^2, 23, 23^2, 3, 12^33^44, 12^33^34, 12^23^24, 2^23^34, \\
	& 12^23^24, 2^23^24, 12^33^44^2, 123^24, 1234, 23^24, 234, 34, 4 \}, \\
	\varDelta_{+}^{a_6}= & \{ 1, 12, 2, 123, 23, 3, 12^23^24, 123^24, 1234, 12^33^44^2, 12^33^34^2, 2^23^24, \\
	& 12^23^34^2, 23^24, 12^33^44^3, 12^23^24^2, 123^24^2, 2^23^34^2, 234, 23^24^2, 34, 4 \}, \\
	\varDelta_{+}^{a_7}= & \{ 1, 12, 2, 123, 23, 3, 1^22^234, 12^234, 1^22^33^24^2, 1^22^334^2, 1234, 1^22^234^2, \\
	& 124, 1^22^33^24^3, 12^23^24^2, 12^234^2, 1234^2, 234, 34, 234^2, 24, 4 \}, \\
	\varDelta_{+}^{a_8}= & \{ 1, 12, 2, 123, 23, 3, 12^33^24, 12^23^24, 12^234, 2^23^24, 2^234, 12^33^34^2, 123^24, \\
	& 23^24, 12^33^24^2, 1234, 12^23^24^2, 124, 234, 24, 34, 4 \}, \\
	\varDelta_{+}^{a_9}= & \{ 1, 12, 2, 123, 23, 3, 1^22^33^24, 1^22^23^24, 1^22^234, 12^33^24, 12^23^24, 1^22^43^34^2, \\
	& 2^23^24, 12^234, 2^234, 1^22^33^24^2, 12^33^24^2, 1234, 124, 234, 24, 4 \}, \\
	\varDelta_{+}^{a_{10}}= & \{ 1, 12, 2, 123, 23, 3, 1^22^334, 1^22^234, 1^22^24, 12^334, 12^234, 1^22^434^2, 2^234, \\
	& 1234, 1^22^334^2, 234, 12^334^2, 12^24, 124, 2^24, 24, 4 \}.
	\end{align*}
}%

\subsubsection{Weyl groupoid}
\label{subsubsec:type-g43-Weyl}
The isotropy group  at $a_6 \in \cX$ is
\begin{align*}
\cW(a_6)= \langle \varsigma_1^{a_6}, \varsigma_2^{a_6}, \varsigma_3^{a_6}, \varsigma_4^{a_6}\varsigma_2 \varsigma_3\varsigma_4 \varsigma_1 \varsigma_2 \varsigma_3\varsigma_2 \varsigma_1 \varsigma_4 \varsigma_3 \varsigma_2 \varsigma_4 \rangle \simeq W(C_3) \times \Z/2.
\end{align*}

\subsubsection{Incarnation}
To describe it, we need the matrices $(\bq^{(i)})_{i\in\I_{10}}$, from left to right and  from up to down:

\begin{align}\label{eq:dynkin-g(4,3)}
\begin{aligned}
&\xymatrix{ \overset{\ztu}{\underset{\ }{\circ}}\ar  @{-}[r]^{\zeta}  &
	\overset{-1}{\underset{\ }{\circ}} \ar  @{-}[r]^{\ztu}  & \overset{-\ztu}{\underset{\
		}{\circ}}
	\ar  @{-}[r]^{\ztu}  & \overset{-1}{\underset{\ }{\circ}}}
& &\xymatrix{ \overset{\ztu}{\underset{\ }{\circ}}\ar  @{-}[r]^{\zeta}  &
	\overset{-1}{\underset{\ }{\circ}} \ar  @{-}[r]^{\ztu}  & \overset{\zeta}{\underset{\
		}{\circ}}
	\ar  @{-}[r]^{\zeta}  & \overset{-1}{\underset{\ }{\circ}}}
\\
& \xymatrix{ \overset{-1}{\underset{\ }{\circ}}\ar  @{-}[r]^{\ztu}  &
	\overset{-1}{\underset{\ }{\circ}} \ar  @{-}[r]^{\zeta}  & \overset{\zeta}{\underset{\
		}{\circ}}
	\ar  @{-}[r]^{\ztu}  & \overset{-1}{\underset{\ }{\circ}}}
& & \xymatrix{ \overset{-1}{\underset{\ }{\circ}}\ar  @{-}[r]^{\ztu}  &
	\overset{-1}{\underset{\ }{\circ}} \ar  @{-}[r]^{\zeta}  & \overset{-1}{\underset{\
		}{\circ}}
	\ar  @{-}[r]^{\zeta}  & \overset{-1}{\underset{\ }{\circ}}}
\\
& \xymatrix{ \overset{-1}{\underset{\ }{\circ}}\ar  @{-}[r]^{\zeta}  &
	\overset{\ztu}{\underset{\ }{\circ}} \ar  @{-}[r]^{\zeta}  & \overset{\zeta}{\underset{\
		}{\circ}}
	\ar  @{-}[r]^{\ztu}  & \overset{-1}{\underset{\ }{\circ}}}
&& \xymatrix{ \overset{-1}{\underset{\ }{\circ}}\ar  @{-}[r]^{\zeta}  &
	\overset{\ztu}{\underset{\ }{\circ}} \ar  @{-}[r]^{\zeta}  & \overset{-1}{\underset{\
		}{\circ}}
	\ar  @{-}[r]^{\zeta}  & \overset{-1}{\underset{\ }{\circ}}}
\\
& \xymatrix@R-6pt{  &    \overset{-1}{\circ} \ar  @{-}[d]^{\zeta} & \\
	\overset{\zeta}{\underset{\ }{\circ}} \ar  @{-}[r]^{\ztu}  & \overset{-1}{\underset{\
		}{\circ}} \ar  @{-}[r]^{\zeta}  & \overset{\ztu}{\underset{\ }{\circ}}}
&& \xymatrix@R-6pt{  &    \overset{\zeta}{\circ} \ar  @{-}[d]_{\ztu}\ar  @{-}[dr]^{\ztu} &
	\\
	\overset{-1}{\underset{\ }{\circ}} \ar  @{-}[r]^{\ztu}  & \overset{\zeta}{\underset{\
		}{\circ}} \ar  @{-}[r]^{\ztu}  & \overset{-1}{\underset{\ }{\circ}}}
\\
& \xymatrix@R-6pt{  &    \overset{-1}{\circ} \ar  @{-}[d]^{\ztu} & \\
	\overset{\zeta}{\underset{\ }{\circ}} \ar  @{-}[r]^{\ztu}  & \overset{\zeta}{\underset{\
		}{\circ}} \ar  @{-}[r]^{\zeta}  & \overset{\ztu}{\underset{\ }{\circ}}}
&&\xymatrix@R-6pt{  &    \overset{\zeta}{\circ} \ar  @{-}[d]_{\ztu}\ar  @{-}[dr]^{\ztu} & \\
	\overset{-1}{\underset{\ }{\circ}} \ar  @{-}[r]^{\zeta}  & \overset{-1}{\underset{\
		}{\circ}} \ar  @{-}[r]^{\ztu}  & \overset{-1}{\underset{\ }{\circ}}}
\end{aligned}
\end{align}
Now, this is the incarnation: $a_i\mapsto \bq^{(i)}$, $i\in\I_{10}$.

\subsubsection{PBW-basis and dimension} \label{subsubsec:type-g43-PBW}
Notice that the roots in each $\varDelta_{+}^{a_i}$, $i\in\I_{10}$, are ordered from left to right, justifying the notation $\beta_1, \dots, \beta_{22}$.

The root vectors $x_{\beta_k}$ are described as in Remark \ref{rem:lyndon-word}.
Thus
\begin{align*}
\left\{ x_{\beta_{22}}^{n_{22}} \dots x_{\beta_2}^{n_{2}}  x_{\beta_1}^{n_{1}} \, | \, 0\le n_{k}<N_{\beta_k} \right\}.
\end{align*}
is a PBW-basis of $\toba_{\bq}$. Hence $\dim \toba_{\bq}= 2^{12}3^96=2^{13}3^{10}$.

\subsubsection{The Dynkin diagram \emph{(\ref{eq:dynkin-g(4,3)}
		a)}}\label{subsubsec:g(4,3)-a}

\
The Nichols algebra $\toba_{\bq}$ is generated by $(x_i)_{i\in \I_4}$ with defining relations
\begin{align}\label{eq:rels-g(4,3)-a}
\begin{aligned}
\begin{aligned}
x_{14}&=0; & x_{24}&=0; & [x_{(13)},x_2]_c&=0; & &x_{13}=0;\\
x_{112}&=0; & x_{3332}&=0; & x_{3334}&=0;\\
x_{2}^2&=0; & x_{4}^2&=0; & x_{\alpha}^{N_\alpha}&=0, &  &\alpha\in\Oc_+^{\bq};
\end{aligned}
\\
[x_2,x_{334}]_c-q_{34}[x_{(24)},x_3]_c+(\zeta^2-\zeta)q_{23}x_3x_{(24)}=0.
\end{aligned}
\end{align}
Here  $\Oc_+^{\bq}=\{  1, 123, 12^23^2, 23, 3, 12^23^34, 123^24, 12^23^44^2, 23^24, 34  \}$ 
and the degree of the integral is
\begin{equation*}
\ya= 18\alpha_1 + 32\alpha_2 + 57\alpha_3 + 20\alpha_4.
\end{equation*}

\subsubsection{The Dynkin diagram \emph{(\ref{eq:dynkin-g(4,3)}
		b)}}\label{subsubsec:g(4,3)-b}

\

The Nichols algebra $\toba_{\bq}$ is generated by $(x_i)_{i\in \I_4}$ with defining relations
\begin{align}\label{eq:rels-g(4,3)-b}
\begin{aligned}
x_{13}&=0; &x_{14}&=0; & x_{24}&=0; & [[x_{(24)},&x_3]_c,x_3]_c=0;\\
&& x_{112}&=0; & x_{332}&=0; & [x_{334},&x_{34}]_c=0;\\
[x_{(13)},x_2]_c&=0; & x_{2}^2&=0; & x_{4}^2&=0; & x_{\alpha}^{N_\alpha}&=0, \ \alpha\in\Oc_+^{\bq}.
\end{aligned}
\end{align}
Here $\Oc_+^{\bq}=\{ 1, 3, 123^24, 1234, 12^23^44^2, 12^23^34^2, 23^24, 12^23^24^2, 234, 34 \}$
and the degree of the integral is
\begin{equation*}
\ya= 18\alpha_1 + 32\alpha_2 + 57\alpha_3 + 39\alpha_4.
\end{equation*}

\subsubsection{The Dynkin diagram \emph{(\ref{eq:dynkin-g(4,3)}
		c)}}\label{subsubsec:g(4,3)-c}

\

The Nichols algebra $\toba_{\bq}$ is generated by $(x_i)_{i\in \I_4}$ with defining relations
\begin{align}\label{eq:rels-g(4,3)-c}
\begin{aligned}
x_{13}&=0; &x_{14}&=0; & x_{24}&=0; & [[x_{(24)},&x_{3}]_c,x_3]_c=0;\\
[x_{332},x_{32}]_c&=0; & x_{334}&=0; & x_{1}^2&=0; & [x_{3321},&x_{32}]_c=0;\\
[x_{(13)},x_2]_c&=0; & x_{2}^2&=0; & x_{4}^2&=0; & x_{\alpha}^{N_\alpha}&=0, \ \alpha\in\Oc_+^{\bq}.
\end{aligned}
\end{align}
Here $\Oc_+^{\bq}=\{ 12, 123, 123^2, 23, 3, 12^23^34, 12^23^24, 12^33^44^2, 23^24, 234 \}$
and the degree of the integral is
\begin{equation*}
\ya= 18\alpha_1 + 45\alpha_2 + 57\alpha_3 + 20\alpha_4.
\end{equation*}

\subsubsection{The Dynkin diagram \emph{(\ref{eq:dynkin-g(4,3)}
		d)}}\label{subsubsec:g(4,3)-d}

\

The Nichols algebra $\toba_{\bq}$ is generated by $(x_i)_{i\in \I_4}$ with defining relations
\begin{align}\label{eq:rels-g(4,3)-d}
\begin{aligned}
x_{13}&=0; &x_{14}&=0; & x_{24}&=0; & [[x_{23},&x_{(24)}]_c,x_3]_c=0;\\
&& x_{1}^2&=0; & x_{2}^2&=0; & [[x_{43},&x_{432}]_c,x_3]_c=0;\\
[x_{(13)},x_2]_c&=0; & x_{3}^2&=0; & x_{4}^2&=0; & x_{\alpha}^{N_\alpha}&=0, \ \alpha\in\Oc_+^{\bq}.
\end{aligned}
\end{align}
Here  $\Oc_+^{\bq}=\{ 12, 23, 12^23^24, 1234, 12^33^44^2, 12^23^34^2, 23^24, 123^24^2, 234, 34 \}$
and the degree of the integral is
\begin{equation*}
\ya= 18\alpha_1 + 45\alpha_2 + 57\alpha_3 + 39\alpha_4.
\end{equation*}

\subsubsection{The Dynkin diagram \emph{(\ref{eq:dynkin-g(4,3)}
		e)}}\label{subsubsec:g(4,3)-e}

\

The Nichols algebra $\toba_{\bq}$ is generated by $(x_i)_{i\in \I_4}$ with defining relations
\begin{align}\label{eq:rels-g(4,3)-e}
\begin{aligned}
x_{13}&=0; &x_{14}&=0; & x_{24}&=0; & [[x_{(24)},&x_{3}]_c,x_3]_c=0;\\
& & x_{221}&=0; & x_{223}&=0; & [x_{3321},&x_{32}]_c=0;\\
x_{334}&=0; & x_{1}^2&=0; & x_{4}^2&=0; & x_{\alpha}^{N_\alpha}&=0, \ \alpha\in\Oc_+^{\bq}.
\end{aligned}
\end{align}
Here  $\Oc_+^{\bq}=\{ 2, 123, 23, 23^2, 3, 12^23^34, 12^23^24, 1^22^33^44^2, 123^24, 1234 \}$
and the degree of the integral is
\begin{equation*}
\ya= 29\alpha_1 + 45\alpha_2 + 57\alpha_3 + 20\alpha_4.
\end{equation*}

\subsubsection{The Dynkin diagram \emph{(\ref{eq:dynkin-g(4,3)}
		f)}}\label{subsubsec:g(4,3)-f}

\

The Nichols algebra $\toba_{\bq}$ is generated by $(x_i)_{i\in \I_4}$ with defining relations
\begin{align}\label{eq:rels-g(4,3)-f}
\begin{aligned}
x_{14}&=0; & x_{24}&=0; & & [[x_{43},x_{432}]_c,x_3]_c=0;\\
x_{221}&=0; & x_{223}&=0; & &x_1^2=0; \quad x_{13}=0;\\
x_{3}^2&=0; & x_{4}^2&=0; & &x_{\alpha}^{N_\alpha}=0, \ \alpha\in\Oc_+^{\bq}.
\end{aligned}
\end{align}
Here $\Oc_+^{\bq}=\{  2, 123, 12^23^24, 1^22^33^44^2, 123^24, 1234, 12^23^34^2, 234, 23^24^2, 34 \}$
and the degree of the integral is
\begin{equation*}
\ya= 29\alpha_1 + 45\alpha_2 + 57\alpha_3 + 39\alpha_4.
\end{equation*}

\subsubsection{The Dynkin diagram \emph{(\ref{eq:dynkin-g(4,3)}
		g)}}\label{subsubsec:g(4,3)-g}

\

The Nichols algebra $\toba_{\bq}$ is generated by $(x_i)_{i\in \I_4}$ with defining relations
\begin{align}\label{eq:rels-g(4,3)-g}
\begin{aligned}
&\begin{aligned}
x_{13}&=0; &x_{14}&=0; & x_{221}&=0; & x_{223}&=0; & x_{224}&=0;\\
x_{332}&=0; & x_{334}&=0;  & x_1^2&=0; & x_4^2&=0;  & x_{\alpha}^{N_\alpha}&=0, \ \alpha\in\Oc_+^{\bq};
\end{aligned}
\\
&x_{(24)}-\zeta q_{34}[x_{24},x_3]_c-(1-\ztu)q_{23}x_3x_{24}=0.
\end{aligned}
\end{align}
Here  $\Oc_+^{\bq}=\{ 2, 123, 12^234, 1234, 12^334^2, 12^234^2, 1234^2, 234, 24, 4 \}$
and the degree of the integral is
\begin{equation*}
\ya= 18\alpha_1 + 45\alpha_2 + 39\alpha_3 + 41\alpha_4.
\end{equation*}

\subsubsection{The Dynkin diagram \emph{(\ref{eq:dynkin-g(4,3)}
		h)}}\label{subsubsec:g(4,3)-h}

\

The Nichols algebra $\toba_{\bq}$ is generated by $(x_i)_{i\in \I_4}$ with defining relations
\begin{align}\label{eq:rels-g(4,3)-h}
\begin{aligned}
&\begin{aligned}
&[x_{(13)},x_2]_c=0; & x_{13}&=0; & x_{332}&=0; & & x_2^2=0; && x_4^2=0;
\\
&[x_{124},x_2]_c=0; & x_{14}&=0; & x_{334}&=0; & & x_{\alpha}^{N_\alpha}=0, && \alpha\in\Oc_+^{\bq};
\end{aligned}
\\
& x_{1}^2=0; \quad x_{(24)}-\zeta q_{34}[x_{24},x_3]_c-(1-\ztu)q_{23}x_3x_{24}=0.
\end{aligned}
\end{align}
Here  $\Oc_+^{\bq}=\{ 12, 123, 3, 1^22^33^24, 12^23^24, 12^234, 23^24, 1234, 234, 24 \}$
and the degree of the integral is
\begin{equation*}
\ya= 29\alpha_1 + 45\alpha_2 + 39\alpha_3 + 29\alpha_4.
\end{equation*}

\subsubsection{The Dynkin diagram \emph{(\ref{eq:dynkin-g(4,3)}
		i)}}\label{subsubsec:g(4,3)-i}

\

The Nichols algebra $\toba_{\bq}$ is generated by $(x_i)_{i\in \I_4}$ with defining relations
\begin{align}\label{eq:rels-g(4,3)-i}
\begin{aligned}
x_{13}&=0; &x_{14}&=0; & x_{34}&=0; & [x_{124},&x_2]_c=0;\\
&&  x_{112}&=0; & x_{442}&=0; & [[x_{32},&x_{324}]_c,x_2]_c=0;\\
[x_{(13)},x_2]_c&=0; & x_2^2&=0; & x_3^2&=0; & x_{\alpha}^{N_\alpha}&=0, \ \alpha\in\Oc_+^{\bq}.
\end{aligned}
\end{align}
Here  $\Oc_+^{\bq}=\{  1, 123, 23, 1^22^23^24, 12^23^24, 2^23^24, 12^234, 1234, 234, 4 \}$
and the degree of the integral is
\begin{equation*}
\ya= 29\alpha_1 + 54\alpha_2 + 39\alpha_3 + 29\alpha_4.
\end{equation*}

\subsubsection{The Dynkin diagram \emph{(\ref{eq:dynkin-g(4,3)}
		j)}}\label{subsubsec:g(4,3)-j}

\

The Nichols algebra $\toba_{\bq}$ is generated by $(x_i)_{i\in \I_4}$ with defining relations
\begin{align}\label{eq:rels-g(4,3)-j}
\begin{aligned}
x_{13}&=0; &x_{14}&=0; & x_{34}&=0; & [[x_{124},&x_2]_c,x_2]_c=0; \\
&& x_{112}&=0; & x_{221}&=0; & [[x_{324},&x_2]_c,x_2]_c=0;\\
x_4^2&=0; & x_{223}&=0; & x_{442}&=0; & x_{\alpha}^{N_\alpha}&=0, \ \alpha\in\Oc_+^{\bq}.
\end{aligned}
\end{align}
Here  $\Oc_+^{\bq}=\{ 1, 12, 2, 1^22^24, 12^234, 12^24, 124, 2^24, 24, 4 \}$and the degree of the integral is
\begin{equation*}
\ya= 29\alpha_1 + 54\alpha_2 + 17\alpha_3 + 29\alpha_4.
\end{equation*}

\subsubsection{The associated Lie algebra} This is of type $C_3\times A_1$.

\subsection{\ Type $\gtt(3, 6)$}\label{subsec:type-g(3,6)}
Here $\theta = 4$, $\zeta \in \G'_3$. Let 
\begin{align*}
A&=\begin{pmatrix}   0 & 1 & 0 & 0 \\ \text{--}1 & 2 & \text{--}1 & 0 \\ 0 & \text{--}2 & 2 & \text{--}2 \\ 0 & 0 & 1 & 0 \end{pmatrix}
\in \kk^{4\times 4},& 
\pa&= (-1,1,-1,-1) \in \G_2^4.
\end{align*}
Let $\g(3, 6) = \g(A, \pa)$, the contragredient Lie
superalgebra corresponding to $(A, \pa)$. 
Then $\sdim \g(3, 6) = 36|40$ \cite{BGL}. 
There are 6 other pairs of matrices and parity vectors for which the associated contragredient Lie superalgebra is isomorphic to $\g(3, 6)$. We describe now its root system $\gtt(3, 6)$,
see \cite{AA-GRS-CLS-NA}.

\subsubsection{Basic datum and root system}
Below, $C_n^{(1)\wedge}$, $C_4$, $F_4$, $A_4$ and $_1 T$ are numbered as in  \eqref{eq:Cn1-indef}, \eqref{eq:dynkin-system-C}, \eqref{eq:dynkin-system-F}, \eqref{eq:dynkin-system-A} and  \eqref{eq:mTn}, respectively.
The basic datum and the bundle of Cartan matrices are described by the following diagram:
\begin{center}
	\begin{tabular}{c c c c c c c c c c c c}
		& $\overset{C_2^{(1)\wedge}}{\underset{a_2}{\vtxgpd}}$
		& \hspace{-7pt}\raisebox{3pt}{$\overset{1}{\rule{27pt}{0.5pt}}$}\hspace{-7pt}
		& $\overset{C_2^{(1)\wedge}}{\underset{a_4}{\vtxgpd}}$
		& \hspace{-7pt}\raisebox{3pt}{$\overset{2}{\rule{27pt}{0.5pt}}$}\hspace{-7pt}
		& $\overset{\tau(F_4)}{\underset{a_5}{\vtxgpd}}$
		& & & & & &
		\\
		& {\scriptsize 4} \vline\hspace{5pt}
		&
		& {\scriptsize 4} \vline\hspace{5pt}
		&
		& {\scriptsize 4} \vline\hspace{5pt}
		& & & & & &
		\\
		& $\overset{C_4}{\underset{a_1}{\vtxgpd}}$
		& \hspace{-7pt}\raisebox{3pt}{$\overset{1}{\rule{27pt}{0.5pt}}$}\hspace{-7pt}
		& $\overset{C_4}{\underset{a_3}{\vtxgpd}}$
		& \hspace{-7pt}\raisebox{3pt}{$\overset{2}{\rule{27pt}{0.5pt}}$}\hspace{-7pt}
		& $\overset{A_4}{\underset{a_6}{\vtxgpd}}$
		& \hspace{-7pt}\raisebox{3pt}{$\overset{3}{\rule{27pt}{0.5pt}}$}\hspace{-7pt}
		& $\overset{{}_1T}{\underset{a_7}{\vtxgpd}}$
		& & & &
	\end{tabular}
\end{center}
Using the notation \eqref{eq:notation-root-exceptional}, the bundle of root sets is the following: { \scriptsize
	\begin{align*}
	\varDelta_{+}^{a_1}= & \{ 1, 12, 2, 123, 23, 3, 1^22^23^24, 12^23^24, 1^22^33^44^2, 1^22^33^34^2, 123^24, 1^22^23^34^2, 1234, 1^32^43^64^4, \\
	& 1^22^33^54^3, 1^22^33^44^3, 1^22^23^44^3, 12^23^44^2, 12^23^34^2, 1^22^33^64^4, 23^24, 123^34^2, 3^24, 1^22^33^54^4, \\
	& 12^23^44^3, 12^23^24^2, 123^24^2, 23^34^2, 234, 23^24^2, 34, 4 \}, \\
	\varDelta_{+}^{a_2}= & \{ 1, 12, 2, 123, 12^23^2, 123^2, 23, 23^2, 3, 1^22^33^54, 1^22^33^44, 1^22^23^44, 12^23^44, 1^22^33^34, 1^22^23^34, \\
	& 1^32^43^64^2, 1^22^23^24, 12^23^24, 12^23^24, 1^22^33^64^2, 1^22^33^54^2, 123^34, 23^34, 1^22^33^44^2, 123^24, \\
	& 12^23^44^2, 1234, 23^24, 234, 3^24, 34, 4 \}, \\
	\varDelta_{+}^{a_3}= & \{ 1, 12, 2, 123, 23, 3, 12^23^24, 123^24, 1234, 12^33^44^2, 12^33^34^2, 2^23^24, 12^23^44^2, 12^23^34^2, \\
	& 12^33^54^3, 123^34^2, 23^24, 3^24, 12^43^64^4, 12^33^64^4, 12^33^44^3, 2^23^34^2, 12^23^44^3, 23^34^2, 12^33^54^4, \\
	& 12^23^24^2, 123^24^2, 2^23^44^3, 234, 23^24^2, 34, 4 \}, \\
	\varDelta_{+}^{a_4}= & \{ 1, 12, 2, 123, 12^23^2, 123^2, 23, 23^2, 3, 12^33^54, 12^33^44, 12^23^44, 2^23^44, 12^33^34, 12^23^24, \\
	& 12^43^64^2, 12^23^24, 2^23^34, 2^23^24, 12^33^64^2, 12^33^54^2, 123^34, 23^34, 12^33^44^2, 123^24, 12^23^44^2, \\
	& 1234, 23^24, 234, 3^24, 34, 4 \}, \\
	\varDelta_{+}^{a_5}= & \{ 1, 12, 2, 123, 12^23^2, 123^2, 23, 23^2, 3, 1^22^43^54, 1^22^43^44, 1^22^33^44, 12^33^44, 1^22^33^34, 12^33^34, \\
	& 1^32^53^64^2, 1^22^23^34, 1^22^23^24, 1^22^53^64^2, 12^23^24, 1^22^43^54^2, 2^23^34, 12^23^24, 2^23^24, 1^22^33^44^2, \\
	& 12^33^44^2, 123^24, 1234, 23^24, 234, 34, 4 \}, \\
	\varDelta_{+}^{a_6}= & \{ 1, 12, 2, 123, 23, 3, 1^22^23^24, 12^23^24, 1^22^33^44^2, 1^22^33^34^2, 123^24, 1^22^23^34^2, 1234, 1^32^53^64^4, \\
	& 1^22^43^54^3, 1^22^43^44^3, 1^22^33^44^3, 12^33^44^2, 12^33^34^2, 1^22^53^64^4, 2^23^24, 12^23^34^2, 23^24, \\
	& 1^22^43^54^4, 12^33^44^3, 12^23^24^2, 123^24^2, 2^23^34^2, 234, 23^24^2, 34, 4 \}, \\
	\varDelta_{+}^{a_7}= & \{ 1, 12, 2, 123, 23, 3, 1^22^33^24, 1^22^23^24, 1^22^234, 12^33^24, 12^23^24, 1^22^43^34^2, 2^23^24, 123^24, \\
	& 1^22^33^34^2, 23^24, 1^32^53^44^3, 12^33^34^2, 1^22^53^44^3, 1^22^43^44^3, 12^234, 1^22^33^24^2, 2^234, 12^33^24^2, \\
	& 1^22^43^34^3, 1234, 12^23^24^2, 124, 234, 24, 34, 4 \}.
	\end{align*}
	
}%

\subsubsection{Weyl groupoid}
\label{subsubsec:type-g36-Weyl}
The isotropy group  at $a_7 \in \cX$ is
\begin{align*}
\cW(a_7)= \langle \varsigma_1^{a_7}, \varsigma_2^{a_7}, \varsigma_4^{a_7}, \varsigma_3^{a_7} \varsigma_4\varsigma_2 \varsigma_3 \varsigma_2 \varsigma_4\varsigma_3 \rangle \simeq W(C_4).
\end{align*}

\subsubsection{Incarnation}
To describe it, we need the matrices $(\bq^{(i)})_{i\in\I_{7}}$, from left to right and  from up to down:
\begin{align}\label{eq:dynkin-g(3,6)}
\begin{aligned}
&\xymatrix{ \overset{-1}{\underset{\ }{\circ}}\ar  @{-}[r]^{\ztu}  &
	\overset{\zeta}{\underset{\ }{\circ}} \ar  @{-}[r]^{\ztu}  & \overset{\zeta}{\underset{\
		}{\circ}}\ar  @{-}[r]^{\zeta}  & \overset{-1}{\underset{\ }{\circ}}}
& &\xymatrix{ \overset{-1}{\underset{\ }{\circ}}\ar  @{-}[r]^{\ztu}  &
	\overset{\zeta}{\underset{\ }{\circ}} \ar  @{-}[r]^{\ztu}  & \overset{-\ztu}{\underset{\
		}{\circ}}
	\ar  @{-}[r]^{\ztu}  & \overset{-1}{\underset{\ }{\circ}}}
\\
& \xymatrix{ \overset{-1}{\underset{\ }{\circ}}\ar  @{-}[r]^{\zeta}  &
	\overset{-1}{\underset{\ }{\circ}} \ar  @{-}[r]^{\ztu}  & \overset{\zeta}{\underset{\
		}{\circ}}
	\ar  @{-}[r]^{\zeta}  & \overset{-1}{\underset{\ }{\circ}}}
&& \xymatrix{ \overset{-1}{\underset{\ }{\circ}}\ar  @{-}[r]^{\zeta}  &
	\overset{-1}{\underset{\ }{\circ}} \ar  @{-}[r]^{\ztu}  & \overset{-\ztu}{\underset{\
		}{\circ}}
	\ar  @{-}[r]^{\ztu}  & \overset{-1}{\underset{\ }{\circ}}}
\\ &\xymatrix{ \overset{\zeta}{\underset{\ }{\circ}}\ar  @{-}[r]^{\ztu}  &
	\overset{-1}{\underset{\ }{\circ}} \ar  @{-}[r]^{\zeta}  & \overset{\zeta}{\underset{\
		}{\circ}}
	\ar  @{-}[r]^{\ztu}  & \overset{-1}{\underset{\ }{\circ}}}
&&\xymatrix{ \overset{\zeta}{\underset{\ }{\circ}}\ar  @{-}[r]^{\ztu}  &
	\overset{-1}{\underset{\ }{\circ}} \ar  @{-}[r]^{\zeta}  & \overset{-1}{\underset{\
		}{\circ}}
	\ar  @{-}[r]^{\zeta}  & \overset{-1}{\underset{\ }{\circ}}}
\\ &\xymatrix@R-6pt{  &    \overset{-1}{\circ} \ar  @{-}[d]_{\ztu}\ar  @{-}[dr]^{\ztu} &
	\\
	\overset{\zeta}{\underset{\ }{\circ}} \ar  @{-}[r]^{\ztu}  & \overset{\zeta}{\underset{\
		}{\circ}} \ar  @{-}[r]^{\ztu}  & \overset{\zeta}{\underset{\ }{\circ}}}
&&
\end{aligned}
\end{align}
Now, this is the incarnation: $a_i\mapsto \bq^{(i)}$, $i\in\I_{10}$.

\subsubsection{PBW-basis and dimension} \label{subsubsec:type-g36-PBW}
Notice that the roots in each $\varDelta_{+}^{a_i}$, $i\in\I_{7}$, are ordered from left to right, justifying the notation $\beta_1, \dots, \beta_{32}$.

The root vectors $x_{\beta_k}$ are described as in Remark \ref{rem:lyndon-word}.
Thus
\begin{align*}
\left\{ x_{\beta_{32}}^{n_{32}} \dots x_{\beta_2}^{n_{2}}  x_{\beta_1}^{n_{1}} \, | \, 0\le n_{k}<N_{\beta_k} \right\}.
\end{align*}
is a PBW-basis of $\toba_{\bq}$. Hence $\dim \toba_{\bq}= 2^{16}3^{12}6^4=2^{20}3^{16}$.

\subsubsection{The Dynkin diagram \emph{(\ref{eq:dynkin-g(3,6)}
		a)}}\label{subsubsec:g(3,6)-a}

\

The Nichols algebra $\toba_{\bq}$ is generated by $(x_i)_{i\in \I_4}$ with defining relations
\begin{align}\label{eq:rels-g(3,6)-a}
\begin{aligned}
x_{13}&=0; &x_{14}&=0; & x_{24}&=0; & [[x_{(24)},&x_{3}]_c,x_{3}]_c=0;\\
&& x_{221}&=0; & x_{223}&=0; & [x_{334},&x_{34}]_c=0;\\
x_{332}&=0; &x_{1}^2&=0; & x_{4}^2&=0; & x_{\alpha}^{N_\alpha}&=0, \ \alpha\in\Oc_+^{\bq}.
\end{aligned}
\end{align}
Here
Here {\scriptsize$\Oc_+^{\bq}=\{ 2, 23, 3, 12^23^24, 123^24, 1234, 12^23^34^2,
	12^33^54^3, 23^24, 12^33^44^3, 2^23^34^2, \\ 12^23^44^3, 23^34^2, 234, 23^24^2, 34 \}$}
and the degree of the integral is
\begin{equation*}
\ya= 29\alpha_1 + 84\alpha_2 + 135\alpha_3 + 91\alpha_4.
\end{equation*}

\subsubsection{The Dynkin diagram \emph{(\ref{eq:dynkin-g(3,6)}
		b)}}\label{subsubsec:g(3,6)-b}

\

The Nichols algebra $\toba_{\bq}$ is generated by $(x_i)_{i\in \I_4}$ with defining relations
\begin{align}\label{eq:rels-g(3,6)-b}
\begin{aligned}
& \begin{aligned}
x_{14}&=0; & x_{221}&=0; & x_{3332}&=0; & &x_{1}^2=0; & & x_{4}^2=0;
\\
x_{24}&=0; & x_{223}&=0; & x_{3334}&=0; & & x_{\alpha}^{N_\alpha}=0, & & \alpha\in\Oc_+^{\bq};
\end{aligned}
\\
& x_{13}=0; \quad [x_2,x_{334}]_c +q_{34}[x_{(24)},x_3]_c +(\zeta^2-\zeta)q_{23} x_3x_{(24)}=0.
\end{aligned}
\end{align}
Here  {\scriptsize$\Oc_+^{\bq}=\{ 2, 123, 23, 23^2, 3, 12^33^44, 12^23^44,
	12^23^34, 12^23^24, 2^23^34, 12^33^54^2, 23^34, 123^24, 23^24,\linebreak 234, 34 \}$}
and the degree of the integral is
\begin{equation*}
\ya= 29\alpha_1 + 84\alpha_2 + 135\alpha_3 + 46\alpha_4.
\end{equation*}

\subsubsection{The Dynkin diagram \emph{(\ref{eq:dynkin-g(3,6)}
		c)}}\label{subsubsec:g(3,6)-c}

\

The Nichols algebra $\toba_{\bq}$ is generated by $(x_i)_{i\in \I_4}$ with defining relations
\begin{align}\label{eq:rels-g(3,6)-c}
\begin{aligned}
x_{13}&=0; &x_{14}&=0; & x_{24}&=0; & [[x_{(24)},&x_{3}]_c,x_{3}]_c=0;\\
&& x_{332}&=0; & [x_{(13)},x_{2}]_c&=0; & [x_{334},&x_{34}]_c=0;\\
x_1^2&=0; & x_{2}^2&=0; & x_{4}^2&=0; & x_{\alpha}^{N_\alpha}&=0, \ \alpha\in\Oc_+^{\bq}.
\end{aligned}
\end{align}
Here {\scriptsize$\Oc_+^{\bq}=\{ 12, 123, 3, 12^23^24, 123^24, 1^22^23^34^2, 1234,
	1^22^33^54^3, 1^22^33^44^3, 12^23^34^2, 23^24, \\ 123^34^2, 12^23^44^3, 123^24^2, 234, 34 \}$}
and the degree of the integral is
\begin{equation*}
\ya= 57\alpha_1 + 84\alpha_2 + 135\alpha_3 + 91\alpha_4.
\end{equation*}

\subsubsection{The Dynkin diagram \emph{(\ref{eq:dynkin-g(3,6)}
		d)}}\label{subsubsec:g(3,6)-d}

\

The Nichols algebra $\toba_{\bq}$ is generated by $(x_i)_{i\in \I_4}$ with defining relations
\begin{align}\label{eq:rels-g(3,6)-d}
\begin{aligned}
& \begin{aligned}
x_{13}&=0; & x_{3332}&=0; & x_{1}^2&=0; & & [x_{(13)},x_2]_c=0; \quad x_{4}^2=0;
\\
x_{14}&=0; & x_{3334}&=0; & x_{2}^2&=0; & & x_{\alpha}^{N_\alpha} =0, \ \alpha\in\Oc_+^{\bq};
\end{aligned}
\\
& x_{24}=0; \quad [x_2,x_{334}]_c +q_{34}[x_{(24)},x_3]_c +(\zeta^2-\zeta)q_{23}x_3x_{(24)}=0.
\end{aligned}
\end{align}
Here {\scriptsize$\Oc_+^{\bq}=\{ 12, 123, 123^2, 23, 3, 1^22^33^44, 12^23^44,
	1^22^23^34, 12^23^34, 12^23^24, 1^22^33^54^2, 123^34$, \\ $123^24, 1234, 23^24, 34 \}$}
and the degree of the integral is
\begin{equation*}
\ya= 57\alpha_1 + 84\alpha_2 + 135\alpha_3 + 46\alpha_4.
\end{equation*}

\subsubsection{The Dynkin diagram \emph{(\ref{eq:dynkin-g(3,6)}
		e)}}\label{subsubsec:g(3,6)-e}

\

The Nichols algebra $\toba_{\bq}$ is generated by $(x_i)_{i\in \I_4}$ with defining relations
\begin{align}\label{eq:rels-g(3,6)-e}
\begin{aligned}
x_{13}&=0; &x_{14}&=0; & x_{24}&=0; & [[x_{(24)},&x_3]_c,x_3]_c=0;\\
[x_{(13)},&x_2]_c=0; & x_{112}&=0; & x_{334}&=0; & [x_{3321},&x_{32}]_c=0;\\
[x_{332},&x_{32}]_c=0; & x_{2}^2&=0; & x_{4}^2&=0; & x_{\alpha}^{N_\alpha}&=0, \ \alpha\in\Oc_+^{\bq}.
\end{aligned}
\end{align}
Here
{\scriptsize $\Oc_+^{\bq}=\{ 1, 123, 12^23^2, 23, 3, 1^22^33^44, 12^33^44,
	1^22^33^34, 12^33^34, 12^23^34, 1^22^43^54^2, 12^23^24$, 
	\newline $123^24, 1234, 23^24, 234 \}$}
and the degree of the integral is
\begin{equation*}
\ya= 57\alpha_1 + 110\alpha_2 + 135\alpha_3 + 46\alpha_4.
\end{equation*}

\subsubsection{The Dynkin diagram \emph{(\ref{eq:dynkin-g(3,6)}
		f)}}\label{subsubsec:g(3,6)-f}

\

The Nichols algebra $\toba_{\bq}$ is generated by $(x_i)_{i\in \I_4}$ with defining relations
\begin{align}\label{eq:rels-g(3,6)-f}
\begin{aligned}
x_{13}&=0; & x_{14}&=0; & x_{24}&=0; & [[x_{23},&x_{(24)}]_c,x_{3}]_c=0;\\
& & x_{112}&=0; & x_{2}^2&=0; & [[x_{43},&x_{432}]_c,x_{3}]_c=0;\\
[x_{(13)},&x_{2}]_c=0; & x_{3}^2&=0; & x_{4}^2&=0; & x_{\alpha}^{N_\alpha}&=0, \ \alpha\in\Oc_+^{\bq}.
\end{aligned}
\end{align}
Here {\scriptsize$\Oc_+^{\bq}=\{  1, 123, 23, 12^23^24, 1^22^33^34^2, 123^24, 1234,
	1^22^43^54^3, 1^22^33^44^3, 12^33^34^2, \\ 12^23^34^2, 23^24, 12^33^44^3, 12^23^24^2, 234, 34 \}$}  
and the degree of the integral is
\begin{equation*}
\ya= 57\alpha_1 + 110\alpha_2 + 135\alpha_3 + 91\alpha_4.
\end{equation*}

\subsubsection{The Dynkin diagram \emph{(\ref{eq:dynkin-g(3,6)}
		g)}}\label{subsubsec:g(3,6)-g}

\

The Nichols algebra $\toba_{\bq}$ is generated by $(x_i)_{i\in \I_4}$ with defining relations
\begin{align}\label{eq:rels-g(3,6)-g}
\begin{aligned}
& \begin{aligned}
x_{221}&=0; & x_{224}&=0; & x_{332}&=0; & & x_{13}=0; & & x_{14}=0; 
\\
x_{223}&=0; & x_{112}&=0; & x_{334}&=0;  & &x_{\alpha}^{N_\alpha}=0, && \alpha\in\Oc_+^{\bq};
\end{aligned}
\\
& x_4^2=0; \quad x_{(24)}=\zeta q_{34}[x_{24},x_3]_c+(1-\ztu)q_{23}x_3x_{24}.
\end{aligned}
\end{align}
Here
{\scriptsize$\Oc_+^{\bq}=\{  1, 12, 2, 123, 23, 3, 12^23^24,
	1^22^43^34^2, 1^22^33^34^2, 12^33^34^2, 12^234, 1^22^33^24^2, 12^33^24^2,\newline 1234,  12^23^24^2, 234 \}$}
and the degree of the integral is
\begin{equation*}
\ya= 57\alpha_1 + 110\alpha_2 + 68\alpha_3 + 91\alpha_4.
\end{equation*}

\subsubsection{The associated Lie algebra} This is of type $C_4$.

\subsection{\ Type $\gtt(2, 6)$}\label{subsec:type-g(2,6)}
Here $\theta = 5$, $\zeta \in \G'_3$. Let 
\begin{align*}
A&=\begin{pmatrix} 2 & \text{--}1 & 0 & 0 & 0 \\ \text{--}1 & 2 & 0 & \text{--}1 & 0 \\ 0 & 0 & 2 & \text{--}1 & 0
\\ 0 & 1 & 1 & 0 & \text{--}1 \\ 0 & 0 & 0 & 1 & 0 \end{pmatrix}
\in \kk^{5\times 5}; & \pa &= (1,1, 1,-1,-1) \in \G_2^5.
\end{align*}
Let $\g(2, 6) = \g(A, \pa)$,
the contragredient Lie superalgebra corresponding to $(A, \pa)$. 
We know \cite{BGL} that $\sdim \g(2, 6) = 36|20$. 
There are 5 other pairs of matrices and parity vectors for which the associated contragredient Lie superalgebra is isomorphic to $\g(2, 6)$.
We describe now the root system $\gtt(2, 6)$ of $\g(2, 6)$, see \cite{AA-GRS-CLS-NA} for details.

\subsubsection{Basic datum and root system}
Below, $A_5$, $_1T_1$ and $D_5$ are numbered as in \eqref{eq:dynkin-system-A}, \eqref{eq:mTn} and  	
\eqref{eq:dynkin-system-D}, respectively.
The basic datum and the bundle of Cartan matrices are described by the following diagram:
\begin{center}
	\begin{tabular}{c c c c c c c c c c c c}
		& & &  $\overset{\tau(D_5)}{\underset{a_1}{\vtxgpd}}$
		& \hspace{-7pt}\raisebox{3pt}{$\overset{1}{\rule{27pt}{0.5pt}}$}\hspace{-7pt}
		& $\overset{\tau(D_5)}{\underset{a_2}{\vtxgpd}}$
		& & & & & &
		\\
		& & & {\scriptsize 2} \vline\hspace{5pt} & & & & & & & &
		\\
		& $\overset{s_{35}(A_5)}{\underset{a_3}{\vtxgpd}}$
		& \hspace{-7pt}\raisebox{3pt}{$\overset{5}{\rule{27pt}{0.5pt}}$}\hspace{-7pt}
		& $\overset{{}_1T_1}{\underset{a_4}{\vtxgpd}}$
		& \hspace{-7pt}\raisebox{3pt}{$\overset{3}{\rule{27pt}{0.5pt}}$}\hspace{-7pt}
		& $\overset{D_5}{\underset{a_5}{\vtxgpd}}$
		& \hspace{-7pt}\raisebox{3pt}{$\overset{4}{\rule{27pt}{0.5pt}}$}\hspace{-7pt}
		& $\overset{D_5}{\underset{a_6}{\vtxgpd}}$.
		& & & &
	\end{tabular}
\end{center}
Using the notation \eqref{eq:notation-root-exceptional}, the bundle of root sets is the following:
{\scriptsize
	\begin{align*}
	\varDelta_{+}^{a_1}= & \tau(\varDelta_{+}^{a_6}), \qquad \qquad \qquad \varDelta_{+}^{a_2}=\tau(\varDelta_{+}^{a_5}), \\ 
	\varDelta_{+}^{a_3} =& \big\{ 1, 12, 2, 123, 23, 3, 12^23^24, 123^24, 1234, 23^24, 234, 34, 4, 12^23^34^25, \\
	&12^23^24^25, 123^24^25, 23^24^25, 12^23^245,123^245, 23^245, 12345, 2345, 345, 45, 5 \big\}, \\
	\varDelta_{+}^{a_4} =& \big\{ 1, 12, 2, 123, 23, 3, 12^234, 1234, 124, 234, 24, 34, 4, 12^23^24^25, 12^234^25, \\
	&1234^25, 234^25, 12^2345, 12345, 2345, 345, 1245, 245, 45, 5 \big\}, \\
	\varDelta_{+}^{a_5} =& \big\{ 1, 12, 2, 123, 23, 3, 12^23^24, 123^24, 1234, 23^24, 234, 34, 4, 12^23^34^25, \\
	&12^23^345, 12^23^245, 123^245, 23^245, 12345, 2345, 345, 1235, 235, 35, 5\big\}, \\
	\varDelta_{+}^{a_6} =& \big\{ 1, 12, 2, 123, 23, 3, 1234, 234, 34, 4, 12^23^245, 123^245, 12345, 1235, \\
	&12^23^34^25^2, 12^23^345^2, 23^245, 12^23^245^2, 2345, 235, 123^245^2, 23^245^2, 345, 35, 5\big\}.
	\end{align*}
}%

\subsubsection{Weyl groupoid}
\label{subsubsec:type-g26-Weyl}
The isotropy group  at $a_3 \in \cX$ is
\begin{align*}
\cW(a_3)= \langle \varsigma_1^{a_3}, \varsigma_2^{a_3},  \varsigma_5^{a_3} \varsigma_2 \varsigma_3 \varsigma_2 \varsigma_5, \varsigma_3^{a_3},  \varsigma_4^{a_3} \rangle \simeq W(A_5).
\end{align*}

\subsubsection{Incarnation}
To describe it, we need the matrices $(\bq^{(i)})_{i\in\I_{4}}$, from left to right and  from up to down:
\begin{align}\label{eq:dynkin-g(2,6)}
\begin{aligned}
&\xymatrix{\\ \overset{\zeta}{\underset{\ }{\circ}}\ar  @{-}[r]^{\ztu}  &
	\overset{\zeta}{\underset{\ }{\circ}} \ar  @{-}[r]^{\ztu}  & \overset{-1}{\underset{\
		}{\circ}}
	\ar  @{-}[r]^{\ztu}  & \overset{\zeta}{\underset{\ }{\circ}} \ar  @{-}[r]^{\ztu}  &
	\overset{\zeta}{\underset{\ }{\circ}}}
& & \xymatrix@R-8pt{  &    \overset{-1}{\circ} \ar  @{-}[d]_{\zeta}\ar  @{-}[dr]^{\zeta} &
	\\
	\overset{\zeta}{\underset{\ }{\circ}} \ar  @{-}[r]^{\ztu}  & \overset{-1}{\underset{\
		}{\circ}} \ar  @{-}[r]^{\zeta}  & \overset{-1}{\underset{\ }{\circ}}\ar  @{-}[r]^{\ztu}  &
	\overset{\zeta}{\underset{\ }{\circ}}}
\\
& \xymatrix@R-10pt{ & &    \overset{\zeta}{\circ} \ar  @{-}[d]^{\ztu} & \\
	\overset{\zeta}{\underset{\ }{\circ}} \ar  @{-}[r]^{\ztu} & \overset{\zeta}{\underset{\
		}{\circ}} \ar  @{-}[r]^{\ztu}  & \overset{-1}{\underset{\ }{\circ}} \ar  @{-}[r]^{\zeta}
	& \overset{-1}{\underset{\ }{\circ}}}
&& \xymatrix@R-10pt{  &  &  \overset{\zeta}{\circ} \ar  @{-}[d]^{\ztu} & \\
	\overset{\zeta}{\underset{\ }{\circ}} \ar  @{-}[r]^{\ztu} & \overset{\zeta}{\underset{\
		}{\circ}} \ar  @{-}[r]^{\ztu}  & \overset{\zeta}{\underset{\ }{\circ}} \ar  @{-}[r]^{\ztu}
	& \overset{-1}{\underset{\ }{\circ}}}
\end{aligned}
\end{align}
Now, this is the incarnation: 
\begin{align*}
& a_i\mapsto \tau(\bq^{(5-i)}), \ i\in\I_{2}; &
& a_3\mapsto s_{35}(\bq^{(1)}); &
& a_i\mapsto \bq^{(i-2)}, \ i\in\I_{4,6}.
\end{align*}

\subsubsection{PBW-basis and dimension} \label{subsubsec:type-g26-PBW}
Notice that the roots in each $\varDelta_{+}^{a_i}$, $i\in\I_{6}$, are ordered from left to right, justifying the notation $\beta_1, \dots, \beta_{25}$.

The root vectors $x_{\beta_k}$ are described as in Remark \ref{rem:lyndon-word}.
Thus
\begin{align*}
\left\{ x_{\beta_{25}}^{n_{25}} \dots x_{\beta_2}^{n_{2}}  x_{\beta_1}^{n_{1}} \, | \, 0\le n_{k}<N_{\beta_k} \right\}.
\end{align*}
is a PBW-basis of $\toba_{\bq}$. Hence $\dim \toba_{\bq}= 2^{10}3^{15}$.

\subsubsection{The Dynkin diagram \emph{(\ref{eq:dynkin-g(2,6)}
		a)}}\label{subsubsec:g(2,6)-a}

\

The Nichols algebra $\toba_{\bq}$ is generated by $(x_i)_{i\in \I_5}$ with defining relations
\begin{align}\label{eq:rels-g(2,6)-a}
\begin{aligned}
x_{13}&=0; & x_{14}&=0; & x_{15}&=0; & [[[x_{(14)},&x_3]_c , x_2]_c , x_3]_c =0;\\
x_{24}&=0; & x_{221}&=0; & x_{112}&=0; & [[[x_{5432},&x_3]_c, x_4]_c , x_3]_c=0;\\
x_{25}&=0; & x_{223}&=0; & x_{443}&=0; & x_{445}&=0; \\
x_{35}&=0; & x_{554}&=0; & x_{3}^2&=0; & x_{\alpha}^{3}&=0, \ \alpha\in\Oc_+^{\bq}.
\end{aligned}
\end{align}
Here {\scriptsize$\Oc_+^{\bq}=\{ 1, 12, 2, 12^23^24, 123^24, 23^24,
	4, 12^23^24^25, 123^24^25, 23^24^25, 12^23^245, 123^245, \newline 23^245, 45,
	5 \}$}
and the degree of the integral is
\begin{equation*}
\ya= 20\alpha_1 + 36\alpha_2 + 48\alpha_3 + 36\alpha_4 + 20\alpha_5.
\end{equation*}

\subsubsection{The Dynkin diagram \emph{(\ref{eq:dynkin-g(2,6)}
		b)}}\label{subsubsec:g(2,6)-b}

\

The Nichols algebra $\toba_{\bq}$ is generated by $(x_i)_{i\in \I_5}$ with defining relations
\begin{align}\label{eq:rels-g(2,6)-b}
\begin{aligned}
& \begin{aligned}
& [x_{125},x_2]_c=0; & x_{13}&=0; & x_{112}&=0; && x_{24}=0; && x_{45}=0;
\\
& [x_{(24)},x_3]_c=0; & x_{14}&=0; & x_{443}&=0; && x_{2}^2=0; && x_{3}^2=0;
\\
& [x_{(13)},x_2]_c=0; & x_{15}&=0; & [x_{435},& x_3]_c=0; & &x_{\alpha}^{3}=0, && \alpha\in\Oc_+^{\bq};
\end{aligned}
\\
& x_{5}^2=0; \quad x_{235}=q_{35}\ztu [x_{25},x_3]_c +q_{23}(1-\zeta)x_3x_{25}.
\end{aligned}
\end{align}
Here {\scriptsize$\Oc_+^{\bq}=\{ 1, 123, 23, 12^234, 124, 24,
	34, 12^23^24^25, 1234^25, 234^25, 12^2345, 345, 1245, 245,
	5 \}$}
and the degree of the integral is
\begin{equation*}
\ya= 20\alpha_1 + 36\alpha_2 + 36\alpha_3 + 20\alpha_4 + 26\alpha_5.
\end{equation*}

\subsubsection{The Dynkin diagram \emph{(\ref{eq:dynkin-g(2,6)}
		c)}}\label{subsubsec:g(2,6)-c}

\

The Nichols algebra $\toba_{\bq}$ is generated by $(x_i)_{i\in \I_5}$ with defining relations
\begin{align}\label{eq:rels-g(2,6)-c}
\begin{aligned}
x_{13}&=0; & x_{223}&=0; & x_{24}&=0; & x_{25}&=0; & & [x_{435},x_3]_c=0; \\
x_{14}&=0; & x_{112}&=0; & x_{221}&=0; & x_{45}&=0; & & [x_{(24)},x_3]_c=0; \\
x_{15}&=0; & x_{553}&=0; & x_{3}^2&=0; & x_{4}^2&=0; & & x_{\alpha}^{3}=0, \ \alpha\in\Oc_+^{\bq}.
\end{aligned}
\end{align}
Here {\scriptsize$\Oc_+^{\bq}=\{ 1, 12, 2, 12^23^24, 123^24, 23^24,
	4, 12^23^34^25, 12^23^345, 12345, 2345, 345, 1235, 235,
	35 \}$} \newline
and the degree of the integral is
\begin{equation*}
\ya= 20\alpha_1 + 36\alpha_2 + 48\alpha_3 + 20\alpha_4 + 26\alpha_5.
\end{equation*}

\subsubsection{The Dynkin diagram \emph{(\ref{eq:dynkin-g(2,6)}
		d)}}\label{subsubsec:g(2,6)-d}

\

The Nichols algebra $\toba_{\bq}$ is generated by $(x_i)_{i\in \I_5}$ with defining relations
\begin{align}\label{eq:rels-g(2,6)-d}
\begin{aligned}
x_{13}&=0; & x_{112}&=0; & x_{24}&=0; & x_{25}&=0; & x_{45}&=0;\\
x_{14}&=0; & x_{332}&=0; & x_{221}&=0; & x_{223}&=0; & x_{334}&=0; \\
x_{15}&=0; & x_{335}&=0; & x_{553}&=0; & x_{4}^2&=0; & x_{\alpha}^{3}&=0, \ \alpha\in\Oc_+^{\bq}.
\end{aligned}
\end{align}
Here {\scriptsize$\Oc_+^{\bq}=\{ 1, 12, 2, 123, 23, 3,
	1234, 234, 34, 4, 12^23^34^25^2, 12^23^345^2, 12^23^245^2, 123^245^2,
	23^245^2 \}$}

and the degree of the integral is
\begin{equation*}
\ya= 20\alpha_1 + 36\alpha_2 + 48\alpha_3 + 30\alpha_4 + 26\alpha_5.
\end{equation*}

\subsubsection{The associated Lie algebra} This is of type $A_5$.

\subsection{\ Type $\El(5;3)$}\label{subsec:type-el(5;3)}
Here $\theta = 5$, $\zeta \in \G'_3$. Let 
\begin{align*}
A&=\begin{pmatrix} 0 & 1 & 0 & 0 & 0 \\ -1 & 2 & -1 & 0 & 0 \\ 0 & -1 & 2 & -1 & -1
\\ 0 & 0 & -1 & 2 & 0 \\ 0 & 0 & 1 & 0 & 0 \end{pmatrix}
\in \kk^{5\times 5}; & \pa &= (-1,1, 1,1,-1) \in \G_2^5.
\end{align*}
Let $\el(5;3) = \g(A, \pa)$,
the contragredient Lie superalgebra corresponding to $(A, \pa)$. 
We know \cite{BGL} that $\sdim \el(5;3) = 39|32$. 
There are 14 other pairs of matrices and parity vectors for which the associated contragredient Lie superalgebra is isomorphic to $\el(5;3)$.
We describe now the root system $\El(5;3)$ of $\el(5;3)$, see \cite{AA-GRS-CLS-NA} for details.

\subsubsection{Basic datum and root system}
Below, $D_5$, $CE_5$, $A_5$, $F_4^{(1)}$, $E_6^{(2)}$, $_1T_1$ and $_2 T$ are numbered as in \eqref{eq:dynkin-system-D}, \eqref{eq:CEn}, \eqref{eq:dynkin-system-A}, \eqref{eq:F4(1)}, \eqref{eq:E6(2)} and \eqref{eq:mTn}, respectively.
The basic datum and the bundle of Cartan matrices are described by the following diagram:

\begin{center}
	\begin{tabular}{c c c c c c c c c c}
		& &  &
		$\overset{D_5}{\underset{a_{14}}{\vtxgpd}}$ & \hspace{-5pt}\raisebox{3pt}{$\overset{1}{\rule{30pt}{0.5pt}}$}\hspace{-5pt}  &
		$\overset{D_5}{\underset{a_{12}}{\vtxgpd}}$ & \hspace{-5pt}\raisebox{3pt}{$\overset{2}{\rule{30pt}{0.5pt}}$}\hspace{-5pt}  &
		$\overset{D_5}{\underset{a_{13}}{\vtxgpd}}$ & \hspace{-5pt}\raisebox{3pt}{$\overset{3}{\rule{30pt}{0.5pt}}$}\hspace{-5pt}  &
		$\overset{{}_2T}{\underset{a_7}{\vtxgpd}}$
		\\
		& & & {\scriptsize 5} \vline\hspace{5pt} & & {\scriptsize 5} \vline\hspace{5pt} & & {\scriptsize 5} \vline\hspace{5pt} & &  {\scriptsize 4} \vline\hspace{5pt}
		\\
		& &
		& $\overset{D_5}{\underset{a_{11}}{\vtxgpd}}$
		& \hspace{-5pt}\raisebox{3pt}{$\overset{1}{\rule{30pt}{0.5pt}}$}\hspace{-5pt}
		& $\overset{D_5}{\underset{a_{10}}{\vtxgpd}}$
		& \hspace{-5pt}\raisebox{3pt}{$\overset{2}{\rule{30pt}{0.5pt}}$}\hspace{-5pt}
		& $\overset{CE_5}{\underset{a_9}{\vtxgpd}}$
		&
		& $\overset{A_5}{\underset{a_2}{\vtxgpd}}$
		\\
		& & & {\scriptsize 3} \vline\hspace{5pt} & & {\scriptsize 3} \vline\hspace{5pt} & & & &  {\scriptsize 5} \vline\hspace{5pt}
		\\
		& $\overset{\varpi_3(D_5)}{\underset{a_{15}}{\vtxgpd}}$
		& \hspace{-5pt}\raisebox{3pt}{$\overset{2}{\rule{30pt}{0.5pt}}$}\hspace{-5pt}
		& $\overset{s_{45}({}_1T_1)}{\underset{a_6}{\vtxgpd}}$
		& \hspace{-5pt}\raisebox{3pt}{$\overset{1}{\rule{30pt}{0.5pt}}$}\hspace{-5pt}
		& $\overset{s_{45}({}_1T_1)}{\underset{a_8}{\vtxgpd}}$
		&
		&
		&
		& $\overset{F_4^{(1)}}{\underset{a_1}{\vtxgpd}}$
		\\
		& & & {\scriptsize 4} \vline\hspace{5pt} & & {\scriptsize 4} \vline\hspace{5pt} & & & &
		\\
		& &
		& $\overset{s_{34}(A_5)}{\underset{a_5}{\vtxgpd}}$
		& \hspace{-5pt}\raisebox{3pt}{$\overset{1}{\rule{30pt}{0.5pt}}$}\hspace{-5pt}
		& $\overset{s_{34}(A_5)}{\underset{a_3}{\vtxgpd}}$
		& \hspace{-5pt}\raisebox{3pt}{$\overset{2}{\rule{30pt}{0.5pt}}$}\hspace{-5pt}
		& $\overset{s_{34}(E_6^{(2)})}{\underset{a_4}{\vtxgpd}}$
		& &
	\end{tabular}
\end{center}

Using the notation \eqref{eq:notation-root-exceptional}, the bundle of root sets is the following: { \scriptsize
	\begin{align*}
	\varDelta_{+}^{a_{1}}= & \{ 1, 12, 2, 123, 23, 3, 1234, 12^23^24^2, 123^24^2, 1234^2, 234, 23^24^2, 234^2, 34, 34^2, 4, 12^23^34^45, \\
	& 12^23^34^35, 12^23^24^35, 123^24^35, 23^24^35, 12^23^24^25, 123^24^25, 23^24^25, 12^23^34^45^2, 1234^25, \\
	& 12345, 234^25, 2345, 34^25, 345, 45, 5 \}, \\
	\varDelta_{+}^{a_{2}}= & \{ 1, 12, 2, 123, 23, 3, 1234, 234, 34, 4, 12^23^24^25, 123^24^25, 1234^25, 12345, 12^23^34^45^2, \\
	& 12^23^34^35^2, 23^24^25, 12^23^24^35^2, 234^25, 12^23^24^25^2, 2345, 12^23^34^45^3, 123^24^35^2, 123^24^25^2,\\
	& 1234^25^2, 23^24^35^2, 23^24^25^2, 234^25^2, 34^25, 345, 34^25^2, 45, 5 \}, \\
	\varDelta_{+}^{a_{3}}= & s_{34}(\{ 1, 12, 2, 123, 23, 3, 1^22^23^24, 12^23^24, 123^24, 1234, 23^24, 234, 34, 4, 1^22^33^44^35, 1^22^33^44^25, \\
	& 1^22^33^34^25, 1^22^23^34^25, 12^23^34^25, 1^22^23^24^25, 12^23^24^25, 123^24^25, 23^24^25, 1^22^33^44^35^2, \\
	& 1^22^23^245, 12^23^245, 123^245, 12345, 23^245, 2345, 345, 45, 5 \}), \\
	\varDelta_{+}^{a_{4}}= & s_{34}(\{ 1, 12, 2, 123, 23, 3, 12^23^24, 123^24, 1234, 23^24, 234, 3^24, 34, 4, 12^23^44^35, 12^23^44^25,  \\
	& 12^23^34^25, 123^34^25, 23^34^25, 12^23^24^25, 123^24^25, 23^24^25, 3^24^25, 12^23^44^35^2, 12^23^245, 123^245,   \\
	& 12345, 23^245, 2345, 3^245, 345, 45, 5 \}), \\
	\varDelta_{+}^{a_{5}}= & s_{34}(\{ 1, 12, 2, 123, 23, 3, 12^23^24, 123^24, 1234, 2^23^24, 23^24, 234, 34, 4, 12^33^44^35, 12^33^44^25, \\
	& 12^33^34^25, 12^23^34^25, 2^23^34^25, 12^23^24^25, 2^23^24^25, 123^24^25, 23^24^25, 12^33^44^35^2, 12^23^245, \\
	& 123^245, 12345, 2^23^245, 23^245, 2345, 345, 45, 5 \}), \\
	\varDelta_{+}^{a_{6}}= & s_{45}(\{ 1, 12, 2, 123, 23, 3, 12^234, 1234, 124, 2^234, 234, 24, 34, 4, 12^33^24^35, 12^33^24^25, 12^23^24^25, \\
	& 2^23^24^25, 12^334^25, 12^234^25, 2^234^25, 1234^25, 234^25, 12^33^24^35^2, 12^2345, 12345, 1245, 2^2345, \\
	& 2345, 245, 345, 45, 5 \}), \\
	\varDelta_{+}^{a_{7}}= & s_{45}(\{ 1, 12, 2, 123, 23, 3, 1234, 234, 34, 4, 12^23^245, 123^245, 12345, 1235, 12^23^34^25^2, 12^23^345^2, \\
	& 23^245, 12^23^24^25^2, 12^23^245^2, 2345, 235, 12^23^34^25^3, 123^24^25^2, 123^245^2, 12345^2, 23^24^25^2, \\
	& 23^245^2, 2345^2, 345, 45, 345^2, 35, 5 \}), \\
	\varDelta_{+}^{a_{8}}= & \{ 1, 12, 2, 123, 23, 3, 1^22^234, 12^234, 1234, 124, 234, 24, 34, 4, 1^22^33^24^35, 1^22^33^24^25, \\
	& 1^22^23^24^25, 12^23^24^25, 1^22^334^25, 1^22^234^25, 12^234^25, 1234^25, 234^25, 1^22^33^24^35^2, \\
	& 1^22^2345, 12^2345, 12345, 1245, 2345, 245, 345, 45, 5 \}, \\
	\varDelta_{+}^{a_{9}}= & \{ 1, 12, 2, 123, 23, 3, 12^23^24, 123^24, 1234, 23^24, 234, 3^24, 34, 4, 12^23^44^25, 12^23^34^25, \\
	& 123^34^25, 23^34^25, 12^23^345, 123^345, 23^345, 12^23^245, 123^245, 12^23^44^25^2, 12345, 1235, \\
	& 23^245, 2345, 235, 3^245, 345, 35, 5 \}, \\
	\varDelta_{+}^{a_{10}}= & \{ 1, 12, 2, 123, 23, 3, 1^22^23^24, 12^23^24, 123^24, 1234, 23^24, 234, 34, 4, 1^22^33^44^25, 1^22^33^34^25, \\
	& 1^22^23^34^25, 12^23^34^25, 1^22^33^345, 1^22^23^345, 12^23^345, 1^22^23^245, 12^23^245, 1^22^33^44^25^2, \\
	& 123^245, 12345, 1235, 23^245, 2345, 235, 345, 35, 5 \}, \\
	\varDelta_{+}^{a_{11}}= & \{ 1, 12, 2, 123, 23, 3, 12^23^24, 123^24, 1234, 2^23^24, 23^24, 234, 34, 4, 12^33^44^25, 12^33^34^25, \\
	& 12^23^34^25, 2^23^34^25, 12^33^345, 12^23^345, 2^23^345, 12^23^245, 2^23^245, 12^33^44^25^2, 123^245, \\
	& 12345, 1235, 23^245, 2345, 235, 345, 35, 5 \}, \\
	\varDelta_{+}^{a_{12}}= & \{ 1, 12, 2, 123, 23, 3, 1234, 234, 34, 4, 1^22^23^245, 12^23^245, 1^22^33^44^25^2, 1^22^33^34^25^2, \\
	& 1^22^33^345^2, 123^245, 1^22^23^34^25^2, 1^22^23^345^2, 12345, 1^22^23^245^2, 1235, 1^22^33^44^25^3, \\
	& 12^23^34^25^2, 12^23^345^2, 12^23^245^2, 123^245^2, 23^245, 2345, 345, 23^245^2, 235, 35, 5 \}, \\
	\varDelta_{+}^{a_{13}}= & \{ 1, 12, 2, 123, 23, 3, 1234, 234, 34, 4, 12^23^245, 123^245, 12345, 1235, 12^23^44^25^2, 12^23^34^25^2, \\
	& 12^23^345^2, 23^245, 12^23^245^2, 2345, 235, 12^23^44^25^3, 123^34^25^2, 123^345^2, 123^245^2, 23^34^25^2, \\
	& 23^345^2, 23^245^2, 3^245, 345, 3^245^2, 35, 5 \}, \\
	\varDelta_{+}^{a_{14}}= & \{ 1, 12, 2, 123, 23, 3, 1234, 234, 34, 4, 12^23^245, 123^245, 12345, 1235, 12^33^44^25^2, 12^33^34^25^2, \\
	& 12^33^345^2, 2^23^245, 12^23^34^25^2, 12^23^345^2, 23^245, 12^33^44^25^3, 12^23^245^2, 123^245^2, 2^23^34^25^2, \\
	& 2345, 345, 2^23^345^2, 2^23^245^2, 23^245^2, 235, 35, 5 \}, \\
	\varDelta_{+}^{a_{15}}= & \varpi_3(\{ 1, 12, 2, 123, 23, 3, 1^22^234, 12^234, 1234, 124, 2^234, 234, 24, 4, 1^22^43^24^35, 1^22^43^24^25, \\
	& 1^22^33^24^25, 12^33^24^25, 1^22^334^25, 12^334^25, 1^22^234^25, 12^234^25, 2^234^25, 1^22^43^24^35^2, \\
	& 1^22^2345, 12^2345, 12345, 1245, 2^2345, 2345, 245, 45, 5 \}).
	\end{align*}
	
}%

\subsubsection{Weyl groupoid}
\label{subsubsec:type-el53-Weyl}
The isotropy group  at $a_{12} \in \cX$ is
\begin{align*}
\cW(a_{12}) &= \langle \varsigma_1^{a_{12}}, \varsigma_2^{a_{12}}, \varsigma_3^{a_{12}}, \varsigma_4^{a_{12}}, \varsigma_5^{a_{12}} \varsigma_4 \varsigma_3\varsigma_2  \varsigma_5 \varsigma_3 \varsigma_4 \varsigma_2\varsigma_1 \varsigma_2\varsigma_4 \varsigma_3\varsigma_5 \varsigma_2\varsigma_3 \varsigma_4\varsigma_5 \rangle \\ 
& \simeq W(B_4)  \times \Z/2.
\end{align*}

\subsubsection{Incarnation}
We set the matrices $(\bq^{(i)})_{i\in\I_{15}}$, from left to right and  from up to down:
\begin{align}\label{eq:dynkin-el(5;3)}
&\xymatrix{\overset{\zeta}{\underset{\ }{\circ}}\ar  @{-}[r]^{\ztu}  &
	\overset{\zeta}{\underset{\ }{\circ}} \ar  @{-}[r]^{\ztu}  & \overset{\zeta}{\underset{\
		}{\circ}}
	\ar  @{-}[r]^{\ztu}  & \overset{\ztu}{\underset{\ }{\circ}} \ar  @{-}[r]^{\zeta}  &
	\overset{-1}{\underset{\ }{\circ}}}
& &
\xymatrix{\overset{\zeta}{\underset{\ }{\circ}}\ar  @{-}[r]^{\ztu}  &
	\overset{\zeta}{\underset{\ }{\circ}} \ar  @{-}[r]^{\ztu}  & \overset{\zeta}{\underset{\
		}{\circ}}
	\ar  @{-}[r]^{\ztu}  & \overset{-1}{\underset{\ }{\circ}} \ar  @{-}[r]^{\ztu}  &
	\overset{-1}{\underset{\ }{\circ}}}
\\ \notag
&\xymatrix{\overset{-1}{\underset{\ }{\circ}} \ar  @{-}[r]^{\zeta} & \overset{-1}{\underset{\ }{\circ}} \ar  @{-}[r]^{\ztu}  & \overset{-1}{\underset{\
		}{\circ}} \ar  @{-}[r]^{\ztu}  & \overset{\zeta}{\underset{\ }{\circ}} \ar  @{-}[r]^{\ztu}
	& \overset{\zeta}{\underset{\ }{\circ}}}
& & 
\xymatrix{\overset{\zeta}{\underset{\ }{\circ}} \ar  @{-}[r]^{\ztu} &
	\overset{-1}{\underset{\ }{\circ}} \ar  @{-}[r]^{\zeta}  & \overset{\ztu}{\underset{\
		}{\circ}} \ar  @{-}[r]^{\ztu}  & \overset{\zeta}{\underset{\ }{\circ}} \ar  @{-}[r]^{\ztu}
	& \overset{\zeta}{\underset{\ }{\circ}}}
\\ \notag
&\xymatrix@C-4pt{ &&&& \\ \overset{-1}{\underset{\ }{\circ}} \ar  @{-}[r]^{\ztu} &
	\overset{\zeta}{\underset{\ }{\circ}} \ar  @{-}[r]^{\ztu}  & \overset{-1}{\underset{\
		}{\circ}} \ar  @{-}[r]^{\ztu}  & \overset{\zeta}{\underset{\ }{\circ}} \ar  @{-}[r]^{\ztu}
	& \overset{\zeta}{\underset{\ }{\circ}}}
& &
\xymatrix@R-8pt{  &  &  \overset{-1}{\circ} \ar  @{-}[dl]_{\zeta}\ar  @{-}[d]^{\zeta} &
	\\ \overset{-1}{\underset{\ }{\circ}} \ar  @{-}[r]^{\ztu}  & \overset{-1}{\underset{\
		}{\circ}} \ar  @{-}[r]^{\zeta}  & \overset{-1}{\underset{\ }{\circ}} \ar  @{-}[r]^{\ztu}
	& \overset{\zeta}{\underset{\ }{\circ}}}
\\ \notag&
\xymatrix@R-8pt{  &  & & \overset{\ztu}{\circ} \ar  @{-}[d]^{\zeta}\ar  @{-}[dl]_{\zeta}
	\\
	\overset{\zeta}{\underset{\ }{\circ}} \ar  @{-}[r]^{\ztu}  & \overset{\zeta}{\underset{\
		}{\circ}} \ar  @{-}[r]^{\ztu}  & \overset{-1}{\underset{\ }{\circ}} \ar  @{-}[r]^{\zeta}
	& \overset{-1}{\underset{\ }{\circ}}}
& &
\xymatrix@R-8pt{  &  &  \overset{-1}{\circ} \ar  @{-}[dl]_{\zeta}\ar  @{-}[d]^{\zeta} &
	\\
	\overset{-1}{\underset{\ }{\circ}} \ar  @{-}[r]^{\zeta}  & \overset{\ztu}{\underset{\
		}{\circ}} \ar  @{-}[r]^{\zeta}  & \overset{-1}{\underset{\ }{\circ}} \ar  @{-}[r]^{\ztu}
	& \overset{\zeta}{\underset{\ }{\circ}}}
\\\notag
&\xymatrix@R-8pt{  &  &  \overset{-1}{\circ} \ar  @{-}[d]^{\zeta} & \\
	\overset{\zeta}{\underset{\ }{\circ}} \ar  @{-}[r]^{\ztu}  & \overset{-1}{\underset{\
		}{\circ}} \ar  @{-}[r]^{\zeta}  & \overset{\ztu}{\underset{\ }{\circ}} \ar
	@{-}[r]^{\ztu}  & \overset{\zeta}{\underset{\ }{\circ}}}
& &\xymatrix@R-8pt{  &  &  \overset{-1}{\circ} \ar  @{-}[d]^{\zeta} & \\
	\overset{-1}{\underset{\ }{\circ}} \ar  @{-}[r]^{\zeta}  & \overset{-1}{\underset{\
		}{\circ}} \ar  @{-}[r]^{\ztu}  & \overset{-1}{\underset{\ }{\circ}} \ar  @{-}[r]^{\ztu}
	& \overset{\zeta}{\underset{\ }{\circ}}}
\\\notag
&\xymatrix@R-8pt{  &  &  \overset{-1}{\circ} \ar  @{-}[d]^{\zeta} & \\
	\overset{-1}{\underset{\ }{\circ}} \ar  @{-}[r]^{\ztu}  & \overset{\zeta}{\underset{\
		}{\circ}} \ar  @{-}[r]^{\ztu}  & \overset{-1}{\underset{\ }{\circ}} \ar  @{-}[r]^{\ztu}
	& \overset{\zeta}{\underset{\ }{\circ}}}
& &\xymatrix@R-8pt{  &  &  \overset{-1}{\circ} \ar  @{-}[d]^{\ztu} & \\
	\overset{-1}{\underset{\ }{\circ}} \ar  @{-}[r]^{\zeta}  & \overset{-1}{\underset{\
		}{\circ}} \ar  @{-}[r]^{\ztu}  & \overset{\zeta}{\underset{\ }{\circ}} \ar  @{-}[r]^{\ztu}
	& \overset{\zeta}{\underset{\ }{\circ}}}
\\\notag
&\xymatrix@R-8pt{  &  &  \overset{-1}{\circ} \ar  @{-}[d]^{\ztu} & \\
	\overset{\zeta}{\underset{\ }{\circ}} \ar  @{-}[r]^{\ztu}  & \overset{-1}{\underset{\
		}{\circ}} \ar  @{-}[r]^{\zeta}  & \overset{-1}{\underset{\ }{\circ}} \ar  @{-}[r]^{\ztu}
	& \overset{\zeta}{\underset{\ }{\circ}}}
& &\xymatrix@R-8pt{  &  &  \overset{-1}{\circ} \ar  @{-}[d]^{\ztu} & \\
	\overset{-1}{\underset{\ }{\circ}} \ar  @{-}[r]^{\ztu}  & \overset{\zeta}{\underset{\
		}{\circ}} \ar  @{-}[r]^{\ztu}  & \overset{\zeta}{\underset{\ }{\circ}} \ar  @{-}[r]^{\ztu}
	& \overset{\zeta}{\underset{\ }{\circ}}}
\\ \notag
&\xymatrix@R-8pt{  &   \overset{\zeta}{\circ} \ar  @{-}[d]^{\ztu} & & \\
	\overset{\ztu}{\underset{\ }{\circ}} \ar  @{-}[r]^{\zeta}  & \overset{-1}{\underset{\
		}{\circ}} \ar  @{-}[r]^{\ztu}  & \overset{\zeta}{\underset{\ }{\circ}} \ar  @{-}[r]^{\ztu}
	& \overset{\zeta}{\underset{\ }{\circ}}}
& &
\end{align}
Now, this is the incarnation: $a_{15}\mapsto \varpi_3(\bq^{(15)})$,
\begin{align*}
& a_i\mapsto s_{34}(\bq^{(5-i)}), \ i\in\I_{3,5}; &
& a_i\mapsto s_{45}(\bq^{(i)}), \ i=6,8; &
& a_i\mapsto \bq^{(i)}, \text{ otherwise}.
\end{align*}

\subsubsection{PBW-basis and dimension} \label{subsubsec:type-el53-PBW}
Notice that the roots in each $\varDelta_{+}^{a_i}$, $i\in\I_{15}$, are ordered from left to right, justifying the notation $\beta_1, \dots, \beta_{33}$.

The root vectors $x_{\beta_k}$ are described as in Remark \ref{rem:lyndon-word}.
Thus
\begin{align*}
\left\{ x_{\beta_{33}}^{n_{33}} \dots x_{\beta_2}^{n_{2}}  x_{\beta_1}^{n_{1}} \, | \, 0\le n_{k}<N_{\beta_k} \right\}.
\end{align*}
is a PBW-basis of $\toba_{\bq}$. Hence $\dim \toba_{\bq}=2^{16}3^{17}$.

\subsubsection{The Dynkin diagram \emph{(\ref{eq:dynkin-el(5;3)}
		a)}}\label{subsubsec:el(5;3)-a}

\

The Nichols algebra $\toba_{\bq}$ is generated by $(x_i)_{i\in \I_5}$ with defining relations
\begin{align}\label{eq:rels-el(5;3)-a}
\begin{aligned}
x_{13}&=0; & x_{14}&=0; & x_{15}&=0; & [[x_{(35)},&x_4]_c,x_4]_c=0;\\
x_{24}&=0; & x_{25}&=0; & x_{35}&=0; & x_{112}&=0;\\
x_{221}&=0; & x_{223}&=0; & x_{332}&=0; & [x_{4432},&x_{43}]_c=0; \\
x_{334}&=0; & x_{445}&=0; & x_{5}^2&=0; & x_{\alpha}^{3}&=0, \ \alpha\in\Oc_+^{\bq}.
\end{aligned}
\end{align}
Here {\scriptsize$\Oc_+^{\bq}=\{ 1, 12, 2, 123, 23, 3,
	1234, 12^23^24^2, 123^24^2, 1234^2, 234, 23^24^2, 234^2, 34,
	34^2, 4,\newline 12^23^34^45^2  \}$}  
and the degree of the integral is
\begin{equation*}
\ya= 24\alpha_1 + 44\alpha_2 + 60\alpha_3 + 72\alpha_4 + 20\alpha_5.
\end{equation*}

\subsubsection{The Dynkin diagram \emph{(\ref{eq:dynkin-el(5;3)}
		b)}}\label{subsubsec:el(5;3)-b}

\

The Nichols algebra $\toba_{\bq}$ is generated by $(x_i)_{i\in \I_5}$ with defining relations
\begin{align}\label{eq:rels-el(5;3)-b}
\begin{aligned}
x_{24}&=0; & x_{13}&=0; & x_{14}&=0; & x_{15}&=0; & & [[x_{54},x_{543}]_c,x_4]_c=0;\\
x_{25}&=0; & x_{112}&=0; & x_{221}&=0; & x_{223}&=0; & &x_{332}=0; \\
x_{35}&=0; & x_{334}&=0; & x_{4}^2&=0; & x_{5}^2&=0; & & x_{\alpha}^{3}=0, \ \alpha\in\Oc_+^{\bq}.
\end{aligned}
\end{align}
Here {\scriptsize$\Oc_+^{\bq}=\{ 1, 12, 2, 123, 23, 3,
	12345, 12^23^34^45^2, 12^23^24^25^2, 2345, 123^24^25^2, 1234^25^2, 23^24^25^2$, \\ $234^25^2,
	345, 34^25^2, 45 \}$}  
and the degree of the integral is
\begin{equation*}
\ya= 24\alpha_1 + 44\alpha_2 + 60\alpha_3 + 72\alpha_4 + 58\alpha_5.
\end{equation*}

\subsubsection{The Dynkin diagram \emph{(\ref{eq:dynkin-el(5;3)}
		c)}}\label{subsubsec:el(5;3)-c}

\

The Nichols algebra $\toba_{\bq}$ is generated by $(x_i)_{i\in \I_5}$ with defining relations
\begin{align}\label{eq:rels-el(5;3)-c}
\begin{aligned}
x_{24}&=0; & x_{13}&=0; & x_{14}&=0; & x_{15}&=0; & & [[x_{23},x_{(24)}]_c,x_3]_c=0;\\
x_{25}&=0; & x_{443}&=0; & x_{445}&=0; & x_{554}&=0; & &[x_{(13)},x_2]_c=0; \\
x_{35}&=0; & x_{1}^2&=0; & x_{2}^2&=0; & x_{3}^2&=0; & & x_{\alpha}^{3}=0, \ \alpha\in\Oc_+^{\bq}.
\end{aligned}
\end{align}
Here {\scriptsize$\Oc_+^{\bq}=\{  12, 23, 123^24, 2^23^24, 234, 4,
	12^33^44^35, 12^33^44^25, 12^23^34^25, 2^23^24^25, 123^24^25,\newline 12^33^44^35^2,  123^245, 2^23^245,
	2345, 45, 5 \}$}
and the degree of the integral is
\begin{equation*}
\ya= 24\alpha_1 + 66\alpha_2 + 84\alpha_3 + 60\alpha_4 + 32\alpha_5.
\end{equation*}

\subsubsection{The Dynkin diagram \emph{(\ref{eq:dynkin-el(5;3)}
		d)}}\label{subsubsec:el(5;3)-d}

\

The Nichols algebra $\toba_{\bq}$ is generated by $(x_i)_{i\in \I_5}$ with defining relations
\begin{align}\label{eq:rels-el(5;3)-d}
\begin{aligned}
x_{13}&=0; & x_{14}&=0; & x_{15}&=0; & [[x_{(24)},&x_{3}]_c,x_3]_c=0;\\
x_{24}&=0; & x_{25}&=0; & x_{35}&=0; & [x_{3345},&x_{34}]_c=0;\\
x_{112}&=0; & x_{332}&=0; & x_{443}&=0; & [x_{(13)},&x_2]_c=0; \\
x_{445}&=0; & x_{554}&=0; & x_{2}^2&=0; & x_{\alpha}^{3}&=0, \ \alpha\in\Oc_+^{\bq}.
\end{aligned}
\end{align}
Here {\scriptsize$\Oc_+^{\bq}=\{  1, 3, 12^23^24, 3^24, 34, 4,
	12^23^44^35, 12^23^44^25, 12^23^34^25, 12^23^24^25, 3^24^25, 12^23^44^35^2, 
	\newline12^23^245, 3^245,
	345, 45, 5 \}$}
and the degree of the integral is
\begin{equation*}
\ya= 24\alpha_1 + 44\alpha_2 + 84\alpha_3 + 60\alpha_4 + 32\alpha_5.
\end{equation*}

\subsubsection{The Dynkin diagram \emph{(\ref{eq:dynkin-el(5;3)}
		e)}}\label{subsubsec:el(5;3)-e}

\

The Nichols algebra $\toba_{\bq}$ is generated by $(x_i)_{i\in \I_5}$ with defining relations
\begin{align}\label{eq:rels-el(5;3)-e}
\begin{aligned}
x_{13}&=0; & x_{14}&=0; & x_{15}&=0; & [[[x_{5432},&x_{3}]_c,x_4]_c,x_3]_c=0;\\
x_{24}&=0; & x_{25}&=0; & x_{35}&=0; & x_{221}&=0;\\
x_{223}&=0; & x_{443}&=0; & x_{445}&=0; & [[[x_{(14)},&x_{3}]_c,x_2]_c,x_3]_c=0; \\
x_{554}&=0; & x_{1}^2&=0; & x_{3}^2&=0; & x_{\alpha}^{3}&=0, \ \alpha\in\Oc_+^{\bq}.
\end{aligned}
\end{align}
Here {\scriptsize$\Oc_+^{\bq}=\{ 2, 123, 1^22^23^24, 1234, 23^24, 4,
	1^22^33^44^35, 1^22^33^44^25, 12^23^34^25, 1^22^23^24^25, 23^24^25$, \\ $1^22^33^44^35^2, 1^22^23^245, 12345,
	23^245, 45, 5 \}$}  
and the degree of the integral is
\begin{equation*}
\ya= 44\alpha_1 + 66\alpha_2 + 84\alpha_3 + 60\alpha_4 + 32\alpha_5.
\end{equation*}

\subsubsection{The Dynkin diagram \emph{(\ref{eq:dynkin-el(5;3)}
		f)}}\label{subsubsec:el(5;3)-f}

\

The Nichols algebra $\toba_{\bq}$ is generated by $(x_i)_{i\in \I_5}$ with defining relations
\begin{align}\label{eq:rels-el(5;3)-f}
\begin{aligned}
\begin{aligned}
x_{13}&=0; & x_{24}&=0; & [x_{125},&x_{2}]_c=0; & & [x_{435},x_{3}]_c=0;
\\
x_{14}&=0; & x_{45}&=0; & [x_{(24)},&x_{3}]_c=0; & & x_{443}=0; \quad x_{2}^2=0; 
\\
x_{15}&=0; & x_{1}^2&=0; & [x_{(13)},&x_{2}]_c=0; & &x_{\alpha}^{3}=0, \ \alpha\in\Oc_+^{\bq};
\end{aligned}
\\
\begin{aligned}
x_{3}^2&=0; & x_{5}^2&=0; & x_{235}=q_{35}\ztu[x_{25},x_3]_c+q_{23}(1-\zeta)x_3x_{25}=0.
\end{aligned}
\end{aligned}
\end{align}
Here {\scriptsize$\Oc_+^{\bq}=\{ 12, 23, 1^22^234, 1234, 24, 34,
	1^22^33^24^35, 1^22^23^24^25, 1^22^334^25, 12^234^25, 234^25, \newline 1^22^33^24^35^2, 1^22^2345, 12345,
	245, 345, 5 \}$}
and the degree of the integral is
\begin{equation*}
\ya= 44\alpha_1 + 66\alpha_2 + 60\alpha_3 + 32\alpha_4 + 44\alpha_5.
\end{equation*}

\subsubsection{The Dynkin diagram \emph{(\ref{eq:dynkin-el(5;3)}
		g)}}\label{subsubsec:el(5;3)-g}

\

The Nichols algebra $\toba_{\bq}$ is generated by $(x_i)_{i\in \I_5}$ with defining relations
\begin{align}\label{eq:rels-el(5;3)-g}
\begin{aligned}
& \begin{aligned}
x_{13}&=0; & x_{14}&=0; & x_{15}&=0; & [x_{(24)},&x_{3}]_c=0;& & x_{24}=0;\\
x_{221}&=0; & x_{223}&=0; & x_{443}&=0; & [x_{235},&x_{3}]_c=0;& &x_{25}=0; \\
x_{445}&=0; & x_{3}^2&=0; & x_{5}^2&=0; & x_{\alpha}^{3}&=0, && \alpha\in\Oc_+^{\bq};
\end{aligned}
\\
& x_{112}=0; \quad  x_{(35)}=q_{45}\ztu[x_{35},x_4]_c +q_{34}(1-\zeta)x_4x_{35}.
\end{aligned}
\end{align}
Here {\scriptsize$\Oc_+^{\bq}=\{  1, 12, 2, 1234, 234, 34,
	12345, 12^23^345^2, 12^23^24^25^2, 2345, 123^24^25^2, 12345^2, 23^24^25^2, \newline 2345^2, 345, 345^2, 5\}$}
and the degree of the integral is
\begin{equation*}
\ya= 24\alpha_1 + 44\alpha_2 + 60\alpha_3 + 58\alpha_4 + 44\alpha_5.
\end{equation*}

\subsubsection{The Dynkin diagram \emph{(\ref{eq:dynkin-el(5;3)}
		h)}}\label{subsubsec:el(5;3)-h}

The Nichols algebra $\toba_{\bq}$ is generated by $(x_i)_{i\in \I_5}$ with defining relations
\begin{align}\label{eq:rels-el(5;3)-h}
\begin{aligned}
& \begin{aligned}
x_{13}&=0; & x_{14}&=0; & x_{15}&=0; & [x_{(24)},&x_{3}]_c=0; & & x_{24}=0;\\
x_{223}&=0; & x_{225}&=0; & x_{443}&=0; & [x_{435},&x_{3}]_c=0; & & x_{45}=0;\\
x_{1}^2&=0; & x_{3}^2&=0; & x_{5}^2&=0; & x_{\alpha}^{3}&=0, & & \alpha\in\Oc_+^{\bq};
\end{aligned}
\\ 
& x_{221}=0; \quad x_{235}= q_{35}\ztu[x_{25},x_3]_c +q_{23}(1-\zeta)x_3x_{25}.
\end{aligned}
\end{align}
Here {\scriptsize$\Oc_+^{\bq}=\{ 2, 123, 124, 2^234, 234, 34,
	12^33^24^35, 2^23^24^25, 12^334^25, 12^234^25, 1234^25, 12^33^24^35^2, \\ 1245, 2^2345, 2345, 345, 5 \}$}
and the degree of the integral is
\begin{equation*}
\ya= 24\alpha_1 + 66\alpha_2 + 60\alpha_3 + 32\alpha_4 + 44\alpha_5.
\end{equation*}

\subsubsection{The Dynkin diagram \emph{(\ref{eq:dynkin-el(5;3)}
		i)}}\label{subsubsec:el(5;3)-i}

\

The Nichols algebra $\toba_{\bq}$ is generated by $(x_i)_{i\in \I_5}$ with defining relations
\begin{align}\label{eq:rels-el(5;3)-i}
\begin{aligned}
x_{13}&=0; & x_{14}&=0; & x_{15}&=0; & [[x_{235},&x_{3}]_c,x_3]_c=0;\\
x_{24}&=0; & x_{25}&=0; & x_{45}&=0; & [x_{(13)},&x_2]_c=0;\\
x_{112}&=0; & x_{332}&=0; & x_{334}&=0; & [[x_{435},&x_{3}]_c,x_3]_c=0; \\
x_{553}&=0; & x_{2}^2&=0; & x_{4}^2&=0; & x_{\alpha}^{3}&=0, \ \alpha\in\Oc_+^{\bq}.
\end{aligned}
\end{align}
Here {\scriptsize$\Oc_+^{\bq}=\{ 1, 3, 12^23^24, 3^24, 34, 4,
	123^34^25, 23^34^25, 123^345, 23^345, 123^245, 12^23^44^25^2, 12345, \\ 1235, 23^245, 2345, 235 \}$}
and the degree of the integral is
\begin{equation*}
\ya= 24\alpha_1 + 44\alpha_2 + 84\alpha_3 + 32\alpha_4 + 44\alpha_5.
\end{equation*}

\subsubsection{The Dynkin diagram \emph{(\ref{eq:dynkin-el(5;3)}
		j)}}\label{subsubsec:el(5;3)-j}

\

The Nichols algebra $\toba_{\bq}$ is generated by $(x_i)_{i\in \I_5}$ with defining relations
\begin{align}\label{eq:rels-el(5;3)-j}
\begin{aligned}
x_{13}&=0; & x_{14}&=0; & x_{15}&=0; & [[x_{23},&x_{235}]_c,x_3]_c=0;\\
x_{24}&=0; & x_{25}&=0; & x_{45}&=0; & [x_{(13)},&x_2]_c=0;\\
x_{553}&=0; & [x_{(24)},&x_3]_c=0; & x_{1}^2&=0; & [x_{435},&x_{3}]_c=0; \\
x_{2}^2&=0; & x_{3}^2&=0; & x_{4}^2&=0; & x_{\alpha}^{3}&=0, \ \alpha\in\Oc_+^{\bq}.
\end{aligned}
\end{align}
Here
{\scriptsize$\Oc_+^{\bq}=\{ 12, 23, 123^24, 2^23^24, 234, 4,
	12^33^34^25, 2^23^34^25, 12^33^345, 2^23^345, 12^23^245, 12^33^44^25^2$, \\ $12345, 1235,
	23^245, 345, 35 \}$}
and the degree of the integral is
\begin{equation*}
\ya= 24\alpha_1 + 66\alpha_2 + 84\alpha_3 + 32\alpha_4 + 44\alpha_5.
\end{equation*}

\subsubsection{The Dynkin diagram \emph{(\ref{eq:dynkin-el(5;3)}
		k)}}\label{subsubsec:el(5;3)-k}

\

The Nichols algebra $\toba_{\bq}$ is generated by $(x_i)_{i\in \I_5}$ with defining relations
\begin{align}\label{eq:rels-el(5;3)-k}
\begin{aligned}
x_{13}&=0; & x_{14}&=0; & x_{15}&=0; & [[[x_{1235},&x_{3}]_c,x_2]_c,x_3]_c=0;\\
x_{24}&=0; & x_{25}&=0; & x_{45}&=0; & [x_{(24)},&x_3]_c=0;\\
x_{221}&=0; & x_{223}&=0; & x_{553}&=0; & [x_{435},&x_{3}]_c=0; \\
x_{1}^2&=0; & x_{3}^2&=0; & x_{4}^2&=0; & x_{\alpha}^{3}&=0, \ \alpha\in\Oc_+^{\bq}.
\end{aligned}
\end{align}
Here
{\scriptsize$\Oc_+^{\bq}=\{ 2, 123, 1^22^23^24, 1234, 23^24, 4,
	1^22^33^34^25, 1^22^23^34^25, 1^22^33^345, 1^22^23^345, 12^23^245$, \\ $1^22^33^44^25^2, 123^245, 2345,
	235, 345, 35 \}$}
and the degree of the integral is
\begin{equation*}
\ya= 44\alpha_1 + 66\alpha_2 + 84\alpha_3 + 32\alpha_4 + 44\alpha_5.
\end{equation*}

\subsubsection{The Dynkin diagram \emph{(\ref{eq:dynkin-el(5;3)}
		l)}}\label{subsubsec:el(5;3)-l}

\

The Nichols algebra $\toba_{\bq}$ is generated by $(x_i)_{i\in \I_5}$ with defining relations
\begin{align}\label{eq:rels-el(5;3)-l}
\begin{aligned}
x_{24}&=0; & x_{13}&=0; & x_{14}&=0; & x_{15}&=0; & [x_{(13)},&x_2]_c=0;\\
x_{25}&=0; & x_{332}&=0; & x_{334}&=0; & x_{335}&=0; & x_{553}&=0; \\
x_{45}&=0; & x_{1}^2&=0; & x_{2}^2&=0; & x_{4}^2&=0; & x_{\alpha}^{3}&=0, \ \alpha\in\Oc_+^{\bq}.
\end{aligned}
\end{align}
Here
{\scriptsize$\Oc_+^{\bq}=\{ 12, 123, 3, 1234, 34, 4,
	12^23^245, 12^33^44^25^2, 12^33^34^25^2, 12^33^345^2, 23^245, 123^245^2, \\ 2^23^34^25^2, 2345,
	2^23^345^2, 2^23^245^2, 235 \}$}
and the degree of the integral is
\begin{equation*}
\ya= 24\alpha_1 + 66\alpha_2 + 84\alpha_3 + 54\alpha_4 + 44\alpha_5.
\end{equation*}

\subsubsection{The Dynkin diagram \emph{(\ref{eq:dynkin-el(5;3)}
		m)}}\label{subsubsec:el(5;3)-m}

\

The Nichols algebra $\toba_{\bq}$ is generated by $(x_i)_{i\in \I_5}$ with defining relations
\begin{align}\label{eq:rels-el(5;3)-m}
\begin{aligned}
x_{13}&=0; & x_{14}&=0; & x_{15}&=0; & [[x_{43},&x_{435}]_c,x_3]_c=0;\\
x_{24}&=0; & x_{25}&=0; & x_{45}&=0; &  [x_{(24)},&x_3]_c=0;\\
[x_{(13)},&x_2]_c=0; & x_{112}&=0; & x_{553}&=0; & [x_{235},&x_3]_c=0; \\
x_{2}^2&=0; & x_{3}^2&=0; & x_{4}^2&=0; & x_{\alpha}^{3}&=0, \ \alpha\in\Oc_+^{\bq}.
\end{aligned}
\end{align}
Here
{\scriptsize$\Oc_+^{\bq}=\{ 1, 123, 23, 1234, 234, 4,
	123^245, 12^23^44^25^2, 23^245, 12^23^245^2, 123^34^25^2, 123^345^2, \\ 23^34^25^2, 23^345^2,
	345, 3^245^2, 35 \}$}
and the degree of the integral is
\begin{equation*}
\ya= 24\alpha_1 + 44\alpha_2 + 84\alpha_3 + 54\alpha_4 + 44\alpha_5.
\end{equation*}

\subsubsection{The Dynkin diagram \emph{(\ref{eq:dynkin-el(5;3)}
		n)}}\label{subsubsec:el(5;3)-n}

\

The Nichols algebra $\toba_{\bq}$ is generated by $(x_i)_{i\in \I_5}$ with defining relations
\begin{align}\label{eq:rels-el(5;3)-n}
\begin{aligned}
x_{24}&=0; & x_{13}&=0; & x_{14}&=0; & x_{15}&=0; & x_{221}&=0;\\
x_{25}&=0; & x_{223}&=0; & x_{332}&=0; & x_{334}&=0; & x_{335}&=0; \\
x_{45}&=0; & x_{553}&=0; & x_{1}^2&=0; & x_{4}^2&=0; & x_{\alpha}^{3}&=0, \ \alpha\in\Oc_+^{\bq}.
\end{aligned}
\end{align}
Here
{\scriptsize$\Oc_+^{\bq}=\{ 2, 23, 3, 234, 34, 4,
	12^23^245, 1^22^33^44^25^2, 1^22^33^34^25^2, 1^22^33^345^2, 123^245, 1^22^23^34^25^2$, \\ $1^22^23^345^2, 12345,
	1^22^23^245^2, 1235, 23^245^2 \}$}
and the degree of the integral is
\begin{equation*}
\ya= 48\alpha_1 + 66\alpha_2 + 84\alpha_3 + 54\alpha_4 + 44\alpha_5.
\end{equation*}

\subsubsection{The Dynkin diagram \emph{(\ref{eq:dynkin-el(5;3)} \~{n})}}
\label{subsubsec:el(5;3)-enie}

\

The Nichols algebra $\toba_{\bq}$ is generated by $(x_i)_{i\in \I_5}$ with defining relations
\begin{align}\label{eq:rels-el(5;3)-enie}
\begin{aligned}
x_{13}&=0; & x_{14}&=0; & x_{15}&=0; & [[[x_{4325},&x_{2}]_c,x_3]_c,x_2]_c=0;\\
x_{24}&=0; & x_{35}&=0; & x_{45}&=0; &  [x_{(13)},&x_2]_c=0;\\
x_{112}&=0; & x_{332}&=0; & x_{334}&=0; & [x_{125},&x_2]_c=0; \\
x_{443}&=0; & x_{552}&=0; & x_{2}^2&=0; & x_{\alpha}^{3}&=0, \ \alpha\in\Oc_+^{\bq}.
\end{aligned}
\end{align}
Here
{\scriptsize$\Oc_+^{\bq}=\{ 1, 3, 1^22^234, 12^234, 4, 2^234,
	1^22^43^24^35, 1^22^43^24^25, 1^22^234^25, 12^234^25, 2^234^25,\\ 1^22^43^24^35^2, 1^22^2345, 12^2345,
	45, 2^2345, 5 \}$}
and the degree of the integral is
\begin{equation*}
\ya= 44\alpha_1 + 84\alpha_2 + 60\alpha_3 + 32\alpha_4 + 44\alpha_5.
\end{equation*}

\subsubsection{The associated Lie algebra} This is of type $B_4\times A_1$.

\subsection{\ Type $\gtt(8,3)$}\label{subsec:type-g(8,3)}
Here $\theta = 5$, $\zeta \in \G'_3$. Let 
\begin{align*}
A&=\begin{pmatrix} 0 & 1 & 0 & 0 & 0 \\ \text{--}1 & 2 & 0 & \text{--}1 & 0 \\ 0 & 0 & 2 & \text{--}1 & \text{--}1
\\ 0 & \text{--}1 & \text{--}2 & 2 & 0 \\ 0 & 0 & \text{--}1 & 0 & 2 \end{pmatrix}
\in \kk^{5\times 5}; & \pa &= (-1,1, 1,1, 1) \in \G_2^5.
\end{align*}
Let $\g(8,3) = \g(A, \pa)$,
the contragredient Lie superalgebra corresponding to $(A, \pa)$. 
We know \cite{BGL} that $\sdim \g(8,3) = 55|50$. 
There are 20 other pairs of matrices and parity vectors for which the associated contragredient Lie superalgebra is isomorphic to $\g(8,3)$.
We describe now the root system $\gtt(8,3)$ of $\g(8,3)$, see \cite{AA-GRS-CLS-NA} for details.

\subsubsection{Basic datum and root system}
Below, $D_5$, $CE_5$, $A_5$, $C_5$, $E_6^{(2)}$, $F_4^{(1)}$, $C_2^{+++}$, $_1T_1$ and $_2 T$ are numbered as in \eqref{eq:dynkin-system-D}, \eqref{eq:CEn}, \eqref{eq:dynkin-system-A}, \eqref{eq:dynkin-system-C}, 
\eqref{eq:E6(2)}, \eqref{eq:F4(1)}, \eqref{eq:C2+++} and \eqref{eq:mTn}, respectively.
The basic datum and the bundle of Cartan matrices are described by the following diagram:
\begin{align*}
\xymatrix{ &
	\overset{s_{34}(F_4^{(1)})}{\underset{a_1}{\vtxgpd}} \ar@{-}[r]^1 &
	\overset{s_{34}(F_4^{(1)})}{\underset{a_{11}}{\vtxgpd}} \ar@{-}[r]^2 &
	\overset{s_{34}(A_5)}{\underset{a_{10}}{\vtxgpd}} \ar@{-}[r]^4 &
	\overset{s_{34}({}_1T_1)}{\underset{a_{14}}{\vtxgpd}} \ar@{-}[d]^3
	\\
	& & &
	\overset{D_5}{\underset{a_{17}}{\vtxgpd}} \ar@{-}[r]^5 \ar@{-}[d]^2 &
	\overset{D_5}{\underset{a_{20}}{\vtxgpd}} \ar@{-}[d]^2
	\\
	\overset{E_6^{(2)}}{\underset{a_2}{\vtxgpd}} \ar@{-}[r]^5 \ar@{-}[d]^1 &
	\overset{A_5}{\underset{a_4}{\vtxgpd}} \ar@{-}[r]^4 \ar@{-}[d]^1 &
	\overset{{}_2T}{\underset{a_{12}}{\vtxgpd}} \ar@{-}[r]^3 \ar@{-}[d]^1 &
	\overset{D_5}{\underset{a_{21}}{\vtxgpd}} \ar@{-}[r]^5 \ar@{-}[d]^1 &
	\overset{CE_5}{\underset{a_{18}}{\vtxgpd}} \ar@{-}[d]^1
	\\
	\overset{E_6^{(2)}}{\underset{a_3}{\vtxgpd}} \ar@{-}[r]^5 \ar@{-}[d]^2 &
	\overset{A_5}{\underset{a_6}{\vtxgpd}} \ar@{-}[r]^4 \ar@{-}[d]^2 &
	\overset{{}_2T}{\underset{a_{13}}{\vtxgpd}} \ar@{-}[r]^3 \ar@{-}[d]^2 &
	\overset{D_5}{\underset{a_{16}}{\vtxgpd}} \ar@{-}[r]^5 &
	\overset{CE_5}{\underset{a_{19}}{\vtxgpd}}
	\\
	\overset{E_6^{(2)}}{\underset{a_5}{\vtxgpd}} \ar@{-}[r]^5 \ar@{-}[d]^3 &
	\overset{A_5}{\underset{a_9}{\vtxgpd}} \ar@{-}[r]^4 \ar@{-}[d]^3 &
	\overset{{}_2T}{\underset{a_{15}}{\vtxgpd}} & &
	\\
	\overset{\tau(C_2^{+++})}{\underset{a_7}{\vtxgpd}} \ar@{-}[r]^5 &
	\overset{C_5}{\underset{a_8}{\vtxgpd}} & & & }
\end{align*}
Using the notation \eqref{eq:notation-root-exceptional}, the bundle of root sets is the following: { \tiny
	\begin{align*}
	\varDelta_{+}^{a_{1}}= & s_{34}(\{ 1, 12, 2, 123, 23, 3, 1^22^23^24, 12^23^24, 123^24, 1234, 23^24, 234, 3^24, 34, 4, 1^22^33^54^35, 1^22^33^44^35, \\
	& 1^22^23^44^35,  12^23^44^35, 1^22^33^44^25, 1^22^23^44^25, 12^23^44^25, 1^22^33^34^25, 1^22^23^34^25, 1^32^43^64^45^2, \\
	& 1^22^23^24^25, 1^22^23^245, 12^23^34^25, 12^23^24^25, 12^23^245, 1^22^33^64^45^2, 1^22^33^54^45^2, 1^22^33^54^35^2, \\
	& 123^34^25, 23^34^25, 1^22^33^44^35^2, 123^24^25, 23^24^25, 1^22^23^44^35^2, 12^23^44^35^2, 123^245, 12345, \\
	& 23^245, 2345, 3^24^25, 3^245, 345, 45, 5 \}), \\
	\varDelta_{+}^{a_{2}}= & \{ 1, 12, 2, 123, 23, 3, 1234, 12^23^24^2, 123^24^2, 1234^2, 234, 23^24^2, 234^2, 34, 34^2, 4, 12^33^44^55, \\
	& 12^33^44^45, 12^33^34^45, 12^23^34^45, 2^23^34^45, 12^33^34^35, 12^23^34^35, 2^23^34^35,  1^22^43^54^65^2, \\
	& 12^43^54^65^2, 12^23^24^35, 12^23^24^25, 2^23^24^35, 2^23^24^25, 12^33^54^65^2, 12^33^44^65^2, 12^33^44^55^2, \\
	& 123^24^35, 23^24^35, 12^33^44^45^2, 123^24^25, 23^24^25, 12^33^34^45^2, 12^23^34^45^2, 1234^25, 12345, \\
	& 2^23^34^45^2, 234^25, 2345, 34^25, 345, 45,  5 \}, \\
	\varDelta_{+}^{a_{3}}= & \{ 1, 12, 2, 123, 23, 3, 1234, 12^23^24^2, 123^24^2, 1234^2, 234, 23^24^2, 234^2, 34, 34^2, 4, 1^22^33^44^55, \\
	& 1^22^33^44^45, 1^22^33^34^45, 1^22^23^34^45, 12^23^34^45, 1^22^33^34^35, 1^22^23^34^35, 12^23^34^35, 1^32^43^54^65^2, \\
	& 1^22^23^24^35, 1^22^23^24^25, 1^22^43^54^65^2, 12^23^24^35, 12^23^24^25, 1^22^33^54^65^2, 1^22^33^44^65^2, \\
	& 1^22^33^44^55^2, 123^24^35, 23^24^35, 1^22^33^44^45^2, 123^24^25, 23^24^25, 1^22^33^34^45^2, 1^22^23^34^45^2, \\
	& 12^23^34^45^2, 1234^25, 12345, 234^25, 2345, 34^25, 345, 45, 5 \}, \\
	\varDelta_{+}^{a_{4}}= & \{ 1, 12, 2, 123, 23, 3, 1234, 234, 34, 4, 12^23^24^25, 123^24^25, 1234^25, 12345, 12^33^44^45^2, 12^33^34^45^2, \\
	& 12^33^34^35^2, 2^23^24^25, 12^23^34^45^2, 12^23^34^35^2, 23^24^25, 1^22^43^54^65^4, 12^33^44^55^3, 12^23^24^35^2, \\
	& 123^24^35^2, 2^23^34^45^2, 234^25, 34^25, 12^43^54^65^4, 12^33^54^65^4,  12^33^44^45^3, 2^23^34^35^2, 12^33^44^65^4, \\
	& 12^33^34^45^3, 2^23^24^35^2, 12^23^34^45^3, 23^24^35^2, 12^33^44^55^4, 12^23^24^25^2, 123^24^25^2, 1234^25^2, \\
	& 2^23^34^45^3, 2345, 23^24^25^2, 234^25^2, 345, 34^25^2, 45, 5 \}, \\
	\varDelta_{+}^{a_{5}}= & \{ 1, 12, 2, 123, 23, 3, 1234, 12^23^24^2, 123^24^2, 1234^2, 234, 23^24^2,  234^2, 34, 34^2, 4, 12^23^44^55, \\
	& 12^23^44^45, 12^23^34^45, 123^34^45, 23^34^45, 12^23^34^35, 123^34^35, 23^34^35, 1^22^33^54^65^2, 12^33^54^65^2, \\
	& 12^23^24^35, 12^23^24^25, 12^23^54^65^2, 123^24^35, 123^24^25, 12^23^44^65^2, 12^23^44^55^2, 23^24^35, 3^24^35, 12^23^44^45^2, \\
	& 23^24^25, 3^24^25, 12^23^34^45^2, 123^34^45^2, 1234^25, 12345, 23^34^45^2, 234^25, 2345, 34^25, 345, 45, 5 \}, \\
	\varDelta_{+}^{a_{6}}= & \{ 1, 12, 2, 123, 23, 3, 1234, 234, 34, 4, 1^22^23^24^25, 12^23^24^25, 1^22^33^44^45^2, 1^22^33^34^45^2, 1^22^33^34^35^2, \\
	& 123^24^25, 1^22^23^34^45^2, 1^22^23^34^35^2, 1234^25, 1^22^23^24^35^2, 12345, 1^32^43^54^65^4, 1^22^33^44^55^3, \\
	& 1^22^33^44^45^3, 1^22^33^34^45^3, 1^22^23^34^45^3, 12^23^34^45^2, 12^23^34^35^2, 1^22^43^54^65^4, 1^22^33^54^65^4, \\
	& 23^24^25, 12^23^24^35^2, 1^22^33^44^65^4, 234^25, 123^24^35^2, 34^25, 1^22^33^44^55^4, 12^23^34^45^3, 12^23^24^25^2, \\
	& 123^24^25^2, 1234^25^2, 23^24^35^2, 2345, 23^24^25^2, 234^25^2, 345, 34^25^2, 45, 5 \}, \\
	\varDelta_{+}^{a_{7}}= & \{ 1, 12, 2, 123, 23, 3, 1234, 12^23^24^2, 123^24^2, 1234^2, 234, 23^24^2, 234^2, 34, 34^2, 4, 12^23^34^55, 12^23^34^45, \\
	& 12^23^24^45, 123^24^45, 23^24^45, 12^23^34^35, 12^23^24^35, 1^22^33^44^65^2, 12^33^44^65^2, 12^23^24^25, 123^24^35, \\
	& 12^23^44^65^2, 123^24^25, 23^24^35, 23^24^25, 12^23^34^65^2, 12^23^34^55^2, 1234^35, 234^35, 34^35, \\
	& 12^23^34^45^2, 1234^25, 12^23^24^45^2, 123^24^45^2, 12345, 234^25, 23^24^45^2, 2345, 34^25, 345, 4^25, 45, 5 \}, \\
	\varDelta_{+}^{a_{8}}= & \{ 1, 12, 2, 123, 23, 3, 1234, 234, 34, 4, 12^23^24^25, 123^24^25, 1234^25, 12345, 12^23^34^45^2, 12^23^34^35^2, \\
	& 23^24^25, 12^23^24^45^2, 12^23^24^35^2, 234^25, 12^23^24^25^2, 2345, 1^22^33^44^65^4, 12^33^44^65^4, 12^23^34^55^3, \\
	& 12^23^34^45^3, 12^23^24^45^3, 123^24^45^2, 123^24^35^2, 1234^35^2, 123^24^25^2, 1234^25^2, 12^23^44^65^4,  \\
	& 12^23^34^65^4, 12^23^34^55^4, 123^24^45^3, 23^24^45^2, 23^24^35^2, 34^25, 23^24^25^2, 345, 23^24^45^3, 234^35^2, \\
	& 234^25^2, 34^35^2, 34^25^2, 4^25, 45, 5 \}, \\
	\varDelta_{+}^{a_{9}}= & \{ 1, 12, 2, 123, 23, 3, 1234, 234, 34, 4, 12^23^24^25, 123^24^25, 1234^25, 12345, 12^23^44^45^2, 12^23^34^45^2, \\
	& 12^23^34^35^2, 23^24^25, 12^23^24^35^2, 234^25, 12^23^24^25^2, 2345, 1^22^33^54^65^4, 12^33^54^65^4, 12^23^44^55^3, \\
	& 12^23^44^45^3, 12^23^34^45^3, 123^34^45^2, 123^34^35^2, 123^24^35^2, 123^24^25^2, 1234^25^2, 12^23^54^65^4, \\
	& 12^23^44^65^4, 12^23^44^55^4, 123^34^45^3, 23^34^45^2, 23^34^35^2, 3^24^25, 23^24^35^2, 34^25, 23^34^45^3, \\
	& 23^24^25^2,234^25^2, 3^24^35^2, 345, 34^25^2, 45, 5 \}, \\
	\varDelta_{+}^{a_{10}}= & s_{34}(\{ 1, 12, 2, 123, 23, 3, 1^22^23^24, 12^23^24, 123^24, 1234, 2^23^24, 23^24, 234, 34, 4, 1^22^43^54^35, 1^22^43^44^35, \\
	& 1^22^33^44^35, 12^33^44^35, 1^22^43^44^25, 1^22^33^44^25, 12^33^44^25, 1^22^33^34^25, 1^22^23^34^25, 1^32^53^64^45^2, \\
	& 1^22^23^24^25, 1^22^23^245, 12^33^34^25, 1^22^53^64^45^2, 12^23^34^25, 1^22^43^54^45^2, 1^22^43^54^35^2, 2^23^34^25, \\
	& 12^23^24^25, 1^22^43^44^35^2, 2^23^24^25, 12^23^245, 2^23^245, 1^22^33^44^35^2, 12^33^44^35^2, 123^24^25, 123^245, \\
	& 12345, 23^24^25, 23^245, 2345, 345, 45, 5 \}), \\
	\varDelta_{+}^{a_{11}}= & s_{34} (\{ 1, 12, 2, 123, 23, 3, 12^23^24, 123^24, 1234, 2^23^24, 23^24, 234, 3^24, 34, 4, 12^33^54^35, 12^33^44^35, 12^23^44^35, \\
	& 2^23^44^35, 12^33^44^25, 12^23^44^25, 2^23^44^25, 12^33^34^25, 12^23^34^25, 12^43^64^45^2, 12^23^24^25, 12^23^245, \\
	& 2^23^34^25, 2^23^24^25, 2^23^245, 12^33^64^45^2, 12^33^54^45^2, 12^33^54^35^2, 123^34^25, 23^34^25, 12^33^44^35^2, \\
	& 123^24^25, 23^24^25, 12^23^44^35^2, 123^245, 12345, 2^23^44^35^2, 23^245, 2345, 3^24^25, 3^245, 345, 45, 5 \}), \\
	\varDelta_{+}^{a_{12}}= & \{ 1, 12, 2, 123, 23, 3, 1234, 234, 34, 4, 12^23^245, 123^245, 12345, 1235, 12^33^44^25^2, 12^33^34^25^2, 12^33^345^2, \\
	& 2^23^245, 12^23^34^25^2, 12^23^345^2, 23^245, 1^22^43^54^35^4, 12^33^44^35^3, 12^23^24^25^2, 123^24^25^2, 12^33^44^25^3, \\
	& 12^23^245^2, 123^245^2, 12^43^54^35^4, 12^33^54^35^4, 2^23^34^25^2, 2^23^345^2, 12^33^34^25^3, 12^23^34^25^3, 12^33^44^35^4, \\
	& 12^33^44^25^4, 12345^2, 2345, 2^23^245^2, 235, 2^23^34^25^3, 23^24^25^2, 23^245^2, 2345^2, 345, 45, 345^2, 35, 5 \}, \\
	\varDelta_{+}^{a_{13}}= & \{ 1, 12, 2, 123, 23, 3, 1234, 234, 34, 4, 1^22^23^245, 12^23^245, 1^22^33^44^25^2, 1^22^33^34^25^2, 1^22^33^345^2, \\
	& 123^245, 1^22^23^34^25^2, 1^22^23^345^2, 12345, 1^22^23^245^2, 1235, 1^32^43^54^35^4, 1^22^33^44^35^3, 1^22^33^44^25^3,\\
	& 1^22^33^34^25^3, 1^22^23^34^25^3, 12^23^34^25^2, 12^23^24^25^2, 123^24^25^2, 1^22^43^54^35^4, 1^22^33^54^35^4, \\
	& 12^23^345^2, 23^245, 1^22^33^44^35^4, 12^23^245^2, 2345, 1^22^33^44^25^4, 12^23^34^25^3, 123^245^2, 12345^2, \\
	& 23^24^25^2, 345, 45, 23^245^2, 2345^2, 235, 345^2, 35, 5 \}, \\
	\varDelta_{+}^{a_{14}}= & s_{34} (\{ 1, 12, 2, 123, 23, 3, 1^22^234, 12^234, 1234, 124, 2^234, 234, 24, 34, 4, 1^22^43^34^35, 1^22^43^24^35, 1^22^33^24^35, \\
	& 12^33^24^35, 1^22^43^24^25, 1^22^33^24^25, 12^33^24^25, 1^22^23^24^25, 12^23^24^25, 2^23^24^25, 1^32^53^34^45^2, \\
	& 1^22^53^34^45^2, 1^22^43^34^45^2, 1^22^43^34^35^2, 1^22^334^25, 1^22^234^25, 1^22^2345, 12^334^25, 12^234^25, 1^22^43^24^35^2, \\
	& 2^234^25, 12^2345, 2^2345, 1^22^33^24^35^2, 12^33^24^35^2, 1234^25, 12345, 1245, 234^25, 2345, 245, 345, 45, 5 \}), \\
	\varDelta_{+}^{a_{15}}= & \{ 1, 12, 2, 123, 23, 3, 1234, 234, 34, 4, 12^23^245, 123^245, 12345, 1235, 12^23^44^25^2, 12^23^34^25^2, 12^23^345^2, \\
	& 23^245, 12^23^24^25^2, 12^23^245^2, 2345, 235, 1^22^33^54^35^4, 12^33^54^35^4, 12^23^44^35^3, 12^23^44^25^3, 12^23^34^25^3, \\
	& 123^34^25^2, 123^24^25^2, 123^345^2, 123^245^2, 12345^2, 12^23^54^35^4, 12^23^44^35^4, 12^23^44^25^4, 123^34^25^3, \\
	& 23^34^25^2, 23^345^2, 3^245, 23^24^25^2, 23^245^2, 23^34^25^3, 2345^2, 345, 45, 3^245^2, 345^2, 35, 5 \}, \\
	\varDelta_{+}^{a_{16}}= & \{ 1, 12, 2, 123, 23, 3, 1234, 234, 34, 4, 1^22^23^245, 12^23^245,  1^22^33^44^25^2, 1^22^33^34^25^2, 1^22^33^345^2, \\
	& 123^245, 1^22^23^34^25^2, 1^22^23^345^2, 12345, 1^22^23^245^2, 1235, 1^32^43^64^35^4, 1^22^33^54^35^3, 1^22^33^54^25^3, \\
	& 1^22^33^44^25^3, 1^22^23^44^25^3, 12^23^44^25^2, 12^23^34^25^2, 123^34^25^2, 1^22^43^64^35^4, 1^22^33^64^35^4, 12^23^345^2, \\
	& 23^245, 123^345^2, 3^245, 1^22^33^54^35^4, 1^22^33^54^25^4, 12^23^44^25^3, 12^23^245^2, 123^245^2, 23^34^25^2, 2345, \\
	& 345, 23^345^2, 23^245^2, 235, 3^245^2, 35, 5 \}, \\
	\varDelta_{+}^{a_{17}}= & \{ 1, 12, 2, 123, 23, 3, 1234, 234, 34, 4, 1^22^23^245, 12^23^245, 1^22^33^44^25^2, 1^22^33^34^25^2, 1^22^33^345^2, \\
	& 123^245, 1^22^23^34^25^2, 1^22^23^345^2, 12345, 1^22^23^245^2, 1235, 1^32^53^64^35^4, 1^22^43^54^35^3, 1^22^43^54^25^3, \\
	& 1^22^43^44^25^3, 1^22^33^44^25^3, 12^33^44^25^2, 12^33^34^25^2, 12^23^34^25^2, 1^22^53^64^35^4, 12^33^345^2, 2^23^245, \\
	& 1^22^43^64^35^4, 12^23^345^2, 23^245, 1^22^43^54^35^4, 1^22^43^54^25^4, 12^33^44^25^3, 12^23^245^2, 123^245^2, \\
	& 2^23^34^25^2, 2345, 345, 2^23^345^2, 2^23^245^2, 23^245^2, 235, 35, 5 \}, \\
	\varDelta_{+}^{a_{18}}= & \{ 1, 12, 2, 123, 23, 3, 12^23^24, 123^24, 1234, 2^23^24, 23^24, 234, 3^24, 34, 4, 12^33^54^35, 12^33^54^25, \\
	& 12^33^44^25, 12^23^44^25, 2^23^44^25, 12^33^34^25, 12^23^34^25, 2^23^34^25, 123^34^25, 23^34^25, 1^22^43^64^35^2, \\
	& 12^43^64^35^2, 12^33^64^35^2, 12^33^54^35^2, 12^33^345, 12^23^345, 12^23^245, 123^345, 123^245, 12345, 12^33^54^25^2, \\
	& 12^33^44^25^2, 12^23^44^25^2, 1235, 2^23^345, 23^345, 2^23^245, 23^245, 2345, 235, 3^245, 345, 35, 5 \}, \\
	\varDelta_{+}^{a_{19}}= & \{ 1, 12, 2, 123, 23, 3, 1^22^23^24, 12^23^24, 123^24, 1234, 23^24, 234, 3^24, 34, 4, 1^22^33^54^35, 1^22^33^54^25, \\
	& 1^22^33^44^25, 1^22^23^44^25, 12^23^44^25, 1^22^33^34^25, 1^22^23^34^25, 12^23^34^25, 123^34^25, 23^34^25, 1^32^43^64^35^2, \\
	& 1^22^43^64^35^2, 1^22^33^64^35^2, 1^22^33^54^35^2, 1^22^33^345, 1^22^23^345, 1^22^23^245, 12^23^345, 12^23^245, 1^22^33^54^25^2, \\
	& 123^345, 23^345, 1^22^33^44^25^2, 123^245, 12^23^44^25^2, 12345, 1235, 23^245, 2345, 235, 3^245, 345, 35, 5 \}, \\
	\varDelta_{+}^{a_{20}}= & \{ 1, 12, 2, 123, 23, 3, 1^22^23^24, 12^23^24, 123^24, 1234, 2^23^24, 23^24, 234, 34, 4, 1^22^43^54^35, 1^22^43^54^25, \\
	& 1^22^43^44^25,  1^22^33^44^25, 12^33^44^25, 1^22^33^34^25, 12^33^34^25, 1^22^23^34^25, 12^23^34^25, 2^23^34^25, 1^32^53^64^35^2, \\
	& 1^22^53^64^35^2, 1^22^43^64^35^2, 1^22^43^54^35^2, 1^22^33^345, 1^22^23^345, 1^22^23^245, 12^33^345, 12^23^345, 1^22^43^54^25^2, \\
	& 2^23^345, 12^23^245, 2^23^245, 1^22^33^44^25^2, 12^33^44^25^2, 123^245, 12345, 1235, 23^245, 2345, 235, 345, 35, 5 \}, \\
	\varDelta_{+}^{a_{21}}= & \{ 1, 12, 2, 123, 23, 3, 1234, 234, 34, 4, 12^23^245, 123^245, 12345, 1235, 12^33^44^25^2, 12^33^34^25^2, 12^33^345^2, \\
	& 2^23^245, 12^23^44^25^2, 12^23^34^25^2, 1^22^43^64^35^4, 12^33^54^35^3, 123^34^25^2, 12^23^345^2, 12^33^54^25^3, 123^345^2, \\
	& 23^245, 3^245, 12^43^64^35^4, 12^33^64^35^4, 12^33^44^25^3, 2^23^34^25^2, 2^23^345^2, 12^23^44^25^3, 23^34^25^2, 23^345^2, \\
	& 12^33^54^35^4, 12^33^54^25^4, 12^23^245^2, 123^245^2, 2^23^44^25^3, 2345, 345, 2^23^245^2, 23^245^2, 235, 3^245^2, 35, 5 \}.
	\end{align*}
}%

\subsubsection{Weyl groupoid}
\label{subsubsec:type-g83-Weyl}
The isotropy group  at $a_1 \in \cX$ is
\begin{align*}
\cW(a_1) &= \langle 
\varsigma_1^{a_1}\varsigma_2 \varsigma_3\varsigma_4 
\varsigma_2 \varsigma_5 \varsigma_4 \varsigma_3
\varsigma_1 \varsigma_2 \varsigma_4 \varsigma_5 \varsigma_3 \varsigma_5 \varsigma_4 \varsigma_2 \varsigma_1 \varsigma_3\varsigma_4 \varsigma_5 \varsigma_2 \varsigma_4\varsigma_3 \varsigma_2 \varsigma_1,
\varsigma_2^{a_1},  \varsigma_3^{a_1}, \\  & \qquad  \varsigma_4^{a_1},  \varsigma_5^{a_1} \rangle   \simeq \Z/2  \times W(F_4).
\end{align*}

\subsubsection{Incarnation}
We set the matrices $(\bq^{(i)})_{i\in\I_{21}}$, from left to right and  from up to down:
\begin{align}\label{eq:dynkin-g(8,3)}
& && \xymatrix@C-4pt{\overset{-1}{\underset{\ }{\circ}}\ar  @{-}[r]^{\zeta}  &
	\overset{\ztu}{\underset{\ }{\circ}} \ar  @{-}[r]^{\zeta}  & \overset{\ztu}{\underset{\
		}{\circ}}\ar  @{-}[r]^{\ztu}
	& \overset{\zeta}{\underset{\ }{\circ}} \ar  @{-}[r]^{\ztu}  & \overset{\zeta}{\underset{\
		}{\circ}}}
\\ \notag
&
\xymatrix@C-4pt{\overset{-1}{\underset{\ }{\circ}}\ar  @{-}[r]^{\ztu}  &
	\overset{\zeta}{\underset{\ }{\circ}} \ar  @{-}[r]^{\ztu}  & \overset{\zeta}{\underset{\
		}{\circ}}
	\ar  @{-}[r]^{\ztu}  & \overset{\ztu}{\underset{\ }{\circ}} \ar  @{-}[r]^{\zeta}  &
	\overset{-1}{\underset{\ }{\circ}}}
& & \xymatrix@C-4pt{\overset{-1}{\underset{\ }{\circ}}\ar  @{-}[r]^{\zeta}  &
	\overset{-1}{\underset{\ }{\circ}} \ar  @{-}[r]^{\ztu}  & \overset{\zeta}{\underset{\
		}{\circ}}
	\ar  @{-}[r]^{\ztu}  & \overset{\ztu}{\underset{\ }{\circ}} \ar  @{-}[r]^{\zeta}  &
	\overset{-1}{\underset{\ }{\circ}}}
\\ \notag
&\xymatrix@C-4pt{\overset{-1}{\underset{\ }{\circ}}\ar  @{-}[r]^{\ztu}  &
	\overset{\zeta}{\underset{\ }{\circ}} \ar  @{-}[r]^{\ztu}  & \overset{\zeta}{\underset{\
		}{\circ}}
	\ar  @{-}[r]^{\ztu}  & \overset{-1}{\underset{\ }{\circ}} \ar  @{-}[r]^{\ztu}  &
	\overset{-1}{\underset{\ }{\circ}}}
&&\xymatrix@C-4pt{\overset{\zeta}{\underset{\ }{\circ}}\ar  @{-}[r]^{\ztu}  &
	\overset{-1}{\underset{\ }{\circ}} \ar  @{-}[r]^{\zeta}  & \overset{-1}{\underset{\
		}{\circ}}
	\ar  @{-}[r]^{\ztu}  & \overset{\ztu}{\underset{\ }{\circ}} \ar  @{-}[r]^{\zeta}  &
	\overset{-1}{\underset{\ }{\circ}}}
\\ \notag 
& \xymatrix@C-4pt{\overset{-1}{\underset{\ }{\circ}}\ar  @{-}[r]^{\zeta}  &
	\overset{-1}{\underset{\ }{\circ}} \ar  @{-}[r]^{\ztu}  & \overset{\zeta}{\underset{\
		}{\circ}}
	\ar  @{-}[r]^{\ztu}  & \overset{-1}{\underset{\ }{\circ}} \ar  @{-}[r]^{\ztu}  &
	\overset{-1}{\underset{\ }{\circ}}}
&& \xymatrix@C-4pt{\overset{\zeta}{\underset{\ }{\circ}}\ar  @{-}[r]^{\ztu}  &
	\overset{\zeta}{\underset{\ }{\circ}} \ar  @{-}[r]^{\ztu}  & \overset{-1}{\underset{\
		}{\circ}}
	\ar  @{-}[r]^{\zeta}  & \overset{-\zeta}{\underset{\ }{\circ}} \ar  @{-}[r]^{\zeta}  &
	\overset{-1}{\underset{\ }{\circ}}}
\\ \notag
& \xymatrix@C-4pt{\overset{\zeta}{\underset{\ }{\circ}}\ar  @{-}[r]^{\ztu}  &
	\overset{\zeta}{\underset{\ }{\circ}} \ar  @{-}[r]^{\ztu}  & \overset{-1}{\underset{\
		}{\circ}}\ar  @{-}[r]^{\zeta}
	& \overset{\ztu}{\underset{\ }{\circ}} \ar  @{-}[r]^{\ztu}  & \overset{-1}{\underset{\
		}{\circ}}}
&&
\xymatrix@C-4pt{\overset{\zeta}{\underset{\ }{\circ}}\ar  @{-}[r]^{\ztu}  &
	\overset{-1}{\underset{\ }{\circ}} \ar  @{-}[r]^{\zeta}  & \overset{-1}{\underset{\
		}{\circ}}\ar  @{-}[r]^{\ztu}
	& \overset{-1}{\underset{\ }{\circ}} \ar  @{-}[r]^{\ztu}  & \overset{-1}{\underset{\
		}{\circ}}}
\\ \notag
& \xymatrix@C-4pt{\overset{\ztu}{\underset{\ }{\circ}}\ar  @{-}[r]^{\zeta}  &
	\overset{-1}{\underset{\ }{\circ}} \ar  @{-}[r]^{\ztu}  & \overset{-1}{\underset{\
		}{\circ}}\ar  @{-}[r]^{\ztu}
	& \overset{\zeta}{\underset{\ }{\circ}} \ar  @{-}[r]^{\ztu}  & \overset{\zeta}{\underset{\
		}{\circ}}}
&& \xymatrix@C-4pt{\overset{-1}{\underset{\ }{\circ}}\ar  @{-}[r]^{\ztu}  &
	\overset{-1}{\underset{\ }{\circ}} \ar  @{-}[r]^{\zeta}  & \overset{\ztu}{\underset{\
		}{\circ}}\ar  @{-}[r]^{\ztu}
	& \overset{\zeta}{\underset{\ }{\circ}} \ar  @{-}[r]^{\ztu}  & \overset{\zeta}{\underset{\
		}{\circ}}}
\end{align}
\begin{align*}
&\xymatrix@R-8pt{  &   & \overset{\ztu}{\circ} \ar  @{-}[d]^{\zeta}\ar@{-}[dr]^{\zeta}  & \\
	\overset{-1}{\underset{\ }{\circ}} \ar  @{-}[r]^{\ztu}  & \overset{\zeta}{\underset{\
		}{\circ}} \ar  @{-}[r]^{\ztu}  & \overset{-1}{\underset{\ }{\circ}} \ar  @{-}[r]^{\zeta}
	& \overset{-1}{\underset{\ }{\circ}}}
&& \xymatrix@R-8pt{  &   & \overset{\ztu}{\circ} \ar  @{-}[d]^{\zeta} \ar@{-}[dr]^{\zeta} &
	\\
	\overset{-1}{\underset{\ }{\circ}} \ar  @{-}[r]^{\zeta}  & \overset{-1}{\underset{\
		}{\circ}} \ar  @{-}[r]^{\ztu}  & \overset{-1}{\underset{\ }{\circ}} \ar  @{-}[r]^{\zeta}
	& \overset{-1}{\underset{\ }{\circ}}}
\\ & \xymatrix@R-8pt{  &    \overset{-1}{\circ} \ar  @{-}[d]^{\zeta} \ar  @{-}[dr]^{\zeta}
	&  & \\
	\overset{\ztu}{\underset{\ }{\circ}} \ar  @{-}[r]^{\zeta}  & \overset{\ztu}{\underset{\
		}{\circ}} \ar  @{-}[r]^{\zeta}  & \overset{-1}{\underset{\ }{\circ}} \ar  @{-}[r]^{\ztu}
	& \overset{\zeta}{\underset{\ }{\circ}}}
&& \xymatrix@R-8pt{  &   & \overset{\ztu}{\circ} \ar  @{-}[d]^{\zeta} \ar@{-}[dr]^{\zeta} &
	\\
	\overset{\zeta}{\underset{\ }{\circ}} \ar  @{-}[r]^{\ztu}  & \overset{-1}{\underset{\
		}{\circ}} \ar  @{-}[r]^{\zeta}  &
	\overset{\ztu}{\underset{\ }{\circ}} \ar  @{-}[r]^{\zeta}  & \overset{-1}{\underset{\
		}{\circ}}}
\end{align*}
\begin{align*}
& \xymatrix@R-8pt{  &   & \overset{-1}{\circ} \ar  @{-}[d]^{\ztu}  & \\
	\overset{-1}{\underset{\ }{\circ}} \ar  @{-}[r]^{\zeta}  & \overset{\ztu}{\underset{\
		}{\circ}} \ar  @{-}[r]^{\zeta}  & \overset{-1}{\underset{\ }{\circ}} \ar  @{-}[r]^{\ztu}
	& \overset{\zeta}{\underset{\ }{\circ}}}
&& \xymatrix@R-8pt{  &   & \overset{-1}{\circ} \ar  @{-}[d]^{\ztu}  & \\
	\overset{\ztu}{\underset{\ }{\circ}} \ar  @{-}[r]^{\zeta}  & \overset{-1}{\underset{\
		}{\circ}} \ar  @{-}[r]^{\ztu}  & \overset{\zeta}{\underset{\ }{\circ}} \ar  @{-}[r]^{\ztu}
	& \overset{\zeta}{\underset{\ }{\circ}}}
\\ & \xymatrix@R-8pt{  &   & \overset{-1}{\circ} \ar  @{-}[d]^{\ztu}  & \\
	\overset{-1}{\underset{\ }{\circ}} \ar  @{-}[r]^{\ztu}  & \overset{-1}{\underset{\
		}{\circ}} \ar  @{-}[r]^{\zeta}  & \overset{\zeta}{\underset{\ }{\circ}} \ar
	@{-}[r]^{\ztu}  & \overset{\zeta}{\underset{\ }{\circ}}}
&& \xymatrix@R-8pt{  &   & \overset{-1}{\circ} \ar  @{-}[d]^{\ztu}  & \\
	\overset{-1}{\underset{\ }{\circ}} \ar  @{-}[r]^{\zeta}  & \overset{\ztu}{\underset{\
		}{\circ}} \ar  @{-}[r]^{\zeta}  & \overset{\zeta}{\underset{\ }{\circ}} \ar
	@{-}[r]^{\ztu}  & \overset{\zeta}{\underset{\ }{\circ}}}
\\ & \xymatrix@R-8pt{  &   & \overset{-1}{\circ} \ar  @{-}[d]^{\zeta}  & \\
	\overset{\ztu}{\underset{\ }{\circ}} \ar  @{-}[r]^{\zeta}  & \overset{-1}{\underset{\
		}{\circ}} \ar  @{-}[r]^{\ztu}  & \overset{-1}{\underset{\ }{\circ}} \ar  @{-}[r]^{\ztu}
	& \overset{\zeta}{\underset{\ }{\circ}}}
&& \xymatrix@R-8pt{  &   & \overset{-1}{\circ} \ar  @{-}[d]^{\ztu}  & \\
	\overset{-1}{\underset{\ }{\circ}} \ar  @{-}[r]^{\ztu}  & \overset{-1}{\underset{\
		}{\circ}} \ar  @{-}[r]^{\zeta}  & \overset{-1}{\underset{\ }{\circ}} \ar  @{-}[r]^{\ztu}
	& \overset{\zeta}{\underset{\ }{\circ}}}
\end{align*}
Now, this is the incarnation:
\begin{align*}
& a_i\mapsto s_{34}(\bq^{i}), \ i\in \{1,10,11,14 \}; &
& a_i\mapsto \bq^{(i)}, \text{ otherwise}.
\end{align*}

\subsubsection{PBW-basis and dimension} \label{subsubsec:type-g83-PBW}
Notice that the roots in each $\varDelta_{+}^{a_i}$, $i\in\I_{21}$, are ordered from left to right, justifying the notation $\beta_1, \dots, \beta_{49}$.

The root vectors $x_{\beta_k}$ are described as in Remark \ref{rem:lyndon-word}.
Thus
\begin{align*}
\left\{ x_{\beta_{49}}^{n_{49}} \dots x_{\beta_2}^{n_{2}}  x_{\beta_1}^{n_{1}} \, | \, 0\le n_{k}<N_{\beta_k} \right\}.
\end{align*}
is a PBW-basis of $\toba_{\bq}$. Hence $\dim \toba_{\bq}=2^{24}3^{24}6=2^{25}3^{25}$.

\subsubsection{The Dynkin diagram \emph{(\ref{eq:dynkin-g(8,3)} a)}}
\label{subsubsec:g(8,3)-a}

\quad

The Nichols algebra $\toba_{\bq}$ is generated by $(x_i)_{i\in \I_5}$ with defining relations
\begin{align}\label{eq:rels-g(8,3)-a}
x_{221}&=0; & x_{223}&=0; & x_{332}&=0; & &x_{ij}=0, \ i<j, \, \widetilde{q}_{ij}=1;
\\ \notag
& & x_{443}&=0; & x_{445}&=0; & & [[x_{(24)},x_{3}]_c,x_3]_c=0;
\\ \notag
[x_{3345},&x_{34}]_c=0; & x_{554}&=0; & x_{1}^2&=0; & &x_{\alpha}^{N_\alpha}=0, \ \alpha\in\Oc_+^{\bq}.
\end{align}
Here  {\scriptsize$\Oc_+^{\bq}=\{ 2, 23, 3, 2^23^24, 23^24, 234,
	3^24, 34, 4, 2^23^44^35, 2^23^44^25, 12^23^34^25, 2^23^34^25, 2^23^24^25, \\
	2^23^245,  23^34^25, 23^24^25, 2^23^44^35^2, 23^245, 2345,
	3^24^25, 3^245, 345, 45, 5 \}$}
and the degree of the integral is
\begin{equation*}
\ya= 29\alpha_1 + 102\alpha_2 + 171\alpha_3 + 118\alpha_4 + 61\alpha_5.
\end{equation*}

\subsubsection{The Dynkin diagram \emph{(\ref{eq:dynkin-g(8,3)}
		b)}}\label{subsubsec:g(8,3)-b}

\

The Nichols algebra $\toba_{\bq}$ is generated by $(x_i)_{i\in \I_5}$ with defining relations
\begin{align}\label{eq:rels-g(8,3)-b}
\begin{aligned}
x_{221}&=0; & x_{223}&=0; & x_{332}&=0; & & x_{ij}=0, \ i<j, \, \widetilde{q}_{ij}=1;\\
& & x_{334}&=0; & x_{445}&=0; & & [[x_{(35)},x_{4}]_c,x_4]_c=0;\\
[x_{4432},&x_{43}]_c=0; & x_{1}^2&=0; & x_{5}^2&=0; & & x_{\alpha}^{N_\alpha}=0, \ \alpha\in\Oc_+^{\bq}.
\end{aligned}
\end{align}
Here {\scriptsize$\Oc_+^{\bq}=\{ 2, 23, 3, 1234, 234, 23^24^2,
	234^2, 34, 34^2, 4, 12^23^34^45, 12^23^34^35, 1^22^43^54^65^2, 12^23^24^35$, \\ $12^23^24^25, 1^22^33^54^65^2, 1^22^33^44^65^2, 1^22^33^44^55^2, 123^24^35, 1^22^33^44^45^2, 123^24^25, 1^22^33^34^45^2$, \\ $1^22^23^34^45^2, 1234^25, 12345  \}$}  
and the degree of the integral is
\begin{equation*}
\ya= 75\alpha_1 + 117\alpha_2 + 155\alpha_3 + 189\alpha_4 + 64\alpha_5.
\end{equation*}

\subsubsection{The Dynkin diagram \emph{(\ref{eq:dynkin-g(8,3)}
		c)}}\label{subsubsec:g(8,3)-c}

\

The Nichols algebra $\toba_{\bq}$ is generated by $(x_i)_{i\in \I_5}$ with defining relations
\begin{align}\label{eq:rels-g(8,3)-c}
\begin{aligned}
&[x_{(13)},x_2]_c=0; & x_{332}&=0; & x_{334}&=0; & & x_{ij}=0, \ i<j, \, \widetilde{q}_{ij}=1;\\
& & x_{445}&=0; & x_{1}^2&=0; & &[[x_{(35)},x_{4}]_c,x_4]_c=0;\\
&[x_{4432},x_{43}]_c=0; & x_{2}^2&=0; & x_{5}^2&=0; & & x_{\alpha}^{N_\alpha}=0, \ \alpha\in\Oc_+^{\bq}.
\end{aligned}
\end{align}
Here {\scriptsize$\Oc_+^{\bq}=\{ 12, 123, 3, 1234, 123^24^2, 1234^2,
	234, 34, 34^2, 4, 12^23^34^45, 12^23^34^35, 1^22^43^54^65^2, \\ 
	12^23^24^35, 12^23^24^25, 12^33^54^65^2, 12^33^44^65^2, 12^33^44^55^2, 23^24^35, 12^33^44^45^2,
	23^24^25, 12^33^34^45^2$, \\ $2^23^34^45^2, 234^25, 2345 \}$}  
and the degree of the integral is
\begin{equation*}
\ya= 44\alpha_1 + 117\alpha_2 + 155\alpha_3 + 189\alpha_4 + 64\alpha_5.
\end{equation*}

\subsubsection{The Dynkin diagram \emph{(\ref{eq:dynkin-g(8,3)}
		d)}}\label{subsubsec:g(8,3)-d}

\

The Nichols algebra $\toba_{\bq}$ is generated by $(x_i)_{i\in \I_5}$ with defining relations
\begin{align}\label{eq:rels-g(8,3)-d}
\begin{aligned}
x_{221}&=0; & x_{223}&=0; & x_{332}&=0; & x_{ij}&=0, \ i<j, \, \widetilde{q}_{ij}=1;\\
& & x_{334}&=0; & x_{1}^2&=0; & [[x_{54},&x_{543}]_c,x_4]_c=0;\\
& & x_{4}^2&=0; & x_{5}^2&=0; & x_{\alpha}^{N_\alpha}&=0, \ \alpha\in\Oc_+^{\bq}.
\end{aligned}
\end{align}
Here {\scriptsize$\Oc_+^{\bq}=\{  2, 23, 3, 1234, 12^23^24^25, 1^22^33^44^45^2,
	1^22^33^34^45^2, 123^24^25, 1^22^23^34^45^2, 1234^25, \\ 12345, 1^22^33^44^55^3, 12^23^34^35^2, 1^22^43^54^65^4,
	1^22^33^54^65^4, 12^23^24^35^2, 1^22^33^44^65^4, 123^24^35^2, \\ 12^23^34^45^3, 2345,
	23^24^25^2, 234^25^2, 345, 34^25^2, 45 \}$}  
and the degree of the integral is
\begin{equation*}
\ya= 75\alpha_1 + 117\alpha_2 + 155\alpha_3 + 189\alpha_4 + 127\alpha_5.
\end{equation*}

\subsubsection{The Dynkin diagram \emph{(\ref{eq:dynkin-g(8,3)}
		e)}}\label{subsubsec:g(8,3)-e}

\

The Nichols algebra $\toba_{\bq}$ is generated by $(x_i)_{i\in \I_5}$ with defining relations
\begin{align}\label{eq:rels-g(8,3)-e}
\begin{aligned}
x_{112}&=0; & [x_{(13)},&x_2]_c=0; & x_{2}^2&=0; & & x_{ij}=0, \ i<j, \, \widetilde{q}_{ij}=1;\\
x_{445}&=0; & [x_{(24)},&x_3]_c=0; & x_{3}^2&=0; & &[[x_{(35)},x_4]_c,x_4]_c=0;\\
[x_{443},&x_{43}]_c=0; & [x_{4432},&x_{43}]_c=0; & x_{5}^2&=0; & & x_{\alpha}^{N_\alpha}=0, \ \alpha\in\Oc_+^{\bq}.
\end{aligned}
\end{align}
Here {\scriptsize$\Oc_+^{\bq}=\{ 1, 123, 23, 1234, 12^23^24^2, 1234^2,
	234, 234^2, 34, 4, 12^23^34^45, 12^23^34^35, 1^22^33^54^65^2, \\ 12^33^54^65^2,  123^24^35, 123^24^25, 12^23^44^65^2, 12^23^44^55^2, 23^24^35, 12^23^44^45^2,
	23^24^25, 123^34^45^2, \\23^34^45^2, 34^25, 345  \}$}  
and the degree of the integral is
\begin{equation*}
\ya= 44\alpha_1 + 117\alpha_2 + 155\alpha_3 + 189\alpha_4 + 64\alpha_5.
\end{equation*}

\subsubsection{The Dynkin diagram \emph{(\ref{eq:dynkin-g(8,3)}
		f)}}\label{subsubsec:g(8,3)-f}

\

The Nichols algebra $\toba_{\bq}$ is generated by $(x_i)_{i\in \I_5}$ with defining relations
\begin{align}\label{eq:rels-g(8,3)-f}
\begin{aligned}
x_{332}&=0; & x_{334}&=0; & x_{1}^2&=0; & x_{ij}&=0, \ i<j, \, \widetilde{q}_{ij}=1;\\
& & x_{2}^2&=0; & x_{4}^2&=0; & [[x_{54},&x_{543}]_c,x_4]_c=0;\\
& & [x_{(13)},&x_2]_c=0; & x_{5}^2&=0; & x_{\alpha}^{N_\alpha}&=0, \ \alpha\in\Oc_+^{\bq}.
\end{aligned}
\end{align}
Here {\scriptsize$\Oc_+^{\bq}=\{ 12, 123, 3, 234, 12^23^24^25, 12345,
	12^33^44^45^2, 12^33^34^45^2, 12^23^34^35^2, 23^24^25, \\ 1^22^43^54^65^4, 12^33^44^55^3, 12^23^24^35^2, 2^23^34^45^2,
	234^25, 12^33^54^65^4, 12^33^44^65^4, 12^23^34^45^3, 23^24^35^2,\\ 123^24^25^2, 
	1234^25^2, 2345, 345, 34^25^2, 45 \}$}  
and the degree of the integral is
\begin{equation*}
\ya= 44\alpha_1 + 117\alpha_2 + 155\alpha_3 + 189\alpha_4 + 127\alpha_5.
\end{equation*}

\subsubsection{The Dynkin diagram \emph{(\ref{eq:dynkin-g(8,3)}
		g)}}\label{subsubsec:g(8,3)-g}

\

The Nichols algebra $\toba_{\bq}$ is generated by $(x_i)_{i\in \I_5}$ with defining relations
\begin{align}\label{eq:rels-g(8,3)-g}
\begin{aligned}
\begin{aligned}
x_{112}&=0; & x_{4443}&=0; & [x_{(24)},&x_{3}]_c=0; & x_{ij}&=0, \ i<j, \, \widetilde{q}_{ij}=1;
\\
x_{221}&=0; & x_{4445}&=0; & x_{223}&=0; & x_{\alpha}^{N_\alpha}&=0, \ \alpha\in\Oc_+^{\bq};
\end{aligned}
\\
\begin{aligned}
x_{3}^2&=0; & x_{5}^2&=0; & 
&[x_3,x_{445}]_c= (\ztu-\zeta)
q_{34}x_4x_{(35)} -q_{45}[x_{(35)},x_4]_c.
\end{aligned}
\end{aligned}
\end{align}
Here {\scriptsize$\Oc_+^{\bq}=\{  1, 12, 2, 1234, 12^23^24^2, 123^24^2,
	234, 23^24^2, 34, 4, 12^23^34^45, 12^23^24^35, 1^22^33^44^65^2, \\ 12^33^44^65^2, 
	123^24^35, 12^23^44^65^2, 23^24^35, 12^23^34^55^2, 1234^25, 12^23^24^45^2,
	123^24^45^2, 234^25, 23^24^45^2, \\ 34^25, 45  \}$}  
and the degree of the integral is
\begin{equation*}
\ya= 44\alpha_1 + 84\alpha_2 + 120\alpha_3 + 189\alpha_4 + 64\alpha_5.
\end{equation*}

\subsubsection{The Dynkin diagram \emph{(\ref{eq:dynkin-g(8,3)}
		h)}}\label{subsubsec:g(8,3)-h}

The Nichols algebra $\toba_{\bq}$ is generated by $(x_i)_{i\in \I_5}$ with defining relations
\begin{align}\label{eq:rels-g(8,3)-h}
\begin{aligned}
& [x_{(24)},x_3]_c=0; & x_{112}&=0; & x_{221}&=0; & &x_{ij}=0, \ i<j, \, \widetilde{q}_{ij}=1;\\
& & x_{223}&=0; & x_{445}&=0; & & [[x_{(35)},x_{4}]_c,x_4]_c=0; \\
& [x_{445},x_{45}]_c=0; & x_{3}^2&=0; & x_{5}^2&=0; & &x_{\alpha}^{N_\alpha}=0, \ \alpha\in\Oc_+^{\bq}.
\end{aligned}
\end{align}
Here {\scriptsize$\Oc_+^{\bq}=\{  1, 12, 2, 4, 1234^25, 12345,
	12^23^24^45^2, 12^23^24^35^2, 234^25, 12^23^24^25^2, 2345, 1^22^33^44^65^4$, 
	\\ $12^33^44^65^4, 12^23^34^55^3,
	12^23^34^45^3, 123^24^45^2, 123^24^35^2, 123^24^25^2, 12^23^44^65^4, 23^24^45^2$, 
	\\ $23^24^35^2, 34^25, 23^24^25^2, 345, 45 \}$} 
and the degree of the integral is
\begin{equation*}
\ya= 44\alpha_1 + 84\alpha_2 + 120\alpha_3 + 189\alpha_4 + 127\alpha_5.
\end{equation*}

\subsubsection{The Dynkin diagram \emph{(\ref{eq:dynkin-g(8,3)}
		i)}}\label{subsubsec:g(8,3)-i}

\

The Nichols algebra $\toba_{\bq}$ is generated by $(x_i)_{i\in \I_5}$ with defining relations
\begin{align}\label{eq:rels-g(8,3)-i}
\begin{aligned}
x_{112}&=0; & & [x_{(24)},x_3]_c=0; & x_{2}^2&=0; & & x_{ij}=0, \ i<j, \, \widetilde{q}_{ij}=1;
\\
& & & [x_{(13)},x_2]_c=0; & x_{3}^2&=0; & & [[x_{54},x_{543}]_c,x_4]_c=0;
\\
x_{5}^2&=0; & & [[x_{34},x_{(35)}]_c,x_4]_c=0; & x_{4}^2&=0; &  & x_{\alpha}^{N_\alpha}=0, \ \alpha\in\Oc_+^{\bq}.
\end{aligned}
\end{align}
Here {\scriptsize$\Oc_+^{\bq}=\{  1, 123, 23, 34, 123^24^25, 12345,
	12^23^44^45^2, 12^23^34^35^2, 23^24^25, 12^23^24^25^2, 2345, \\ 1^22^33^54^65^4, 
	12^33^54^65^4, 12^23^44^55^3,
	12^23^34^45^3, 123^34^45^2, 123^24^35^2, 1234^25^2, 12^23^44^65^4, 23^34^45^2,
	\\ 23^24^35^2, 34^25, 234^25^2, 345, 45  \}$}  
and the degree of the integral is
\begin{equation*}
\ya= 44\alpha_1 + 84\alpha_2 + 155\alpha_3 + 189\alpha_4 + 127\alpha_5.
\end{equation*}

\subsubsection{The Dynkin diagram \emph{(\ref{eq:dynkin-g(8,3)}
		j)}}\label{subsubsec:g(8,3)-j}

\

The Nichols algebra $\toba_{\bq}$ is generated by $(x_i)_{i\in \I_5}$ with defining relations
\begin{align}\label{eq:rels-g(8,3)-j}
\begin{aligned}
& [x_{(13)},x_2]_c=0; & x_{112}&=0; & x_{443}&=0; & x_{ij}&=0, \ i<j, \, \widetilde{q}_{ij}=1;\\
& & x_{445}&=0; & x_{554}&=0; & [[x_{23},&x_{(24)}]_c,x_3]_c=0;\\
& & x_{2}^2&=0; & x_{3}^2&=0; & x_{\alpha}^{N_\alpha}&=0, \ \alpha\in\Oc_+^{\bq}.
\end{aligned}
\end{align}
Here {\scriptsize$\Oc_+^{\bq}=\{  1, 123, 23, 1^22^23^24, 12^23^24, 1234,
	2^23^24, 234, 4, 1^22^43^44^35, 1^22^43^44^25, 1^22^33^34^25$, \\ 
	$1^22^23^24^25, 1^22^23^245, 12^33^34^25, 12^23^34^25, 12^23^24^25, 1^22^43^44^35^2, 2^23^24^25, 12^23^245$, \\ 
	$2^23^245, 12345, 2345, 45, 5 \}$}   
and the degree of the integral is
\begin{equation*}
\ya= 75\alpha_1 + 146\alpha_2 + 171\alpha_3 + 118\alpha_4 + 61\alpha_5.
\end{equation*}

\subsubsection{The Dynkin diagram \emph{(\ref{eq:dynkin-g(8,3)}
		k)}}\label{subsubsec:g(8,3)-k}

\

The Nichols algebra $\toba_{\bq}$ is generated by $(x_i)_{i\in \I_5}$ with defining relations
\begin{align}\label{eq:rels-g(8,3)-k}
\begin{aligned}
& [x_{(13)},x_2]_c=0; & x_{332}&=0; & x_{443}&=0; & & x_{ij}1=0, \ i<j, \, \widetilde{q}_{ij}=1;\\
& & x_{445}&=0; & x_{554}&=0; & & [[x_{(35)},x_{4}]_c,x_4]_c=0;\\
& [x_{3345},x_{34}]_c=0; & x_{1}^2&=0; & x_{2}^2&=0; & & x_{\alpha}^{N_\alpha}=0, \ \alpha\in\Oc_+^{\bq}.
\end{aligned}
\end{align}
Here {\scriptsize$\Oc_+^{\bq}=\{  12, 123, 3, 1^22^23^24, 123^24, 1234,
	3^24, 34, 4, 1^22^23^44^35, 1^22^23^44^25, 1^22^23^34^25, \\ 1^22^23^24^25, 
	1^22^23^245, 12^23^34^25, 123^34^25, 123^24^25, 1^22^23^44^35^2, 123^245, 12345,
	3^24^25, 3^245, \\ 345, 45, 5 \}$}  
and the degree of the integral is
\begin{equation*}
\ya= 75\alpha_1 + 102\alpha_2 + 171\alpha_3 + 118\alpha_4 + 61\alpha_5.
\end{equation*}

\subsubsection{The Dynkin diagram \emph{(\ref{eq:dynkin-g(8,3)}
		l)}}\label{subsubsec:g(8,3)-l}

\

The Nichols algebra $\toba_{\bq}$ is generated by $(x_i)_{i\in \I_5}$ with defining relations
\begin{align}\label{eq:rels-g(8,3)-l}
\begin{aligned}
& \begin{aligned}
x_{221}&=0; & x_{443}&=0; & [x_{(24)},&x_3]_c=0; & x_{ij}&=0, \ i<j, \, \widetilde{q}_{ij}=1;
\\
x_{223}&=0; & x_{445}&=0; & [x_{235},&x_3]_c=0; & x_{\alpha}^{N_\alpha}&=0, \ \alpha\in\Oc_+^{\bq};
\end{aligned}
\\
& x_1^2=0; \ x_3^2=0; \ x_5^2=0; \quad x_{(35)} =q_{45}\ztu[x_{35},x_4]_c +q_{34}(1-\zeta)x_4x_{35}.
\end{aligned}
\end{align}
Here {\scriptsize$\Oc_+^{\bq}=\{  2, 123, 234, 34, 12^23^245, 1^22^33^44^25^2,
	1^22^33^345^2, 123^245, 1^22^23^345^2, 12345, 1235$, \\ $1^22^33^44^25^3, 12^23^34^25^2, 1^22^43^54^35^4,
	1^22^33^54^35^4, 12^23^245^2, 2345, 1^22^33^44^25^4, 12^23^34^25^3$, \\ $123^245^2,
	23^24^25^2, 345, 2345^2, 345^2, 5 \}$}  
and the degree of the integral is
\begin{equation*}
\ya= 75\alpha_1 + 117\alpha_2 + 155\alpha_3 + 127\alpha_4 + 95\alpha_5.
\end{equation*}

\subsubsection{The Dynkin diagram \emph{(\ref{eq:dynkin-g(8,3)}
		m)}}\label{subsubsec:g(8,3)-m}

\

The Nichols algebra $\toba_{\bq}$ is generated by $(x_i)_{i\in \I_5}$ with defining relations
\begin{align}\label{eq:rels-g(8,3)-m}
\begin{aligned}
& \begin{aligned}
x_{443}&=0; & [x_{(13)},&x_2]_c=0; & x_1^2&=0; & x_{ij}&=0, & & i<j, \ \widetilde{q}_{ij}=1;
\\
x_{445}&=0; & [x_{(24)},&x_3]_c=0; & x_2^2&=0; & x_5^2&=0; & & x_{\alpha}^{N_\alpha}=0, \ \alpha\in\Oc_+^{\bq};
\end{aligned}
\\
& [x_{235},x_3]_c=0; \  x_3^2=0; \
x_{(35)}=q_{45}\ztu[x_{35},x_4]_c+q_{34}(1-\zeta)x_4x_{35}.
\end{aligned}
\end{align}
Here {\scriptsize$\Oc_+^{\bq}=\{  12, 23, 1234, 34, 12^23^245, 12345,
	12^33^44^25^2, 12^33^345^2, 12^23^34^25^2, 23^245, \\ 1^22^43^54^35^4, 
	123^24^25^2, 12^33^44^25^3, 12^23^245^2,
	12^33^54^35^4, 2^23^345^2, 12^23^34^25^3, 12^33^44^25^4, 12345^2, \\ 2345, 
	235, 23^245^2, 345, 345^2, 5 \}$}  
and the degree of the integral is
\begin{equation*}
\ya= 44\alpha_1 + 117\alpha_2 + 155\alpha_3 + 127\alpha_4 + 95\alpha_5.
\end{equation*}

\subsubsection{The Dynkin diagram \emph{(\ref{eq:dynkin-g(8,3)}
		n)}}\label{subsubsec:g(8,3)-n}

\

The Nichols algebra $\toba_{\bq}$ is generated by $(x_i)_{i\in \I_5}$ with defining relations
\begin{align}\label{eq:rels-g(8,3)-n}
\begin{aligned}
& \begin{aligned}
& [x_{(24)},x_3]_c=0; & x_{112}&=0; & x_{221}&=0; & x_{ij}&=0, & & i<j, \, \widetilde{q}_{ij}=1;
\\
& [x_{435},x_3]_c=0; & x_{223}&=0; & x_{225}&=0; & x_5^2&=0; & & x_{\alpha}^{N_\alpha}=0, \ \alpha\in\Oc_+^{\bq}; 
\end{aligned}
\\
& x_{443}=0; \quad x_3^2=0; \quad x_{235} =q_{35}\ztu[x_{25},x_3]_c +q_{23}(1-\zeta)x_3x_{25}.
\end{aligned}
\end{align}
Here {\scriptsize$\Oc_+^{\bq}=\{  1, 12, 2, 1^22^234, 12^234, 1234,
	2^234, 234, 34, 1^22^43^34^35, 1^22^43^24^25, 1^22^33^24^25, \\ 12^33^24^25, 1^22^23^24^25, 12^23^24^25, 2^23^24^25, 1^22^43^34^35^2, 1^22^2345, 12^234^25, 12^2345,
	2^2345, 12345, \\ 2345,  345, 5 \}$}  
and the degree of the integral is
\begin{equation*}
\ya= 75\alpha_1 + 146\alpha_2 + 118\alpha_3 + 61\alpha_4 + 95\alpha_5.
\end{equation*}

\subsubsection{The Dynkin diagram \emph{(\ref{eq:dynkin-g(8,3)} \~{n})}}
\label{subsubsec:g(8,3)-enie}

\

The Nichols algebra $\toba_{\bq}$ is generated by $(x_i)_{i\in \I_5}$ with defining relations
\begin{align}\label{eq:rels-g(8,3)-enie}
\begin{aligned}
& \begin{aligned}
x_{112}&=0; & x_{332}&=0; & x_{443}&=0; &x_2^2&=0; & x_{ij}&=0, \ i<j, \, \widetilde{q}_{ij}=1;
\\
x_{334}&=0; & x_{335}&=0; & x_{445}&=0; & x_5^2&=0; & x_{\alpha}^{N_\alpha}&=0, \ \alpha\in\Oc_+^{\bq};
\end{aligned}
\\
& [x_{(13)},x_2]_c=0; \quad  x_{(35)}=q_{45}\ztu[x_{35},x_4]_c+q_{34}(1-\zeta)x_4x_{35}.
\end{aligned}
\end{align}
Here {\scriptsize$\Oc_+^{\bq}=\{ 1, 3, 1234, 234, 123^245, 12345,
	12^23^44^25^2, 123^345^2, 12^23^34^25^2, 23^245, 1^22^33^54^35^4$, \\ $12^23^24^25^2, 12^23^44^25^3, 123^245^2,
	12^33^54^35^4, 23^345^2, 12^23^34^25^3, 12^23^44^25^4, 12345^2, 345$, \\ 
	$35, 23^245^2, 2345, 2345^2, 5 \}$}   
and the degree of the integral is
\begin{equation*}
\ya= 44\alpha_1 + 84\alpha_2 + 155\alpha_3 + 127\alpha_4 + 95\alpha_5.
\end{equation*}

\subsubsection{The Dynkin diagram \emph{(\ref{eq:dynkin-g(8,3)}
		o)}}\label{subsubsec:g(8,3)-o}

\

The Nichols algebra $\toba_{\bq}$ is generated by $(x_i)_{i\in \I_5}$ with defining relations
\begin{align}\label{eq:rels-g(8,3)-o}
\begin{aligned}
& [x_{(24)},x_3]_c=0; & x_{221}&=0; & x_{223}&=0; & x_{ij}&=0, \ i<j, \, \widetilde{q}_{ij}=1;\\
& & x_{553}&=0; & x_1^2&=0; & [[x_{43},&x_{435}]_c,x_3]_c=0;\\
& [x_{235},x_3]_c=0; & x_{3}^2&=0; & x_{4}^2&=0; & x_{\alpha}^{N_\alpha}&=0, \ \alpha\in\Oc_+^{\bq}.
\end{aligned}
\end{align}
Here {\scriptsize$\Oc_+^{\bq}=\{  2, 123, 1234, 4, 12^23^245, 123^245,
	12^33^34^25^2, 12^33^345^2, 12^23^34^25^2, 1^22^43^64^35^4, \\ 123^34^25^2, 
	12^23^345^2, 123^345^2, 23^245,
	12^33^44^25^3, 12^23^44^25^3, 12^33^54^35^4, 12^33^54^25^4, 2345, 345, \\
	2^23^245^2, 23^245^2, 235, 3^245^2, 35 \}$}  
and the degree of the integral is
\begin{equation*}
\ya= 44\alpha_1 + 117\alpha_2 + 186\alpha_3 + 127\alpha_4 + 95\alpha_5.
\end{equation*}

\subsubsection{The Dynkin diagram \emph{(\ref{eq:dynkin-g(8,3)}
		p)}}\label{subsubsec:g(8,3)-p}

\

The Nichols algebra $\toba_{\bq}$ is generated by $(x_i)_{i\in \I_5}$ with defining relations
\begin{align}\label{eq:rels-g(8,3)-p}
\begin{aligned}
x_{112}&=0; & x_{332}&=0; & x_{334}&=0; & x_{ij}&=0, \ i<j, \, \widetilde{q}_{ij}=1;\\
& & x_{335}&=0; & x_{553}&=0; & [x_{(13)},&x_2]_c=0;\\
& & x_{2}^2&=0; & x_{4}^2&=0; & x_{\alpha}^{N_\alpha}&=0, \ \alpha\in\Oc_+^{\bq}.
\end{aligned}
\end{align}
Here {\scriptsize$\Oc_+^{\bq}=\{ 1, 3, 34, 4, 12^23^245, 123^245,
	1^22^23^34^25^2, 1^22^23^345^2, 12345, 1^22^23^245^2, 1235, \\ 1^22^33^44^25^3, 
	12^23^34^25^2, 1^22^43^64^35^4,
	12^23^345^2, 23^245, 1^22^43^54^35^4, 1^22^43^54^25^4, 12^33^44^25^3, \\ 12^23^245^2,
	2^23^34^25^2, 2345, 2^23^345^2, 2^23^245^2, 235 \}$}  
and the degree of the integral is
\begin{equation*}
\ya= 75\alpha_1 + 146\alpha_2 + 186\alpha_3 + 127\alpha_4 + 95\alpha_5.
\end{equation*}

\subsubsection{The Dynkin diagram \emph{(\ref{eq:dynkin-g(8,3)}
		q)}}\label{subsubsec:g(8,3)-q}

\

The Nichols algebra $\toba_{\bq}$ is generated by $(x_i)_{i\in \I_5}$ with defining relations
\begin{align}\label{eq:rels-g(8,3)-q}
\begin{aligned}
& [x_{(13)},x_2]_c=0; & x_{332}&=0; & x_1^2&=0; & & x_{ij}=0, \ i<j, \, \widetilde{q}_{ij}=1;\\
& & x_{553}&=0; & x_2^2&=0; & & [[x_{235},x_3]_c,x_3]_c=0;\\
& [[x_{435},x_3]_c,x_3]_c=0; & x_{334}&=0; & x_{4}^2&=0; & & x_{\alpha}^{N_\alpha}=0, \ \alpha\in\Oc_+^{\bq}.
\end{aligned}
\end{align}
Here {\scriptsize$\Oc_+^{\bq}=\{ 12, 123, 3, 1^22^23^24, 123^24, 1234,
	3^24, 34, 4, 1^22^33^54^35, 1^22^33^54^25, 1^22^33^44^25, \\ 12^23^44^25, 
	1^22^33^34^25,
	12^23^34^25, 23^34^25, 1^22^43^64^35^2, 1^22^33^345, 12^23^345, 12^23^245,
	23^345, 123^245, \\ 23^245, 2345, 235 \}$}  
and the degree of the integral is 
\begin{equation*}
\ya= 75\alpha_1 + 117\alpha_2 + 186\alpha_3 + 61\alpha_4 + 95\alpha_5.
\end{equation*}

\subsubsection{The Dynkin diagram \emph{(\ref{eq:dynkin-g(8,3)}
		r)}}\label{subsubsec:g(8,3)-r}

\

The Nichols algebra $\toba_{\bq}$ is generated by $(x_i)_{i\in \I_5}$ with defining relations
\begin{align}\label{eq:rels-g(8,3)-r}
\begin{aligned}
x_{221}&=0; & x_{223}&=0; & x_{332}&=0; & & x_{ij}=0, \ i<j, \, \widetilde{q}_{ij}=1;\\
& & x_{334}&=0; & x_{553}&=0; & & [[x_{235},x_3]_c,x_3]_c=0;\\
[[x_{435},&x_3]_c,x_3]_c=0; & x_{1}^2&=0; & x_{4}^2&=0; & & x_{\alpha}^{N_\alpha}=0, \ \alpha\in\Oc_+^{\bq}.
\end{aligned}
\end{align}
Here {\scriptsize$\Oc_+^{\bq}=\{ 2, 23, 3, 2^23^24, 23^24, 234,
	3^24, 34, 4, 12^33^54^35, 12^33^54^25, 12^33^44^25, 12^23^44^25,
	\\ 12^33^34^25, 12^23^34^25, 123^34^25, 1^22^43^64^35^2, 12^33^345, 12^23^345, 12^23^245,
	123^345, 123^245, 12345, \\ 1235, 23^245 \}$}  
and the degree of the integral is
\begin{equation*}
\ya= 44\alpha_1 + 117\alpha_2 + 186\alpha_3 + 61\alpha_4 + 95\alpha_5.
\end{equation*}

\subsubsection{The Dynkin diagram \emph{(\ref{eq:dynkin-g(8,3)}
		s)}}\label{subsubsec:g(8,3)-s}

\

The Nichols algebra $\toba_{\bq}$ is generated by $(x_i)_{i\in \I_5}$ with defining relations
\begin{align}\label{eq:rels-g(8,3)-s}
\begin{aligned}
x_{112}&=0; & x_{2}^2&=0; & [x_{(13)},&x_2]_c=0; & x_{ij}&=0, \ i<j, \, \widetilde{q}_{ij}=1;
\\
& & x_{3}^2&=0; & [x_{(24)},&x_3]_c=0; & [[x_{23},&x_{235}]_c,x_3]_c=0;
\\
x_{553}&=0; & x_{4}^2&=0; & [x_{435},&x_3]_c=0; & x_{\alpha}^{N_\alpha}&=0, \ \alpha\in\Oc_+^{\bq}.
\end{aligned}
\end{align}
Here {\scriptsize$\Oc_+^{\bq}=\{ 1, 123, 23, 1^22^23^24, 12^23^24, 1234,
	2^23^24, 234, 4, 1^22^43^54^35, 1^22^43^54^25, 1^22^33^44^25$, \\ $12^33^44^25, 1^22^23^34^25,
	12^23^34^25, 2^23^34^25, 1^22^43^64^35^2, 1^22^23^345, 12^23^345, 2^23^345,
	12^23^245$, \\ $123^245, 23^245, 345, 35 \}$}  
and the degree of the integral is
\begin{equation*}
\ya= 75\alpha_1 + 146\alpha_2 + 186\alpha_3 + 61\alpha_4 + 95\alpha_5.
\end{equation*}

\subsubsection{The Dynkin diagram \emph{(\ref{eq:dynkin-g(8,3)} t)}}
\label{subsubsec:g(8,3)-t}

\

The Nichols algebra $\toba_{\bq}$ is generated by $(x_i)_{i\in \I_5}$ with defining relations
\begin{align}\label{eq:rels-g(8,3)-t}
\begin{aligned}
x_{443}&=0; & & [x_{(13)},x_2]_c=0; & x_{1}^2&=0; & & x_{ij}=0, \ i<j, \, \widetilde{q}_{ij}=1;
\\
& & & [x_{(24)},x_3]_c=0; & x_{2}^2&=0; & & [[x_{43},x_{435}]_c,x_3]_c=0;
\\
x_{3}^2&=0; & & [x_{235},x_{3}]_c=0; & x_{5}^2&=0; & 
& x_{\alpha}^{N_\alpha}=0, \ \alpha\in\Oc_+^{\bq}.
\end{aligned}
\end{align}
Here {\scriptsize$\Oc_+^{\bq}=\{ 12, 23, 234, 4, 12^23^245, 1^22^33^34^25^2,
	1^22^33^345^2, 123^245, 12345, 1^22^23^245^2, 1235, \\ 1^22^33^44^25^3, 12^23^34^25^2, 1^22^43^64^35^4,
	12^23^345^2, 23^245, 1^22^33^54^35^4, 1^22^33^54^25^4, 12^23^44^25^3, \\ 123^245^2, 
	23^34^25^2, 345, 23^345^2, 3^245^2, 35 \}$}  
and the degree of the integral is
\begin{equation*}
\ya= 75\alpha_1 + 117\alpha_2 + 186\alpha_3 + 127\alpha_4 + 95\alpha_5.
\end{equation*}

\subsubsection{The associated Lie algebra} This is of type $F_4\times A_1$.

\subsection{Type $\gtt(4,6)$}\label{subsec:type-g(4,6)}
Here $\theta = 6$, $\zeta \in \G'_3$. Let 
\begin{align*}
A&=\begin{pmatrix} 2 & -1 & 0 & 0 & 0 & 0 \\ 1 & 0 & -1 & 0 & 0 & 0 \\ 0 & -1 & 0 & 1 & 0 & 1
\\ 0 & 0 & -1 & 2 & -1 & 0 \\ 0 & 0 & 0 & -1 & 2 & 0 \\ 0 & 0 & -1 & 0 & 0 & 2 \end{pmatrix}
\in \kk^{6\times 6}; & \pa &= (-1,1, 1, 1,1, 1) \in \G_2^6.
\end{align*}
Let $\g(4,6) = \g(A, \pa)$,
the contragredient Lie superalgebra corresponding to $(A, \pa)$. 
We know \cite{BGL} that $\sdim \g(4,6) = 66|32$. 
There are 6 other pairs of matrices and parity vectors for which the associated contragredient Lie superalgebra is isomorphic to $\g(4,6)$.
We describe now the root system $\gtt(4,6)$ of $\g(4,6)$, see \cite{AA-GRS-CLS-NA} for details.

\subsubsection{Basic datum and root system}
Below, $A_6$, $D_6$, $E_6$ and $_{2}T_1$ are numbered as in \eqref{eq:dynkin-system-A}, \eqref{eq:dynkin-system-D}, \eqref{eq:dynkin-system-E} and  	
\eqref{eq:mTn}, respectively.
The basic datum and the bundle of Cartan matrices are described by the following diagram:
\begin{center}
	\begin{tabular}{c c c c c c c c c c c c}
		& &
		& $\overset{E_6}{\underset{a_3}{\vtxgpd}}$
		& \hspace{-7pt}\raisebox{3pt}{$\overset{2}{\rule{27pt}{0.5pt}}$}\hspace{-7pt}
		& $\overset{E_6}{\underset{a_5}{\vtxgpd}}$
		& \hspace{-7pt}\raisebox{3pt}{$\overset{1}{\rule{27pt}{0.5pt}}$}\hspace{-7pt}
		& $\overset{E_6}{\underset{a_6}{\vtxgpd}}$
		& & & &
		\\
		& & & {\scriptsize 3} \vline\hspace{5pt} & & & & & & & &
		\\
		& $\overset{s_{456}(A_6)}{\underset{a_1}{\vtxgpd}}$
		& \hspace{-7pt}\raisebox{3pt}{$\overset{6}{\rule{27pt}{0.5pt}}$}\hspace{-7pt}
		& $\overset{{}_2T_1}{\underset{a_7}{\vtxgpd}}$
		& \hspace{-7pt}\raisebox{3pt}{$\overset{4}{\rule{27pt}{0.5pt}}$}\hspace{-7pt}
		& $\overset{D_6}{\underset{a_2}{\vtxgpd}}$
		& \hspace{-7pt}\raisebox{3pt}{$\overset{5}{\rule{27pt}{0.5pt}}$}\hspace{-7pt}
		& $\overset{D_6}{\underset{a_4}{\vtxgpd}}$
		& & & &
	\end{tabular}
\end{center}
Using the notation \eqref{eq:notation-root-exceptional}, the bundle of root sets is the following: { \tiny
	\begin{align*}
	\varDelta_{+}^{a_1}= & s_{456}(\{ 1, 12, 2, 123, 23, 3, 1234, 234, 34, 4, 12^23^24^25, 123^24^25, 1234^25, 12345, 23^24^25, 234^25, 2345, \\
	& 34^25, 345, 45, 5, 12^23^34^45^36, 12^23^34^45^26, 12^23^34^35^26, 12^23^24^35^26, 123^24^35^26, 23^24^35^26, \\
	& 12^23^24^25^26, 123^24^25^26, 23^24^25^26, 1234^25^26, 234^25^26, 34^25^26, 12^23^34^45^36^2, 12^23^24^256,  \\
	& 123^24^256, 1234^256, 123456, 23^24^256, 234^256, 23456, 34^256, 3456, 456, 56, 6 \}), \\
	\varDelta_{+}^{a_2}= & \{ 1, 12, 2, 123, 23, 3, 1234, 234, 34, 4, 12345, 2345, 345, 45, 5, 12^23^34^356, 12^23^24^356, 12^23^24^256, \\
	& 12^23^24^26, 123^24^356, 123^24^256, 123^24^26, 23^24^356, 23^24^256, 23^24^26, 12^23^34^45^26^2, 12^23^34^456^2, \\
	& 1234^256, 234^256, 34^256, 12^23^34^356^2, 123456, 23456, 3456, 12^23^24^356^2, 123^24^356^2, 1234^26, \\
	& 12346, 23^24^356^2, 234^26, 2346, 456, 34^26, 346, 46, 6 \}, \\
	\varDelta_{+}^{a_3}= & \{ 1, 12, 2, 123, 23, 3, 1234, 234, 34, 4, 12345, 2345, 345, 45, 5, 12^23^34^256, 12^23^24^256, 12^23^2456, \\
	& 12^23^246, 123^34^256, 23^34^256, 123^24^256, 123^2456, 123^246, 12^23^44^35^26^2, 12^23^44^356^2, 23^24^256, \\
	& 3^24^256, 123456, 12346, 12^23^44^256^2, 12^23^34^256^2, 123^34^256^2, 1236, 23^2456, 3^2456, 23^246, 3^246, \\
	& 23^34^256^2, 23456, 2346, 236, 3456, 346, 36, 6 \}, \\
	\varDelta_{+}^{a_4}= & \{ 1, 12, 2, 123, 23, 3, 1234, 234, 34, 4, 12345, 2345, 345, 45, 5, 12^23^34^35^26, 12^23^24^35^26, 12^23^24^25^26, \\
	& 12^23^24^256, 123^24^35^26, 123^24^25^26, 123^24^256, 23^24^35^26, 23^24^25^26, 23^24^256, 12^23^34^45^36^2, \\
	& 1234^25^26, 234^25^26, 34^25^26, 12^23^34^45^26^2, 12^23^34^35^26^2, 1234^256, 234^256, 34^256, 12^23^24^35^26^2, \\
	& 123^24^35^26^2, 123456, 12346, 23^24^35^26^2, 23456, 2346, 3456, 346, 456, 46, 6 \}, \\
	\varDelta_{+}^{a_5}= & \{ 1, 12, 2, 123, 23, 3, 1234, 234, 34, 4, 12345, 2345, 345, 45, 5, 1^22^33^34^256, 1^22^23^34^256, 1^22^23^24^256, \\
	& 1^22^23^2456, 1^22^23^246, 12^23^34^256, 12^23^24^256, 12^23^2456, 12^23^246, 1^22^33^44^35^26^2, 1^22^33^44^356^2, \\
	& 123^24^256, 23^24^256, 1^22^33^44^256^2, 123^2456, 23^2456, 1^22^33^34^256^2, 123456, 23456, 1^22^23^34^256^2, \\
	& 12^23^34^256^2, 123^246, 12346, 1236, 3456, 23^246, 2346, 236, 346, 36, 6 \}, \\
	\varDelta_{+}^{a_6}= & \{ 1, 12, 2, 123, 23, 3, 1234, 234, 34, 4, 12345, 2345, 345, 45, 5, 12^33^34^256, 12^23^34^256, 12^23^24^256, \\
	& 12^23^2456, 12^23^246, 2^23^34^256, 2^23^24^256, 2^23^2456, 2^23^246, 12^33^44^35^26^2, 12^33^44^356^2, 123^24^256, \\
	& 23^24^256, 12^33^44^256^2, 123^2456, 23^2456, 12^33^34^256^2, 123456, 23456, 12^23^34^256^2, 123^246, 12346, \\
	& 1236, 2^23^34^256^2, 3456, 23^246, 2346, 236, 346, 36, 6 \}, \\
	\varDelta_{+}^{a_7}= & \{ 1, 12, 2, 123, 23, 3, 1234, 234, 34, 4, 12345, 2345, 345, 45, 5, 12^23^34^256, 12^23^24^256, 12^23^2456, \\
	& 12^23^246, 123^24^256, 123^2456, 123^246, 23^24^256, 23^2456, 23^246, 12^23^34^35^26^2, 12^23^34^356^2, 1234^256, \\
	& 234^256, 34^256, 12^23^34^256^2, 123456, 23456, 3456, 12^23^24^256^2, 123^24^256^2, 12346, 1236, \\
	& 23^24^256^2, 2346, 236, 456, 346, 36, 46, 6 \}.
	\end{align*}
}%

\subsubsection{Weyl groupoid}
\label{subsubsec:type-g46-Weyl}
The isotropy group  at $a_3 \in \cX$ is
\begin{align*}
\cW(a_3)= \langle \varsigma_1^{a_3}\varsigma_2 \varsigma_3 \varsigma_6 \varsigma_4 \varsigma_6 \varsigma_3 \varsigma_2 \varsigma_1,
\varsigma_2^{a_3},  \varsigma_3^{a_3}, \varsigma_4^{a_3},  \varsigma_5^{a_3},  \varsigma_6^{a_4} \rangle \simeq W(D_6).
\end{align*}

\subsubsection{Incarnation}
We set the matrices $(\bq^{(i)})_{i\in\I_{7}}$, from left to right and  from up to down:
\begin{align}\label{eq:dynkin-g(4,6)}
\begin{aligned}
& & &
\xymatrix@C-4pt{\overset{\zeta}{\underset{\ }{\circ}}\ar  @{-}[r]^{\ztu}  &
	\overset{\zeta}{\underset{\ }{\circ}} \ar  @{-}[r]^{\ztu}  & \overset{\zeta}{\underset{\
		}{\circ}}
	\ar  @{-}[r]^{\ztu}  & \overset{-1}{\underset{\ }{\circ}} \ar  @{-}[r]^{\ztu}  &
	\overset{\zeta}{\underset{\ }{\circ}}  \ar  @{-}[r]^{\ztu}  & \overset{\zeta}{\underset{\
		}{\circ}}}
\\
&\xymatrix@C-5pt@R-8pt{  &   & & \overset{\zeta}{\circ} \ar  @{-}[d]^{\ztu}  & \\
	\overset{\zeta}{\underset{\ }{\circ}} \ar  @{-}[r]^{\ztu}  & \overset{\zeta}{\underset{\
		}{\circ}} \ar  @{-}[r]^{\ztu}  & \overset{\zeta}{\underset{\ }{\circ}} \ar  @{-}[r]^{\ztu}
	& \overset{-1}{\underset{\ }{\circ}}
	\ar  @{-}[r]^{\zeta}  & \overset{-1}{\underset{\ }{\circ}}}
&& \xymatrix@C-5pt@R-8pt{  &   & \overset{\zeta}{\circ} \ar  @{-}[d]^{\ztu}  & &\\
	\overset{\zeta}{\underset{\ }{\circ}} \ar  @{-}[r]^{\ztu}  & \overset{-1}{\underset{\
		}{\circ}} \ar  @{-}[r]^{\zeta}  & \overset{-1}{\underset{\ }{\circ}} \ar  @{-}[r]^{\ztu}
	& \overset{\zeta}{\underset{\ }{\circ}}
	\ar  @{-}[r]^{\ztu}  & \overset{\zeta}{\underset{\ }{\circ}}}
\\
&\xymatrix@C-5pt@R-8pt{  &   & & \overset{\zeta}{\circ} \ar  @{-}[d]^{\ztu}  & \\
	\overset{\zeta}{\underset{\ }{\circ}} \ar  @{-}[r]^{\ztu}  & \overset{\zeta}{\underset{\
		}{\circ}} \ar  @{-}[r]^{\ztu}  & \overset{\zeta}{\underset{\ }{\circ}} \ar  @{-}[r]^{\ztu}
	& \overset{\zeta}{\underset{\ }{\circ}}
	\ar  @{-}[r]^{\ztu}  & \overset{-1}{\underset{\ }{\circ}}}
&& \xymatrix@C-5pt@R-8pt{  &   & \overset{\zeta}{\circ} \ar  @{-}[d]^{\ztu}  & &\\
	\overset{-1}{\underset{\ }{\circ}} \ar  @{-}[r]^{\zeta}  & \overset{-1}{\underset{\
		}{\circ}} \ar  @{-}[r]^{\ztu}  & \overset{\zeta}{\underset{\ }{\circ}} \ar  @{-}[r]^{\ztu}
	& \overset{\zeta}{\underset{\ }{\circ}}
	\ar  @{-}[r]^{\ztu}  & \overset{\zeta}{\underset{\ }{\circ}}}
\\
&\xymatrix@C-5pt@R-8pt{  &    & \overset{\zeta}{\circ} \ar  @{-}[d]^{\ztu} &  & \\
	\overset{-1}{\underset{\ }{\circ}} \ar  @{-}[r]^{\ztu}  & \overset{\zeta}{\underset{\
		}{\circ}} \ar  @{-}[r]^{\ztu}  & \overset{\zeta}{\underset{\ }{\circ}} \ar  @{-}[r]^{\ztu}
	& \overset{\zeta}{\underset{\ }{\circ}}
	\ar  @{-}[r]^{\ztu}  & \overset{\zeta}{\underset{\ }{\circ}}}
&& \xymatrix@C-5pt@R-8pt{  &   & \overset{-1}{\circ} \ar  @{-}[d]^{\zeta} \ar
	@{-}[dr]^{\zeta}  & &\\
	\overset{\zeta}{\underset{\ }{\circ}} \ar  @{-}[r]^{\ztu}  & \overset{\zeta}{\underset{\
		}{\circ}} \ar  @{-}[r]^{\ztu}  & \overset{-1}{\underset{\ }{\circ}} \ar  @{-}[r]^{\zeta}
	& \overset{-1}{\underset{\ }{\circ}}
	\ar  @{-}[r]^{\ztu}  & \overset{\zeta}{\underset{\ }{\circ}}}
\end{aligned}
\end{align}
Now, this is the incarnation:
\begin{align*}
& a_1\mapsto s_{456}(\bq^{1}); &
& a_i\mapsto \bq^{(i)}, \ i\in\I_{2,7}.
\end{align*}

\subsubsection{PBW-basis and dimension} \label{subsubsec:type-g46-PBW}
Notice that the roots in each $\varDelta_{+}^{a_i}$, $i\in\I_{7}$, are ordered from left to right, justifying the notation $\beta_1, \dots, \beta_{46}$.

The root vectors $x_{\beta_k}$ are described as in Remark \ref{rem:lyndon-word}.
Thus
\begin{align*}
\left\{ x_{\beta_{46}}^{n_{46}} \dots x_{\beta_2}^{n_{2}}  x_{\beta_1}^{n_{1}} \, | \, 0\le n_{k}<N_{\beta_k} \right\}.
\end{align*}
is a PBW-basis of $\toba_{\bq}$. Hence $\dim \toba_{\bq}=2^{16}3^{30}
$.

\subsubsection{The Dynkin diagram \emph{(\ref{eq:dynkin-g(4,6)}
		a)}}\label{subsubsec:g(4,6)-a}

\

The Nichols algebra $\toba_{\bq}$ is generated by $(x_i)_{i\in \I_6}$ with defining relations
\begin{align}\label{eq:rels-g(4,6)-a}
\begin{aligned}
x_{112}&=0; & x_{221}&=0; & & x_{ij}=0, \ i<j, \, \widetilde{q}_{ij}=1;
\\
x_{223}&=0; & x_{332}&=0; & & [[[x_{(25)},x_{4}]_c,x_3]_c,x_4]_c=0;
\\
x_{334}&=0; & x_{554}&=0; & & [[[x_{6543},x_{4}]_c,x_5]_c,x_4]_c=0;
\\
x_{556}&=0; & x_{665}&=0; & & x_{4}^2=0; \quad x_{\alpha}^{3}=0, \ \alpha\in\Oc_+^{\bq}.
\end{aligned}
\end{align}
Here {\scriptsize$\Oc_+^{\bq}=\{ 1, 12, 2, 123, 23, 3, 12^23^24^25, 123^24^25, 1234^25, 23^24^25, 234^25, 34^25, 5, 12^23^34^45^36$, \\ $12^23^34^45^26, 12^23^24^25^26, 123^24^25^26, 23^24^25^26, 1234^25^26, 234^25^26, 34^25^26, 12^23^34^45^36^2$, \\ $12^23^24^256, 123^24^256, 1234^256, 23^24^256, 234^256, 34^256, 56, 6 \}$}  
and the degree of the integral is
\begin{equation*}
\ya= 36\alpha_1 + 68\alpha_2 + 96\alpha_3 + 120\alpha_4 + 84\alpha_5 + 44\alpha_6.
\end{equation*}

\subsubsection{The Dynkin diagram \emph{(\ref{eq:dynkin-g(4,6)}
		b)}}\label{subsubsec:g(4,6)-b}

\

The Nichols algebra $\toba_{\bq}$ is generated by $(x_i)_{i\in \I_6}$ with defining relations
\begin{align}\label{eq:rels-g(4,6)-b}
\begin{aligned}
& [x_{546},x_{4}]_c=0; & x_{221}&=0; & x_{223}&=0; & & x_{ij}=0, \quad i<j, \widetilde{q}_{ij}=1;\\
& x_{332}=0; & x_{334}&=0; & x_{664}&=0; & & [[[x_{2346},x_{4}]_c,x_3]_c,x_4]_c=0;\\
& [x_{(35)},x_{4}]_c=0; & x_{112}&=0; & x_{5}^2&=0; & 
& x_{4}^2=0; \quad x_{\alpha}^{3}=0, \ \alpha\in\Oc_+^{\bq}.
\end{aligned}
\end{align}
Here {\scriptsize$\Oc_+^{\bq}=\{ 1, 12, 2, 123, 23, 3, 12345, 2345, 345, 45, 12^23^34^356, 12^23^24^356, 12^23^24^26, 123^24^356, \\ 123^24^26, 23^24^356, 23^24^26,
	12^23^44^45^26^2, 12^23^34^356^2, 123456, 23456, 3456, 12^23^24^356^2, 123^24^356^2,
	\\ 1234^26, 23^24^356^2, 234^26, 456, 34^26, 6 \}$}  
and the degree of the integral is
\begin{equation*}
\ya= 36\alpha_1 + 68\alpha_2 + 96\alpha_3 + 120\alpha_4 + 44\alpha_5 + 62\alpha_6.
\end{equation*}

\subsubsection{The Dynkin diagram \emph{(\ref{eq:dynkin-g(4,6)}
		c)}}\label{subsubsec:g(4,6)-c}

\

The Nichols algebra $\toba_{\bq}$ is generated by $(x_i)_{i\in \I_6}$ with defining relations
\begin{align}\label{eq:rels-g(4,6)-c}
\begin{aligned}
& [x_{(13)},x_{2}]_c=0; & x_{443}&=0; & x_{445}&=0; & & x_{ij}=0, \quad i<j, \widetilde{q}_{ij}=1;\\
& [x_{236},x_{3}]_c=0; & x_{554}&=0; & x_{663}&=0; & & [[[x_{5436},x_{3}]_c,x_4]_c,x_3]_c=0; \\
& [x_{(24)},x_{3}]_c=0; & x_{112}&=0; & x_{3}^2&=0; &   & x_{2}^2=0; \quad x_{\alpha}^{3}=0, \ \alpha\in\Oc_+^{\bq}.
\end{aligned}
\end{align}
Here {\scriptsize$\Oc_+^{\bq}=\{ 1, 123, 23, 1234, 234,
	4, 12345, 2345, 45, 5, 12^23^24^256, 12^23^2456, 12^23^246, 123^34^256$, \\ $23^34^256, 12^23^44^35^26^2, 12^23^44^356^2,
	3^24^256, 123456, 12346, 12^23^44^256^2, 123^34^256^2, 1236, 3^2456$, \\ $3^246, 23^34^256^2, 23456, 2346, 236, 6 \}$}  
and the degree of the integral is
\begin{equation*}
\ya= 36\alpha_1 + 68\alpha_2 + 120\alpha_3 + 84\alpha_4 + 44\alpha_5 + 62\alpha_6.
\end{equation*}

\subsubsection{The Dynkin diagram \emph{(\ref{eq:dynkin-g(4,6)}
		d)}}\label{subsubsec:g(4,6)-d}

\

The Nichols algebra $\toba_{\bq}$ is generated by $(x_i)_{i\in \I_6}$ with defining relations
\begin{align}\label{eq:rels-g(4,6)-d}
\begin{aligned}
x_{112}&=0; & x_{221}&=0; & x_{223}&=0; & &x_{ij}=0, \ i<j, \, \widetilde{q}_{ij}=1;\\
x_{332}&=0; & x_{334}&=0; & x_{443}&=0; & & x_{445}=0;\\
x_{446}&=0; & x_{664}&=0; & x_{5}^2&=0; & & x_{\alpha}^{3}=0, \ \alpha\in\Oc_+^{\bq}.
\end{aligned}
\end{align}
Here {\scriptsize$\Oc_+^{\bq}=\{ 1, 12, 2, 123, 23,
	3, 1234, 234, 34, 4, 12^23^34^35^26, 12^23^24^35^26, 12^23^24^25^26, 123^24^35^26$, \\ $123^24^25^26, 23^24^35^26, 23^24^25^26,
	1234^25^26, 234^25^26, 34^25^26, 12^23^44^45^26^2, 12^23^34^35^26^2$, \\ $12^23^24^35^26^2, 123^24^35^26^2,
	12346, 23^24^35^26^2, 2346, 346, 46, 6 \}$}  
and the degree of the integral is
\begin{equation*}
\ya= 36\alpha_1 + 68\alpha_2 + 96\alpha_3 + 120\alpha_4 + 78\alpha_5 + 62\alpha_6.
\end{equation*}

\subsubsection{The Dynkin diagram \emph{(\ref{eq:dynkin-g(4,6)}
		e)}}\label{subsubsec:g(4,6)-e}

\

The Nichols algebra $\toba_{\bq}$ is generated by $(x_i)_{i\in \I_6}$ with defining relations
\begin{align}\label{eq:rels-g(4,6)-e}
\begin{aligned}
x_{332}&=0; & x_{334}&=0; & x_{336}&=0; & & x_{ij}=0, \ i<j, \, \widetilde{q}_{ij}=1;\\
x_{663}&=0; & x_{443}&=0; & x_{445}&=0; & & [x_{(13)},x_2]_c=0;\\
x_{554}&=0; & x_{1}^2&=0; & x_{2}^2&=0; & & x_{\alpha}^{3}=0, \ \alpha\in\Oc_+^{\bq}.
\end{aligned}
\end{align}
Here {\scriptsize$\Oc_+^{\bq}=\{ 12, 123, 3, 1234, 34, 4, 12345, 345, 45, 5, 12^33^34^256, 2^23^34^256, 2^23^24^256, 2^23^2456, \\ 2^23^246, 12^33^44^35^26^2, 12^33^44^356^2,
	123^24^256, 12^33^44^256^2, 123^2456, 12^33^34^256^2, 123456, 123^246,\\
	12346, 1236, 2^23^34^256^2, 3456, 346, 36, 6 \}$}  
and the degree of the integral is
\begin{equation*}
\ya= 36\alpha_1 + 90\alpha_2 + 120\alpha_3 + 84\alpha_4 + 44\alpha_5 + 62\alpha_6.
\end{equation*}

\subsubsection{The Dynkin diagram \emph{(\ref{eq:dynkin-g(4,6)}
		f)}}\label{subsubsec:g(4,6)-f}

\

The Nichols algebra $\toba_{\bq}$ is generated by $(x_i)_{i\in \I_6}$ with defining relations
\begin{align}\label{eq:rels-g(4,6)-f}
\begin{aligned}
x_{221}&=0; & x_{223}&=0; & x_{332}&=0; & & x_{ij}=0, \ i<j, \, \widetilde{q}_{ij}=1;\\
x_{334}&=0; & x_{443}&=0; & x_{445}&=0; & & x_{336}=0;\\
x_{663}&=0; & x_{1}^2&=0; & x_{554}&=0; & & x_{\alpha}^{3}=0, \ \alpha\in\Oc_+^{\bq}.
\end{aligned}
\end{align}
Here {\scriptsize$\Oc_+^{\bq}=\{ 2, 23, 3, 234, 34,
	4, 2345, 345, 45, 5, 1^22^33^34^256, 1^22^23^34^256, 1^22^23^24^256, \\ 
	1^22^23^2456, 1^22^23^246, 1^22^33^44^35^26^2, 1^22^33^44^356^2,
	23^24^256, 1^22^33^44^256^2, 23^2456, 1^22^33^34^256^2, \\ 23456, 1^22^23^34^256^2, 3456,
	23^246, 2346, 236, 346, 36, 6 \}$}  
and the degree of the integral is
\begin{equation*}
\ya= 56\alpha_1 + 90\alpha_2 + 120\alpha_3 + 84\alpha_4 + 44\alpha_5 + 62\alpha_6.
\end{equation*}

\subsubsection{The Dynkin diagram \emph{(\ref{eq:dynkin-g(4,6)}
		g)}}\label{subsubsec:g(4,6)-g}

\

The Nichols algebra $\toba_{\bq}$ is generated by $(x_i)_{i\in \I_6}$ with defining relations
\begin{align}\label{eq:rels-g(4,6)-g}
\begin{aligned}
& \begin{aligned}
x_{112}&=0; & [x_{(24)},&x_3]_c=0; & x_{554}&=0; & & x_{ij}=0, \ i<j, \, \widetilde{q}_{ij}=1;
\\
x_{221}&=0; & [x_{236},&x_3]_c=0; & x_{3}^2&=0; & & [x_{546},x_4]_c=0;
\\
x_{223}&=0; & [x_{(35)},&x_4]_c=0; & x_{4}^2&=0; & & x_{\alpha}^{3}=0, \ \alpha\in\Oc_+^{\bq};
\end{aligned}
\\
& x_{6}^2=0; \quad x_{346}=q_{46}\ztu[x_{36},x_4]_c +q_{34}(1-\zeta)x_4x_{36}=0.
\end{aligned}
\end{align}
Here {\scriptsize$\Oc_+^{\bq}=\{ 1, 12, 2, 1234, 234,
	34, 12345, 2345, 345, 5, 12^23^34^256, 12^23^2456, 12^23^246, 123^2456, \\ 123^246, 23^2456, 23^246,
	12^23^34^35^26^2, 12^23^34^356^2, 1234^256, 234^256, 34^256, 12^23^24^256^2, 123^24^256^2,
	\\ 1236, 23^24^256^2, 236, 456, 36, 46 \}$}  
and the degree of the integral is
\begin{equation*}
\ya= 36\alpha_1 + 68\alpha_2 + 96\alpha_3 + 84\alpha_4 + 44\alpha_5 + 62\alpha_6.
\end{equation*}

\subsubsection{The associated Lie algebra} This is of type $D_6$.

\subsection{Type $\gtt(6,6)$}\label{subsec:type-g(6,6)}
Here $\theta = 6$, $\zeta \in \G'_3$. Let 
\begin{align*}
A&=\begin{pmatrix} 2 & -1 & 0 & 0 & 0 & 0 \\ -1 & 2 & -1 & 0 & 0 & 0 \\ 0 & -1 & 2 & -1 & 0 & 0
\\ 0 & 0 & -1 & 2 & 0 & -1 \\ 0 & 0 & 0 & 0 & 0 & 1 \\ 0 & 0 & 0 & -2 & -1 & 2 \end{pmatrix}
\in \kk^{6\times 6}; & \pa &= (1,1, 1,1, -1, 1) \in \G_2^6.
\end{align*}
Let $\g(6,6) = \g(A, \pa)$,
the contragredient Lie superalgebra corresponding to $(A, \pa)$. 
We know \cite{BGL} that $\sdim \g(6,6) = 78|64$. 
There are 20 other pairs of matrices and parity vectors for which the associated contragredient Lie superalgebra is isomorphic to $\g(6,6)$.
We describe now the root system $\gtt(6,6)$ of $\g(6,6)$, see \cite{AA-GRS-CLS-NA} for details.

\subsubsection{Basic datum and root system}
Below, $A_6$, $D_6$, $E_6$, $E_6^{(2)\wedge}$, $F_4^{(1)\wedge}$, $CE_6$ $_{3}T$ and $_{2}T_1$ are numbered as in \eqref{eq:dynkin-system-A}, \eqref{eq:dynkin-system-D}, \eqref{eq:dynkin-system-E}, \eqref{eq:E6(2)w}, \eqref{eq:F4(1)w}, \eqref{eq:CEn} and \eqref{eq:mTn}, respectively.
The basic datum and the bundle of Cartan matrices are described by the following diagram:

\begin{center}
	\begin{tabular}{c c}
		&
		$\xymatrix{&  &
			\overset{s_{56}(F_4^{(1)\wedge})}{\underset{a_1}{\vtxgpd}} \ar@{-}[r]^5 &
			\overset{s_{56}(A_6)}{\underset{a_2}{\vtxgpd}} \ar@{-}[r]^6 &
			\overset{{}_3T}{\underset{a_7}{\vtxgpd}} \ar@{-}[d]^4
			\\
			&
			\overset{D_6}{\underset{a_{16}}{\vtxgpd}} \ar@{-}[r]^1 \ar@{-}[d]^5 &
			\overset{D_6}{\underset{a_{14}}{\vtxgpd}} \ar@{-}[r]^2 \ar@{-}[d]^5 &
			\overset{D_6}{\underset{a_{12}}{\vtxgpd}} \ar@{-}[r]^3 \ar@{-}[d]^5 &
			\overset{D_6}{\underset{a_{11}}{\vtxgpd}} \ar@{-}[d]^5
			\\
			\overset{E_6}{\underset{a_{20}}{\vtxgpd}} \ar@{-}[rrd]^(.6){3}|!{[d];[rrd]}\hole \ar@{-}[d]^1 &
			\overset{D_6}{\underset{a_{19}}{\vtxgpd}} \ar@{-}[r]^1 \ar@{-}[d]^(.6){4} &
			\overset{D_6}{\underset{a_{17}}{\vtxgpd}} \ar@{-}[r]^2 \ar@{-}[d]^4 &
			\overset{D_6}{\underset{a_{15}}{\vtxgpd}} \ar@{-}[r]^3 \ar@{-}[d]^4 &
			\overset{CE_6}{\underset{a_{13}}{\vtxgpd}}
			\\
			\overset{E_6}{\underset{a_{21}}{\vtxgpd}} \ar@{-}[r]^3 \ar@{-}[d]^2 &
			\overset{{}_2T_1}{\underset{a_{8}}{\vtxgpd}} \ar@{-}[r]^1 \ar@{-}[d]^6 &
			\overset{{}_2T_1}{\underset{a_9}{\vtxgpd}} \ar@{-}[r]^2 \ar@{-}[d]^6 &
			\overset{{}_2T_1}{\underset{a_{10}}{\vtxgpd}} \ar@{-}[d]^6 &
			\\
			\overset{E_6}{\underset{a_{18}}{\vtxgpd}} &
			\overset{s_{456}(A_6)}{\underset{a_5}{\vtxgpd}} \ar@{-}[r]^1 &
			\overset{s_{456}(A_6)}{\underset{a_4}{\vtxgpd}} \ar@{-}[r]^2 &
			\overset{s_{456}(A_6)}{\underset{a_3}{\vtxgpd}} \ar@{-}[r]^3 &
			\overset{s_{456}(E_6^{(2)\wedge})}{\underset{a_6}{\vtxgpd}} }$
	\end{tabular}
\end{center}
Using the notation \eqref{eq:notation-root-exceptional}, the bundle of root sets is the following: { \scriptsize
	\begin{align*}
	\varDelta_{+}^{a_{1}}= & s_{56}(\{ 1, 12, 2, 123, 23, 3, 1234, 234, 34, 4, 12345, 12^23^24^25^2, 123^24^25^2, 1234^25^2, 12345^2, 2345, \\
	& 23^24^25^2, 234^25^2, 2345^2, 345, 34^25^2, 345^2, 45, 45^2, 5, 12^23^34^45^56, 12^23^34^45^46, 12^23^34^35^46, \\
	& 12^23^24^35^46, 123^24^35^46, 23^24^35^46, 12^23^34^35^36, 12^23^24^35^36, 123^24^35^36, 23^24^35^36, \\
	& 1^22^33^44^55^56^2, 12^33^44^55^56^2, 12^23^24^25^36, 12^23^24^25^26, 12^23^44^55^56^2, 123^24^25^36, \\
	& 123^24^25^26, 23^24^25^36, 23^24^25^26, 12^23^34^55^56^2, 12^23^34^45^66^2, 12^23^34^45^56^2, 1234^25^36, \\
	& 234^25^36, 34^25^36, 12^23^34^45^46^2, 1234^25^26, 234^25^26, 34^25^26, 12^23^34^35^46^2, 12^23^24^35^46^2, \\
	& 123^24^35^46^2, 12345^26, 123456, 23^24^35^46^2, 2345^26, 23456, 345^26, 3456, 45^26, 456, 56, 6 \}), \\
	\varDelta_{+}^{a_{2}}= & s_{56}(\{ 1, 12, 2, 123, 23, 3, 1234, 234, 34, 4, 12345, 2345, 345, 45, 5, 12^23^24^25^26, 123^24^25^26, \\
	& 1234^25^26, 12345^26, 123456, 12^23^34^45^46^2, 12^23^34^35^46^2, 12^23^34^35^36^2, 23^24^25^26, \\
	& 12^23^24^35^46^2, 12^23^24^35^36^2, 234^25^26, 12^23^24^25^36^2, 2345^26, 12^23^24^25^26^2, 23456, \\
	& 1^22^33^44^55^66^4, 12^33^44^55^66^4, 12^23^34^45^56^3, 12^23^34^45^46^3, 12^23^34^35^46^3, 12^23^24^35^46^3, \\
	& 123^24^35^46^2, 123^24^35^36^2, 123^24^25^36^2, 1234^25^36^2, 123^24^25^26^2, 1234^25^26^2, \\
	& 12345^26^2, 12^23^44^55^66^4, 12^23^34^55^66^4, 12^23^34^45^66^4, 12^23^34^45^56^4, 123^24^35^46^3, \\
	& 23^24^35^46^2, 23^24^35^36^2, 34^25^26, 23^24^25^36^2, 345^26, 23^24^25^26^2, 3456, 23^24^35^46^3, \\
	& 234^25^36^2, 234^25^26^2, 2345^26^2, 34^25^36^2, 34^25^26^2, 345^26^2, 45^26, 456, 45^26^2, 56, 6 \}), \\
	\varDelta_{+}^{a_{3}}= & s_{456}(\{ 1, 12, 2, 123, 23, 3, 1234, 234, 34, 4, 12^23^24^25, 123^24^25, 1234^25, 12345, 23^24^25, 234^25, \\
	& 2345, 3^24^25, 34^25, 345, 45, 5, 12^23^44^55^36, 12^23^44^45^36, 12^23^34^45^36, 123^34^45^36, 23^34^45^36, \\
	& 12^23^44^45^26, 12^23^34^45^26, 123^34^45^26, 23^34^45^26, 12^23^34^35^26, 12^23^24^35^26, 1^22^33^54^65^46^2, \\
	& 12^33^54^65^46^2, 12^23^24^25^26, 12^23^24^256, 123^34^35^26, 23^34^35^26, 12^23^54^65^46^2, 123^24^35^26, \\
	& 123^24^25^26, 123^24^256, 12^23^44^65^46^2, 12^23^44^55^46^2, 12^23^44^55^36^2, 23^24^35^26, 3^24^35^26, \\
	& 12^23^44^45^36^2, 23^24^25^26, 3^24^25^26, 23^24^256, 3^24^256, 12^23^34^45^36^2, 123^34^45^36^2, 1234^25^26, \\
	& 1234^256, 123456, 23^34^45^36^2, 234^25^26, 234^256, 23456, 34^25^26, 34^256, 3456, 456, 56, 6 \}), \\
	\varDelta_{+}^{a_{4}}= & s_{456}(\{ 1, 12, 2, 123, 23, 3, 1234, 234, 34, 4, 1^22^23^24^25, 12^23^24^25, 123^24^25, 1234^25, 12345, 23^24^25, \\
	& 234^25, 2345, 34^25, 345, 45, 5, 1^22^33^44^55^36, 1^22^33^44^45^36, 1^22^33^34^45^36, 1^22^23^34^45^36, \\
	& 12^23^34^45^36, 1^22^33^44^45^26, 1^22^33^34^45^26, 1^22^23^34^45^26, 12^23^34^45^26, 1^22^33^34^35^26, \\
	& 1^22^23^34^35^26, 12^23^34^35^26, 1^32^43^54^65^46^2, 1^22^23^24^35^26, 1^22^23^24^25^26, 1^22^23^24^256, \\
	& 1^22^43^54^65^46^2, 12^23^24^35^26, 12^23^24^25^26, 12^23^24^256, 1^22^33^54^65^46^2, 1^22^33^44^65^46^2, \\
	& 1^22^33^44^55^46^2, 1^22^33^44^55^36^2, 123^24^35^26, 23^24^35^26, 1^22^33^44^45^36^2, 123^24^25^26, 23^24^25^26, \\
	& 123^24^256, 23^24^256, 1^22^33^34^45^36^2, 1^22^23^34^45^36^2, 12^23^34^45^36^2, 1234^25^26, 1234^256, \\
	& 123456, 234^25^26, 234^256, 23456, 34^25^26, 34^256, 3456, 456, 56, 6 \}), \\
	\varDelta_{+}^{a_{5}}= & s_{456}(\{ 1, 12, 2, 123, 23, 3, 1234, 234, 34, 4, 12^23^24^25, 123^24^25, 1234^25, 12345, 2^23^24^25, 23^24^25, \\
	& 234^25, 2345, 34^25, 345, 45, 5, 12^33^44^55^36, 12^33^44^45^36, 12^33^34^45^36, 12^23^34^45^36, 2^23^34^45^36, \\
	& 12^33^44^45^26, 12^33^34^45^26, 12^23^34^45^26, 2^23^34^45^26, 12^33^34^35^26, 12^23^34^35^26, 2^23^34^35^26, \\
	& 1^22^43^54^65^46^2, 12^43^54^65^46^2, 12^23^24^35^26, 12^23^24^25^26, 12^23^24^256, 2^23^24^35^26, 2^23^24^25^26, \\
	& 2^23^24^256, 12^33^54^65^46^2, 12^33^44^65^46^2, 12^33^44^55^46^2, 12^33^44^55^36^2, 123^24^35^26, 23^24^35^26, \\
	& 12^33^44^45^36^2, 123^24^25^26, 23^24^25^26, 123^24^256, 23^24^256, 12^33^34^45^36^2, 12^23^34^45^36^2, 1234^25^26, \\
	& 1234^256, 123456, 2^23^34^45^36^2, 234^25^26, 234^256, 23456, 34^25^26, 34^256, 3456, 456, 56, 6 \}), \\
	\varDelta_{+}^{a_{6}}= & s_{456}(\{ 1, 12, 2, 123, 23, 3, 1234, 234, 34, 4, 12^23^24^25, 123^24^25, 1234^25, 12345, 23^24^25, 234^25, 2345, \\
	& 34^25, 345, 4^25, 45, 5, 12^23^34^55^36, 12^23^34^45^36, 12^23^24^45^36, 123^24^45^36, 23^24^45^36, 12^23^34^45^26, \\
	& 12^23^24^45^26, 123^24^45^26, 23^24^45^26, 12^23^34^35^26, 12^23^24^35^26, 1^22^33^44^65^46^2, 12^33^44^65^46^2, \\
	& 12^23^24^25^26, 12^23^24^256, 123^24^35^26, 12^23^44^65^46^2, 123^24^25^26, 123^24^256, 23^24^35^26, 23^24^25^26, \\
	& 23^24^256, 12^23^34^65^46^2, 12^23^34^55^46^2, 12^23^34^55^36^2, 1234^35^26, 234^35^26, 34^35^26, 12^23^34^45^36^2, \\
	& 1234^25^26, 234^25^26, 34^25^26, 12^23^24^45^36^2, 123^24^45^36^2, 1234^256, 123456, 23^24^45^36^2, 234^256, \\
	& 23456, 34^256, 3456, 4^25^26, 4^256, 456, 56, 6 \}), \\
	\varDelta_{+}^{a_{7}}= & \{ 1, 12, 2, 123, 23, 3, 1234, 234, 34, 4, 12345, 2345, 345, 45, 5, 12^23^34^35^26, 12^23^24^35^26, \\
	& 12^23^24^25^26, 12^23^24^256, 123^24^35^26, 123^24^25^26, 123^24^256, 23^24^35^26, 23^24^25^26, 23^24^256, \\
	& 12^23^34^45^46^2, 12^23^34^45^36^2, 1234^25^26, 234^25^26, 34^25^26, 12^23^34^35^36^2, 12345^26, 2345^26, 345^26, \\
	& 1^22^33^44^55^46^3, 12^33^44^55^46^3, 12^23^44^55^46^3, 12^23^34^45^26^2, 12^23^34^35^26^2, 12^23^24^35^36^2, \\
	& 123^24^35^36^2, 1234^256, 123456, 12^23^34^55^46^3, 12^23^34^45^46^3, 23^24^35^36^2, 45^26, 12^23^24^35^26^2, \\
	& 234^256, 123^24^35^26^2, 34^256, 12^23^34^45^36^3, 12^23^24^25^26^2, 123^24^25^26^2, 1234^25^26^2, 12346, \\
	& 23^24^35^26^2, 23456, 23^24^25^26^2, 234^25^26^2, 2346, 3456, 34^25^26^2, 346, 456, 46, 56, 6 \}, \\
	\varDelta_{+}^{a_{8}}= & \{ 1, 12, 2, 123, 23, 3, 1234, 234, 34, 4, 12345, 2345, 345, 45, 5, 12^33^34^256, 12^23^34^256, 12^23^24^256, \\
	& 12^23^2456, 12^23^246, 2^23^34^256, 2^23^24^256, 2^23^2456, 2^23^246, 12^33^44^45^26^2, 12^33^44^35^26^2, \\
	& 12^33^44^356^2, 123^24^256, 23^24^256, 12^33^34^35^26^2, 12^33^34^356^2, 1234^256, 234^256, 1^22^43^54^45^26^3, \\
	& 12^43^54^45^26^3, 12^33^44^256^2, 12^33^34^256^2, 12^23^34^35^26^2, 123^2456, 123456, 12^23^34^356^2, 123^246, \\
	& 12346, 12^33^54^45^26^3, 12^33^44^45^26^3, 2^23^34^35^26^2, 2^23^34^356^2, 34^256, 12^23^34^256^2, 23^2456, \\
	& 23^246, 12^33^44^35^26^3, 12^33^44^356^2, 12^23^24^256^2, 123^24^256^2, 1236, 2^23^34^256^2, 23456, 3456, \\
	& 2^23^24^256^2, 23^24^256^2, 2346, 236, 456, 346, 36, 46, 6 \}, \\
	\varDelta_{+}^{a_{9}}= & \{ 1, 12, 2, 123, 23, 3, 1234, 234, 34, 4, 12345, 2345, 345, 45, 5, 1^22^33^34^256, 1^22^23^34^256, \\
	& 1^22^23^24^256, 1^22^23^2456, 1^22^23^246, 12^23^34^256, 12^23^24^256, 12^23^2456, 12^23^246, 1^22^33^44^45^26^2, \\
	& 1^22^33^44^35^26^2, 1^22^33^44^356^2, 123^24^256, 23^24^256, 1^22^33^34^35^26^2, 1^22^33^34^356^2, 1234^256, \\
	& 234^256, 1^32^43^54^45^26^3, 1^22^43^54^45^26^3, 1^22^33^44^256^2, 1^22^33^34^256^2, 1^22^23^34^35^26^2, \\
	& 1^22^23^34^356^2, 12^23^34^35^26^2, 123^2456, 123456, 1^22^33^54^45^26^3, 1^22^33^44^45^26^3, 12^23^34^356^2, \\
	& 34^256, 23^2456, 23456, 1^22^33^44^35^26^3, 1^22^23^34^256^2, 12^23^34^256^2, 3456, 1^22^23^24^256^2, 12^23^24^256^2, \\
	& 456, 1^22^33^44^356^3, 123^24^256^2, 123^246, 12346, 1236, 23^24^256^2, 23^246, 2346, 236, 346, 36, 46, 6 \}, \\
	\varDelta_{+}^{a_{10}}= & \{ 1, 12, 2, 123, 23, 3, 1234, 234, 34, 4, 12345, 2345, 345, 45, 5, 12^23^34^256, 12^23^24^256, 12^23^2456, \\
	& 12^23^246, 123^34^256, 23^34^256, 123^24^256, 123^2456, 123^246, 12^23^44^45^26^2, 1234^256, 12^23^44^35^26^2, \\
	& 12^23^44^356^2, 23^24^256, 3^24^256, 1^22^33^54^45^26^3, 12^23^34^35^26^2, 123^34^35^26^2, 123456, 12^23^44^256^2, \\
	& 23^2456, 3^2456, 12^33^54^45^26^3, 12^23^54^45^26^3, 12^23^34^356^2, 12^23^34^256^2, 23^246, 23^34^35^26^2, 234^256, \\
	& 23456, 12^23^44^45^26^3, 123^34^356^2, 23^34^356^2, 34^256, 12^23^44^35^26^3, 12^23^24^256^2, 123^34^256^2, 23^34^256^2, \\
	& 3456, 12^23^44^356^3, 123^24^256^2, 12346, 1236, 3^246, 23^24^256^2, 2346, 236, 3^24^256^2, 456, 346, 36, 46, 6 \}, \\
	\varDelta_{+}^{a_{11}}= & \{ 1, 12, 2, 123, 23, 3, 1234, 234, 34, 4, 12345, 2345, 345, 45, 5, 12^23^34^35^26, 12^23^24^35^26, 12^23^24^25^26, \\
	& 12^23^24^256, 123^24^35^26, 123^24^25^26, 123^24^256, 23^24^35^26, 23^24^25^26, 23^24^256, 12^23^34^55^46^2, \\
	& 12^23^34^55^36^2, 1234^35^26, 234^35^26, 34^35^26, 12^23^34^45^36^2, 1234^25^26, 234^25^26, 34^25^26, \\
	& 1^22^33^44^65^46^3, 12^23^44^65^46^3, 12^23^44^65^46^3, 12^23^34^45^26^2, 12^23^34^35^26^2, 12^23^24^45^36^2, 123^24^45^36^2, \\
	& 1234^256, 123456, 12^23^34^65^46^3, 12^23^34^55^46^3, 23^24^45^36^2, 4^25^26, 12^23^24^45^26^2, 123^24^45^26^2, \\
	& 12^23^24^35^26^2, 234^256, 12^23^34^55^36^3, 123^24^35^26^2, 1234^35^26^2, 12346, 23456, 23^24^45^26^2, 23^24^35^26^2, \\
	& 234^35^26^2, 2346, 34^256, 4^256, 34^35^26^2, 3456, 346, 456, 46, 6 \}, \\
	\varDelta_{+}^{a_{12}}= & \{ 1, 12, 2, 123, 23, 3, 1234, 234, 34, 4, 12345, 2345, 345, 45, 5, 12^23^34^35^26, 12^23^24^35^26, 12^23^24^25^26, \\
	& 12^23^24^256, 123^34^35^26, 23^34^35^26, 123^24^35^26, 123^24^25^26, 123^24^256, 12^23^44^55^46^2, 12^23^44^55^36^2, \\
	& 23^24^35^26, 3^24^35^26, 1234^25^26, 1234^256, 1^22^33^54^65^46^3, 12^23^44^45^36^2, 12^23^34^45^36^2, 123^34^45^36^2, \\
	& 123456, 12^23^44^45^26^2, 12^23^34^45^26^2, 123^34^45^26^2, 12346, 12^33^54^65^46^3, 12^23^54^65^46^3, \\
	& 12^23^34^35^26^2, 23^24^25^26, 23^24^256, 12^23^44^65^46^3, 12^23^24^35^26^2, 234^25^26, 234^256, 12^23^44^55^46^3, \\
	& 12^23^44^55^36^3, 123^34^35^26^2, 123^24^35^26^2, 23^34^45^36^2, 3^24^25^26, 34^25^26, 23456, 23^34^45^26^2, \\
	& 23^34^35^26^2, 23^24^35^26^2, 2346, 3^24^256, 34^256, 3^24^35^26^2, 3456, 346, 456, 46, 6 \}, \\
	\varDelta_{+}^{a_{13}}= & \{ 1, 12, 2, 123, 23, 3, 1234, 234, 34, 4, 12345, 2345, 345, 45, 5, 12^23^34^356, 12^23^24^356, 12^23^24^256, \\
	& 12^23^24^26, 123^24^356, 123^24^256, 123^24^26, 23^24^356, 23^24^256, 23^24^26, 12^23^34^55^26^2, 12^23^34^556^2, \\
	& 1234^356, 234^356, 34^356, 12^23^34^45^26^2, 12^23^34^456^2, 1^22^33^44^65^26^3, 12^33^44^65^26^3, 12^23^44^65^26^3, \\
	& 12^23^34^356^2, 1234^256, 12^23^24^45^26^2, 123^24^45^26^2, 123456, 1234^26, 12^23^24^456^2, 123^24^456^2, 12346, \\
	& 12^23^34^65^26^3, 12^23^24^356^2, 234^256, 234^26, 12^23^34^55^26^3, 12^23^34^556^3, 123^24^356^2, 1234^356^2, 23^24^45^26^2, \\
	& 23456, 23^24^456^2, 34^256, 4^256, 23^24^356^2, 234^356^2, 2346, 34^26, 4^26, 34^356^2, 3456, 346, 456, 46, 6 \}, \\
	\varDelta_{+}^{a_{14}}= & \{ 1, 12, 2, 123, 23, 3, 1234, 234, 34, 4, 12345, 2345, 345, 45, 5, 1^22^33^34^35^26, 1^22^23^34^35^26, \\
	& 1^22^23^24^35^26, 1^22^23^24^25^26, 1^22^23^24^256, 12^23^34^35^26, 12^23^24^35^26, 12^23^24^25^26, 12^23^24^256, \\
	& 1^22^33^44^55^46^2, 1^22^33^44^55^36^2, 123^24^35^26, 23^24^35^26, 1^22^33^44^45^36^2, 123^24^25^26, 23^24^25^26, \\
	& 1^22^33^34^45^36^2, 1234^25^26, 234^25^26, 1^32^43^54^65^46^3, 1^22^43^54^65^46^3, 1^22^33^44^45^26^2, 1^22^33^34^45^26^2, \\
	& 1^22^33^34^35^26^2, 1^22^23^34^45^36^2, 12^23^34^45^36^2, 1^22^33^54^65^46^3, 1^22^33^44^65^46^3, 1^22^33^44^55^46^3, \\
	& 34^25^26, 123^24^256, 1^22^23^34^45^26^2, 1^22^23^34^35^26^2, 23^24^256, 12^23^34^45^26^2, 12^23^34^35^26^2, \\
	& 1^22^33^44^55^36^3, 1234^256, 234^256, 34^256, 1^22^23^24^35^26^2, 12^23^24^35^26^2, 123^24^35^26^2, 123456, \\
	& 12346, 23^24^35^26^2, 23456, 2346, 3456, 346, 456, 46, 6 \}, \\
	\varDelta_{+}^{a_{15}}= & \{ 1, 12, 2, 123, 23, 3, 1234, 234, 34, 4, 12345, 2345, 345, 45, 5, 12^23^34^356, 12^23^24^356, 12^23^24^256, \\
	& 12^23^24^26, 123^34^356, 23^34^356, 123^24^356, 123^24^256, 123^24^26, 12^23^44^55^26^2, 12^23^44^556^2, 23^24^356, \\
	& 3^24^356, 1234^256, 1234^26, 1^22^33^54^65^26^3, 12^23^44^45^26^2, 12^23^34^45^26^2, 123^34^45^26^2, 123456, \\
	& 12^23^44^456^2, 12^23^34^456^2, 123^34^456^2, 12346, 12^33^54^65^26^3, 12^23^54^65^26^3, 12^23^34^356^2, \\
	& 23^24^256, 23^24^26, 12^23^44^65^26^3, 12^23^24^356^2, 234^256, 234^26, 12^23^44^55^26^3, 12^23^44^556^3, \\
	& 123^34^356^2, 123^24^356^2, 23^34^45^26^2, 23456, 23^34^456^2, 3^24^256, 34^256, 23^34^356^2, 23^24^356^2, \\
	& 2346, 3^24^26, 34^26, 3^24^356^2, 3456, 346, 456, 46, 6 \}, \\
	\varDelta_{+}^{a_{16}}= & \{ 1, 12, 2, 123, 23, 3, 1234, 234, 34, 4, 12345, 2345, 345, 45, 5, 12^33^34^35^26, 12^23^34^35^26, 12^23^24^35^26, \\
	& 12^23^24^25^26, 12^23^24^256, 2^23^34^35^26, 2^23^24^35^26, 2^23^24^25^26, 2^23^24^256, 12^33^44^55^46^2, 12^33^44^55^36^2, \\
	& 123^24^35^26, 23^24^35^26, 12^33^44^45^36^2, 123^24^25^26, 23^24^25^26, 12^33^34^45^36^2, 1234^25^26, 234^25^26, \\
	& 1^22^43^54^65^46^3, 12^43^54^65^46^3, 12^33^44^45^26^2, 12^33^34^45^26^2, 12^33^34^35^26^2, 12^23^34^45^36^2, 123^24^256, \\
	& 1234^256, 123456, 12^33^54^65^46^3, 12^23^44^65^46^3, 12^33^44^55^46^3, 2^23^34^45^36^2, 34^25^26, 12^23^34^45^26^2, \\
	& 12^23^34^35^26^2, 23^24^256, 12^33^44^55^36^3, 12^23^24^35^26^2, 123^24^35^26^2, 12346, 2^23^34^45^26^2, 2^23^34^35^26^2, \\
	& 234^256, 34^256, 2^23^24^35^26^2, 23^24^35^26^2, 23456, 2346, 3456, 346, 456, 46, 6 \}, \\
	\varDelta_{+}^{a_{17}}= & \{ 1, 12, 2, 123, 23, 3, 1234, 234, 34, 4, 12345, 2345, 345, 45, 5, 1^22^33^34^356, 1^22^23^34^356, 1^22^23^24^356, \\
	& 1^22^23^24^256, 1^22^23^24^26, 12^23^34^356, 12^23^24^356, 12^23^24^256, 12^23^24^26, 1^22^33^44^55^26^2, 1^22^33^44^556^2, \\
	& 123^24^356, 23^24^356, 1^22^33^44^45^26^2, 1^22^33^34^45^26^2, 1^22^33^44^456^2, 123^24^256, 23^24^256, 1^32^43^54^65^26^3, \\
	& 1^22^43^54^65^26^3, 1^22^33^34^456^2, 1^22^33^34^356^2, 123^24^26, 23^24^26, 1^22^33^54^65^26^3, 1^22^23^34^45^26^2, \\
	& 1^22^23^34^456^2, 1^22^23^34^356^2, 12^23^34^45^26^2, 12^23^34^456^2, 12^23^34^356^2, 1^22^33^44^65^26^3, 1^22^33^44^55^26^3, \\
	& 1234^256, 234^256, 34^256, 1^22^33^44^556^3, 1234^26, 234^26, 34^26, 1^22^23^24^356^2, 12^23^24^356^2, 123^24^356^2, \\
	& 123456, 12346, 23^24^356^2, 23456, 2346, 3456, 346, 456, 46, 6 \}, \\
	\varDelta_{+}^{a_{18}}= & \{ 1, 12, 2, 123, 23, 3, 1234, 234, 34, 4, 12345, 2345, 345, 45, 5, 1^22^33^34^256, 1^22^23^34^256, 1^22^23^24^256, \\
	& 1^22^23^2456, 1^22^23^246, 12^33^34^256, 12^23^34^256, 1^22^43^54^45^26^2, 1^22^43^54^35^26^2, 1^22^43^54^356^2, \\
	& 12^23^24^256, 1^22^43^44^35^26^2, 1^22^43^44^356^2, 2^23^24^256, 123^24^256, 1^22^33^44^35^26^2, 2^23^34^256, \\
	& 1^22^33^44^356^2, 23^24^256, 1^32^53^64^45^26^3, 12^33^44^35^26^2, 12^23^2456, 123^2456, 123456, 1^22^53^64^45^26^3, \\
	& 1^22^43^64^44^26^3, 1^22^43^54^45^26^3, 12^33^44^356^2, 1^22^43^44^256^2, 2^23^2456, 1^22^33^44^256^2, 1^22^33^34^256^2, \\
	& 12^23^246, 1^22^43^54^34^26^3, 12^33^44^256^2, 23^2456, 1^22^23^34^256^2, 123^246, 1^22^43^54^356^3, 12^33^34^256^2, \\
	& 12^23^34^256^2, 12346, 1236, 2^23^246, 23^246, 2^23^34^256^2, 23456, 2346, 236, 3456, 346, 36, 6 \}, \\
	\varDelta_{+}^{a_{19}}= & \{ 1, 12, 2, 123, 23, 3, 1234, 234, 34, 4, 12345, 2345, 345, 45, 5, 12^33^34^356, 12^23^34^356, 12^23^24^356, \\
	& 12^23^24^256, 12^23^24^26, 2^23^34^356, 2^23^24^356, 2^23^24^256, 2^23^24^26, 12^33^44^55^26^2, 12^33^44^556^2, \\
	& 123^24^356, 23^24^356, 12^33^44^45^26^2, 12^33^34^45^26^2, 12^33^44^456^2, 123^24^256, 23^24^256, \\
	& 1^22^43^54^65^26^3, 12^43^54^65^26^3, 12^33^34^456^2, 12^33^34^356^2, 123^24^26, 23^24^26, 12^33^54^65^26^3, \\
	& 12^23^34^45^26^2, 12^23^34^456^2, 12^23^34^356^2, 2^23^34^45^26^2, 2^23^34^456^2, 2^23^34^356^2, 12^33^44^65^26^3, \\
	& 12^33^44^55^26^3, 1234^256, 234^256, 34^256, 12^33^44^556^3, 1234^26, 234^26, 34^26, 12^23^24^356^2, 123^24^356^2, \\
	& 123456, 12346, 2^23^34^356^2, 23^24^356^2, 23456, 2346, 3456, 346, 456, 46, 6 \}, \\
	\varDelta_{+}^{a_{20}}= & \{ 1, 12, 2, 123, 23,  3, 1234, 234, 34, 4, 12345, 2345, 345, 45, 5, 1^22^33^34^256, 1^22^23^34^256, 1^22^23^24^256, \\
	& 1^22^23^2456, 1^22^23^246, 12^23^34^256, 12^23^24^256, 12^23^2456, 12^23^246, 1^22^33^54^45^26^2, 1^22^33^54^35^26^2, \\
	& 1^22^33^54^356^2, 123^34^256, 23^34^256, 1^22^33^44^35^26^2, 1^22^33^44^356^2, 123^24^256, 23^24^256, 1^32^43^64^45^26^3, \\
	& 1^22^43^64^44^26^3, 1^22^33^44^256^2, 1^22^33^34^256^2, 1^22^23^44^35^26^2, 1^22^23^44^356^2, 12^23^44^35^26^2, 123^2456, \\
	& 123456, 1^22^33^64^45^26^3, 1^22^33^54^45^26^3, 12^23^44^356^2, 3^24^256, 23^2456, 23456, 1^22^33^54^35^26^3, \\
	& 1^22^23^44^256^2, 12^23^44^256^2, 3^2456, 1^22^23^34^256^2, 12^23^34^256^2, 3456, 1^22^33^54^356^3, 123^34^256^2, \\
	& 123^246, 12346, 1236, 23^34^256^2, 23^246, 2346, 236, 3^246, 346, 36, 6 \}, \\
	\varDelta_{+}^{a_{21}}= & \{ 1, 12, 2, 123, 23, 3, 1234, 234, 34, 4, 12345, 2345, 345, 45, 5, 12^33^34^256, 12^23^34^256, 12^23^24^256, \\
	& 12^23^2456, 12^23^246, 2^23^34^256, 2^23^24^256, 2^23^2456, 2^23^246, 12^33^54^45^26^2, 12^33^54^35^26^2, \\
	& 12^33^54^356^2, 123^34^256, 23^34^256, 12^33^44^35^26^2, 12^33^44^356^2, 123^24^256, 23^24^256, 1^22^43^64^44^26^3, \\
	& 12^43^64^45^26^3, 12^33^44^256^2, 12^33^34^256^2, 12^23^44^35^26^2, 123^2456, 123456, 12^23^44^356^2, 123^246, \\
	& 12346, 12^33^64^45^26^3, 12^33^54^45^26^3, 2^23^44^35^26^2, 2^23^44^356^2, 3^24^256, 12^23^44^256^2, \\
	& 12^23^34^256^2, 12^33^54^35^26^3, 12^33^54^356^3, 123^34^256^2, 1236, 23^2456, 2^23^44^256^2, 3^2456, 23456, \\
	& 2^23^34^256^2, 3456, 23^34^256^2, 23^246, 2346, 236, 3^246, 346, 36, 6 \}.
	\end{align*}
}%

\subsubsection{Weyl groupoid}
\label{subsubsec:type-g66-Weyl}
The isotropy group  at $a_1 \in \cX$ is
\begin{align*}
\cW(a_1)= \langle \varsigma_1^{a_1}, \varsigma_2^{a_1},  \varsigma_3^{a_1}, \varsigma_4^{a_1}, \varsigma_6^{a_1}, \varsigma_5^{a_1} \varsigma_6 \varsigma_4 \varsigma_6 \varsigma_3 \varsigma_2 \varsigma_4 \varsigma_3 \varsigma_5 \varsigma_3 \varsigma_4 \varsigma_2 \varsigma_3 \varsigma_6 \varsigma_4 \varsigma_6 \varsigma_5  \rangle \simeq W(B_6).
\end{align*}

\subsubsection{Incarnation}

We set the matrices $(\bq^{(i)})_{i\in\I_{21}}$, from left to right and  from up to down:
\begin{align}\label{eq:dynkin-g(6,6)}
\begin{aligned}
&
\xymatrix@C-6pt{\overset{\zeta}{\underset{\ }{\circ}}\ar  @{-}[r]^{\ztu}  &
	\overset{\zeta}{\underset{\ }{\circ}} \ar  @{-}[r]^{\ztu}  & \overset{\zeta}{\underset{\
		}{\circ}}
	\ar  @{-}[r]^{\ztu}  & \overset{\zeta}{\underset{\ }{\circ}} \ar  @{-}[r]^{\ztu}  &
	\overset{\ztu}{\underset{\ }{\circ}}  \ar  @{-}[r]^{\zeta}  & \overset{-1}{\underset{\
		}{\circ}}}
& &
\xymatrix@C-6pt{\overset{\zeta}{\underset{\ }{\circ}}\ar  @{-}[r]^{\ztu}  &
	\overset{\zeta}{\underset{\ }{\circ}} \ar  @{-}[r]^{\ztu}  & \overset{\zeta}{\underset{\
		}{\circ}}
	\ar  @{-}[r]^{\ztu}    & \overset{\zeta}{\underset{\ }{\circ}}  \ar  @{-}[r]^{\ztu} &
	\overset{-1}{\underset{\ }{\circ}} \ar  @{-}[r]^{\ztu}  & \overset{-1}{\underset{\
		}{\circ}}}
\\&
\xymatrix@C-7pt{\overset{\zeta}{\underset{\ }{\circ}} \ar  @{-}[r]^{\ztu}  &
	\overset{-1}{\underset{\ }{\circ}}\ar  @{-}[r]^{\zeta}  & \overset{-1}{\underset{\
		}{\circ}}\ar  @{-}[r]^{\ztu}  &
	\overset{-1}{\underset{\ }{\circ}}\ar  @{-}[r]^{\ztu}  & \overset{\zeta}{\underset{\
		}{\circ}}
	\ar  @{-}[r]^{\ztu}  & \overset{\zeta}{\underset{\ }{\circ}}}
&&\xymatrix@C-7pt{\overset{-1}{\underset{\ }{\circ}} \ar  @{-}[r]^{\zeta}  &
	\overset{-1}{\underset{\ }{\circ}}
	\ar  @{-}[r]^{\ztu}  & \overset{\zeta}{\underset{\ }{\circ}} \ar  @{-}[r]^{\ztu}  &
	\overset{-1}{\underset{\ }{\circ}} \ar  @{-}[r]^{\ztu}  & \overset{\zeta}{\underset{\
		}{\circ}}\ar  @{-}[r]^{\ztu}  & \overset{\zeta}{\underset{\ }{\circ}}}
\\ &
\xymatrix@C-6pt{\overset{-1}{\underset{\ }{\circ}}\ar  @{-}[r]^{\ztu}  &
	\overset{\zeta}{\underset{\ }{\circ}} \ar  @{-}[r]^{\ztu}  & \overset{\zeta}{\underset{\
		}{\circ}}
	\ar  @{-}[r]^{\ztu}  & \overset{-1}{\underset{\ }{\circ}} \ar  @{-}[r]^{\ztu}  &
	\overset{\zeta}{\underset{\ }{\circ}}\ar  @{-}[r]^{\ztu}  & \overset{\zeta}{\underset{\
		}{\circ}}}
&& \xymatrix@C-7pt{\overset{\zeta}{\underset{\ }{\circ}}\ar  @{-}[r]^{\ztu}  &
	\overset{\zeta}{\underset{\ }{\circ}} \ar  @{-}[r]^{\ztu}  & \overset{-1}{\underset{\
		}{\circ}}
	\ar  @{-}[r]^{\zeta}  & \overset{\ztu}{\underset{\ }{\circ}} \ar  @{-}[r]^{\ztu}  &
	\overset{\zeta}{\underset{\ }{\circ}}\ar  @{-}[r]^{\ztu}  & \overset{\zeta}{\underset{\
		}{\circ}}}
\end{aligned}
\end{align}

\begin{align*}
&
\xymatrix@R-8pt{  &  & & \overset{-1}{\circ} \ar  @{-}[d]^{\zeta}\ar@{-}[dr]^{\zeta}  & \\
	\overset{\zeta}{\underset{\ }{\circ}} \ar  @{-}[r]^{\ztu} & \overset{\zeta}{\underset{\
		}{\circ}} \ar  @{-}[r]^{\ztu} & \overset{\zeta}{\underset{\ }{\circ}} \ar  @{-}[r]^{\ztu}
	& \overset{-1}{\underset{\ }{\circ}} \ar  @{-}[r]^{\zeta}  & \overset{\ztu}{\underset{\
		}{\circ}}}
&&
\xymatrix@C-3pt@R-8pt{  &   & \overset{-1}{\circ} \ar  @{-}[d]^{\zeta} \ar
	@{-}[dr]^{\zeta}  & \\
	\overset{-1}{\underset{\ }{\circ}} \ar  @{-}[r]^{\ztu}  & \overset{\zeta}{\underset{\
		}{\circ}} \ar  @{-}[r]^{\ztu}
	& \overset{-1}{\underset{\ }{\circ}} \ar  @{-}[r]^{\zeta}
	& \overset{-1}{\underset{\ }{\circ}}\ar  @{-}[r]^{\ztu}  & \overset{\zeta}{\underset{\
		}{\circ}}}
\\ &
\xymatrix@C-3pt@R-8pt{  &   & \overset{-1}{\circ} \ar  @{-}[d]^{\zeta} \ar
	@{-}[dr]^{\zeta}  & \\
	\overset{-1}{\underset{\ }{\circ}} \ar  @{-}[r]^{\zeta}  & \overset{-1}{\underset{\
		}{\circ}} \ar  @{-}[r]^{\ztu}
	& \overset{-1}{\underset{\ }{\circ}} \ar  @{-}[r]^{\zeta}
	& \overset{-1}{\underset{\ }{\circ}}\ar  @{-}[r]^{\ztu}  & \overset{\zeta}{\underset{\
		}{\circ}}}
&&
\xymatrix@C-3pt@R-8pt{  &   & \overset{-1}{\circ} \ar  @{-}[d]^{\zeta} \ar
	@{-}[dr]^{\zeta}  & \\
	\overset{\zeta}{\underset{\ }{\circ}} \ar  @{-}[r]^{\ztu}  & \overset{-1}{\underset{\
		}{\circ}} \ar  @{-}[r]^{\zeta}
	& \overset{\ztu}{\underset{\ }{\circ}} \ar  @{-}[r]^{\zeta}
	& \overset{-1}{\underset{\ }{\circ}}\ar  @{-}[r]^{\ztu}  & \overset{\zeta}{\underset{\
		}{\circ}}}
\end{align*}

\begin{align*}
&
\xymatrix@C-3pt@R-8pt{  & &  & \overset{\zeta}{\circ} \ar  @{-}[d]^{\ztu}  & \\
	\overset{\zeta}{\underset{\ }{\circ}} \ar  @{-}[r]^{\ztu} & \overset{\zeta}{\underset{\
		}{\circ}} \ar  @{-}[r]^{\ztu}  & \overset{-1}{\underset{\ }{\circ}} \ar  @{-}[r]^{\zeta}
	& \overset{-1}{\underset{\ }{\circ}} \ar  @{-}[r]^{\ztu}  & \overset{-1}{\underset{\
		}{\circ}}}
&& \xymatrix@R-8pt{  & &  & \overset{\zeta}{\circ} \ar  @{-}[d]^{\ztu}  & \\
	\overset{\zeta}{\underset{\ }{\circ}} \ar  @{-}[r]^{\ztu} & \overset{-1}{\underset{\
		}{\circ}} \ar  @{-}[r]^{\zeta}  & \overset{-1}{\underset{\ }{\circ}} \ar  @{-}[r]^{\ztu}
	& \overset{\zeta}{\underset{\ }{\circ}} \ar  @{-}[r]^{\ztu}  & \overset{-1}{\underset{\
		}{\circ}}}
\\ &
\xymatrix@R-8pt{  & &  & \overset{\zeta}{\circ} \ar  @{-}[d]^{\ztu}  & \\
	\overset{\zeta}{\underset{\ }{\circ}} \ar  @{-}[r]^{\ztu} & \overset{\zeta}{\underset{\
		}{\circ}} \ar  @{-}[r]^{\ztu}  & \overset{-1}{\underset{\ }{\circ}} \ar  @{-}[r]^{\zeta}
	& \overset{\ztu}{\underset{\ }{\circ}} \ar  @{-}[r]^{\zeta}  & \overset{-1}{\underset{\
		}{\circ}}}
&& \xymatrix@R-8pt{  & &  & \overset{\zeta}{\circ} \ar  @{-}[d]^{\ztu}  & \\
	\overset{-1}{\underset{\ }{\circ}} \ar  @{-}[r]^{\zeta} & \overset{-1}{\underset{\
		}{\circ}} \ar  @{-}[r]^{\ztu}  & \overset{\zeta}{\underset{\ }{\circ}} \ar  @{-}[r]^{\ztu}
	& \overset{\zeta}{\underset{\ }{\circ}} \ar  @{-}[r]^{\ztu}  & \overset{-1}{\underset{\
		}{\circ}}}
\\ &
\xymatrix@C-4pt@R-8pt{  & &  & \overset{\zeta}{\circ} \ar  @{-}[d]^{\ztu}  & \\
	\overset{\zeta}{\underset{\ }{\circ}} \ar  @{-}[r]^{\ztu} & \overset{-1}{\underset{\
		}{\circ}} \ar  @{-}[r]^{\zeta}  & \overset{-1}{\underset{\ }{\circ}} \ar  @{-}[r]^{\ztu}
	& \overset{-1}{\underset{\ }{\circ}} \ar  @{-}[r]^{\zeta}  & \overset{-1}{\underset{\
		}{\circ}}}
&&
\xymatrix@R-8pt{  & &  & \overset{\zeta}{\circ} \ar  @{-}[d]^{\ztu}  & \\
	\overset{-1}{\underset{\ }{\circ}} \ar  @{-}[r]^{\ztu} & \overset{\zeta}{\underset{\
		}{\circ}} \ar  @{-}[r]^{\ztu}  & \overset{\zeta}{\underset{\ }{\circ}} \ar  @{-}[r]^{\ztu}
	& \overset{\zeta}{\underset{\ }{\circ}} \ar  @{-}[r]^{\ztu}  & \overset{-1}{\underset{\
		}{\circ}}}
\\ &
\xymatrix@C-4pt@R-8pt{  & &  & \overset{\zeta}{\circ} \ar  @{-}[d]^{\ztu}  & \\
	\overset{-1}{\underset{\ }{\circ}} \ar  @{-}[r]^{\zeta} & \overset{-1}{\underset{\
		}{\circ}} \ar  @{-}[r]^{\ztu}  & \overset{\zeta}{\underset{\ }{\circ}} \ar  @{-}[r]^{\ztu}
	& \overset{-1}{\underset{\ }{\circ}} \ar  @{-}[r]^{\zeta}  & \overset{-1}{\underset{\
		}{\circ}}}
&&
\xymatrix@C-4pt@R-8pt{  &   & \overset{\zeta}{\circ} \ar  @{-}[d]^{\ztu}  & \\
	\overset{\ztu}{\underset{\ }{\circ}} \ar  @{-}[r]^{\zeta}  & \overset{-1}{\underset{\
		}{\circ}} \ar  @{-}[r]^{\ztu}  & \overset{\zeta}{\underset{\ }{\circ}} \ar  @{-}[r]^{\ztu}
	& \overset{\zeta}{\underset{\ }{\circ}}\ar  @{-}[r]^{\ztu}  & \overset{\zeta}{\underset{\
		}{\circ}}}
\\ &
\xymatrix@C-3pt@R-8pt{  & &  & \overset{\zeta}{\circ} \ar  @{-}[d]^{\ztu}  & \\
	\overset{-1}{\underset{\ }{\circ}} \ar  @{-}[r]^{\ztu} & \overset{\zeta}{\underset{\
		}{\circ}} \ar  @{-}[r]^{\ztu}  & \overset{\zeta}{\underset{\ }{\circ}} \ar  @{-}[r]^{\ztu}
	& \overset{-1}{\underset{\ }{\circ}} \ar  @{-}[r]^{\zeta}  & \overset{-1}{\underset{\
		}{\circ}}}
&&
\xymatrix@C-3pt@R-8pt{  &   & \overset{\zeta}{\circ} \ar  @{-}[d]^{\ztu}  & \\
	\overset{-1}{\underset{\ }{\circ}} \ar  @{-}[r]^{\zeta}  & \overset{\ztu}{\underset{\
		}{\circ}} \ar  @{-}[r]^{\zeta}
	& \overset{-1}{\underset{\ }{\circ}} \ar  @{-}[r]^{\ztu}
	& \overset{\zeta}{\underset{\ }{\circ}}\ar  @{-}[r]^{\ztu}  & \overset{\zeta}{\underset{\
		}{\circ}}}
\\ &
\xymatrix@C-3pt@R-8pt{  &   & \overset{\zeta}{\circ} \ar  @{-}[d]^{\ztu}   & \\
	\overset{-1}{\underset{\ }{\circ}} \ar  @{-}[r]^{\ztu}  & \overset{-1}{\underset{\
		}{\circ}} \ar  @{-}[r]^{\zeta}
	& \overset{-1}{\underset{\ }{\circ}} \ar  @{-}[r]^{\ztu}
	& \overset{\zeta}{\underset{\ }{\circ}}\ar  @{-}[r]^{\ztu}  & \overset{\zeta}{\underset{\
		}{\circ}}}
&&
\end{align*}
Now, this is the incarnation:
\begin{align*}
& a_i\mapsto s_{56}(\bq^{i}), \ i\in\I_{2}; &
& a_i\mapsto s_{456}(\bq^{i}), \ i\in\I_{3,6}; &
& a_i\mapsto \bq^{(i)}, \ i\in\I_{7,21}.
\end{align*}

\subsubsection{PBW-basis and dimension} \label{subsubsec:type-g66-PBW}
Notice that the roots in each $\varDelta_{+}^{a_i}$, $i\in\I_{21}$, are ordered from left to right, justifying the notation $\beta_1, \dots, \beta_{68}$.

The root vectors $x_{\beta_k}$ are described as in Remark \ref{rem:lyndon-word}.
Thus
\begin{align*}
\left\{ x_{\beta_{68}}^{n_{68}} \dots x_{\beta_2}^{n_{2}}  x_{\beta_1}^{n_{1}} \, | \, 0\le n_{k}<N_{\beta_k} \right\}.
\end{align*}
is a PBW-basis of $\toba_{\bq}$. Hence $\dim \toba_{\bq}=2^{32}3^{36}
$.

\subsubsection{The Dynkin diagram \emph{(\ref{eq:dynkin-g(6,6)}
		a)}}\label{subsubsec:g(6,6)-a}

\

The Nichols algebra $\toba_{\bq}$ is generated by $(x_i)_{i\in \I_6}$ with defining relations
\begin{align}\label{eq:rels-g(6,6)-a}
\begin{aligned}
& x_{112}=0; & x_{221}&=0; & x_{223}&=0; & &x_{ij}=0, \ i<j, \, \widetilde{q}_{ij}=1;\\
&[x_{5543},x_{54}]_c=0; & x_{332}&=0; & x_{334}&=0; &  &[[x_{(46)},x_5]_c,x_5]_c=0;\\
&x_{443}=0; & x_{445}&=0; & x_{556}&=0; & &x_{6}^2=0; \quad x_{\alpha}^{3}=0, \ \alpha\in\Oc_+^{\bq}.
\end{aligned}
\end{align}
Here {\scriptsize$\Oc_+^{\bq}=\{ 1, 12, 2, 123, 23,
	3, 1234, 234, 34, 4, 12345, 12^23^24^25^2,
	123^24^25^2, 1234^25^2, 12345^2, \\ 2345, 23^24^25^2,
	234^25^2, 2345^2, 345, 34^25^2, 345^2, 45, 45^2,
	5, 1^22^33^44^55^66^2, 1^22^33^44^55^66^2, \\ 12^23^44^55^66^2, 12^23^34^55^66^2,
	12^23^34^45^66^2, 12^23^34^45^56^2, 12^23^34^45^46^2, 12^23^34^35^46^2, 12^23^24^35^46^2, \\
	123^24^35^46^2, 23^24^35^46^2 \}$}
and the degree of the integral is
\begin{equation*}
\ya= 56\alpha_1 + 108\alpha_2 + 156\alpha_3 + 200\alpha_4 + 240\alpha_5 + 76\alpha_6.
\end{equation*}

\subsubsection{The Dynkin diagram \emph{(\ref{eq:dynkin-g(6,6)}
		b)}}\label{subsubsec:g(6,6)-b}

\

The Nichols algebra $\toba_{\bq}$ is generated by $(x_i)_{i\in \I_6}$ with defining relations
\begin{align}\label{eq:rels-g(6,6)-b}
\begin{aligned}
x_{112}&=0; & x_{221}&=0; & x_{445}&=0; & & x_{ij}=0, \ i<j, \, \widetilde{q}_{ij}=1;\\
x_{223}&=0; & x_{332}&=0; & x_{5}^2&=0; &  & [[x_{65},x_{654}]_c,x_5]_c=0;\\
x_{334}&=0; & x_{443}&=0; & x_{6}^2&=0; & & x_{\alpha}^3=0, \ \alpha\in\Oc_+^{\bq}.
\end{aligned}
\end{align}
Here {\scriptsize$\Oc_+^{\bq}=\{ 1, 12, 2, 123, 23,
	3, 1234, 234, 34, 4, 123456, 12^23^34^45^46^2,
	12^23^34^35^46^2, 12^23^24^35^46^2$, \\ $12^23^24^25^26^2, 23456, 1^22^33^44^55^66^4,
	12^33^44^55^66^4, 12^23^34^45^56^3, 123^24^35^46^2, 123^24^25^26^2$, \\ $1234^25^26^2, 12345^26^2, 12^23^44^55^66^4,
	12^23^34^55^66^4, 12^23^34^45^66^4, 23^24^35^46^2, 23^24^25^26^2, 3456$, \\ $234^25^26^2, 2345^26^2, 34^25^26^2, 345^26^2, 456, 45^26^2, 56 \}$}  
and the degree of the integral is
\begin{equation*}
\ya= 56\alpha_1 + 108\alpha_2 + 156\alpha_3 + 200\alpha_4 + 240\alpha_5 + 166\alpha_6.
\end{equation*}

\subsubsection{The Dynkin diagram \emph{(\ref{eq:dynkin-g(6,6)}
		c)}}\label{subsubsec:g(6,6)-c}

\

The Nichols algebra $\toba_{\bq}$ is generated by $(x_i)_{i\in \I_6}$ with defining relations
\begin{align}\label{eq:rels-g(6,6)-c}
\begin{aligned}
x_{112}&=0; & x_{554}&=0; & x_{2}^2&=0; & & x_{ij}=0, \ i<j, \, \widetilde{q}_{ij}=1;
\\
[x_{(13)},&x_2]_c=0; & x_{556}&=0; & x_{3}^2&=0; & & [[x_{34},x_{(35)}]_c,x_4]_c=0;
\\
[x_{(24)},&x_3]_c=0; & x_{665}&=0; & x_{4}^2&=0; & 
& x_{\alpha}^{3}=0, \ \alpha\in\Oc_+^{\bq}.
\end{aligned}
\end{align}
Here {\scriptsize$\Oc_+^{\bq}=\{ 1, 123, 23, 34, 12^23^24^25,
	1234^25, 234^25, 3^24^25, 345, 5, 12^23^44^45^36, 123^34^45^36,
	23^34^45^36$, \\ $12^23^44^45^26, 123^34^45^26, 23^34^45^26, 12^23^34^35^26,
	1^22^33^54^65^46^2, 12^33^54^65^46^2, 12^23^24^25^26$, \\ $12^23^24^256, 123^24^35^26, 12^23^44^65^46^2, 23^24^35^26,
	12^23^44^45^36^2, 3^24^25^26, 3^24^256, 123^34^45^36^2$, \\ $1234^25^26,
	1234^256, 23^34^45^36^2, 234^25^26, 234^256, 3456, 56, 6 \}$}  
and the degree of the integral is
\begin{equation*}
\ya= 56\alpha_1 + 108\alpha_2 + 206\alpha_3 + 252\alpha_4 + 172\alpha_5 + 88\alpha_6.
\end{equation*}

\subsubsection{The Dynkin diagram \emph{(\ref{eq:dynkin-g(6,6)}
		d)}}\label{subsubsec:g(6,6)-d}

\

The Nichols algebra $\toba_{\bq}$ is generated by $(x_i)_{i\in \I_6}$ with defining relations
\begin{align}\label{eq:rels-g(6,6)-d}
\begin{aligned}
x_{332}&=0; & x_{334}&=0; & &x_{ij}=0, \ i<j, \, \widetilde{q}_{ij}=1;\\
x_{556}&=0; & x_{665}&=0; &  & [[[x_{(25)},x_4]_c,x_3]_c,x_4]_c=0;\\
x_1^2&=0; & [x_{(13)},&x_2]_c=0; & & [[[x_{6543},x_4]_c,x_5]_c,x_4]_c=0; \\
x_{554}&=0; & x_{2}^2&=0; & &x_{4}^2=0; \quad  x_{\alpha}^{3}=0, \ \alpha\in\Oc_+^{\bq}.
\end{aligned}
\end{align}
Here {\scriptsize$\Oc_+^{\bq}=\{ 12, 123, 3, 234, 123^24^25,
	1234^25, 2^23^24^25, 2345, 34^25, 5, 12^33^44^45^36, 12^33^34^45^36,
	2^23^34^45^36$, \\ $12^33^44^45^26, 12^33^34^45^26, 2^23^34^45^26, 12^23^34^35^26,
	1^22^43^54^65^46^2, 12^23^24^35^26, 2^23^24^25^26, 2^23^24^256$, \\ $12^33^54^65^46^2, 12^33^44^65^46^2, 23^24^35^26,
	12^33^44^45^36^2, 123^24^25^26, 123^24^256, 12^33^34^45^36^2$, \\ $1234^25^26,
	1234^256, 2^23^34^45^36^2, 23456, 34^25^26, 34^256, 56, 6 \}$}  
and the degree of the integral is
\begin{equation*}
\ya= 56\alpha_1 + 156\alpha_2 + 206\alpha_3 + 252\alpha_4 + 172\alpha_5 + 88\alpha_6.
\end{equation*}

\subsubsection{The Dynkin diagram \emph{(\ref{eq:dynkin-g(6,6)}
		e)}}\label{subsubsec:g(6,6)-e}

\

The Nichols algebra $\toba_{\bq}$ is generated by $(x_i)_{i\in \I_6}$ with defining relations
\begin{align}\label{eq:rels-g(6,6)-e}
\begin{aligned}
x_{223}&=0; & x_{332}&=0; & &x_{ij}=0, \ i<j, \, \widetilde{q}_{ij}=1;\\
x_{334}&=0; & x_{554}&=0; &  & [[[x_{(25)},x_4]_c,x_3]_c,x_4]_c=0;\\
x_{556}&=0; & x_{665}&=0; & & [[[x_{6543},x_4]_c,x_5]_c,x_4]_c=0; \\
x_{221}&=0; & x_{1}^2&=0; & & x_{4}^2=0; \quad x_{\alpha}^{3}=0, \ \alpha\in\Oc_+^{\bq}.
\end{aligned}
\end{align}
Here {\scriptsize$\Oc_+^{\bq}=\{ 2, 23, 3, 1234, 1^22^23^24^25,
	12345, 23^24^25, 234^25, 34^25, 5, 1^22^33^44^45^36, 1^22^33^34^45^36$, \\ $1^22^23^34^45^36, 1^22^33^44^45^26, 1^22^33^34^45^26, 1^22^23^34^45^26, 12^23^34^35^26,
	1^22^23^24^25^26, 1^22^23^24^256$, \\ $1^22^43^54^65^46^2, 12^23^24^35^26, 1^22^33^54^65^46^2, 1^22^33^44^65^46^2, 123^24^35^26,
	1^22^33^44^45^36^2, 23^24^25^26$, \\ $23^24^256, 1^22^33^34^45^36^2, 1^22^23^34^45^36^2,
	123456, 234^25^26, 234^256, 34^25^26, 34^256, 56, 6 \}$}  
and the degree of the integral is
\begin{equation*}
\ya= 102\alpha_1 + 156\alpha_2 + 206\alpha_3 + 252\alpha_4 + 172\alpha_5 + 88\alpha_6.
\end{equation*}

\subsubsection{The Dynkin diagram \emph{(\ref{eq:dynkin-g(6,6)}
		f)}}\label{subsubsec:g(6,6)-f}

\

The Nichols algebra $\toba_{\bq}$ is generated by $(x_i)_{i\in \I_6}$ with defining relations
\begin{align}\label{eq:rels-g(6,6)-f}
\begin{aligned}
x_{112}&=0; & x_{221}&=0; & & x_{ij}=0, \ i<j, \, \widetilde{q}_{ij}=1;\\
x_{223}&=0; & x_{443}&=0; &  & [[x_{(35)},x_4]_c,x_4]_c=0;\\
x_{554}&=0; & x_{556}&=0; & & [x_{4456},x_{45}]_c=0; \\
x_{665}&=0; & [x_{(24)},x_3]_c&=0; & & x_{3}^2=0; \quad x_{\alpha}^{3}=0, \ \alpha\in\Oc_+^{\bq}.
\end{aligned}
\end{align}
Here {\scriptsize$\Oc_+^{\bq}=\{ 1, 12, 2, 4, 12^23^24^25,
	123^24^25, 23^24^25, 4^25, 45, 5, 12^23^24^45^36, 123^24^45^36,
	23^24^45^36$, \\ $12^23^24^45^26, 123^24^45^26, 23^24^45^26, 12^23^24^35^26,
	1^22^33^44^65^46^2, 12^33^44^65^46^2, 12^23^24^25^26$, \\ $12^23^24^256, 123^24^35^26, 12^23^44^65^46^2, 123^24^25^26,
	123^24^256, 23^24^35^26, 23^24^25^26, 23^24^256$, \\ $12^23^24^45^36^2,
	123^24^45^36^2, 23^24^45^36^2, 4^25^26, 4^256, 456, 56, 6 \}$}  
and the degree of the integral is
\begin{equation*}
\ya= 56\alpha_1 + 108\alpha_2 + 156\alpha_3 + 252\alpha_4 + 172\alpha_5 + 88\alpha_6.
\end{equation*}

\subsubsection{The Dynkin diagram \emph{(\ref{eq:dynkin-g(6,6)}
		g)}}\label{subsubsec:g(6,6)-g}

\

The Nichols algebra $\toba_{\bq}$ is generated by $(x_i)_{i\in \I_6}$ with defining relations
\begin{align}\label{eq:rels-g(6,6)-g}
\begin{aligned}
& \begin{aligned}
x_{112}&=0; & x_{221}&=0; & x_{223}&=0; & & x_{ij}=0, \ i<j, \, \widetilde{q}_{ij}=1;\\
x_{554}&=0; & x_{332}&=0; & x_{334}&=0; &  & [x_{346},x_4]_c=0;\\
x_{556}&=0; & x_{4}^2&=0; & x_{5}^2&=0; & & x_{\alpha}^{3}=0, \ \alpha\in\Oc_+^{\bq};
\end{aligned}
\\
& [x_{(35)},x_4]_c=0; \quad  x_{(46)}= q_{56}\ztu [x_{46},x_5]_c +q_{45}(1-\zeta)x_5x_{46}.
\end{aligned}
\end{align}
Here {\scriptsize$\Oc_+^{\bq}=\{ 1, 12, 2, 123, 23,
	3, 5, 12^23^34^35^26, 12^23^24^35^26, 123^24^35^26, 23^24^35^26, 12^23^34^45^46^2$, \\ $12^23^34^45^36^2, 12345^26, 2345^26, 345^26, 1^22^33^44^55^46^3,
	12^33^44^55^46^3, 12^23^44^55^46^3, 12^23^44^45^26^2$, \\ $123456, 12^23^34^55^46^3, 45^26, 12^23^24^25^26^2,
	123^24^25^26^2, 1234^25^26^2, 12346, 23456, 23^24^25^26^2$, \\ $234^25^26^2, 2346, 3456, 34^25^26^2, 346, 456, 46 \}$}  
and the degree of the integral is
\begin{equation*}
\ya= 56\alpha_1 + 108\alpha_2 + 156\alpha_3 + 200\alpha_4 + 166\alpha_5 + 128\alpha_6.
\end{equation*}

\subsubsection{The Dynkin diagram \emph{(\ref{eq:dynkin-g(6,6)}
		h)}}\label{subsubsec:g(6,6)-h}

The Nichols algebra $\toba_{\bq}$ is generated by $(x_i)_{i\in \I_6}$ with defining relations
\begin{align}\label{eq:rels-g(6,6)-h}
\begin{aligned}
& \begin{aligned}
x_{221}&=0; & x_{223}&=0; & [x_{546},&x_4]_c=0; & & x_{ij}=0, \ i<j, \, \widetilde{q}_{ij}=1;\\
x_{554}&=0; & x_{1}^2&=0; & [x_{(24)},&x_3]_c=0; &  & [x_{236},x_3]_c=0;\\
x_{3}^2&=0; & x_{4}^2&=0; & [x_{(35)},&x_4]_c=0; & & x_{\alpha}^{3}=0, \ \alpha\in\Oc_+^{\bq};
\end{aligned}
\\
& x_{6}^2=0; \quad x_{346}= q_{46}\ztu [x_{36},x_4]_c + q_{34}(1-\zeta)x_4x_{36}.
\end{aligned}
\end{align}
Here {\scriptsize$\Oc_+^{\bq}=\{ 2, 123, 234, 34, 2345,
	345, 5, 1^22^33^34^256, 1^22^23^34^256, 1^22^23^2456, 1^22^23^246, 12^23^24^256$, \\ $1^22^33^44^45^26^2, 123^24^256, 1^22^33^34^35^26^2, 1^22^33^34^356^2, 234^256,
	1^22^43^54^45^26^3, 1^22^33^44^256^2$, \\ $1^22^23^34^35^26^2, 1^22^23^34^356^2, 123456, 1^22^33^54^45^26^3, 34^256,
	23^2456, 1^22^33^44^35^26^3, 12^23^34^256^2$, \\ $1^22^23^24^256^2, 456,
	1^22^33^44^356^3, 12346, 23^24^256^2, 23^246, 236, 36, 46 \}$}  
and the degree of the integral is
\begin{equation*}
\ya= 102\alpha_1 + 156\alpha_2 + 206\alpha_3 + 172\alpha_4 + 88\alpha_5 + 128\alpha_6.
\end{equation*}

\subsubsection{The Dynkin diagram \emph{(\ref{eq:dynkin-g(6,6)}
		i)}}\label{subsubsec:g(6,6)-i}

\

The Nichols algebra $\toba_{\bq}$ is generated by $(x_i)_{i\in \I_6}$ with defining relations
\begin{align}\label{eq:rels-g(6,6)-i}
\begin{aligned}
& \begin{aligned}
x_{554}&=0; & x_{1}^2&=0; & [x_{(13)},&x_2]_c=0; & & x_{ij}=0, \ i<j, \, \widetilde{q}_{ij}=1;\\
x_{2}^2&=0; & x_{3}^2&=0; & [x_{(24)},&x_3]_c=0; &  & [x_{236},x_3]_c=0;\\
x_{4}^2&=0; & x_{6}^2&=0; & [x_{(35)},&x_4]_c=0; & & x_{\alpha}^{3}=0, \ \alpha\in\Oc_+^{\bq};
\end{aligned}
\\
& [x_{546},x_4]_c=0; \quad 
x_{346}= q_{46}\ztu [x_{36},x_4]_c +q_{34}(1-\zeta)x_4x_{36}.
\end{aligned}
\end{align}
Here {\scriptsize$\Oc_+^{\bq}=\{ 12, 23, 1234, 34, 12345,
	345, 5, 12^33^34^256, 12^23^24^256, 2^23^34^256, 2^23^2456, 2^23^246$, \\ $12^33^44^45^26^2, 23^24^256, 12^33^34^35^26^2, 12^33^34^356^2, 1234^256,
	1^22^43^54^45^26^3, 12^33^44^256^2, 123^2456$, \\ $123^246, 12^33^54^45^26^3, 2^23^34^35^26^2, 2^23^34^356^2,
	34^256, 12^23^34^256^2, 12^33^44^35^26^3, 12^33^44^356^3$, \\ $123^24^256^2,
	1236, 23456, 2^23^24^256^2, 2346, 456, 36, 46 \}$}  
and the degree of the integral is
\begin{equation*}
\ya= 56\alpha_1 + 156\alpha_2 + 206\alpha_3 + 172\alpha_4 + 88\alpha_5 + 128\alpha_6.
\end{equation*}

\subsubsection{The Dynkin diagram \emph{(\ref{eq:dynkin-g(6,6)}
		j)}}\label{subsubsec:g(6,6)-j}

\

The Nichols algebra $\toba_{\bq}$ is generated by $(x_i)_{i\in \I_6}$ with defining relations
\begin{align}\label{eq:rels-g(6,6)-j}
\begin{aligned}
& \begin{aligned}
x_{112}&=0; & x_{332}&=0; & x_{334}&=0; & & x_{ij}=0, \ i<j, \, \widetilde{q}_{ij}=1;\\
x_{336}&=0; & x_2^2&=0; & [x_{(13)},&x_2]_c=0; &  & [x_{(35)},x_4]_c=0;\\
x_{554}&=0; & x_{4}^2&=0; & [x_{546},&x_4]_c=0; & & x_{\alpha}^{3}=0, \ \alpha\in\Oc_+^{\bq};
\end{aligned}
\\
& x_{6}^2=0; \quad x_{346}=q_{46}\ztu [x_{36},x_4]_c +q_{34}(1-\zeta)x_4x_{36}.
\end{aligned}
\end{align}
Here {\scriptsize$\Oc_+^{\bq}=\{ 1, 3, 1234, 234, 12345,
	2345, 5, 12^23^2456, 12^23^246, 123^34^256, 23^34^256, 123^24^256$, \\ $12^23^44^45^26^2, 1234^256, 23^24^256, 1^22^33^54^45^26^3, 123^34^35^26^2,
	12^23^44^256^2, 3^2456, 12^33^54^45^26^3$, \\ $12^23^34^256^2, 23^34^35^26^2, 234^256, 123^34^356^2,
	23^34^356^2, 12^23^44^35^26^3, 12^23^24^256^2, 3456$, \\ $12^23^44^356^3,
	1236, 3^246, 236, 3^24^256^2, 456, 346, 46 \}$} 
and the degree of the integral is 
\begin{equation*}
\ya= 56\alpha_1 + 108\alpha_2 + 206\alpha_3 + 172\alpha_4 + 88\alpha_5 + 128\alpha_6.
\end{equation*}

\subsubsection{The Dynkin diagram \emph{(\ref{eq:dynkin-g(6,6)}
		k)}}\label{subsubsec:g(6,6)-k}

\

The Nichols algebra $\toba_{\bq}$ is generated by $(x_i)_{i\in \I_6}$ with defining relations
\begin{align}\label{eq:rels-g(6,6)-k}
\begin{aligned}
x_{112}&=0; & x_{221}&=0; & [x_{346},&x_4]_c=0; & & x_{ij}=0, \ i<j, \, \widetilde{q}_{ij}=1;\\
x_{223}&=0; & x_{664}&=0; & [x_{(24)},&x_3]_c=0; &  & [[x_{54},x_{546}]_c,x_4]_c=0;\\
x_{4}^2&=0; & x_{3}^2&=0; & [x_{(35)},&x_4]_c=0; & & x_{5}^2=0; \quad x_{\alpha}^{3}=0, \ \alpha\in\Oc_+^{\bq}.
\end{aligned}
\end{align}
Here {\scriptsize$\Oc_+^{\bq}=\{ 1, 12, 2, 1234, 234,
	34, 45, 12^23^34^35^26, 12^23^24^25^26, 123^24^25^26, 23^24^25^26, 12^23^34^55^46^2$, \\ $1234^35^26, 234^35^26, 34^35^26, 12^23^34^45^36^2, 1^22^33^44^65^46^3,
	12^33^44^65^46^3, 12^23^44^65^46^3, 12^23^34^35^26^2$, \\ $1234^256, 12^23^34^55^46^3, 4^25^26, 12^23^24^45^26^2,
	123^24^45^26^2, 234^256, 1234^35^26^2, 12346, 23^24^45^26^2$, \\ $234^35^26^2, 2346, 34^256, 34^35^26^2, 346, 456, 6 \}$}  
and the degree of the integral is
\begin{equation*}
\ya= 56\alpha_1 + 108\alpha_2 + 156\alpha_3 + 252\alpha_4 + 166\alpha_5 + 128\alpha_6.
\end{equation*}

\subsubsection{The Dynkin diagram \emph{(\ref{eq:dynkin-g(6,6)}
		l)}}\label{subsubsec:g(6,6)-l}

\

The Nichols algebra $\toba_{\bq}$ is generated by $(x_i)_{i\in \I_6}$ with defining relations
\begin{align}\label{eq:rels-g(6,6)-l}
\begin{aligned}
x_{112}&=0; & x_{443}&=0; & [x_{(13)},&x_2]_c=0; & & x_{ij}=0, \ i<j, \, \widetilde{q}_{ij}=1;
\\
x_{445}&=0; & x_{446}&=0; & x_{664}&=0; & & [x_{(24)},x_3]_c=0;
\\
x_2^2&=0; & x_{3}^2&=0; & x_{5}^2&=0; & &x_{\alpha}^{3}=0, \ \alpha\in\Oc_+^{\bq}.
\end{aligned}
\end{align}
Here {\scriptsize$\Oc_+^{\bq}=\{ 1, 123, 23, 1234, 234,
	4, 345, 12^23^24^35^26, 12^23^24^25^26, 123^34^35^26, 23^34^35^26, 123^24^256$, \\ $12^23^44^55^46^2, 3^24^35^26, 1234^25^26, 1^22^33^54^65^46^3, 12^23^34^45^36^2,
	12^23^44^45^26^2, 123^34^45^26^2, 12346$, \\ $12^33^54^65^46^3, 23^24^256, 12^23^44^65^46^3, 12^23^24^35^26^2,
	234^25^26, 12^23^44^55^46^3, 123^34^35^26^2, 3^24^25^26$, \\ $23^34^45^26^2,
	23^34^35^26^2, 2346, 34^256, 3^24^35^26^2, 3456, 46, 6 \}$}  
and the degree of the integral is
\begin{equation*}
\ya= 56\alpha_1 + 108\alpha_2 + 206\alpha_3 + 252\alpha_4 + 166\alpha_5 + 128\alpha_6.
\end{equation*}

\subsubsection{The Dynkin diagram \emph{(\ref{eq:dynkin-g(6,6)}
		m)}}\label{subsubsec:g(6,6)-m}

\

The Nichols algebra $\toba_{\bq}$ is generated by $(x_i)_{i\in \I_6}$ with defining relations
\begin{align}\label{eq:rels-g(6,6)-m}
\begin{aligned}
&[x_{(24)},x_3]_c=0; & x_{221}&=0; & x_{223}&=0; & &x_{ij}=0, \ i<j, \, \widetilde{q}_{ij}=1;\\
& & x_{443}&=0; & x_{112}&=0; &  &[[x_{346},x_4]_c,x_4]_c=0;\\
& & x_{445}&=0; & x_{664}&=0; & &[[x_{546},x_4]_c,x_4]_c=0; \\
& & x_{3}^2&=0; & x_{5}^2&=0; & &x_{\alpha}^{3}=0, \ \alpha\in\Oc_+^{\bq}.
\end{aligned}
\end{align}
Here {\scriptsize$\Oc_+^{\bq}=\{ 1, 12, 2, 4, 12345,
	2345, 345, 12^23^34^356, 12^23^24^26, 123^24^26, 23^24^26, 12^23^34^556^2,
	1234^356$, \\ $234^356, 34^356, 12^23^34^456^2, 1^22^33^44^65^26^3, 12^33^44^65^26^3, 12^23^44^65^26^3, 12^23^34^356^2, 1234^256$, \\ $12^23^24^45^26^2, 123^24^45^26^2, 123456,
	234^256, 12^23^34^556^3, 1234^356^2, 23^24^45^26^2, 23456,
	34^256$, \\ $234^356^2, 4^26, 34^356^2, 3456, 46, 6 \}$}  
and the degree of the integral is
\begin{equation*}
\ya= 56\alpha_1 + 108\alpha_2 + 156\alpha_3 + 252\alpha_4 + 88\alpha_5 + 128\alpha_6.
\end{equation*}

\subsubsection{The Dynkin diagram \emph{(\ref{eq:dynkin-g(6,6)}
		n)}}\label{subsubsec:g(6,6)-n}

\

The Nichols algebra $\toba_{\bq}$ is generated by $(x_i)_{i\in \I_6}$ with defining relations
\begin{align}\label{eq:rels-g(6,6)-n}
\begin{aligned}
x_{332}&=0; & x_{334}&=0; & x_1^2&=0; & &x_{ij}=0, \ i<j, \, \widetilde{q}_{ij}=1;\\
x_{443}&=0; & x_{445}&=0; & x_{2}^2&=0; & & [x_{(13)},x_2]_c=0;\\
x_{446}&=0; & x_{664}&=0; & x_{5}^2&=0; & &x_{\alpha}^{3}=0, \ \alpha\in\Oc_+^{\bq}.
\end{aligned}
\end{align}
Here {\scriptsize$\Oc_+^{\bq}=\{ 12, 123, 3, 1234, 34,
	4, 2345, 12^33^34^35^26, 12^23^24^256, 2^23^34^35^26, 2^23^24^35^26, 2^23^24^25^26$, \\ $12^33^44^55^46^2, 123^24^35^26, 123^24^25^26, 1234^25^26, 1^22^43^54^65^46^3,
	12^33^44^45^26^2, 12^33^34^45^26^2$, \\ $12^33^34^35^26^2, 12^23^34^45^36^2, 12^33^54^65^46^3, 12^33^44^65^46^3, 12^33^44^55^46^3,
	34^25^26, 23^24^256$, \\ $123^24^35^26^2, 12346, 2^23^34^45^26^2,
	2^23^34^35^26^2, 234^256, 2^23^24^35^26^2, 23456, 346, 46, 6 \}$}  
and the degree of the integral is
\begin{equation*}
\ya= 56\alpha_1 + 156\alpha_2 + 206\alpha_3 + 252\alpha_4 + 166\alpha_5 + 128\alpha_6.
\end{equation*}

\subsubsection{The Dynkin diagram \emph{(\ref{eq:dynkin-g(6,6)} \~{n})}}
\label{subsubsec:g(6,6)-enie}

\

The Nichols algebra $\toba_{\bq}$ is generated by $(x_i)_{i\in \I_6}$ with defining relations
\begin{align}\label{eq:rels-g(6,6)-enie}
\begin{aligned}
x_{112}&=0; & [x_{(13)},&x_2]_c=0; & x_{2}^2&=0; & & x_{ij}=0, \ i<j, \, \widetilde{q}_{ij}=1;
\\
x_{664}&=0; & [x_{(24)},&x_3]_c=0; & x_{3}^2&=0; & &  [[x_{34},x_{346}]_c,x_4]_c=0;
\\
[x_{546},&x_4]_c=0; & [x_{(35)},&x_4]_c=0; & x_{4}^2&=0; & 
& x_{5}^2=0; \ x_{\alpha}^{3}=0, \alpha\in\Oc_+^{\bq}.
\end{aligned}
\end{align}
Here {\scriptsize$\Oc_+^{\bq}=\{ 1, 123, 23, 34, 12345,
	2345, 45, 123^34^356, 123^24^256, 23^34^356, 3^24^356, 3^24^26,
	12^23^44^556^2$, \\ $12^23^24^356, 12^23^44^45^26^2, 123^44^45^26^2, 23^24^256,
	1^22^33^54^65^26^3, 123^34^356^2, 12^23^24^26$, \\ $12^33^54^65^26^3, 12^23^34^456^2, 23^34^45^26^2, 23^34^356^2,
	12^23^44^65^26^3, 34^256, 12^23^44^556^3, 1234^26$, \\ $234^26,
	12^23^24^356^2, 123456, 3^24^356^2, 346, 23456, 456, 6 \}$}  
and the degree of the integral is
\begin{equation*}
\ya= 56\alpha_1 + 108\alpha_2 + 206\alpha_3 + 252\alpha_4 + 88\alpha_5 + 128\alpha_6.
\end{equation*}

\subsubsection{The Dynkin diagram \emph{(\ref{eq:dynkin-g(6,6)}
		o)}}\label{subsubsec:g(6,6)-o}

\

The Nichols algebra $\toba_{\bq}$ is generated by $(x_i)_{i\in \I_6}$ with defining relations
\begin{align}\label{eq:rels-g(6,6)-o}
\begin{aligned}
x_{221}&=0; & x_{223}&=0; & x_{332}&=0; & &x_{ij}=0, \ i<j, \, \widetilde{q}_{ij}=1;\\
x_{334}&=0; & x_{443}&=0; &  x_{445}&=0; & & x_{446}=0; 
\\
x_{664}&=0; & x_{1}^2&=0; & x_{5}^2&=0; & & x_{\alpha}^{3}=0, \ \alpha\in\Oc_+^{\bq}.
\end{aligned}
\end{align}
Here {\scriptsize$\Oc_+^{\bq}=\{ 2, 23, 3, 234, 34,
	4, 12345, 1^22^33^34^35^26, 1^22^23^34^35^26, 1^22^23^24^35^26, 1^22^23^24^25^26, 12^23^24^256$, \\ $1^22^33^44^55^46^2, 23^24^35^26, 23^24^25^26, 234^25^26, 1^22^43^54^65^46^3,
	1^22^33^44^45^26^2, 1^22^33^34^45^26^2$, \\ $1^22^33^34^35^26^2, 12^23^34^45^36^2, 1^22^33^54^65^46^3, 1^22^33^44^65^46^3, 1^22^33^44^55^46^3,
	34^25^26, 123^24^256$, \\ $1^22^23^34^45^26^2, 1^22^23^34^35^26^2, 1234^256,
	1^22^23^24^35^26^2, 123456, 23^24^35^26^2, 2346, 346, 46, 6 \}$}  
and the degree of the integral is
\begin{equation*}
\ya= 102\alpha_1 + 156\alpha_2 + 206\alpha_3 + 252\alpha_4 + 166\alpha_5 + 128\alpha_6.
\end{equation*}

\subsubsection{The Dynkin diagram \emph{(\ref{eq:dynkin-g(6,6)}
		p)}}\label{subsubsec:g(6,6)-p}

\

The Nichols algebra $\toba_{\bq}$ is generated by $(x_i)_{i\in \I_6}$ with defining relations
\begin{align}\label{eq:rels-g(6,6)-p}
\begin{aligned}
x_{332}&=0; & [x_{(13)},&x_2]_c=0; & x_{1}^2&=0; & & x_{ij}=0, \ i<j, \, \widetilde{q}_{ij}=1;
\\
x_{664}&=0; & [x_{(35)},&x_4]_c=0; & x_{2}^2&=0; & & [x_{564},x_4]_c=0;
\\
x_{334}&=0; & x_{4}^2&=0; & x_{5}^2&=0; & & x_{\alpha}^{3}=0, \ \alpha\in\Oc_+^{\bq}.
\end{aligned}
\end{align}
Here {\scriptsize$\Oc_+^{\bq}=\{ 12, 123, 3, 234, 12345,
	345, 45, 12^33^34^356, 12^23^24^256, 2^23^34^356, 2^23^24^356, 2^23^24^26$, \\ $12^33^44^556^2, 123^24^356, 12^33^44^45^26^2, 12^33^34^45^26^2, 23^24^256,
	1^22^43^54^65^26^3, 12^33^34^356^2, 123^24^26$, \\ $12^33^54^65^26^3, 12^23^34^456^2, 2^23^34^45^26^2, 2^23^34^356^2,
	12^33^44^65^26^3, 234^256, 12^33^44^556^3, 1234^26$, \\ $34^26,
	123^24^356^2, 123456, 2^23^24^356^2, 2346, 3456, 456, 6 \}$}  
and the degree of the integral is
\begin{equation*}
\ya= 56\alpha_1 + 156\alpha_2 + 206\alpha_3 + 252\alpha_4 + 88\alpha_5 + 128\alpha_6.
\end{equation*}

\subsubsection{The Dynkin diagram \emph{(\ref{eq:dynkin-g(6,6)}
		q)}}\label{subsubsec:g(6,6)-q}

\

The Nichols algebra $\toba_{\bq}$ is generated by $(x_i)_{i\in \I_6}$ with defining relations
\begin{align}\label{eq:rels-g(6,6)-q}
\begin{aligned}
x_{112}&=0; & x_{332}&=0; & x_{334}&=0; & & x_{ij}=0, \ i<j, \, \widetilde{q}_{ij}=1;\\
x_{336}&=0; & x_{443}&=0; & x_{445}&=0; & & [x_{(13)},x_2]_c=0;\\
x_{554}&=0; & x_{663}&=0; &  x_{2}^2&=0; & & x_{\alpha}^{3}=0, \ \alpha\in\Oc_+^{\bq}.
\end{aligned}
\end{align}
Here {\scriptsize$\Oc_+^{\bq}=\{ 1, 3, 34, 4, 345,
	45, 5, 1^22^23^34^256, 1^22^23^24^256, 1^22^23^2456, 1^22^23^246, 12^23^34^256$, \\ $1^22^43^54^45^26^2, 1^22^43^54^35^26^2, 1^22^43^54^356^2, 2^23^34^256, 12^23^24^256,
	1^22^43^44^35^26^2, 1^22^43^44^356^2$, \\ $2^23^24^256, 12^23^2456, 1^22^43^64^45^26^3, 1^22^43^54^45^26^3, 1^22^43^44^256^2,
	2^23^2456, 12^23^246$, \\ $1^22^43^54^35^26^3, 1^22^23^34^256^2, 1^22^43^54^356^3,
	12^23^34^256^2, 2^23^246, 2^23^34^256^2, 3456, 346, 36, 6 \}$}  
and the degree of the integral is
\begin{equation*}
\ya= 102\alpha_1 + 280\alpha_2 + 252\alpha_3 + 172\alpha_4 + 88\alpha_5 + 128\alpha_6.
\end{equation*}

\subsubsection{The Dynkin diagram \emph{(\ref{eq:dynkin-g(6,6)}
		r)}}\label{subsubsec:g(6,6)-r}

\

The Nichols algebra $\toba_{\bq}$ is generated by $(x_i)_{i\in \I_6}$ with defining relations
\begin{align}\label{eq:rels-g(6,6)-r}
\begin{aligned}
x_{221}&=0; & x_{334}&=0; & x_{332}&=0; & & x_{ij}=0, \ i<j, \, \widetilde{q}_{ij}=1;
\\
x_{223}&=0; & [x_{(35)},&x_4]_c=0; & x_{1}^2&=0; & &  [[[x_{2346},x_4]_c,x_3]_c,x_4]_c=0;
\\
x_{664}&=0; & [x_{546},&x_4]_c=0; & x_{4}^2&=0; & & x_{5}^2=0; \quad x_{\alpha}^{3}=0, \ \alpha\in\Oc_+^{\bq}.
\end{aligned}
\end{align}
Here {\scriptsize$\Oc_+^{\bq}=\{ 2, 23, 3, 1234, 2345,
	345, 45, 1^22^33^34^356, 1^22^23^34^356, 1^22^23^24^356, 1^22^23^24^26, 12^23^24^256$, \\ $
	1^22^33^44^556^2, 23^24^356, 1^22^33^44^45^26^2, 1^22^33^34^45^26^2, 123^24^256,
	1^22^43^54^65^26^3, 1^22^33^34^356^2$, \\ $23^24^26, 1^22^33^54^65^26^3, 1^22^23^34^45^26^2, 1^22^23^34^356^2, 12^23^34^456^2,
	1^22^33^44^65^26^3, 1234^256$, \\ $1^22^33^44^556^3, 234^26, 34^26,
	1^22^23^24^356^2, 12346, 23^24^356^2, 23456, 3456, 456, 6 \}$}  
and the degree of the integral is
\begin{equation*}
\ya= 102\alpha_1 + 156\alpha_2 + 206\alpha_3 + 252\alpha_4 + 88\alpha_5 + 128\alpha_6.
\end{equation*}

\subsubsection{The Dynkin diagram \emph{(\ref{eq:dynkin-g(6,6)}
		s)}}\label{subsubsec:g(6,6)-s}

\

The Nichols algebra $\toba_{\bq}$ is generated by $(x_i)_{i\in \I_6}$ with defining relations
\begin{align}\label{eq:rels-g(6,6)-s}
\begin{aligned}
x_{221}&=0; & x_{223}&=0; & [x_{(24)},&x_3]_c=0; & & x_{ij}=0, \ i<j, \, \widetilde{q}_{ij}=1;\\
x_{443}&=0; & x_{445}&=0; & [x_{236},&x_3]_c=0; & & [[[x_{5436},x_3]_c,x_4]_c,x_3]_c=0;\\
x_{554}&=0; & x_{663}&=0; & x_{1}^2&=0; &  & x_{3}^2=0; \quad x_{\alpha}^{3}=0, \ \alpha\in\Oc_+^{\bq}.
\end{aligned}
\end{align}
Here {\scriptsize$\Oc_+^{\bq}=\{ 2, 123, 1234, 4, 12345,
	45, 5, 12^33^34^256, 12^23^34^256, 2^23^24^256, 2^23^2456, 2^23^246,
	12^33^54^45^26^2$, \\ $12^33^54^35^26^2, 12^33^54^356^2, 123^34^256, 23^24^256,
	1^22^43^64^45^26^3, 12^33^34^256^2, 123456, 12346$, \\ $12^33^54^45^26^3, 2^23^44^35^26^2, 2^23^44^356^2,
	3^24^256, 12^23^34^256^2, 12^33^54^35^26^3, 12^33^54^356^3$, \\ $123^34^256^2,
	1236, 23^2456, 2^23^44^256^2, 3^2456, 23^246, 3^246, 6 \}$}  
and the degree of the integral is
\begin{equation*}
\ya= 56\alpha_1 + 156\alpha_2 + 252\alpha_3 + 172\alpha_4 + 88\alpha_5 + 128\alpha_6.
\end{equation*}

\subsubsection{The Dynkin diagram \emph{(\ref{eq:dynkin-g(6,6)} t)}}
\label{subsubsec:g(6,6)-t}

\

The Nichols algebra $\toba_{\bq}$ is generated by $(x_i)_{i\in \I_6}$ with defining relations
\begin{align}\label{eq:rels-g(6,6)-t}
\begin{aligned}
x_{443}&=0; & x_{445}&=0; & [x_{(13)},&x_2]_c=0; & & x_{ij}=0, \ i<j, \, \widetilde{q}_{ij}=1;\\
x_{554}&=0; &  x_{663}&=0; & [x_{(24)},&x_3]_c=0; & & [[[x_{5436},x_3]_c,x_4]_c,x_3]_c=0;\\
x_{1}^2&=0; & x_{2}^2&=0; & [x_{236},&x_3]_c=0; & & x_{3}^2=0; \quad x_{\alpha}^{3}=0, \ \alpha\in\Oc_+^{\bq}.
\end{aligned}
\end{align}
Here {\scriptsize$\Oc_+^{\bq}=\{ 12, 23, 234, 4, 2345,
	45, 5, 1^22^33^34^256, 1^22^23^24^256, 1^22^23^2456, 1^22^23^246, 12^23^34^256$, \\ $1^22^33^54^45^26^2, 1^22^33^54^35^26^2, 1^22^33^54^356^2, 23^34^256, 123^24^256,
	1^22^43^64^45^26^3, 1^22^33^34^256^2$, \\ $1^22^23^44^35^26^2, 1^22^23^44^356^2, 123^2456, 1^22^33^54^45^26^3, 3^24^256,
	23456, 1^22^33^54^35^26^3$, \\ $1^22^23^44^256^2, 3^2456, 12^23^34^256^2,
	1^22^33^54^356^3, 123^246, 23^34^256^2, 2346, 236, 3^246, 6 \}$}  
and the degree of the integral is
\begin{equation*}
\ya= 102\alpha_1 + 156\alpha_2 + 252\alpha_3 + 172\alpha_4 + 88\alpha_5 + 128\alpha_6.
\end{equation*}

\subsubsection{The associated Lie algebra} This is of type $B_6$.

\subsection{Type $\gtt(8,6)$}\label{subsec:type-g(8,6)}
Here $\theta = 7$, $\zeta \in \G'_3$. Let 
\begin{align*}
A&=\begin{pmatrix} 2 & \text{--}1 & 0 & 0 & 0 & 0 & 0 \\
\text{--}1 & 2 & \text{--}1 & 0 & 0 & 0 & 0 \\ 0 & 1 & 0 & \text{--}1 & 0 & 0 & 0 \\ 0 & 0 & \text{--}1 & 0 & 1 & 0 & 1
\\ 0 & 0 & 0 & \text{--}1 & 2 & \text{--}1 & 0 \\ 0 & 0 & 0 & 0 & \text{--}1 & 2 & 0 \\ 0 & 0 & 0 & \text{--}1 & 0 & 0 & 2 \end{pmatrix}
\in \kk^{7\times 7}; & \pa &= (1,1, -1,-1, 1, 1, 1) \in \G_2^7.
\end{align*}
Let $\g(8,6) = \g(A, \pa)$,
the contragredient Lie superalgebra corresponding to $(A, \pa)$. 
We know \cite{BGL} that $\sdim \g(8,6) = 133|56$. 
There are 7 other pairs of matrices and parity vectors for which the associated contragredient Lie superalgebra is isomorphic to $\g(8,6)$.
We describe now the root system $\gtt(8,6)$ of $\g(8,6)$, see \cite{AA-GRS-CLS-NA} for details.

\subsubsection{Basic datum and root system}
Below, $A_7$, $D_7$, $E_7$ and $_{3}T_1$ are numbered as in \eqref{eq:dynkin-system-A}, \eqref{eq:dynkin-system-D}, \eqref{eq:dynkin-system-E} and \eqref{eq:mTn}, respectively.
The basic datum and the bundle of Cartan matrices are described by the following diagram:

\begin{center}
	\begin{tabular}{c c c c c c c c c c}
		& &
		& $\overset{E_7}{\underset{a_1}{\vtxgpd}}$
		& \hspace{-7pt}\raisebox{3pt}{$\overset{3}{\rule{27pt}{0.5pt}}$}\hspace{-7pt}
		& $\overset{E_7}{\underset{a_2}{\vtxgpd}}$
		& \hspace{-7pt}\raisebox{3pt}{$\overset{2}{\rule{27pt}{0.5pt}}$}\hspace{-7pt}
		& $\overset{E_7}{\underset{a_3}{\vtxgpd}}$
		& \hspace{-7pt}\raisebox{3pt}{$\overset{1}{\rule{27pt}{0.5pt}}$}\hspace{-7pt}
		& $\overset{E_7}{\underset{a_4}{\vtxgpd}}$
		\\
		& & & {\scriptsize 4} \vline\hspace{5pt} & & & & & &
		\\
		& $\overset{s_{567}(A_7)}{\underset{a_5}{\vtxgpd}}$
		& \hspace{-7pt}\raisebox{3pt}{$\overset{7}{\rule{27pt}{0.5pt}}$}\hspace{-7pt}
		& $\overset{{}_3T_1}{\underset{a_6}{\vtxgpd}}$
		& \hspace{-7pt}\raisebox{3pt}{$\overset{5}{\rule{27pt}{0.5pt}}$}\hspace{-7pt}
		& $\overset{D_7}{\underset{a_7}{\vtxgpd}}$
		& \hspace{-7pt}\raisebox{3pt}{$\overset{6}{\rule{27pt}{0.5pt}}$}\hspace{-7pt}
		& $\overset{D_7}{\underset{a_8}{\vtxgpd}}$
		& &
	\end{tabular}
\end{center}
Using the notation \eqref{eq:notation-root-exceptional}, the bundle of root sets is the following: { \scriptsize
	\begin{align*}
	\varDelta_{+}^{a_1}& = \{ 1, 12, 2, 123, 23, 3, 1234, 234, 34, 4, 12345, 2345, 345, 45, 5, 12^23^24^256, 123^24^256, \\
	& 1234^256, 123456, 12346, 23^24^256, 234^256, 23456, 2346, 34^256, 3456, 346, 4^256, 456, 46, 6, \\
	& 12^23^34^55^36^37, 12^23^34^55^26^37, 12^23^34^45^26^37, 12^23^24^45^26^37, 123^24^45^26^37, 23^24^45^26^37, \\
	& 12^23^34^45^26^27, 12^23^24^45^26^27, 123^24^45^26^27, 23^24^45^26^27, 12^23^34^35^26^27, 12^23^24^35^26^27, \\
	& 123^24^35^26^27, 23^24^35^26^27, 1234^35^26^27, 234^35^26^27, 34^35^26^27, 1^22^33^44^65^36^47^2, 12^33^44^65^36^47^2, \\
	& 12^23^44^65^36^47^2, 12^23^34^65^36^47^2, 12^23^34^55^36^47^2, 12^23^34^55^36^37^2, 12^23^34^356^27, \\
	& 12^23^24^356^27, 12^23^24^256^27, 12^23^24^2567, 123^24^356^27, 123^24^256^27, 123^24^2567, 1234^356^27, \\
	& 1234^256^27, 1234^2567, 1234567, 12^23^34^55^26^47^2, 12^23^34^55^26^37^2, 12^23^34^45^26^37^2, 12^23^24^45^26^37^2, \\
	& 123^24^45^26^37^2, 123467, 23^24^356^27, 234^356^27, 34^356^27, 23^24^256^27, 234^256^27, 34^256^27, 4^256^27, \\
	& 23^24^45^26^37^2, 23^24^2567, 234^2567, 234567, 23467, 34^2567, 34567, 3467, 4^2567, 4567, 467, 67, 7 \}, \\
	\varDelta_{+}^{a_2}& = \{ 1, 12, 2, 123, 23, 3, 1234, 234, 34, 4, 12345, 2345, 345, 45, 5, 12^23^24^256, 123^24^256, \\
	& 1234^256, 123456, 12346, 23^24^256, 234^256, 23456, 2346, 3^24^256, 34^256, 3456, 346, 456, 46, 6, \\
	& 12^23^44^55^36^37, 12^23^44^55^26^37, 12^23^44^45^26^37, 12^23^34^45^26^37, 123^34^45^26^37, 23^34^45^26^37, \\
	& 12^23^44^45^26^27, 12^23^34^45^26^27, 123^34^45^26^27, 23^34^45^26^27, 12^23^34^35^26^27, 123^34^35^26^27, \\
	& 23^34^35^26^27, 12^23^24^35^26^27, 123^24^35^26^27, 23^24^35^26^27, 3^24^35^26^27, 1^22^33^54^65^36^47^2, \\
	& 12^33^54^65^36^47^2, 12^23^54^65^36^47^2, 12^23^44^65^36^47^2, 12^23^44^55^36^47^2, 12^23^44^55^36^37^2, \\
	& 12^23^34^356^27, 12^23^24^356^27, 12^23^24^256^27, 12^23^24^2567, 123^34^356^27, 123^24^356^27, 123^24^256^27, \\
	& 123^24^2567, 1234^256^27, 1234^2567, 1234567, 12^23^44^55^26^47^2, 12^23^44^55^26^37^2, 12^23^44^45^26^37^2, \\
	& 12^23^34^45^26^37^2, 123^34^45^26^37^2, 123467, 23^34^356^27, 23^24^356^27, 3^24^356^27, 23^24^256^27, \\
	& 3^24^256^27, 234^256^27, 34^256^27, 23^34^45^26^37^2, 23^24^2567, 234^2567, 234567, 23467, 3^24^2567, \\
	& 34^2567, 34567, 3467, 4567, 467, 67, 7 \}, \\
	\varDelta_{+}^{a_3}& = 
	\{ 1, 12, 2, 123, 23, 3, 1234, 234, 34, 4, 12345, 2345, 345, 45, 5, 12^23^24^256, 123^24^256, \\
	& 1234^256, 123456, 12346, 2^23^24^256, 23^24^256, 234^256, 23456, 2346, 34^256, 3456, 346, 456, \\
	& 46, 6, 12^33^44^55^36^37, 12^33^44^55^26^37, 12^33^44^45^26^37, 12^33^34^45^26^37, 12^23^34^45^26^37, \\
	& 2^23^34^45^26^37, 12^33^44^45^26^27, 12^33^34^45^26^27, 12^23^34^45^26^27, 2^23^34^45^26^27, 12^33^34^35^26^27, \\
	& 12^23^34^35^26^27, 2^23^34^35^26^27, 12^23^24^35^26^27, 2^23^24^35^26^27, 123^24^35^26^27, 23^24^35^26^27, \\
	& 1^22^43^54^65^36^47^2, 12^43^54^65^36^47^2, 12^33^54^65^36^47^2, 12^33^44^65^36^47^2, 12^33^44^55^36^47^2, \\
	& 12^33^44^55^36^37^2, 12^33^34^356^27, 12^23^34^356^27, 12^23^24^356^27, 12^23^24^256^27, 12^23^24^2567, \\
	& 123^24^356^27, 123^24^256^27, 123^24^2567, 1234^256^27, 1234^2567, 1234567, 12^33^44^55^26^47^2, \\
	& 12^33^44^55^26^37^2, 12^33^44^45^26^37^2, 12^33^34^45^26^37^2, 12^23^34^45^26^37^2, 123467, 2^23^34^356^27, \\
	& 2^23^24^356^27, 23^24^356^27, 2^23^24^256^27, 23^24^256^27, 234^256^27, 34^256^27, 2^23^34^45^26^37^2, \\
	& 2^23^24^2567, 23^24^2567, 234^2567, 234567, 23467, 34^2567, 34567, 3467, 4567, 467, 67, 7 \}, \\
	\varDelta_{+}^{a_4}& = \{ 1, 12, 2, 123, 23, 3, 1234, 234, 34, 4, 12345, 2345, 345, 45, 5, 1^22^23^24^256, 12^23^24^256, \\
	& 123^24^256, 1234^256, 123456, 12346, 23^24^256, 234^256, 23456, 2346, 34^256, 3456, 346, 456, \\
	& 46, 6, 1^22^33^44^55^36^37, 1^22^33^44^55^26^37, 1^22^33^44^45^26^37, 1^22^33^34^45^26^37, 1^22^23^34^45^26^37, \\
	& 12^23^34^45^26^37, 1^22^33^44^45^26^27, 1^22^33^34^45^26^27, 1^22^23^34^45^26^27, 12^23^34^45^26^27, \\
	& 1^22^33^34^35^26^27, 1^22^23^34^35^26^27, 12^23^34^35^26^27, 1^22^23^24^35^26^27, 12^23^24^35^26^27, \\
	& 123^24^35^26^27, 23^24^35^26^27, 1^32^43^54^65^36^47^2, 1^22^43^54^65^36^47^2, 1^22^33^54^65^36^47^2, \\
	& 1^22^33^44^65^36^47^2, 1^22^33^44^55^36^47^2, 1^22^33^44^55^36^37^2, 1^22^33^34^356^27, 1^22^23^34^356^27, \\
	& 1^22^23^24^356^27, 1^22^23^24^256^27, 1^22^23^24^2567, 12^23^34^356^27, 12^23^24^356^27, 12^23^24^256^27, \\
	& 12^23^24^2567, 1^22^33^44^55^26^47^2, 1^22^33^44^55^26^37^2, 123^24^356^27, 23^24^356^27, \\
	& 1^22^33^44^45^26^37^2, 123^24^256^27, 23^24^256^27, 123^24^2567, 23^24^2567, 1^22^33^34^45^26^37^2, \\
	& 1^22^23^34^45^26^37^2, 12^23^34^45^26^37^2, 1234^256^27, 1234^2567, 1234567, 123467, 234^256^27, \\
	& 234^2567, 234567, 23467, 34^256^27, 34^2567, 34567, 3467, 4567, 467, 67, 7 \}, \\
	\varDelta_{+}^{a_5}& = s_{567}( \{ 1, 12, 2, 123, 23, 3, 1234, 234, 34, 4, 12345, 2345, 345, 45, 5, 12^23^24^25^26, 123^24^25^26, \\
	& 1234^25^26, 12345^26, 123456, 23^24^25^26, 234^25^26, 2345^26, 23456, 34^25^26, 345^26, 3456, 45^26, \\
	& 456, 56, 6, 12^23^34^45^56^37, 12^23^34^45^46^37, 12^23^34^35^46^37, 12^23^24^35^46^37, 123^24^35^46^37, \\
	& 23^24^35^46^37, 12^23^34^45^46^27, 12^23^34^35^46^27, 12^23^24^35^46^27, 123^24^35^46^27, 23^24^35^46^27, \\
	& 12^23^34^35^36^27, 12^23^24^35^36^27, 123^24^35^36^27, 23^24^35^36^27, 1^22^33^44^55^66^47^2, 12^33^44^55^66^47^2, \\
	& 12^23^24^25^36^27, 12^23^24^25^26^27, 12^23^24^25^267, 12^23^44^55^66^47^2, 123^24^25^36^27, 123^24^25^26^27, \\
	& 123^24^25^267, 23^24^25^36^27, 23^24^25^26^27, 23^24^25^267, 12^23^34^55^66^47^2, 12^23^34^45^66^47^2, \\
	& 12^23^34^45^56^47^2, 12^23^34^45^56^37^2, 1234^25^36^27, 234^25^36^27, 34^25^36^27, 12^23^34^45^46^37^2, \\
	& 1234^25^26^27, 234^25^26^27, 34^25^26^27, 1234^25^267, 234^25^267, 34^25^267, 12^23^34^35^46^37^2, \\
	& 12^23^24^35^46^37^2, 123^24^35^46^37^2, 12345^26^27, 12345^267, 1234567, 23^24^35^46^37^2, \\
	& 2345^26^27, 2345^267, 234567, 345^26^27, 345^267, 34567, 45^26^27, 45^267, 4567, 567, 67, 7 \}), \\
	\varDelta_{+}^{a_6}& = \{ 1, 12, 2, 123, 23, 3, 1234, 234, 34, 4, 12345, 2345, 345, 45, 5, 12^23^24^256, 123^24^256, 1234^256, \\
	& 123456, 12346, 23^24^256, 234^256, 23456, 2346, 34^256, 3456, 346, 456, 46, 56, 6, 12^23^34^45^36^37, \\
	& 12^23^34^45^26^37, 12^23^34^35^26^37, 12^23^24^35^26^37, 123^24^35^26^37, 23^24^35^26^37, 12^23^34^45^26^27, \\
	& 12^23^34^35^26^27, 12^23^24^35^26^27, 123^24^35^26^27, 23^24^35^26^27, 12^23^24^25^26^27, 123^24^25^26^27, \\
	& 23^24^25^26^27, 1234^25^26^27, 234^25^26^27, 34^25^26^27, 1^22^33^44^55^36^47^2, 12^33^44^55^36^47^2, \\
	& 12^23^44^55^36^47^2, 12^23^34^55^36^47^2, 12^23^34^45^36^47^2, 12^23^34^45^36^37^2, 12^23^34^356^27, 12^23^24^356^27, \\
	& 12^23^24^256^27, 12^23^24^2567, 123^24^356^27, 123^24^256^27, 123^24^2567, 1234^256^27, 1234^2567, \\
	& 123456^27, 1234567, 12^23^34^45^26^47^2, 12^23^34^45^26^37^2, 12^23^34^35^26^37^2, 12^23^24^35^26^37^2, \\
	& 123^24^35^26^37^2, 123467, 23^24^356^27, 23^24^256^27, 234^256^27, 34^256^27, 23456^27, 3456^27, 456^27, \\
	& 23^24^35^26^37^2, 23^24^2567, 234^2567, 234567, 23467, 34^2567, 34567, 3467, 4567, 467, 567, 67, 7 \}, \\
	\varDelta_{+}^{a_7}& = \{ 1, 12, 2, 123, 23, 3, 1234, 234, 34, 4, 12345, 2345, 345, 45, 5, 12^23^24^25^26, 123^24^25^26, \\
	& 1234^25^26, 12345^26, 123456, 23^24^25^26, 234^25^26, 2345^26, 23456, 34^25^26, 345^26, 3456, 45^26, 456, \\
	& 56, 6, 12^23^34^45^56^37, 12^23^34^45^56^27, 12^23^34^45^46^27, 12^23^34^35^46^27, 12^23^24^35^46^27, 123^24^35^46^27, \\
	& 23^24^35^46^27, 12^23^34^35^36^27, 12^23^24^35^36^27, 123^24^35^36^27, 23^24^35^36^27, 12^23^24^25^36^27, \\
	& 123^24^25^36^27, 23^24^25^36^27, 1234^25^36^27, 234^25^36^27, 34^25^36^27, 1^22^33^44^55^66^37^2, 12^33^44^55^66^37^2, \\
	& 12^23^44^55^66^37^2, 12^23^34^55^66^37^2, 12^23^34^45^66^37^2, 12^23^34^45^56^37^2, 12^23^34^35^367, 12^23^24^35^367, \\
	& 12^23^24^25^367, 12^23^24^25^267, 123^24^35^367, 123^24^25^367, 123^24^25^267, 1234^25^367, 1234^25^267, \\
	& 12345^267, 1234567, 12^23^34^45^56^27^2, 12^23^34^45^46^27^2, 12^23^34^35^46^27^2, 12^23^24^35^46^27^2, \\
	& 123^24^35^46^27^2, 123457, 23^24^35^367, 23^24^25^367, 234^25^367, 34^25^367, 23^24^25^267, 234^25^267, \\
	& 34^25^267, 23^24^35^46^27^2, 2345^267, 234567, 23457, 345^267, 34567, 3457, 45^267, 4567, 457, 567, 57, 7 \}, \\
	\varDelta_{+}^{a_8}& = \{ 1, 12, 2, 123, 23, 3, 1234, 234, 34, 4, 12345, 2345, 345, 45, 5, 123456, 23456, 3456, 456, 56, 6, \\
	& 12^23^24^25^267, 123^24^25^267, 1234^25^267, 12345^267, 1234567, 123457, 12^23^34^45^46^27^2, \\
	& 12^23^34^35^46^27^2, 12^23^34^35^36^27^2, 12^23^34^35^367^2, 23^24^25^267, 12^23^24^35^46^27^2, 12^23^24^35^36^27^2, \\
	& 12^23^24^35^367^2, 234^25^267, 12^23^24^25^36^27^2, 12^23^24^25^367^2, 2345^267, 12^23^24^25^267^2, 234567, \\
	& 23457, 1^22^33^44^55^66^37^4, 12^33^44^55^66^37^4, 12^23^34^45^56^37^3, 12^23^34^45^56^27^3, \\
	& 12^23^34^45^46^27^3, 12^23^34^35^46^27^3, 12^23^24^35^46^27^3, 123^24^35^46^27^2, 123^24^35^36^27^2, 123^24^25^36^27^2, \\
	& 1234^25^36^27^2, 123^24^35^367^2, 123^24^25^367^2, 1234^25^367^2, 123^24^25^267^2, 1234^25^267^2, 12345^267^2, \\
	& 12^23^44^55^66^37^4, 12^23^34^55^66^37^4, 12^23^34^45^66^37^4, 12^23^34^45^56^37^4, 12^23^34^45^56^27^4, \\
	& 123^24^35^46^27^3, 23^24^35^46^27^2, 23^24^35^36^27^2, 23^24^35^367^2, 34^25^267, 23^24^25^36^27^2, 23^24^25^367^2, \\
	& 345^267, 23^24^25^267^2, 34567, 3457, 23^24^35^46^27^3, 234^25^36^27^2, 234^25^367^2, 234^25^267^2, 2345^267^2, \\
	& 34^25^36^27^2, 34^25^367^2, 34^25^267^2, 345^267^2, 45^267, 4567, 567, 45^267^2, 457, 57, 7 \}.
	\end{align*}
}%

\subsubsection{Weyl groupoid}
\label{subsubsec:type-g86-Weyl}
The isotropy group  at $a_4 \in \cX$ is
\begin{align*}
\cW(a_4)= \langle \varsigma_1^{a_4} \varsigma_2 \varsigma_3\varsigma_4 \varsigma_7 \varsigma_6 \varsigma_5 \varsigma_6 \varsigma_7\varsigma_4 \varsigma_3 \varsigma_2 \varsigma_1, \varsigma_2^{a_4},  \varsigma_3^{a_4}, 
\varsigma_4^{a_4},  \varsigma_5^{a_4},
\varsigma_6^{a_4},  \varsigma_7^{a_4} \rangle \simeq W(E_7).
\end{align*}

\subsubsection{Incarnation}
We set the matrices $(\bq^{(i)})_{i\in\I_{8}}$, from left to right and  from up to down:
\begin{align}\label{eq:dynkin-g(8,6)}
\begin{aligned}
& \xymatrix@R-8pt{  &   & & \overset{\zeta}{\circ} \ar  @{-}[d]^{\ztu}  & &\\
	\overset{\zeta}{\underset{\ }{\circ}} \ar  @{-}[r]^{\ztu}  & \overset{\zeta}{\underset{\
		}{\circ}} \ar  @{-}[r]^{\ztu}  & \overset{-1}{\underset{\ }{\circ}} \ar  @{-}[r]^{\zeta}
	& \overset{-1}{\underset{\ }{\circ}}
	\ar  @{-}[r]^{\ztu}  & \overset{\zeta}{\underset{\ }{\circ}} \ar  @{-}[r]^{\ztu}  &
	\overset{\zeta}{\underset{\ }{\circ}}}
\\
& \xymatrix@R-8pt{  &   & & \overset{\zeta}{\circ} \ar  @{-}[d]^{\ztu}  & & \\
	\overset{\zeta}{\underset{\ }{\circ}} \ar  @{-}[r]^{\ztu}  & \overset{-1}{\underset{\
		}{\circ}} \ar  @{-}[r]^{\zeta}  & \overset{-1}{\underset{\ }{\circ}} \ar  @{-}[r]^{\ztu}
	& \overset{\zeta}{\underset{\ }{\circ}}
	\ar  @{-}[r]^{\ztu}  & \overset{\zeta}{\underset{\ }{\circ}} \ar  @{-}[r]^{\ztu}  &
	\overset{\zeta}{\underset{\ }{\circ}}}
\\
& \xymatrix@R-8pt{  &   & & \overset{\zeta}{\circ} \ar  @{-}[d]^{\ztu}    & &\\
	\overset{-1}{\underset{\ }{\circ}} \ar  @{-}[r]^{\zeta}  & \overset{-1}{\underset{\
		}{\circ}} \ar  @{-}[r]^{\ztu}  & \overset{\zeta}{\underset{\ }{\circ}} \ar  @{-}[r]^{\ztu}
	& \overset{\zeta}{\underset{\ }{\circ}}
	\ar  @{-}[r]^{\ztu}  & \overset{\zeta}{\underset{\ }{\circ}} \ar  @{-}[r]^{\ztu}  &
	\overset{\zeta}{\underset{\ }{\circ}}}
\\
& \xymatrix@R-8pt{  &   & & \overset{\zeta}{\circ} \ar  @{-}[d]^{\ztu}    & &\\
	\overset{-1}{\underset{\ }{\circ}} \ar  @{-}[r]^{\ztu}  & \overset{\zeta}{\underset{\
		}{\circ}} \ar  @{-}[r]^{\ztu}  & \overset{\zeta}{\underset{\ }{\circ}} \ar  @{-}[r]^{\ztu}
	& \overset{\zeta}{\underset{\ }{\circ}}
	\ar  @{-}[r]^{\ztu}  & \overset{\zeta}{\underset{\ }{\circ}} \ar  @{-}[r]^{\ztu}  &
	\overset{\zeta}{\underset{\ }{\circ}}}
\end{aligned}
\end{align}

\begin{align*}
& \xymatrix{\overset{\zeta}{\underset{\ }{\circ}}\ar  @{-}[r]^{\ztu}  &
	\overset{\zeta}{\underset{\ }{\circ}} \ar  @{-}[r]^{\ztu}  & \overset{\zeta}{\underset{\
		}{\circ}}
	\ar  @{-}[r]^{\ztu}  & \overset{\zeta}{\underset{\ }{\circ}}
	\ar  @{-}[r]^{\ztu}  & \overset{-1}{\underset{\ }{\circ}}  \ar  @{-}[r]^{\ztu}  &
	\overset{\zeta}{\underset{\ }{\circ}}  \ar  @{-}[r]^{\ztu}  & \overset{\zeta}{\underset{\
		}{\circ}}
}
\\
& \xymatrix@R-8pt{  &   & & \overset{-1}{\circ} \ar  @{-}[d]^{\zeta}\ar  @{-}[dr]^{\zeta}
	& &\\
	\overset{\zeta}{\underset{\ }{\circ}} \ar  @{-}[r]^{\ztu}  & \overset{\zeta}{\underset{\
		}{\circ}} \ar  @{-}[r]^{\ztu}  & \overset{\zeta}{\underset{\ }{\circ}} \ar  @{-}[r]^{\ztu}
	& \overset{-1}{\underset{\ }{\circ}}
	\ar  @{-}[r]^{\zeta}  & \overset{-1}{\underset{\ }{\circ}} \ar  @{-}[r]^{\ztu}  &
	\overset{\zeta}{\underset{\ }{\circ}}}
\\
& \xymatrix@R-8pt{  &  & & & \overset{\zeta}{\circ} \ar  @{-}[d]^{\ztu}  & \\
	\overset{\zeta}{\underset{\ }{\circ}} \ar  @{-}[r]^{\ztu}  & \overset{\zeta}{\underset{\
		}{\circ}} \ar  @{-}[r]^{\ztu}  & \overset{\zeta}{\underset{\ }{\circ}} \ar  @{-}[r]^{\ztu}
	& \overset{\zeta}{\underset{\ }{\circ}}
	\ar  @{-}[r]^{\ztu}  & \overset{-1}{\underset{\ }{\circ}} \ar  @{-}[r]^{\zeta}  &
	\overset{-1}{\underset{\ }{\circ}}}
\\
& \xymatrix@R-8pt{  &  & & & \overset{\zeta}{\circ} \ar  @{-}[d]^{\ztu}  & \\
	\overset{\zeta}{\underset{\ }{\circ}} \ar  @{-}[r]^{\ztu}  & \overset{\zeta}{\underset{\
		}{\circ}} \ar  @{-}[r]^{\ztu}  & \overset{\zeta}{\underset{\ }{\circ}} \ar  @{-}[r]^{\ztu}
	& \overset{\zeta}{\underset{\ }{\circ}}
	\ar  @{-}[r]^{\ztu}  & \overset{\zeta}{\underset{\ }{\circ}} \ar  @{-}[r]^{\ztu}  &
	\overset{-1}{\underset{\ }{\circ}}}
\end{align*}
Now, this is the incarnation:
\begin{align*}
& a_5\mapsto s_{567}(\bq^{5}); &
& a_i\mapsto \bq^{(i)}, \ i\neq 5.
\end{align*}

\subsubsection{PBW-basis and dimension} \label{subsubsec:type-g86-PBW}
Notice that the roots in each $\varDelta_{+}^{a_i}$, $i\in\I_{8}$, are ordered from left to right, justifying the notation $\beta_1, \dots, \beta_{91}$.

The root vectors $x_{\beta_k}$ are described as in Remark \ref{rem:lyndon-word}.
Thus
\begin{align*}
\left\{ x_{\beta_{91}}^{n_{91}} \dots x_{\beta_2}^{n_{2}}  x_{\beta_1}^{n_{1}} \, | \, 0\le n_{k}<N_{\beta_k} \right\}.
\end{align*}
is a PBW-basis of $\toba_{\bq}$. Hence $\dim \toba_{\bq}=2^{28}3^{63}$.

\subsubsection{The Dynkin diagram \emph{(\ref{eq:dynkin-g(8,6)}
		a)}}\label{subsubsec:g(8,6)-a}

\

The Nichols algebra $\toba_{\bq}$ is generated by $(x_i)_{i\in \I_7}$ with defining relations
\begin{align}\label{eq:rels-g(8,6)-a}
\begin{aligned}
x_{112}&=0; & x_{221}&=0; & x_{223}&=0; & & x_{ij}=0, \ i<j, \, \widetilde{q}_{ij}=1;\\
& & x_{332}&=0; & x_{334}&=0; &  & [[[x_{(36)},x_5]_c,x_4]_c,x_5]_c=0;\\
x_{443}&=0; & x_{445}&=0; & x_{776}&=0; & & [[[x_{7654},x_5]_c,x_6]_c,x_5]_c=0; \\
x_{665}&=0; & x_{667}&=0; & x_{5}^2&=0; & &x_{\alpha}^{3}=0, \ \alpha\in\Oc_+^{\bq}.
\end{aligned}
\end{align}
Here {\scriptsize$\Oc_+^{\bq}=\{ 1, 12, 2, 1234, 234, 34, 12345, 2345, 345, 5, 12^23^24^256, 123^24^256, 123456, 12346, 23^24^256$, \\ $23456, 2346, 3456, 346, 4^256, 6, 12^23^34^55^36^37, 12^23^34^55^26^37, 12^23^24^45^26^37, 123^24^45^26^37$, \\ $23^24^45^26^37, 12^23^24^45^26^27, 123^24^45^26^27, 23^24^45^26^27, 12^23^34^35^26^27, 1234^35^26^27, 234^35^26^27$, \\ $34^35^26^27, 1^22^33^44^65^36^47^2, 12^33^44^65^36^47^2, 12^23^44^65^36^47^2, 12^23^34^55^36^47^2, 12^23^34^55^36^37^2$, \\ $12^23^34^356^27, 12^23^24^256^27, 12^23^24^2567, 123^24^256^27, 123^24^2567, 1234^356^27, 1234567$, \\ $12^23^34^55^26^47^2, 12^23^34^55^26^37^2, 12^23^24^45^26^37^2, 123^24^45^26^37^2, 123467, 234^356^27$, \\ $34^356^27, 23^24^256^27, 4^256^27, 23^24^45^26^37^2,
	23^24^2567, 234567, 23467, 34567, 3467, 4^2567, 67, 7 \}$}  
and the degree of the integral is
\begin{equation*}
\ya= 80\alpha_1 + 156\alpha_2 + 228\alpha_3 + 360\alpha_4 + 244\alpha_5 + 124\alpha_6 + 182\alpha_7.
\end{equation*}

\subsubsection{The Dynkin diagram \emph{(\ref{eq:dynkin-g(8,6)}
		b)}}\label{subsubsec:g(8,6)-b}

\

The Nichols algebra $\toba_{\bq}$ is generated by $(x_i)_{i\in \I_7}$ with defining relations
\begin{align}\label{eq:rels-g(8,6)-b}
\begin{aligned}
& \begin{aligned}
x_{112}&=0; & x_{221}&=0; & [x_{(35)},&x_4]_c=0; & & x_{ij}=0, \ i<j, \, \widetilde{q}_{ij}=1;\\
x_{223}&=0; & x_{332}&=0; & [x_{347},&x_4]_c=0; & & [x_{(46)},x_5]_c=0; \\ 
x_{334}&=0; & x_{665}&=0; & [x_{657},&x_5]_c=0; &
& x_{4}^2=0; \quad x_{\alpha}^{3}=0, \ \alpha\in\Oc_+^{\bq};
\end{aligned}
\\
& x_{5}^2=0; \quad x_{7}^2=0; \quad x_{457}= q_{57}\ztu[x_{47},x_5]_c +q_{45}(1-\zeta)x_5x_{47}.
\end{aligned}
\end{align}
Here {\scriptsize$\Oc_+^{\bq}=\{ 1, 123, 23, 1234, 234, 4, 12345, 2345, 45, 5, 12^23^24^256, 1234^256, 123456, 12346, 234^256, 23456$, \\ $2346, 3^24^256, 456, 46, 6, 12^23^44^55^36^37, 12^23^44^55^26^37, 12^23^44^45^26^37, 123^34^45^26^37, 23^34^45^26^37$, \\ $12^23^44^45^26^27, 123^34^45^26^27, 23^34^45^26^27, 123^34^35^26^27, 23^34^35^26^27, 12^23^24^35^26^27, 3^24^35^26^27$, \\ $1^22^33^54^65^36^47^2, 12^33^54^65^36^47^2, 12^23^44^65^36^47^2, 12^23^44^55^36^47^2, 12^23^44^55^36^37^2, 12^23^24^356^27$, \\ $12^23^24^256^27, 12^23^24^2567, 123^34^356^27, 1234^256^27, 1234^2567,
	1234567, 12^23^44^55^26^47^2$, \\ $12^23^44^55^26^37^2, 12^23^44^45^26^37^2, 123^34^45^26^37^2, 123467, 23^34^356^27, 3^24^356^27, 3^24^256^27$, \\ $234^256^27, 23^34^45^26^37^2, 234^2567, 234567, 23467, 3^24^2567, 4567, 467, 67, 7 \}$}  
and the degree of the integral is
\begin{equation*}
\ya= 80\alpha_1 + 156\alpha_2 + 290\alpha_3 + 360\alpha_4 + 244\alpha_5 + 124\alpha_6 + 182\alpha_7.
\end{equation*}

\subsubsection{The Dynkin diagram \emph{(\ref{eq:dynkin-g(8,6)}
		c)}}\label{subsubsec:g(8,6)-c}

\

The Nichols algebra $\toba_{\bq}$ is generated by $(x_i)_{i\in \I_7}$ with defining relations
\begin{align}\label{eq:rels-g(8,6)-c}
\begin{aligned}
x_{112}&=0; & x_{221}&=0; & x_{223}&=0; & & x_{ij}=0, \ i<j, \, \widetilde{q}_{ij}=1;\\
& & x_{332}&=0; & [x_{(35)},&x_4]_c=0; &  & [[x_{65},x_{657}]_c,x_5]_c=0;\\
x_{334}&=0; & x_{776}&=0; & [x_{(46)},&x_5]_c=0; & & [x_{457},x_5]_c=0; \\
x_{4}^2&=0; & x_{5}^2&=0; & x_{6}^2&=0; & & x_{\alpha}^{3}=0, \ \alpha\in\Oc_+^{\bq}.
\end{aligned}
\end{align}
Here {\scriptsize$\Oc_+^{\bq}=\{ 12, 123, 3, 1234, 34, 4, 12345, 345, 45, 5, 123^24^256,
	1234^256, 123456, 12346, 2^23^24^256$, \\ $34^256, 3456, 346, 456, 46, 6, 12^33^44^55^36^37, 12^33^44^55^26^37, 12^33^44^45^26^37, 12^33^34^45^26^37$, \\ $2^23^34^45^26^37, 12^33^44^45^26^27, 12^33^34^45^26^27, 2^23^34^45^26^27, 12^33^34^35^26^27, 2^23^34^35^26^27$, \\ $2^23^24^35^26^27, 123^24^35^26^27,
	1^22^43^54^65^36^47^2, 12^33^54^65^36^47^2, 12^33^44^65^36^47^2, 12^33^44^55^36^47^2$, \\ $12^33^44^55^36^37^2, 12^33^34^356^27, 123^24^356^27, 123^24^256^27, 123^24^2567, 1234^256^27, 1234^2567$, \\ $1234567, 12^33^44^55^26^47^2, 12^33^44^55^26^37^2, 12^33^44^45^26^37^2, 12^33^34^45^26^37^2, 123467, 2^23^34^356^27$, \\ $2^23^24^356^27, 2^23^24^256^27, 34^256^27, 2^23^34^45^26^37^2,
	2^23^24^2567, 34^2567, 34567, 3467, 4567,
	467, 67, 7 \}$}  
and the degree of the integral is
\begin{equation*}
\ya= 80\alpha_1 + 216\alpha_2 + 290\alpha_3 + 360\alpha_4 + 244\alpha_5 + 124\alpha_6 + 182\alpha_7.
\end{equation*}

\subsubsection{The Dynkin diagram \emph{(\ref{eq:dynkin-g(8,6)}
		d)}}\label{subsubsec:g(8,6)-d}

\

The Nichols algebra $\toba_{\bq}$ is generated by $(x_i)_{i\in \I_7}$ with defining relations
\begin{align}\label{eq:rels-g(8,6)-d}
\begin{aligned}
x_{112}&=0; & x_{221}&=0; & x_{223}&=0; & x_{ij}&=0, & &i<j, \, \widetilde{q}_{ij}=1;\\
x_{332}&=0; & x_{334}&=0; & x_{443}&=0; & x_{445}&=0; & &x_{554}=0; \\
x_{556}&=0; & x_{557}&=0; & x_{775}&=0; & 
x_{6}^2&=0; & & x_{\alpha}^{3}=0, \ \alpha\in\Oc_+^{\bq}.
\end{aligned}
\end{align}
Here {\scriptsize$\Oc_+^{\bq}=\{ 2, 23, 3, 234, 34, 4, 2345, 345, 45, 5, 1^22^23^24^256,
	23^24^256, 234^256, 23456, 2346, 34^256, 3456$, \\ $346, 456, 46, 6, 1^22^33^44^55^36^37, 1^22^33^44^55^26^37, 1^22^33^44^45^26^37, 1^22^33^34^45^26^37, 1^22^23^34^45^26^37$, \\ $1^22^33^44^45^26^27, 1^22^33^34^45^26^27, 1^22^23^34^45^26^27, 1^22^33^34^35^26^27, 1^22^23^34^35^26^27, 1^22^23^24^35^26^27$, \\ $23^24^35^26^27, 1^22^43^54^65^36^47^2, 1^22^33^54^65^36^47^2, 1^22^33^44^65^36^47^2, 1^22^33^44^55^36^47^2, 1^22^33^44^55^36^37^2$, \\ $1^22^33^34^356^27, 1^22^23^34^356^27, 1^22^23^24^356^27, 1^22^23^24^256^27, 1^22^23^24^2567, 1^22^33^44^55^26^47^2$, \\ $1^22^33^44^55^26^37^2, 23^24^356^27, 1^22^33^44^45^26^37^2, 23^24^256^27, 23^24^2567,
	1^22^33^34^45^26^37^2$, \\ $1^22^23^34^45^26^37^2, 234^256^27, 234^2567, 234567, 23467,
	34^256^27, 34^2567, 34567, 3467, 4567,
	467, 67, 7 \}$}  
and the degree of the integral is
\begin{equation*}
\ya= 138\alpha_1 + 216\alpha_2 + 290\alpha_3 + 360\alpha_4 + 244\alpha_5 + 124\alpha_6 + 182\alpha_7.
\end{equation*}

\subsubsection{The Dynkin diagram \emph{(\ref{eq:dynkin-g(8,6)}
		e)}}\label{subsubsec:g(8,6)-e}

\

The Nichols algebra $\toba_{\bq}$ is generated by $(x_i)_{i\in \I_7}$ with defining relations
\begin{align}\label{eq:rels-g(8,6)-e}
\begin{aligned}
x_{112}&=0; & x_{221}&=0; & x_{223}&=0; & & x_{ij}=0, \ i<j, \, \widetilde{q}_{ij}=1;\\
& & x_{554}&=0; & [x_{(24)},&x_3]_c=0; &  & [[[x_{6547},x_4]_c,x_5]_c,x_4]_c=0;\\
x_{556}&=0; & x_{665}&=0; & [x_{(35)},&x_4]_c=0; & & [x_{347},x_4]_c=0; \\
x_{774}&=0; & x_{3}^2&=0; & x_{4}^2&=0; & & x_{\alpha}^{3}=0, \ \alpha\in\Oc_+^{\bq}.
\end{aligned}
\end{align}
Here {\scriptsize$\Oc_+^{\bq}=\{ 1, 12, 2, 123, 23, 3, 1234, 234, 34, 4, 12^23^24^25^26,
	123^24^25^26, 1234^25^26, 12345^26, 23^24^25^26$, \\ $234^25^26, 2345^26, 34^25^26, 345^26, 45^26, 6, 12^23^34^45^46^37, 12^23^34^35^46^37, 12^23^24^35^46^37$, \\ $123^24^35^46^37, 23^24^35^46^37, 12^23^34^45^46^27, 12^23^34^35^46^27, 12^23^24^35^46^27, 123^24^35^46^27, 23^24^35^46^27$, \\ $1^22^33^44^55^66^47^2, 12^33^44^55^66^47^2, 12^23^24^25^26^27, 12^23^24^25^267, 12^23^44^55^66^47^2, 123^24^25^26^27$, \\ $123^24^25^267, 23^24^25^26^27, 23^24^25^267, 12^23^34^55^66^47^2, 12^23^34^45^66^47^2, 12^23^34^45^46^37^2, 1234^25^26^27$, \\ $234^25^26^27, 34^25^26^27, 1234^25^267, 234^25^267, 34^25^267, 12^23^34^35^46^37^2, 12^23^24^35^46^37^2, 123^24^35^46^37^2$, \\ $12345^26^27, 12345^267, 23^24^35^46^37^2, 2345^26^27, 2345^267, 345^26^27, 345^267, 45^26^27, 45^267, 67, 7 \}$}  
and the degree of the integral is
\begin{equation*}
\ya= 80\alpha_1 + 156\alpha_2 + 228\alpha_3 + 296\alpha_4 + 360\alpha_5 + 244\alpha_6 + 124\alpha_7.
\end{equation*}

\subsubsection{The Dynkin diagram \emph{(\ref{eq:dynkin-g(8,6)}
		f)}}\label{subsubsec:g(8,6)-f}

\

The Nichols algebra $\toba_{\bq}$ is generated by $(x_i)_{i\in \I_7}$ with defining relations
\begin{align}\label{eq:rels-g(8,6)-f}
\begin{aligned}
x_{112}&=0; & x_{443}&=0; & x_{445}&=0; & x_{ij}&=0, & &i<j, \, \widetilde{q}_{ij}=1;\\
x_{554}&=0; & x_{447}&=0; & x_{774}&=0; & x_{2}^2&=0; & &[x_{(13)},x_2]_c=0;\\
[x_{(24)},&x_3]_c=0; & x_{556}&=0; & x_{665}&=0; & x_{3}^2&=0; & & x_{\alpha}^{3}=0, \ \alpha\in\Oc_+^{\bq}.
\end{aligned}
\end{align}
Here {\scriptsize$\Oc_+^{\bq}=\{ 1, 12, 2, 123, 23, 3, 12345, 2345, 345, 45, 12^23^24^256,
	123^24^256, 1234^256, 12346, 23^24^256$, \\ $234^256, 2346, 34^256, 346, 46, 56, 12^23^34^45^36^37, 12^23^34^35^26^37, 12^23^24^35^26^37, 123^24^35^26^37$, \\ $23^24^35^26^37, 12^23^34^45^26^27, 12^23^24^25^26^27, 123^24^25^26^27, 23^24^25^26^27, 1234^25^26^27, 234^25^26^27$, \\ $34^25^26^27, 1^22^33^44^55^36^47^2, 12^33^44^55^36^47^2, 12^23^44^55^36^47^2, 12^23^34^55^36^47^2, 12^23^34^45^36^37^2$, \\ $12^23^34^356^27, 12^23^24^356^27, 12^23^24^2567, 123^24^356^27, 123^24^2567, 1234^2567, 123456^27$, \\ $12^23^34^45^26^47^2, 12^23^34^35^26^37^2, 12^23^24^35^26^37^2, 123^24^35^26^37^2, 123467, 23^24^356^27, 23456^27$, \\ $3456^27, 456^27, 23^24^35^26^37^2, 23^24^2567, 234^2567, 23467, 34^2567, 3467, 467, 567, 7 \}$}  
and the degree of the integral is
\begin{equation*}
\ya= 80\alpha_1 + 156\alpha_2 + 228\alpha_3 + 296\alpha_4 + 244\alpha_5 + 124\alpha_6 + 182\alpha_7.
\end{equation*}

\subsubsection{The Dynkin diagram \emph{(\ref{eq:dynkin-g(8,6)}
		g)}}\label{subsubsec:g(8,6)-g}

\

The Nichols algebra $\toba_{\bq}$ is generated by $(x_i)_{i\in \I_7}$ with defining relations
\begin{align}\label{eq:rels-g(8,6)-g}
\begin{aligned}
x_{332}&=0; & x_{334}&=0; & x_{443}&=0; & x_{ij}&=0, & & i<j, \, \widetilde{q}_{ij}=1;\\
x_{445}&=0; & x_{554}&=0; & x_{447}&=0; & x_{1}^2&=0; & 
& [x_{(13)},x_2]_c=0;\\
x_{774}&=0; & x_{556}&=0; & x_{665}&=0; & x_{2}^2&=0; & & x_{\alpha}^{3}=0, \ \alpha\in\Oc_+^{\bq}.
\end{aligned}
\end{align}
Here {\scriptsize$\Oc_+^{\bq}=\{ 1, 12, 2, 123, 23, 3, 1234, 234, 34, 4, 12^23^24^25^26,
	123^24^25^26, 1234^25^26, 12345^26, 23^24^25^26$, \\ $234^25^26,
	2345^26, 34^25^26, 345^26, 45^26, 6, 12^23^34^45^56^37, 12^23^34^45^56^27, 12^23^34^35^36^27$, \\ $12^23^24^35^36^27, 123^24^35^36^27, 23^24^35^36^27, 12^23^24^25^36^27, 123^24^25^36^27, 23^24^25^36^27, 1234^25^36^27$, \\ $234^25^36^27, 34^25^36^27, 1^22^33^44^55^66^37^2, 12^33^44^55^66^37^2, 12^23^44^55^66^37^2, 12^23^34^55^66^37^2$, \\ $12^23^34^45^66^37^2, 12^23^34^35^367, 12^23^24^35^367, 12^23^24^25^367, 123^24^35^367, 123^24^25^367, 1234^25^367$, \\ $1234567, 12^23^34^45^46^27^2, 12^23^34^35^46^27^2, 12^23^24^35^46^27^2, 123^24^35^46^27^2,
	123457, 23^24^35^367$, \\ $23^24^25^367, 234^25^367, 34^25^367, 23^24^35^46^27^2, 234567, 23457, 34567, 3457, 4567, 457, 567, 57 \}$}  
and the degree of the integral is
\begin{equation*}
\ya= 80\alpha_1 + 156\alpha_2 + 228\alpha_3 + 296\alpha_4 + 360\alpha_5 + 124\alpha_6 + 182\alpha_7.
\end{equation*}

\subsubsection{The Dynkin diagram \emph{(\ref{eq:dynkin-g(8,6)}
		h)}}\label{subsubsec:g(8,6)-h}

\

The Nichols algebra $\toba_{\bq}$ is generated by $(x_i)_{i\in \I_7}$ with defining relations
\begin{align}\label{eq:rels-g(8,6)-h}
\begin{aligned}
x_{221}&=0; & x_{223}&=0; & x_{332}&=0; & x_{ij}&=0, & & i<j, \, \widetilde{q}_{ij}=1;
\\
x_{334}&=0; & x_{443}&=0; & x_{445}&=0; & x_{447}&=0; & & x_{774}=0; 
\\ 
x_{554}&=0; & x_{556}&=0; & x_{665}&=0; & x_{1}^2&=0; & & x_{\alpha}^{3}=0, \ \alpha\in\Oc_+^{\bq}.
\end{aligned}
\end{align}
Here {\scriptsize$\Oc_+^{\bq}=\{ 1, 12, 2, 123, 23, 3, 1234, 234, 34, 4, 12345, 2345, 345, 45, 5, 123456, 23456, 3456, 456, 56, 6$, \\ $12^23^34^45^46^27^2, 12^23^34^35^46^27^2, 12^23^34^35^36^27^2, 12^23^34^35^367^2, 12^23^24^35^46^27^2, 12^23^24^35^36^27^2$, \\ $12^23^24^35^367^2, 12^23^24^25^36^27^2, 12^23^24^25^367^2, 12^23^24^25^267^2, 1^22^33^44^55^66^37^4, 12^33^44^55^66^37^4$, \\ $123^24^35^46^27^2, 123^24^35^36^27^2, 123^24^25^36^27^2, 1234^25^36^27^2, 123^24^35^367^2, 123^24^25^367^2, 1234^25^367^2$, \\ $123^24^25^267^2, 1234^25^267^2, 12345^267^2, 12^23^44^55^66^37^4, 12^23^34^55^66^37^4, 12^23^34^45^66^37^4$, \\ $12^23^34^45^56^37^4, 12^23^34^45^56^27^4, 23^24^35^46^27^2, 23^24^35^36^27^2, 23^24^35^367^2, 23^24^25^36^27^2$, \\ $23^24^25^367^2, 23^24^25^267^2, 234^25^36^27^2, 234^25^367^2, 234^25^267^2, 2345^267^2$, \\ $34^25^36^27^2, 34^25^367^2, 34^25^267^2, 345^267^2, 45^267^2 \}$}  
and the degree of the integral is
\begin{equation*}
\ya= 80\alpha_1 + 156\alpha_2 + 228\alpha_3 + 296\alpha_4 + 360\alpha_5 + 238\alpha_6 + 182\alpha_7.
\end{equation*}

\subsubsection{The associated Lie algebra} This is of type $E_7$.

\section{Super modular type, characteristic 5}\label{sec:by-diagram-super-modular-char5}

In this Section $\kk$ is a field of characteristic  $5$.

\subsection{Type $\Sbrown(2; 5)$}\label{subsec:type-brj(2;5)}
Here $\theta = 2$, $\zeta \in \G'_5$. Let 
\begin{align*}
A&=\begin{pmatrix} 2 & -3 \\ -1 & 0 \end{pmatrix}, \,
A'=\begin{pmatrix} 2 & -4 \\ -1 & 0 \end{pmatrix}\in \kk^{2\times 2}; & \pa &= (-1,1), \, \pa' = (-1,-1) \in \G_2^2.
\end{align*}
Let $\brj(2;5) = \g(A, \pa) \simeq \g(A', \pa')$,
the contragredient Lie superalgebras corresponding to $(A, \pa)$, $ \g(A', \pa')$. 
We know \cite{BGL} that $\sdim \brj(2;5) = 10|12$. 
We describe the root system $\Sbrown(2; 5)$ of $\brj(2;5)$, see \cite{AA-GRS-CLS-NA} for details.

\subsubsection{Basic datum and root system}
Below, $G_2$ and $A_2^{(2)}$ are numbered as in \eqref{eq:dynkin-system-G} and \eqref{eq:A2-2n}, respectively.
The basic datum and the bundle of Cartan matrices are described by the following diagram:
\begin{center}
	$\overset{G_2}{\underset{a_1}{\vtxgpd}}$   \hspace{-5pt}\raisebox{3pt}{$\overset{1}{\rule{40pt}{0.5pt}}$}\hspace{-5pt}  $\overset{A_2^{(2)}}{\underset{a_2}{\vtxgpd}}$.
\end{center}
Using the notation \eqref{eq:notation-root-exceptional}, the bundle of root sets is the following:
\begin{align*}
\varDelta_{+}^{a_1}= & \{ 1,1^32,1^22,1^52^3,1^32^2,1^42^3,12,2 \}, \\
\varDelta_{+}^{a_2}= & \{ 1,1^42,1^32,1^52^2,1^22,1^32^2,12,2 \}.
\end{align*}

\subsubsection{Weyl groupoid}
\label{subsubsec:type-brj25-Weyl}
The isotropy group  at $a_1 \in \cX$ is
\begin{align*}
\cW(a_1)= \langle \varsigma_1^{a_1}, \varsigma_2^{a_1} \varsigma_1 \varsigma_2 \rangle \simeq \mathbb{D}_4.
\end{align*}

\subsubsection{Incarnation}

We assign the following Dynkin diagrams to $a_i$, $i\in\I_2$:
\begin{align}\label{eq:dynkin-brj(2;5)}
a_1 \mapsto & \xymatrix{\overset{\zeta}{\underset{\ }{\circ}} \ar  @{-}[r]^{\zeta^2} &
	\overset{-1}{\underset{\ }{\circ}}},
&  a_2 \mapsto &\xymatrix{\overset{-\zeta^3}{\underset{\ }{\circ}} \ar  @{-}[r]^{\zeta^3} &
	\overset{-1}{\underset{\ }{\circ}}}.
\end{align}

\subsubsection{PBW-basis and dimension} \label{subsubsec:type-brj25-PBW}

Notice that the roots in each $\varDelta_{+}^{a_i}$, $i\in\I_{2}$, are ordered from left to right, justifying the notation $\beta_1, \dots, \beta_{8}$.

The root vectors $x_{\beta_k}$ are described as in Remark \ref{rem:lyndon-word}.
Thus
\begin{align*}
\left\{ x_{\beta_{8}}^{n_{8}} \dots x_{\beta_2}^{n_{2}}  x_{\beta_1}^{n_{1}} \, | \, 0\le n_{k}<N_{\beta_k} \right\}.
\end{align*}
is a PBW-basis of $\toba_{\bq}$. Hence $\dim \toba_{\bq}=2^45^210^2=40000$.

\subsubsection{The Dynkin diagram \emph{(\ref{eq:dynkin-brj(2;5)}
		a)}}\label{subsubsec:brj(2;5)-a}

\

The Nichols algebra $\toba_{\bq}$ is generated by $(x_i)_{i\in \I_2}$ with defining relations
\begin{align}\label{eq:rels-brj(2;5)-a}
\begin{aligned}
x_1^5&=0; & x_{112}^{10}&=0; & [x_{112},x_{12}]_c^5&=0; & [[[x_{112},x_{12}]_c,x_{12}]_c,x_{12}]_c&=0;  \\
x_2^2&=0; &x_{12}^{10}&=0; &x_{11112}&=0; & [x_{1112},x_{112}]_c&=0.
\end{aligned}
\end{align}
Here $\Oc_+^{\bq}= \{1, 1^22, 1^32^2, 12 \}$ and the degree of the integral is
\begin{equation*}
\ya= 55\alpha_1 + 34\alpha_2.
\end{equation*}

\subsubsection{The Dynkin diagram \emph{(\ref{eq:dynkin-brj(2;5)}
		b)}}\label{subsubsec:brj(2;5)-b}

\

The Nichols algebra $\toba_{\bq}$ is generated by $(x_i)_{i\in \I_2}$ with defining relations
\begin{align}\label{eq:rels-brj(2;5)-b}
\begin{aligned}
x_{1112}^5&=0; & x_{112}^{10}&=0; & x_1^{10}&=0; \qquad x_2^2=0;
\\ x_{12}^5&=0; & x_{111112}&=0; & [x_1,&[x_{112},x_{12}]_c]_c+q_{12}x_{112}^2=0.
\end{aligned}
\end{align}
Here $\Oc_+^{\bq}= \{1, 1^32, 1^22, 12 \}$ and the degree of the integral is
\begin{equation*}
\ya= 55\alpha_1 + 23\alpha_2.
\end{equation*}

\subsubsection{The associated Lie algebra} This is of type $B_2$.

\subsection{Type $\El(5;5)$}\label{subsec:type-el(5;5)}
Here $\theta = 5$, $\zeta \in \G'_5$. Let 
\begin{align*}
A&=\begin{pmatrix} 0 & 1 & 0 & 0 & 0 \\ \text{--}1 & 0 & 1 & 0 & 1 \\ 0 & \text{--}1 & 2 & \text{--}1 & 0
\\ 0 & 0 & \text{--}1 & 2 & 0 \\ 0 & \text{--}1 & 0 & 0 & 2 \end{pmatrix}
\in \kk^{5\times 5}; & \pa &= (-1,-1, 1,1, 1) \in \G_2^5.
\end{align*}
Let $\el(5;5) = \g(A, \pa)$,
the contragredient Lie superalgebra corresponding to $(A, \pa)$. 
We know \cite{BGL} that $\sdim \el(5;5) = 55|32$. 
There are 6 other pairs of matrices and parity vectors for which the associated contragredient Lie superalgebra is isomorphic to $\el(5;5)$.
We describe the root system $\El(5;5)$ of $\el(5;5)$, see \cite{AA-GRS-CLS-NA} for details.

\subsubsection{Basic datum and root system}
Below, $A_1^{(1)}$, $C_2$ and $A_2^{(2)}$ are numbered as in \eqref{eq:An-(1)}, \eqref{eq:dynkin-system-C} and  	
\eqref{eq:A2-2n}, respectively.
The basic datum and the bundle of Cartan matrices are described by the following diagram:

\begin{center}
	\begin{tabular}{c c c c c c c c c c c c}
		& $\overset{\varpi_5(D_5)}{\underset{a_4}{\vtxgpd}}$
		& \hspace{-7pt}\raisebox{3pt}{$\overset{2}{\rule{27pt}{0.5pt}}$}\hspace{-7pt}
		& $\overset{{}_1T_1}{\underset{a_6}{\vtxgpd}}$
		& \hspace{-7pt}\raisebox{3pt}{$\overset{3}{\rule{27pt}{0.5pt}}$}\hspace{-7pt}
		& $\overset{{}_2T}{\underset{a_7}{\vtxgpd}}$
		& & & & & &
		\\
		& {\scriptsize 1} \vline\hspace{5pt}
		&
		& {\scriptsize 5} \vline\hspace{5pt}
		&
		& {\scriptsize 4} \vline\hspace{5pt}
		& & & & & &
		\\
		& $\overset{\varpi_5(D_5)}{\underset{a_5}{\vtxgpd}}$
		&
		& $\overset{\varpi_4(C_5)}{\underset{a_1}{\vtxgpd}}$
		&
		& $\overset{A_5}{\underset{a_2}{\vtxgpd}}$
		& \hspace{-7pt}\raisebox{3pt}{$\overset{5}{\rule{27pt}{0.5pt}}$}\hspace{-7pt}
		& $\overset{F_4^{(1)}}{\underset{a_3}{\vtxgpd}}$
		& & & &
	\end{tabular}
\end{center}
Using the notation \eqref{eq:notation-root-exceptional}, the bundle of root sets is the following: { \scriptsize
	\begin{align*}
	\varDelta_{+}^{a_1}= & \varpi_4(\{ 1, 12, 2, 123, 23, 3, 12^23^24, 123^24, 1234, 23^24, 234, 34, 4, 12^23^34^35, 12^23^34^25, 12^23^24^35, \\
	& 123^24^35, 23^24^35, 12^23^24^25, 1^22^33^44^45^2, 12^33^44^45^2, 12^23^245, 123^24^25, 1234^25, 12^23^44^45^2, \\
	& 123^245, 12^23^34^45^2, 23^24^25, 12^23^34^35^2, 234^25, 34^25, 12^23^24^35^2, 123^24^35^2, 12345, 4^25, \\
	& 23^24^35^2, 23^245, 2345, 345, 45, 5 \}), \\
	\varDelta_{+}^{a_2}= & \{ 1, 12, 2, 123, 23, 3, 1234, 234, 34, 4, 12^23^34^35, 12^23^24^35, 12^23^24^25, 123^24^35, 123^24^25, \\
	& 23^24^35, 23^24^25, 12^23^34^45^2, 1234^25, 234^25, 34^25, 1^22^33^44^55^3, 12^33^44^55^3, 12^23^44^55^3, \\
	& 12^23^34^35^2, 12^23^34^55^3, 12^23^24^35^2, 123^24^35^2, 23^24^35^2, 12^23^34^45^3, 12345, 12^23^24^25^2, \\
	& 123^24^25^2, 1234^25^2, 2345, 23^24^25^2, 234^25^2, 345, 34^25^2, 45, 5 \}, \\
	\varDelta_{+}^{a_3}= & \{ 1, 12, 2, 123, 23, 3, 1234, 12^23^24^2, 123^24^2, 1234^2, 234, 23^24^2, 234^2, 34, 34^2, 4, 12^23^34^45, \\
	& 12^23^34^35, 12^23^24^35, 123^24^35, 23^24^35, 1^22^33^44^55^2, 12^33^44^55^2, 12^23^24^25, 12^23^44^55^2, \\
	& 123^24^25, 12^23^34^55^2, 1234^25, 12^23^34^45^2, 23^24^25, 234^25, 34^25, 12^23^34^35^2, 12^23^24^35^2, \\
	& 123^24^35^2, 12345, 23^24^35^2, 2345, 345, 45, 5 \}, \\
	\varDelta_{+}^{a_4}= & \varpi_5(\{ 1, 12, 2, 123, 23, 3, 1234, 234, 34, 4, 1^32^33^245, 1^22^33^245, 1^42^53^44^25^2, 1^42^53^34^25^2, \\
	& 1^42^53^345^2, 1^22^23^245, 1^42^43^34^25^2, 1^42^43^345^2, 1^22^2345, 1^42^43^245^2, 1^22^235, 1^52^63^44^25^3, \\
	& 1^32^43^34^25^2, 1^32^43^345^2, 1^32^43^245^2, 1^32^33^245^2, 1^42^63^44^25^3, 1^42^53^44^25^3, 12^23^245, \\
	& 1^42^53^34^25^3, 12^2345, 1^42^53^345^3, 12^235, 1^22^33^245^2, 12345, 1235, 125, 2345, 235, 25, 5 \}), \\
	\varDelta_{+}^{a_5}= & \varpi_5(\{ 1, 12, 2, 123, 23, 3, 1234, 234, 34, 4, 12^33^245, 12^23^245, 12^2345, 12^235, 12^53^44^25^2, \\
	& 12^53^34^25^2, 12^53^345^2, 2^33^245, 1^22^63^44^25^3, 12^43^34^25^2, 12345, 12^43^345^2, 1235, 12^43^245^2, \\
	& 125, 12^63^44^25^3, 12^33^245^2, 12^53^44^25^3, 12^53^34^25^3, 12^53^345^3, 2^23^245, 2^43^34^25^2, 2^43^345^2, \\
	& 2^2345, 2^43^245^2, 2^235, 2^33^245^2, 2345, 235, 25, 5 \}), \\
	\varDelta_{+}^{a_6}= & \{ 1, 12, 2, 123, 23, 3, 1234, 234, 34, 4, 12^23^245, 12^2345, 12^235, 123^245, 12^23^44^25^2, \\
	& 23^245, 12^23^34^25^2, 12^23^345^2, 3^245, 1^22^33^44^25^3, 123^34^25^2, 12345, 12^33^44^25^3, 23^34^25^2, \\
	& 2345, 12^23^44^25^3, 123^345^2, 23^345^2, 12^23^245^2, 12^23^34^25^3, 345, 123^245^2, 23^245^2, \\
	& 12^23^345^3, 1235, 125, 3^245^2, 235, 25, 35, 5 \}, \\
	\varDelta_{+}^{a_7}= & \{ 1, 12, 2, 123, 23, 3, 1234, 234, 34, 4, 12^23^345, 12^23^245, 12^23^25, 123^245, 123^25, 23^245, \\
	& 23^25, 12^23^34^25^2, 12^23^345^2, 1^22^33^44^25^3, 12^33^44^25^3, 12^23^44^25^3, 12345, 12^23^24^25^2, \\
	& 123^24^25^2, 2345, 23^24^25^2, 345, 12^23^34^25^3, 12^23^245^2, 123^245^2, 23^245^2, 45, 12^23^345^3, \\
	& 12345^2, 1235, 2345^2, 235, 345^2, 35, 5 \}.
	\end{align*}
}%

\subsubsection{Weyl groupoid}
\label{subsubsec:type-el55-Weyl} 
The isotropy group  at $a_3 \in \cX$ is
\begin{align*}
\cW(a_3)= \langle \varsigma_1^{a_3}, \varsigma_2^{a_3}, \varsigma_3^{a_3}, \varsigma_4^{a_3}, \varsigma_5^{a_3}\varsigma_4 \varsigma_3\varsigma_2 \varsigma_5 \varsigma_2 \varsigma_3 \varsigma_4 \varsigma_5  \rangle \simeq W(C_5).
\end{align*}

\subsubsection{Incarnation}
We set the matrices $(\bq^{(i)})_{i\in\I_{7}}$, from left to right and  from up to down:
\begin{align}\label{eq:dynkin-el(5;5)}
\begin{aligned}
&\xymatrix@C-4pt{\overset{\zeta^2}{\underset{\ }{\circ}} \ar  @{-}[r]^{\ztu^{\, 2}} &
	\overset{\zeta^{2}}{\underset{\ }{\circ}} \ar  @{-}[r]^{\ztu^{\, 2}}  &
	\overset{-1}{\underset{\ }{\circ}} \ar  @{-}[r]^{\ztu}  & \overset{\zeta}{\underset{\
		}{\circ}} \ar  @{-}[r]^{\ztu^{\, 2}}  & \overset{\zeta^2}{\underset{\ }{\circ}}}
& &\xymatrix@C-4pt{\overset{\zeta^2}{\underset{\ }{\circ}} \ar  @{-}[r]^{\ztu^{\, 2}} &
	\overset{\zeta^{2}}{\underset{\ }{\circ}} \ar  @{-}[r]^{\ztu^{\, 2}}  &
	\overset{\zeta^{2}}{\underset{\ }{\circ}} \ar  @{-}[r]^{\ztu^{\, 2}}  &
	\overset{-1}{\underset{\ }{\circ}} \ar  @{-}[r]^{\zeta}  & \overset{-1}{\underset{\
		}{\circ}}}
\\
&\xymatrix@C-4pt{\overset{\zeta^2}{\underset{\ }{\circ}} \ar  @{-}[r]^{\ztu^{\, 2}} &
	\overset{\zeta^{2}}{\underset{\ }{\circ}} \ar  @{-}[r]^{\ztu^{\, 2}}  &
	\overset{\zeta^{2}}{\underset{\ }{\circ}} \ar  @{-}[r]^{\ztu^{\, 2}}  &
	\overset{\zeta}{\underset{\ }{\circ}} \ar  @{-}[r]^{\ztu}  & \overset{-1}{\underset{\
		}{\circ}}}&&
\end{aligned}
\end{align}

\begin{align*}
&\xymatrix@R-8pt{  &   \overset{\zeta^2}{\circ} \ar  @{-}[d]^{\ztu^{\, 2}} & & \\
	\overset{-1}{\underset{\ }{\circ}} \ar  @{-}[r]^{\zeta^2}  & \overset{-1}{\underset{\
		}{\circ}} \ar  @{-}[r]^{\ztu^{\, 2}}  & \overset{\zeta^2}{\underset{\ }{\circ}} \ar
	@{-}[r]^{\ztu^{\, 2}}  & \overset{\zeta^2}{\underset{\ }{\circ}}}
& &\xymatrix@R-8pt{  &   \overset{\zeta^2}{\circ} \ar  @{-}[d]^{\ztu^{\, 2}} & & \\
	\overset{-1}{\underset{\ }{\circ}} \ar  @{-}[r]^{\ztu^{\, 2}}  &
	\overset{\zeta^2}{\underset{\ }{\circ}} \ar  @{-}[r]^{\ztu^{\, 2}}  &
	\overset{\zeta^2}{\underset{\ }{\circ}} \ar  @{-}[r]^{\ztu^{\, 2}}  &
	\overset{\zeta^2}{\underset{\ }{\circ}}}
\\
& \xymatrix@R-8pt{  &   \overset{-1}{\circ} \ar  @{-}[d]_{\zeta^{2}} \ar  @{-}[dr]^{\zeta}
	& & \\
	\overset{\zeta^2}{\underset{\ }{\circ}} \ar  @{-}[r]^{\ztu^{\, 2}}  &
	\overset{-1}{\underset{\ }{\circ}} \ar  @{-}[r]^{\zeta^{2}}  & \overset{-1}{\underset{\
		}{\circ}} \ar  @{-}[r]^{\ztu^{\, 2}}  & \overset{\zeta^2}{\underset{\ }{\circ}}}&
&\xymatrix@R-8pt{& &  \overset{\zeta}{\circ} \ar  @{-}[d]_{\ztu} \ar  @{-}[dr]^{\ztu} &
	\\
	\overset{\zeta^{2}}{\underset{\ }{\circ}}  \ar  @{-}[r]^{\ztu^{\, 2}} &
	\overset{\zeta^{2}}{\underset{\ }{\circ}} \ar  @{-}[r]^{\ztu^{\, 2}}  &
	\overset{-1}{\underset{\ }{\circ}} \ar  @{-}[r]^{\zeta^{2}}  & \overset{-1}{\underset{\
		}{\circ}} }
\end{align*}
Now, this is the incarnation:
\begin{align*}
& a_1\mapsto \varpi_4(\bq^{1}); &
& a_i\mapsto \bq^{(i)}, \ i\in\I_{2,7}.
\end{align*}

\subsubsection{PBW-basis and dimension} \label{subsubsec:type-el55-PBW}
Notice that the roots in each $\varDelta_{+}^{a_i}$, $i\in\I_{7}$, are ordered from left to right, justifying the notation $\beta_1, \dots, \beta_{41}$.

The root vectors $x_{\beta_k}$ are described as in Remark \ref{rem:lyndon-word}.
Thus
\begin{align*}
\left\{ x_{\beta_{41}}^{n_{41}} \dots x_{\beta_2}^{n_{2}}  x_{\beta_1}^{n_{1}} \, | \, 0\le n_{k}<N_{\beta_k} \right\}.
\end{align*}
is a PBW-basis of $\toba_{\bq}$. Hence $\dim \toba_{\bq}=2^{16}5^{25}$.

\subsubsection{The Dynkin diagram \emph{(\ref{eq:dynkin-el(5;5)}
		a)}}\label{subsubsec:el(5;5)-a}

\

The Nichols algebra $\toba_{\bq}$ is generated by $(x_i)_{i\in \I_5}$ with defining relations
\begin{align}\label{eq:rels-el(5;5)-a}
\begin{aligned}
\begin{aligned}
x_{112}&=0; & x_{221}&=0; & x_{223}&=0; & & [[[x_{(14)},x_3]_c,x_2]_c,x_3]_c=0;\\
x_{554}&=0; & x_{443}&=0; & x_{4445}&=0; & & x_{3}^2=0; \ x_{ij}=0, \ i<j, \ \widetilde{q}_{ij}=1; 
\end{aligned}
\\
\begin{aligned}
x_{\alpha}^{5}&=0, \ \alpha\in\Oc_+^{\bq};
& [[x_{5432},x_4]_c,x_3]_c &= q_{43}(\zeta^2-\zeta) [[x_{5432},x_3]_c,x_4]_c.
\end{aligned}
\end{aligned}
\end{align}
Here {\scriptsize$\Oc_+^{\bq}=\{ 1, 12, 2, 12^23^24, 123^24, 23^24,
	4, 12^23^24^35, 123^24^35, 23^24^35, 12^23^24^25, 1^22^33^44^45^2$, \\ $12^33^44^45^2, 12^23^245,
	123^24^25, 12^23^44^45^2, 123^245, 23^24^25, 12^23^24^35^2, 123^24^35^2$, \\ $4^25, 23^24^35^2, 23^245, 45, 5 \}$}  
and the degree of the integral is
\begin{equation*}
\ya= 72\alpha_1 + 136\alpha_2 + 192\alpha_3 + 208\alpha_4 + 108\alpha_5.
\end{equation*}

\subsubsection{The Dynkin diagram \emph{(\ref{eq:dynkin-el(5;5)}
		b)}}\label{subsubsec:el(5;5)-b}

\

The Nichols algebra $\toba_{\bq}$ is generated by $(x_i)_{i\in \I_5}$ with defining relations
\begin{align}\label{eq:rels-el(5;5)-b}
\begin{aligned}
x_{112}&=0; & x_{221}&=0; & x_{223}&=0; & & x_{ij}=0, \ i<j, \, \widetilde{q}_{ij}=1;\\
& & x_{332}&=0; & x_{334}&=0; & &[[x_{54},x_{543}]_c,x_4]_c=0;\\
& & x_{4}^2&=0; & x_{5}^2&=0; & &x_{\alpha}^{5}=0, \ \alpha\in\Oc_+^{\bq}.
\end{aligned}
\end{align}
Here {\scriptsize$\Oc_+^{\bq}=\{ 1, 12, 2, 123, 23, 3,
	12^23^34^35, 12^23^24^35, 123^24^35, 23^24^35, 12^23^34^45^2, 1^22^33^44^55^3, 12^33^44^55^3$, \\ $12^23^44^55^3,
	12^23^34^55^3, 12345, 12^23^24^25^2, 123^24^25^2, 1234^25^2, 2345,
	23^24^25^2, 234^25^2, 345, 34^25^2, 45 \}$}  
and the degree of the integral is
\begin{equation*}
\ya= 72\alpha_1 + 136\alpha_2 + 192\alpha_3 + 240\alpha_4 + 154\alpha_5.
\end{equation*}

\subsubsection{The Dynkin diagram \emph{(\ref{eq:dynkin-el(5;5)}
		c)}}\label{subsubsec:el(5;5)-c}

\

The Nichols algebra $\toba_{\bq}$ is generated by $(x_i)_{i\in \I_5}$ with defining relations
\begin{align}\label{eq:rels-el(5;5)-c}
\begin{aligned}
x_{112}&=0; & x_{221}&=0; & x_{223}&=0; & x_{332}&=0; & x_{ij}&=0, & i<j,& \, \widetilde{q}_{ij}=1;\\
x_{334}&=0; &  x_{4443}&=0; & x_{445}&=0; & x_{5}^2&=0; & x_{\alpha}^{5}&=0, & \alpha&\in\Oc_+^{\bq}.
\end{aligned}
\end{align}
Here {\scriptsize$\Oc_+^{\bq}=\{ 1, 12, 2, 123, 23, 3,
	1234, 12^23^24^2, 123^24^2, 1234^2, 234, 23^24^2, 234^2, 34,
	34^2, 4, 1^22^33^44^55^2$, \\ $12^33^44^55^2, 12^23^44^55^2, 12^23^34^55^2,
	12^23^34^45^2, 12^23^34^35^2, 12^23^24^35^2, 123^24^35^2, 23^24^35^2 \}$}  
and the degree of the integral is
\begin{equation*}
\ya= 72\alpha_1 + 136\alpha_2 + 192\alpha_3 + 240\alpha_4 + 88\alpha_5.
\end{equation*}

\subsubsection{The Dynkin diagram \emph{(\ref{eq:dynkin-el(5;5)}
		d)}}\label{subsubsec:el(5;5)-d}

\

The Nichols algebra $\toba_{\bq}$ is generated by $(x_i)_{i\in \I_5}$ with defining relations
\begin{align}\label{eq:rels-el(5;5)-d}
\begin{aligned}
x_{332}&=0; & x_{334}&=0; & [x_{(13)},&x_2]_c=0; & x_{1}^2&=0; & x_{ij}&=0, \ i<j, \, \widetilde{q}_{ij}=1;\\
x_{443}&=0; &  x_{552}&=0; & [x_{125},&x_2]_c=0; & x_{2}^2&=0; & x_{\alpha}^{5}&=0, \ \alpha\in\Oc_+^{\bq}.
\end{aligned}
\end{align}
Here {\scriptsize$\Oc_+^{\bq}=\{ 12, 123, 3, 1234, 34, 4, 12^33^245, 12^53^44^25^2, 12^53^34^25^2, 12^53^345^2, 1^22^63^44^25^3, 12345, 1235, 125$, \\ $12^33^245^2, 12^53^44^25^3, 12^53^34^25^3, 12^53^345^3, 2^23^245, 2^43^34^25^2,
	2^43^345^2, 2^2345, 2^43^245^2, 2^235, 5 \}$}  
and the degree of the integral is
\begin{equation*}
\ya= 72\alpha_1 + 300\alpha_2 + 208\alpha_3 + 108\alpha_4 + 154\alpha_5.
\end{equation*}

\subsubsection{The Dynkin diagram \emph{(\ref{eq:dynkin-el(5;5)}
		e)}}\label{subsubsec:el(5;5)-e}

\

The Nichols algebra $\toba_{\bq}$ is generated by $(x_i)_{i\in \I_5}$ with defining relations
\begin{align}\label{eq:rels-el(5;5)-e}
\begin{aligned}
x_{221}&=0; & x_{223}&=0; & x_{332}&=0; & x_{225}&=0; & x_{ij}&=0, \ i<j, \, \widetilde{q}_{ij}=1;\\
x_{552}&=0; &  x_{334}&=0; & x_{443}&=0; & x_{1}^2&=0; & x_{\alpha}^{5}&=0, \ \alpha\in\Oc_+^{\bq}.
\end{aligned}
\end{align}
Here {\scriptsize$\Oc_+^{\bq}=\{ 2, 23, 3, 234, 34, 4, 1^22^33^245, 1^42^53^44^25^2, 1^42^53^34^25^2, 1^42^53^345^2, 1^22^23^245, 1^42^43^34^25^2$, \\ $1^42^43^345^2, 1^22^2345, 1^42^43^245^2, 1^22^235, 1^42^63^44^25^3, 1^42^53^44^25^3, 1^42^53^34^25^3, 1^42^53^345^3$, \\ $1^22^33^245^2, 2345, 235, 25, 5 \}$}  
and the degree of the integral is
\begin{equation*}
\ya= 230\alpha_1 + 300\alpha_2 + 208\alpha_3 + 108\alpha_4 + 154\alpha_5.
\end{equation*}

\subsubsection{The Dynkin diagram \emph{(\ref{eq:dynkin-el(5;5)}
		f)}}\label{subsubsec:el(5;5)-f}

\

The Nichols algebra $\toba_{\bq}$ is generated by $(x_i)_{i\in \I_5}$ with defining relations
\begin{align}\label{eq:rels-el(5;5)-f}
\begin{aligned}
&\begin{aligned}
x_{112}&=0; & [x_{(13)},&x_2]_c=0; & x_{3}^2&=0; & & x_{ij}=0, \ i<j, \, \widetilde{q}_{ij}=1;
\\
x_{443}&=0; & [x_{(24)},&x_3]_c=0; & x_{5}^2&=0; \ & & [[x_{53},x_{534}]_c,x_3]_c=0;
\\
&& [x_{125},& x_2]_c=0; & x_{2}^2&=0; & & x_{\alpha}^{5}=0, \ \alpha\in\Oc_+^{\bq};
\end{aligned}
\\
& x_{235} =\frac{q_{35}}{\zeta^2+\zeta}[x_{25},x_3]_c +q_{23}(1-\zeta)x_3x_{25}.
\end{aligned}
\end{align}
Here {\scriptsize$\Oc_+^{\bq}=\{ 1, 123, 23, 1234, 234, 4, 12^2345, 12^235, 123^245, 12^23^44^25^2, 23^245, 1^22^33^44^25^3, 123^34^25^2$, \\ $12^33^44^25^3, 23^34^25^2, 123^345^2, 23^345^2, 12^23^245^2, 12^23^34^25^3, 345, 12^23^345^3, 125, 3^245^2, 25, 35 \}$}  
and the degree of the integral is
\begin{equation*}
\ya= 72\alpha_1 + 136\alpha_2 + 208\alpha_3 + 108\alpha_4 + 154\alpha_5.
\end{equation*}

\subsubsection{The Dynkin diagram \emph{(\ref{eq:dynkin-el(5;5)}
		g)}}\label{subsubsec:el(5;5)-g}

\

The Nichols algebra $\toba_{\bq}$ is generated by $(x_i)_{i\in \I_5}$ with defining relations
\begin{align}\label{eq:rels-el(5;5)-g}
\begin{aligned}
&\begin{aligned}
&&x_{112}&=0; & x_{221}&=0; & & x_{ij}=0, \ i<j, \, \widetilde{q}_{ij}=1;
\\
x_{3}^2&=0; &  x_{223}&=0; & x_{553}&=0; & &[[[x_{1235},x_3]_c,x_2]_c,x_3]_c=0;
\\
x_4^2&=0; & x_{554}&=0; & [x_{(24)},&x_3]_c=0; & & x_{\alpha}^{5}=0, \ \alpha\in\Oc_+^{\bq};
\end{aligned}
\\
&x_{(35)}=q_{45}\zeta[x_{35},x_4]_c+q_{34}(1-\ztu)x_4x_{35}.
\end{aligned}
\end{align}
Here {\scriptsize$\Oc_+^{\bq}=\{ 1, 12, 2, 1234, 234, 34, 12^23^345, 12^23^25, 123^25, 23^25, 12^23^345^2, 1^22^33^44^25^3, 12^33^44^25^3$, \\ $12^23^44^25^3, 12345, 12^23^24^25^2, 123^24^25^2, 2345, 23^24^25^2, 345, 12^23^345^3, 12345^2, 2345^2, 345^2, 5 \}$}  
and the degree of the integral is
\begin{equation*}
\ya= 72\alpha_1 + 136\alpha_2 + 192\alpha_3 + 108\alpha_4 + 154\alpha_5.
\end{equation*}

\subsubsection{The associated Lie algebra} This is of type $C_5$.

\section{Unidentified}\label{sec:by-diagram-Unidentified}

The root systems in this Section are denoted by $\Ufo(h)$, $8 \neq h \in \I_{12}$; the corresponding Nichols algebras 
are called collectively $\ufo(h)$. However $\Ufo(7)$ has two different incarnations, that are called
$\ufo(7)$ and $\ufo(8)$ respectively.

\subsection{Type $\Ufo(1)$}\label{subsec:type-ufo(1)}
Here $\zeta \in \G'_4$.
We describe first the root system $\Ufo(1)$.

\subsubsection{Basic datum and root system}
Below, $A_5$, $D_5$, $_{2}T$ and $_{1}T_1$ are numbered as in \eqref{eq:dynkin-system-A}, \eqref{eq:dynkin-system-D} and  	
\eqref{eq:mTn}, respectively.
The basic datum and the bundle of Cartan matrices are described by the following diagram:
\begin{center}
	\begin{tabular}{c c c c c c c}
		$\overset{\varpi_1(D_5)}{\underset{a_1}{\vtxgpd}}$
		&
		& $\overset{A_5}{\underset{a_2}{\vtxgpd}}$
		& \hspace{-5pt}\raisebox{3pt}{$\overset{4}{\rule{30pt}{0.5pt}}$}\hspace{-5pt}
		& $\overset{{}_2T}{\underset{a_3}{\vtxgpd}}$
		& \hspace{-5pt}\raisebox{3pt}{$\overset{5}{\rule{30pt}{0.5pt}}$}\hspace{-5pt}
		& $\overset{s_{45}(A_5)}{\underset{a_4}{\vtxgpd}}$
		\\
		{\scriptsize 1} \vline\hspace{5pt}
		& & {\scriptsize 3} \vline\hspace{5pt}
		& & {\scriptsize 3} \vline\hspace{5pt}
		& &
		\\
		$\overset{\varpi_1(D_5)}{\underset{a_5}{\vtxgpd}}$
		& \hspace{-5pt}\raisebox{3pt}{$\overset{2}{\rule{30pt}{0.5pt}}$}\hspace{-5pt}
		& $\overset{\varpi_2({}_1T_1)}{\underset{a_6}{\vtxgpd}}$
		&
		& $\overset{s_{45}({}_1T_1)}{\underset{a_7}{\vtxgpd}}$
		& \hspace{-5pt}\raisebox{3pt}{$\overset{2}{\rule{30pt}{0.5pt}}$}\hspace{-5pt}
		& $\overset{\varpi_3(D_5)}{\underset{a_8}{\vtxgpd}}$
		\\
		& &{\scriptsize 4} \vline\hspace{5pt}
		& & {\scriptsize 4} \vline\hspace{5pt}
		& & {\scriptsize 1} \vline\hspace{5pt}
		\\
		$\overset{\varpi_2(A_5)}{\underset{a_9}{\vtxgpd}}$
		& \hspace{-5pt}\raisebox{3pt}{$\overset{5}{\rule{30pt}{0.5pt}}$}\hspace{-5pt}
		& $\overset{s_{34}({}_2T)}{\underset{a_{10}}{\vtxgpd}}$
		& \hspace{-5pt}\raisebox{3pt}{$\overset{3}{\rule{30pt}{0.5pt}}$}\hspace{-5pt}
		& $\overset{s_{34}(A_5)}{\underset{a_{11}}{\vtxgpd}}$
		&
		& $\overset{\varpi_3(D_5)}{\underset{a_{12}}{\vtxgpd}}$
	\end{tabular}
\end{center}
Using the notation \eqref{eq:notation-root-exceptional}, we set: { \scriptsize
	\begin{align*}
	\Delta_{+}^{(1)}= & \{ 1, 12, 2, 123, 23, 3, 12^23^24, 123^24, 1234, 23^24, 234, 34, 4, 12^23^34^35, 12^23^34^25, \\
	& 12^23^24^25, 123^24^25, 23^24^25, 1234^25, 234^25, 34^25, 12^23^34^35^2, 12^23^245, 123^245,  \\
	& 12345, 23^245, 2345, 345, 45, 5 \}, \\
	\Delta_{+}^{(2)}= & \{ 1, 12, 2, 123, 23, 3, 1234, 234, 34, 4, 12^23^34^35, 12^23^24^35, 12^23^24^25, 123^24^35, \\
	& 123^24^25, 23^24^35, 23^24^25, 12^23^34^45^2, 1234^25, 234^25, 34^25, 12^23^34^35^2, 12^23^24^35^2, \\
	& 123^24^35^2, 12345, 23^24^35^2, 2345, 345, 45, 5 \}, \\
	\Delta_{+}^{(3)}= & \{ 1, 12, 2, 123, 23, 3, 1234, 234, 34, 4, 12^23^245, 12^2345, 12^235, 123^245, 23^245, \\
	& 12^23^34^25^2, 12^23^345^2, 3^245, 12345, 2345, 12^23^245^2, 345, 123^245^2, 1235, 125, \\
	& 23^245^2, 235, 25, 35, 5 \}, \\
	\Delta_{+}^{(4)}= & \{ 1, 12, 2, 123, 23, 3, 1234, 234, 34, 4, 12^23^345, 12^23^245, 12^23^25, 123^245, 123^25, \\
	& 23^245, 23^25, 12^23^34^25^2, 12^23^345^2, 12345, 2345, 345, 12^23^245^2, 123^245^2, 1235, \\
	& 23^245^2, 235, 45, 35, 5 \}, \\
	\Delta_{+}^{(5)}= & \{ 1, 12, 2, 123, 23, 3, 1234, 234, 34, 4, 1^32^33^245, 1^22^33^245, 1^22^23^245, 1^22^2345, \\
	& 1^22^235, 1^32^43^34^25^2, 1^32^43^345^2, 12^23^245, 1^32^43^245^2, 12^2345, 1^32^33^245^2, 12345, \\
	& 1^22^33^245^2, 12^235, 1235, 125, 2345, 235, 25, 5 \}, \\
	\Delta_{+}^{(6)}= & \{ 1, 12, 2, 123, 23, 3, 1234, 234, 34, 4, 12^33^245, 12^23^245, 12^2345, 12^235, 2^33^245, \\
	& 12^43^34^25^2,12^43^345^2, 2^23^245, 12^43^245^2, 2^2345, 12345, 12^33^245^2, 1235, 125, 2345, \\
	& 2^33^245^2, 2^235, 235, 25, 5 \}.
	\end{align*}
}
Now the bundle of sets of (positive) roots is described as follows:
\begin{align*}
a_1 & \mapsto \varpi_1(\Delta_+^{(6)}), &
a_2 & \mapsto \Delta_+^{(1)}, &
a_3 & \mapsto \Delta_+^{(4)}, &
a_4 & \mapsto s_{45}(\Delta_+^{(2)}),
\\
a_5 & \mapsto \varpi_1(\Delta_+^{(5)}), &
a_6 & \mapsto \varpi_2(\Delta_+^{(3)}), &
a_7 & \mapsto s_{45}(\Delta_+^{(3)}), &
a_8 & \mapsto \varpi_3(\Delta_+^{(5)}),
\\
a_9 & \mapsto \varpi_2(\Delta_+^{(2)}), &
a_{10} & \mapsto s_{34}(\Delta_+^{(4)}), &
a_{11} & \mapsto s_{34}(\Delta_+^{(1)}), &
a_{12} & \mapsto \varpi_3(\Delta_+^{(6)}).
\end{align*}

\subsubsection{Weyl groupoid}
\label{subsubsec:type-ufo1-Weyl}
The isotropy group  at $a_1 \in \cX$ is
\begin{align*}
\cW(a_1)= \langle \varsigma_1^{a_1}\varsigma_2 \varsigma_3\varsigma_4 \varsigma_5 \varsigma_4 \varsigma_3 \varsigma_2 \varsigma_1, \varsigma_2^{a_1},  \varsigma_3^{a_1}, \varsigma_4^{a_1},  \varsigma_5^{a_1} \rangle \simeq W(A_5).
\end{align*}

\subsubsection{Incarnation}
We set the matrices $(\bq^{(i)})_{i\in\I_{6}}$, from left to right and  from up to down:
\begin{align}\label{eq:dynkin-ufo(1)}
\begin{aligned}
&
\Dchainfive{\zeta }{\ztu }{\zeta }{\ztu }{-1}{-1}{-1}{\ztu }{\zeta }
& &
\Dchainfive{\zeta }{\ztu }{\zeta }{\ztu }{\zeta }{\ztu }{-1}{\ztu }{\zeta }
\\
& \xymatrix@R-8pt{  &   \overset{-1}{\circ} \ar  @{-}[d]_{\zeta} \ar  @{-}[dr]^{-1} & & \\
	\overset{\zeta}{\underset{\ }{\circ}} \ar  @{-}[r]^{\ztu}  & \overset{-1}{\underset{\
		}{\circ}} \ar  @{-}[r]^{\zeta}  & \overset{-1}{\underset{\ }{\circ}} \ar  @{-}[r]^{\ztu}
	& \overset{\zeta}{\underset{\ }{\circ}}}
& &
\xymatrix@R-8pt{& &  \overset{-1}{\circ} \ar  @{-}[d]_{-1} \ar  @{-}[dr]^{\zeta} &  \\
	\overset{\zeta}{\underset{\ }{\circ}}  \ar  @{-}[r]^{\ztu} &
	\overset{\zeta}{\underset{\ }{\circ}} \ar  @{-}[r]^{\ztu}  & \overset{-1}{\underset{\
		}{\circ}} \ar  @{-}[r]^{\zeta}  & \overset{-1}{\underset{\ }{\circ}} }
\\
& \xymatrix@R-8pt{  &   \overset{\zeta}{\circ} \ar  @{-}[d]^{\ztu} & & \\
	\overset{-1}{\underset{\ }{\circ}} \ar  @{-}[r]^{\zeta}  & \overset{-1}{\underset{\
		}{\circ}} \ar  @{-}[r]^{\ztu}  & \overset{\zeta}{\underset{\ }{\circ}} \ar
	@{-}[r]^{\ztu}  & \overset{\zeta}{\underset{\ }{\circ}}}
& &
\xymatrix@R-8pt{  &   \overset{\zeta}{\circ} \ar  @{-}[d]^{\ztu} & & \\
	\overset{-1}{\underset{\ }{\circ}} \ar  @{-}[r]^{\ztu}  & \overset{\zeta}{\underset{\
		}{\circ}} \ar  @{-}[r]^{\ztu}  & \overset{\zeta}{\underset{\ }{\circ}} \ar
	@{-}[r]^{\ztu}  & \overset{\zeta}{\underset{\ }{\circ}}}
\end{aligned}
\end{align}
Now this is the incarnation:
\begin{align*}
a_1 & \mapsto \varpi_1(\bq^{(6)}), &
a_2 & \mapsto \bq^{(1)}, &
a_3 & \mapsto \bq^{(4)}, &
a_4 & \mapsto s_{45}(\bq^{(2)}),
\\
a_5 & \mapsto \varpi_1(\bq^{(5)}), &
a_6 & \mapsto \varpi_2(\bq^{(3)}), &
a_7 & \mapsto s_{45}(\bq^{(3)}), &
a_8 & \mapsto \varpi_3(\bq^{(5)}),
\\
a_9 & \mapsto \varpi_2(\bq^{(2)}), &
a_{10} & \mapsto s_{34}(\bq^{(4)}), &
a_{11} & \mapsto s_{34}(\bq^{(1)}), &
a_{12} & \mapsto \varpi_3(\bq^{(6)}).
\end{align*}

\subsubsection{PBW-basis and dimension} \label{subsubsec:type-ufo1-PBW}
Notice that the roots in each $\Delta_{+}^{a_i}$, $i\in\I_{12}$, are ordered from left to right, justifying the notation $\beta_1, \dots, \beta_{30}$.

The root vectors $x_{\beta_k}$ are described as in Remark \ref{rem:lyndon-word}.
Thus
\begin{align*}
\left\{ x_{\beta_{30}}^{n_{30}} \dots x_{\beta_2}^{n_{2}}  x_{\beta_1}^{n_{1}} \, | \, 0\le n_{k}<N_{\beta_k} \right\}.
\end{align*}
is a PBW-basis of $\toba_{\bq}$. Hence $\dim \toba_{\bq}=2^{15}4^{15}=2^{45}$.

\subsubsection{The Dynkin diagram \emph{(\ref{eq:dynkin-ufo(1)}
		a)}}\label{subsubsec:ufo(1)-a}

\

The Nichols algebra $\toba_{\bq}$ is generated by $(x_i)_{i\in \I_5}$ with defining relations
\begin{align}\label{eq:rels-ufo(1)-a}
\begin{aligned}
\begin{aligned}
x_{112}&=0; & x_{221}&=0; & x_{223}&=0; & & [[[x_{(14)},x_3]_c,x_2]_c,x_3]_c=0; \\
x_{554}&=0; & x_{34}^2&=0; & x_3^2&=0; & & x_4^2=0; \ x_{ij}=0, i<j,  \, \widetilde{q}_{ij}=1;
\end{aligned}
\\
\begin{aligned}
& x_{\alpha}^{4}=0, \ \alpha\in\Oc^{\bq}_+; & & [[x_{(25)},x_3]_c,x_4]_c= q_{34}\zeta[[x_{(25)},x_4]_c,x_3]_c.
\end{aligned}
\end{aligned}
\end{align}
Here {\scriptsize$\Oc^{\bq}_+=\{ 1, 12, 2, 12^23^24, 123^24, 23^24,
	12^23^34^35, 1234^25, 234^25, 34^25, 12^23^34^35^2, 12^23^245$, \\ $123^245, 23^245,
	5 \}$}, and the degree of the integral is
\begin{align*}
\ya &= 33 \alpha_1+ 60\alpha_2 +81\alpha_3 +70\alpha_4 +38\alpha_5.
\end{align*}

\subsubsection{The Dynkin diagram \emph{(\ref{eq:dynkin-ufo(1)}
		b)}}\label{subsubsec:ufo(1)-b}

\

The Nichols algebra $\toba_{\bq}$ is generated by $(x_i)_{i\in \I_5}$ with defining relations
\begin{align}\label{eq:rels-ufo(1)-b}
\begin{aligned}
x_{112}&=0; & x_{221}&=0; & x_{223}&=0; & x_{ij}&=0, & & i<j, \, \widetilde{q}_{ij}=1;\\
x_{332}&=0; &  x_{334}&=0; & x_{554}&=0; & x_{4}^2&=0; & & x_{\alpha}^{4}=0, \ \alpha\in\Oc^{\bq}_+.
\end{aligned}
\end{align}
Here {\scriptsize$\Oc^{\bq}_+=\{ 1, 12, 2, 123, 23, 3,
	12^23^34^35, 12^23^24^35, 123^24^35, 23^24^35, 12^23^34^35^2, 12^23^24^35^2$, \\ $123^24^35^2, 23^24^35^2, 5 \}$}, and the degree of the integral is
\begin{align*}
\ya &= 33 \alpha_1+ 60\alpha_2 +81\alpha_3 +96\alpha_4 +51\alpha_5.
\end{align*}

\subsubsection{The Dynkin diagram \emph{(\ref{eq:dynkin-ufo(1)}
		c)}}\label{subsubsec:ufo(1)-c}

\

The Nichols algebra $\toba_{\bq}$ is generated by $(x_i)_{i\in \I_5}$ with defining relations
\begin{align}\label{eq:rels-ufo(1)-c}
\begin{aligned}
&\begin{aligned}
&[x_{(13)},x_2]_c=0; & x_{112}&=0; & x_{443}&=0; & x_{ij}&=0, & & i<j, \, \widetilde{q}_{ij}=1; \\
&[x_{(24)},x_3]_c=0; & x_2^2&=0; & x_{35}^2&=0; & x_3^2&=0; & & x_{\alpha}^{4}=0, \ \alpha\in\Oc^{\bq}_+;
\end{aligned}
\\
& [x_{125},x_2]_c=0; \quad x_{5}^2=0; \quad x_{235}=2q_{23}x_3x_{25}-q_{35}(1+\zeta)[x_{25},x_3]_c.
\end{aligned}
\end{align}

Here {\scriptsize$\Oc^{\bq}_+=\{ 1, 123, 23, 1234, 234, 4,
	12^2345, 12^235, 12^23^34^25^2, 12^23^345^2, 3^245, 123^245^2, 125, 23^245^2,
	25 \}$}, and the degree of the integral is
\begin{align*}
\ya &= 33 \alpha_1+ 60\alpha_2 +81\alpha_3 +38\alpha_4 +51\alpha_5.
\end{align*}

\subsubsection{The Dynkin diagram \emph{(\ref{eq:dynkin-ufo(1)}
		d)}}\label{subsubsec:ufo(1)-d}

\

The Nichols algebra $\toba_{\bq}$ is generated by $(x_i)_{i\in \I_5}$ with defining relations
\begin{align}\label{eq:rels-ufo(1)-d}
\begin{aligned}
& \begin{aligned}
x_{112}&=0; & x_{221}&=0; & x_{223}&=0; & x_{ij}&=0, \ i<j, \, \widetilde{q}_{ij}=1;\\
& & x_{3}^2&=0; & x_4^2&=0; &  [[[x_{1235},&x_3]_c,x_2]_c,x_3]_c=0;\\
[x_{(24)},&x_3]_c=0; & x_5^2&=0; & x_{35}^2&=0; & x_{\alpha}^{4}&=0, \ \alpha\in\Oc^{\bq}_+;
\end{aligned}
\\
& x_{(35)} +\frac{q_{45}(1+\zeta)}{2}[x_{35},x_4]_c -q_{34}(1-\zeta)x_4x_{35}=0.
\end{aligned}
\end{align}
Here {\scriptsize$\Oc^{\bq}_+=\{ 1, 12, 2, 1234, 234, 34,
	12^23^345, 12^23^25, 123^25, 23^25, 12^23^34^25^2, 12^23^245^2$, \\ $123^245^2, 23^245^2,
	45 \}$}, and the degree of the integral is
\begin{align*}
\ya &= 33 \alpha_1+ 60\alpha_2 +81\alpha_3 +38\alpha_4 +51\alpha_5.
\end{align*}

\subsubsection{The Dynkin diagram \emph{(\ref{eq:dynkin-ufo(1)}
		e)}}\label{subsubsec:ufo(1)-e}

\

The Nichols algebra $\toba_{\bq}$ is generated by $(x_i)_{i\in \I_5}$ with defining relations
\begin{align}\label{eq:rels-ufo(1)-e}
\begin{aligned}
& [x_{(13)},x_2]_c=0; & x_{332}&=0; & x_{334}&=0; & x_1^2&=0; & x_{ij}&=0, \ \widetilde{q}_{ij}=1;\\
& [x_{125},x_2]_c=0; & x_{443}&=0; & x_{552}&=0; & x_2^2&=0; & x_{\alpha}^{4}&=0, \ \alpha\in\Oc^{\bq}_+.
\end{aligned}
\end{align}
Here {\scriptsize$\Oc^{\bq}_+=\{ 12, 123, 3, 1234, 34, 4, 2^33^245, 12^43^34^25^2, 12^43^345^2, 12^43^245^2, 12345, 1235, 125, 2^33^245^2, 5 \}$}, and the degree of the integral is
\begin{align*}
\ya &= 33 \alpha_1+ 96\alpha_2 +70\alpha_3 +38\alpha_4 +51\alpha_5.
\end{align*}

\subsubsection{The Dynkin diagram \emph{(\ref{eq:dynkin-ufo(1)}
		f)}}\label{subsubsec:ufo(1)-f}

\

The Nichols algebra $\toba_{\bq}$ is generated by $(x_i)_{i\in \I_5}$ with defining relations
\begin{align}\label{eq:rels-ufo(1)-f}
\begin{aligned}
x_{221}&=0; & x_{223}&=0; & x_{225}&=0; & x_{552}&=0; & x_{ij}&=0, \ i<j \ \widetilde{q}_{ij}=1;
\\
x_{332}&=0; & x_{334}&=0; & x_{443}&=0; & x_1^2&=0; & x_{\alpha}^{4}&=0, \ \alpha\in\Oc^{\bq}_+.
\end{aligned}
\end{align}
Here {\scriptsize$\Oc^{\bq}_+=\{ 2, 23, 3, 234, 34, 4, 1^32^33^245, 1^32^43^34^25^2, 1^32^43^345^2, 1^32^43^245^2, 1^32^33^245^2, 2345, 235, 25, 5 \}$}, and the degree of the integral is
\begin{align*}
\ya &= 65 \alpha_1+ 96\alpha_2 +70\alpha_3 +38\alpha_4 +51\alpha_5.
\end{align*}

\subsubsection{The associated Lie algebra} This is of type $A_5$.

\subsection{Type $\Ufo(2)$}\label{subsec:type-ufo(2)}
Here $\zeta \in \G'_4$.
We describe first  the root system $\Ufo(2)$.

\subsubsection{Basic datum and root system}
Below, $A_6$, $E_6$, $_{3}T$ and $_{2}T_1$ are numbered as in \eqref{eq:dynkin-system-A}, \eqref{eq:dynkin-system-E} and  	
\eqref{eq:mTn}, respectively.
The basic datum and the bundle of Cartan matrices are described by the following diagram:
\begin{center}
	\begin{tabular}{c c c c c c c }
		$\overset{s_{465}(E_6)}{\underset{a_1}{\vtxgpd}}$
		&
		&
		&
		&
		&
		&
		\\
		{\scriptsize 1} \vline\hspace{5pt}
		& &
		& &
		& &
		\\
		$\overset{s_{465}(E_6)}{\underset{a_2}{\vtxgpd}}$
		&
		& $\overset{A_6}{\underset{a_3}{\vtxgpd}}$
		& \hspace{-5pt}\raisebox{3pt}{$\overset{5}{\rule{30pt}{0.5pt}}$}\hspace{-5pt}
		& $\overset{{}_3T}{\underset{a_4}{\vtxgpd}}$
		& \hspace{-5pt}\raisebox{3pt}{$\overset{6}{\rule{30pt}{0.5pt}}$}\hspace{-5pt}
		& $\overset{s_{56}(A_6)}{\underset{a_5}{\vtxgpd}}$
		\\
		{\scriptsize 2} \vline\hspace{5pt}
		& & {\scriptsize 4} \vline\hspace{5pt}
		& & {\scriptsize 4} \vline\hspace{5pt}
		& &
		\\
		$\overset{s_{465}(E_6)}{\underset{a_6}{\vtxgpd}}$
		& \hspace{-5pt}\raisebox{3pt}{$\overset{3}{\rule{30pt}{0.5pt}}$}\hspace{-5pt}
		& $\overset{s_{465}({}_2T_1)}{\underset{a_7}{\vtxgpd}}$
		&
		& $\overset{s_{56}({}_2T_1)}{\underset{a_8}{\vtxgpd}}$
		& \hspace{-5pt}\raisebox{3pt}{$\overset{3}{\rule{30pt}{0.5pt}}$}\hspace{-5pt}
		& $\overset{s_{56}(E_6)}{\underset{a_9}{\vtxgpd}}$
		\\
		& &{\scriptsize 5} \vline\hspace{5pt}
		& & {\scriptsize 5} \vline\hspace{5pt}
		& & {\scriptsize 2} \vline\hspace{5pt}
		\\
		$\overset{s_{465}(A_6)}{\underset{a_{10}}{\vtxgpd}}$
		& \hspace{-5pt}\raisebox{3pt}{$\overset{6}{\rule{30pt}{0.5pt}}$}\hspace{-5pt}
		& $\overset{s_{45}({}_3T)}{\underset{a_{11}}{\vtxgpd}}$
		& \hspace{-5pt}\raisebox{3pt}{$\overset{4}{\rule{30pt}{0.5pt}}$}\hspace{-5pt}
		& $\overset{s_{45}(A_6)}{\underset{a_{12}}{\vtxgpd}}$
		&
		& $\overset{s_{56}(E_6)}{\underset{a_{13}}{\vtxgpd}}$
		\\
		& &
		& &
		& & {\scriptsize 1} \vline\hspace{5pt}
		\\
		&
		&
		&
		&
		&
		& $\overset{s_{56}(E_6)}{\underset{a_{14}}{\vtxgpd}}$
	\end{tabular}
\end{center}
Using the notation \eqref{eq:notation-root-exceptional}, we set: { \scriptsize
	\begin{align*}
	\Delta_{+}^{(1)}= & \{ 1, 12, 2, 123, 23, 3, 1234, 234, 34, 4, 12^23^24^25, 123^24^25, 1234^25, 12345, 23^24^25, 234^25, \\
	& 2345, 34^25, 345, 45, 5, 12^23^34^45^36, 12^23^34^35^36, 12^23^24^35^36, 123^24^35^36, 23^24^35^36, \\
	& 12^23^34^45^26, 12^23^34^35^26, 12^23^24^35^26, 123^24^35^26, 23^24^35^26, 1^22^33^44^55^46^2, 12^33^44^55^46^2, \\
	& 12^23^24^25^26, 12^23^24^256, 12^23^44^55^46^2, 123^24^25^26, 123^24^256, 23^24^25^26, 23^24^256, \\
	& 12^23^34^55^46^2, 12^23^34^45^46^2, 12^23^34^45^36^2, 1234^25^26, 234^25^26, 34^25^26, 1234^256, \\
	& 234^256, 34^256, 12^23^34^35^36^2, 12^23^24^35^36^2, 123^24^35^36^2, 12345^26, 123456, 23^24^35^36^2, \\
	& 2345^26, 23456, 345^26, 3456, 45^26, 456, 56, 6 \}, \\
	\Delta_{+}^{(2)}= & \{ 1, 12, 2, 123, 23, 3, 1234, 234, 34, 4, 12345, 2345, 345, 45, 5, 12^23^34^35^36, 12^23^24^35^36, \\
	& 12^23^24^25^36, 12^23^24^25^26, 123^24^35^36, 123^24^25^36, 123^24^25^26, 23^24^35^36, 23^24^25^36, \\
	& 23^24^25^26, 12^23^34^45^56^2, 1234^25^36, 234^25^36, 34^25^36, 12^23^34^45^46^2, 12^23^34^35^46^2, \\
	& 1^22^33^44^55^66^3, 12^33^44^55^66^3, 12^23^44^55^66^3, 12^23^34^35^36^2, 1234^25^26, 12345^26, 12^23^24^35^46^2, \\
	& 123^24^35^46^2, 123456, 12^23^34^55^66^3, 12^23^24^35^36^2, 234^25^26, 12^23^34^45^66^3, 12^23^24^25^36^2, \\
	& 2345^26, 12^23^34^45^56^3, 123^24^35^36^2, 123^24^25^36^2, 1234^25^36^2, 23^24^35^46^2, 23456, 23^24^35^36^2, \\
	& 23^24^25^36^2, 234^25^36^2, 34^25^26, 345^26, 45^26, 34^25^36^2, 3456, 456, 56, 6 \}, \\
	\Delta_{+}^{(3)}= & \{ 1, 12, 2, 123, 23, 3, 1234, 234, 34, 4, 12^23^245, 123^245, 12345, 1235, 23^245, 2345, 235, 3^245, \\
	& 345, 35, 5, 12^23^54^35^36, 12^23^54^25^36, 12^23^44^25^36, 123^44^25^36, 23^44^25^36, 12^23^44^25^26, \\
	& 123^44^25^26, 23^44^25^26, 12^23^34^25^26, 123^34^25^26, 23^34^25^26, 3^34^25^26, 1^22^33^64^35^46^2, \\
	& 12^33^64^35^46^2, 12^23^64^35^46^2, 12^23^54^35^46^2, 12^23^54^35^36^2, 12^23^345^26, 12^23^245^26, \\
	& 12^23^2456, 123^345^26, 123^245^26, 123^2456, 123456, 12^23^54^25^46^2, 12^23^54^25^36^2, \\
	& 12^23^44^25^36^2, 123^44^25^36^2, 12356, 23^345^26, 3^345^26, 23^245^26, 3^245^26, 23^44^25^36^2, 23^2456, \\
	& 23456, 2356, 3^2456, 3456, 356, 56, 6 \}, \\
	\Delta_{+}^{(4)}= & \{ 1, 12, 2, 123, 23, 3, 1234, 234, 34, 4, 1^22^23^245, 12^23^245, 123^245, 12345, 1235, 23^245, 2345, \\
	& 235, 345, 35, 5, 1^32^43^54^35^36, 1^32^43^54^25^36, 1^32^43^44^25^36, 1^32^33^44^25^36, 1^22^33^44^25^36, \\
	& 1^32^43^44^25^26, 1^32^33^44^25^26, 1^22^33^44^25^26, 1^32^33^34^25^26, 1^22^33^34^25^26, 1^22^23^34^25^26, \\
	& 12^23^34^25^26, 1^42^53^64^35^46^2, 1^32^33^345^26, 1^32^53^64^35^46^2, 1^32^43^64^35^46^2, 1^32^43^54^35^46^2, \\
	& 1^32^43^54^35^36^2, 1^22^33^345^26, 1^22^23^345^26, 1^22^23^245^26, 1^22^23^2456, 1^32^43^54^25^46^2, \\
	& 1^32^43^54^25^36^2, 12^23^345^26, 1^32^43^44^25^36^2, 12^23^245^26, 12^23^2456, 1^32^33^44^25^36^2, 1^22^33^44^25^36^2, \\
	& 123^245^26, 123^2456, 123456, 12356, 23^245^26, 23^2456, 23456, 2356, 3456, 356, 56, 6 \}, \\
	\Delta_{+}^{(5)}= & \{ 1, 12, 2, 123, 23, 3, 1234, 234, 34, 4, 12^23^24^25, 123^24^25, 1234^25, 12345, 23^24^25, 234^25, 2345, \\
	& 34^25, 345, 45, 5, 12^23^34^45^36, 12^23^34^45^26, 12^23^34^35^26, 12^23^24^35^26, 123^24^35^26, 23^24^35^26, \\
	& 12^23^24^25^26, 123^24^25^26, 23^24^25^26, 1234^25^26, 234^25^26, 34^25^26, 1^22^33^44^55^36^2, 12^33^44^55^36^2, \\
	& 12^23^44^55^36^2, 12^23^34^55^36^2, 12^23^34^45^36^2, 12^23^34^356, 12^23^24^356, 12^23^24^256, 123^24^356, \\
	& 123^24^256, 1234^256, 123456, 12^23^34^45^26^2, 23^24^356, 12^23^34^35^26^2, 12^23^24^35^26^2, 123^24^35^26^2, \\
	& 12346, 23^24^256, 234^256, 34^256, 23^24^35^26^2, 23456, 2346, 3456, 346, 456, 56, 46, 6 \}, \\
	\Delta_{+}^{(6)}= & \{ 1, 12, 2, 123, 23, 3, 1234, 234, 34, 4, 12^23^245, 123^245, 12345, 1235, 2^23^245, 23^245, 2345, 235, \\
	& 345, 35, 5, 12^43^54^35^36, 12^43^54^25^36, 12^43^44^25^36, 12^33^44^25^36, 2^33^44^25^36, 12^43^44^25^26, \\
	& 12^33^44^25^26, 2^33^44^25^26, 12^33^34^25^26, 2^33^34^25^26, 12^23^34^25^26, 2^23^34^25^26, 1^22^53^64^35^46^2, \\
	& 12^53^64^35^46^2, 12^43^64^35^46^2, 12^43^54^35^46^2, 12^43^54^35^36^2, 12^33^345^26, 12^23^345^26, 12^23^245^26, \\
	& 12^23^2456, 123^245^26, 123^2456, 123456, 12^43^54^25^46^2, 12^43^54^25^36^2, 12^43^44^25^36^2, 12^33^44^25^36^2, \\
	& 12356, 2^33^345^26, 2^23^345^26, 2^23^245^26, 23^245^26, 2^33^44^25^36^2, 2^23^2456, 23^2456, 23456, 2356, \\
	& 3456, 356, 56, 6 \}, \\
	\Delta_{+}^{(7)}= & \{ 1, 12, 2, 123, 23, 3, 1234, 234, 34, 4, 12^23^245, 123^245, 12345, 1235, 23^245, 2345, 235, 345, 35, \\
	& 45, 5, 12^23^34^35^36, 12^23^34^25^36, 12^23^24^25^36, 123^24^25^36, 23^24^25^36, 12^23^34^25^26, 12^23^24^25^26, \\
	& 123^24^25^26, 23^24^25^26, 1234^25^26, 234^25^26, 34^25^26, 1^22^33^44^35^46^2, 12^33^44^35^46^2, \\
	& 12^23^44^35^46^2, 12^23^34^35^46^2, 12^23^34^35^36^2, 12^23^345^26, 12^23^245^26, 12^23^2456, 123^245^26, \\
	& 123^2456, 12345^26, 123456, 12^23^34^25^46^2, 12^23^34^25^36^2, 12^23^24^25^36^2, 123^24^25^36^2, 12356, \\
	& 23^245^26, 2345^26, 345^26, 45^26, 23^24^25^36^2, 23^2456, 23456, 2356, 3456, 356, 456, 56, 6 \}.
	\end{align*}
}
Now the bundle of sets of (positive) roots is described as follows:
\begin{align*}
a_1 & \mapsto s_{465}(\Delta_+^{(6)}), &
a_2 & \mapsto s_{465}(\Delta_+^{(4)}), &
a_3 & \mapsto \Delta_+^{(1)}, &
a_4 & \mapsto \Delta_+^{(5)},
\\
a_5 & \mapsto s_{56}(\Delta_+^{(2)}), &
a_6 & \mapsto s_{465}(\Delta_+^{(3)}), &
a_7 & \mapsto s_{465}(\Delta_+^{(7)}), &
a_8 & \mapsto s_{56}(\Delta_+^{(7)}),
\\
a_9 & \mapsto s_{56}(\Delta_+^{(3)}), &
a_{10} & \mapsto s_{465}(\Delta_+^{(2)}), &
a_{11} & \mapsto s_{45}(\Delta_+^{(5)}), &
a_{12} & \mapsto s_{45}(\Delta_+^{(1)}),
\\
a_{13} & \mapsto s_{56}(\Delta_+^{(4)}), &
a_{14} & \mapsto s_{56}(\Delta_+^{(6)}).
\end{align*}

\subsubsection{Weyl groupoid}
\label{subsubsec:type-ufo2-Weyl}
The isotropy group  at $a_1 \in \cX$ is
\begin{align*}
\cW(a_1)= \langle \varsigma_1^{a_1}\varsigma_2 \varsigma_3\varsigma_4 \varsigma_5 \varsigma_6 \varsigma_5 \varsigma_4 \varsigma_3 \varsigma_2 \varsigma_1, \varsigma_2^{a_1}, \varsigma_3^{a_1}, \varsigma_4^{a_1},  \varsigma_5^{a_1}, \varsigma_6^{a_1} \rangle \simeq W(E_6).
\end{align*}

\subsubsection{Incarnation}
We set the matrices $(\bq^{(i)})_{i\in\I_{7}}$, from left to right and  from up to down:
\begin{align}\label{eq:dynkin-ufo(2)}
\begin{aligned}
&
\xymatrix@C-4pt{\overset{\zeta}{\underset{\ }{\circ}}\ar  @{-}[r]^{\ztu}  &
	\overset{\zeta}{\underset{\ }{\circ}} \ar  @{-}[r]^{\ztu}  & \overset{\zeta}{\underset{\
		}{\circ}}
	\ar  @{-}[r]^{\ztu}  & \overset{-1}{\underset{\ }{\circ}} \ar  @{-}[r]^{-1}  &
	\overset{-1}{\underset{\ }{\circ}}  \ar  @{-}[r]^{\ztu}  & \overset{\zeta}{\underset{\
		}{\circ}}}
& &
\\
&
\xymatrix@C-4pt{\overset{\zeta}{\underset{\ }{\circ}}\ar  @{-}[r]^{\ztu}  &
	\overset{\zeta}{\underset{\ }{\circ}} \ar  @{-}[r]^{\ztu}  & \overset{\zeta}{\underset{\
		}{\circ}}
	\ar  @{-}[r]^{\ztu}  & \overset{\zeta}{\underset{\ }{\circ}} \ar  @{-}[r]^{\ztu}  &
	\overset{-1}{\underset{\ }{\circ}}  \ar  @{-}[r]^{\ztu}  & \overset{\zeta}{\underset{\
		}{\circ}}}
\end{aligned}
\end{align}

\begin{align*}
&\xymatrix@C-5pt@R-8pt{  &   &  \overset{\zeta}{\circ} \ar  @{-}[d]^{\ztu} & & \\
	\overset{\zeta}{\underset{\ }{\circ}} \ar  @{-}[r]^{\ztu}  & \overset{-1}{\underset{\
		}{\circ}} \ar  @{-}[r]^{\zeta}  & \overset{-1}{\underset{\ }{\circ}} \ar  @{-}[r]^{\ztu}
	& \overset{\zeta}{\underset{\ }{\circ}} \ar  @{-}[r]^{\ztu}  & \overset{\zeta}{\underset{\
		}{\circ}}}
&& \xymatrix@C-5pt@R-8pt{  &   & \overset{\zeta}{\circ} \ar  @{-}[d]^{\ztu}  & &\\
	\overset{-1}{\underset{\ }{\circ}} \ar  @{-}[r]^{\zeta}  & \overset{-1}{\underset{\
		}{\circ}} \ar  @{-}[r]^{\ztu}  & \overset{\zeta}{\underset{\ }{\circ}} \ar  @{-}[r]^{\ztu}
	& \overset{\zeta}{\underset{\ }{\circ}} \ar  @{-}[r]^{\ztu}  &
	\overset{\zeta}{\underset{\ }{\circ}}}
\\
&\xymatrix@C-5pt@R-8pt{  &   & & \overset{-1}{\circ} \ar  @{-}[d]^{-1} \ar
	@{-}[dr]^{\zeta}  & \\
	\overset{\zeta}{\underset{\ }{\circ}} \ar  @{-}[r]^{\ztu}  & \overset{\zeta}{\underset{\
		}{\circ}} \ar  @{-}[r]^{\ztu}  & \overset{\zeta}{\underset{\ }{\circ}} \ar  @{-}[r]^{\ztu}
	& \overset{-1}{\underset{\ }{\circ}} \ar  @{-}[r]^{\zeta}  & \overset{-1}{\underset{\
		}{\circ}}}
&& \xymatrix@C-5pt@R-8pt{  &   & \overset{\zeta}{\circ} \ar  @{-}[d]^{\ztu}  & &\\
	\overset{-1}{\underset{\ }{\circ}} \ar  @{-}[r]^{\ztu}  & \overset{\zeta}{\underset{\
		}{\circ}} \ar  @{-}[r]^{\ztu}  & \overset{\zeta}{\underset{\ }{\circ}} \ar  @{-}[r]^{\ztu}
	& \overset{\zeta}{\underset{\ }{\circ}} \ar  @{-}[r]^{\ztu}  &
	\overset{\zeta}{\underset{\ }{\circ}}}
\\
& \xymatrix@C-5pt@R-8pt{  &   & \overset{-1}{\circ} \ar  @{-}[d]^{\zeta} \ar
	@{-}[dr]^{-1}  & &\\
	\overset{\zeta}{\underset{\ }{\circ}} \ar  @{-}[r]^{\ztu}  & \overset{\zeta}{\underset{\
		}{\circ}} \ar  @{-}[r]^{\ztu}  & \overset{-1}{\underset{\ }{\circ}} \ar  @{-}[r]^{\zeta}
	& \overset{-1}{\underset{\ }{\circ}}
	\ar  @{-}[r]^{\ztu}  & \overset{\zeta}{\underset{\ }{\circ}}} &&
\end{align*}
Now this is the incarnation:
\begin{align*}
a_1 & \mapsto s_{465}(\bq^{(6)}), &
a_2 & \mapsto s_{465}(\bq^{(4)}), &
a_3 & \mapsto \bq^{(1)}, &
a_4 & \mapsto \bq^{(5)},
\\
a_5 & \mapsto s_{56}(\bq^{(2)}), &
a_6 & \mapsto s_{465}(\bq^{(3)}), &
a_7 & \mapsto s_{465}(\bq^{(7)}), &
a_8 & \mapsto s_{56}(\bq^{(7)}),
\\
a_9 & \mapsto s_{56}(\bq^{(3)}), &
a_{10} & \mapsto s_{465}(\bq^{(2)}), &
a_{11} & \mapsto s_{45}(\bq^{(5)}), &
a_{12} & \mapsto s_{45}(\bq^{(1)}),
\\
a_{13} & \mapsto s_{56}(\bq^{(4)}), &
a_{14} & \mapsto s_{56}(\bq^{(6)}).
\end{align*}

\subsubsection{PBW-basis and dimension} \label{subsubsec:type-ufo2-PBW}
Notice that the roots in each $\Delta_{+}^{a_i}$, $i\in\I_{14}$, are ordered from left to right, justifying the notation $\beta_1, \dots, \beta_{63}$.

The root vectors $x_{\beta_k}$ are described as in Remark \ref{rem:lyndon-word}.
Thus
\begin{align*}
\left\{ x_{\beta_{63}}^{n_{63}} \dots x_{\beta_2}^{n_{2}}  x_{\beta_1}^{n_{1}} \, | \, 0\le n_{k}<N_{\beta_k} \right\}.
\end{align*}
is a PBW-basis of $\toba_{\bq}$. Hence $\dim \toba_{\bq}=2^{27}4^{36}=2^{99}$.

\subsubsection{The Dynkin diagram \emph{(\ref{eq:dynkin-ufo(2)}
		a)}}\label{subsubsec:ufo(2)-a}

\

The Nichols algebra $\toba_{\bq}$ is generated by $(x_i)_{i\in \I_6}$ with defining relations
\begin{align}\label{eq:rels-ufo(2)-a}
\begin{aligned}
& \begin{aligned}
x_{112}&=0; & x_{221}&=0; & x_{223}&=0; & x_{ij}&=0, \ i<j, \, \widetilde{q}_{ij}=1;\\
x_{332}&=0; & x_{334}&=0; & x_{665}&=0; &  [[[x_{(25)},&x_4]_c,x_3]_c,x_4]_c=0;\\
x_4^2&=0; & x_5^2&=0; & x_{45}^2&=0; & x_{\alpha}^{4}&=0, \ \alpha\in\Oc^{\bq}_+;
\end{aligned}
\\
&[[x_{(36)},x_4]_c,x_5]_c=q_{45}\zeta[[x_{(36)},x_5]_c,x_4]_c.
\end{aligned}
\end{align}
Here {\scriptsize$\Oc^{\bq}_+=\{ 1, 12, 2, 123, 23, 3, 12^23^24^25, 123^24^25, 1234^25, 23^24^25, 234^25, 34^25, 12^23^34^35^36$, \\ $12^23^24^35^36,123^24^35^36, 23^24^35^36, 12^23^34^45^26, 1^22^33^44^55^46^2, 12^33^44^55^46^2, 12^23^24^256$, \\ $12^23^44^55^46^2, 123^24^256, 23^24^256, 12^23^34^55^46^2, 1234^256, 234^256, 34^256, 12^23^34^35^36^2$, \\ $12^23^24^35^36^2, 123^24^35^36^2, 12345^26, 23^24^35^36^2, 2345^26, 345^26, 45^26, 6 \}$}, and the degree of the integral is
\begin{align*}
\ya &= 78 \alpha_1+ 150\alpha_2 +216\alpha_3 +276\alpha_4 +226\alpha_5 +116\alpha_6.
\end{align*}

\subsubsection{The Dynkin diagram \emph{(\ref{eq:dynkin-ufo(2)}
		b)}}\label{subsubsec:ufo(2)-b}

\

The Nichols algebra $\toba_{\bq}$ is generated by $(x_i)_{i\in \I_6}$ with defining relations
\begin{align}\label{eq:rels-ufo(2)-b}
\begin{aligned}
x_{112}&=0; & x_{221}&=0; & x_{223}&=0; & x_{ij}&=0, \ i<j, \, \widetilde{q}_{ij}=1;\\
& & x_{332}&=0; &  x_{334}&=0; & x_{443}&=0; \\
x_{445}&=0; & x_{665}&=0; & x_{5}^2&=0; & x_{\alpha}^{4}&=0, \ \alpha\in\Oc^{\bq}_+.
\end{aligned}
\end{align}
Here {\scriptsize$\Oc^{\bq}_+=\{ 1, 12, 2, 123, 23, 3, 1234, 234, 34, 4, 12^23^34^35^36, 12^23^24^35^36, 12^23^24^25^36, 123^24^35^36$, \\ $123^24^25^36, 23^24^35^36, 23^24^25^36, 1234^25^36, 234^25^36, 34^25^36, 1^22^33^44^55^66^3, 12^33^44^55^66^3$, \\ $12^23^44^55^66^3, 12^23^34^35^36^2, 12^23^34^55^66^3, 12^23^24^35^36^2, 12^23^34^45^66^3, 12^23^24^25^36^2, 123^24^35^36^2$, \\ $123^24^25^36^2, 1234^25^36^2, 23^24^35^36^2, 23^24^25^36^2, 234^25^36^2, 34^25^36^2, 6 \}$}, and the degree of the integral is
\begin{align*}
\ya &= 78 \alpha_1+ 150\alpha_2 +216\alpha_3 +276\alpha_4 +330\alpha_5 +168\alpha_6.
\end{align*}

\subsubsection{The Dynkin diagram \emph{(\ref{eq:dynkin-ufo(2)}
		c)}}\label{subsubsec:ufo(2)-c}

\

The Nichols algebra $\toba_{\bq}$ is generated by $(x_i)_{i\in \I_6}$ with defining relations
\begin{align}\label{eq:rels-ufo(2)-c}
\begin{aligned}
& [x_{(13)},x_2]_c=0; & x_{112}&=0; & x_{443}&=0; & x_{ij}&=0, \ i<j, \, \widetilde{q}_{ij}=1;\\
& [x_{236},x_3]_c=0; & x_{445}&=0; & x_{554}&=0; & x_{663}&=0; \\
& [x_{(24)},x_3]_c=0; & x_2^2&=0; & x_3^2&=0; & x_{\alpha}^{4}&=0, \ \alpha\in\Oc^{\bq}_+.
\end{aligned}
\end{align}
Here {\scriptsize$\Oc^{\bq}_+=\{ 1, 123, 23, 1234, 234, 4, 12^23^245, 12345, 1235, 2345, 235, 5, 12^23^54^35^36, 12^23^54^25^36$, \\ $123^44^25^36, 23^44^25^36, 123^44^25^26, 23^44^25^26, 3^34^25^26, 1^22^33^64^35^46^2, 12^33^64^35^46^2, 12^23^54^35^46^2$, \\ $12^23^54^35^36^2, 12^23^245^26, 12^23^2456, 123456, 12^23^54^25^46^2, 12^23^54^25^36^2, 123^44^25^36^2, 12356$, \\ $3^345^26, 13^44^25^36^2, 23456, 2356, 56, 6 \}$}, and the degree of the integral is
\begin{align*}
\ya &= 78 \alpha_1+ 150\alpha_2 +330\alpha_3 +226\alpha_4 +116\alpha_5 +168\alpha_6.
\end{align*}

\subsubsection{The Dynkin diagram \emph{(\ref{eq:dynkin-ufo(2)}
		d)}}\label{subsubsec:ufo(2)-d}

\

The Nichols algebra $\toba_{\bq}$ is generated by $(x_i)_{i\in \I_6}$ with defining relations
\begin{align}\label{eq:rels-ufo(2)-d}
\begin{aligned}
x_{332}&=0; & x_{334}&=0; & x_{336}&=0; & & x_{ij}=0, \ i<j, \, \widetilde{q}_{ij}=1;\\
x_{663}&=0; & x_{443}&=0; & x_{445}&=0; &  & [x_{(13)},x_2]_c=0;\\
x_{554}&=0; & x_1^2&=0; & x_2^2&=0; & & x_{\alpha}^{4}=0, \ \alpha\in\Oc^{\bq}_+.
\end{aligned}
\end{align}
Here {\scriptsize$\Oc^{\bq}_+=\{ 12, 123, 3, 1234, 34, 4, 123^245, 12345, 1235, 345, 24, 5, 12^43^54^35^36, 12^43^54^25^36, 12^43^44^25^36$, \\ $2^33^44^25^36, 12^43^44^25^26, 2^33^44^25^26, 2^33^34^25^26, 1^22^53^64^35^46^2, 12^43^64^35^46^2, 12^43^54^35^46^2$, \\ $12^43^54^35^36^2, 123^245^26, 123^2456, 123456, 12^43^54^25^46^2, 12^43^54^25^36^2, 12^43^44^25^36^2, 12356$, \\ $2^33^345^26, 2^33^44^25^36^2, 3456, 245, 56, 6 \}$}, and the degree of the integral is
\begin{align*}
\ya &= 78 \alpha_1+ 260\alpha_2 +330\alpha_3 +226\alpha_4 +116\alpha_5 +168\alpha_6.
\end{align*}

\subsubsection{The Dynkin diagram \emph{(\ref{eq:dynkin-ufo(2)}
		e)}}\label{subsubsec:ufo(2)-e}

\

The Nichols algebra $\toba_{\bq}$ is generated by $(x_i)_{i\in \I_6}$ with defining relations
\begin{align}\label{eq:rels-ufo(2)-e}
\begin{aligned}
\begin{aligned}
x_{112}&=0; & x_{221}&=0; & x_{223}&=0; & &x_{ij}=0, \ i<j, \, \widetilde{q}_{ij}=1;\\
x_{46}^2&=0; & x_{332}&=0; & x_{334}&=0; &  & [[[x_{2346},x_4]_c,x_3]_c,x_4]_c=0;\\
x_{4}^2&=0; & x_5^2&=0; & x_6^2&=0; & & x_{\alpha}^{4}=0, \ \alpha\in\Oc^{\bq}_+;
\end{aligned}
\\
\begin{aligned}
& [x_{(35)},x_4]_c=0; & x_{(46)}& = q_{45}(1-\zeta)x_5x_{46}- \frac{q_{56}(1+\zeta)}{2}[x_{46},x_5]_c.
\end{aligned}
\end{aligned}
\end{align}
Here {\scriptsize$\Oc^{\bq}_+=\{ 1, 12, 2, 123, 23, 3, 12^23^24^25, 123^24^25, 1234^25, 23^24^25, 234^25, 34^25, 12^23^34^45^36$, \\ $12^23^24^25^26, 123^24^25^26, 23^24^25^26, 1234^25^26, 234^25^26, 34^25^26, 1^22^33^44^55^36^2, 12^33^44^55^36^2$, \\ $12^23^44^55^36^2, 12^23^34^55^36^2, 12^23^34^356, 12^23^24^356, 123^24^356, 23^24^356, 12^23^34^35^26^2$, \\ $12^23^24^35^26^2, 123^24^35^26^2, 12346, 23^24^35^26^2, 2346, 346, 56, 46 \}$}, and the degree of the integral is
\begin{align*}
\ya &= 78 \alpha_1+ 150\alpha_2 +216\alpha_3 +276\alpha_4 +116\alpha_5 +168\alpha_6.
\end{align*}

\subsubsection{The Dynkin diagram \emph{(\ref{eq:dynkin-ufo(2)}
		f)}}\label{subsubsec:ufo(2)-f}

\

The Nichols algebra $\toba_{\bq}$ is generated by $(x_i)_{i\in \I_6}$ with defining relations
\begin{align}\label{eq:rels-ufo(2)-f}
\begin{aligned}
x_{221}&=0; & x_{223}&=0; & x_{332}&=0; & x_{ij}&=0, \ i<j, \, \widetilde{q}_{ij}=1;\\
x_{334}&=0; & x_{336}&=0; & x_{443}&=0; &  x_{445}&=0;\\
x_{554}&=0; & x_{663}&=0; & x_1^2&=0; & x_{\alpha}^{4}&=0, \ \alpha\in\Oc^{\bq}_+.
\end{aligned}
\end{align}
Here {\scriptsize$\Oc^{\bq}_+=\{ 2, 23, 3, 234, 34, 4, 23^345, 2345, 235, 345, 24, 5,
	1^32^43^54^35^36, 1^32^43^54^25^36, 1^32^43^44^25^36$, \\ $1^32^33^44^25^36, 1^32^43^44^25^26, 1^32^33^44^25^26, 1^32^33^34^25^26, 1^32^33^345^26, 1^32^53^64^35^46^2$, \\ $1^32^43^64^35^46^2, 1^32^43^54^35^46^2, 1^32^43^54^35^36^2, 1^32^43^54^25^46^2, 1^32^43^54^25^36^2, 1^32^43^44^25^36^2$, \\ $1^32^33^44^25^36^2, 23^245^26, 23^2456, 23456, 2356, 3456, 245, 56, 6 \}$}, and the degree of the integral is
\begin{align*}
\ya &= 184 \alpha_1+ 260\alpha_2 +330\alpha_3 +226\alpha_4 +116\alpha_5 +168\alpha_6.
\end{align*}

\subsubsection{The Dynkin diagram \emph{(\ref{eq:dynkin-ufo(2)}
		g)}}\label{subsubsec:ufo(2)-g}

\

The Nichols algebra $\toba_{\bq}$ is generated by $(x_i)_{i\in \I_6}$ with defining relations
\begin{align}\label{eq:rels-ufo(2)-g}
\begin{aligned}
& \begin{aligned}
& [x_{(24)},x_3]_c=0; & x_{112}&=0; & x_{221}&=0; & x_{ij}&=0, & & i<j, \, \widetilde{q}_{ij}=1;\\
& [x_{236},x_3]_c=0; & x_{223}&=0; & x_{554}&=0; & x_6^2&=0; & & x_{46}^2=0; \\
& [x_{(35)},x_4]_c=0; & x_{3}^2&=0; & x_4^2&=0; & x_{\alpha}^{4}&=0, & & \alpha\in\Oc^{\bq}_+;
\end{aligned}
\\
& x_{346}+q_{46}(1+\zeta)[x_{36},x_4]_c-2q_{34}x_4x_{36}=0.
\end{aligned}
\end{align}
Here {\scriptsize$\Oc^{\bq}_+=\{ 1, 12, 2, 1234, 234, 34, 12^23^245, 123^245, 1235, 23^345, 235, 24, 12^23^34^35^36, 12^23^24^25^36$, \\ $123^24^25^36, 23^24^25^36, 1234^25^26,
	234^25^26, 34^25^26, 1^22^33^44^35^46^2, 12^33^44^35^46^2, 12^23^44^35^46^2$, \\ $12^23^34^35^36^2, 12^23^345^26, 12^23^2456, 123^2456, 12^23^34^25^46^2, 12^23^24^25^36^2, 123^24^25^36^2$, \\ $12356, 45^26, 23^24^25^36^2, 23^2456, 2356, 245, 6 \}$}, and the degree of the integral is
\begin{align*}
\ya &= 78 \alpha_1+ 150\alpha_2 +216\alpha_3 +226\alpha_4 +116\alpha_5 +168\alpha_6.
\end{align*}

\subsubsection{The associated Lie algebra} This is of type $E_6$.

\subsection{Type $\Ufo(3)$}\label{subsec:type-ufo(3)}
Here $\zeta \in \G'_3$.
We describe first  the root system $\Ufo(3)$.

\subsubsection{Basic datum and root system}
Below, $A_3$, $B_3$, $C_3$ and $T^{(2)}$ are numbered as in \eqref{eq:dynkin-system-A}, \eqref{eq:dynkin-system-B}, \eqref{eq:dynkin-system-C} and  	
\eqref{eq:T2}, respectively.
The basic datum and the bundle of Cartan matrices are described by the following diagram:

\begin{center}
	\begin{tabular}{c c c c c c c c c }
		$\overset{s_{13}(B_3)}{\underset{a_1}{\vtxgpd}}$
		& \hspace{-5pt}\raisebox{3pt}{$\overset{2}{\rule{30pt}{0.5pt}}$}\hspace{-5pt}
		& $\overset{T^{(2)}}{\underset{a_2}{\vtxgpd}}$
		&\hspace{-5pt}\raisebox{3pt}{$\overset{3}{\rule{30pt}{0.5pt}}$}\hspace{-5pt}
		& $\overset{s_{23}(A_3)}{\underset{a_3}{\vtxgpd}}$
		& & & &
		\\
		{\scriptsize 1} \vline\hspace{5pt}
		& & {\scriptsize 1} \vline\hspace{5pt}
		& & {\scriptsize 1} \vline\hspace{5pt}
		& & & &
		\\
		$\overset{s_{13}(B_3)}{\underset{a_4}{\vtxgpd}}$
		& & $\overset{s_{123}(C_3)}{\underset{a_5}{\vtxgpd}}$
		& & $\overset{s_{23}(C_3)}{\underset{a_6}{\vtxgpd}}$
		& & $\overset{B_3}{\underset{a_7}{\vtxgpd}}$ & &
		\\
		& & {\scriptsize 3} \vline\hspace{5pt}
		& & {\scriptsize 3} \vline\hspace{5pt}
		& & {\scriptsize 3} \vline\hspace{5pt} & &
		\\
		& &
		$\overset{s_{123}(A_3)}{\underset{a_8}{\vtxgpd}}$
		& \hspace{-5pt}\raisebox{3pt}{$\overset{1}{\rule{30pt}{0.5pt}}$}\hspace{-5pt}
		& $\overset{s_{13}(T^{(2)})}{\underset{a_9}{\vtxgpd}}$
		& \hspace{-5pt}\raisebox{3pt}{$\overset{2}{\rule{30pt}{0.5pt}}$}\hspace{-5pt}
		& $\overset{B_3}{\underset{a_{10}}{\vtxgpd}}$  & &
	\end{tabular}
\end{center}
Using the notation \eqref{eq:notation-root-exceptional}, we set: { \scriptsize
	\begin{align*}
	\Delta_{+}^{(1)}= & \{ 1, 12, 2, 1^22^33, 1^22^23, 12^23, 1^22^33^2, 123, 23, 3 \}, \\
	\Delta_{+}^{(2)}= & \{ 1, 12, 2, 12^33, 12^23, 2^23, 12^33^2, 123, 23, 3 \}, \\
	\Delta_{+}^{(3)}= & \{ 1, 1^22, 12, 2, 1^32^23, 1^22^23, 1^223, 123, 23, 3 \}, \\
	\Delta_{+}^{(4)}= & \{ 1, 1^22, 12, 2, 1^22^23, 12^23, 1^223, 123, 23, 3 \}, \\
	\Delta_{+}^{(5)}= & \{ 1, 1^22, 12, 2, 1^32^23, 1^323, 1^223, 123, 23, 3 \}.
	\end{align*}
}%
Now the bundle of sets of (positive) roots is described as follows:
\begin{align*}
a_1 & \mapsto s_{13}(\Delta_+^{(3)}), &
a_2 & \mapsto \Delta_+^{(5)}, &
a_3 & \mapsto s_{23}(\Delta_+^{(2)}), &
a_4 & \mapsto s_{13}(\Delta_+^{(4)}),
\\
a_5 & \mapsto s_{123}(\Delta_+^{(1)}), &
a_6 & \mapsto s_{23}(\Delta_+^{(1)}), &
a_7 & \mapsto \Delta_+^{(4)}, &
a_8 & \mapsto s_{123}(\Delta_+^{(2)}),
\\
a_9 & \mapsto s_{13}(\Delta_+^{(5)}), &
a_{10} & \mapsto \Delta_+^{(3)}.
\end{align*}

\subsubsection{Weyl groupoid}
\label{subsubsec:type-ufo3-Weyl}
The isotropy group  at $a_1 \in \cX$ is
\begin{align*}
\cW(a_4)= \langle \varsigma_2^{a_4}, \varsigma_3^{a_4} \rangle \simeq W(A_2).
\end{align*}

\subsubsection{Incarnation}
We set the matrices $(\bq^{(i)})_{i\in\I_{5}}$, from left to right and  from up to down:
\begin{align}\label{eq:dynkin-ufo(3)}
\begin{aligned}
&\Dchainthree{-1}{\ztu}{\zeta }{-\ztu}{-\zeta }
&& \Dchainthree{-1}{\zeta }{-1}{-\ztu}{-\zeta }
&& \Dchainthree{\zeta }{-1}{-1}{-\ztu}{-\zeta }
\\&\Dchainthree{\zeta }{-\ztu}{-\zeta }{-\ztu}{-\zeta }
&&\Dtriangle{\zeta }{-1}{-1}{\ztu}{-\zeta }{-1}
&&
\end{aligned}
\end{align}
Now this is the incarnation:
\begin{align*}
a_1 & \mapsto \Delta_+^{(3)}, &
a_2 & \mapsto \Delta_+^{(5)}, &
a_3 & \mapsto s_{23}(\Delta_+^{(2)}), &
a_4 & \mapsto \Delta_+^{(4)},
\\
a_5 & \mapsto s_{123}(\Delta_+^{(1)}), &
a_6 & \mapsto s_{23}(\Delta_+^{(1)}), &
a_7 & \mapsto s_{13}(\Delta_+^{(4)}), &
a_8 & \mapsto s_{123}(\Delta_+^{(2)}),
\\
a_9 & \mapsto s_{13}(\Delta_+^{(5)}), &
a_{10} & \mapsto s_{13}(\Delta_+^{(3)}).
\end{align*}

\subsubsection{PBW-basis and dimension} \label{subsubsec:type-ufo3-PBW}
Notice that the roots in each $\Delta_{+}^{a_i}$, $i\in\I_{10}$, are ordered from left to right, justifying the notation $\beta_1, \dots, \beta_{10}$.

The root vectors $x_{\beta_k}$ are described as in Remark \ref{rem:lyndon-word}.
Thus
\begin{align*}
\left\{ x_{\beta_{10}}^{n_{10}} \dots x_{\beta_2}^{n_{2}}  x_{\beta_1}^{n_{1}} \, | \, 0\le n_{k}<N_{\beta_k} \right\}.
\end{align*}
is a PBW-basis of $\toba_{\bq}$. Hence $\dim \toba_{\bq}=2^43^36^3=2^73^6$.

\subsubsection{The Dynkin diagram \emph{(\ref{eq:dynkin-ufo(3)}
		a)}}\label{subsubsec:ufo(3)-a}

\

The Nichols algebra $\toba_{\bq}$ is generated by $(x_i)_{i\in \I_3}$ with defining relations
\begin{align}\label{eq:rels-ufo(3)-a}
\begin{aligned}
x_{221}&=0; & x_{13}&=0; & x_1^2&=0; & [x_{(13)},&x_{223}]_c^6=0; \\
x_{332}&=0; & x_2^3&=0; &  x_3^6&=0; &  [[x_{(13)},&x_2]_c,x_2]_c^6=0.
\end{aligned}
\end{align}
Here, $\Oc^{\bq}_+=\{\alpha_3, \alpha_1+3\alpha_2+\alpha_3 \}$ and the degree of the integral is 
\begin{align*}
\ya&= 15\alpha_1 +42\alpha_2 +26\alpha_3.
\end{align*}

\subsubsection{The Dynkin diagram \emph{(\ref{eq:dynkin-ufo(3)}
		b)}}\label{subsubsec:ufo(3)-b}

\

The Nichols algebra $\toba_{\bq}$ is generated by $(x_i)_{i\in \I_3}$ with defining relations
\begin{align}\label{eq:rels-ufo(3)-b}
\begin{aligned}
x_{332}&=0; & x_{13}&=0; &  x_2^2&=0; & [x_{12},&[x_{(13)},x_2]_c]_c^6=0; \\
& & x_1^2&=0; & x_3^6&=0; &  [x_{(13)},&[x_{(13)},x_2]_c]_c^6=0.
\end{aligned}
\end{align}
Here, $\Oc^{\bq}_+=\{2\alpha_1 + 3\alpha_2 +\alpha_3, 2\alpha_1+3\alpha_2+2\alpha_3 \}$ and the degree of the integral is 
\begin{align*}
\ya&= 29\alpha_1 +42\alpha_2 +26\alpha_3.
\end{align*}

\subsubsection{The Dynkin diagram \emph{(\ref{eq:dynkin-ufo(3)}
		c)}}\label{subsubsec:ufo(3)-c}

\

The Nichols algebra $\toba_{\bq}$ is generated by $(x_i)_{i\in \I_3}$ with defining relations
\begin{align}\label{eq:rels-ufo(3)-c}
\begin{aligned}
x_{332}&=0; & & [[x_{12},x_{(13)}]_c,x_2]_c=0; & x_{112}^6&=0; & [x_{112},&x_{12}]_c=0; \\
x_{13}&=0; & & x_1^3=0; \quad x_2^2=0; & x_3^6&=0; &  [x_1,&x_{(13)}]_c^6=0.
\end{aligned}
\end{align}
Here, $\Oc^{\bq}_+=\{2\alpha_1 + \alpha_2, 2\alpha_1+\alpha_2+\alpha_3 \}$ and the degree of the integral is 
\begin{align*}
\ya&= 29\alpha_1 +20\alpha_2 +15\alpha_3.
\end{align*}

\subsubsection{The Dynkin diagram \emph{(\ref{eq:dynkin-ufo(3)}
		d)}}\label{subsubsec:ufo(3)-d}

\
The Nichols algebra $\toba_{\bq}$ is generated by $(x_i)_{i\in \I_3}$ with defining relations
\begin{align}\label{eq:rels-ufo(3)-d}
\begin{aligned}
x_{221}&=0; & x_{13}&=0; & x_1^3&=0; & x_{332}&=0; \\
x_{223}&=0; & x_2^6&=0; &  x_3^6&=0; &  x_{23}^6&=0.
\end{aligned}
\end{align}
Here, $\Oc^{\bq}_+=\{\alpha_2 +\alpha_3, \alpha_3 \}$ and the degree of the integral is 
\begin{align*}
\ya&= 15\alpha_1 +20\alpha_2 +15\alpha_3.
\end{align*}

\subsubsection{The Dynkin diagram \emph{(\ref{eq:dynkin-ufo(3)}
		e)}}\label{subsubsec:ufo(3)-e}

\

The Nichols algebra $\toba_{\bq}$ is generated by $(x_i)_{i\in \I_3}$ with defining relations
\begin{align}\label{eq:rels-ufo(3)-e}
\begin{aligned}
& \begin{aligned}
x_{113}&=0; & x_1^3&=0; & x_{23}^6&=0; &  [x_{112},x_{12}]_c&=0; \\
x_2^2&=0; &  x_3^2&=0; & x_{112}^6&=0; &  [x_{12},x_{(13)}]_c^6&=0;
\end{aligned}
\\
& x_{(13)}=\frac{q_{23}\zeta}{1-\ztu}[x_{13},x_2]_c-q_{12}\ztu x_2x_{13}.
\end{aligned}
\end{align}
Here, $\Oc^{\bq}_+=\{\alpha_2 +\alpha_3, 2\alpha_1+2\alpha_2+\alpha_3 \}$ and the degree of the integral is 
\begin{align*}
\ya&= 29\alpha_1 +26\alpha_2 +15\alpha_3.
\end{align*}

\subsubsection{The associated Lie algebra} This is of type $A_2$.

\subsection{Type $\Ufo(4)$}\label{subsec:type-ufo(4)}
Here $\zeta \in \G'_3$.
We describe first  the root system $\Ufo(4)$.

\subsubsection{Basic datum and root system}
Below, $A_3$, $C_3$, $C_2^{(1)}$ and $T^{(2)}$ are numbered as in \eqref{eq:dynkin-system-A}, \eqref{eq:dynkin-system-C}, \eqref{eq:Cn-(1)} and \eqref{eq:T2}, respectively.
The basic datum and the bundle of Cartan matrices are described by the following diagram:

\begin{center}
	\begin{tabular}{c c c c c c c c c c}
		& & & & & $\overset{s_{23}(C_3)}{\underset{a_1}{\vtxgpd}}$ & &  & &
		\\
		& & & & & {\scriptsize 3} \vline\hspace{5pt} & &  & &
		\\
		& $\overset{C_2^{(1)}}{\underset{a_2}{\vtxgpd}}$
		& \hspace{-5pt}\raisebox{3pt}{$\overset{3}{\rule{30pt}{0.5pt}}$}\hspace{-5pt}
		& $\overset{C_3}{\underset{a_3}{\vtxgpd}}$
		& \hspace{-5pt}\raisebox{3pt}{$\overset{2}{\rule{30pt}{0.5pt}}$}\hspace{-5pt}
		& $\overset{s_{13}(T^{(2)})}{\underset{a_4}{\vtxgpd}}$
		& \hspace{-5pt}\raisebox{3pt}{$\overset{1}{\rule{30pt}{0.5pt}}$}\hspace{-5pt}
		& $\overset{s_{12}(A_3)}{\underset{a_5}{\vtxgpd}}$  & &
		\\
		& {\scriptsize 1} \vline\hspace{5pt} & & {\scriptsize 1} \vline\hspace{5pt} & & &
		& {\scriptsize 2} \vline\hspace{5pt}  & &
		\\
		& $\overset{\tau(C_3)}{\underset{a_6}{\vtxgpd}}$
		& \hspace{-5pt}\raisebox{3pt}{$\overset{3}{\rule{30pt}{0.5pt}}$}\hspace{-5pt}
		& $\overset{A_3}{\underset{a_7}{\vtxgpd}}$
		& \hspace{-5pt}\raisebox{3pt}{$\overset{2}{\rule{30pt}{0.5pt}}$}\hspace{-5pt}
		& $\overset{s_{23}(T^{(2)})}{\underset{a_8}{\vtxgpd}}$
		& \hspace{-5pt}\raisebox{3pt}{$\overset{1}{\rule{30pt}{0.5pt}}$}\hspace{-5pt}
		& $\overset{s_{123}(C_3)}{\underset{a_9}{\vtxgpd}}$  & &
	\end{tabular}
\end{center}
Using the notation \eqref{eq:notation-root-exceptional}, the bundle of root sets is the following: { \scriptsize
	\begin{align*}
	\Delta_{+}^{a_1}= & s_{23}(\{ 1,12,123,12^23,123^2,12^23^2,12^23^3,2,23,23^2,3 \}), \\
	\Delta_{+}^{a_2}= & \{ 1,12,123,12^2,12^23,12^33,12^33^2,2,2^23,23,3 \}, \\
	\Delta_{+}^{a_3}= & \{ 1,12,123,12^23,12^33,12^23^2,12^33^2,2,2^23,23,3 \}, \\
	\Delta_{+}^{a_4}= & \{ 1,12,13,123,12^23,12^23^2,12^33^2,2,2^23,23,3 \}, \\
	\Delta_{+}^{a_5}= & s_{12}(\{ 1,12,1^22^23,123,12^23,1^22^33,1^22^33^2,1^32^43^2,2,23,3 \}), \\
	\Delta_{+}^{a_6}= & \{ 1,12,12^2,1^22^23,123,12^23,1^22^33^2,1^22^33,2,23,3 \}, \\
	\Delta_{+}^{a_7}= & \{ 1,12,1^22^23,123,1^22^33,12^23,12^23^2,1^22^33^2,2,23,3 \}, \\
	\Delta_{+}^{a_8}= & \{ 1,12,12^2,1^22^23,13,1^223,123,12^23,2,23,3 \}, \\
	\Delta_{+}^{a_9}= & s_{123}(\{ 1,12,123,12^23,12^33,12^33^2,12^43^2,2,2^23,23,3 \}).
	\end{align*}
}%

\subsubsection{Weyl groupoid}
\label{subsubsec:type-ufo4-Weyl}
The isotropy group  at $a_2 \in \cX$ is
\begin{align*}
\cW(a_2)= \langle \varsigma_2^{a_2}, \varsigma_1^{a_2} \varsigma_2\varsigma_1 \rangle \simeq \Z/2  \times \Z/2.
\end{align*}

\subsubsection{Incarnation}
We set the matrices $(\bq^{(i)})_{i\in\I_{9}}$, from left to right and  from up to down:
\begin{align}\label{eq:dynkin-ufo(4)}
\begin{aligned}
&\Dchainthree{-1}{-1}{-1}{-\ztu}{\ztu}
&&\Dchainthree{-1}{\ztu}{\ztu}{-\ztu}{-1}
&&\Dchainthree{-1}{\ztu}{\zeta }{-\zeta }{-1}
\\
&\Dtriangle{-1}{-1}{\zeta }{-1}{\ztu}{-\zeta }
&&\Dchainthree{-1}{\zeta}{-1}{-1}{-1}
&&\Dchainthree{-1}{\zeta }{-\zeta }{-\ztu}{-1}
\\
&\Dchainthree{-1}{\zeta }{-1}{-\zeta }{-1}
&&\Dtriangle{-\zeta }{\zeta }{-1}{-\ztu}{\ztu}{-\ztu}
&& \Dchainthree{-1}{\ztu}{\zeta }{-1}{-1}
\end{aligned}
\end{align}
Now, this is the incarnation:
\begin{align*}
& a_1\mapsto s_{23}(\bq^{1}); &
& a_i\mapsto s_{13}(\bq^{i}), \ i=4,8; \\
& a_i\mapsto s_{12}(\bq^{i}), \ i=5,9; &
& a_i\mapsto \bq^{(i)}, \ i=2,3,6,7.
\end{align*}

\subsubsection{PBW-basis and dimension} \label{subsubsec:type-ufo4-PBW}
Notice that the roots in each $\Delta_{+}^{a_i}$, $i\in\I_{9}$, are ordered from left to right, justifying the notation $\beta_1, \dots, \beta_{11}$.

The root vectors $x_{\beta_k}$ are described as in Remark \ref{rem:lyndon-word}.
Thus
\begin{align*}
\left\{ x_{\beta_{11}}^{n_{11}} \dots x_{\beta_2}^{n_{2}}  x_{\beta_1}^{n_{1}} \, | \, 0\le n_{k}<N_{\beta_k} \right\}.
\end{align*}
is a PBW-basis of $\toba_{\bq}$. Hence $\dim \toba_{\bq}=2^{7}3^{3}6=2^83^4$.

\subsubsection{The Dynkin diagram \emph{(\ref{eq:dynkin-ufo(4)}
		a)}}\label{subsubsec:ufo(4)-a}

\
The Nichols algebra $\toba_{\bq}$ is generated by $(x_i)_{i\in \I_3}$ with defining relations
\begin{align}\label{eq:rels-ufo(4)-a}
\begin{aligned}
x_1^2&=0; & x_2^2&=0; & [x_{332},x_{32}]_c&=0;  \\
x_3^3&=0; &  [[x_{(13)},x_3]_c,x_2]_c^6&=0; & [x_{3321},x_{321}]_c&=0; \\
x_{13}&=0; &  x_{12}^2&=0; & [[x_{32},x_{321}]_c,x_2]_c&=0.
\end{aligned}
\end{align}
Here, $\Oc^{\bq}_+=\{\alpha_1 + 2\alpha_2 +2\alpha_3 \}$ and the degree of the integral is 
\begin{align*}
\ya&= 12\alpha_1 +22\alpha_2 +24\alpha_3.
\end{align*}

\subsubsection{The Dynkin diagram \emph{(\ref{eq:dynkin-ufo(4)}
		b)}}\label{subsubsec:ufo(4)-b}

\
The Nichols algebra $\toba_{\bq}$ is generated by $(x_i)_{i\in \I_3}$ with defining relations
\begin{align}\label{eq:rels-ufo(4)-b}
\begin{aligned}
\begin{aligned}
x_1^2&=0; & x_2^3&=0; &  x_{12}^6&=0; &
x_3^2&=0; &  x_{13}&=0;
\end{aligned}
\\
\begin{aligned}
& [x_{223}, x_{23}]_c=0; & [x_1,x_{223}]_c &= q_{12} x_2x_{123} -q_{23}[x_{123},x_2]_c.
\end{aligned}
\end{aligned}
\end{align}
Here, $\Oc^{\bq}_+=\{\alpha_1 + \alpha_2\}$ and the degree of the integral is 
\begin{align*}
\ya&= 12\alpha_1 +24\alpha_2 +10\alpha_3.
\end{align*}

\subsubsection{The Dynkin diagram \emph{(\ref{eq:dynkin-ufo(4)}
		c)}}\label{subsubsec:ufo(4)-c}

\
The Nichols algebra $\toba_{\bq}$ is generated by $(x_i)_{i\in \I_3}$ with defining relations
\begin{align}\label{eq:rels-ufo(4)-c}
\begin{aligned}
x_1^2&=0; & x_2^3&=0; & x_3^2&=0; &  x_{(13)}^6&=0; \\
&& x_{221}&=0; & x_{13}&=0; & [x_{223},x_{23}]_c&=0.
\end{aligned}
\end{align}
Here, $\Oc^{\bq}_+=\{\alpha_1 + \alpha_2 +\alpha_3 \}$ and the degree of the integral is 
\begin{align*}
\ya&= 12\alpha_1 +24\alpha_2 +16\alpha_3.
\end{align*}

\subsubsection{The Dynkin diagram \emph{(\ref{eq:dynkin-ufo(4)}
		d)}}\label{subsubsec:ufo(4)-d}

\
The Nichols algebra $\toba_{\bq}$ is generated by $(x_i)_{i\in \I_3}$ with defining relations
\begin{align}\label{eq:rels-ufo(4)-d}
\begin{aligned}
& \begin{aligned}
x_1^2&=0; & x_2^3&=0; & [x_{221},x_{21}]_c&=0;
\\
x_3^2&=0; & x_{13}^2&=0; & [x_{(13)},x_2]_c^6&=0; 
\end{aligned}
\\
& x_{2223}=0; \quad x_{(13)}= \frac{q_{23}(\zeta-1)}{2} [x_{13},x_2]_c +q_{12}(1-\ztu)x_2x_{13}.
\end{aligned}
\end{align}
Here, $\Oc^{\bq}_+=\{\alpha_1 + 2\alpha_2 +\alpha_3 \}$ and the degree of the integral is 
\begin{align*}
\ya&= 16\alpha_1 +24\alpha_2 +12\alpha_3.
\end{align*}

\subsubsection{The Dynkin diagram \emph{(\ref{eq:dynkin-ufo(4)}
		e)}}\label{subsubsec:ufo(4)-e}

\
The Nichols algebra $\toba_{\bq}$ is generated by $(x_i)_{i\in \I_3}$ with defining relations
\begin{align}\label{eq:rels-ufo(4)-e}
\begin{aligned}
x_1^2&=0; & x_2^2&=0; & x_3^2&=0; &  [x_{12},[x_{123},x_{123}]_c]_c&=0;\\
x_{12}^3&=0; & x_{23}^2&=0; & x_{13}&=0; &  [x_{12},x_{(13)}]_c^6&=0.
\end{aligned}
\end{align}
Here, $\Oc^{\bq}_+=\{2\alpha_1 + 2\alpha_2 +\alpha_3 \}$ and the degree of the integral is 
\begin{align*}
\ya&= 24\alpha_1 +30\alpha_2 +16\alpha_3.
\end{align*}

\subsubsection{The Dynkin diagram \emph{(\ref{eq:dynkin-ufo(4)}
		f)}}\label{subsubsec:ufo(4)-f}

\
The Nichols algebra $\toba_{\bq}$ is generated by $(x_i)_{i\in \I_3}$ with defining relations
\begin{align}\label{eq:rels-ufo(4)-f}
\begin{aligned}
x_1^2&=0; & x_2^6&=0; &  x_{2221}&=0; &
x_3^2&=0; &  x_{13}&=0; &  x_{223}&=0.
\end{aligned}
\end{align}
Here, $\Oc^{\bq}_+=\{\alpha_2 \}$ and the degree of the integral is 
\begin{align*}
\ya&= 14\alpha_1 +24\alpha_2 +10\alpha_3.
\end{align*}

\subsubsection{The Dynkin diagram \emph{(\ref{eq:dynkin-ufo(4)}
		g)}}\label{subsubsec:ufo(4)-g}

\
The Nichols algebra $\toba_{\bq}$ is generated by $(x_i)_{i\in \I_3}$ with defining relations
\begin{align}\label{eq:rels-ufo(4)-g}
\begin{aligned}
x_1^2&=0; & x_2^2&=0; & [[x_{32},x_{321}]_c,x_2]_c&=0; \\
x_3^2&=0; &  x_{13}&=0; &  x_{23}^6&=0.
\end{aligned}
\end{align}
Here, $\Oc^{\bq}_+=\{\alpha_2 +\alpha_3 \}$ and the degree of the integral is 
\begin{align*}
\ya&= 14\alpha_1 +24\alpha_2 +16\alpha_3.
\end{align*}

\subsubsection{The Dynkin diagram \emph{(\ref{eq:dynkin-ufo(4)}
		h)}}\label{subsubsec:ufo(4)-h}

\
The Nichols algebra $\toba_{\bq}$ is generated by $(x_i)_{i\in \I_3}$ with defining relations
\begin{align}\label{eq:rels-ufo(4)-h}
\begin{aligned}
& \begin{aligned}
x_1^6&=0; & x_2^2&=0; & x_3^3&=0; & x_{112}&=0; & x_{113}&=0; & x_{332}&=0;
\end{aligned}
\\
& x_{(13)} =q_{23}(\ztu-1)[x_{13},x_2]_c +q_{12}(1-\ztu)x_2x_{13}.
\end{aligned}
\end{align}
Here, $\Oc^{\bq}_+=\{\alpha_1 \}$ and the degree of the integral is 
\begin{align*}
\ya&= 16\alpha_1 +8\alpha_2 +14\alpha_3.
\end{align*}

\subsubsection{The Dynkin diagram \emph{(\ref{eq:dynkin-ufo(4)}
		i)}}\label{subsubsec:ufo(4)-i}

\
The Nichols algebra $\toba_{\bq}$ is generated by $(x_i)_{i\in \I_3}$ with defining relations
\begin{align}\label{eq:rels-ufo(4)-i}
\begin{aligned}
x_1^2&=0; & x_2^3&=0; & x_3^2&=0; & [x_{223}, x_{23}]_c&=0; \\
x_{221}&=0; & x_{13}&=0; & x_{223}^6&=0; & [x_{(13)},x_{23}]_c&=0.
\end{aligned}
\end{align}
Here, $\Oc^{\bq}_+=\{2\alpha_2 +\alpha_3 \}$ and the degree of the integral is 
\begin{align*}
\ya&= 8\alpha_1 +30\alpha_2 +16\alpha_3.
\end{align*}

\subsubsection{The associated Lie algebra} This is of type $A_1$.

\subsection{Type $\Ufo(5)$}\label{subsec:type-ufo(5)}
Here $\zeta \in \G'_3$.
We describe first  the root system $\Ufo(5)$.

\subsubsection{Basic datum and root system}
Below, $A_4$, $B_4$, $C_4$, $T_1$ and $_{1}\widetilde{T}^{(2)}$ are numbered as in \eqref{eq:dynkin-system-A}, \eqref{eq:dynkin-system-B}, \eqref{eq:dynkin-system-C}, \eqref{eq:mTn} and \eqref{eq:1T2t}, respectively.
The basic datum and the bundle of Cartan matrices are described by the following diagram:

\begin{center}
	\begin{tabular}{c c c c c c c }
		$\overset{B_4}{\underset{a_1}{\vtxgpd}}$
		&
		& $\overset{s_{34}(C_4)}{\underset{a_2}{\vtxgpd}}$
		& \hspace{-5pt}\raisebox{3pt}{$\overset{2}{\rule{30pt}{0.5pt}}$}\hspace{-5pt}
		& $\overset{s_{24}(T_1)}{\underset{a_3}{\vtxgpd}}$
		& \hspace{-5pt}\raisebox{3pt}{$\overset{1}{\rule{30pt}{0.5pt}}$}\hspace{-5pt}
		& $\overset{\kappa_3(A_4)}{\underset{a_4}{\vtxgpd}}$
		\\
		{\scriptsize 4} \vline\hspace{5pt}
		& & {\scriptsize 4} \vline\hspace{5pt}
		& & {\scriptsize 4} \vline\hspace{5pt}
		& &
		\\
		$\overset{B_4}{\underset{a_5}{\vtxgpd}}$
		& \hspace{-5pt}\raisebox{3pt}{$\overset{3}{\rule{30pt}{0.5pt}}$}\hspace{-5pt}
		& $\overset{{}_1\widetilde{T}^{(2)}}{\underset{a_6}{\vtxgpd}}$
		&
		& $\overset{s_{24}({}_1\widetilde{T}^{(2)})}{\underset{a_7}{\vtxgpd}}$
		& \hspace{-5pt}\raisebox{3pt}{$\overset{3}{\rule{30pt}{0.5pt}}$}\hspace{-5pt}
		& $\overset{s_{24}(B_4)}{\underset{a_8}{\vtxgpd}}$
		\\
		& &{\scriptsize 2} \vline\hspace{5pt}
		& & {\scriptsize 2} \vline\hspace{5pt}
		& & {\scriptsize 2} \vline\hspace{5pt}
		\\
		$\overset{\kappa_1(A_4)}{\underset{a_9}{\vtxgpd}}$
		& \hspace{-5pt}\raisebox{3pt}{$\overset{1}{\rule{30pt}{0.5pt}}$}\hspace{-5pt}
		& $\overset{T_1}{\underset{a_{10}}{\vtxgpd}}$
		& \hspace{-5pt}\raisebox{3pt}{$\overset{4}{\rule{30pt}{0.5pt}}$}\hspace{-5pt}
		& $\overset{\kappa_2(C_4)}{\underset{a_{11}}{\vtxgpd}}$
		&
		& $\overset{s_{24}(B_4)}{\underset{a_{12}}{\vtxgpd}}$
	\end{tabular}
\end{center}
Using the notation \eqref{eq:notation-root-exceptional}, we set: { \scriptsize
	\begin{align*}
	\Delta_{+}^{(1)}= & \{ 1, 12, 2, 123, 23, 3, 1234, 12^23^24^2, 123^24^2, 1234^2, 1^22^33^44^5, 12^23^34^3, 12^23^24^3, 123^24^3, \\
	& 12^23^34^4, 12^33^44^5, 12^23^44^5, 12^23^34^5, 234, 23^24^2, 234^2, 23^24^3, 34, 34^2, 4 \}, \\
	\Delta_{+}^{(2)}= & \{ 1, 12, 2, 123, 23, 3, 12^23^24, 12^23^24, 123^24, 23^24, 12^23^34^2, 1^22^33^44^3, 12^33^44^3, 12^23^44^3, \\
	& 1234, 12^23^24^2, 123^24^2, 12^23^34^3, 1234^2, 234, 23^24^2, 34, 234^2, 34^2, 4 \}, \\
	\Delta_{+}^{(3)}= & \{ 1, 12, 12^2, 2, 1^22^33, 12^33, 12^23, 123, 23, 3, 1^22^43^34, 1^22^43^24, 1^22^33^24, 12^33^24, 12^23^24, \\
	& 123^24, 23^24, 1^22^43^34^2, 1^22^334, 12^334, 12^234, 1234, 234, 34, 4 \}, \\
	\Delta_{+}^{(4)}= & \{ 1, 12, 2, 123, 23, 3, 12^23^24, 1^22^43^54^2, 1^22^33^54^2, 12^33^54^2, 12^23^24, 1^22^33^44^2, 12^33^44^2, \\
	& 1^22^43^64^3, 123^24, 12^23^44^2, 23^24, 1^22^43^54^3, 1^22^33^54^3, 12^33^54^3, 12^23^34^2, 1234, 234, 34, 4 \}, \\
	\Delta_{+}^{(5)}= & \{ 1, 12, 2, 12^23, 123, 23, 3, 12^23^24, 12^23^24, 123^24, 23^24, 3^24, 1^22^33^34^2, 12^33^34^2, 12^23^34^2, \\
	& 12^234, 1234, 12^23^24^2, 234, 123^24^2, 124, 23^24^2, 34, 24, 4 \}, \\
	\Delta_{+}^{(6)}= & \{ 1, 12, 2, 12^23, 1^22^33^2, 12^33^2, 123, 12^23^2, 23, 3, 1^22^43^34, 1^22^43^24, 1^22^33^34, 12^33^34, \\
	& 1^22^33^24, 12^33^24, 12^23^24, 1^22^43^34^2, 12^234, 1234, 124, 234, 34, 24, 4 \}.
	\end{align*}
}
Now the bundle of sets of (positive) roots is described as follows:
\begin{align*}
a_1 & \mapsto \Delta_+^{(1)}, &
a_2 & \mapsto \tau(\Delta_+^{(3)}), &
a_3 & \mapsto \kappa_6(\Delta_+^{(6)}), &
a_4 & \mapsto \kappa_4(\Delta_+^{(4)}),
\\
a_5 & \mapsto \Delta_+^{(2)}, &
a_6 & \mapsto s_{34}(\Delta_+^{(5)}), &
a_7 & \mapsto \kappa_2(\Delta_+^{(5)}), &
a_8 & \mapsto s_{24}(\Delta_+^{(2)}),
\\
a_9 & \mapsto s_{13}(\Delta_+^{(4)}), &
a_{10} & \mapsto \kappa_7(\Delta_+^{(6)}), &
a_{11} & \mapsto \kappa_7(\Delta_+^{(3)}), &
a_{12} & \mapsto s_{24}(\Delta_+^{(1)}).
\end{align*}

\subsubsection{Weyl groupoid}
\label{subsubsec:type-ufo5-Weyl}
The isotropy group at $a_1 \in \cX$ is
\begin{align*}
\cW(a_1)= \langle \varsigma_1^{a_1}, \varsigma_2^{a_1},  \varsigma_3^{a_1}, \varsigma_4^{a_1} \varsigma_3\varsigma_2 \varsigma_4 \varsigma_1 \varsigma_4\varsigma_2 \varsigma_3 \varsigma_4 \rangle \simeq W(A_4).
\end{align*}

\subsubsection{Incarnation}
We set the matrices $(\bq^{(i)})_{i\in\I_{6}}$, from left to right and  from up to down:
\begin{align}\label{eq:dynkin-ufo(5)}
\begin{aligned}
&\Dchainfour{-\zeta }{-\ztu}{-\zeta }{-\ztu}{-\zeta }{-\ztu}{\zeta } &&
\Dchainfour{-\zeta }{-\ztu}{-\zeta }{-\ztu}{-1}{-1}{\zeta }
\\
&\Dchainfour{-\zeta }{-\ztu}{\zeta }{\ztu}{-1}{-\ztu}{-\zeta }
&&\Dchainfour{-\zeta }{-\ztu}{-\zeta }{-\ztu}{-1}{-\ztu}{-\zeta }.
\\
& \Drightofway{-\zeta }{-\ztu}{-1}{-\zeta }{\ztu}{-1}{-1}{\zeta }
&&\Drightofway{-\zeta }{-\ztu}{-1}{-\zeta }{\zeta }{-1}{-\zeta }{-1}
\end{aligned}
\end{align}
Now this is the incarnation:
\begin{align*}
a_1 & \mapsto \bq^{(1)}, &
a_2 & \mapsto \tau(\bq^{(3)}), &
a_3 & \mapsto \kappa_6(\bq^{(6)}), &
a_4 & \mapsto \kappa_4(\bq^{(4)}),
\\
a_5 & \mapsto \bq^{(2)}, &
a_6 & \mapsto s_{34}(\bq^{(5)}), &
a_7 & \mapsto \kappa_2(\bq^{(5)}), &
a_8 & \mapsto s_{24}(\bq^{(2)}),
\\
a_9 & \mapsto s_{13}(\bq^{(4)}), &
a_{10} & \mapsto \kappa_7(\bq^{(6)}), &
a_{11} & \mapsto \kappa_7(\bq^{(3)}), &
a_{12} & \mapsto s_{24}(\bq^{(1)}).
\end{align*}

\subsubsection{PBW-basis and dimension} \label{subsubsec:type-ufo5-PBW}
Notice that the roots in each $\Delta_{+}^{a_i}$, $i\in\I_{12}$, are ordered from left to right, justifying the notation $\beta_1, \dots, \beta_{25}$.

The root vectors $x_{\beta_k}$ are described as in Remark \ref{rem:lyndon-word}.
Thus
\begin{align*}
\left\{ x_{\beta_{25}}^{n_{25}} \dots x_{\beta_2}^{n_{2}}  x_{\beta_1}^{n_{1}} \, | \, 0\le n_{k}<N_{\beta_k} \right\}.
\end{align*}
is a PBW-basis of $\toba_{\bq}$. Hence $\dim \toba_{\bq}=2^{10}3^56^{10}=2^{20}3^{15}$.

\subsubsection{The Dynkin diagram \emph{(\ref{eq:dynkin-ufo(5)}
		a)}}\label{subsubsec:ufo(5)-a}

\

The Nichols algebra $\toba_{\bq}$ is generated by $(x_i)_{i\in \I_4}$ with defining relations
\begin{align}\label{eq:rels-ufo(5)-a}
\begin{aligned}
x_{112}&=0; & x_{221}&=0; & x_{223}&=0; & x_{ij}&=0, \ i<j, \, \widetilde{q}_{ij}=1;\\
x_{332}&=0; &  x_{334}&=0; & x_{4}^3&=0; & x_{\alpha}^{6}&=0, \ \alpha\in\Oc^{\bq}_+.
\end{aligned}
\end{align}
Here {\scriptsize$\Oc^{\bq}_+=\{ 1, 12, 2, 123, 23, 3, 1^22^33^44^5, 12^33^44^5, 12^23^44^5, 12^23^34^5 \}$}
and the degree of the integral is
\begin{align*}
\ya &= 50 \alpha_1+ 90\alpha_2 +120\alpha_3 +140\alpha_4.
\end{align*}


\subsubsection{The Dynkin diagram \emph{(\ref{eq:dynkin-ufo(5)}
		b)}}\label{subsubsec:ufo(5)-b}

\

The Nichols algebra $\toba_{\bq}$ is generated by $(x_i)_{i\in \I_4}$ with defining relations
\begin{align}\label{eq:rels-ufo(5)-b}
\begin{aligned}
x_{112}&=0; & x_{221}&=0; & x_{ij}&=0, \ i<j, \, \widetilde{q}_{ij}=1;\\
x_{223}&=0; & [x_{443},&x_{43}]_c=0;  & [[x_{43},&x_{432}]_c,x_3]_c=0;\\
x_3^2&=0; & x_{4}^3&=0; & x_{\alpha}^{6}&=0, \ \alpha\in\Oc^{\bq}_+.
\end{aligned}
\end{align}
Here {\scriptsize$\Oc^{\bq}_+=\{ 1, 12, 2, 12^23^34, 1^22^33^44^3, 12^33^44^3, 12^23^44^3, 1234^2, 234^2, 34^2 \}$}
and the degree of the integral is
\begin{align*}
\ya &= 50 \alpha_1+ 90\alpha_2 +120\alpha_3 +104\alpha_4.
\end{align*}


\subsubsection{The Dynkin diagram \emph{(\ref{eq:dynkin-ufo(5)}
		c)}}\label{subsubsec:ufo(5)-c}

\

The Nichols algebra $\toba_{\bq}$ is generated by $(x_i)_{i\in \I_4}$ with defining relations
\begin{align}\label{eq:rels-ufo(5)-c}
\begin{aligned}
x_{112}&=0; & x_{443}&=0; & x_{ij}&=0, \ i<j, \, \widetilde{q}_{ij}=1;\\
x_2^3&=0; & x_{3}^2&=0; & x_{\alpha}^{6}&=0, \ \alpha\in\Oc^{\bq}_+.
\end{aligned}
\end{align}
Here {\scriptsize$\Oc^{\bq}_+=\{ 1, 1^22^33, 12^33, 1^22^43^34, 123^24, 23^24, 1^22^43^34^2, 1^22^334, 12^334, 4 \}$}
and the degree of the integral is
\begin{align*}
\ya &= 76\alpha_1+ 142\alpha_2 +90\alpha_3 +50\alpha_4.
\end{align*}


\subsubsection{The Dynkin diagram \emph{(\ref{eq:dynkin-ufo(5)}
		d)}}\label{subsubsec:ufo(5)-d}

\

The Nichols algebra $\toba_{\bq}$ is generated by $(x_i)_{i\in \I_4}$ with defining relations
\begin{align}\label{eq:rels-ufo(5)-d}
\begin{aligned}
x_{112}&=0; & x_{221}&=0; & x_{ij}&=0,  & & i<j, \, \widetilde{q}_{ij}=1;\\
x_{223}&=0;  & x_{443}&=0; & x_{3}^2&=0; & & x_{\alpha}^{6}=0, \ \alpha\in\Oc^{\bq}_+.
\end{aligned}
\end{align}
Here {\scriptsize$\Oc^{\bq}_+=\{ 1, 12, 2, 1^22^43^54^2, 1^22^33^54^2, 12^33^54^2, 1^22^43^54^3, 1^22^33^54^3, 12^33^54^3, 4 \}$}
and the degree of the integral is
\begin{align*}
\ya &= 76 \alpha_1+ 142\alpha_2 +198\alpha_3 +104\alpha_4.
\end{align*}


\subsubsection{The Dynkin diagram \emph{(\ref{eq:dynkin-ufo(5)}
		e)}}\label{subsubsec:ufo(5)-e}

\

The Nichols algebra $\toba_{\bq}$ is generated by $(x_i)_{i\in \I_4}$ with defining relations
\begin{align}\label{eq:rels-ufo(5)-e}
\begin{aligned}
\begin{aligned}
x_{112}&=0; & [x_{124},&x_2]_c=0; & x_{2}^2&=0; & x_{ij}&=0, \ i<j, \, \widetilde{q}_{ij}=1;
\\
x_{332}&=0; & [x_{334},&x_{34}]_c=0; & x_{4}^2&=0; & x_{\alpha}^{6}&=0, \ \alpha\in\Oc^{\bq}_+;
\end{aligned}
\\
\begin{aligned}
x_3^3&=0; & x_{(24)} &=2q_{34}\zeta[x_{24},x_3]_c +2q_{23}x_3x_{24}.
\end{aligned}
\end{aligned}
\end{align}
Here {\scriptsize$\Oc^{\bq}_+=\{ 1, 12^23, 12^23^34, 3^24, 1^22^33^34^2, 12^33^34^2, 123^24^2, 124, 23^24^2, 24 \}$}
and the degree of the integral is
\begin{align*}
\ya &= 50 \alpha_1+ 90\alpha_2 +104\alpha_3 +76\alpha_4.
\end{align*}


\subsubsection{The Dynkin diagram \emph{(\ref{eq:dynkin-ufo(5)}
		f)}}\label{subsubsec:ufo(5)-f}

\

The Nichols algebra $\toba_{\bq}$ is generated by $(x_i)_{i\in \I_4}$ with defining relations
\begin{align}\label{eq:rels-ufo(5)-f}
\begin{aligned}
& \begin{aligned}
& x_{112}=0; & x_{2}^2&=0; & x_{ij}&=0, && i<j, \, \widetilde{q}_{ij}=1;\\
& [x_{124},x_2]_c=0; & x_3^2&=0; & x_{4}^2&=0; & & x_{\alpha}^{6}=0, \ \alpha\in\Oc^{\bq}_+;
\end{aligned}
\\
& x_{(24)}=-q_{34}\ztu [x_{24},x_3]_c-q_{23}\ztu x_3x_{24}.
\end{aligned}
\end{align}
Here {\scriptsize$\Oc^{\bq}_+=\{ 1, 1^22^33^2, 12^33^2, 1^22^43^24, 1^22^33^34, 12^33^34, 1^22^43^34^2, 124, 34, 24 \}$}
and the degree of the integral is
\begin{align*}
\ya &= 76 \alpha_1+ 142\alpha_2 +104\alpha_3 +50\alpha_4.
\end{align*}


\subsubsection{The associated Lie algebra} This is of type $A_4$.

\subsection{Type $\Ufo(6)$}\label{subsec:type-ufo(6)}
Here $\zeta \in \G'_4$.
We describe first  the root system $\Ufo(6)$.

\subsubsection{Basic datum and root system}
Below, $A_4$, $F_4$, $D_{4}^{(3)\wedge}$, $T_1$ and $_{1}T^{(2)}$ are numbered as in \eqref{eq:dynkin-system-A}, \eqref{eq:dynkin-system-F}, \eqref{eq:D4(3)w}, \eqref{eq:mTn} and \eqref{eq:1T2}, respectively.
The basic datum and the bundle of Cartan matrices are described by the following diagram:

\begin{center}
	\begin{tabular}{c c c c c c c c c c c c}
		& $\overset{D_{4}^{(3)\wedge}}{\underset{a_1}{\vtxgpd}}$
		& \hspace{-5pt}\raisebox{3pt}{$\overset{2}{\rule{17pt}{0.5pt}}$}\hspace{-5pt}
		& $\overset{A_4}{\underset{a_2}{\vtxgpd}}$
		& \hspace{-5pt}\raisebox{3pt}{$\overset{3}{\rule{17pt}{0.5pt}}$}\hspace{-5pt}
		& $\overset{s_{34}({}_1T^{(2)})}{\underset{a_3}{\vtxgpd}}$
		& \hspace{-5pt}\raisebox{3pt}{$\overset{4}{\rule{17pt}{0.5pt}}$}\hspace{-5pt}
		& $\overset{F_4}{\underset{a_4}{\vtxgpd}}$ & & & &
		\\
		& & & {\scriptsize 1} \vline\hspace{5pt} & & {\scriptsize 1} \vline\hspace{5pt} & & {\scriptsize 1} \vline\hspace{5pt} & &  & &
		\\
		& &
		& $\overset{A_4}{\underset{a_5}{\vtxgpd}}$
		& \hspace{-5pt}\raisebox{3pt}{$\overset{3}{\rule{17pt}{0.5pt}}$}\hspace{-5pt}
		& $\overset{{}_1T}{\underset{a_6}{\vtxgpd}}$
		& \hspace{-5pt}\raisebox{3pt}{$\overset{4}{\rule{17pt}{0.5pt}}$}\hspace{-5pt}
		& $\overset{A_4}{\underset{a_7}{\vtxgpd}}$
		& & & &
		\\
		& & & & & {\scriptsize 2} \vline\hspace{5pt} & & {\scriptsize 2} \vline\hspace{5pt} & & & &
		\\
		&
		&
		&
		&
		& $\overset{T_1}{\underset{a_8}{\vtxgpd}}$
		&
		& $\overset{s_{24}(T_1)}{\underset{a_9}{\vtxgpd}}$
		&
		& & &
		\\
		& & & & & {\scriptsize 4} \vline\hspace{5pt} & & {\scriptsize 4} \vline\hspace{5pt} & &  & &
		\\
		&
		&
		&
		&
		& $\overset{\kappa_2(A_4)}{\underset{a_{10}}{\vtxgpd}}$
		& \hspace{-5pt}\raisebox{3pt}{$\overset{2}{\rule{17pt}{0.5pt}}$}\hspace{-5pt}
		& $\overset{s_{24}({}_1T)}{\underset{a_{11}}{\vtxgpd}}$
		& \hspace{-5pt}\raisebox{3pt}{$\overset{3}{\rule{17pt}{0.5pt}}$}\hspace{-5pt}
		& $\overset{\kappa_2(A_4)}{\underset{a_{12}}{\vtxgpd}}$ & &
		\\
		& & & & & {\scriptsize 1} \vline\hspace{5pt} & & {\scriptsize 1} \vline\hspace{5pt} & & {\scriptsize 1} \vline\hspace{5pt} & &
		\\
		&
		&
		&
		&
		& $\overset{\kappa_2(F_4)}{\underset{a_{13}}{\vtxgpd}}$
		& \hspace{-5pt}\raisebox{3pt}{$\overset{2}{\rule{17pt}{0.5pt}}$}\hspace{-5pt}
		& $\overset{s_{24}({}_1T^{(2)})}{\underset{a_{14}}{\vtxgpd}}$
		& \hspace{-5pt}\raisebox{3pt}{$\overset{3}{\rule{17pt}{0.5pt}}$}\hspace{-5pt}
		& $\overset{\kappa_2(A_4)}{\underset{a_{15}}{\vtxgpd}}$
		& \hspace{-5pt}\raisebox{3pt}{$\overset{4}{\rule{17pt}{0.5pt}}$}\hspace{-5pt}
		& $\overset{\kappa_2(D_{4}^{(3)\wedge})}{\underset{a_{16}}{\vtxgpd}}$
	\end{tabular}
\end{center}
Using the notation \eqref{eq:notation-root-exceptional}, we set: { \scriptsize
	\begin{align*}
	\Delta_{+}^{(1)}= & \big\{ 1, 12, 2, 123, 23, 3, 12^23^34, 12^23^24, 123^34, 23^34, 12^23^54^2, 3^34, 123^24, \\
	& 1^22^33^64^3, 12^23^44^2, 12^33^64^3, 23^24, 12^23^64^3, 12^23^34^2, 123^44^2, 23^44^2, \\
	& 12^23^54^3, 3^24, 123^34^2, 1234, 23^34^2, 234, 3^34^2, 34, 4 \big\}, \\
	\Delta_{+}^{(2)}= & \big\{ 1, 12, 2, 123, 23, 3, 1^32^33^34, 1^22^33^34, 1^22^23^34, 1^22^23^24, 1^32^43^54^2, \\
	& 12^23^34, 1^32^43^44^2, 1^32^33^34^2, 1^42^53^64^3, 1^32^33^34^2, 12^23^24, 1^32^53^64^3, 1^22^33^44^2, \\
	& 1^32^43^64^3, 123^24, 1^32^43^54^3, 23^24, 1^22^33^34^2, 1^22^23^34^2, 12^23^34^2, 1234, 234, 34, 4 \big\}, \\
	\Delta_{+}^{(3)}= & \big\{ 1, 12, 2, 123, 23, 3, 1^32^334, 1^22^334, 1^22^234, 1^22^24, 1^32^43^24^2, 1^32^434^2, \\
	& 12^234, 1^32^33^24^2, 1^42^53^24^3, 1^32^334^2, 1^22^33^24^2, 1^32^53^24^3, 12^24, 1^22^334^2,  \\
	& 1^32^43^24^3, 1^32^434^3, 1234, 234, 1^22^234^2, 12^234^2, 124, 24, 34, 4 \big\}, \\
	\Delta_{+}^{(4)}= & \big\{ 1, 12, 2, 1^22^23, 12^23, 123, 23, 3, 1^32^43^34, 1^32^43^24, 1^32^33^34, 1^22^33^34, \\
	& 1^32^33^24, 1^22^33^24, 1^42^53^44^2, 1^22^23^24, 1^22^234, 1^32^53^44^2, 1^32^43^44^2, 1^32^43^34^2, \\
	& 12^23^24, 12^234, 1^32^33^34^2, 1^22^33^34^2, 123^24, 1234, 23^24, 234, 34, 4 \big\}, \\
	\Delta_{+}^{(5)}= & \big\{ 1, 12, 2, 123, 23, 3, 12^33^34, 12^23^34, 12^23^24, 2^33^34, 12^43^54^2, 2^23^34, 12^43^44^2, \\
	& 1^22^53^64^3, 123^24, 12^33^44^2, 1234, 12^53^64^3, 12^43^64^3, 12^33^34^2, 2^23^24, 12^43^54^, \\
	& 12^23^34^2, 2^33^44^2, 23^24, 2^33^34^2, 2^23^34^2, 234, 34, 4 \big\}, \\
	\Delta_{+}^{(6)}= & \big\{ 1, 12, 2, 123, 23, 3, 12^334, 12^234, 12^24, 2^334, 12^43^24^2, 12^434^2, 2^234,  \\
	& 1^22^53^24^3, 12^33^24^2, 1234, 12^53^24^3, 12^334^2, 2^24, 2^33^24^2, 12^43^24^3, 2^334^2, \\
	& 234, 12^434^3, 12^234^2, 124, 2^234^2, 24, 34, 4 \big\}, \\
	\Delta_{+}^{(7)}= & \big\{ 1, 12, 2, 12^23, 123, 2^23, 23, 3, 12^43^34, 12^43^24, 12^33^34, 2^33^34, 12^33^24, \\
	& 1^22^53^44^2, 2^33^24, 12^53^44^2, 12^23^24, 12^234, 12^43^44^2, 12^43^34^2, 2^23^24, 2^234, \\
	& 12^33^34^2, 123^24, 1234, 2^33^34^2, 23^24, 234, 34, 4 \big\}, \\
	\Delta_{+}^{(8)}= & \big\{ 1, 12, 2, 123, 23, 3, 12^23^34, 12^23^24, 123^24, 23^24, 1^22^43^34^2, 1^22^33^34^2,  \\
	& 12^33^34^2, 12^234, 1^22^43^44^3, 1^22^33^24^2, 12^33^24^2, 12^23^34^2, 1^22^43^34^3, 1234,  \\
	& 1^22^33^34^3, 12^23^24^2, 12^33^34^3, 234, 12^24, 12^234^2, 124, 24, 34, 4 \big\}.
	\end{align*}
}
Now the bundle of sets of (positive) roots is described as follows:
\begin{align*}
a_8 & \mapsto \kappa_7(\Delta_+^{(8)}); & a_i & \mapsto \Delta_+^{(i)}, \ i=1,2,5; &
a_i & \mapsto s_{34}(\Delta_+^{(i)}), \ i=3,4,6,7;
\\
a_{9} & \mapsto \kappa_6(\Delta_+^{(8)}); & 
a_{i} & \mapsto \kappa_8(\Delta_+^{(17-i)}), \ i=11,14; &
a_{i} & \mapsto \kappa_2(\Delta_+^{(17-i)}), \ \text{otherwise}.
\end{align*}

\subsubsection{Weyl groupoid}
\label{subsubsec:type-ufo6-Weyl}
The isotropy group  at $a_1 \in \cX$ is
\begin{align*}
\cW(a_1)= \langle \varsigma_1^{a_1}, \varsigma_2^{a_1} \varsigma_3 \varsigma_4\varsigma_2 \varsigma_4 \varsigma_3 \varsigma_2, \varsigma_3^{a_1}, \varsigma_4^{a_1} \rangle \simeq W(G_2) \times W(G_2).
\end{align*}

\subsubsection{Incarnation}
We set the matrices $(\bq^{(i)})_{i\in\I_{8}}$, from left to right and  from up to down:
\begin{align}\label{eq:dynkin-ufo(6)}
\begin{aligned}
&\Dchainfour{\ztu }{\zeta }{-1}{\ztu }{\zeta }{\zeta }{\ztu }&&
\Dchainfour{-1}{\ztu }{-1}{\zeta }{-1}{\zeta }{\ztu }\\
&\Drightofway{-1}{\ztu }{\zeta }{\ztu }{-1}{-1}{\ztu }{-1} &&
\Dchainfour{-1}{\ztu }{\zeta }{-1}{-1}{\zeta }{\ztu } \\
&\Dchainfour{-1}{\zeta }{\ztu }{\zeta }{-1}{\zeta }{\ztu }&&
\Drightofway{-1}{\zeta }{-1}{\ztu }{-1}{-1}{\ztu }{-1}\\
&\Dchainfour{-1}{\zeta }{-1}{-1}{-1}{\zeta }{\ztu }&&
\Drightofway{\ztu }{\zeta }{-1}{\ztu }{-1}{\zeta }{\ztu }{-1}
\end{aligned}
\end{align}
Now this is the incarnation:
\begin{align*}
a_8 & \mapsto \kappa_7(\bq^{(8)}); & a_i & \mapsto \bq^{(i)}, \ i=1,2,5; &
a_i & \mapsto s_{34}(\bq^{(i)}), \ i=3,4,6,7;
\\
a_{9} & \mapsto \kappa_6(\bq^{(8)}); & 
a_{i} & \mapsto \kappa_8(\bq^{(17-i)}), \ i=11,14; &
a_{i} & \mapsto \kappa_2(\bq^{(17-i)}), \ \text{otherwise}.
\end{align*}

\subsubsection{PBW-basis and dimension} \label{subsubsec:type-ufo6-PBW}
Notice that the roots in each $\Delta_{+}^{a_i}$, $i\in\I_{16}$, are ordered from left to right, justifying the notation $\beta_1, \dots, \beta_{30}$.

The root vectors $x_{\beta_k}$ are described as in Remark \ref{rem:lyndon-word}.
Thus
\begin{align*}
\left\{ x_{\beta_{30}}^{n_{30}} \dots x_{\beta_2}^{n_{2}}  x_{\beta_1}^{n_{1}} \, | \, 0\le n_{k}<N_{\beta_k} \right\}.
\end{align*}
is a PBW-basis of $\toba_{\bq}$. Hence $\dim \toba_{\bq}=2^{18}4^{12}=2^{42}$.

\subsubsection{The Dynkin diagram \emph{(\ref{eq:dynkin-ufo(6)}
		a)}}\label{subsubsec:ufo(6)-a}

\

The Nichols algebra $\toba_{\bq}$ is generated by $(x_i)_{i\in \I_4}$ with defining relations
\begin{align}\label{eq:rels-ufo(6)-a}
\begin{aligned}
x_{112}&=0; & x_{332}&=0; & x_{ij}&=0, \ i<j, \, \widetilde{q}_{ij}=1;\\
& & [x_{(13)},&x_2]_c=0;  & [[[x_{(24)},&x_3]_c,x_3]_c,x_3]_c=0;\\
x_{443}&=0; & x_{2}^2&=0; & x_{\alpha}^{4}&=0, \ \alpha\in\Oc^{\bq}_+.
\end{aligned}
\end{align}
Here {\scriptsize$\Oc^{\bq}_+=\{ 1, 3, 3^34, 123^24, 1^22^33^64^3, 12^23^44^2, 12^33^64^3, 23^24, 3^24, 3^34^2, 34, 4 \}$}
and the degree of the integral is
\begin{align*}
\ya &= 30 \alpha_1+ 54\alpha_2 +138\alpha_3 +72\alpha_4.
\end{align*}


\subsubsection{The Dynkin diagram \emph{(\ref{eq:dynkin-ufo(6)} b)}} \label{subsubsec:ufo(6)-b}
\

The Nichols algebra $\toba_{\bq}$ is generated by $(x_i)_{i\in \I_4}$ with defining relations
\begin{align}\label{eq:rels-ufo(6)-b}
\begin{aligned}
x_{443}&=0; & x_1^2&=0; & x_{ij}&=0, \ i<j, \, \widetilde{q}_{ij}=1;\\
& & [x_{(13)},&x_2]_c=0;  & [[x_{23},&[x_{23},x_{(24)}]_c]_c,x_3]_c=0;\\
x_2^2&=0; & x_3^2&=0; & x_{\alpha}^{4}&=0, \ \alpha\in\Oc^{\bq}_+.
\end{aligned}
\end{align}
Here {\scriptsize$\Oc^{\bq}_+=\{ 12, 23, 12^23^24, 2^33^34, 1^22^53^64^3, 12^33^44^2, 12^43^64^3, 2^23^24, 23^24, 2^33^34^2, 234, 4 \}$}
and the degree of the integral is
\begin{align*}
\ya &= 30 \alpha_1+ 116\alpha_2 +138\alpha_3 +72\alpha_4.
\end{align*}


\subsubsection{The Dynkin diagram \emph{(\ref{eq:dynkin-ufo(6)}
		c)}}\label{subsubsec:ufo(6)-c}

\

The Nichols algebra $\toba_{\bq}$ is generated by $(x_i)_{i\in \I_4}$ with defining relations
\begin{align}\label{eq:rels-ufo(6)-c}
\begin{aligned}
& \begin{aligned}
x_{221}&=0; & x_{2223}&=0; & x_1^2&=0; & x_{ij}&=0, \ i<j, \, \widetilde{q}_{ij}=1;
\\
x_{224}&=0; & x_3^2&=0; & x_4^2&=0; & x_{\alpha}^{4}&=0, \ \alpha\in\Oc^{\bq}_+;
\end{aligned}
\\
& x_{(24)}=q_{34}\zeta [x_{24},x_3]_c +q_{23}(1+\zeta)x_3x_{24}.
\end{aligned}
\end{align}
Here {\scriptsize$\Oc^{\bq}_+=\{ 2, 123, 12^234, 2^334, 2^234, 1^22^53^24^3, 12^334^2, 2^33^24^2, 234, 12^434^3, 24, 34 \}$}
and the degree of the integral is
\begin{align*}
\ya &= 30 \alpha_1+ 116\alpha_2 +72\alpha_3 +52\alpha_4.
\end{align*}


\subsubsection{The Dynkin diagram \emph{(\ref{eq:dynkin-ufo(6)}
		d)}}\label{subsubsec:ufo(6)-d}

\

The Nichols algebra $\toba_{\bq}$ is generated by $(x_i)_{i\in \I_4}$ with defining relations
\begin{align}\label{eq:rels-ufo(6)-d}
\begin{aligned}
& \begin{aligned}
x_{221}&=0; & x_{443}&=0;  & x_{2223}&=0; & x_{ij}&=0, \ i<j, \, \widetilde{q}_{ij}=1;\\
& &  x_1^2&=0; & x_3^2&=0; & x_{\alpha}^{4}&=0, \ \alpha\in\Oc^{\bq}_+;
\end{aligned}
\\
& [x_2, [x_{(24)},x_3]_c]_c = \frac{q_{23}q_{43}}{1+\zeta}[x_{23},x_{(24)}]_c+(\zeta-1)q_{23}q_{24} x_{(24)}x_{23}.
\end{aligned}
\end{align}
Here {\scriptsize$\Oc^{\bq}_+=\{ 2, 23, 12^43^24, 2^33^34, 12^33^24, 1^22^53^44^2, 12^23^24, 2^23^24, 123^24, 2^33^34^2, 234, 4 \}$}
and the degree of the integral is
\begin{align*}
\ya &= 30 \alpha_1+ 116\alpha_2 +98\alpha_3 +52\alpha_4.
\end{align*}


\subsubsection{The Dynkin diagram \emph{(\ref{eq:dynkin-ufo(6)}
		e)}}\label{subsubsec:ufo(6)-e}
\

The Nichols algebra $\toba_{\bq}$ is generated by $(x_i)_{i\in \I_4}$ with defining relations
\begin{align}\label{eq:rels-ufo(6)-e}
\begin{aligned}
x_{221}&=0; & x_{223}&=0; & x_{ij}&=0, && i<j, \, \widetilde{q}_{ij}=1;\\
x_{443}&=0;  & x_1^2&=0; & x_3^2&=0; & & x_{\alpha}^{4}=0, \ \alpha\in\Oc^{\bq}_+.
\end{aligned}
\end{align}
Here {\scriptsize$\Oc^{\bq}_+=\{ 2, 123, 1^32^33^34, 1^22^23^24, 1^32^33^34^2, 12^23^24, 1^32^53^64^3, 1^22^33^44^2, 1^32^43^64^3, 123^24, 1234, 4 \}$}
and the degree of the integral is
\begin{align*}
\ya &= 88 \alpha_1+ 116\alpha_2 +138\alpha_3 +72\alpha_4.
\end{align*}


\subsubsection{The Dynkin diagram \emph{(\ref{eq:dynkin-ufo(6)}
		f)}}\label{subsubsec:ufo(6)-f}

\

The Nichols algebra $\toba_{\bq}$ is generated by $(x_i)_{i\in \I_4}$ with defining relations
\begin{align}\label{eq:rels-ufo(6)-f}
\begin{aligned}
& \begin{aligned}
x_1^2&=0; & [x_{124},&x_2]_c=0; & x_2^2&=0; & x_{ij}&=0, \ i<j, \, \widetilde{q}_{ij}=1;\\
x_3^2&=0; & [[x_{12},&x_{(13)}]_c,x_2]_c=0; &
x_4^2&=0; & x_{\alpha}^{4}&=0, \ \alpha\in\Oc^{\bq}_+;
\end{aligned}
\\
& x_{23}^2=0; \quad x_{(24)}=q_{34}\zeta [x_{24},x_3]_c +q_{23}(1+\zeta)x_3x_{24}.
\end{aligned}
\end{align}
Here {\scriptsize$\Oc^{\bq}_+=\{ 12, 23, 1^32^334, 1^22^234, 12^234, 1^32^33^24^2, 1^32^53^24^3, 1^22^334^2, 1^32^434^3, 1234, 124, 34 \}$}
and the degree of the integral is
\begin{align*}
\ya &= 88 \alpha_1+ 116\alpha_2 +72\alpha_3 +52\alpha_4.
\end{align*}


\subsubsection{The Dynkin diagram \emph{(\ref{eq:dynkin-ufo(6)}
		g)}}\label{subsubsec:ufo(6)-g}

\

The Nichols algebra $\toba_{\bq}$ is generated by $(x_i)_{i\in \I_4}$ with defining relations
\begin{align}\label{eq:rels-ufo(6)-g}
\begin{aligned}
x_{443}&=0; & x_1^2&=0; & x_2^2&=0;  & x_{ij}&=0, \ i<j, \, \widetilde{q}_{ij}=1;\\
[[x_{12},x_{(13)}]_c, x_2]_c &=0; & x_3^2&=0; & x_{23}^2&=0; & x_{\alpha}^{4}&=0, \ \alpha\in\Oc^{\bq}_+.
\end{aligned}
\end{align}
Here {\scriptsize$\Oc^{\bq}_+=\{ 12, 123, 1^32^43^24, 1^32^33^34, 1^22^33^24, 1^22^23^24, 1^32^53^44^2, 12^23^24, 1^32^33^34^2, 1234, 23^24, 4 \}$}
and the degree of the integral is
\begin{align*}
\ya &= 88 \alpha_1+ 116\alpha_2 +98\alpha_3 +52\alpha_4.
\end{align*}


\subsubsection{The Dynkin diagram \emph{(\ref{eq:dynkin-ufo(6)}
		h)}}\label{subsubsec:ufo(6)-h}

\

The Nichols algebra $\toba_{\bq}$ is generated by $(x_i)_{i\in \I_4}$ with defining relations
\begin{align}\label{eq:rels-ufo(6)-h}
\begin{aligned}
& \begin{aligned}
x_{112}&=0; & x_{443}&=0; & x_2^2&=0; & x_{ij}&=0, \ i<j, \, \widetilde{q}_{ij}=1;\\
x_{442}&=0; & x_{23}^2&=0;  & x_3^2&=0; & x_{\alpha}^{4}&=0, \ \alpha\in\Oc^{\bq}_+;
\end{aligned}
\\
& [x_{124},x_2]_c=0; \quad x_{(24)}=q_{34}\zeta [x_{24},x_3]_c+q_{23}(1+\zeta)x_3x_{24}.
\end{aligned}
\end{align}
Here {\scriptsize$\Oc^{\bq}_+=\{ 1, 3, 12^23^34, 12^23^24, 1^22^43^34^2, 12^234, 1234,
	1^22^33^34^3, 12^23^24^2, 12^33^34^3, 234, 12^24 \}$}
and the degree of the integral is
\begin{align*}
\ya &= 52\alpha_1+ 98\alpha_2 +72\alpha_3 +88\alpha_4.
\end{align*}


\subsubsection{The associated Lie algebra} This is of type $G_2\times G_2$.

\subsection{Type $\Ufo(7)$}\label{subsec:type-ufo(7)}
Here $\zeta \in \G'_{12}$.
We describe first  the root system $\Ufo(7)$.

\subsubsection{Basic datum and root system}\label{subsubsec:type-ufo7-roots}
Below, $A_1^{(1)}$, $B_2$, $C_2$ and $G_2$ are numbered as in \eqref{eq:An-(1)}, \eqref{eq:dynkin-system-B}, \eqref{eq:dynkin-system-C} and \eqref{eq:dynkin-system-G}, respectively.
The basic datum and the bundle of Cartan matrices are described by the following diagram:
\begin{center}
	$\overset{G_2}{\underset{a_1}{\vtxgpd}}$
	\hspace{-5pt}\raisebox{3pt}{$\overset{2}{\rule{40pt}{0.5pt}}$}\hspace{-5pt}
	$\overset{C_2}{\underset{a_2}{\vtxgpd}}$
	\hspace{-5pt}\raisebox{3pt}{$\overset{1}{\rule{40pt}{0.5pt}}$}\hspace{-5pt}
	$\overset{A_{1}^{(1)}}{\underset{a_3}{\vtxgpd}}$
	\hspace{-5pt}\raisebox{3pt}{$\overset{2}{\rule{40pt}{0.5pt}}$}\hspace{-5pt}
	$\overset{B_2}{\underset{a_4}{\vtxgpd}}$
	\hspace{-5pt}\raisebox{3pt}{$\overset{1}{\rule{40pt}{0.5pt}}$}\hspace{-5pt}
	$\overset{\tau(G_2)}{\underset{a_5}{\vtxgpd}}$.
\end{center}
Using the notation \eqref{eq:notation-root-exceptional}, the bundle of (positive) root sets is the following: 
\begin{align*}
\Delta_{+}^{a_1}= & \{ 1, 1^32, 1^22, 12, 2 \}= \tau(\Delta_{+}^{a_5}), &
\Delta_{+}^{a_3}= & \{ 1, 1^22, 12, 12^2, 2 \}, \\
\Delta_{+}^{a_2}= & \{ 1, 1^22, 1^32^2, 12, 2 \}= \tau(\Delta_{+}^{a_4}).
\end{align*}

\subsubsection{Weyl groupoid}
\label{subsubsec:type-ufo7-Weyl}
The isotropy group  at $a_3 \in \cX$ is
\begin{align*}
\cW(a_3)= \langle \varsigma_1^{a_3}\varsigma_2 \varsigma_1\varsigma_2 \varsigma_1 = \varsigma_2^{a_3} \varsigma_1 \varsigma_2 \varsigma_1 \varsigma_2 \rangle \simeq \Z/2.
\end{align*}

\subsubsection{Incarnation} This is called $\ufo(7)$.
We assign the following Dynkin diagrams to $a_i$, $i\in\I_5$:
\begin{align}\label{eq:dynkin-ufo(7)}
\begin{aligned}
a_1\mapsto &\Dchaintwo{-\zeta ^3}{\zeta}{-1}, &
a_2\mapsto &\Dchaintwo{-\overline{\zeta}^{\,2}}{\overline{\zeta}}{-1}, &
a_3\mapsto &\Dchaintwo{-\overline{\zeta}^{\,2}}{-\zeta ^3}{-\zeta ^2}, \\
a_4 \mapsto & \Dchaintwo{-1}{-\zeta }{-\zeta ^2}, &
a_5 \mapsto & \Dchaintwo{-1}{-\overline{\zeta}}{-\zeta ^3}.
\end{aligned}
\end{align}

\subsubsection{PBW-basis and dimension} \label{subsubsec:type-ufo7-PBW}

Notice that the roots in each $\Delta_{+}^{a_i}$, $i\in\I_{5}$, are ordered from left to right, justifying the notation $\beta_1, \beta_2, \beta_3, \beta_4, \beta_{5}$.

The root vectors $x_{\beta_k}$ are described as in Remark \ref{rem:lyndon-word}.
Thus
\begin{align*}
\left\{ x_{\beta_{5}}^{n_{5}} x_{\beta_{4}}^{n_{4}} x_{\beta_{3}}^{n_{3}} x_{\beta_2}^{n_{2}}  x_{\beta_1}^{n_{1}} \, | \, 0\le n_{k}<N_{\beta_k} \right\}.
\end{align*}
is a PBW-basis of $\toba_{\bq}$. Hence $\dim \toba_{\bq}=2^23^24=144$.

\subsubsection{The Dynkin diagram \emph{(\ref{eq:dynkin-ufo(7)}
		a)}}\label{subsubsec:ufo(7)-a}

\

The Nichols algebra $\toba_{\bq}$ is generated by $(x_i)_{i\in \I_2}$ with defining relations
\begin{align}\label{eq:rels-ufo(7)-a}
x_1^4&=0; & x_2^2&=0; &  [ x_{112},x_{12}]_c&=0.
\end{align}
Here, $\Oc^{\bq}_+$ is empty and the degree of the integral is $\ya= 12\alpha_1 + 6\alpha_2$.

\subsubsection{The Dynkin diagram \emph{(\ref{eq:dynkin-ufo(7)}
		b)}}\label{subsubsec:ufo(7)-b}

\

The Nichols algebra $\toba_{\bq}$ is generated by $(x_i)_{i\in \I_2}$ with defining relations
\begin{align}\label{eq:rels-ufo(7)-b}
x_1^3&=0; & x_2^2&=0; & [[x_{112},x_{12}]_c,x_{12}]_c&=0.
\end{align}
Here, $\Oc^{\bq}_+$ is empty and the degree of the integral is $\ya= 12\alpha_1 + 8\alpha_2$.

\subsubsection{The Dynkin diagram \emph{(\ref{eq:dynkin-ufo(7)}
		c)}}\label{subsubsec:ufo(7)-c}

\

The Nichols algebra $\toba_{\bq}$ is generated by $(x_i)_{i\in \I_2}$ with defining relations
\begin{align}\label{eq:rels-ufo(7)-c}
x_1^3&=0; & x_2^3&=0; &  [x_1,x_{122}]_c +\frac{\zeta^4 q_{12}}{(3)_{\zeta}}x_{12}^2&=0.
\end{align}
Here, $\Oc^{\bq}_+$ is empty and the degree of the integral is $\ya= 8\alpha_1 + 8\alpha_2$.

\subsubsection{The Dynkin diagram \emph{(\ref{eq:dynkin-ufo(7)}
		d)}}\label{subsubsec:ufo(7)-d}

\

This diagram is of the shape of (\ref{eq:dynkin-ufo(7)} b) but with $\zeta^5$
instead of $\zeta$. Therefore the corresponding Nichols algebra has relations analogous to
\eqref{eq:rels-ufo(7)-b}.

\subsubsection{The Dynkin diagram \emph{(\ref{eq:dynkin-ufo(7)}
		e)}}\label{subsubsec:ufo(7)-e}

\

This diagram is of the shape of (\ref{eq:dynkin-ufo(7)} a) but with $\zeta^5$
instead of $\zeta$. Therefore the corresponding Nichols algebra has relations analogous to
\eqref{eq:rels-ufo(7)-a}.

\subsubsection{The associated Lie algebra} This is trivial.

\subsection{The Nichols algebras $\ufo(8)$}\label{subsec:type-ufo(8)}
Here $\zeta \in \G'_{12}$.

\subsubsection{Basic datum, root system and Weyl groupoid}
The  root system is of type $\Ufo(7)$ as in \S \ref{subsubsec:type-ufo7-roots}; hence the Weyl groupoid is as in \S \ref{subsubsec:type-ufo7-Weyl}. 

\subsubsection{Incarnation}
This is a new incarnation, denoted $\ufo(8)$.
We set the matrices $(\bq^{(i)})_{i\in\I_{3}}$, from left to right:
\begin{align}\label{eq:dynkin-ufo(8)}
\begin{aligned}
& \Dchaintwo{-\ztu}{-\zeta ^3}{-1}
& &\Dchaintwo{-\zeta ^2}{\zeta ^3}{-1}
& &\Dchaintwo{-\zeta ^2}{\zeta }{-\zeta ^2}
\end{aligned}
\end{align}
Now this is the incarnation:
\begin{align*}
a_i & \mapsto \bq^{(i)}, &
i & \in\I_3; &
a_i & \mapsto \tau(\bq^{(6-i)}), &
i & \in\I_{4,5}.
\end{align*}

\subsubsection{PBW-basis and dimension} \label{subsubsec:type-ufo8-PBW}
Notice that the roots in each $\Delta_{+}^{a_i}$, $i\in\I_{5}$, are ordered from left to right, justifying the notation $\beta_1, \beta_2, \beta_3, \beta_4, \beta_{5}$.

The root vectors $x_{\beta_k}$ are described as in Remark \ref{rem:lyndon-word}.
Thus
\begin{align*}
\left\{ x_{\beta_{5}}^{n_{5}} x_{\beta_{4}}^{n_{4}} x_{\beta_{3}}^{n_{3}} x_{\beta_2}^{n_{2}}  x_{\beta_1}^{n_{1}} \, | \, 0\le n_{k}<N_{\beta_k} \right\}.
\end{align*}
is a PBW-basis of $\toba_{\bq}$. Hence $\dim \toba_{\bq}=2^23^212=432$.

\subsubsection{The Dynkin diagram \emph{(\ref{eq:dynkin-ufo(8)} a)}} \label{subsubsec:ufo(8)-a}

\

The Nichols algebra $\toba_{\bq}$ is generated by $(x_i)_{i\in \I_2}$ with defining relations
\begin{align}\label{eq:rels-ufo(8)-a}
x_1^{12}&=0; & x_2^2&=0; & x_{11112}&=0; &  [x_{112},x_{12}]_c&=0.
\end{align}
Here, $\Oc^{\bq}_+=\{\alpha_1\}$ and the degree of the integral is $\ya= 20\alpha_1 + 16\alpha_2$.


\subsubsection{The Dynkin diagram \emph{(\ref{eq:dynkin-ufo(8)}
		b)}}\label{subsubsec:ufo(8)-b}

\

The Nichols algebra $\toba_{\bq}$ is generated by $(x_i)_{i\in \I_2}$ with defining relations
\begin{align}\label{eq:rels-ufo(8)-b}
x_1^3&=0; & x_2^2&=0; & x_{12}^{12}&=0; & [[x_{112},x_{12}]_c,x_{12}]_c&=0.
\end{align}
Here, $\Oc^{\bq}_+=\{\alpha_1+\alpha_2\}$ and the degree of the integral is $\ya= 20\alpha_1 + 16\alpha_2$.


\subsubsection{The Dynkin diagram \emph{(\ref{eq:dynkin-ufo(8)} c)}} \label{subsubsec:ufo(8)-c}

\

The Nichols algebra $\toba_{\bq}$ is generated by $(x_i)_{i\in \I_2}$ with defining relations
\begin{align}\label{eq:rels-ufo(8)-c}
\begin{aligned}
& x_1^3=0; && x_2^3=0; && x_{12}^{12}=0; & [x_1,x_{122}]_c&=(1+\zeta+\zeta^2)\zeta^4q_{12}x_{12}^2.
\end{aligned}
\end{align}
Here, $\Oc^{\bq}_+=\{\alpha_1+\alpha_2\}$ and the degree of the integral is $\ya= 16\alpha_1 + 16\alpha_2$.


\subsubsection{The associated Lie algebra} This is of type $A_1$.

\subsection{Type $\Ufo(9)$}\label{subsec:type-ufo(9)}
Here $\zeta \in \G'_{24}$.
We describe first  the root system $\Ufo(9)$.

\subsubsection{Basic datum and root system}
Below,$C_2$, $G_2$, $H_{5,1}$ and $H_{2,3}$ are numbered as in \eqref{eq:dynkin-system-G}, \eqref{eq:dynkin-system-C} and \eqref{eq:Hcd}, respectively.
The basic datum and the bundle of Cartan matrices are described by the following diagram:
\begin{center}
	$\overset{H_{5,1}}{{\underset{a_1}\vtxgpd}}$
	\hspace{-5pt}\raisebox{3pt}{$\overset{2}{\rule{40pt}{0.5pt}}$}\hspace{-5pt}
	$\overset{C_2}{{\underset{a_2}\vtxgpd}}$
	\hspace{-5pt}\raisebox{3pt}{$\overset{1}{\rule{40pt}{0.5pt}}$}\hspace{-5pt}
	$\overset{H_{2,3}}{{\underset{a_3}\vtxgpd}}$
	\hspace{-5pt}\raisebox{3pt}{$\overset{2}{\rule{40pt}{0.5pt}}$}\hspace{-5pt}
	$\overset{\tau(G_2)}{{\underset{a_4}\vtxgpd}}$.
\end{center}
Using the notation \eqref{eq:notation-root-exceptional}, the bundle of root sets is the following:
\begin{align*}
\Delta_{+}^{a_1}= & \{ 1,1^52,1^42,1^32,1^52^2,1^22,12,2 \}, \\
\Delta_{+}^{a_2}= & \{ 1,1^22,1^52^3,1^32^2,1^42^3,1^52^4,12,2   \}, \\
\Delta_{+}^{a_3}= & \{ 1,1^22,12,1^22^3,1^32^4,12^2,12^3,2 \}, \\
\Delta_{+}^{a_4}= & \{ 1,12,1^22^3,1^32^5,1^22^5,12^2,12^3,2 \}.
\end{align*}

\subsubsection{Weyl groupoid}
\label{subsubsec:type-ufo9-Weyl}
The isotropy group  at $a_2 \in \cX$ is
\begin{align*}
\cW(a_2)= \langle \varsigma_1^{a_2}\varsigma_2 \varsigma_1\varsigma_2 \varsigma_1, \varsigma_2^{a_2} \varsigma_1\varsigma_2 \rangle \simeq \Z/2  \times \Z/2.
\end{align*}

\subsubsection{Incarnation}
We set the matrices $(\bq^{(i)})_{i\in\I_{4}}$, from left to right:
\begin{align}\label{eq:dynkin-ufo(9)}
\begin{aligned}
&\Dchaintwo{\zeta ^6}{-\ztu}{\ \ -\ztu^{\, 4}}& &\Dchaintwo{\zeta ^6}{\zeta }{\ztu}&
&\Dchaintwo{-\ztu^{\, 4}}{\zeta ^5}{-1}& &\Dchaintwo{\zeta }{\ztu^{\, 5}}{-1}.
\end{aligned}
\end{align}
Now this is the incarnation:
\begin{align*}
a_1 & \mapsto \bq^{(4)}, &
a_2 & \mapsto \bq^{(3)}, &
a_3 & \mapsto \tau(\bq^{(1)}), &
a_4 & \mapsto \tau(\bq^{(2)}), &.
\end{align*}

\subsubsection{PBW-basis and dimension} \label{subsubsec:type-ufo9-PBW}
Notice that the roots in each $\Delta_{+}^{a_i}$, $i\in\I_{4}$, are ordered from left to right, justifying the notation $\beta_1, \dots, \beta_{8}$.

The root vectors $x_{\beta_k}$ are described as in Remark \ref{rem:lyndon-word}.
Thus
\begin{align*}
\left\{ x_{\beta_{8}}^{n_{8}} \cdots x_{\beta_2}^{n_{2}}  x_{\beta_1}^{n_{1}} \, | \, 0\le n_{k}<N_{\beta_k} \right\}.
\end{align*}
is a PBW-basis of $\toba_{\bq}$. Hence $\dim \toba_{\bq}=2^23^24^224^2=2^{12}3^4$.

\subsubsection{The Dynkin diagram \emph{(\ref{eq:dynkin-ufo(9)}
		a)}}\label{subsubsec:ufo(9)-a}

\

The Nichols algebra $\toba_{\bq}$ is generated by $(x_i)_{i\in \I_2}$ with defining relations
\begin{align}\label{eq:rels-ufo(9)-a}
\begin{aligned}
x_1^4&=0; & &x_{1112}^{24}=0; \quad x_{12}^{24}=0; \\
x_2^3&=0; & &[x_1,x_{122}]_c =\frac{1+\zeta^7}{1+\zeta} \zeta^{10}q_{12}x_{12}^2.
\end{aligned}
\end{align}
Here, $\Oc^{\bq}_+=\{3\alpha_1+\alpha_2, \alpha_1+\alpha_2\}$ and the degree of the integral is
\begin{align*}
\ya= 108\alpha_1 + 58\alpha_2.
\end{align*}

\subsubsection{The Dynkin diagram \emph{(\ref{eq:dynkin-ufo(9)}
		b)}}\label{subsubsec:ufo(9)-b}

\

The Nichols algebra $\toba_{\bq}$ is generated by $(x_i)_{i\in \I_2}$ with defining relations
\begin{align}\label{eq:rels-ufo(9)-b}
\begin{aligned}
x_1^4&=0; & x_2^{24}&=0; &  x_{112}^{24}&=0; & x_{221}&=0; & [[x_{112},x_{12}]_c,x_{12}]_c&=0.
\end{aligned}
\end{align}
Here, $\Oc^{\bq}_+=\{2\alpha_1+\alpha_2, \alpha_2\}$ and the degree of the integral is
\begin{align*}
\ya= 74\alpha_1 + 60\alpha_2.
\end{align*}

\subsubsection{The Dynkin diagram \emph{(\ref{eq:dynkin-ufo(9)}
		c)}}\label{subsubsec:ufo(9)-c}

\

The Nichols algebra $\toba_{\bq}$ is generated by $(x_i)_{i\in \I_2}$ with defining relations
\begin{align}\label{eq:rels-ufo(9)-c}
x_1^3&=0; & x_2^2&=0; & [x_{112},[x_{112},x_{12}]_c]_c^{24}&=0; & x_{12}^{24}&=0;
\end{align}
\vsp
\begin{align*}
[x_{112},[[x_{112},x_{12}]_c,x_{12}]_c]_c -\frac{1+\zeta+\zeta^6+2\zeta^7+\zeta^{17}}{1+\zeta^4+\zeta^6+\zeta^{11}} \zeta^{9}q_{12}
[x_{112},x_{12}]_c^2 =0.
\end{align*}
Here, $\Oc^{\bq}_+=\{5\alpha_1+3\alpha_2, \alpha_1+\alpha_2\}$ and the degree of the integral is
\begin{align*}
\ya= 168\alpha_1 + 112\alpha_2.
\end{align*}

\subsubsection{The Dynkin diagram \emph{(\ref{eq:dynkin-ufo(9)}
		d)}}\label{subsubsec:ufo(9)-d}

\

The Nichols algebra $\toba_{\bq}$ is generated by $(x_i)_{i\in \I_2}$ with defining relations
\begin{align}\label{eq:rels-ufo(9)-d}
\begin{aligned}
x_1^{24}&=0; & x_2^2&=0; & [x_{1112},x_{112}]_c^{24}&=0; \\
& & x_{1111112}&=0; & [x_{112},x_{12}]_c&=0.
\end{aligned}
\end{align}
Here, $\Oc^{\bq}_+=\{5\alpha_1+2\alpha_2, \alpha_1\}$ and the degree of the integral is
\begin{align*}
\ya= 168\alpha_1 + 58\alpha_2.
\end{align*}

\subsubsection{The associated Lie algebra} This is of type $A_1\times A_1$.

\subsection{Type $\Ufo(10)$}\label{subsec:type-ufo(10)}
Here $\zeta\in \G'_{20}$.
We describe first  the root system.

\subsubsection{Basic datum and root system}
Below, $G_2$ and $A_2^{(2)}$ are numbered as in \eqref{eq:dynkin-system-G} and  	
\eqref{eq:A2-(2)}, respectively.
The basic datum and the bundle of Cartan matrices are described by the following diagram:
\begin{center}
	$\overset{G_2}{\underset{a_1}{\vtxgpd}}$
	\hspace{-5pt}\raisebox{3pt}{$\overset{2}{\rule{40pt}{0.5pt}}$}\hspace{-5pt}
	$\overset{A_2^{(2)}}{\underset{a_1}{\vtxgpd}}$
	\hspace{-5pt}\raisebox{3pt}{$\overset{1}{\rule{40pt}{0.5pt}}$}\hspace{-5pt}
	$\overset{A_2^{(2)}}{\underset{a_1}{\vtxgpd}}$
	\hspace{-5pt}\raisebox{3pt}{$\overset{2}{\rule{40pt}{0.5pt}}$}\hspace{-5pt}
	$\overset{G_2}{\underset{a_1}{\vtxgpd}}$.
\end{center}
Using the notation \eqref{eq:notation-root-exceptional}, the bundle of root sets is the following:
\begin{align*}
\Delta_{+}^{a_1} = & \{ 1,1^32,1^22,1^52^3,1^32^2,1^42^3,12,2 \} = \Delta_{+}^{a_4}, \\
\Delta_{+}^{a_2} = & \{ 1,1^42,1^32,1^52^2,1^22,1^32^2,12,2 \}=\Delta_{+}^{a_3}.
\end{align*}

\subsubsection{Weyl groupoid}
\label{subsubsec:type-ufo10-Weyl}
The isotropy group  at $a_1 \in \cX$ is
\begin{align*}
\cW(a_1)= \langle \varsigma_1^{a_1}, \varsigma_2^{a_1} \varsigma_1\varsigma_2 \varsigma_1 \varsigma_2 \varsigma_1\varsigma_2 \rangle \simeq \Z/2  \times \Z/2.
\end{align*}

\subsubsection{Incarnation}

We assign the following Dynkin diagrams to $a_i$, $i\in\I_4$:
\begin{align}\label{eq:dynkin-ufo(10)}
\begin{aligned}
a_1\mapsto&\Dchaintwo{\zeta }{\ztu^{\, 3}}{-1},& a_2\mapsto&\Dchaintwo{-\ztu^{\, 2}}{\zeta ^3}{-1},
\\
a_3\mapsto&\Dchaintwo{-\ztu^{\, 2}}{-\zeta ^3}{-1},& a_4\mapsto&\Dchaintwo{-\zeta }{-\ztu^{\, 3}}{-1}.
\end{aligned}
\end{align}

\subsubsection{PBW-basis and dimension} \label{subsubsec:type-ufo10-PBW}
Notice that the roots in each $\Delta_{+}^{a_i}$, $i\in\I_{4}$, are ordered from left to right, justifying the notation $\beta_1, \dots, \beta_{8}$.

The root vectors $x_{\beta_k}$ are described as in Remark \ref{rem:lyndon-word}.
Thus
\begin{align*}
\left\{ x_{\beta_{8}}^{n_{8}} \cdots x_{\beta_2}^{n_{2}}  x_{\beta_1}^{n_{1}} \, | \, 0\le n_{k}<N_{\beta_k} \right\}.
\end{align*}
is a PBW-basis of $\toba_{\bq}$. Hence $\dim \toba_{\bq}=2^45^220^2=160000$.

\subsubsection{The Dynkin diagram \emph{(\ref{eq:dynkin-ufo(10)}
		a)}}\label{subsubsec:ufo(10)-a}

\

The Nichols algebra $\toba_{\bq}$ is generated by $(x_i)_{i\in \I_2}$ with defining relations
\begin{align}\label{eq:rels-ufo(10)-a}
\begin{aligned}
x_1^{20}&=0;  & x_2^2&=0;  &  [x_{112},x_{12}]_c^{20}&=0;  \\
&& x_{11112}&=0;  & [[[x_{112},x_{12}]_c,x_{12}]_c,x_{12}]_c&=0.
\end{aligned}
\end{align}
Here, $\Oc^{\bq}_+=\{\alpha_1, 3\alpha_1+2\alpha_2\}$ and the degree of the integral is
\begin{align*}
\ya= 100\alpha_1 + 54\alpha_2.
\end{align*}


\subsubsection{The Dynkin diagram \emph{(\ref{eq:dynkin-ufo(10)}
		b)}}\label{subsubsec:ufo(10)-b}

\

The Nichols algebra $\toba_{\bq}$ is generated by $(x_i)_{i\in \I_2}$ with defining relations
\begin{align}\label{eq:rels-ufo(10)-b}
\begin{aligned}
x_1^5&=0; & & x_{1112}^{20}=0; \quad x_{12}^{20}=0; \\
x_2^2&=0; & & [x_1,[x_{112},x_{12}]_c]_c +\frac{1-\zeta^{17}}{1-\zeta^2} q_{12}x_{112}^2=0.
\end{aligned}
\end{align}
Here, $\Oc^{\bq}_+=\{3\alpha_1+\alpha_2, \alpha_1+\alpha_2\}$ and the degree of the integral is
\begin{align*}
\ya= 100\alpha_1 + 48\alpha_2.
\end{align*}


\subsubsection{The Dynkin diagram \emph{(\ref{eq:dynkin-ufo(10)}
		c)}}\label{subsubsec:ufo(10)-c}

\

This diagram is of the shape of (\ref{eq:dynkin-ufo(10)} b) but with $-\zeta$
instead of $\zeta$. Thus the information on the  corresponding Nichols algebra is analogous to
\eqref{eq:rels-ufo(10)-b}.

\subsubsection{The Dynkin diagram \emph{(\ref{eq:dynkin-ufo(10)}
		d)}}\label{subsubsec:ufo(10)-d}

\

This diagram is of the shape of (\ref{eq:dynkin-ufo(10)} a) but with $-\zeta$ instead of $\zeta$. Thus
the information on the corresponding Nichols algebra is analogous to \eqref{eq:rels-ufo(10)-a}.

\subsubsection{The associated Lie algebra} This is of type $A_1\times A_1$.

\subsection{Type $\Ufo(11)$}\label{subsec:type-ufo(11)}
Here $\zeta\in \G'_{15}$.
We start by  the root system $\Ufo(11)$.

\subsubsection{Basic datum and root system}
Below, $A_2^{(2)}$, $C_2$ and $H_{2,3}$ are numbered as in \eqref{eq:A2-(2)}, \eqref{eq:dynkin-system-C} and  	
\eqref{eq:Hcd}, respectively.
The basic datum and the bundle of Cartan matrices are described by the following diagram:
\begin{center}
	$\overset{A_2^{(2)}}{\underset{a_1}{\vtxgpd}}$
	\hspace{-5pt}\raisebox{3pt}{$\overset{1}{\rule{40pt}{0.5pt}}$}\hspace{-5pt}
	$\overset{A_2^{(2)}}{\underset{a_2}{\vtxgpd}}$
	\hspace{-5pt}\raisebox{3pt}{$\overset{2}{\rule{40pt}{0.5pt}}$}\hspace{-5pt}
	$\overset{C_2}{\underset{a_3}{\vtxgpd}}$
	\hspace{-5pt}\raisebox{3pt}{$\overset{1}{\rule{40pt}{0.5pt}}$}\hspace{-5pt}
	$\overset{H_{2,3}}{\underset{a_4}{\vtxgpd}}$.
\end{center}
Using the notation \eqref{eq:notation-root-exceptional}, the bundle of root sets is the following:
\begin{align*}
\Delta_{+}^{a_1}= & \{ 1, 1^42, 1^32, 1^22, 1^32^2, 1^42^3, 12, 2 \}, \\
\Delta_{+}^{a_2}= & \{ 1, 1^42, 1^32, 1^82^3, 1^52^2, 1^22, 12, 2 \}, \\
\Delta_{+}^{a_3}= & \{ 1, 1^22, 1^52^3, 1^82^5, 1^32^2, 1^42^3, 12, 2 \}, \\
\Delta_{+}^{a_4}= & \{ 1, 1^22, 12, 1^22^3, 12^2, 1^22^5, 12^3, 2 \}.
\end{align*}

\subsubsection{Weyl groupoid}
\label{subsubsec:type-ufo11-Weyl}
The isotropy group  at $a_1 \in \cX$ is
\begin{align*}
\cW(a_1)= \langle \varsigma_1^{a_1}\varsigma_2 \varsigma_1 \varsigma_2 \varsigma_1 \varsigma_2 \varsigma_1,
\varsigma_2^{a_1}  \rangle \simeq \Z/2  \times \Z/2.
\end{align*}

\subsubsection{Incarnation}
We assign the following Dynkin diagrams to $a_i$, $i\in\I_4$:
\begin{align}\label{eq:dynkin-ufo(11)}
\begin{aligned}
a_1\mapsto &\Dchaintwo{\zeta ^3}{-\zeta ^4}{-\ztu^{\,
		4}},
& 
a_2\mapsto &\Dchaintwo{\zeta ^3}{-\zeta ^2}{-1},
\\
a_3\mapsto &\Dchaintwo{\zeta ^5}{-\ztu^{\, 2}}{-1},
& 
a_4\mapsto &\Dchaintwo{-\zeta }{-\ztu^{\, 3}}{\zeta ^5}.
\end{aligned}
\end{align}

\subsubsection{PBW-basis and dimension} \label{subsubsec:type-ufo11-PBW}
Notice that the roots in each $\Delta_{+}^{a_i}$, $i\in\I_{4}$, are ordered from left to right, justifying the notation $\beta_1, \dots, \beta_{8}$.

The root vectors $x_{\beta_k}$ are described as in Remark \ref{rem:lyndon-word}.
Thus
\begin{align*}
\left\{ x_{\beta_{8}}^{n_{8}} \cdots x_{\beta_2}^{n_{2}}  x_{\beta_1}^{n_{1}} \, | \, 0\le n_{k}<N_{\beta_k} \right\}.
\end{align*}
is a PBW-basis of $\toba_{\bq}$. Hence $\dim \toba_{\bq}=2^23^25^230^2=2^43^45^4$.

\subsubsection{The Dynkin diagram \emph{(\ref{eq:dynkin-ufo(11)}
		a)}}\label{subsubsec:ufo(11)-a}

\
The Nichols algebra $\toba_{\bq}$ is generated by $(x_i)_{i\in \I_2}$ with defining relations
\begin{align}\label{eq:rels-ufo(11)-a}
\begin{aligned}
x_1^5&=0; & x_2^{30}&=0; & & [x_1,[x_{112},x_{12}]_c]_c =\frac{1-\zeta^2}{1+\zeta^7}\zeta^9q_{12} x_{112}^2; \\
x_{112}^{30}&=0; & x_{221}&=0; & & [[[x_{112},x_{12}]_c,x_{12}]_c,x_{12}]_c=0.
\end{aligned}
\end{align}
Here, $\Oc^{\bq}_+=\{\alpha_2, 2\alpha_1+\alpha_2\}$ and the degree of the integral is
\begin{align*}
\ya= 86\alpha_1 + 72\alpha_2.
\end{align*}


\subsubsection{The Dynkin diagram \emph{(\ref{eq:dynkin-ufo(11)}
		b)}}\label{subsubsec:ufo(11)-b}

\
The Nichols algebra $\toba_{\bq}$ is generated by $(x_i)_{i\in \I_2}$ with defining relations
\begin{align}\label{eq:rels-ufo(11)-b}
\begin{aligned}
x_1^5&=0; & x_{112}^{30}&=0; & [x_{112},x_{12}]_c&=0; \\
x_2^2&=0; & x_{11112}^{30}&=0; & [[x_{1112},x_{112}]_c,x_{112}]_c&=0.
\end{aligned}
\end{align}
Here, $\Oc^{\bq}_+=\{4\alpha_1+\alpha_2, 2\alpha_1+\alpha_2\}$ and the degree of the integral is
\begin{align*}
\ya= 210\alpha_1 + 72\alpha_2.
\end{align*}


\subsubsection{The Dynkin diagram \emph{(\ref{eq:dynkin-ufo(11)}
		c)}}\label{subsubsec:ufo(11)-c}

\
The Nichols algebra $\toba_{\bq}$ is generated by $(x_i)_{i\in \I_2}$ with defining relations
\begin{align}\label{eq:rels-ufo(11)-c}
\begin{aligned}
x_1^3&=0; & x_2^2&=0; & [[x_{112},x_{12}]_c,x_{12}]_c^{30}&=0;  \\
& & x_{112}^{30}&=0; & [[[x_{112},x_{12}]_c,x_{12}]_c,x_{12}]_c& =0.
\end{aligned}
\end{align}
Here, $\Oc^{\bq}_+=\{4\alpha_1+3\alpha_2, 2\alpha_1+\alpha_2\}$ and the degree of the integral is
\begin{align*}
\ya= 210\alpha_1 + 140\alpha_2.
\end{align*}


\subsubsection{The Dynkin diagram \emph{(\ref{eq:dynkin-ufo(11)}
		d)}}\label{subsubsec:ufo(11)-d}

\

The Nichols algebra $\toba_{\bq}$ is generated by $(x_i)_{i\in \I_2}$ with defining relations
\begin{align}\label{eq:rels-ufo(11)-d}
\begin{aligned}
x_1^{30}&=0; & x_2^3&=0; &  & [x_1,x_{122}]_c +\frac{1+\zeta^{13}}{1+\zeta^{12}} \zeta^{10}q_{12} x_{12}^2=0;  \\
x_{1112}^{30}&=0; & x_{11112}&=0; & & [[x_{112},x_{12}]_c,x_{12}]_c=0.
\end{aligned}
\end{align}
Here, $\Oc^{\bq}_+=\{\alpha_1, 3\alpha_1+\alpha_2\}$ and the degree of the integral is
\begin{align*}
\ya= 140\alpha_1 + 74\alpha_2.
\end{align*}


\subsubsection{The associated Lie algebra} This is of type $A_1\times A_1$.

\subsection{Type $\Ufo(12)$}\label{subsec:type-ufo(12)}
Here $\zeta\in \G'_{7}$.
We start by the root system $\Ufo(12)$.

\subsubsection{Basic datum and root system}
Below, $G_2$ and $H_{5,1}$ are numbered as in \eqref{eq:dynkin-system-G} and \eqref{eq:Hcd}, respectively.
The basic datum and the bundle of Cartan matrices are described by the following diagram:
\begin{center}
	$\overset{G_2}{\underset{a_1}{\vtxgpd}}$   \hspace{-5pt}\raisebox{3pt}{$\overset{3}{\rule{40pt}{0.5pt}}$}\hspace{-5pt}  $\overset{H_{5,1}}{\underset{a_2}{\vtxgpd}}$.
\end{center}
Using the notation \eqref{eq:notation-root-exceptional}, the bundle of root sets is the following:
\begin{align*}
\Delta_{+}^{a_1}= & \{ 1,1^32,1^22,1^72^4,1^52^3,1^82^5,1^32^2,1^72^5,1^42^3,1^52^4,12,2 \}, \\
\Delta_{+}^{a_2}= & \{ 1,1^52,1^42,1^72^2,1^32,1^82^3,1^52^2,1^72^3,1^22,1^32^2,12,2 \}.
\end{align*}

\subsubsection{Weyl groupoid}
\label{subsubsec:type-ufo12-Weyl}
The isotropy group  at $a_1 \in \cX$ is
\begin{align*}
\cW(a_1)= \langle \varsigma_1^{a_1}, \varsigma_2^{a_1} \varsigma_1 \varsigma_2 \rangle \simeq \mathbb{D}_6.
\end{align*}

\subsubsection{Incarnation}
We assign the following Dynkin diagrams to $a_i$, $i\in\I_2$:
\begin{align}\label{eq:dynkin-ufo(12)}
\begin{aligned}
a_1\mapsto&\Dchaintwo{-\zeta }{-\ztu^{\, 3}}{-1}& a_2\mapsto&\Dchaintwo{-\ztu^{\, 2}}{-\zeta ^3}{-1}.
\end{aligned}
\end{align}

\subsubsection{PBW-basis and dimension} \label{subsubsec:type-ufo12-PBW}
Notice that the roots in each $\Delta_{+}^{a_i}$, $i\in\I_{2}$, are ordered from left to right, justifying the notation $\beta_1, \dots, \beta_{12}$.

The root vectors $x_{\beta_k}$ are described as in Remark \ref{rem:lyndon-word}.
Thus
\begin{align*}
\left\{ x_{\beta_{12}}^{n_{12}} \cdots x_{\beta_2}^{n_{2}}  x_{\beta_1}^{n_{1}} \, | \, 0\le n_{k}<N_{\beta_k} \right\}.
\end{align*}
is a PBW-basis of $\toba_{\bq}$. Hence $\dim \toba_{\bq}=2^614^6=2^{12}7^6$.

\subsubsection{The Dynkin diagram \emph{(\ref{eq:dynkin-ufo(12)}
		a)}}\label{subsubsec:ufo(12)-a}

\

The Nichols algebra $\toba_{\bq}$ is generated by $(x_i)_{i\in \I_2}$ with defining relations
\begin{align}\label{eq:rels-ufo(12)-a}
\begin{aligned}
& \begin{aligned}
x_{11112}&=0; & x_2^2&=0; & [[x_{112},x_{12}]_c,x_{12}]_c^{14}&=0;  \\
&    & x_{112}^{14}&=0; &  [x_{112},x_{12}]_c^{14}&=0; \\
x_1^{14}&=0; & x_{12}^{14}&=0;   & [x_{112},[x_{112},x_{12}]_c]_c^{14}&=0;
\end{aligned}
\\
& [x_{112},[[x_{112},x_{12}]_c,x_{12}]_c]_c=q_{12} \frac{\zeta-3\zeta^2-3\zeta^3+\zeta^4-3\zeta^6}{-2\zeta+2\zeta^3-\zeta^5+\zeta^6}[x_{112},x_{12}]_c^2.
\end{aligned}
\end{align}
Here, $\Oc^{\bq}_+=\{\alpha_1, 2\alpha_1+\alpha_2,  5\alpha_1+3\alpha_2, 4\alpha_1+3\alpha_2, 3\alpha_1+2\alpha_2,  \alpha_1+\alpha_2\}$ and the degree of the integral is
\begin{align*}
\ya= 238\alpha_1 + 150\alpha_2.
\end{align*}


\subsubsection{The Dynkin diagram \emph{(\ref{eq:dynkin-ufo(12)}
		b)}}\label{subsubsec:ufo(12)-b}

\

The Nichols algebra $\toba_{\bq}$ is generated by $(x_i)_{i\in \I_2}$ with defining relations
\begin{align}\label{eq:rels-ufo(12)-b}
\begin{aligned}
& \begin{aligned}
x_2^2&=0; & x_{112}^{14}&=0; & x_{11112}^{14}&=0; &  [x_{1112},x_{112}]_c^{14}&=0; \\
x_1^{14}&=0; & x_{12}^{14}&=0; & x_{1112}^{14}&=0;
\end{aligned}
\\
& x_{1111112}=0; \quad [x_1,[x_{112},x_{12}]_c]_c= q_{12}\frac{1+\zeta^4}{\zeta^2-1}x_{112}^2.
\end{aligned}
\end{align}
Here, $\Oc^{\bq}_+=\{\alpha_1, 4\alpha_1+\alpha_2,  3\alpha_1+\alpha_2, 5\alpha_1+2\alpha_2, 2\alpha_1+\alpha_2, \alpha_1+\alpha_2\}$ and the degree of the integral is
\begin{align*}
\ya= 238\alpha_1 + 90\alpha_2.
\end{align*}


\subsubsection{The associated Lie algebra} This is of type $G_2$.


\begin{thebibliography}{XXXX}


\bibitem[A]{A}  N. Andruskiewitsch, \emph{On finite-dimensional Hopf algebras}. Proc. of the ICM, Seoul 2014. S.-Y. Jang et al. (Editors). Vol II (2014), 117--142.

\bibitem[AA1]{AA} N. Andruskiewitsch, I. Angiono, \emph{On Nichols algebras with
generic braiding} Modules and Comodules. Trends in Mathematics. Brzezinski, T.; Gomez
Pardo, J.L.; Shestakov, I.; Smith, P.F. (Eds.), pp. 47-64 (2008). ISBN: 978-3-7643-8741-9.

\bibitem[AA2]{AA-GRS-CLS-NA} \bysame, \emph{Generalized root systems,
contragredient Lie superalgebras and Nichols algebras}, in preparation.

\bibitem[AA3]{AA-nichols-quantum} \bysame, 
\emph{On Nichols algebras over quantized enveloping algebras}, in preparation.


\bibitem[AAH1]{AAH} N. Andruskiewitsch, I. Angiono, I. Heckenberger. \emph{On finite GK-dimensional Nichols algebras over abelian groups}, \texttt{arXiv:1606.02521}.

\bibitem[AAH2]{AAH2} \bysame. \emph{On finite GK-dimensional Nichols algebras of diagonal type}, in preparation.


\bibitem[AAR1]{AAR} N. Andruskiewitsch, I. Angiono, F. Rossi Bertone. \emph{The quantum divided power algebra of a finite-dimensional Nichols algebra of diagonal type}, Math. Res. Lett., in press. \texttt{arXiv:1501.04518}.

\bibitem[AAR2]{AAR2} \bysame \emph{A finite-dimensional Lie algebra arising from a Nichols algebra of   diagonal type (rank 2)}, Bull. Belg. Math. Soc. Simon Stevin 24 (1) (2017), 15--34.

\bibitem[AAR3]{AAR3} \bysame \emph{Lie algebras arising from Nichols algebras of diagonal type}, in preparation.


\bibitem[AAY]{AAY} N. Andruskiewitsch, I. Angiono, H. Yamane. \emph{On pointed Hopf superalgebras}, Contemp. Math. \textbf{544} (2011), 123--140.

\bibitem[AC]{AC} N. Andruskiewitsch, J. Cuadra, \emph{On the structure of (co-Frobenius) Hopf algebras}. J.
Noncommut. Geom. 7 (2013), 83-104.

\bibitem[AD]{AD}  N. Andruskiewitsch,  S. Dascalescu. 
\emph{On finite quantum groups at -1}. Algebr. Represent. Theory \textbf{8} (2005), 11--34.

\bibitem[AGM]{AGM} N. Andruskiewitsch, C. Galindo, M.   M\"uller. \emph{On finite GK-dimensional Nichols algebras over abelian groups}, 	Publ. Mat., Barc., to appear.

\bibitem[AGi]{AGi} N. Andruskiewitsch, J. M. J. Giraldi, 
\emph{Nichols algebras that are quantum planes}, Linear Multilinear Algebra,
to appear.

\bibitem[AGr]{AG} N. Andruskiewitsch, M. Gra\~na,
\emph{Braided {H}opf algebras over non-abelian groups\/}, Bol.
Acad. Ciencias (Cordoba) \textbf{63} (1999), 45--78. Also in
\texttt{math.QA/9802074}.

\bibitem[AHS]{AHS} N. Andruskiewitsch, I. Heckenberger, H.-J. Schneider, 
\emph{The Nichols algebra of a semisimple Yetter-Drinfeld module}, Amer. J. Math. \textbf{132} (2010),  1493--1547.




\bibitem[ARS]{ARS} N. Andruskiewitsch, D. Radford, H.-J. Schneider,
\emph{Complete reducibility theorems for modules over pointed Hopf algebras}, J. Algebra \textbf{324}, 2932--2970 (2010).

\bibitem[AS1]{AS1} N. Andruskiewitsch, H.-J. Schneider, \emph{Lifting of
quantum linear spaces and pointed Hopf algebras of order $p^3$}, J. Algebra \textbf{209},
658-691 (1998).

\bibitem[AS2]{AS2} \bysame
\emph{Finite quantum groups and Cartan matrices}, Adv. Math.
\textbf{154}, 1--45 (2000).

\bibitem[AS3]{AS3} \bysame
\emph{Pointed Hopf algebras}, ``New directions in Hopf algebras'',
MSRI series Cambridge Univ. Press; 1--68 (2002).

\bibitem[AS4]{AS4} \bysame
\emph{On the classification of finite-dimensional pointed Hopf
algebras}, Annals of Mathematics Vol. \textbf{171} (2010), No. 1, 375--417.

\bibitem[Ang1]{A-standard} I. Angiono, \emph{On Nichols algebras with standard braiding},
Algebra and Number Theory Vol. 3, No. \textbf{1}, 35-106,  (2009).

\bibitem[Ang2]{A-jems} \bysame \emph{A presentation by generators and relations of Nichols
algebras of diagonal type and convex orders on root systems}. J. Europ. Math. Soc.  \textbf{17} (2015), 2643--2671.


\bibitem[Ang3]{A-presentation} \bysame \emph{On Nichols algebras of diagonal type}. J. Reine Angew. Math.  683 (2013), 189--251.

\bibitem[Ang4]{A-ufo}  \bysame \emph{Nichols algebras of unidentified diagonal type}. Comm. Alg \textbf{41} (2013), 4667--4693.

\bibitem[Ang5]{A-pre-Nichols}  \bysame \emph{Distinguished pre-Nichols algebras}. Transf. Groups \textbf{21} (2016), 1--33.


\bibitem[BDR]{BDR} M. Beattie, S. D\u{a}sc\u{a}lescu, \c{S}. Raianu. \emph{Lifting of
Nichols algebras of type $B_2$}. Israel J. Math. {\bf 132} (2002),  1--28.

\bibitem[Be]{beck} J. Beck, \emph{Braid group action and quantum affine algebras}, Comm. Math. Phys. \textbf{165} (1994),
555--568.


\bibitem[BGL]{BGL} S. Bouarroudj, P. Grozman, D. Leites, \emph{Classification of finite dimensional modular
Lie superalgebras with indecomposable Cartan matrix}. SIGMA Symmetry Integrability Geom. Methods Appl. \textbf{5} (2009),
Paper 060, 63 pp.

\bibitem[Bo]{Bourbaki} N. Bourbaki, \emph{Groupes et alg\`ebres de Lie, Ch. 4, 5 et 6}, Hermann, Paris, 1968.

\bibitem[Br]{Br} G. Brown, \emph{Properties of a 29-dimensional simple Lie algebra of characteristic three}, Math. Ann. \textbf{261} (1982), 487--492.

\bibitem[Bu]{Bu} S. Burciu, \emph{A class of Drinfeld doubles that are ribbon algebras}, J. Algebra \textbf{320} (2008), 2053--2078.

\bibitem[Bur]{Burr-pbw} N. Burroughs,  \emph{The universal R-matrix for $U_qsl(3)$ and beyond! }
Comm. Math. Phys. \textbf{127} (1990), 109--128.

\bibitem[CE]{CE} I. Cunha, A. Elduque, \emph{An extended Freudenthal magic square in characteristic 3}. J. Algebra \textbf{317} (2007), 471--509.

\bibitem[CH1]{CH-at most 3} M. Cuntz, I. Heckenberger. \emph{Weyl groupoids with at most three objects}. J. Pure Appl. Algebra \textbf{213} (2009), 1112--1128.

\bibitem[CH2]{CH-classification} \bysame, \emph{Finite Weyl groupoids}. J. Reine Angew. Math. \textbf{702} (2015), 77--108.

\bibitem[Da]{Da} I. Damiani, \emph{Drinfeld Realization of Affine Quantum Algebras: the Relations}, Publ. RIMS Kyoto Univ. \textbf{48} (2012), 661--733.

\bibitem[DP]{DP} C. De Concini, C. Procesi. \emph{Quantum groups}. D-modules, representation theory, and quantum groups, 31--
140, Lecture Notes in Math. \textbf{1565}, Springer, 1993.



\bibitem[Dr]{Dr} V.G. Drinfeld, \emph{Quantum groups}, Proc. Int. Cong. Math, Berkeley \textbf{1}, 789--820 (1986).



\bibitem[E1]{E1} A. Elduque, \emph{New simple Lie superalgebras in characteristic 3}. J. Algebra \textbf{296} (2006), 196--233.

\bibitem[E2]{E2} \bysame \emph{Some new simple modular Lie superalgebras}. Pacific J. Math. \textbf{231} (2007), 337--359.


\bibitem[EGNO]{egno} P. Etingof, S. Gelaki, D. Nikshych,V. Ostrik. \emph{Tensor categories}, Mathematical Surveys and Monographs 205. Providence, RI: American Mathematical Society (AMS). xvi, 343 p. (2015).

\bibitem[GGi]{GGi} G. A. Garc\'\i a, J. M. J. Giraldi. \emph{On Hopf Algebras over quantum subgroups}, \texttt{arXiv:1605.03995}.

\bibitem[H1]{H-Weyl gpd} I. Heckenberger, \emph{The Weyl groupoid of a Nichols algebra of diagonal type}, Invent. Math. \textbf{164}, 175--188 (2006).


\bibitem[H2]{Hrk2}  \bysame \emph{Examples of finite dimensional rank 2 Nichols algebras of diagonal type}, Compositio Math. \textbf{143} (2007), 165--190.

\bibitem[H3]{H-classif RS} \bysame \emph{Classification of arithmetic root
systems}, Adv. Math. \textbf{220} (2009) 59--124.

\bibitem[H4]{H-lusztig iso} \bysame \emph{Lusztig isomorphism for Drinfel'd doubles of bosonizations of Nichols algebras of diagonal type}, J. Algebra \textbf{323} (2010), 2130--2182.

\bibitem[HS1]{HS-london} I. Heckenberger and H.-J   Schneider, 
\emph{Root systems and Weyl groupoids for Nichols algebras}, Proc. Lond. Math. Soc. \textbf{101} (2010),  623--654.

\bibitem[HS2]{HS} \bysame
\emph{Yetter-Drinfeld modules over bosonizations of dually paired Hopf algebras},Adv. Math. \textbf{244} (2013) 354--394.





\bibitem[HeV]{HV} I. Heckenberger, L. Vendramin, \emph{A classification of Nichols algebras of semi-simple Yetter-Drinfeld modules over non-abelian groups}, J. Europ. Math. Soc., to appear.

\bibitem[HeW]{HW} I. Heckenberger, J. Wang. \emph{Rank 2 Nichols algebras of diagonal type over fields of positive characteristic}, SIGMA \textbf{11} (2015), 011.

\bibitem[HeY]{HY} I. Heckenberger, H. Yamane, \emph{A generalization of Coxeter groups, root systems, and Matsumoto's theorem}, Math. Z.
\textbf{259} (2008), 255--276.

\bibitem[Hg]{helbig}  M. Helbig.  {\em On the Lifting of Nichols Algebras}, 
Comm. Alg. \textbf{40} (2012), 3317--3351.

\bibitem[HX]{HX} Naihong Hu, Rongchuan Xiong. \emph{Some Hopf algebras of dimension 72 without the Chevalley property}, \texttt{arXiv:1612.04987}; \emph{Eight classes of new Hopf algebras of dimension 128 without the Chevalley property}, \texttt{arXiv:1701.01991}.


\bibitem[JS]{JS} Joyal, A., Street, R., Braided tensor categories, \emph{Adv. Math.} \textbf{102} (1993), 20--78.

\bibitem[K1]{K-super} V. Kac, \emph{Lie superalgebras}. Adv. Math. \textbf{26} (1977), 8--96.

\bibitem[K2]{K-libro} \bysame \emph{Infinite-dimensional Lie algebras}. Third edition. Cambridge University Press, Cambridge, 1990. xxii+400 pp.

\bibitem[KaW]{KW-exponentials} V. Kac, B. Weisfeiler, \emph{Exponentials in Lie algebras of characteristic p},   Math. USSR Izv. 5 (1971), 777--803.

\bibitem[Kh1]{Kh} V. Kharchenko, \emph{A quantum analogue of the Poincar\'e-Birkhoff-Witt theorem}.
Algebra and Logic \textbf{38} (1999), 259--276.

\bibitem[Kh2]{Kh-libro} \bysame \emph{Quantum Lie theory}.
Lect. Notes Math. \textbf{2150} (2015), Springer-Verlag.

\bibitem[KR]{RK} L. Kauffman, D. Radford, \emph{A necesary and sufficient condition for a finite dimensional Drinfel'd Double to be a ribbon Hopf algebra}, J. Algebra \textbf{159}, 98--114 (1993).

\bibitem[LaR]{LR} P. Lalonde, A. Ram, \emph{Standard Lyndon bases of Lie algebras and enveloping algebras}, Trans. Am. Math. Soc. \textbf{137} (1995), 1821--1830.

\bibitem[Lo]{Lo} M. Lothaire, \emph{Combinatorics on Words}. Encyclopedia Math. Appl. \textbf{17} , Addison-Wesley, 1983.

\bibitem[L1]{Lu-jams1990} G. Lusztig, \emph{Finite-dimensional Hopf algebras arising from quantized universal enveloping algebra}, 
J. Amer. Math. Soc. \textbf{3} (1990),  257--296.

\bibitem[L2]{Lu-dedicata} \bysame \emph{Quantum groups at roots of 1}, Geom. Dedicata \textbf{35} (1990), 89--113. 

\bibitem[L3]{Lu} \bysame \emph{Introduction to quantum groups}, Birkh\"auser (1993).

\bibitem[M]{M} S. MacLane, \emph{Categories for the working mathematician}, Springer (1971).

\bibitem[Ma]{Ma} S. Majid, \emph{Foundations of Quantum Group Theory}, Cambridge University Press (1995).


\bibitem[Mk]{Masuoka} A. Masuoka. \emph{Abelian and non-abelian second cohomologies of quantized
enveloping algebras}, J.  Algebra \textbf{320} (2008), 1--47.


\bibitem[Mo]{Mo} S. Montgomery, \emph{Hopf algebras and their action on rings},
CBMS Regional Conference Series \textbf{82} (1993).


\bibitem[N]{Ni}  Nichols, W. D., Bialgebras of type one, \emph{Comm. Algebra} \textbf{6} (1978),  1521--1552.

\bibitem[P]{P} P. Papi, \emph{A characterization of a special ordering in a root system}, Proc. Am. Math. Soc. \textbf{120}  (1994),  661--665.

\bibitem[R]{radford-book} D. E. Radford,   {\it Hopf algebras}. 
Series on Knots and Everything 49. Hackensack, NJ: World Scientific.  xxii, 559 p. (2012)







\bibitem[Ro1]{Ro-pbw} M. Rosso,  \emph{An analogue of P.B.W. theorem and the universal R-matrix for $U_hsl(N+1)$. }
Comm. Math. Phys. \textbf{124} (1989), 307--318.




\bibitem[Ro2]{Ro1} \bysame
\emph{Groupes quantiques et algebres of battage quantiques}, C. R.
A. S. (Paris) \textbf{320} (1995), 145--148.

\bibitem[Ro3]{Ro2} \bysame \emph{Quantum groups and quantum shuffles},
Inventiones Math. \textbf{133}   (1998),  399--416.

\bibitem[Ro4]{R2}  \bysame \emph{Lyndon words and Universal R-matrices}, talk at MSRI, October 26, 1999, available at {\tt
http://www.msri.org};  \emph{Lyndon basis and the multiplicative formula for R-matrices}, preprint (2003).



\bibitem[S1]{Sch} P.~Schauenburg. {\it A characterization of
the Borel-like subalgebras of quantum enveloping algebras},
Comm. Algebra {\bf24} (1996), 2811--2823.


\bibitem[Se1]{Ser-root systems} V. Serganova, \emph{On generalizations of root systems}. Comm. Algebra \textbf{24} (1996), 4281--4299.


\bibitem[Se2]{Ser-superKM} \bysame \emph{Kac-Moody superalgebras and integrability}.
Neeb, Karl-Hermann (ed.) et al., Developments and trends in infinite-dimensional Lie theory. Basel: Birkh\"auser.
Progress in Mathematics 288, 169-218 (2011).



\bibitem[Sh]{Sh} A.I. Shirshov. \emph{On bases for free Lie algebra}, Alg. Log. \textbf{1}, no. 1, 14--19 (1962).

\bibitem[Sk]{Skryabin} S. Skryabin, \emph{A contragredient 29-dimensional Lie algebra of characteristic 3}.  Siberian Math. J. \textbf{34} (1993), 548--554.

\bibitem[T1]{T-gln}  M. Takeuchi. \emph{Some topics on $\mathrm{GL}_q(n)$}. 
J. Algebra \textbf{147}  (1992), 379-410.


\bibitem[T2]{T} \bysame \emph{Finite Hopf algebras in braided tensor categories}. J. Pure Appl. Algebra \textbf{138} (1999), 59--82.

\bibitem[U]{U} S. Ufer. \emph{PBW bases for a class of braided Hopf algebras}, J. Alg. \textbf{280} (2004) 84-119.


\bibitem[W]{Wg} J. Wang.  \emph{Rank three Nichols algebras of diagonal type over fields of positive characteristic},   
Israel J. Math. 218 (2017) 1--26.

\bibitem[Wo]{W} S. L. Woronowicz. Differential calculus on compact matrix pseudogroups (quantum
groups), \emph{Comm. Math. Phys.} \textbf{122} (1989),  125--170.


\bibitem[Y]{Y-pbw} H. Yamane. \emph{A Poincar\'e-Birkhoff-Witt theorem for quantized universal enveloping algebras of type $A_N$}. 
Publ. Res. Inst. Math. Sci. \textbf{25} (1989) 503--520.

\bibitem[Y]{Y-super}  \bysame \emph{Quantized enveloping algebras associated to simple Lie superalgebras and their
universal R-matrices}, Publ. Res. Inst. Math. Sci. \textbf{30} (1994), 15--87.

\end{thebibliography}
\end{document}